\titleformat{\subsubsection}[runin]
       {\normalfont\bfseries}
       {\thesubsection}
       {0.5em}
       {}
       [.]
\theoremstyle{plain}
\newtheorem{theorem}{Theorem}[section]
\newtheorem*{theorem*}{Theorem}
\newtheorem*{question*}{Question}
\newtheorem*{example*}{Example}
\newtheorem*{remark*}{Remark}
\newtheorem{lemma}[theorem]{Lemma}
\newtheorem{proposition}[theorem]{Proposition}
\newtheorem{corollary}[theorem]{Corollary}
\newtheorem{claim}[theorem]{Claim}
\theoremstyle{definition}
\newtheorem{definition}[theorem]{Definition}
\newtheorem{example}[theorem]{Example}
\theoremstyle{remark}
\newtheorem{remark}[theorem]{Remark}
\numberwithin{equation}{section}
\newcommand{\B}{\mathbb{B}}
\newcommand{\C}{\mathbb{C}}
\newcommand{\R}{\mathbb{R}}
\newcommand{\Q}{\mathbb{Q}}
\newcommand{\Z}{\mathbb{Z}}
\newcommand{\M}{\mathcal{M}}
\newcommand{\T}{\mathbb{T}}
\newcommand{\V}{\mathbb{V}}
\newcommand{\X}{\mathbf{X}}
\newcommand{\Y}{\mathbf{Y}}
\newcommand{\PP}{\mathbf{P}}
\newcommand{\HH}{\mathbb{H}}
\newcommand{\Ss}{\mathbb{S}}
\newcommand{\eps}{\varepsilon}
\newcommand{\mc}{\mathcal}
\newcommand{\dd}{\mathrm{d}}
\newcommand{\rank}{\mathrm{rank}}
\newcommand{\e}{\mathbf{e}}
\newcommand{\f}{\mathbf{f}}
\newcommand{\Lk}{\mathbf{L}^{\otimes \mathbf{k}}}
\newcommand{\pr}{\mathrm{pr}}
\newcommand{\spec}{\mathrm{spec}}
\DeclareMathOperator{\vol}{vol}
\DeclareMathOperator{\Ell}{ell}
\DeclareMathOperator{\Tr}{Tr}
\DeclareMathOperator{\Op}{Op}
\DeclareMathOperator{\WF}{WF}
\DeclareMathOperator{\id}{id}
\DeclareMathOperator{\Div}{div}
\DeclareMathOperator{\supp}{supp}
\DeclareMathOperator{\comp}{comp}
\DeclareMathOperator{\End}{End}
\DeclareMathOperator{\Lie}{\mc{L}}
\DeclareMathOperator{\ad}{ad}
\DeclareMathOperator{\Ad}{Ad}
\title
{Semiclassical analysis on principal bundles \\
\large Applications to rapid mixing for isometric extensions of Anosov flows, and spectral theory of horizontal Laplacians}
\date{\today}
\author{Mihajlo Ceki\'{c}, Thibault Lefeuvre}
\begin{document}

\dominitoc

\maketitle

\hspace{1cm}

\newpage

\section*{Abstract}

Let $G$ be a compact Lie group. We introduce a semiclassical framework, called \emph{Borel-Weil calculus}, to investigate $G$-equivariant (pseudo)differential operators acting on $G$-principal bundles over closed manifolds. In this calculus, the semiclassical parameters correspond to the highest roots in the Weyl chamber of the group $G$ that parametrize irreducible representations, and operators are pseudodifferential in the base variable, with values in Toeplitz operators on the flag manifold associated to the group. This monograph unfolds two main applications of our calculus.

Firstly, in the realm of dynamical systems, we obtain explicit sufficient conditions for rapid mixing of volume-preserving partially hyperbolic flows obtained as extensions of an Anosov flow to a $G$-principal bundle (for an arbitrary $G$). In particular, when $G = \mathrm{U}(1)$, we prove that the flow on the extension is rapid mixing whenever the Anosov flow is not jointly integrable, and the circle bundle is not torsion. When $G$ is semisimple, we prove that ergodicity of the extension is equivalent to rapid mixing. Secondly, we study the spectral theory of sub-elliptic Laplacians obtained as horizontal Laplacians of a $G$-equivariant connection on a principal bundle. When $G$ is semisimple, we prove that the horizontal Laplacian is globally hypoelliptic as soon as the connection has a dense holonomy group in $G$. Notably, this result encompasses all flat bundles with a dense monodromy group in $G$. We also prove a quantum ergodicity result for flat (and in some situations non-flat) principal bundles under a suitable ergodicity assumption.

We believe that this monograph will serve as a cornerstone for future investigations applying the Borel-Weil calculus across different fields, including tensor tomography and geometric inverse problems, Quantum Ergodicity on non-flat principal bundles, as well as wave decay for sub-elliptic Laplacians.

\newpage

\hspace{1cm}

\newpage

\tableofcontents

\newpage

\hspace{1cm}

\newpage

\chapter{Introduction}

We review the content of this monograph and indicate further possible applications of the Borel-Weil calculus.

\minitoc

\newpage

The purpose of this monograph is to study the class of (pseudo)differential operators defined on a $G$-principal bundle $P \to M$, where $M$ is a closed manifold and $G$ is a compact Lie group, that commute with the right-action of the group. When $P$ is equipped with a $G$-invariant connection, natural examples of such (differential) operators are provided by horizontal lifts of vector fields on $M$, or by the horizontal Laplacian associated to the connection. 

To study these operators, we shall introduce an algebra of semiclassical pseudodifferential operators $\Psi^{\bullet}_{h,\mathrm{BW}}(P)$ defined on the flag bundle $F := P/T$, where $T \leqslant G$ is a maximal torus. The equivariant (pseudo)differential operators under consideration in this monograph will naturally induce operators in the Borel-Weil calculus. The Borel-Weil calculus is developed in Chapters \ref{chapter:analysis} and \ref{chapter:analytic}. This formalism is applied to two distinct problems:
\begin{enumerate}[label=(\roman*)]
\item \textbf{Dynamical systems.} We prove that the frame flow over a negatively curved Riemannian manifold is rapid mixing whenever it is ergodic, that is the correlation of functions, computed with respect to the natural smooth invariant volume, decays faster than any polynomial power of time. More generally, our study encompasses all isometric extensions of volume-preserving Anosov flows, see \S\ref{ssection:dynamical-systems} and Chapter \ref{chapter:flow}.

\item \textbf{Sub-elliptic Laplacians.} We study horizontal Laplacians obtained from a connection on a $G$-principal bundle; these operators are sub-elliptic since their principal symbol vanishes on a non-compact submanifold in phase space. We prove that they are (globally) hypoelliptic whenever the connection admits a dense holonomy group and $G$ is semisimple. This turns out to be false without the semisimplicity assumption (e.g. $G=\mathrm{U}(1)$). In particular, on a closed manifold $M$, all representations $\rho : \pi_1(M) \to G$ with dense image $\rho(\pi_1(M)) \leqslant G$ give rise to a flat connection on a $G$-principal bundle with this property, and in this example H\"ormander's hypoelliptic theorem does not apply. Finally, we will also prove a quantum ergodicity statement for flat (and also in some situations non-flat) bundles under a certain ergodicity assumption. We refer to Chapter \ref{chapter:hypoelliptic} and \S\ref{ssection:hypoelliptic-laplacians} for further details.
\end{enumerate}

\section{Semiclassical analysis on principal bundles}

\label{section:sc-analysis}

In Chapters \ref{chapter:analysis} and \ref{chapter:analytic}, we analyze (pseudo)differential operators $\mathbf{Q} : C^\infty(P) \to C^\infty(P)$ that are invariant by the right-action of the group in the fibers. It is straightforward to show that such operators induce a natural family of operators
\[
\mathbf{Q}_{\lambda} : C^\infty(M,E^\lambda) \to C^\infty(M,E^\lambda)
\]
on every associated vector bundle $E^\lambda := P \times_\lambda V_\lambda$ over $M$, where $\lambda : G \to \mathrm{GL}(V_\lambda)$ is an irreducible representation of $G$. The purpose of the Borel-Weil calculus is to provide a suitable framework in which the isomorphism classes of irreducible unitary $G$-representations $\lambda \in \widehat{G}$ can be interpreted as a natural semiclassical parameter.

Given a maximal torus $T \leqslant G$, the flag manifold $G/T$ admits a natural complex structure. By the Borel-Weil theory, every irreducible representation $\lambda \in \widehat{G}$ of $G$ can be realized as the space of holomorphic sections of a certain line bundle $J^\lambda \to G/T$ over the flag manifold, that is $V_\lambda = H^0(G/T,J^\lambda)$. More precisely, the irreducible representations $\lambda \in \widehat{G}$ are parametrized by integer points $\mathbf{k}$ of a certain cone in the dual to the Lie algebra of the maximal torus (the so-called Weyl chamber). To each $\lambda \in \widehat{G}$ corresponds a $d$-tuple $\mathbf{k} := (k_1, \dotsc, k_d)$ of (possibly signed) integers, where $d$ is the rank of $G$. The line bundle $J^\lambda \to G/T$ is then given by $J^\lambda = J_1^{\otimes k_1} \otimes \dotsm \otimes J_d^{\otimes k_d} =: \mathbf{J}^{\otimes \mathbf{k}}$, where $J_1, \dotsc, J_d \to G/T$ are some specific holomorphic line bundles over $G/T$. The parameter $\mathbf{k}$ will play the role of a semiclassical parameter in the Borel-Weil calculus. Notice that, unlike standard semiclassical analysis, the semiclassical parameter $\mathbf{k}$ is now a $d$-tuple instead of a real number.

\begin{example*} In this example we assume that $M$ is a point. If $G = \mathrm{U}(1)= \mathbb{R}/2\pi\Z$, then $T=G$ and $G/T$ reduces to a point. The line bundle $J_1 =: L \to \{\bullet\}$ is thus isomorphic to $\C$ and corresponds to the span of the function $\theta \mapsto e^{i\theta}$ on $\mathrm{U}(1)$; the line bundles $L^{\otimes k}$ (with $k \in \Z$) correspond to the span of the function $\theta \mapsto e^{ik\theta}$. One therefore retrieves the usual Fourier series on the circle.

If $G= \mathrm{SO}(3)$, then $T = \mathrm{SO}(2)$ and the flag manifold is given by $\mathrm{SO}(3)/\mathrm{SO}(2) = \C \mathbb{P}^1$. The irreducible representations representations of $\mathrm{SO}(3)$ are all given by $H^0(\C \mathbb{P}^1, L^{\otimes k})$, where $L \to \C \mathbb{P}^1$ is the tautological line bundle, and $k \geq 0$. (Equivalently, $H^0(\C \mathbb{P}^1, L^{\otimes k}) \simeq \Omega_k := \{u \in C^\infty(\C \mathbb{P}^1) ~|~ \Delta_{\C \mathbb{P}^1}u=k(k+1)u\}$, the space of spherical harmonics of degree $k$.)
\end{example*}

The description of irreducible representations using the Borel-Weil correspondence has the following consequence. The space of sections $C^\infty(M,E^{\lambda})$ of an associated vector bundles $E^{\lambda} \to M$ can be realized as the space of fiberwise holomorphic sections $C^\infty_{\mathrm{hol}}(F,\mathbf{L}^{\otimes \mathbf{k}})$ of a corresponding line bundle $\mathbf{L}^{\otimes \mathbf{k}} \to F$, where $F := P/T$ is the flag bundle over $M$ all of whose fibers are biholomorphic to $G/T$. (Over a single fiber $F_x$ of $F$, $x \in M$, the line bundle $\mathbf{L}^{\otimes \mathbf{k}}|_{F_x} \to F_x$ is biholomorphic to $\mathbf{J}^{\otimes \mathbf{k}} \to G/T$.) By the Peter-Weyl theorem, any function $f \in C^\infty(P)$ can thus be naturally identified with a sequence of elements $(f_{\mathbf{k},i})_{\mathbf{k} \in \widehat{G},i =1,\dotsc ,d_{\mathbf{k}}}$, where $f_{\mathbf{k},i} \in C^\infty_{\mathrm{hol}}(F,\mathbf{L}^{\otimes \mathbf{k}})$ and $d_{\mathbf{k}} := \dim(H^0(G/T,\mathbf{J}^{\otimes \mathbf{k}}))$ is the (complex) dimension of the vector space of the corresponding irreducible representation.

A (pseudo)differential operator $\mathbf{Q} \in \Psi^{\bullet}_{h, \mathrm{BW}}(P)$ in the Borel-Weil calculus is then defined as a family of pseudodifferential operators $(\mathbf{Q}_{\mathbf{k}, i})_{\mathbf{k} \in \widehat{G},i =1,\dotsc ,d_{\mathbf{k}}}$, such that
\[
\mathbf{Q}_{\mathbf{k}, i} : C^\infty_{\mathrm{hol}}(F,\mathbf{L}^{\otimes \mathbf{k}}) \to C^\infty_{\mathrm{hol}}(F,\mathbf{L}^{\otimes \mathbf{k}}),
\]
i.e. $\mathbf{Q}_{\mathbf{k}, i}$ preserves the property of being fiberwise holomorphic for a section $f_{\mathbf{k}} \in C^\infty_{\mathrm{hol}}(F,\mathbf{L}^{\otimes \mathbf{k}})$. Furthermore, we require that the family of pseudodifferential operators $(\mathbf{Q}_{\mathbf{k}, i})_{\mathbf{k} \in \widehat{G},i =1,\dotsc ,d_{\mathbf{k}}}$ be semiclassical in the parameter $h := 1/|\mathbf{k}|$. This class of operators actually corresponds to pseudodifferential operators on $M$ with values in Toeplitz operators on the fibers of $F$. Proving the latter interpretation turns out to be involved and is the content of Theorems \ref{theorem:penible} and \ref{theorem:quantization-bw} in Chapter \ref{chapter:analytic}. All natural $G$-equivariant differential operators on principal bundles (such as connections, horizontal vector fields, horizontal Laplacians, etc.) induce operators in the Borel-Weil calculus.

This calculus should be thought of as a non-Abelian version of a calculus introduced by Charles in his thesis \cite{Charles-00} to study the magnetic Laplacian (this corresponds to the case $G = \mathrm{U}(1)$). His calculus deals with semiclassical operators on $C^\infty(M,L^{\otimes k})$, where $L \to M$ is a fixed Hermitian line bundle, and $k \in \Z_{\geq 0}$ is seen as a semiclassical parameter (see also \cite{Guillarmou-Kuster-21}). In the Borel-Weil calculus, the space $M$ is replaced by the flag bundle $F := P/T$ over $M$, and $L^{\otimes k}$ is replaced by a `multi' line bundle $\mathbf{L}^{\otimes \mathbf{k}} = L_1^{\otimes k_1} \otimes \dotsm \otimes L_d^{\otimes k_d}$ over $F$. In the process of writing this monograph, we also discovered recent preprints \cite{Ben-Ovadia-Ma-Rogdriguez-Hertz-24, Ma-Ma-23} where similar ideas seem to be developed in the specific case where the bundle $P \to M$ is flat, in view of studying Quantum Ergodicity (see \S\ref{ssection:weyl} for further comparison).

\section{Applications}

We provide two main applications of the Borel-Weil calculus. 

\subsection{Decay of correlations for isometric extensions}

\label{ssection:dynamical-systems}

In Chapter \ref{chapter:flow}, we study isometric extensions of volume-preserving Anosov flows and establish quantitative decay for their correlation functions. More precisely, let $M$ denote a closed manifold equipped with a volume-preserving Anosov flow $(\varphi_t)_{t \in \R}$, and let $\pi : P \to M$ be a $G$-principal bundle, where $G$ is a compact Lie group. An \emph{extension} of $(\varphi_t)_{t \in \R}$ is a flow $(\psi_t)_{t \in \R}$ defined over $P$ such that:
\[
\varphi_t \circ \pi = \pi \circ \psi_t, \qquad R_g \circ \psi_t = \psi_t \circ R_g, \qquad \forall t \in \R, g \in G.
\]
The flow $(\psi_t)_{t \in \R}$ is \emph{partially hyperbolic}, which makes its analysis harder than that of $(\varphi_t)_{t \in \R}$. The typical example of such a flow is provided by the \emph{frame flow} over a negatively curved Riemannian manifold $(N^n,g)$ of dimension $n \geq 3$. In this case, $P = FM$ is the frame bundle, $G = \mathrm{SO}(n-1)$, $M = SN \subset TN$ is the unit tangent bundle, $(\varphi_t)_{t \in \R}$ is the geodesic flow, and $(\psi_t)_{t \in \R}$ is the frame flow. We refer to \S\ref{ssection:frame-flows} for further details (see also \cite{Cekic-Lefeuvre-Moroianu-Semmelmann-22} for an introduction to frame flows).

Since $(\varphi_t)_{t \in \R}$ preserves a smooth probability measure $\mu_M$ by assumption, $(\psi_t)_{t \in \R}$ also preserves smooth probability measures $\mu$ obtained locally by taking the product of $\mu$ with any bi-invariant probability measure on $G$. The first problem is to characterize and understand when $(\psi_t)_{t \in \R}$ is ergodic with respect to $\mu$. This question is now fairly well understood, especially when the underlying flow $(\varphi_t)_{t \in \R}$ is the geodesic flow, see \cite{Brin-75-1,Brin-75-2,Lefeuvre-23,Cekic-Lefeuvre-Moroianu-Semmelmann-21,Cekic-Lefeuvre-22,Cekic-Lefeuvre-Moroianu-Semmelmann-23}. Our goal is to study quantitative rate of mixing for $(\psi_t)_{t \in \R}$. More precisely, given $f_1, f_2 \in C^\infty(P)$, define the \emph{correlation function} as
\begin{equation}
\label{equation:correlation-intro}
C_t(f_1,f_2) := \int_{P} f_1\circ\Phi_t \cdot f_2 ~\dd\mu - \int_{P} f_1~\dd\mu \int_P f_2~\dd\mu.
\end{equation}
The flow $(\Phi_t)_{t \in \R}$ is \emph{mixing} if $C_t(f_1,f_2) \to 0$ as $t \to \infty$; it is \emph{rapid mixing} if $C_t(f_1, f_2) = \mc{O}(t^{-N})$ as $t \to \infty$ for every $N > 0$ and \emph{exponentially mixing} if $C_t(f_1,f_2) = \mc{O}(e^{-\nu t})$ as $t \to \infty$ for some $\nu > 0$.

We obtain a general result characterizing when $(\psi_t)_{t \in \R}$ is rapid mixing in terms of the curvature of the \emph{dynamical connection} on the principal bundle $P$ (see Theorem \ref{theorem:main2}). To keep the discussion simple, here we only state as an illustration the main two consequences of this result when $G=\mathrm{U}(1)$, and when $G$ is semisimple.

When $G = \mathrm{U}(1)$, $P \to M$ is also called a \emph{circle} extension. Recall that a circle extension is \emph{torsion} if it admits a certain tensor power which is trivial over $M$. An Anosov flow is said to be \emph{non jointly integrable} if the sum of the stable and unstable bundles $E_s \oplus E_u$ is not integrable over $M$. We will prove that the following holds:

\begin{theorem*}
Suppose that $P \to M$ is a circle extension, and $(\varphi_t)_{t \in \R}$ is not jointly integrable on $M$. If $P$ is not torsion, then $(\psi_t)_{t \in \R}$ is rapid mixing.
\end{theorem*}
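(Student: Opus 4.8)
The plan is to reduce the rapid-mixing statement to a quantitative spectral gap for the family of transfer operators acting on the associated line bundles $L^{\otimes k} \to M$, and then to exploit the Borel–Weil/Charles calculus to control these operators uniformly in the semiclassical parameter $h = 1/|k|$. First I would decompose $C^\infty(P)$ along the Fourier modes of the $\mathrm{U}(1)$-action, writing $f = \sum_{k \in \Z} f_k$ with $f_k \in C^\infty_{\mathrm{hol}}(F, \mathbf{L}^{\otimes k}) = C^\infty(M, L^{\otimes k})$; since $(\psi_t)$ commutes with $R_g$, the correlation function splits as $C_t(f_1, f_2) = \sum_{k \neq 0} \langle (\psi_{-t})^* f_{1,k}, \overline{f_{2,k}}\rangle_{L^2(M, L^{\otimes k})}$, the $k = 0$ term being the correlation of the Anosov base flow (which is itself rapid — indeed exponentially — mixing by classical results, so contributes a negligible term). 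Thus it suffices to show that the dynamics twisted by $L^{\otimes k}$ mixes at a rate that is rapid in $t$ with constants growing at most polynomially in $k$, so that summing over $k$ against the rapidly-decaying Fourier coefficients of smooth $f_1, f_2$ preserves rapid decay.

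Next I would bring in the anisotropic/microlocal machinery: for each fixed $k$, the twisted transfer operator $e^{-tX_k}$ (where $X_k$ is the generator of the flow lifted to $L^{\otimes k}$ via the dynamical connection) acts on suitable anisotropic Sobolev spaces, and its Ruelle resonances near the imaginary axis are what governs decay of correlations. The key point is that non joint integrability of $(\varphi_t)$ combined with the non-torsion hypothesis forces the curvature of the dynamical connection (pulled back to the weak-stable/weak-unstable directions) to be "non-degenerate" in the sense needed for the $k$-dependent Livšic-type obstruction to vanish only for $k = 0$: if for some $k \neq 0$ the twisted operator had a resonance on $i\R$, one would obtain a continuous (hence, by Livšic regularity, smooth) section trivialising $L^{\otimes k}$ along the flow, which by the transitivity of the Anosov flow and a holonomy argument would make $L^{\otimes k}$ flat with trivial holonomy — contradicting non-torsion. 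This gives, for each $k \neq 0$, a spectral gap; the semiclassical content of the Borel–Weil calculus (Theorems \ref{theorem:penible} and \ref{theorem:quantization-bw}) is what upgrades this to a gap whose width and whose resolvent bounds degrade only polynomially as $|k| \to \infty$, via a normal-form / propagation-of-singularities estimate for the principal symbol along the flow, uniformly on the non-compact characteristic set.

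Finally I would assemble the pieces: fix $N$; using the uniform (polynomial-in-$k$) resolvent bound, get $\|e^{-tX_k} f_{1,k}\|_{\text{anis}} \lesssim \langle k\rangle^{C} t^{-N'} \|f_{1,k}\|_{\text{anis}}$ for a suitable $N'$ and anisotropic norm, pair with $f_{2,k}$, and sum over $k$, absorbing the polynomial loss $\langle k\rangle^{C}$ into the Schwartz decay of the Fourier coefficients $\|f_{j,k}\|$ of the smooth functions $f_j$. The main obstacle I anticipate is precisely the \emph{uniformity in $k$} of the spectral gap and resolvent estimates: for each individual $k$ a gap is more or less classical (Dolgopyat-type or Tsujii–Faure-type arguments for twisted Anosov flows), but controlling the constants as the semiclassical parameter $h = 1/|k| \to 0$ requires the full strength of the Borel–Weil calculus — in particular a careful analysis of how the curvature term enters the subprincipal symbol and ensures a strictly positive escape rate bounded below independently of $k$ (this is where non joint integrability must be used quantitatively, not just qualitatively). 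A secondary technical point is handling small $|k|$ uniformly and patching the "semiclassical" regime $|k| \geq k_0$ with the finitely many remaining modes, each of which is dealt with by the non-torsion + Livšic argument above.
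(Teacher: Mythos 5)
Your high-level plan — Fourier decomposition along $\mathrm{U}(1)$-modes, reduction to resolvent estimates for $\X_k$ on anisotropic spaces with constants polynomial in $k$, and then summation against rapidly-decaying Fourier coefficients — is indeed the paper's strategy. You also correctly flag the uniformity in $k$ as the crux. But your account of how that uniformity is obtained contains a genuine conceptual error. You describe the mechanism as establishing a \emph{strictly positive escape rate bounded below independently of $k$}, i.e.\ a uniform resonance-free strip for the family $\X_k$. This is not what Theorem \ref{theorem:real} proves, and a uniform strip would yield \emph{exponential} mixing, which the paper explicitly lists as open. What the paper actually proves is a polynomial bound on $\|(-\X_k-z)^{-1}\|$ for $\Re z \in [0,1]$ and $|\Im z| \to \infty$, $|k| \to \infty$ (Propositions \ref{proposition:technical-reduction} and \ref{proposition:technical-reduction2}); the mechanism is a contradiction argument with quasimodes in the Borel--Weil calculus: source/sink radial estimates force microsupport on the torus $\mc{T}_\eta$, propagation along stable/unstable directions gives horocyclic near-parallelism (Lemma \ref{lemma:parallel}), whence $|v_h|$ is nearly constant (Lemma \ref{lemma:lower-bound}); in the non-torsion case — which is precisely the case of this corollary — one then gets an immediate contradiction because a nowhere-vanishing section of $L^{\otimes k(h)}$ cannot exist for a non-trivial line bundle. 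No Dolgopyat-type cancellation, subprincipal symbol positivity, or spectral gap is used. Conflating ``rapid'' with ``uniform gap'' is the step that fails.

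A secondary, more minor issue concerns your per-$k$ triviality argument. You argue that a resonance on $i\R$ for $\X_k$ would, via Liv\v{s}ic and a holonomy argument, make $L^{\otimes k}$ ``flat with trivial holonomy.'' What actually follows (as in Lemma \ref{lemma:no-resonances}) is only that the resonant state has constant modulus (by ergodicity of $\varphi$, which is automatic for volume-preserving Anosov flows), hence gives a nowhere-vanishing section, hence $L^{\otimes k}$ is \emph{topologically} trivial — flatness with trivial holonomy is an overreach and is not needed. More importantly, the paper's reduction from the hypotheses to Theorem \ref{theorem:main2}, Item (i), is a one-liner that you miss entirely: $P$ not torsion is equivalent to $[F_1] \neq 0$ in $H^2(M,\C)$, while $d\alpha$ is exact, so $d\alpha \neq 0$ and $F_1 \not\in \R\,d\alpha$ are automatic, i.e.\ $(d\alpha, F_1)$ are linearly independent. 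This purely cohomological observation replaces your Liv\v{s}ic detour and is what makes Corollary \ref{corollary1} an immediate consequence of the general theorem.
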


In the semisimple case, we shall prove the following:

\begin{theorem*}
Suppose that $P \to M$ is a $G$-principal bundle, $G$ is semisimple and $(\varphi_t)_{t \in \R}$ is not jointly integrable. Then $(\psi_t)_{t \in \R}$ is rapid mixing if and only if it is ergodic.
\end{theorem*}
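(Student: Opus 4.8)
The plan is to reduce the statement to the general criterion of Theorem~\ref{theorem:main2}, which (we anticipate) characterizes rapid mixing of $(\psi_t)_{t\in\R}$ in terms of the curvature $\Omega$ of the dynamical connection on $P$ and its interaction with the weak-unstable foliation of $(\varphi_t)$. One direction is essentially free: rapid mixing trivially implies mixing, hence ergodicity, so the content is the converse. Thus I would assume $(\psi_t)$ is ergodic with respect to $\mu$, that $(\varphi_t)$ is not jointly integrable, and that $G$ is semisimple, and aim to verify the sufficient condition of Theorem~\ref{theorem:main2}.

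First I would set up the Borel-Weil decomposition: by Peter--Weyl, $L^2(P)$ splits over $\widehat G$, and correlation decay reduces to uniform-in-$\mathbf{k}$ decay of the transfer operator on each isotypic block, realized via the Borel-Weil calculus as a semiclassical family acting on $C^\infty_{\mathrm{hol}}(F,\mathbf{L}^{\otimes\mathbf{k}})$ with $h=1/|\mathbf{k}|$. The key mechanism is that ergodicity of the extension is equivalent (by Brin-type theory, cf.\ the references \cite{Brin-75-1,Brin-75-2,Cekic-Lefeuvre-Moroianu-Semmelmann-21}) to the \emph{transitivity group} of the dynamical connection being all of $G$; when $G$ is semisimple this group, being a closed subgroup with full Lie-algebra span generated by curvature and holonomy data, forces a genuine spectral gap for the averaging/rotation part of the dynamics in each nontrivial representation. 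Concretely, for every nontrivial $\lambda\in\widehat G$ the induced operator $\mathbf{Q}_\lambda$ on $E^\lambda$ has no invariant sections and, crucially, a gap that is \emph{quantitative} in $|\mathbf{k}|$: here is where semisimplicity enters decisively, because the Casimir/quadratic form on the Weyl chamber grows like $|\mathbf{k}|^2$ and there is no nontrivial one-dimensional (``torsion'') representation escaping the gap --- contrast the $\mathrm{U}(1)$ case where $k\mapsto L^{\otimes k}$ can be torsion.

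The technical heart, which I expect to be the main obstacle, is upgrading this representation-theoretic non-degeneracy into a polynomial (in fact, arbitrary-power) decay estimate uniform over all $\mathbf{k}$. The natural route is a normal-hyperbolicity / microlocal argument on the extension: one studies the Ruelle resonances of $(\psi_t)$ via the anisotropic-Sobolev / Faure--Sj\"ostrand framework applied fiberwise in $\mathbf{k}$, shows that on each block the leading resonance is strictly inside the unit disk, and controls the dependence of the spectral gap on the semiclassical parameter using the Borel-Weil calculus (escape function adapted to the partially hyperbolic splitting, plus a positive commutator estimate on the fiber-Toeplitz part). The not-jointly-integrable hypothesis is used to propagate the gap through the center direction: joint integrability would produce an invariant foliation transverse to the fibers along which the connection is flat, defeating any fixed rate. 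Assembling the per-block gaps, one needs the gap to degrade at worst polynomially as $|\mathbf{k}|\to\infty$; the Toeplitz structure on the flag fibers (fibers compact, so fiberwise discrete spectrum with controlled multiplicities $d_{\mathbf{k}}$ growing polynomially in $|\mathbf{k}|$) is what makes the sum over $\widehat G$ against smooth $f_1,f_2$ (whose Fourier coefficients decay faster than any polynomial in $|\mathbf{k}|$) converge to an $\mc O(t^{-N})$ bound for every $N$. I would finish by noting that the $G$-semisimple case of Theorem~\ref{theorem:main2} specializes exactly to this dichotomy, since there the curvature condition in that theorem is automatically satisfied once the transitivity group is all of $G$, i.e.\ once the extension is ergodic.
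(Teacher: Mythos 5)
Your overall plan is right: treat rapid mixing $\Rightarrow$ ergodicity as trivial and reduce the converse to the general criterion of Theorem~\ref{theorem:main2}. But your final step, which is the whole content of the deduction, misidentifies why the criterion applies. You claim the curvature condition of Theorem~\ref{theorem:main2}, Item~(i), is ``automatically satisfied once the transitivity group is all of $G$, i.e.\ once the extension is ergodic.'' That is not how the reduction works and the stated reason is wrong. The condition in Theorem~\ref{theorem:main2}, Item~(i), is that $(d\alpha, F_1,\dots,F_a)$ are linearly independent over $\R$, where $a=\dim\mathfrak z(\mathfrak g)$ and $F_1,\dots,F_a$ are the abelian components of the curvature of the dynamical connection. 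When $G$ is semisimple the center $\mathfrak z(\mathfrak g)$ is trivial, so $a=0$; the $F_i$'s simply do not exist and the curvature condition degenerates to nothing but $d\alpha\neq 0$, which is the given non-joint-integrability hypothesis of the theorem you are proving (via Lemma~\ref{lemma:joint-integrability}). Ergodicity of the extension plays no role here. It enters at a different point entirely: Theorem~\ref{theorem:main2} requires ergodicity of the quotient flow $\psi^F$ on $F=P/T$, which follows because $\psi^F$ is a factor of $\psi$. These two hypotheses of Theorem~\ref{theorem:main2} are independent, and your proposal conflates them.

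The long middle section of your proposal, which re-derives the machinery underlying Theorem~\ref{theorem:main2} (Peter--Weyl decomposition, Borel-Weil calculus, anisotropic spaces, quasi-mode contradiction, Diophantine property of semisimple groups), is a reasonable if hand-wavy sketch, but it is not needed if you are allowed to cite Theorem~\ref{theorem:main2}. Your correct observation that semisimple groups have no nontrivial one-dimensional representations (hence no ``torsion'' escape route as in the $\mathrm{U}(1)$ case) is the representation-theoretic shadow of the same fact $a=0$, and is indeed the reason Case~2 of the quasi-mode contradiction (Proposition~\ref{proposition:technical-reduction2}) only arises for $\mathbf k=0$ — but you should attribute the disappearance of the curvature condition to $\mathfrak z(\mathfrak g)=0$, not to ergodicity.
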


In particular, the previous two theorems show that the frame flow over a negatively curved manifold is rapid mixing whenever it is ergodic, see Corollary \ref{corollary}. We also point out that Dolgopyat \cite{Dolgopyat-02} had established a similar result as the second theorem in the related case of Anosov diffeomorphisms. While writing this monograph, we learnt that Pollicott-Zhang \cite{Pollicott-Zhang-24} had also established at the same time the second of the above theorems (for semisimple Lie groups $G$), using different techniques.

The proof of the previous two results will build on the calculus developed in Chapters \ref{chapter:analysis} and \ref{chapter:analytic}. By $G$-equivariance, the generator $X_P \in C^\infty(P,TP)$ of the flow $(\psi_t)_{t \in \R}$ induces an operator
\[
\mathbf{X}_{\mathbf{k}} : C^\infty_{\mathrm{hol}}(F,\mathbf{L}^{\otimes \mathbf{k}}) \to C^\infty_{\mathrm{hol}}(F,\mathbf{L}^{\otimes \mathbf{k}})
\]
such that the rescaled family of operators $\mathbf{Q} := (\mathbf{Q}_{\mathbf{k}})_{\mathbf{k} \in \widehat{G}} \in \Psi^1_{|\mathbf{k}|^{-1},\mathrm{BW}}(P)$ defined by $\mathbf{Q}_{\mathbf{k}} := |\mathbf{k}|^{-1}\mathbf{X}_{\mathbf{k}}$ (for $\mathbf{k} \neq 0$) belongs to the Borel-Weil calculus. Rapid mixing will then follow from a \emph{uniform} (in $\mathbf{k} \in \widehat{G}$) estimate on the resolvent $(\mathbf{Q}_{\mathbf{k}}-z)^{-1}$ for $\Re(z)=0$ and $\Im(z) \to \pm \infty$, in adequate Sobolev spaces (so-called \emph{anisotropic Sobolev spaces}).

\subsection{Spectral theory of horizontal Laplacians}

\label{ssection:hypoelliptic-laplacians}

In Chapter \ref{chapter:hypoelliptic}, the second application of the Borel-Weil calculus is concerned with the study of a class of subelliptic Laplacians obtained as horizontal Laplacians associated to a connection on a principal bundle. More precisely, let $P \to M$ be a $G$-principal bundle over the closed Riemannian manifold $(M,g)$, and further assume that $P$ is equipped with a $G$-equivariant connection $\nabla$. This provides a decomposition of the tangent space
\[
TP = \HH \oplus \V,
\]
where $\V$ is the tangent space to the fibres of $P$. Let $\HH^*$ be the annihilator of $\V$. The restriction of the usual exterior derivative to $\HH$ defines a horizontal exterior derivative
\[
	d_{\HH} : C^\infty(P) \to C^\infty(P,\HH^*), \qquad d_{\HH}f := df|_{\HH}.
\]
The horizontal Laplacian is then defined as
\[
\Delta_{\HH} := (d_{\HH})^* d_{\HH} : C^\infty(P) \to C^\infty(P),
\]
where $(d_{\HH})^*$ is the formal $L^2$-adjoint of $d_{\HH}$ (defined with respect to the natural $L^2$ structures). As explained in \S\ref{section:sc-analysis}, this operator induces a family of operators in the Borel-Weil calculus
 \begin{equation}
 \label{equation:deltak-intro}
 \Delta_{\mathbf{k}} : C^\infty_{\mathrm{hol}}(F,\mathbf{L}^{\otimes \mathbf{k}}) \to C^\infty_{\mathrm{hol}}(F,\mathbf{L}^{\otimes \mathbf{k}}).
 \end{equation}
 
 \begin{example*}
If $G = \mathrm{U}(1)$, then $F = M$ and one retrieves in \eqref{equation:deltak-intro} the magnetic Laplacian. More precisely, taking a local patch of coordinates $U \subset M$, the operator \eqref{equation:deltak-intro} can be written locally as $\Delta_k = (d+ik \alpha)^*(d+ik\alpha)$, where $\alpha \in C^\infty(U,T^*U)$ is a real-valued $1$-form (locally defined on $U$). 

If $G = \mathrm{SO}(3)$, then $F$ can be trivialized locally over $U$ as $F|_{U} \simeq U \times \C\mathbb{P}^1$. As explained in \S\ref{section:sc-analysis}, the irreducible representations of $\mathrm{SO}(3)$ are all given by $H^0(\C\mathbb{P}^1,L^{\otimes k})$ for $k \geq 0$, where $L \to \C\mathbb{P}^1$ is the tautological line bundle. Taking $k=1$ in \eqref{equation:deltak-intro}, and identifying locally $H^0(\C\mathbb{P}^1,L) \simeq \C^3$, the operator $\Delta_1$ can be written locally as
\[
\Delta_1 = (d+A)^*(d+A),
\]
where $A$ is a locally defined $1$-form $A \in C^\infty(U,T^*U \otimes \mathfrak{so}(3))$ with values in the Lie algebra $\mathfrak{so}(3)$ (that is $3 \times 3$ antisymmetric matrices), and $\Delta_1$ is restricted to $C^\infty_{\mathrm{comp}}(U,\C^3)$. Here, the action of $A$ should be understood as $A : C^\infty_{\mathrm{comp}}(U,\C^3) \to C^\infty(U,T^*U \otimes \C^3)$. More precisely, given a vector field $Z \in C^\infty_{\mathrm{comp}}(U,TU)$, one has that $A(Z) \in C^\infty(U,\mathfrak{so}(3))$ is a field of antisymmetric $3 \times 3$ matrices; given $u \in C^\infty_{\mathrm{comp}}(U,\C^3)$, $A(Z)u \in C^\infty(U,\C^3)$ is the field of matrices applied to $u$. For higher values of $k \geq 2$ in \eqref{equation:deltak-intro}, the action of $A$ is slightly less easy to describe geometrically and is given by symmetrization, i.e. the representation action of $\mathrm{SO}(3)$ (or $\mathrm{SU}(2)$) on the space $\mathrm{Sym}^k_0 (\mathbb{R}^3)$ (homogeneous harmonic polynomials of degree $k$ on $\mathbb{R}^3$).
\end{example*}

\subsubsection{Hypoellipticity}

Recall that the operator $\Delta_{\HH}$ is (globally) \emph{hypoelliptic} if the equation $\Delta_{\HH} u = f$ with $u \in \mc{D}'(P)$ a distribution and $f \in C^\infty(P)$ only admits smooth solutions $u \in C^\infty(P)$. As the operator $\Delta_{\HH}$ can be written (locally) as a ``sum of squares'' of vector fields, i.e. $\Delta_{\HH} = \sum_{i = 1}^n X_i^*X_i + \mathrm{l.o.t.}$ (modulo lower order terms), where $X_i$ are horizontal vector fields, one can apply Hörmander's bracket condition \cite{Hormander-67} to study (local) hypoellipticity: if for all $x \in M$, the iterated brackets 
\begin{equation}
\label{equation:brabra}
\left\{[X_{i_1},[X_{i_2}, \dotsc]] \mid i_1, \dotsc,i_k \in \{1, \dotsc, d\}, k \in \Z_{\geq 0}\right\}
\end{equation}
span the tangent space $T_xM$, then $\Delta_{\HH}$ is (locally) hypoelliptic. In particular, this translates geometrically as follows: if the curvature $F \in C^\infty(M,\Lambda^2 T^*M \otimes \mathrm{Ad}(P))$ is everywhere nondegenerate (where $\Ad(P) \to M$ is associated to $P$ via the adjoint representation, see \S \ref{sssection:associated-vector-bundles}), that is
\[
\mathrm{Span}\{F_x(X,Y) ~|~ X,Y \in T_xM\} = \mathrm{Ad}_x(P)
\]
for all $x \in M$, then the operator is locally hypoelliptic.

However, in certain problems, the bracket condition is not always easily verified or can even fail. Our aim is to give a natural condition on the connection $\nabla$ that ensures global hypoellipticity of $\Delta_{\HH}$. Recall that the holonomy group (based at $x_0 \in M$) of the connection $\nabla$ is defined as
\[
 \mathrm{Hol}(P,\nabla) = \{\tau_{\gamma} ~|~ \gamma \text{ loop based at } x_0\} \leqslant G,
 \]
where $G$ is identified with $P_{x_0}$, and $\tau_\gamma : P_{x_0} \to P_{x_0}$ is the parallel transport with respect to the connection along $\gamma$. We will establish a general result (for arbitrary $G$) later in Theorem \ref{theorem:hypoellipticity}, and for simplicity we will only state its consequence (this is very much analogous to the statement we prove for rapid mixing of isometric extensions, Theorem \ref{theorem:main2}). We will prove the following:

 \begin{theorem*}
Assume that $G$ is semisimple and that the holonomy group of the connection is dense in $G$. Then $\Delta_{\HH}$ is hypoelliptic.
 \end{theorem*}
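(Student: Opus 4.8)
The plan is to diagonalise $\Delta_{\HH}$ over the irreducible representations of $G$ via Peter--Weyl and reduce the statement to a single uniform spectral estimate. By $G$-equivariance, $\Delta_{\HH}$ preserves each $\lambda$-isotypic summand of $C^\infty(P)$, which by \S\ref{section:sc-analysis} is $C^\infty(M,E^\lambda)\simeq C^\infty_{\mathrm{hol}}(F,\Lk)$, and on it $\Delta_{\HH}$ acts as the Bochner Laplacian $\Delta_\lambda=(\nabla^\lambda)^*\nabla^\lambda$ of the connection induced on $E^\lambda$. If $\Delta_{\HH}u=f$ with $u\in\mathcal{D}'(P)$ and $f\in C^\infty(P)$, then $\Delta_\lambda u_\lambda=f_\lambda$ on $M$ for every $\lambda$. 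For nontrivial $\lambda$, $\Delta_\lambda$ is an elliptic nonnegative operator on the closed manifold $M$ whose coefficients are polynomial in $\langle\lambda\rangle$ (the curvature terms of $\nabla^\lambda$ have operator norm $\asymp\langle\lambda\rangle\asymp|\mathbf k|$), and its kernel consists of the parallel sections of $E^\lambda$, i.e. of the $\mathrm{Hol}(P,\nabla)$-invariant — hence, by density, $G$-invariant — vectors of $V_\lambda$, so it is $\{0\}$; thus $u_\lambda=\Delta_\lambda^{-1}f_\lambda$ is smooth. Since $u\in C^\infty(P)$ iff for each $s$ the norms $\|u_\lambda\|_{H^s(M,E^\lambda)}$ decay faster than any power of $\langle\lambda\rangle$, and since the elliptic estimate for $\Delta_\lambda$ propagates such decay up the Sobolev scale at the cost of factors polynomial in $\langle\lambda\rangle$, the theorem is equivalent to the uniform lower bound $\inf\spec\Delta_\lambda\geq c\langle\lambda\rangle^{-N}$ for some fixed $c>0$, $N\geq0$ and all nontrivial $\lambda\in\widehat G$.

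This is precisely the regime the Borel--Weil calculus is designed for: with $h=\langle\lambda\rangle^{-1}$ the rescaled operator $h^2\Delta_\lambda$ lies in $\Psi^{0}_{h,\mathrm{BW}}(P)$ and its principal symbol is the Toeplitz-operator-valued function on $T^*M$ whose fibrewise Toeplitz symbol on $F_x\simeq G/T$ is $q_0(x,\xi,[g])=\bigl|\xi+\langle\mu([g]),A(x)\rangle\bigr|^2\geq0$, with $A$ the local connection form and $\mu$ the moment map realising $G/T$ as a coadjoint orbit in $\mathfrak{g}^*$. Off the characteristic variety $\Sigma=\{q_0=0\}$ the operator is semiclassically elliptic and carries no small eigenvalues, so everything is localised near $\Sigma$; there, no lower bound can be harvested from Hörmander's iterated-bracket condition (the bracket invariants are the quantities $\langle\mu([g]),\nabla^k F\rangle$, which vanish identically for, say, flat bundles with dense monodromy), and the bound must instead come from the global density-of-holonomy hypothesis.

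For the uniform bound I would argue by contradiction: if it fails, there are $\lambda_j\to\infty$ and unit bottom eigensections $v_j$ with $\mu_j:=\|\nabla^{\lambda_j}v_j\|_{L^2}^2=\inf\spec\Delta_{\lambda_j}$ decaying faster than any power of $\langle\lambda_j\rangle$. Combining the elliptic a priori bound $\|v_j\|_{C^0}\leq\langle\lambda_j\rangle^{O(1)}\|v_j\|_{L^2}$ with an averaging of the elementary inequality $|v_j(y)|\leq|v_j(x)|+\int_\gamma|\nabla^{\lambda_j}v_j|$ over thin tubes of loops $\gamma$ based at a suitably chosen $x_0$, one finds that $|v_j|$ is almost constant and that the unit vector $w_j:=v_j(x_0)/|v_j(x_0)|\in V_{\lambda_j}$ is moved by every holonomy $\lambda_j(\tau_\gamma)$ with $\gamma$ of bounded length by at most $\langle\lambda_j\rangle^{O(1)}\mu_j^{\kappa}$ for some fixed $\kappa>0$, i.e. super-polynomially little. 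On the other hand $\lambda_j$ is a nontrivial irreducible representation of a compact group, so $\int_G\|\lambda_j(g)w_j-w_j\|^2\,dg=2$ and some $g\in G$ moves $w_j$ by at least $\sqrt2$. To reach a contradiction one must transfer the smallness along bounded-length holonomies to smallness along all of $G$ in a way that beats the modulus of continuity $\asymp\langle\lambda_j\rangle$ of $\lambda_j$; this amounts to a \emph{quantitative} density of $\mathrm{Hol}(P,\nabla)$ in $G$ — equivalently a uniform (over all nontrivial $\lambda$) spectral gap for a fixed finite set of holonomies — and it is exactly here that semisimplicity of $G$ is used, through the structure theory of compact semisimple groups and spectral-gap estimates. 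Dually, this is the reason there is no \emph{resonant} obstruction: a coadjoint orbit of a semisimple $G$ has barycentre $0$ and linearly spans $\mathfrak{g}^*$, whereas for $G=\mathrm{U}(1)$ the analogous central resonance is exactly what a Liouville — that is, non-torsion — holonomy exploits to make the statement false.

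The main obstacle is this last transfer. The Peter--Weyl splitting, the identification $\Delta_{\HH}|_{E^\lambda}=(\nabla^\lambda)^*\nabla^\lambda$, ellipticity on $M$, the soft vanishing of the kernel, the polynomial bookkeeping of constants and the elliptic bootstrap are all routine; the heart of the proof — and the only place where the semisimplicity of $G$ genuinely enters — is the uniform-in-$\lambda$ quantitative-density / spectral-gap statement converting \emph{dense holonomy in a semisimple group} into the polynomial lower bound $\inf\spec\Delta_\lambda\geq c\langle\lambda\rangle^{-N}$; this is what the general Theorem~\ref{theorem:hypoellipticity} and the contrast with the $\mathrm{U}(1)$ case are built around.
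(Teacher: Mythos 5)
Your proposal follows the paper's own route: reduce via Peter--Weyl to a uniform polynomial lower bound $\lambda_1(\Delta_\lambda)\gtrsim\langle\lambda\rangle^{-N}$ on each isotypic summand (the paper's Theorem~\ref{theorem:lambda1}~(ii) plus the bootstrap Lemma~\ref{lemma:key-lol}), and prove that bound by contradiction from almost-constant bottom eigensections together with the quantitative (Diophantine) density of the holonomy group in the semisimple $G$. You have correctly located the crux --- the transfer from smallness along bounded-length holonomies to smallness along all of $G$, uniformly in $\lambda$, beating the modulus of continuity $\asymp\langle\lambda\rangle$ of $\lambda_j$ --- and you correctly identify this as the one place semisimplicity is essential.

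There is one genuine variation. Once the almost-constancy of $|v_j|$ and near-invariance of $w_j=v_j(x_0)/|v_j(x_0)|$ are in hand, you derive the contradiction from the representation-theoretic identity $\int_G\|\lambda_j(g)w_j-w_j\|^2\,dg=2$, which forces some $g\in G$ to move $w_j$ by $\geq\sqrt{2}$. The paper instead pushes the eigensection to the flag manifold and uses that the Borel--Weil line bundle $\mathbf{J}^{\otimes\mathbf{k}}\to G/T$ is topologically nontrivial for every nonzero $\mathbf{k}$ in the semisimple case (Proposition~\ref{prop:line-bundle-topology}), so the section must vanish somewhere. Your version is arguably the more elementary finish and avoids the flag bundle, at the cost of carrying a vector bundle $E^\lambda$ of growing rank through the $C^0$ estimates; the two are interchangeable here.

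Two side remarks. You leave the quantitative-density lemma as a black box and speculate about spectral-gap estimates; the paper proves it elementarily by an iterated Baker--Campbell--Hausdorff argument (Lemma~\ref{lemma:diophantine-semisimple}), with no spectral-gap input, so your intuition about \emph{where} semisimplicity is used is right even if the mechanism you guess is not quite the one employed. Also, your parenthetical ``Liouville --- that is, non-torsion'' conflates two independent notions: the paper's $\mathrm{U}(1)$ counterexample in \S\ref{ssection:not-hypo} is a \emph{flat, torsion} (indeed trivial) circle bundle over $\mathbb{S}^1$ whose holonomy rotation number $\alpha$ is Liouville; the Diophantine type of $\alpha$, not the Chern class, is what fails there.
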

 
We shall see that this statement fails if $G$ is not semisimple -- see \S\ref{ssection:not-hypo} for a counterexample with $G=\mathrm{U}(1)$. A remarkable case where the condition of the theorem holds is that of a flat $G$-bundle over $M$ with dense monodromy. Recall that a flat $G$-bundle is equivalent to the data of a representation $\rho : \pi_1(M) \to G$ (see Example \ref{example:flat-connection} for further details); its holonomy (or monodromy) group is then given by $\rho(\pi_1(M))$. The theorem therefore applies if $G$ is semisimple and $\rho(\pi_1(M)) \leqslant G$ is dense and shows that $\Delta_{\HH}$ is hypoelliptic despite all the Lie brackets \eqref{equation:brabra} being horizontal everywhere. We will also establish a lower bound on the first eigenvalue of the induced Laplace operator in the Borel-Weil calculus
 \[
 \Delta_{\mathbf{k}} : C^\infty_{\mathrm{hol}}(F,\mathbf{L}^{\otimes \mathbf{k}}) \to C^\infty_{\mathrm{hol}}(F,\mathbf{L}^{\otimes \mathbf{k}}),
\]
 under a non-degenerate condition on the curvature of the connection.
 
 \subsubsection{Quantum ergodicity} \label{sssection:qe}
 
Let $(M,g)$ be a closed Riemannian manifold and denote by $0=\lambda_0 < \lambda_1 \leq \dotsb$ the eigenvalues of $\Delta_g$ (Laplacian on functions) counted with multiplicity, and $u_j \in C^\infty(M)$ the associated $L^2$-normalized eigenfunctions. Quantum ergodicity consists in understanding high-frequency limits of
\[
\Delta_g u_j = \lambda_j^2 u_j, \quad \lambda_j \to \infty.
\]
More precisely, it can be shown that, up to extraction in $j$, there exists a measure $\mu$ supported on the cosphere bundle $S^*M$ such that for all $a \in C^\infty_{\mathrm{comp}}(T^*M)$,
\begin{equation}
\label{equation:op-lim}
\lim_{j \to \infty} \langle\Op_{h_j}(a)u_j,u_j\rangle_{L^2(M)} = \int_{S^*M} a(x,\xi) ~\dd\mu(x,\xi),
\end{equation}
where $h_j := \lambda_j^{-1}$ and $\Op_{h}(\bullet)$ denotes a semiclassical quantisation propcedure on $M$. The measure $\mu$ is called a \emph{semiclassical defect measure}; it is invariant by the geodesic flow.

A notably difficult question is to understand the possible quantum limits $\mu$ of eigenfunctions. On the round sphere, it is known that \emph{any} measure invariant by the geodesic flow can be obtained as a quantum limit. On the other hand, in negative curvature, it is conjectured that $\mu = \mu_{\mathrm{Liouville}}$, that is the Liouville measure should be the only quantum limit. This is known as the Quantum Unique Ergodicity (QUE) conjecture \cite{Rudnick-Sarnak-94}. It is only known on arithmetic hyperbolic surfaces \cite{Lindenstrauss-06}; further limitations on $\mu$ were also established, see \cite{Anantharaman-08, Anantharaman-Nonnenmacher-07,Riviere-10, Dyatlov-Jin-18,Dyatlov-Jin-Nonnenmacher-22,Bourgain-Dyatlov-18,Dyatlov-22}. However, when the geodesic flow is ergodic on $S^*M$ (with respect to the Liouville measure), QUE is known along a density $1$ subsequence of eigenfunctions, see \cite{Shnirelman-74-1,Shnirelman-74-2,Colindeverdiere-85,Zelditch-87}.

In this monograph, we will prove a quantum ergodicity statement on a density $1$ subset of eigenstates for the operators $\Delta_{\mathbf{k}}$ defined in \eqref{equation:deltak-intro} under certain assumptions on the curvature of the connection $\nabla$ on $P$. Define
 \[
\Omega = \{(\mathbf{k},\lambda) \in \widehat{G} \times [0,\infty) ~|~\exists u_{\mathbf{k},\lambda} \in C^\infty_{\mathrm{hol}}(F,\mathbf{L}^{\otimes \mathbf{k}}),\, u_{\mathbf{k},\lambda} \neq 0,\, \Delta_{\mathbf{k}} u_{\mathbf{k},\lambda} = \lambda^2 u_{\mathbf{k},\lambda}\},
 \]
where we recall that $\Delta_{\mathbf{k}}$ is the operator on $C^\infty(F, \Lk)$ naturally associated to the horizontal Laplacian $\Delta_{\HH}$ of a connection on the $G$-principal bundle $P \to M$. The eigenstate corresponding to $(\mathbf{k},\lambda) \in \Omega$ is denoted by $u_{\mathbf{k},\lambda}$.

We note that a connection on $P$ and a choice of a bi-invariant Riemannian metric on $G$ induce a natural Riemannian metric on $P$ making the projection $\pi: P \to M$ a Riemannian submersion. 
We shall prove the following:
 
 \begin{theorem*}
 Let $(M,g)$ be a Riemannian manifold with Anosov geodesic flow, and let $P \to M$ be a flat $G$-principal bundle with dense holonomy group. Then there exists a density $1$ subset $\Lambda \subset \Omega$ such that for all sequences $(\mathbf{k}_j,\lambda_j)_{j \geq 0} \in \Lambda^{\Z_{\geq 0}}$, one has: for all $a \in C^\infty(F)$,
 \[
\langle a u_{\mathbf{k}_j,\lambda_j},u_{\mathbf{k}_j,\lambda_j}\rangle_{L^2(F,\mathbf{L}^{\otimes \mathbf{k}_j})} \to_{j \to \infty} \dfrac{1}{\vol(F)} \int_F a(w)\, \dd w,
 \]
 where $\dd w$ stands for the Riemannian measure on $F$.
 \end{theorem*}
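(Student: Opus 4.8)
The plan is to run a quantum ergodicity argument (in the spirit of Shnirelman–Zelditch–Colin de Verdière) in the Borel-Weil calculus, using that $\Delta_{\HH}$ is hypoelliptic in this flat, dense-holonomy setting (by the hypoellipticity theorem stated above, since a flat bundle with dense monodromy has dense holonomy) so that the operators $\Delta_{\mathbf{k}}$ have a discrete spectrum with good Weyl asymptotics. First I would set up the semiclassical picture: for $(\mathbf{k},\lambda)\in\Omega$ the natural semiclassical parameter is $h := (|\mathbf{k}|^2+\lambda^2)^{-1/2}$, and the rescaled operator $h^2\Delta_{\mathbf{k}}$ is (by the quantization Theorems \ref{theorem:penible} and \ref{theorem:quantization-bw}) a semiclassical pseudodifferential operator on $M$ with values in Toeplitz operators on the fibres of $F$. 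Because $P\to M$ is flat, the horizontal distribution is integrable and the curvature vanishes, so the principal symbol of $h^2\Delta_{\mathbf{k}}$ is, modulo the Toeplitz structure on the fibre, simply $|\xi|_g^2/(|\xi|_g^2+1)$ pushed forward to $F$ — i.e. it is essentially the geodesic Hamiltonian lifted from $M$, independent of the fibre variable on the flag manifold. This is the key structural point: on a flat bundle the Borel-Weil symbol of $\Delta_{\mathbf{k}}$ reduces to a function on $T^*M$, so the associated classical flow is (the lift of) the geodesic flow on $S^*M$, which is Anosov hence ergodic with respect to Liouville measure.

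The second step is the Weyl law: I would establish that $N(\mathbf{k},\lambda) := \#\{(\mathbf{k},\mu)\in\Omega : \mu\le\lambda\}$ grows like a constant times $\lambda^{\dim M}\,d_{\mathbf{k}}$ (the extra factor $d_{\mathbf{k}}=\dim V_\lambda$ from the fibre), which combined with the Weyl law for the plain Laplacian $\Delta_g$ on $M$ gives the correct joint density needed to make sense of a ``density $1$ subset $\Lambda\subset\Omega$.'' Here hypoellipticity is what guarantees that $\Delta_{\HH}$ has compact resolvent and that its spectrum, organized by $\mathbf{k}$, exhausts $\Omega$ with this asymptotic. The third step is the Egorov theorem and the variance estimate. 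Writing $\langle a u_{\mathbf{k}_j,\lambda_j}, u_{\mathbf{k}_j,\lambda_j}\rangle$ as $\langle \Op_{h_j}^{\mathrm{BW}}(a) u, u\rangle$ for a Borel-Weil quantization of $a\in C^\infty(F)$, I would first reduce to the fibrewise-average of $a$ (the Toeplitz structure forces the diagonal matrix element to see only the average of $a$ over each flag fibre, which is exactly $\mathrm{vol}(F)^{-1}\int_F a$ once one also averages over $M$); the fibrewise-zero-mean part contributes $0$ in the limit because $\Delta_{\mathbf{k}}$ commutes with the $G$-action fibrewise and the eigenstates are (fibrewise holomorphic) highest-weight sections. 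Then for $a$ a function pulled back from $M$ with $\int_M a\,\dd\mathrm{vol}=0$, I run the standard argument: by self-adjointness of $\Delta_{\mathbf{k}}$ and an Egorov theorem transporting $\Op^{\mathrm{BW}}_{h}(a)$ along the flow generated by the symbol (i.e. the geodesic flow lifted to $F$), the Birkhoff average $\langle a\rangle_T := \frac1T\int_0^T a\circ\varphi_s\,\dd s$ satisfies $\langle\Op^{\mathrm{BW}}_{h_j}(a)u_j,u_j\rangle = \langle\Op^{\mathrm{BW}}_{h_j}(\langle a\rangle_T)u_j,u_j\rangle + O_T(h_j)$, and then one bounds the variance $\sum_{(\mathbf{k}_j,\lambda_j), \lambda_j\le\lambda} |\langle\Op^{\mathrm{BW}}_{h_j}(\langle a\rangle_T)u_j,u_j\rangle|^2$ using the local Weyl law and the $L^2$-norm control $\|\langle a\rangle_T\|_{L^2(S^*M)}\to 0$ as $T\to\infty$ coming from the ergodicity (unique ergodicity of Liouville would not be needed, just mean ergodicity). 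A diagonal/Chebyshev argument in $T$ then extracts the density $1$ subset $\Lambda$.

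The main obstacle, as usual in QE proofs in a new calculus, is the Egorov theorem and the composition/trace properties in the Borel-Weil calculus uniformly in $\mathbf{k}$: one must check that $\Op^{\mathrm{BW}}_h$ composes to leading order by the product of symbols (with a remainder that is $O(h)$ uniformly in the multi-parameter $\mathbf{k}$, not just for $\mathbf{k}$ in a compact set), that the flow generated by a scalar (fibre-independent) symbol is well approximated at the operator level, and that a local Weyl law of the form $\sum_{\lambda_j\le\lambda}\langle\Op^{\mathrm{BW}}_{h_j}(b)u_j,u_j\rangle \sim c\,\lambda^{\dim M}\,d_{\mathbf{k}}\int b$ holds — all of which should follow from Theorems \ref{theorem:penible} and \ref{theorem:quantization-bw} together with the hypoellipticity, but require care because $h=(|\mathbf{k}|^2+\lambda^2)^{-1/2}$ mixes the two scales and because one is simultaneously letting $|\mathbf{k}|\to\infty$ and $\lambda\to\infty$ along $\Lambda$. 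A secondary subtlety is justifying that the fibrewise-nonconstant part of $a$ genuinely drops out; I would handle this by decomposing $a$ over the fibre into isotypic components for the $G$-action and observing that an off-diagonal isotypic component maps a highest-weight holomorphic section to a section in a different isotype, so its diagonal matrix element against $u_{\mathbf{k}_j,\lambda_j}$ vanishes identically — reducing everything to $a$ effectively a function on $M$.
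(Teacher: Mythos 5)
There is a genuine gap at the heart of your argument: you claim that ``on a flat bundle the Borel-Weil symbol of $\Delta_{\mathbf{k}}$ reduces to a function on $T^*M$, so the associated classical flow is (the lift of) the geodesic flow on $S^*M$, which is Anosov hence ergodic.'' This conflates the geodesic flow on $S^*M$ with the flow that actually governs the problem. The symbol $|\xi_{\HH^*}|^2$ of $h^2\Delta_{\mathbf{k}}$ lives on $\HH^* \subset T^*F$, and the Hamiltonian flow is a flow on the energy layers $S_E\HH^*$, which are bundles over $S^*_EM$ with fibre $G/T$: concretely, it is the geodesic flow coupled with parallel transport in the flag fibre. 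Ergodicity of that \emph{extension} is not a consequence of the geodesic flow being Anosov; it is precisely where the dense-holonomy hypothesis enters, via a transitivity-group argument (the transitivity group of the extended flow contains the holonomy group, hence is dense, hence equal to $G$). In your proposal dense holonomy is invoked only to get hypoellipticity of $\Delta_{\HH}$, which is actually not needed for the Weyl law (each $\Delta_{\mathbf{k}}$ already has discrete spectrum for fixed $\mathbf{k}$, and the Weyl asymptotics come from the trace formula in the Borel--Weil calculus; moreover, for a flat bundle with $G$ non-semisimple the curvature condition for hypoellipticity fails anyway). So the key use of the hypothesis is missing from your argument.

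Your treatment of the fibre-nonconstant part of $a$ is also incorrect, and this is not independent of the first gap. You claim that once $a$ is decomposed into fibrewise $G$-isotypic components, the nontrivial ones pair to zero against $u_{\mathbf{k}_j,\lambda_j}$ identically. But the diagonal matrix element is $\int_F a\,|u_{\mathbf{k}_j,\lambda_j}|^2\,\dd w$, and $|u_{\mathbf{k}_j,\lambda_j}(x,\cdot)|^2$, as a function on $G/T$, is not constant (it is a Husimi-type density arising from the pairing on $H^0(G/T,\mathbf{J}^{\otimes\mathbf{k}})\otimes \overline{H^0(G/T,\mathbf{J}^{\otimes\mathbf{k}})}$ and in general contains many isotypes), nor is there a global left $G$-action on $F=P/T$ that would give representation-theoretic selection rules, and the eigenstates are not fibrewise highest-weight vectors. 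The fibre-nonconstant part of $a$ does not vanish term-by-term; it averages to zero only in the limit $j\to\infty$, by the ergodicity of the extended flow on $S_E\HH^*$ — the same ergodicity you skipped. Once these two points are corrected, the remaining architecture you propose (Egorov in the Borel--Weil calculus, a local Weyl law summed over $\mathbf{k}$, variance estimate via Birkhoff averages, diagonal/Chebyshev extraction of $\Lambda$) is indeed correct and matches the actual proof, though with the single semiclassical parameter $h=R^{-1}$ rather than $(|\mathbf{k}|^2+\lambda^2)^{-1/2}$ (which matters for uniformity in the local Weyl law), and with symbol $|\xi_{\HH^*}|^2$, not $|\xi|^2_g/(1+|\xi|^2_g)$.
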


 Here, $\Lambda \subset \Omega$ is of density $1$ if
 \[
 \dfrac{\sharp (\Lambda \cap B(0,R))}{\sharp(\Omega \cap B(0,R))} \to_{R \to \infty} 1,
 \]
 where $B(0, R)$ is a ball of radius $R$ of a natural metric coming from an identification $\Omega \subset \mathbb{R}^{d+1}$ (see Theorem \ref{theorem:quantum-ergodicity} below). We note that the just stated theorem actually holds under a more general assumption of ergodicity of a certain Hamiltonian flow (replacing the Anosov geodesic flow and dense holonomy group assumptions), but we decided to state the theorem this way for simplicity. We actually establish a more general result holding for \emph{nearly flat} connections but with some further limitations on the possible sequences $(\mathbf{k}_j,\lambda_j)$, see Theorem \ref{theorem:quantum-ergodicity2}. While writing this monograph, we discovered that a similar result had been recently established independently in \cite{Ma-Ma-23}, and also in the non-flat setting in \cite{Ben-Ovadia-Ma-Rogdriguez-Hertz-24}. Further perspectives are discussed below in \S\ref{ssection:weyl}. \\

 We conclude this section with an important remark:

\begin{remark*}[Diophantine property of semisimple Lie groups]
A common feature of the theorems stated in the introduction involving compact semisimple Lie groups is that they ultimately rely on a certain Diophantine property verified by semisimple groups and violated by Abelian groups. More precisely, in a semisimple compact Lie group, a dense set is automatically \emph{quantitatively dense}. For instance, if $a,b \in \mathrm{SU}(2)$ generate a dense free group in $\mathrm{SU}(2)$, it can be shown that the set of $ab$-words of word-length $\leq n$ is $\mc{O}(1/n^{1-\eps})$ dense in $\mathrm{SU}(2)$ as $n \to \infty$, for any $\eps > 0$, see \S\ref{ssection:diophantine}. Such a property does not hold on the torus.
\end{remark*}

\section{Potential applications}

Finally, we conclude with a discussion of some future problems to which the Borel-Weil calculus could potentially be applied.

\subsection{Tensor tomography. Geometric inverse problems}

Let $(M^n, g)$ be a closed Riemannian manifold of dimension $n \geq 2$ and further assume that its geodesic flow on its unit tangent bundle $SM$ is Anosov. In this case, it is well-known that the set of free homotopy classes $\mc{C}$ on $M$ is in $1$-to-$1$ correspondence with closed geodesics on $M$. Given $c \in \mc{C}$, we let $\ell_g(c)$ denotes the length of this unique closed geodesic; below, $\gamma : [0,\ell_g(c)] \to M$ is an arc-length parametrization of this closed geodesic. We will denote by $\mathrm{Sym}^kT^*M \to M$ the vector bundle of symmetric tensors of order $k$, and by $\mathrm{Sym}_0^kT^*M$ its sub-bundle of tensors with zero trace.

A classical question in geometric inverse problem and integral geometry is to show that the \emph{X-ray transform} $I_k : C^\infty(M, \mathrm{Sym}^k T^*M) \to \ell^\infty(\mc{C})$, defined by
\begin{equation}
\label{equation:ik}
 I_kf(c) := \dfrac{1}{\ell_g(c)} \int_0^{\ell_g(c)} f_{\gamma(t)}(\dot{\gamma}(t),\dotsc ,\dot{\gamma}(t))\, \dd t,
\end{equation}
is \emph{solenoidal injective} on symmetric $k$-tensors, that is injective in restriction to divergence-free symmetric tensors. This is known for Anosov surfaces \cite{Guillemin-Kazhdan-80,Paternain-Salo-Uhlmann-14-1,Guillarmou-17-1}, for Anosov manifolds in higher dimensions for $k=0,1$ \cite{Dairbekov-Sharafutdinov-03}, for negatively curved metrics \cite{Croke-Sharafutdinov-98} in any dimension, and generically \cite{Cekic-Lefeuvre-21-2}. Solenoidal injectivity has various consequences. For instance, for $k=2$, this implies the local rigidity of the marked length spectrum in a neighborhood of $g$ \cite{Guillarmou-Lefeuvre-18}; using the Anosov embedding theorem \cite{Chen-Erchenko-Gogolev-23} and \cite{Erchenko-Lefeuvre-24}, this also proves tensor tomography for simple manifolds.

Denote by $\Omega_k$ the space of smooth functions on the unit tangent bundle $\pi : SM \to M$ that are fiberwise spherical harmonics of degree $k$. It is a standard fact
\[
\pi_k^* : C^\infty(M, \mathrm{Sym}^k_0 T^*M) \to \Omega_k, \qquad \pi_k^*f(v) := f_{\pi(v)}(v, \dotsc,v)
\]
is an isomorphism. By the classical Abelian Liv\v sic theorem, injectivity of \eqref{equation:ik} is equivalent to the following transport problem, known as \emph{tensor tomography}. Let $u \in C^\infty(SM)$ be a function such that $X u = f \in \Omega_k \oplus \Omega_{k-2} \oplus \dotsb$ (with the convention that $\Omega_j = \{0\}$ for $j < 0$). Then, show that $u \in \Omega_{k-1} \oplus \dotsb$.

The \emph{geodesic vector field} $X : C^\infty(SM) \to C^\infty(SM)$ acts as $X : \Omega_k \to \Omega_{k-1} \oplus \Omega_{k+1}$ and therefore splits as a sum $X = X^- + X^+$, where $X^\pm : \Omega_k \to \Omega_{k \pm 1}$. The solutions to $X^+ f_k= 0$ with $f_k \in \Omega_k$ and $k \neq 0$ are called \emph{Conformal Killing Tensors} (CKTs).

Solving the transport problem $X u = f$ is usually done in two steps:
\begin{enumerate}[label=(\roman*)]
\item Show that $u$ has \emph{finite Fourier degree}, that is $u \in \Omega_N \oplus \Omega_{N-2} \oplus \dotsb$ for some finite $N \geq 0$.
\item Show that $(M,g)$ has no Conformal Killing Tensors.
\end{enumerate}
The combination of (i-ii) easily implies solenoidal injectivity of \eqref{equation:ik}. It turns out that the operators $X^\pm$ actually \emph{belong} to the Borel-Weil calculus. We believe that this should help understand why (i) or (ii) should hold in the Anosov case. In particular, studying high-frequency limits (as $k \to \infty$) of CKTs $f_k \in \Omega_k$ (i.e. $X^+f_k = 0$) using the tools of semiclassical analysis might help understand (ii).

Indeed, let us quickly explain why these operators fit into the calculus. Denote by $FM \to M$ the frame bundle. Let $F := FM/T$ be the flag bundle where $T \leqslant \mathrm{SO}(n)$ is a maximal torus. Note that for $n=3$, $F = SM$ is the unit tangent bundle over $M$. It can be shown that there exists a fiberwise holomorphic line bundle $L \to F$ such that there exists a natural isomorphism
\begin{equation}
\label{equation:theta-k}
i_k : \Omega_k \to C^\infty_{\mathrm{hol}}(F,L^{\otimes k}),
\end{equation}
see \cite[Exercise 7.2, page 154]{Sepanski-07} (for $n=3$, $L$ is the tautological line bundle over $\mathbb{C}P^1$). We set for $k \neq 0$
\[
\mathbf{A}^\pm_k := k^{-1} i_{k\pm 1} \circ X^\pm \circ i_k^{-1}.
\]
It can then be shown that
\[
\mathbf{A}^\pm := (\mathbf{A}^\pm_k)_{k \geq 1} \in \Psi^1_{k^{-1},\mathrm{BW}}(FM)
\]
belongs to the Borel-Weil calculus.

\begin{question*}
Can one prove the absence of CKTs on Anosov Riemannian manifolds $(M,g)$ using high-frequency estimates on $\mathbf{A}^\pm$? Can one prove finite Fourier degree for solutions of the transport equation $X u = f \in \Omega_k \oplus \Omega_{k-2} \oplus \dotsb$ using the Borel-Weil calculus?
\end{question*}

We expect this new point of view on tensor tomography to shed new light on this problem.

\subsection{Wave decay for sub-elliptic Laplacians}

Let $(M,g)$ be a non-compact complete Riemannian manifold and denote by $\Delta_g$ the (non-negative) metric Laplacian. Let $U \Subset M$ be a compact subset. A standard question in scattering theory consists in proving that, under some mild geometric assumptions on $(M,g)$ (e.g. hyperbolic trapped set), the solutions to the wave equation
\begin{equation}
\label{equation:wave-equation}
(\partial_t^2 + \Delta_g) u = 0, \qquad u|_{t=0}\in H^1_{\comp}(U), \partial_tu|_{t=0} \in L^2(U)
\end{equation}
admit exponential decay in any (other) fixed compact subset $U' \Subset M$, namely
\begin{equation}
\label{equation:decay-wave}
\|u(t)\|^2_{L^2(U')} \leq C (\|u|_{t=0}\|^2_{H^1(U)} + \|\partial_t u|_{t=0}\|^2_{L^2(U)}) e^{-\nu t},
\end{equation}
for some $C, \nu > 0$.

Standard geometric assumptions on $(M,g)$ usually involve supposing that the trapped set, that is the set of geodesics staying in a bounded region of $M$ for all times, admits a hyperbolic structure (Axiom A flow). Typical examples are provided by asymptotically hyperbolic manifolds of negative sectional curvature. We refer to \cite{Dyatlov-Zworski-19} for further details.

The proof of \eqref{equation:decay-wave} usually amounts to proving that $(\Delta_g-P(\lambda))^{-1}$ (where $P : \C \to \C$ is a polynomial of degree $2$) admits a meromorphic extension from $\Im(\lambda) \gg 0$ to $\C$ on adequate weighted $L^2$-spaces, and establishing the existence of a \emph{resonance free strip} $\{-\delta \leq \Im(\lambda)\leq 0\}$, that is a band below the real axis without resonances.

A related question is to understand the analogue of \eqref{equation:wave-equation} when $\Delta_g$ is replaced by a subelliptic operator such as a horizontal Laplacian on a principal bundle, as in \S\ref{ssection:hypoelliptic-laplacians}. Physically, subellipticity corresponds to wave propagation in an anisotropic medium structured in layers, where propagation only occurs parallel to the layers.

More precisely, we let $FM \to M$ be the frame bundle, $\Delta_{\HH}$ be the horizontal Laplacian on $FM$ provided by the Levi-Civita connection. Consider the subelliptic wave equation
\begin{equation}
\label{equation:wave-equation2}
(\partial_t^2 + \Delta_{\HH}) u = 0, \qquad u|_{t=0}\in H^1_{\mathrm{comp}}(U), \partial_tu|_{t=0} \in L^2(U),
\end{equation}
where $U \Subset FM$ is a compact subset.

\begin{question*}
Assume that $(M,g)$ is asymptotically hyperbolic with negative sectional curvature. Can one prove exponential decay for solutions to \eqref{equation:wave-equation2}, namely
\[
\|u(t)\|^2_{L^2(U')} \leq C (\|u|_{t=0}\|^2_{H^1(U)} + \|\partial_t u|_{t=0}\|^2_{L^2(U)}) e^{-\nu t},
\]
for all compact sets $U' \Subset FM$?
\end{question*}

This would amount to proving meromorphic extension of $(\Delta_{\mathbf{k}}-P(\lambda))^{-1}$ (where $\Delta_{\mathbf{k}}$ is the horizontal Laplacian induced on $\Lk$) on weighted $L^2_{\mathrm{hol}}(F,\mathbf{L}^{\otimes \mathbf{k}})$ spaces from $\Im(\lambda) \gg 0$ to $\C$, and then that there exists a resonance free strip $\{-\delta \leq \Im(\lambda) \leq 0\}$ \emph{uniform} in $\mathbf{k} \in \widehat{G}$. The Borel-Weil calculus seems to be the right tool to study this high-frequency problem involving simultaneous semiclassical parameters $h := 1/\Re(\lambda)$ and $1/|\mathbf{k}|$ with $\mathbf{k} \in \widehat{G}$.

\subsection{Quantum Ergodicity}

\label{ssection:weyl}

The quantum ergodicity problem explained in \S\ref{sssection:qe} consists in understanding the high-frequency behaviour in phase space of the solutions to the equation
\begin{equation}
\label{equation:qe-limits}
\Delta_{\mathbf{k}} u_{\mathbf{k},\lambda}  = \lambda^2 u_{\mathbf{k},\lambda},
\end{equation}
where $\mathbf{k} \in \widehat{G}$, $\lambda \geq 0$, and $u_{\mathbf{k},\lambda} \in C^\infty_{\mathrm{hol}}(F,\mathbf{L}^{\otimes \mathbf{k}})$, with $F=P/T$, the flag bundle. The result stated in \S\ref{sssection:qe} is restrictive in two ways. First, it only deals with flat (or nearly flat) bundles, and the extension to the non-flat setting would be interesting to consider.

\begin{question*}
Can one extend the results of \S\ref{sssection:qe} beyond the flat or the nearly flat case?
\end{question*}

The main issue in this case lies in the underlying dynamics associated to high-frequencies. If $G = \mathrm{U}(1)$ (magnetic case), the Hamiltonian flow associated to high-frequency limits of \eqref{equation:qe-limits} is the magnetic flow on the cosphere bundle $S^*M$. In the absence of a magnetic field (flat bundle), this boils down to the geodesic flow; on the other hand, if the magnetic field is high enough, it is well-known that the magnetic flow may fail to be ergodic. For instance, when $M := \Sigma$ is a hyperbolic surface, and the magnetic field is proportional to the volume form of $\Sigma$ (by a constant), there is a threshold for the magnetic field above which the magnetic flow is conjugate to the rotation flow in the circle fibers of $S^*\Sigma$, see \cite{Charles-Lefeuvre-24} for further details. This is the reason why we have to limit ourselves to nearly flat connections in the general statement of quantum ergodicity (see Theorem \ref{theorem:quantum-ergodicity2}). Understanding the ergodic properties of the Hamiltonian flows appearing in this construction seems to be a challenging question. In addition, for a general Lie group $G$, another issue arises in the fact that the underlying Hamiltonian flow is less easy to describe geometrically than in the $\mathrm{U}(1)$ case.

Secondly, the results of \S\ref{sssection:qe} hold for functions $a \in C^\infty(F)$ whereas the natural phase space for this problem is $\HH^* \subset T^*F$, the horizontal bundle of the flag bundle $F \to M$. We point out that the parameters $\lambda$ and $\mathbf{k}$ play the role of two simultaneous (and somehow competing) semiclassical parameters. It should be possible to assign to \eqref{equation:qe-limits} a natural defect measure on $\HH^*$, similarly to \eqref{equation:op-lim}, but this is not yet clear. (For instance, what should be the value of the semiclassical parameter $h$, $h=1/\lambda$ or $h=1/|\mathbf{k}|$, or $h=(|\mathbf{k}|^2+\lambda^2)^{-1/2}$?)

\begin{question*}
Can one establish a quantum ergodicity statement similar to \S\ref{sssection:qe} for observables $a \in C^\infty_{\mathrm{comp}}(\HH^*)$?
\end{question*}

These two questions are left for future investigation. 

\subsection{Other perspectives}

The theory presented here deals with compact Lie groups; it is very likely that it could be generalized to non-compact Lie groups. However, the classification of irreducible representations then becomes more subtle. The Borel-Weil calculus is an intriguing combination of the standard semiclassical calculus with Toeplitz operators; we believe that new phenomena at the crossroads of these two quantizations could be discovered. \\

\noindent \textbf{Acknowledgement:} We thank Laurent Charles for several fruitful discussions. In particular, we are grateful for his explanation of the Borel-Weil correspondence which is used crucially in this work. We also thank Semyon Dyatlov, Yannick Guedes Bonthonneau, and Gabriel Paternain for their feedback on several aspects of this manuscript.

\newpage

\hspace{1cm}

\newpage

\chapter[Preliminaries]{Preliminaries on representation theory and principal bundles}

\label{chapter:analysis}

The aim of this chapter is two-fold: to present the Borel-Weil theory of a compact Lie group, as well as to interpret its analytical and differential geometric implications in the context of principal bundles.

 \minitoc
 
 \newpage

\section{Representation theory preliminaries}

\label{section:representation}

In the following, $G$ is a connected compact Lie group of rank $d \geq 1$ equipped with a bi-invariant metric normalized so that $\dd g$, the induced Haar measure, is a probability measure. Let $\widehat{G}$ be the set of irreducible complex unitary representations of $G$ on finite dimensional vector spaces (modulo isomorphisms).

\subsection{Associated bundles} 

\subsubsection{General construction} \label{sssection:associated-bundle} Let $M$ be a smooth closed manifold, and $\pi : P \to M$ be a $G$-principal bundle. We first recall the general construction of an associated bundle to the $G$-principal bundle $P$.

\begin{definition}[Associated bundle]
\label{definition:associated-bundle}
Let $H \leqslant G$ be a closed subgroup and $\lambda : H \to \mathrm{Aut}(F)$ be a representation of $H$, where $F$ is a closed manifold or a vector space. The bundle $P \times_{\lambda} F$ is defined as the set of equivalence classes in $P \times F$ modulo the relation given by the right $H$-action
\begin{equation}
\label{equation:relation-0}
(w,\xi) \sim (w, \xi) \cdot h := (w \cdot h, \lambda(h^{-1})\xi),
\end{equation}
 for all $w \in P$, $h \in H$ and $\xi \in F$.
\end{definition}

It can be verified that $P \times_{\lambda} F$ is a double fibration over $M$ in the sense that there is a tower of projections
\begin{equation}
\label{equation:tower}
P \times_{\lambda} F \to P/H \to M, \quad [w, \xi] \mapsto wH \mapsto \pi(w),
\end{equation}
where $[w, \xi] \in P \times F$ represents an equivalence class in $P \times_{\lambda} F$ and $wH$ denotes the orbit of $w$ in the quotient bundle $P/H$. Note that $P \times_{\lambda} F \to P/H$ has fibers isomorphic to $F$ and $P/H \to M$ has fibers isomorphic to $G/H$.

\subsubsection{Associated vector bundles}

\label{sssection:associated-vector-bundles}

We apply the associated bundle construction of \S\ref{sssection:associated-bundle} in the specific case where $H = G$, $F$ is a finite-dimensional vector space and $\lambda$ is a linear representation of $G$, we obtain an \emph{associated vector bundle} over $M$. Given $\lambda \in \widehat{G}$, a unitary irreducible representation $\lambda : G \to \mathrm{GL}(V^\lambda)$ on the complex finite dimensional vector space $V^\lambda$, we can construct the corresponding vector bundle
\begin{equation}
\label{equation:e-lambda}
E^\lambda := P \times_\lambda V^\lambda,
\end{equation}
defined over $M$ by taking $P \times V^\lambda/\sim$, where
\begin{equation}
\label{equation:right-product}
(w,\xi) \sim (w,\xi)\cdot g := (w \cdot g, \lambda(g^{-1})\xi),
\end{equation}
for any $g\in G$, $w \in P$, $\xi \in V^\lambda$.

The vector bundle $E^\lambda \to M$ is a bundle all of whose fibers are naturally isomorphic to $V^\lambda$. More precisely, the quotient map $P \times V^\lambda \to E^\lambda$ is a $G$-principal bundle with right-action given by \eqref{equation:right-product}. Freezing the basepoint $x \in M$, we get a fiber bundle $\pi_x : P_x \times V^\lambda \to E^\lambda_x$ such that $\pi_x(w,\xi) := [w,\xi]$ (the equivalence class modulo \eqref{equation:right-product}). Choosing an arbitrary point $w \in P_x$ then allows to identify $V^\lambda \to E^\lambda_x$ via the linear isomorphism $V^\lambda \ni \xi \mapsto \pi_x(w,\xi)$. In order to keep notation simple, we use the letter $w : V^\lambda \to E^\lambda_x$ for this isomorphism.

In particular, if $\lambda \in \widehat{G}$, $g \in G$ and $x \in M$, $w \in P_x$, we denote by $w \cdot \lambda(g) \in \mathrm{Hom}(V^\lambda,E^\lambda_x)$ the composition of the map $\lambda(g) \in \mathrm{GL}(V^\lambda)$ with the identification $w : V^\lambda \to E^\lambda_x$. Note that, for $w$ and $w \cdot g$ (for some $g \in G$), we can compare these two identifications using \eqref{equation:right-product} and we obtain for all $\xi \in V^\lambda$:
\begin{equation}
\label{equation:inv}
	w \cdot g(\lambda(h)\xi)= \pi_x(w \cdot g,\lambda(h)\xi) = \pi_x(w, \lambda(gh) \xi) = w(\lambda(gh) \xi), \quad h \in G,
\end{equation}
and so in particular we obtain
\begin{equation}
\label{equation:id-inv}
w\cdot g(\lambda(g^{-1})\xi)= \pi_x(w \cdot g,\lambda(g^{-1})\xi) = \pi_x(w,\xi) = w(\xi).
\end{equation}

Finally, we will write $\mathrm{Hom}(V^\lambda,E^\lambda) \to M$ for the associated bundle constructed via \eqref{equation:e-lambda} by taking $P \times_{\lambda \otimes \mathbf{1}} (V^\lambda \otimes {V^\lambda}^*)$, where $\lambda \otimes \mathbf{1} : G \to \mathrm{GL}(V^\lambda \otimes {V^\lambda}^*)$ is the representation obtained by the tensor product of $\lambda : G \to \mathrm{GL}(V^\lambda)$ and the trivial representation $\mathbf{1} : G \to \mathrm{GL}({V^\lambda}^*)$, $\mathbf{1}(g) = \mathbf{1}_{{V^\lambda}^*}$; here, ${V^\lambda}^*$ denotes the dual representation. Note that $\mathrm{Hom}(V^\lambda,E^\lambda)$ is naturally isomorphic to the tensor product of the trivial vector bundle ${V^\lambda}^* \times M$ and $E^\lambda$.

\subsection{Borel-Weil Theorem}

\label{ssection:borel-weil}

Let $\rho : G \to \mathrm{GL}(V^\lambda)$ be an irreducible representation. The Borel-Weil correspondence gives a concrete geometric realization of the spaces $V^\lambda$ as holomorphic sections of certain complex line bundles $J^\lambda \to G/T$, where $T$ is a maximal torus in $G$. We follow closely the exposition in \cite[Chapter 7]{Sepanski-07} (see also \cite[Chapter VI]{Brocker-Dieck-85} or \cite[Theorem 4.12.5]{Duistermaat-Kolk-00}).

\subsubsection{Roots. Irreducible representations} A compact Lie group $G$ is \emph{semisimple} if its Lie algebra $\mathfrak{g}$ is semisimple. In turn, $\mathfrak{g}$ is called \emph{semisimple} if it is a direct sum of simple Lie algebras; a Lie algebra is called \emph{simple} if it is non-Abelian and has no non-trivial ideals.

We let $T$ be a maximal torus in $G$, $\mathfrak{t} \subset \mathfrak{g}$ its Lie algebra. Recall by structure theory of Lie groups \cite[Theorem 8.1]{Brocker-Dieck-85} that there is a finite cover $p: \widetilde{G} \to G$ such that $\widetilde{G} \cong (\mathbb{S}^1)^a \times G_0$, where $G_0$ is simply connected (and hence semisimple) and $\widetilde{G}/\Gamma \cong G$ where $\Gamma \leqslant Z(\widetilde{G})$. The Lie algebra $\mathfrak{g}$ then splits as 
\begin{equation}
\label{equation:lie-algebra-g}
\mathfrak{g} = \mathfrak{z}(\mathfrak{g}) \oplus \mathfrak{g}',
\end{equation}
where $\mathfrak{g}' := [\mathfrak{g},\mathfrak{g}]$ is the Lie algebra of $G_0$, and $\mathfrak{z}(\mathfrak{g})$ is the (Abelian) Lie algebra of the center $Z(G)$ of $G$, which also corresponds to the Lie algebra of $(\mathbb{S}^1)^a$. The Lie algebra of $T$ then splits as
\begin{equation}
\label{equation:lie-algebra-t}
\mathfrak{t} =  \mathfrak{z}(\mathfrak{g}) \oplus \mathfrak{t}',
\end{equation}
where $\mathfrak{t}' := \mathfrak{t} \cap \mathfrak{g}'$.

A \emph{weight} (or \emph{infinitesimal weight}) is a purely imaginary valued linear form $\alpha: \mathfrak{t} \to i\R$ (we shall write $\alpha \in (i\mathfrak{t})^*$). A \emph{global weight} is a group homomorphism $\gamma : T \to \mathbb{S}^1$. If $\rho : G \to \mathrm{GL}(V)$ is a finite-dimensional complex unitary representation, one can look at the induced (Abelian) Lie algebra representation $d\rho : \mathfrak{t} \to \mathrm{End}(V)$. Since $\mathfrak{t}$ is Abelian, one can diagonalize simultaneously elements of $\mathfrak{t}$ acting on $V$, that is
\begin{equation}
\label{equation:v-decomp}
V = \oplus_{\alpha \in \Delta(V)} V_\alpha, \qquad V_\alpha:=\{v \in V ~|~ d\rho(H)v=\alpha(H)v, \forall H \in \mathfrak{t}\}.
\end{equation}
Note that $\alpha$ is purely imaginary as $\rho$ is unitary. The above decomposition is known as the weight space decomposition of $V$. When $V = \mathfrak{g}_{\C}$ (complexified adjoint representation), $\mathfrak{g}_{\C} = \mathfrak{t}_{\C} \oplus_{\alpha \in \Delta(\mathfrak{g}_{\C})} \mathfrak{g}_\alpha$ and the weights $\Delta(\mathfrak{g}_{\C})$ are called the \emph{roots} of $\mathfrak{g}_{\C}$. Note that $\mathfrak{z}(\mathfrak{g}) \subset \ker \alpha$ for all $\alpha \in \Delta(\mathfrak{g}_{\C})$, so the roots can be seen equivalently as elements of $(i\mathfrak{t}')^*$.

Define $A \subset \mathfrak{t}$, the set of \emph{analytically integral weights} as
\[
A := \{ \alpha\in (i\mathfrak{t})^* ~|~ \alpha(H) \in 2\pi i \Z, \forall H \in \mathfrak{t} \text{ s.t. } \exp(H)=1\}.
\]
A system of simple roots $\Pi$ is a subset of $\Delta(\mathfrak{g}_{\C})$ that is a basis of $(i\mathfrak{t}')^*$ and satisfies the property that any $\beta \in \Delta(\mathfrak{g}_{\C})$ can be written as $\beta = \sum_{\alpha \in \Pi} k_\alpha \alpha$, with $k_\alpha$ being either all nonnegative or nonpositive integers. Note that such a system always exists, see \cite[Lemma 6.42]{Sepanski-07} for instance. A system of simple roots therefore decomposes the space
\[
	\Delta(\mathfrak{g}_{\C}) = \Delta_+(\mathfrak{g}_{\C}) \sqcup \Delta_-(\mathfrak{g}_{\C})
\]
into \emph{positive} and \emph{negative} roots, respectively. They give rise to $\ad(\mathfrak{t})$- and $\Ad(T)$-invariant (i.e. invariant under the action of the adjoint representation) splitting
\begin{equation}\label{eq:positive-negative-splitting}
	\mathfrak{n}^\pm := \oplus_{\alpha \in \Delta_\pm(\mathfrak{g}_{\mathbb{C}})} (\mathfrak{g}_{\mathbb{C}})_{\alpha}.
\end{equation}
Taking the real part $\mathfrak{m}$ of $\mathfrak{n}^+ \oplus \mathfrak{n}^-$ we obtain a sum of real, two dimensional irreducible representations of $\ad(\mathfrak{t})$.

Define $\mathfrak{a}_+^{\mathrm{ss}} \subset (i\mathfrak{t}')^*$ as the polyhedral convex positive cone spanned by $\Delta_+(\mathfrak{g}_{\C})$ (the superscript $\mathrm{ss}$ stands for semisimple) and set $\mathfrak{a}_+ := (i\mathfrak{z}(\mathfrak{g}))^* \oplus \mathfrak{a}_+^{\mathrm{ss}} \subset (i\mathfrak{t})^*$, the \emph{positive Weyl chamber}. The Weyl group $W:=N/T$ (where $N := \{g \in G ~|~ gTg^{-1} = T\}$) is finite and acts faithfully on $(i\mathfrak{t})^*$ by the adjoint representation; the set of all possible positive Weyl chambers is given by $w\cdot\mathfrak{a}_+$ for $w\in W$ (the non-empty intersections of the different Weyl chambers are called the \emph{walls}). Finally, for $\gamma,\lambda \in (i\mathfrak{t})^*$, introduce the relation $\lambda \leq \gamma$ if $\lambda \in \mathrm{Conv}(W\gamma)$, where $\mathrm{Conv}(B)$ denotes the convex closure (barycenters of points in $B$) of a subset $B \subset (i\mathfrak{t})^*$.

Equivalence classes of irreducible representations $\widehat{G}$ are classified by their corresponding \emph{highest weight root}. More precisely, if $\rho : G \to \operatorname{GL}(V)$ is an irreducible complex unitary representation, then in the weight space decomposition \eqref{equation:v-decomp}, there is a unique weight $\alpha_0 \in A \cap \mathfrak{a}_+$, called the highest weight, such that for all $\alpha \in \Delta(V)$, $\alpha \leq \alpha_0$ with equality if and only if $\alpha=w\alpha_0$ for some $w\in W$. Conversely, given $\alpha_0 \in A \cap \mathfrak{a}_+$, there is a unique (modulo isomorphism) irreducible representation $\rho : G \to \operatorname{GL}(V)$ with highest weight $\alpha_0$, see \cite[Theorem 7.34]{Sepanski-07}.

Let 
\[
a := \dim \mathfrak{z}(\mathfrak{g}), \qquad b := \dim \mathfrak{t}', \qquad d:=a+b=\mathrm{rk}(G).
\]
Let $\{\lambda_1, \dotsc, \lambda_a\}$ be a system of generators of $\mathfrak{z}(\mathfrak{g})$, that is such that any $\alpha \in (i\mathfrak{z}(\mathfrak{g}))^* \cap A$ can be written as $\alpha=\sum_{i=1}^a k_i \lambda_i$ with $k_i \in \Z$. Let $\{\lambda_{a+1}, \dotsc,\lambda_{d}\}$ be a system of generators of $\mathfrak{a}_+^{\mathrm{ss}} \cap A$, that is such that any $\alpha \in \mathfrak{a}_+^{\mathrm{ss}} \cap A$ can be written as $\alpha = \sum_{i=a+1}^d k_i \lambda_i$ with $k_i \in \Z_{\geq 0}$. We can then form the following surjective map 
\begin{equation}
\label{equation:ecriture}
\phi : \Z^a \times \Z_{\geq 0}^{b} \to \mathfrak{a}_+ \cap A, \qquad \mathbf{k} \mapsto \sum_{i=1}^d k_i \lambda_i.
\end{equation}
Note that $\mathfrak{a}_+$ is given by the set of all linear combinations $\sum_i \ell_i \lambda_i$ where $\ell_1, \dotsc, \ell_a \in \R, \ell_{a+1}, \dotsc, \ell_d \in [0,\infty)$. When $G = (\mathbb{S}^1)^a\times G_0$ with $G_0$ semi-simple and simply connected, \eqref{equation:ecriture} is an isomorphism (as monoids); however, this might not be the case if $G$ is not simply connected. As a consequence, $\widehat{G}$ is indexed by $\Z^a \times \Z_{\geq 0}^b/\sim$, where the equivalence relation is defined by $\mathbf{k}\sim\mathbf{k}'$ if and only if $\phi(\mathbf{k})=\phi(\mathbf{k}')$. In the following, we shall mostly drop the notation $\lambda \in \widehat{G}$ for irreducible representations and index them by $\mathbf{k}$.

\subsubsection{Compactification of the positive Weyl chamber} \label{sssection:compactification}

It will be convenient for later purposes to compactify the Weyl chamber by adding a boundary at infinity. More precisely, define $\partial_\infty \mathfrak{a}_+$, the boundary at infinity of the Weyl chamber, as the set of all possible limits $\mathbf{k}/|\mathbf{k}|$ as $|\mathbf{k}| \to \infty$ and $\mathbf{k} \in \widehat{G}$. Define $\partial_\infty \mathfrak{a}_+^{\mathrm{ss}} \subset \partial_\infty  \mathfrak{a}_+$ by further requiring $k_1 = \dotsb = k_a = 0$ (this is the boundary at infinity corresponding to the semisimple part of the group). Typically, if $G = \mathrm{U}(1)^a$, then $\partial_\infty \mathfrak{a}_+ = \mathbb{S}^{a-1}$, while if $G = \mathrm{SU}(2)$, $\partial_\infty  \mathfrak{a}_+ = \partial_\infty  \mathfrak{a}_+^{\mathrm{ss}} = \{\bullet\}$ is reduced to a point. More generally, we have the following diffeomorphisms:
\begin{equation}
\label{equation:boundary-infinity}
\partial_\infty  \mathfrak{a}_+^{\mathrm{ss}} \simeq \mathbb{S}^{b-1} \cap \R^{b}_+, \qquad \partial_\infty \mathfrak{a}_+ \simeq \mathbb{S}^{d-1} \cap (\R^a \times \R^b_+).
\end{equation}

We then set:
\[
\overline{\mathfrak{a}_+} := \mathfrak{a}_+ \sqcup \partial_\infty \mathfrak{a}_+, \qquad \overline{\mathfrak{a}_+^{\mathrm{ss}}} := \mathfrak{a}_+^{\mathrm{ss}} \sqcup \partial_\infty \mathfrak{a}_+^{\mathrm{ss}}.
\]
This space is equipped with the following topology: a neighborhood of $\mathbf{l}_0 \in \mathfrak{a}_+$ is given by a neighborhood for the standard topology on $\mathfrak{a}_+$, while a neighborhood of $\mathbf{l}_0 \in \partial_\infty \mathfrak{a}_+$ is given for $R, \eps > 0$ by
\[
U_{R,\eps} := \{\mathbf{l} \in \mathfrak{a}_+ ~|~ |\mathbf{l}| > R, |\mathbf{l}/|\mathbf{l}|-\mathbf{l}_0| < \eps\}.
\]
Finally, observe that $\overline{\mathfrak{a}_+}$ and $\overline{\mathfrak{a}_+^{\mathrm{ss}}}$ are compact topological spaces. (These are the radial compactifications of the Weyl chambers.)

\subsubsection{Flag manifold}\label{sssection:holomorphic-line-bundle}

Let $G/T$ be the flag manifold of $G$.  It is a standard fact that $G/T$ admits a complex structure such that the left-action of $G$ on $G/T$ is holomorphic (and transitive), see \cite[Theorem 7.50]{Sepanski-07}.

More precisely, the spaces $\mathfrak{n}^\pm \subset \mathfrak{g}_{\mathbb{C}}$ in the root decomposition \eqref{eq:positive-negative-splitting} are $\Ad(T)$-invariant Lie sub-algebras. Letting $\pi : G \to G/T$ be the projection, the real part $\mathfrak{m}$ of $\mathfrak{n}^+ \oplus \mathfrak{n}^-$ projects via $d\pi$ onto $T(G/T)$.
The associated vector bundles to $\mathfrak{n}^\pm$ may be identified with the holomorphic and anti-holomorphic tangent bundles of $G/T$
\[
	T^{1, 0} (G/T) = G \times_{\Ad(T)} \mathfrak{n}^+, \quad T^{0, 1} (G/T) \otimes \mathbb{C}= G \times_{\Ad(T)} \mathfrak{n}^-,
\]
where the complexified tangent space splits as
\[
	T(G/T) \otimes \mathbb{C}= T^{1, 0} (G/T) \oplus T^{0, 1} (G/T),
\]
and we write $\pi^{1, 0}$ and $\pi^{0, 1}$ for the respective projections. (Explicitly, the identification is given by $G \times_{\Ad(T)} \mathfrak{n}^\pm \ni [p, v] \mapsto d\pi(g) dL_g v$.)

\subsubsection{Holomorphic line bundles}

Given a global weight $\gamma : T \to \mathbb{S}^1$, one can form the complex line bundle 
\[
J^\gamma := G \times_\gamma \C
\]
over $G/T$, using the construction of \S\ref{sssection:associated-bundle} with $M = \{\bullet\}$. It is holomorphic and homogeneous (see \cite[Chapter 7, Section 7.4.3]{Sepanski-07}), that is it is endowed with a natural holomorphic left $G$-action given by $g\cdot [h,\xi] = [gh, \xi]$ for all $g,h \in G$, $\xi \in \C$, where $[h,\xi]$ denotes a class in $J^\gamma$.

The line bundle $J^\gamma$ is equipped with a natural inner product $g^{J^\gamma}$ (inherited from the Hermitian product on $\C$). Since the bi-invariant Riemannian metric on $G$ descends to a $G$-invariant Riemannian metric on $G/T$, there is a volume form on $G/T$ denoted at $hT \in G/T$ by $\mathrm{d}(hT)$, and we obtain a natural $L^2$-scalar product on $L^2(G/T,J^\gamma)$ given by
\begin{equation}
\label{equation:metric-j-gamma}
\langle u,v\rangle_{L^2(G/T,J^\gamma)} := \int_{G/T} g_{hT}^{J^\gamma}(u(hT),v(hT))\, \mathrm{d}(hT).
\end{equation}

We now proceed to make the holomorphic structure on $J^\gamma$ more explicit. Notice that, by construction, a section $s \in C^\infty(G/T,J^\gamma)$ is equivalent to the data of a function $\overline{s} \in C^\infty(G)$ with a $T$-equivariant property, namely
\begin{equation}
\label{equation:t-equivariance}
\overline{s}(gt) = \gamma(t^{-1})\overline{s}(g), \qquad \forall g \in G, t \in T.
\end{equation}

Given a vector $X \in T(G/T)$ at $gT \in G/T$, it can be lifted to a $T$-invariant section $\overline{X}$ on the fiber above $gT$ in $G$ with values in the real part $\mathfrak{m}$ of $\mathfrak{n}^+ \oplus \mathfrak{n}^-$. Then, define a connection on $J^\gamma$ as follows: 
\begin{equation}
\label{equation:fiberwise-chern}
(\nabla_{J^\gamma})_X s(gT) := d\overline{s}(\overline{X}(g)),
\end{equation}
where $d$ is the exterior derivative. It is straightforward to check that the right hand side descends to the quotient. 

The $\overline{\partial}$ operator on $J^\gamma$ is simply given by the $(0,1)$ part of this connection. More precisely, given $X \in T^{0, 1} (G/T)$ at $gT \in G/T$, it can be lifted to a $T$-invariant section $\overline{X}$ on the fiber above $gT$ in $G$ with values in $\mathfrak{n}^-$. Then
\begin{equation}
\label{equation:fiberwise-dbar}
	(\overline{\partial}_{J^\gamma})_X s(gT) := d\overline{s}(\overline{X}(g)).
\end{equation}
We will denote by $H^0(G/T,J^\gamma)$ the space of holomorphic sections of $J^\gamma$. It is immediate to check that $\nabla_{J^\gamma}$ is the (unique) Chern connection associated to $\overline{\partial}_{J^\gamma}$ and the Hermitian structure on $J^\gamma$.

\subsubsection{Borel-Weil Theorem}

The $G$-action on $J^\gamma$ defined above (linearly) extends the $G$-action on $G/T$ so the space $L^2(G/T,J^\gamma)$ is a unitary $G$-representation (it preserves \eqref{equation:metric-j-gamma}):
\begin{equation}\label{eq:G-action-holomorphic}
	(g \cdot u)(hT) := g \cdot [u(g^{-1}hT)], \quad g \in G,\, hT \in G/T,\, u \in L^2(G/T, J^\gamma).
\end{equation}
This space is `very large'; however, since the $G$-action on $J^\gamma$ is holomorphic, $G$ preserves the finite-dimensional space $H^0(G/T, J^\gamma)$. It can be easily verified that $H^0(G/T, J^\gamma)$ is always an irreducible $G$-representation (it can be the trivial space though), see \cite[Theorem 3.1.1]{Charles-23} for instance. 

Given a global weight $\gamma$, write $\lambda(\gamma) \in A$ for the corresponding infinitesimal weight (conversely, any infinitesimal weight in $A$ lifts to define a global weight). The Borel-Weil Theorem then asserts that $H^0(G/T,J^\gamma) = 0$ if $-\lambda(\gamma) \notin \mathfrak{a}_+ \cap A$ and otherwise
\begin{equation}
\label{equation:bw}
	H^0(G/T,J^\gamma) \cong  (V^{-\lambda(\gamma)})^*,
\end{equation}
where $V^{-\lambda(\gamma)} \in \widehat{G}$ is the unique irreducible representation with highest weight $-\lambda(\gamma)$, and ${}^*$ denotes the dual representation, see \cite[Theorem 7.58 and Lemma 7.5]{Sepanski-07}.



Combining \eqref{equation:ecriture} and \eqref{equation:bw}, we therefore see that the space of all irreducible representations can be realized geometrically as
\begin{equation}
\label{equation:irreps}
\bigoplus_{\mathbf{k} \in \Z^a \times \Z^b_{\geq 0}/\sim} H^0(G/T,\mathbf{J}^{\otimes \mathbf{k}}),
\end{equation}
where we use the compact notation
\begin{equation}
\label{equation:compress}
\mathbf{J}^{\otimes \mathbf{k}} := J^{\otimes k_1}_1 \otimes \dotsm \otimes J^{\otimes k_d}_d,
\end{equation}
and $J_i \to G/T$ is the homogeneous holomorphic line bundle obtained with the weight $\lambda_i$. The space $H^0(G/T,\mathbf{J}^{\otimes \mathbf{k}})$ is the unique (modulo isomorphism) irreducible representation with highest weight $\sum_{j=1}^d k_j \lambda_j$.

\subsubsection{One-dimensional representations}\label{sssection:abelian-rep-0} In this section we classify the one-dimensional representations of $G$ in terms of Borel-Weil theory and express the dependence on $\mathbf{k}$ introduced in \eqref{equation:ecriture}. We start with the latter. Denote by $[G, G]$ the commutator subgroup of $G$, i.e. the subgroup generated by commutators $xyx^{-1}y^{-1}$ for all $x, y \in G$; it is a Lie subgroup of $G$.

	\begin{lemma}\label{lemma:extension}
		The global weight $\gamma: T \to \mathbb{S}^1$ admits a lift $\beta$ to $G$, i.e. a homomorphism $\beta: G \to \mathbb{S}^1$ such that $\beta|_{T} = \gamma$, if and only if $k_{a + 1} = \dotsb = k_{a + b} = 0$. If such $\beta$ exists, then it is the representation corresponding to the global weight $\gamma$. 
	\end{lemma}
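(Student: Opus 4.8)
The statement comprises an equivalence together with a supplementary identification of $\beta$; I would treat the three parts in turn, the implication $(\Leftarrow)$ carrying all the weight.

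For the necessity $(\Rightarrow)$, suppose $\gamma$ extends to a homomorphism $\beta : G \to \mathbb{S}^1$. Since $\mathbb{S}^1$ is abelian, $\beta$ is trivial on the commutator subgroup $[G,G]$, so its differential $d\beta$ annihilates $[\mathfrak{g},\mathfrak{g}] = \mathfrak{g}'$, in particular $\mathfrak{t}' = \mathfrak{t}\cap\mathfrak{g}'$; as $d\beta|_\mathfrak{t} = d\gamma = \lambda(\gamma)$, this gives $\lambda(\gamma)|_{\mathfrak{t}'} = 0$. Now $\{\lambda_1,\dots,\lambda_a\}$ is an $\R$-basis of $(i\mathfrak{z}(\mathfrak{g}))^*$ and $\{\lambda_{a+1},\dots,\lambda_d\}$ an $\R$-basis of $(i\mathfrak{t}')^*$ (each is a generating set of the right cardinality for a full-dimensional lattice, resp.\ cone), so $\{\lambda_1,\dots,\lambda_d\}$ is a basis of $(i\mathfrak{t})^* = (i\mathfrak{z}(\mathfrak{g}))^*\oplus(i\mathfrak{t}')^*$ adapted to \eqref{equation:lie-algebra-t}. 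Writing $\pm\lambda(\gamma) = \sum_{i=1}^d k_i\lambda_i$ as in \eqref{equation:ecriture}, restriction to $\mathfrak{t}'$ reads $\pm\lambda(\gamma)|_{\mathfrak{t}'} = \sum_{i=a+1}^d k_i\,(\lambda_i|_{\mathfrak{t}'})$ because $\lambda_i|_{\mathfrak{t}'}=0$ for $i\leqslant a$, and the vectors $\lambda_i|_{\mathfrak{t}'}$ ($i>a$) are linearly independent; hence $\lambda(\gamma)|_{\mathfrak{t}'}=0$ forces $k_{a+1}=\dots=k_d=0$ (this is insensitive to the sign convention in \eqref{equation:ecriture}).

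For the sufficiency $(\Leftarrow)$, assume $k_{a+1}=\dots=k_d=0$, i.e.\ $\lambda(\gamma)|_{\mathfrak{t}'}=0$. I would pass to the finite cover $p : \widetilde G \cong (\mathbb{S}^1)^a\times G_0 \to G$ of the structure theorem, with $G_0$ simply connected (hence semisimple) and $\Gamma := \ker p \leqslant Z(\widetilde G)$, and set $\widetilde T := p^{-1}(T) = (\mathbb{S}^1)^a\times T_0$ with $T_0$ a maximal torus of $G_0$; then $\widetilde\gamma := \gamma\circ p|_{\widetilde T}$ is a global weight of $\widetilde T$ with $d\widetilde\gamma$ vanishing on $\mathfrak{t}_0 \cong \mathfrak{t}'$, hence $\widetilde\gamma|_{T_0}=1$ since $T_0$ is connected. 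Consequently $\widetilde\gamma$ factors through $\widetilde T \to (\mathbb{S}^1)^a$ and extends to a character $\widetilde\beta$ of $\widetilde G = (\mathbb{S}^1)^a\times G_0$, by taking the trivial character on the factor $G_0$ (the only one, as $[G_0,G_0]=G_0$). It remains to descend $\widetilde\beta$ to $G=\widetilde G/\Gamma$: because $\Gamma \leqslant Z(\widetilde G)\leqslant \widetilde T$, we get $\widetilde\beta|_\Gamma = \widetilde\gamma|_\Gamma = (\gamma\circ p)|_\Gamma = 1$, so $\widetilde\beta = \beta\circ p$ for a character $\beta : G\to\mathbb{S}^1$, and restricting this to $\widetilde T$ gives $\beta\circ p|_{\widetilde T} = \widetilde\gamma = \gamma\circ p|_{\widetilde T}$, whence $\beta|_T = \gamma$ by surjectivity of $p|_{\widetilde T}$. (Alternatively one argues directly on $G$: since $T\cdot[G,G]=G$ and $T\cap[G,G]$ is the maximal torus of $[G,G]$ — in particular connected, so $\gamma$ is trivial on it because $d\gamma|_{\mathfrak{t}'}=0$ — the formula $\beta(tc):=\gamma(t)$ for $t\in T$, $c\in[G,G]$ is well defined and yields the character; this relies on the same structural input.) For the last assertion, a character of $G$ is determined by $d\beta$, which is determined on $\mathfrak{z}(\mathfrak{g})$ by $\gamma$ and on $\mathfrak{g}'$ by being $0$, so ($G$ being connected) $\beta$ is the unique such lift; and since $\beta^{-1}$ is then a one-dimensional representation with highest weight $-\lambda(\gamma)$, the Borel--Weil representation attached to $\gamma$ satisfies $H^0(G/T,J^\gamma)\cong (V^{-\lambda(\gamma)})^*\cong \C_\beta$, so $\beta$ is exactly the representation corresponding to $\gamma$.

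I expect the sufficiency direction — specifically the descent step — to be the main obstacle: one must make sure that the character built on the abelian quotient (of $\widetilde G$, or of $G$ by $[G,G]$) does not acquire a nontrivial root-of-unity value on the finite part, and this is precisely where the inclusion $\Gamma \leqslant Z(\widetilde G)\leqslant \widetilde T$ — equivalently, the connectedness of $T\cap[G,G]$ — is indispensable. Dropping it, a global weight vanishing on $\mathfrak{t}'$ would in general fail to extend; everything else is bookkeeping with the splitting $\mathfrak{t}=\mathfrak{z}(\mathfrak{g})\oplus\mathfrak{t}'$.
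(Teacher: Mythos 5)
Your proof is correct. The necessity direction is essentially the same as the paper's (via triviality of $\beta$ on $[G,G]$, hence of $d\gamma$ on $\mathfrak{t}'$). Your main argument for sufficiency, however, takes a genuinely different route: you pass to the finite cover $\widetilde{G}\cong(\mathbb{S}^1)^a\times G_0$ of the structure theorem, extend the pulled-back weight trivially on the simply connected semisimple factor, and descend across $\Gamma=\ker p$ using $\Gamma\leqslant Z(\widetilde G)\leqslant\widetilde T$. The paper instead constructs $\beta$ directly on $G$ from the decomposition $G=Z(G)\cdot[G,G]$, setting $\beta(zc):=\gamma(z)$ and checking well-definedness by noting that the ambiguity lies in the finite group $F=Z(G)\cap[G,G]\leqslant T\cap[G,G]$, on which $\gamma$ is already trivial. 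Your parenthetical alternative ($\beta(tc):=\gamma(t)$ using $G=T\cdot[G,G]$ and connectedness of $T\cap[G,G]$) is the closest to the paper's argument, differing only in using $T$ in place of $Z(G)$; the paper's use of $Z(G)$ makes the homomorphism check one line since central elements can be freely commuted, whereas with $T$ one must invoke normality of $[G,G]$. The cover-based argument you lead with is cleaner conceptually — the descent obstruction is visibly localized in $\Gamma$ and the extension step is trivial on the product — at the cost of invoking more structure theory. Both are sound; you correctly isolate the load-bearing fact in each case, namely that the relevant finite intersection lies inside the connected group $T\cap[G,G]$ on which $\gamma$ is forced to be trivial.

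One small remark: for the final assertion, the paper dismisses it as "clear by definition of being a global weight," meaning simply that a character $\beta$ with $\beta|_T=\gamma$ has $\gamma$ as its global weight and is therefore (by uniqueness of the irreducible of a given highest weight) the representation associated to $\gamma$. Your detour through \eqref{equation:bw} and the identification $H^0(G/T,J^\gamma)\cong(V^{-\lambda(\gamma)})^*\cong\C_\beta$ reaches the same conclusion and is consistent with the sign conventions of the paper, but is more machinery than needed.
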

	\begin{proof}
	Notice that $k_{a + 1} = \dotsb = k_{a + b} = 0$ is equivalent to $d\gamma(e)|_{\mathfrak{t} \cap [\mathfrak{g}, \mathfrak{g}]} \equiv 0$. Denote by $Z(G)$ and $[G, G]$ the centre and the commutator subgroup of $G$, respectively.
	
	 Assume firstly that such an extension $\beta$ exists. Since $\mathbb{S}^1$ is Abelian, we get that $\beta|_{[G, G]}$ is trivial, and hence so is $\gamma|_{[G, G] \cap T}$. Therefore $d\gamma(e)|_{\mathfrak{t} \cap [\mathfrak{g}, \mathfrak{g}]} \equiv 0$.
	
	For the other direction, assume $d\gamma(e)|_{\mathfrak{t} \cap [\mathfrak{g}, \mathfrak{g}]} \equiv 0$. Since homomorphisms respect the exponential map, and as $[G, G] \cap T$ is connected, this implies that $\gamma|_{[G, G] \cap T}$ is trivial. By the structure theory of Lie groups, any $g \in G$ can be written as $g = zc$, where $z \in Z(G)$ and $c \in [G, G]$. If $g = z' c'$ for some other $z' \in Z(G)$ and $c' \in [G, G]$, then $z' \in zF$, where $F := [G, G] \cap Z(G)$. (Note that $F \leqslant [G, G] \cap T$, as $Z(G) \leqslant T$, and $F$ is finite since $\mathfrak{z}$ and $[\mathfrak{g}, \mathfrak{g}]$ are transverse to each other.) Define $\beta(g) := \gamma(z)$ for any such choice. Then $\beta$ is well-defined since $\gamma|_F$ is trivial, and by a similar argument $\beta|_T = \gamma$. Also, for any $g_i \in G$ satisfying $g_i = z_i c_i$ for some $z_i \in Z(G)$ and $c_i \in [G, G]$, for $i = 1, 2$, we have
	\[
		\beta(g_1 g_2) = \gamma(z_1 z_2) = \gamma(z_1)\gamma(z_2) = \beta(g_1) \beta(g_2),
	\]
	and thus $\beta$ is a homomorphism. The final statement is clear by definition of being a global weight, which completes the proof.
	\end{proof}

Next, we will make a standing assumption that the global weight $\gamma: T \to \mathbb{S}^1$ admits an extension to a homomorphism
\[
	\beta: G \to \mathbb{S}^1, \quad \beta|_{T} = \gamma.
\]
(Conversely, the global weight corresponding to a one-dimensional unitary representation $\rho: G \to \mathbb{S}^1$ is simply $\gamma = \rho|_T$.) Then, there are two natural line bundles over $G/T$; the trivial one $B^\gamma := G/T \times G \times_\beta \mathbb{C}$ and $J^\gamma = G \times_\gamma \mathbb{C}$, which we will show are canonically isomorphic. Indeed, write $[\bullet, \bullet]_{\beta/\gamma}$ to denote equivalence classes of elements in $G\times_{\beta/\gamma} \mathbb{C}$. As in \eqref{equation:fiberwise-dbar}, we equip $B^\gamma$ with a structure of a holomorphic vector bundle. Note that $G \times_\beta \mathbb{C} \cong \mathbb{C}$ via $[g, z]_{\beta} \mapsto \beta(g)z$; we will see in what follows that the holomorphic structure on $B^\gamma$ respects this trivialisation. Then:

\begin{lemma}\label{lemma:1d-rep}
	There is an equivariant unitary biholomorphism of holomorphic line bundles over $G/T$ covering the identity
	\begin{equation}\label{eq:map-Y}
		\Upsilon: J^\gamma \to B^\gamma, \quad [g, z]_\gamma \mapsto (gT, [g, z]_\beta), \quad g \in G, z \in \mathbb{C}.
	\end{equation}
	Moreover, $B^\gamma$ is biholomorphic to $G/T \times \mathbb{C}$ and so the space $H^0(G/T, J^\gamma)$ of holomorphic sections can be be identified with the space of constant functions on $G/T$. 
\end{lemma}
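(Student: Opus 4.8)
The plan is to verify that $\Upsilon$ is well-defined, equivariant, unitary, and biholomorphic, and then to combine it with the trivialisation of $B^\gamma$ to identify holomorphic sections with constants. First I would check that $\Upsilon$ descends to the quotient: an element of $J^\gamma = G \times_\gamma \C$ is a class $[g,z]_\gamma$ subject to $(gt, z) \sim (g, \gamma(t)z)$ for $t \in T$, so I must show $(gt, [gt,z]_\beta)$ and $(gT, [g, \gamma(t) z]_\beta)$ agree in $G/T \times (G \times_\beta \C)$; since $\beta|_T = \gamma$ we have $[gt, z]_\beta = [g, \beta(t) z]_\beta = [g, \gamma(t) z]_\beta$, and $gtT = gT$, so the two coincide. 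Injectivity and surjectivity over each fibre are immediate because on a fixed fibre $\Upsilon$ is a nonzero $\C$-linear map between one-dimensional spaces (concretely, in the trivialisation $[g,z]_\beta \mapsto \beta(g) z$ it reads $[g,z]_\gamma \mapsto (gT, \beta(g) z)$, visibly a linear isomorphism on fibres). Equivariance for the left $G$-actions \eqref{eq:G-action-holomorphic} follows from $g' \cdot [g,z]_\gamma = [g'g, z]_\gamma \mapsto (g'gT, [g'g, z]_\beta) = g' \cdot (gT, [g,z]_\beta)$, using that the $G$-action on $B^\gamma$ is the diagonal one (trivial on the $G/T$ factor of $B^\gamma$ and left-translation on $G \times_\beta \C$, which under $[g,z]_\beta \mapsto \beta(g)z$ becomes $z \mapsto \beta(g') z$ — a genuine action since $\beta$ is a homomorphism). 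Unitarity is clear since both Hermitian structures are induced from the standard one on $\C$ and $\beta$ takes values in $\Ss^1$, so $|\beta(g)z| = |z|$.

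Next I would address the holomorphic structure, which I expect to be the main (though still mild) obstacle: one must check $\Upsilon$ intertwines the $\overline\partial$-operators \eqref{equation:fiberwise-dbar} on $J^\gamma$ and on $B^\gamma$. The cleanest route is to transport everything through the trivialisation $B^\gamma \cong G/T \times \C$, $[g,z]_\beta \mapsto \beta(g)z$, and show that the $\overline\partial$-operator on $B^\gamma$ defined via \eqref{equation:fiberwise-dbar} (using the lift $\beta$ in place of $\gamma$ in the $T$-equivariance relation \eqref{equation:t-equivariance}) becomes the standard $\overline\partial$ on $G/T \times \C$ under this trivialisation. Concretely, a section $s$ of $B^\gamma$ corresponds to a function $\bar s \in C^\infty(G)$ with $\bar s(gt) = \beta(t^{-1})\bar s(g)$, and $f := \beta \bar s$ is then genuinely $T$-invariant, hence descends to a function $f$ on $G/T$; the point is that for $X \in T^{0,1}(G/T)$ lifted to $\overline X$ valued in $\mathfrak n^-$, one has $d\bar s(\overline X) = \beta^{-1}\big(df(\overline X) - \bar s \, d\beta(\overline X)\big)$, and since $\beta: G \to \Ss^1$ is a homomorphism, $d\beta(e)$ vanishes on $[\mathfrak g, \mathfrak g]$, in particular on $\mathfrak n^+ \oplus \mathfrak n^- \subset \mathfrak g_\C$ (these root spaces lie in the complexified semisimple part), so $d\beta(\overline X) = 0$ and $\overline\partial_{B^\gamma} s$ corresponds to the ordinary $\overline\partial f$ on $G/T$. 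Composing with the fibrewise identification $J^\gamma \to B^\gamma$ — which in these coordinates sends the function $\bar u$ representing a section of $J^\gamma$ (with $\bar u(gt) = \gamma(t^{-1})\bar u(g)$) to the same $\bar u$ now viewed as representing a section of $B^\gamma$, because $\gamma = \beta|_T$ — shows $\Upsilon$ is a biholomorphism.

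Finally, the identification of $H^0(G/T, J^\gamma)$ with constants is a corollary: via $\Upsilon$ and the trivialisation, $H^0(G/T, J^\gamma) \cong H^0(G/T, G/T \times \C)$, the holomorphic functions on the compact complex manifold $G/T$, which are exactly the constants by the maximum principle (equivalently, $G/T$ is compact connected Kähler, so $H^0$ of the trivial bundle is $\C$). I would remark that this is consistent with Borel-Weil \eqref{equation:bw}: the weight $\lambda(\gamma)$ here lies in $(i\mathfrak z(\mathfrak g))^*$ (its $k_{a+1} = \dots = k_{a+b} = 0$ by Lemma \ref{lemma:extension}), so $-\lambda(\gamma) \in \mathfrak a_+ \cap A$ and $V^{-\lambda(\gamma)}$ is one-dimensional, matching $\dim H^0(G/T, J^\gamma) = 1$; moreover the resulting representation on constants is precisely $\beta$ (up to dualising), recovering the last sentence of Lemma \ref{lemma:extension}. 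The only genuinely delicate point throughout is the vanishing $d\beta(\overline X) = 0$ on the (anti)holomorphic directions, which is why the hypothesis that $\gamma$ extends to $G$ — equivalently that $\gamma$ kills $\mathfrak t \cap [\mathfrak g, \mathfrak g]$ — is exactly what makes the trivialisation holomorphic.
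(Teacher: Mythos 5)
Your proof is correct and follows essentially the same route as the paper: both arguments hinge on the fact that $d\beta$ vanishes on $\mathfrak{n}^\pm$ because $\beta$ is a homomorphism into the abelian group $\Ss^1$ and hence kills $[\mathfrak{g},\mathfrak{g}]$. The only differences are cosmetic — you verify biholomorphicity by transporting $\overline{\partial}$ through the trivialisation $[g,z]_\beta \mapsto \beta(g)z$, where the paper instead appeals to $\Upsilon$ descending from the identity on $G\times\C$ and separately checks the constant section $[e,1]_\beta$ is holomorphic, and you make the compactness/maximum-principle step explicit where the paper leaves it implicit — plus one small slip: the $G$-action on $B^\gamma$ is left translation, not trivial, on the $G/T$ factor (your displayed equivariance computation is correct and contradicts the parenthetical description).
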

\begin{proof}
	The map $\Upsilon$ is well-defined as 
	\[
		\Upsilon(g t, \gamma(t^{-1})z) = (gt T, [g t, \gamma(t^{-1}) z]_{\beta}) = (g T, [g , z]_{\beta})= \Upsilon(g, z), \quad g \in G, t \in T, z \in \mathbb{C}.
	\]
	(More precisely, this defines a map $G \times \mathbb{C} \to B^\gamma$ which respects the equivalence relation and so descends to a map $\Upsilon: J^\gamma \to B^\gamma$ denoted by the same letter. We will make similar abuse of notation below.) It is $G$-equivariant in the sense that
	\[
		\Upsilon(h[g, z]_\gamma) = \Upsilon([hg, z]_\gamma) = (hgT, [hg, z]_\beta) = h (gT, [g, z]_{\beta}) = h \Upsilon([g, z]_{\gamma}), \quad h \in G.
	\]
	
	The map $\Upsilon$ is clearly surjective. Assume $\Upsilon([g_1, z_1]_\gamma) = \Upsilon([g_2, z_2]_\gamma)$ for some $g_i \in G, z_i \in \mathbb{C}$ for $i = 1, 2$. This implies $g_2 = g_1 t$ for some $t \in T$ and $[g_1, z_1]_\beta = [g_1 t, z_2]_\beta$ then implies $z_1 = \gamma(t) z_2$. Therefore also $[g_1, z_1]_\gamma = [g_2, z_2]_\gamma$. Since $\Upsilon^{-1}$ is smooth it is thus a diffeomorphism, which also respects the linear fibre structure and covers the identity. That $\Upsilon$ respects the holomorphic structure follows from the fact that $\Upsilon$ descends from the identity map on $G \times \mathbb{C}$. (Alternatively, from the definition \eqref{equation:fiberwise-dbar} it is possible to show $\Upsilon(\overline{\partial}^{J^\gamma}s) = \overline{\partial}^{B^\gamma} \Upsilon(s)$ for any section $s$ of $J^\gamma$, where $\overline{\partial}^{B^\gamma}$ is the natural $\overline{\partial}$-operator on $B^\gamma$ descending from $G \times \mathbb{C}$.)
	
	For the second claim, we note that the holomorphic structure on $B^\gamma$ really is the product holomorphic structure. Indeed, the constant section $s(gT) := [e, 1]_{\beta}$ is holomorphic, since it lifts to the function
	\[
	\overline{s}(g) := g^{-1} s(gT) = g^{-1} [g, \beta(g^{-1})]_{\beta} = \beta(g^{-1})
	\]
	on $G$. Since $\mathbb{S}^1$ is Abelian, $\beta$ is constant along $[G, G]$, and so in particular its derivative vanishes on $\mathfrak{n}^{\pm}$; thus $s$ is holomorphic, which completes the proof.
%


\end{proof}

\subsubsection{Laplacian eigenvalues} \label{sssection:laplace-eigenvalue} Recall that we equipped $G$ with a bi-invariant Riemannian metric $g$. Using the bi-invariance, the choice of $g$ bijectively corresponds to the choice of an $\Ad(G)$-invariant inner product on $\mathfrak{g} = \mathfrak{z} \oplus [\mathfrak{g}, \mathfrak{g}]$. For this, consider the Killing form $B(X, Y) := \Tr(\ad_X \circ \ad_Y)$ on $[\mathfrak{g}, \mathfrak{g}]$ ($-B$ is positive definite on $[\mathfrak{g}, \mathfrak{g}]$ and $\Ad(G)$-invariant by \cite[Theorem 6.16]{Sepanski-07}), and an arbitrary negative definite $A$ on $\mathfrak{z}$; then we define $g$ to be induced from the negative of $A \oplus B$. The negative definite symmetric bilinear form $A \oplus B$ extends complex bilinearly to $\mathfrak{g}_{\mathbb{C}} \times \mathfrak{g}_{\mathbb{C}}$, and in particular restricts to an inner product on $i\mathfrak{g} \times i\mathfrak{g}$. Write $\|\bullet\|$ for the norm defined by duality on $(i\mathfrak{t})^*$, and $\Delta_G$ for the (non-negative) Laplace operator induced by $g$.

\begin{lemma}\label{lemma:laplace-eigenvalue}
	Let $\rho: G \to \operatorname{U}(V)$ be an irreducible unitary representation with highest weight $\lambda = \phi(\mathbf{k})$ for some $\mathbf{k} \in \mathbb{Z}^a \times \mathbb{Z}^b_{\geq 0}$. Then for any $C \in \End(V)$, the function
	\[
		f_C(g) := \Tr(C \rho(g)^{-1}), \quad g \in G,
	\]
	is an eigenfunction of the Laplace operator with eigenvalue
	\begin{equation}
	\label{equation:definition-ck}
		c(\mathbf{k}) = \|\phi(\mathbf{k}) + \delta\|^2 - \|\delta\|^2,
	\end{equation}
	where $\delta$ is half of the sum of the positive roots, i.e. $\delta = \tfrac{1}{2} \sum_{\alpha \in \Delta^+} \alpha$. We have $c(\mathbf{k}) \geq 0$ with equality if and only if $\rho$ is the trivial representation.
\end{lemma}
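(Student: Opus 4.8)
The plan is to recognise $\Delta_G$ as (minus) the Casimir operator of the universal enveloping algebra $U(\mathfrak{g})$, and then to invoke the classical Freudenthal computation of the Casimir eigenvalue. Since $G$ is compact and $g$ is bi-invariant, it is standard that $\Delta_G = -\sum_{i=1}^{m}\widetilde{X}_i^{2}$, where $\{X_i\}$ is an orthonormal basis of $\mathfrak{g}$ for the inner product $\langle\cdot,\cdot\rangle := -(A\oplus B)$ defining $g$, and $\widetilde{X}$ denotes the left-invariant vector field $(\widetilde{X}f)(g) = \tfrac{d}{dt}\big|_{0} f(g\exp(tX))$. First I would record the elementary identity that, for the $\End(V)$-valued function $F(g) := \rho(g)^{-1}$, one has $\widetilde{X}F = -d\rho(X)F$ (left matrix multiplication), hence $\widetilde{X}^{2}F = d\rho(X)^{2}F$, and therefore
\[
\Delta_G F = -\,d\rho(\mathrm{Cas})\,F, \qquad \mathrm{Cas} := \sum_{i} X_i^{2} \in U(\mathfrak{g}).
\]
Since $\mathrm{Cas}$ is central and $\rho$ is irreducible, Schur's lemma gives $d\rho(\mathrm{Cas}) = -c\,\Id_V$ for a scalar $c \in \R$, so that $\Delta_G f_C = \Tr\!\big(C\,\Delta_G F\big) = c\,f_C$ for every $C \in \End(V)$; thus $f_C$ is automatically an eigenfunction, and it remains to identify $c$ and to settle the sign statement.

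Next I would compute $c$ by applying $d\rho(\mathrm{Cas})$ to a highest-weight vector $v_\lambda$, $\lambda = \phi(\mathbf{k})$. Choosing $\{X_i\}$ compatibly with the orthogonal splitting $\mathfrak{g} = \mathfrak{z}(\mathfrak{g}) \oplus [\mathfrak{g},\mathfrak{g}]$, refined on $\mathfrak{t}$ by suitably normalised root vectors $E_\alpha \in (\mathfrak{g}_{\C})_\alpha$, one writes $\mathrm{Cas}$ in the usual Cartan--Weyl form and uses $d\rho(E_\alpha)v_\lambda = 0$ for $\alpha \in \Delta_+(\mathfrak{g}_{\C})$, $d\rho(H)v_\lambda = \lambda(H)v_\lambda$, and $[E_\alpha, E_{-\alpha}] \in \mathfrak{t}$, to arrive at $d\rho(\mathrm{Cas})v_\lambda = -\big(\|\lambda\|^{2} + 2\langle\lambda,\delta\rangle\big)v_\lambda = -\big(\|\lambda+\delta\|^{2} - \|\delta\|^{2}\big)v_\lambda$; here the roots vanish on $\mathfrak{z}(\mathfrak{g})$, so $\delta$ has no central component and the central directions of $\mathrm{Cas}$ contribute precisely $\|\lambda|_{\mathfrak{z}}\|^{2}$, in accordance with the stated formula. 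This gives $c = c(\mathbf{k}) = \|\phi(\mathbf{k})+\delta\|^{2} - \|\delta\|^{2}$. I expect this to be the only substantive step, and it is entirely classical (see e.g. \cite{Sepanski-07}); the part requiring genuine care is matching the conventions in force here --- purely imaginary weights on $\mathfrak{t}$, the metric taken as $-(A\oplus B)$ with $B$ the Killing form on $[\mathfrak{g},\mathfrak{g}]$, the dual norm $\|\cdot\|$ on $(i\mathfrak{t})^{*}$, and the reductive splitting --- so that one lands on precisely the claimed expression and not a rescaled variant.

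Finally, for the sign statement: since $\rho$ is unitary, each $d\rho(X)$ with $X \in \mathfrak{g}$ is skew-Hermitian, so $-d\rho(\mathrm{Cas}) = \sum_i d\rho(X_i)^{*}d\rho(X_i) \geq 0$, whence $c(\mathbf{k}) \geq 0$. If $c(\mathbf{k}) = 0$ then $d\rho(X_i)v_\lambda = 0$ for all $i$, hence $d\rho(X)v_\lambda = 0$ for all $X \in \mathfrak{g}$, so $\C v_\lambda$ is a trivial subrepresentation; as $G$ is connected and $\rho$ irreducible, this forces $\rho$ to be the trivial representation, and the converse is immediate. (Alternatively, this follows from the formula, as $c(\mathbf{k}) = \|\lambda\|^{2} + 2\langle\lambda,\delta\rangle$ with $\langle\lambda,\delta\rangle \geq 0$ since $\lambda$ and $\delta$ are dominant, so $c(\mathbf{k}) = 0$ iff $\lambda = 0$.)
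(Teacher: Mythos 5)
Your proposal is correct and follows essentially the same route as the paper: identify $\Delta_G$ with (minus) the Casimir acting via $d\rho$, invoke Schur's lemma for irreducibility, and read off the scalar by evaluating on a highest-weight vector. The only substantive difference is presentational: where the paper outsources both the Casimir eigenvalue formula and the sign claim to \cite[Proposition 5.28]{Knapp-02}, you sketch the Cartan--Weyl computation and give a clean self-contained argument for $c(\mathbf{k}) \geq 0$ via $-d\rho(\mathrm{Cas}) = \sum_i d\rho(X_i)^*\,d\rho(X_i) \geq 0$ (using skew-Hermiticity of $d\rho(X_i)$), together with the kernel argument for the equality case. Your treatment is also slightly more careful with signs: note that, read literally, the paper's display $\Delta_G f_C(g) = \sum_i \partial_t^2|_{t=0} f_C(e^{tX_i}g)$ is missing the minus from $\Delta_G = -\sum_i X_i^2$; your $\Delta_G F = -d\rho(\mathrm{Cas})F$ is the correct statement, and it is what makes the sign of $c(\mathbf{k})$ come out right.
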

\begin{proof}
	Let $(X_i)_{i = 1}^{\dim G}$ be an orthonormal basis of $\mathfrak{g}$, and use the same notation for their extension as left-invariant vector fields on $G$. It is straightforward to compute that $\Delta_G = -\sum_i X_i^2$. Then we compute
	\[
		\Delta_G f_C(g) = \sum_i \partial_t^2|_{t = 0} f_C(e^{tX_i}g) = \sum_i \partial_t^2|_{t = 0} f_{\rho(e^{-tX_i}) C}(g) = f_{d\rho(\Omega) C}(g), 
	\]
	where $\Omega := \sum_i X_i \otimes X_i \in \mathfrak{g} \otimes \mathfrak{g}$. Next, it can be shown that $d\rho(\Omega)$ commutes with the representation, see \cite[Proposition 5.24]{Knapp-02} (in fact, $\Omega$ is known as the \emph{Casimir element} and belongs to the centre of the \emph{universal enveloping algebra}) and so by Schur's lemma it is a constant multiple of identity. Write $c(\mathbf{k})$ for this constant; we are left to compute the value $c(\mathbf{k})$.
	
	In the case when $\mathfrak{g}$ is semisimple, this is done in \cite[Proposition 5.28]{Knapp-02}. In the general case, we may simply take $X_1, \dotsc, X_a$ to be an orthonormal basis of $\mathfrak{z}$, and complete this basis as in \cite[Proposition 5.28]{Knapp-02} (where now $\mathfrak{t}$ is replaced by $\mathfrak{t}'$); the same argument using the highest weight vector of $\rho$ then applies and gives the claimed formula. The final claim follows by the same argument as in \cite[Proposition 5.28]{Knapp-02} (from the \emph{dominant property} of the highest weight vector).
	\end{proof}

By Lemma \ref{lemma:laplace-eigenvalue}, we note that by equivalence of norms there is a constant $C > 1$ such that
\begin{equation}
\label{equation:ck-asymptotic}
	C^{-1} (1 + |\mathbf{k}|^2) \leq c(\mathbf{k}) \leq C(1 + |\mathbf{k}|^2), \quad \mathbf{k} \neq 0.
\end{equation}

\subsection{Topology of the line bundles}
\label{sssection:topology}

In this section we investigate the topology of the holomorphic line bundles constructed previously. We first study the cohomology of $G/T$.

\begin{proposition}\label{prop:G/T-topology}
	Let $G$ be a compact connected Lie group and $T \leqslant G$ a maximal torus. Then 
	\begin{enumerate}[itemsep=5pt]
		\item[1.] The cohomology ring $H^{\bullet}(G/T, \mathbb{Z})$ has no torsion and is zero in odd degrees. 
		
		\item[2.] Moreover, $H^2(G/T, \mathbb{Z}) \cong \Z^{b}$, where $b = \dim T - \dim Z(G)$.
	\end{enumerate}
\end{proposition}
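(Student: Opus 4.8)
The plan is to deduce both statements from the classical Bruhat/Schubert cell decomposition of the flag manifold $G/T$. First I would pass to the complexification: write $G_{\mathbb{C}}$ for the complexification of $G$ (or work with $G_0$ and its complexification on the semisimple part, treating the central torus separately), and let $B \leqslant G_{\mathbb{C}}$ be the Borel subgroup with Lie algebra $\mathfrak{t}_{\mathbb{C}} \oplus \mathfrak{n}^+$. The standard fact (see e.g. \cite{Brocker-Dieck-85} or any reference on flag varieties) is that $G/T \cong G_{\mathbb{C}}/B$, and the Bruhat decomposition $G_{\mathbb{C}} = \bigsqcup_{w \in W} B w B$ gives a decomposition of $G_{\mathbb{C}}/B$ into $B$-orbits $C_w := BwB/B$, each of which is a complex affine cell $C_w \cong \mathbb{C}^{\ell(w)}$, where $\ell(w)$ is the length of $w$ in the Weyl group $W$ (with respect to the chosen simple roots $\Pi$). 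Thus $G/T$ is a finite CW complex with exactly one cell in each \emph{even} real dimension $2\ell(w)$, $w \in W$, and no odd-dimensional cells.

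From this cell structure both claims follow by standard algebraic topology. Since all cells are even-dimensional, the cellular chain complex has zero differentials, so $H^{\bullet}(G/T,\mathbb{Z})$ is free abelian, concentrated in even degrees, with $H^{2j}(G/T,\mathbb{Z}) \cong \mathbb{Z}^{N_j}$ where $N_j = \#\{w \in W : \ell(w) = j\}$. This gives statement 1 immediately. For statement 2, I need $N_1 = b = \dim T - \dim Z(G) = \dim \mathfrak{t}'$, i.e. the number of Weyl group elements of length $1$. But length-$1$ elements of $W$ are exactly the simple reflections $s_\alpha$, $\alpha \in \Pi$, and these are in bijection with the system of simple roots $\Pi$, which is a basis of $(i\mathfrak{t}')^*$ by definition; hence $\#\Pi = \dim \mathfrak{t}' = b$. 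Therefore $H^2(G/T,\mathbb{Z}) \cong \mathbb{Z}^b$ as claimed. (One should note here that only the semisimple part contributes: when $G$ has a central torus factor $(\mathbb{S}^1)^a$, that factor contributes a point to $G/T$ and does not affect the flag variety of $G$, which is the flag variety of $G_0$; this is why $b$ rather than $d = a+b$ appears.)

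An alternative route for statement 1, avoiding explicit flag-variety machinery, is via the fibration $T \to G \to G/T$ together with the fact that $H^{\bullet}(G,\mathbb{Q})$ is an exterior algebra on odd generators and a rational computation with the Serre spectral sequence; but this only gives the rational/real statement and getting \emph{torsion-freeness} genuinely wants the cell decomposition (or the theorem of Bott that $G/T$ has torsion-free homology). So I would stick with the Bruhat cell approach, which delivers the integral statement directly.

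The main obstacle I anticipate is purely expository: making the identification $G/T \cong G_{\mathbb{C}}/B$ precise in the presence of a central torus (so that the flag variety is really $G_0/T'$ with $T' = T \cap G_0$), and citing cleanly the statement that Bruhat cells are affine spaces of dimension equal to the Weyl length — this is standard but needs a careful reference. The combinatorial input ($\ell(w) = 1 \iff w$ is a simple reflection, and simple reflections $\leftrightarrow$ simple roots $\leftrightarrow$ basis of $(i\mathfrak{t}')^*$) is elementary once the cell decomposition is in hand.
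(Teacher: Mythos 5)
Your proposal is correct, and it follows a genuinely different route from the paper. You appeal to the Bruhat/Schubert cell decomposition of the complex flag variety $G_{\mathbb{C}}/B \cong G/T$: cells $C_w \cong \mathbb{C}^{\ell(w)}$ indexed by $W$, all of even real dimension, so the cellular (co)chain complex has vanishing differentials, giving torsion-freeness and vanishing in odd degrees at once; $H^2$ is then counted by length-one Weyl elements, i.e.\ simple reflections, i.e.\ simple roots, of which there are $b = \dim\mathfrak{t}'$. The paper instead cites Borel for Item~1 and, for Item~2, runs a ``bare-hands'' algebraic-topology argument: the long exact sequence of the pair $(G,T)$, the absolute and relative Hurewicz theorems to identify $H^2(G/T,\mathbb{Z}) \cong H^2(G,T;\mathbb{Z})$, the universal coefficient theorem, and a reduction to the simply connected case via the finite cover $(\mathbb{S}^1)^a\times G_0 \to G$ to make the connecting map $\delta: H^1(T)\to H^2(G,T)$ an isomorphism. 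The trade-off is clear: your route buys a shorter and conceptually cleaner proof (Item~1 comes out for free, Item~2 is a one-line combinatorial count), at the cost of importing the Bruhat decomposition and the identification $G/T\cong G_{\mathbb{C}}/B$ as black boxes from algebraic geometry; the paper's argument is longer but stays entirely within the elementary algebraic topology of the fibration $T\to G\to G/T$ and the Lie-theoretic structure theory that the paper already develops. Both are valid; the only loose end in your write-up, which you flag yourself, is pinning down references for the complex/algebraic facts (Iwasawa decomposition for $G/T\cong G_{\mathbb{C}}/B$, Bruhat cells being affine and forming a CW structure), since Br\"ocker--tom Dieck does not contain the full Bruhat decomposition.
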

\begin{proof}
	\emph{Item 1.} The first point is due to \cite[Section 26]{Borel-53}.
	\medskip
	
	\emph{Item 2.} The second point is an exercise in algebraic topology, which is (more or less) known but is difficult to locate a precise reference, so we give a direct proof. The starting point is the long exact sequence of the pair $(G, T)$ in cohomology:
	\begin{align*}
\dotso &\longrightarrow \underbrace{H^1(G, T; \Z)}_{\cong \{0\}} \longrightarrow H^1(G,\Z) \longrightarrow H^1(T,\Z) \overset{\delta}{\longrightarrow} H^2(G, T; \Z)\\
 &\longrightarrow H^2(G,\Z) \longrightarrow H^2(T, \Z) \longrightarrow \dotso
	\end{align*}
	That the first space is zero, we will see at the end of Step 1. 
	\medskip
	
	\emph{Step 1.} We first claim that $H^2(G/T, \Z)$ is isomorphic to $H^2(G, T; \Z)$. Indeed, denoting the quotient map by $\pi: G \to G/T$, the pullback map $\pi^*: H^2(G/T, \mathbb{Z}) \to H^2(G, \Z)$ is well-defined. Moreover, denoting $j: T \to G$ the inclusion, the map $\pi \circ j$ is constant, and so trivial on cohomology. In particular, the map $\pi^*$ lifts as a map $\pi^*: H^2(G/T, \Z) \to H^2(G, T; \Z)$, which we claim is an isomorphism. Consider the commutative diagram, coming from the naturality of Hurewicz maps:
	\[
	\begin{tikzcd}
		H_2(G, T; \Z) \dar{\pi_*} & \lar[swap]{h_{\mathrm{rel}}} \pi_2(G, T)\dar{\cong}\\
		H_2(G/T, \Z) & \lar{h_{\mathrm{abs}}} \pi_2(G/T),
	\end{tikzcd}
	\]
	where $\pi_2$ denotes the second homotopy group, $h_{\mathrm{rel}}$ and $h_{\mathrm{abs}}$ denote relative and absolute Hurewicz maps, respectively, $\pi_*$ is defined analogously to $\pi^*$ above (using the long exact sequence in homology), and the map to the right is an isomorphism thanks to \cite[Theorem 4.41]{Hatcher-02}. We claim that $\pi_*$ is an isomorphism; it suffices to show that $h_{\mathrm{rel}}$ and $h_{\mathrm{abs}}$ are both isomorphisms. 
	
	By the long exact sequence in homotopy groups for the pair $(G, T)$ we get 
	\[\small
	\begin{tikzcd}
		\dotso \rar & \underbrace{\pi_2(G)}_{\cong \{0\}}\rar & \pi_2(G, T) \rar & \pi_1(T) \rar[two heads] & \pi_1(G) \rar & \pi_1(G, T) \rar & 0, 
	\end{tikzcd}
	\]
	where the fourth arrow is surjective and $\pi_2(G)$ is trivial, both thanks to \cite[Theorem 7.1]{Brocker-Dieck-85}, implying that $\pi_1(G, T) \cong \{0\}$ and $\pi_2(G, T)$ is Abelian (as $\pi_1(T)$ is Abelian). It follows that also $\pi_1(G/T) \cong \{0\}$. Next, we claim that the action of $\pi_1(T)$ on $\pi_2(G, T)$ is trivial. Then, by the absolute and relative Hurewicz theorems (see \cite[Theorem 4.32]{Hatcher-02}), we deduce that $\pi_*$ is an isomorphism.
	
	To see the latter claim, let $f: (D^2, \mathbb{S}^1, z_0) \to (G, T, e)$ define a class $[f] \in \pi_2(G, T)$ (where $D^2 \subset \mathbb{R}^2$ is the unit disk) and take a loop $\gamma: [0, 1] \to T$ based at $e$. Then, by definition, $[\gamma] [f] = [f_1]$, where $(f_s)_{s \in [0, 1]}$ is a homotopy satisfying $f_s(z_0) = \gamma(1 - s)$ (see \cite[Section 4.A]{Hatcher-02}). Using the group structure, observe we may take $f_s(z) = \gamma(1 - s) f(z)$ which satisfies all the conditions; thus $[\gamma][f] = [f]$ in this case. We conclude that the action of $\pi_1(T)$ on $\pi_2(G, T)$ is trivial, as claimed.
	
	Next, by the universal coefficients theorem (see \cite[Corollary 3.3]{Hatcher-02}), we get that 
	\[
		H^2(G, T; \Z) \cong \operatorname{Hom}(H_2(G, T; \Z), \mathbb{Z}) \oplus \operatorname{Tor}(H_1(G, T; \Z)),
	\]
	where $\mathrm{Tor}$ denotes the torsion operator. As noted above, the inclusion $\pi_1(T) \to \pi_1(G)$ is surjective, and hence is its Abelianisation, $H_1(T) \to H_1(G)$; thus, by the long exact sequence in homology for the pair $(G,T)$ we conclude that $H_1(G, T; \Z) \cong \{0\}$. It follows that $H^2(G, T; \Z)$ has no torsion and that $\pi^*$ is also an isomorphism.
	
	Finally, the space $H^1(G, T; \Z)$ is zero by the universal coefficients theorem and the fact that $H_1(G, T; \Z)$ is zero.
	\medskip

	\emph{Step 2.} First we claim that $H^1(G, \Z) \cong \mathbb{Z}^a$, where $a := \dim Z(G)$. By universal coefficients theorem $H^1(G, \Z) \cong \operatorname{Hom}(H_1(G, \Z), \Z)$, and since $\pi_1(G)$ is Abelian (for instance, as $\pi_1(T)$ surjects onto it), we know that $H_1(G, \Z) \cong \pi_1(G)$. By structure theory for compact Lie groups, there is a finite cover $p: \widetilde{G} := (\mathbb{S}^1)^a \times G_0 \to G$, which is also a homomorphism, and $G_0$ is simply connected (see \cite[Theorem 8.1]{Brocker-Dieck-85}). It follows that the rank of $\pi_1(G)$ is equal to $a$, proving the claim.
	
%
	 Again, by structure theory of Lie groups, $\widetilde{G}/\Gamma \cong G$ where $\Gamma \leqslant Z(\widetilde{G})$. A maximal torus of $\widetilde{G}$ is $\widetilde{T} = p^{-1}(T) = (\mathbb{S}^1)^a \times T_0$, where $T_0 \leqslant G_0$ is a maximal torus. Thus $p$ descends to $p: \widetilde{G}/\widetilde{T} \to G/T$, and this map is surjective. It is in fact also injective, since $Z(\widetilde{G}) \leqslant \widetilde{T}$ by Cartan's theorem \cite[Chapter IV, Theorem 1.6]{Brocker-Dieck-85} (every element of a Lie group is conjugate to an element of $T$). Since $p$ is also an immersion, it follows that $p$ is a diffeomorphism. Since $\widetilde{G}/\widetilde{T} \cong G_0/T_0$, we have reduced the main claim to a claim about simply connected Lie groups $G$.
	
	In this case $H^1(G, \Z) \cong \{0\}$, so $\delta$ is injective, and $H^2(G, \Z) \cong \{0\}$ by Hurewicz's theorem (and the fact that $\pi_1(G) \cong \pi_2(G) \cong \{0\}$), so $\delta: H^1(T, \Z) \to H^2(G, T; \Z)$ is an isomorphism, which together with Step 1 completes the proof.
	\end{proof}
	
	\begin{remark}
		In fact, the \emph{transgression map} appearing on the second page of the Leray-Serre spectral sequence is then equal to $\tau = \delta \circ (\pi^*)^{-1}$, see \cite[page 540]{Hatcher-notes}, and using the proof of Proposition \ref{prop:G/T-topology} it follows that
	\[
		H^2(G/T, \mathbb{Z})/ \mathrm{im}(\tau) \cong H_1(G, \Z).
	\]
	Also, an alternative argument that circumvents the reduction to simply connected Lie groups in Step 2 is possible: it starts by noting $H^2(G, \Z) \cong \mathrm{Hom}(H_2(G, \Z), \Z) \oplus \mathrm{Tor}(H_1(G, \Z))$, and that the non-torsion part is determined by Hopf's theorem \cite[Theorem 3C.4]{Hatcher-02}, isomorphic to $\mathbb{Z}^{{a \choose 2}}$ and generated by cup products of elements of $H^1(G, \Z)$, and so by K\"unneth's formula, the map $H^2(G, \Z) \to H^2(T, \Z)$ is injective on the non-torsion part and zero on the torsion part. It follows that the quotient space $H^2(G, T; \Z)/ \mathrm{im}(\delta)$ is isomorphic to $\mathrm{Tor}(H_1(G, \Z))$, which concludes the argument.
	\end{remark}

Let us now introduce a suitable basis of de Rham cohomology classes on $H^2_{\mathrm{dR}}(G/T)$ and compute the Chern classes of the bundles $J^\gamma$ in terms this basis and the weight $\gamma: T \to \mathbb{S}^1$, where $T \leqslant G$ is a maximal torus with Lie algebra $\mathfrak{t}$. Recall that $\mathfrak{z}$ denotes the Lie algebra of the centre of $G$, and $[\mathfrak{g}, \mathfrak{g}]$ denotes the commutator Lie algebra.


The \emph{Maurer-Cartan} $1$-form $\omega_{\mathrm{MC}}$ on $G$, with values in $\mathfrak{g}$, is defined as
\[
	\omega_{\mathrm{MC}}(g)(v) := dL_{g^{-1}} v, \quad v \in T_gG,
\]
where $L_{\bullet}$ denotes left multiplication by $\bullet \in G$. It is known (see \cite[page 41]{Kobayashi-Nomizu-63}) that $\omega_{\mathrm{MC}}$ satisfies
\[
	R_g^* \omega_{\mathrm{MC}} = \Ad(g^{-1}) \omega_{\mathrm{MC}},\,\, g \in G, \quad \omega_{\mathrm{MC}}(X_\xi) = \xi, \quad \xi \in \mathfrak{g},
\]
where $\Ad(\bullet)$ denotes the adjoint representation of $G$ on $\mathfrak{g}$ and $X_\xi$ is the fundamental vector field generated by $\xi$. The curvature of $\omega_{\mathrm{MC}}$ is zero (when thinking of $\omega_{\mathrm{MC}}$ as the connection $1$-form on the trivial bundle principal bundle $G \to \{e\}$, where $e \in G$ is the neutral element), i.e.
\begin{equation}\label{eq:MC-flat}
	d\omega_{\mathrm{MC}}(X, Y) + [\omega_{\mathrm{MC}}(X), \omega_{\mathrm{MC}}(Y)] = 0,\,\, \forall X, Y \in C^\infty(G, TG).
\end{equation}
There is a natural $\Ad(T)$-invariant splitting
\[
	\mathfrak{g} = \mathfrak{t} \oplus \mathfrak{m} = \mathfrak{z} \oplus \mathfrak{t} \cap [\mathfrak{g}, \mathfrak{g}] \oplus \mathfrak{m},
\]
where $\mathfrak{m}$ is the real part of $\mathfrak{n}^+ \oplus \mathfrak{n}^-$ (see \eqref{eq:positive-negative-splitting}). Note that $\mathfrak{m} \leqslant [\mathfrak{g}, \mathfrak{g}]$ by definition of $\mathfrak{n}^\pm$. Denoting the projection to $\mathfrak{t}$ by $\Pi_{\mathfrak{t}}$, it is straightforward to check that $\omega := \Pi_{\mathfrak{t}} \omega_{\mathrm{MC}}$ is a connection $1$-form on the principal $T$-bundle $G \to G/T$. Take a basis $(\f_j)_{j = 1}^{a + b}$ of $\mathfrak{g}$ such that $(\f_j)_{j = 1}^a$ is a basis of $\mathfrak{z}$ dual to $(-i\lambda_j)_{j = 1}^{a}$, and $(\f_j)_{j = a + 1}^{a + b}$ is a basis of $\mathfrak{t} \cap [\mathfrak{g}, \mathfrak{g}]$ dual to $(-i\lambda_j)_{j = a + 1}^{a + b}$. In this basis write
\[
	\omega = (\omega_1, \dotsc, \omega_a, \omega_{a + 1}, \dotsc, \omega_{a + b}),
\]
where $(\omega_j)_{j = 1}^{a + b}$ are real-valued $1$-forms. Notice that $d\omega_1 = \dotsb = d\omega_a = 0$ thanks to \eqref{eq:MC-flat} and the fact that $[\mathfrak{m}, \mathfrak{m}] \leqslant \mathfrak{t} \cap [\mathfrak{g}, \mathfrak{g}]$, as well as $[\mathfrak{m}, \mathfrak{t}] \leqslant \mathfrak{m}$. Moreover, write
\[
	d\omega_{a + 1} = \pi^* \eta_1, \dotsc, d\omega_{a + b} = \pi^*\eta_{b},
\]
for some closed real-valued $2$-forms $\eta_1, \dotsc, \eta_b$ on $G/T$ (this is possible as $R_g^*\omega_i = \omega_i$ and $d\omega_i$ vanishes on vertical vectors for all $i$). Then

\begin{proposition}\label{prop:cohomology-basis}
	We have that $([\omega_i])_{i = 1}^a$ and $(\eta_i)_{i = 1}^b$ are bases of $H^1_{\mathrm{dR}}(G)$ and $H^2_{\mathrm{dR}}(G/T)$, respectively. 
\end{proposition}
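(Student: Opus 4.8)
The plan is to handle the two statements separately, in each case reducing to the dimension counts provided by Proposition~\ref{prop:G/T-topology} together with a linear independence argument.

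\emph{First claim.} I would begin by observing that the $1$-forms $\omega_1,\dotsc,\omega_a$ are \emph{bi-invariant}. Left-invariance is immediate since they are components of the Maurer--Cartan form; right-invariance follows from $R_g^*\omega_{\mathrm{MC}} = \Ad(g^{-1})\omega_{\mathrm{MC}}$ together with the fact that $\Ad(G)$ fixes $\mathfrak{z}$ pointwise and preserves the splitting $\mathfrak{g} = \mathfrak{z}\oplus[\mathfrak{g},\mathfrak{g}]$, so that the component of $\omega_{\mathrm{MC}}$ along $\f_j \in \mathfrak{z}$ ($j\le a$) is unchanged (here one uses that $\Ad(G)$ maps $[\mathfrak{g},\mathfrak{g}]$ into itself and $\Pi_{\mathfrak{t}}$ maps $[\mathfrak{g},\mathfrak{g}]$ into $\mathfrak{t}\cap[\mathfrak{g},\mathfrak{g}]$). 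They are closed (recorded just before the statement), and at the identity they restrict to the linearly independent functionals $-i\lambda_1,\dotsc,-i\lambda_a$ on $\mathfrak{z}$. By the classical fact that on a compact connected Lie group every de Rham class has a unique bi-invariant representative (obtained by averaging over left and right translations, the bi-invariant complex carrying zero differential), and since the space $(\mathfrak{g}^*)^{\Ad(G)}$ of bi-invariant $1$-forms equals $\mathfrak{z}^*$ — any invariant functional vanishes on $[\mathfrak{g},\mathfrak{g}]$ — which has dimension $a = \dim H^1_{\mathrm{dR}}(G)$ (consistently with Proposition~\ref{prop:G/T-topology}), the classes $[\omega_1],\dotsc,[\omega_a]$ form a basis of $H^1_{\mathrm{dR}}(G)$.

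\emph{Second claim.} The forms $\eta_i$ are closed, since $\pi^*d\eta_i = d\,d\omega_{a+i} = 0$ and $\pi^*$ is injective on forms, and by Proposition~\ref{prop:G/T-topology} the space $H^2_{\mathrm{dR}}(G/T)$ has dimension $b$; it thus suffices to prove that the $[\eta_i]$ are linearly independent. Assume $\sum_{i=1}^b c_i\eta_i = d\beta$ for some $\beta\in\Omega^1(G/T)$. Pulling back to $G$ yields $d\bigl(\sum_i c_i\omega_{a+i} - \pi^*\beta\bigr) = 0$, so by the first claim there are constants $d_1,\dotsc,d_a$ and $f\in C^\infty(G)$ with
\[
\sum_{i=1}^b c_i\,\omega_{a+i} - \pi^*\beta = \sum_{j=1}^a d_j\,\omega_j + df .
\]
Now I would restrict this identity to a fibre $T_x = \pi^{-1}(x)$. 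Parametrising $T_x$ through a point $g$ by $t\mapsto gt$, one checks that $\omega$ pulls back to the Maurer--Cartan form of $T$; hence each $\omega_k|_{T_x}$ is a nonvanishing translation-invariant $1$-form on the torus $T$, the family $\omega_1|_{T_x},\dotsc,\omega_{a+b}|_{T_x}$ is linearly independent, and $\pi^*\beta|_{T_x}=0$. The restricted identity reads $\sum_i c_i\omega_{a+i}|_{T_x} - \sum_j d_j\omega_j|_{T_x} = d(f|_{T_x})$: the left-hand side is translation-invariant hence harmonic on $T$, while the right-hand side is exact, so both sides vanish. Linear independence of the $\omega_k|_{T_x}$ then forces $c_i = 0$ for all $i$, and therefore $[\eta_1],\dotsc,[\eta_b]$ is a basis of $H^2_{\mathrm{dR}}(G/T)$.

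The closedness assertions and the dimension counts are routine (the latter already established in Proposition~\ref{prop:G/T-topology}); the step that requires genuine care is the linear independence of the $[\eta_i]$ — i.e. the fibre-restriction argument and its reliance on the description of $H^1_{\mathrm{dR}}(G)$ from the first claim — and that is where I expect the main, if modest, difficulty to lie. A secondary point worth spelling out is the bi-invariance of $\omega_1,\dotsc,\omega_a$ and the fact that $\omega$ restricts to the Maurer--Cartan form of $T$ along each fibre, both of which are elementary but best made explicit.
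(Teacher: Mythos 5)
Your proof is correct, but it takes a genuinely different route from the paper's in both halves. For the first claim, the paper argues directly and elementarily: assuming $\sum_{i=1}^a c_i\omega_i = df$, it contracts with the fundamental vector field $X_{\f_j}$ to get $c_j = X_{\f_j}f$, then integrates along a fibre of $G\to G/T$ (using that $X_{\f_j}$ preserves the Haar measure) to obtain $c_j = 0$. You instead invoke the Chevalley--Eilenberg description of $H^\bullet_{\mathrm{dR}}(G)$ by bi-invariant forms and identify the bi-invariant $1$-forms with $\mathfrak{z}^*$; this is more conceptual but imports a heavier result, and your verification that $\omega_1,\dotsc,\omega_a$ are bi-invariant (using that $\Ad(G)$ preserves the splitting $\mathfrak{g}=\mathfrak{z}\oplus[\mathfrak{g},\mathfrak{g}]$ and acts trivially on $\mathfrak{z}$) is correct and necessary. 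For the second claim, both proofs first arrive at the identity $\sum_{i=1}^b c_i\omega_{a+i} = \pi^*\beta + \sum_{j=1}^a C_j\omega_j + df$; the paper then re-applies the same contraction-and-integration device with $X_{\f_{a+j}}$ (using that $\pi^*\beta$ and the $\omega_j$, $j\leq a$, vanish on $X_{\f_{a+j}}$), whereas you restrict everything to a fibre $T_x\cong T$, observe that $\omega$ there becomes the Maurer--Cartan form of $T$, and argue that a translation-invariant $1$-form on a torus which is exact must vanish. Both are sound; your fibre-restriction is essentially the same observation recast in Hodge-theoretic rather than measure-theoretic language, and the paper's version has the advantage of reusing one uniform technique for both halves, while yours makes the torus geometry along the fibre more visible.
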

\begin{proof}
	For the first claim, note that $H^1(G, \Z) \cong \Z^a$, as follows from the structure theory of Lie groups: indeed, $\pi_1(G)$ is Abelian and of rank $a$ (by the proof of Proposition \ref{prop:G/T-topology}), so the claim follows using $H^1(G, \Z) \cong \mathrm{Hom}(H_1(G, \Z), \Z)$ and Hurewicz's theorem. Thus, it suffices to show that $([\omega_i])_{i = 1}^a$ are linearly independent. Assume $\sum_i c_i [\omega_i] = 0$ for some $c_i \in \mathbb{R}$, i.e. 
	\[
		\sum_{i = 1}^a c_i \omega_i = df, \quad f \in C^\infty(G).
	\]   
	For fixed $j$, plugging in the fundamental vector fields $X_{\f_j}$, we get that $c_j = X_{\f_j} f$; integrating along an arbitrary fibre and using that $X_{\f_j}$ preserves the Haar measure, we get that $c_j = 0$. As $j$ was arbitrary, this proves the claim.
	
	For the latter claim, by the preceding proposition, we know that the dimension of de Rham cohomology is $\dim H_{\mathrm{dR}}^2(G/T) = b$, so it suffices to show that $([\eta_i])_{i = 1}^b$ are linearly independent. Assume that $\sum_i c_i [\eta_i] = 0$ for some $c_i \in \mathbb{R}$. Then $\sum_i c_i \eta_i = d\beta$ for some $1$-form $\beta$ on $G/T$, and so
	\[
		d\big(- \pi^*\beta + \sum_i c_i \omega_{a + i}\big) = 0,
	\]
	which, combined with the first part of the proposition, implies there is $f \in C^\infty(G)$ and $C_1, \dotsc, C_a \in \mathbb{R}$ such that
	\[
		\sum_{i = 1}^b c_i \omega_{a + i} = \pi^*\beta + \sum_{i = 1}^a C_i \omega_i + df.
	\]
	Plugging in fundamental vector fields $X_{\f_j}$ for some fixed $j$ and integrating in the fibres (as above) gives $c_j = 0$ for $j = 1, \dotsc, b$, which completes the proof.
\end{proof}

We turn to the first Chern class of $J^\gamma = G \times_\gamma \mathbb{C}$. By \eqref{equation:ecriture}, there is a $\mathbf{k} = (k_1, \dotsc, k_{a + b}) \in \mathbb{Z}^a \times \mathbb{Z}^b_{\geq 0}$ such that $d\gamma(e) = \phi(\mathbf{k}) \in (i \mathfrak{t})^*$. Then 

\begin{proposition}\label{prop:line-bundle-topology}
		The first Chern class of $J^\gamma$ satisfies
		\[
			2\pi c_1(J^\gamma) = \sum_{j = 1}^b k_{a + j} [\eta_j] \in H^2_{\mathrm{dR}}(G/T).
		\]
		In particular, $J^\gamma$ is topologically trivial if and only if $k_{a + 1} = \dotsb = k_{a + b} = 0$.
	\end{proposition}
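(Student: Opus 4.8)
The plan is to represent $c_1(J^\gamma)$ by the Chern--Weil form of the canonical connection $\nabla_{J^\gamma}$ of \eqref{equation:fiberwise-chern}, and to read this form off from the splitting $\omega = (\omega_1,\dotsc,\omega_{a+b})$ of the connection $1$-form $\omega = \Pi_{\mathfrak t}\,\omega_{\mathrm{MC}}$ on the principal $T$-bundle $\pi : G \to G/T$. The ``in particular'' clause will then follow from the linear independence of the $[\eta_i]$ (Proposition \ref{prop:cohomology-basis}) together with the absence of torsion in $H^2(G/T,\Z)$ (Proposition \ref{prop:G/T-topology}).

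\emph{Step 1 (identify the connection).} First I would observe that $J^\gamma = G\times_\gamma\C$ is exactly the line bundle associated to $\pi : G \to G/T$ via the character $\gamma : T \to \mathbb{S}^1$, and that $\nabla_{J^\gamma}$ of \eqref{equation:fiberwise-chern} is the connection it inherits from $\omega$: the lift $\overline{X}$ used there, namely the $T$-invariant lift with values in the real part $\mathfrak m$ of $\mathfrak n^+\oplus\mathfrak n^-$, is precisely the $\omega$-horizontal lift, since $\ker\omega_g = dL_g(\mathfrak m)$. Consequently the curvature $F^{\nabla_{J^\gamma}}$, regarded as an $i\R$-valued $2$-form on $G/T$, is $d\gamma(e) : \mathfrak t \to i\R$ applied to the curvature $2$-form $\overline\Omega$ of $\omega$; as $\mathfrak t$ is Abelian there is no quadratic term, so $\overline\Omega$ is just the descent of $d\omega$ to $G/T$.

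\emph{Step 2 (compute the curvature).} Writing $\omega = \sum_{j=1}^{a+b}\omega_j\f_j$ and using $d\omega_1 = \dotsb = d\omega_a = 0$ together with $d\omega_{a+i} = \pi^*\eta_i$ (recorded just before the statement) gives $\overline\Omega = \sum_{i=1}^b \eta_i\,\f_{a+i}$ on $G/T$. Since $d\gamma(e) = \phi(\mathbf k) = \sum_{j=1}^{a+b} k_j\lambda_j$, and the summands with $j\leq a$ lie in $(i\mathfrak z)^*$ and hence vanish on $\mathfrak t' = \mathfrak t\cap[\mathfrak g,\mathfrak g]\ni\f_{a+i}$ while $(\f_j)_{j=a+1}^{a+b}$ is dual to $(-i\lambda_j)_{j=a+1}^{a+b}$, we get $d\gamma(e)(\f_{a+i}) = i\,k_{a+i}$ and therefore $F^{\nabla_{J^\gamma}} = i\sum_{i=1}^b k_{a+i}\,\eta_i$. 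Chern--Weil theory then identifies $c_1(J^\gamma)$ with the class of $\tfrac{1}{2\pi i}F^{\nabla_{J^\gamma}}$, i.e.\ $2\pi c_1(J^\gamma) = \sum_{i=1}^b k_{a+i}[\eta_i]$ in $H^2_{\mathrm{dR}}(G/T)$. The one place where genuine care is required is the bookkeeping of this step: matching \eqref{equation:fiberwise-chern} with the induced connection, pushing $\overline\Omega$ down to $G/T$, and contracting with $d\gamma(e)$ against the dual bases with the correct normalisation and sign.

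\emph{Step 3 (topological triviality).} By Proposition \ref{prop:cohomology-basis} the classes $[\eta_1],\dotsc,[\eta_b]$ are linearly independent in $H^2_{\mathrm{dR}}(G/T)$, so the formula just proved shows $c_1(J^\gamma) = 0$ in $H^2_{\mathrm{dR}}(G/T)$ if and only if $k_{a+1} = \dotsb = k_{a+b} = 0$. As $H^2(G/T,\Z)$ is torsion-free (Proposition \ref{prop:G/T-topology}), the integral class $c_1(J^\gamma)\in H^2(G/T,\Z)$ injects into $H^2_{\mathrm{dR}}(G/T)$, so it vanishes integrally precisely when it vanishes in de Rham cohomology; and for complex line bundles over a compact manifold the integral first Chern class is a complete topological invariant, so $J^\gamma$ is topologically trivial exactly when $c_1(J^\gamma) = 0$ in $H^2(G/T,\Z)$. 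Chaining these equivalences gives the claim. The main obstacle throughout is Steps 1--2; Step 3 is standard algebraic topology.
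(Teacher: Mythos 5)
Your proof is correct and takes essentially the same approach as the paper: identify the curvature of $J^\gamma$ as $d\gamma(e)$ contracted against $d\omega$ (the bracket term vanishing since $T$ is Abelian), then read off the result from the decomposition $\omega = (\omega_1,\dotsc,\omega_{a+b})$ using the dual-basis pairing $\lambda_j(\f_{a+i}) = i\,\delta_{j,a+i}$. You merely spell out two points the paper leaves implicit: that $\nabla_{J^\gamma}$ of \eqref{equation:fiberwise-chern} is the connection associated to $\omega$ (via $\ker\omega_g = dL_g(\mathfrak m)$), and the torsion-freeness step in passing from de~Rham vanishing of $c_1$ to topological triviality.
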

	\begin{proof}
Write $F_\gamma$ for the (purely imaginary) curvature $2$-form on $G/T$ of $J^\gamma$. We will make use of the following formula for the curvature (see \cite[Chapter III, Theorem 5.1]{Kobayashi-Nomizu-63})
		 \[
		 	F_\gamma(x)(X, Y) = p \big(d\gamma(e) d\omega(p)(X^{\HH}, Y^{\HH})\big) p^{-1},
		 \]
		 where $x \in G/T$, $X, Y \in T_x(G/T)$, $p \in \pi^{-1}x$ is arbitrary, $X^{\HH}, Y^{\HH}$ denote horizontal lifts of $X, Y$ at $p$, and $p: \mathbb{C} \to J_\gamma(x)$, $\xi \mapsto [p, \xi]$, is the identification of the fibre with $\mathbb{C}$. Expanding we get
		 \[
		 	F_{\gamma}(x)(X, Y) = \sum_{j = 1}^b d\omega_{a + j}(p)(X^{\HH}, Y^{\HH}) d\gamma(e)(\f_{a + j}) = i \sum_{j = 1}^b k_{a + j} \eta_{j}(x)(X, Y).
		 \]
		 As by definition $c_1(J^\gamma) = \tfrac{1}{2\pi i} [F_\gamma]$, this completes the proof.
	\end{proof}

\subsection{Diophantine actions}

\label{ssection:diophantine}

When the fibre of the principal extension is an arbitrary Lie group, we will need to know how fast does the transitivity group generate $G$. We need:

\begin{definition}\label{def:diophantine-action}
	Let $G$ be a compact Lie group equipped with a bi-invariant Riemannian metric, let $H \leqslant G$ be a Lie subgroup, and let $W \subset G$ be a subset. Set $X := G/H$ and equip it with the quotient Riemannian metric, and for $n \in \mathbb{Z}_{\geq 1}$ write
	\begin{equation}\label{eq:diophantine-w_n-def}
		\mc{W}_n := \{w_1^{\pm 1} \dotsm w_k^{\pm 1} \mid w_i \in W,\, i = 1, \dotsc, k,\, 1\leq k \leq n\}.
	\end{equation}
	 We say that $W$ has a \emph{Diophantine} action (by left multiplication) on $X$ if there exist $C > 0$ and $\alpha > 0$ such that for all $x \in X$ and all $n \in \mathbb{Z}_{\geq 1}$ the orbit space $\mc{W}_n \cdot x \subset X$ is $C n^{-\alpha}$-dense in $X$.   
\end{definition}
By invariance of the metric under left multiplication and transitivity of the action, note that in the previous definition it suffices to consider the orbit of an arbitrary point $x_0 \in X$. 

We recall that for a semisimple Lie algebra, we have $[\mathfrak{g}, \mathfrak{g}] = \mathfrak{g}$. The following statements were essentially proved in \cite[Theorem A.3 and Corollary A.4]{Dolgopyat-02}. We slightly modify the proofs and improve the statements.

\begin{lemma}\label{lemma:diophantine-semisimple}
	Assume that $G$ is a compact connected semisimple Lie group. Then there exists $\varepsilon = \varepsilon(G) > 0$ such that any $W \subset G$ which is $\varepsilon$-dense, has a Diophantine action on $G$ (by left multiplication). Moreover, we may take any $\alpha \in (0, 1)$ in Definition \ref{def:diophantine-action}. 
\end{lemma}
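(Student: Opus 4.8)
The plan is to exploit the key feature of semisimple compact Lie groups: the exponential map from $\mathfrak{g}$ is ``quantitatively surjective'', and the group is generated by one-parameter subgroups along root directions with good commutator estimates. First I would reduce to a local statement near the identity. Fix a small ball $B_\rho(e) \subset G$ on which $\exp$ is a diffeomorphism onto its image with bi-Lipschitz constants controlled by the metric. The claim that $\mc{W}_n \cdot x_0$ is $Cn^{-\alpha}$-dense in $G$ follows, by a covering/chaining argument and invariance under left translation, from the statement that for each $n$ the set $\mc{W}_{n}$ itself is $Cn^{-\alpha}$-dense in $G$. Indeed, since $G$ is compact it is covered by $\OO(1)$ translates of $B_\rho(e)$; one shows that from any point $w \in \mc{W}_{m}$ one can reach a $C' m^{-\alpha}$-net of a fixed neighborhood using $\OO(m)$ further letters, and then iterates/partitions the budget $n$ into $\OO(1)$ blocks of comparable size.

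The heart of the argument is the local density estimate. Since $\mathfrak{g} = [\mathfrak{g},\mathfrak{g}]$, the Lie algebra is spanned by brackets of elements, and more usefully, for a compact semisimple $\mathfrak{g}$ one can choose finitely many elements $\xi_1,\dots,\xi_N \in \mathfrak{g}$ (e.g. root vectors together with their brackets) whose $\R$-span is $\mathfrak{g}$ and such that iterated brackets of depth $\le 2$ already suffice. If $W$ is $\eps$-dense, then for each $i$ there is $w_i \in W$ with $d(w_i, \exp(\eta_i)) < \eps$ where $\exp(\eta_i)$ is close to a rotation along $\xi_i$. The basic move is the commutator trick: the group commutator $[g,h] = ghg^{-1}h^{-1}$ of elements at distance $t$ from $e$ lies at distance $\OO(t^2)$ from $\exp$ of the Lie bracket, so using $\OO(1)$ letters one produces, from translates at scale $t$, new elements realizing bracket directions at scale $\OO(t^2)$ — but crucially, by instead taking a long product $g^{\lfloor 1/t \rfloor}$ (which costs $\OO(1/t)$ letters) one moves a macroscopic distance along $\xi_i$. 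Combining: with a word of length $\le n$ one can independently adjust all $d = \dim G$ ``coordinates'' (a spanning set of directions) by amounts down to scale $\OO(1/n)$, so the reachable set is $\OO(1/n)$-dense, i.e. $\alpha$ can be taken arbitrarily close to $1$. This is essentially the content of \cite[Theorem A.3, Corollary A.4]{Dolgopyat-02}, which I would follow, tightening the bookkeeping to get any $\alpha \in (0,1)$.

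More concretely, the key quantitative lemma to establish is: there are constants $c, C>0$ (depending only on $G$) and $\eps_0>0$ such that if $W$ is $\eps$-dense with $\eps < \eps_0$, then for every $g \in G$ and every $n$, there is a word $w \in \mc{W}_n$ with $d(w, g) \le C(\eps + n^{-1})$; choosing $\eps$ sufficiently small and absorbing, one gets $d(w,g) \le C' n^{-\alpha}$ for the desired range of $\alpha$ once $n$ is large, and the finitely many small $n$ are handled trivially by enlarging $C'$. The proof of this lemma proceeds by: (a) writing $g = \exp(\zeta_1)\cdots\exp(\zeta_M)$ with $M = \OO(1)$ and each $\zeta_j$ small, using quantitative surjectivity of $\exp$ on a semisimple compact group; (b) for each factor $\exp(\zeta_j)$, writing $\zeta_j$ in terms of the spanning set $\{\xi_i\}$ and realizing it as a product of $\OO(\|\zeta_j\|/t + \text{const})$ letters that tracks $\exp(\zeta_j)$ with error $\OO(t)$, where $t = \OO(1/n)$; (c) summing the errors and letter-counts.

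\emph{Main obstacle.} The delicate point is propagating the error estimates through long products without the errors accumulating faster than linearly: a naive bound gives an error $\OO(n \cdot t^2) = \OO(t)$ only if $t \sim 1/n$, which is exactly the borderline, and this is why one gets ``any $\alpha < 1$'' rather than $\alpha = 1$. Keeping the implied constants uniform in $g$ (not just for a fixed $g$) and uniform over all $\eps$-dense sets $W$ — as opposed to a fixed generating tuple — requires care: one must use a \emph{compactness} argument to extract, from $\eps$-density, a finite sub-collection of elements of $W$ that is uniformly ``well-positioned'' relative to the spanning directions $\xi_i$, with the relevant Jacobian bounded below by a constant depending only on $G$. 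This uniformity, together with the linear-in-$n$ error control in the commutator/power constructions, is the crux; everything else is the standard chaining argument and the (classical) fact that $\exp$ is quantitatively surjective on a compact semisimple Lie group.
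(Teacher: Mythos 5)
Your proposal identifies the right ingredients — the commutator trick via the Baker--Campbell--Hausdorff formula, the fact that $[\mathfrak{g},\mathfrak{g}]=\mathfrak{g}$ forces brackets to span, a chaining argument, and the borderline $n t^2 \sim t$ at $t\sim 1/n$ — and correctly points to Dolgopyat. But your concrete plan (a)--(c) is organized as a \emph{one-shot} approximation, and this hides a genuine gap: it never explains how to manufacture moves at the fine scale $t\sim 1/n$ from a set $W$ that is only $\varepsilon$-dense for a \emph{fixed} $\varepsilon$. Words built from elements of $W$, each at distance $\gtrsim\varepsilon$ from the identity, cannot resolve features finer than $\varepsilon$ unless one descends via commutators to scale $\varepsilon^{2}$, then $\varepsilon^{4}$, and so on. The paper's proof is precisely the inductive bootstrap on scales that accomplishes this: it defines $\varepsilon_{j+1}=C\varepsilon_j^{2(1-\delta)}$ and $n_{j+1}=Cn_j\varepsilon_j^{-1+2\delta}$, proves by induction that $\mc{W}_{n_j}$ is $\varepsilon_j$-dense (Steps 1--4: extract a uniformly transverse basis of commutators from $\varepsilon_j$-density, approximate directions by integer combinations with bounded coefficients, pass to the group via BCH, chain through a cube decomposition), and then unwinds the recursion to read off $\alpha$ arbitrarily close to $1$ as $\delta\to 1/2$. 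Your step (b), "realizing $\exp(\zeta_j)$ as a product of $\OO(\|\zeta_j\|/t)$ letters tracking $\exp(\zeta_j)$ with error $\OO(t)$," implicitly presupposes access to $t$-scale building blocks; this is exactly the content the recursion supplies and that your plan treats as given. Without it, a naive reading of (a)--(c) using $\varepsilon$-scale letters produces density no better than $\varepsilon$, independently of $n$. (The alternative you gesture at, taking powers $g^{\lfloor 1/t\rfloor}$, only produces \emph{macroscopic} displacement along a one-parameter subgroup; the fill-in density on that circle is controlled by the arithmetic of the rotation number of $g$, which is not uniformly controlled over all $\varepsilon$-dense $W$, so it does not substitute for the commutator descent.)

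A secondary point concerns the uniformity you flag. You propose handling it by "a compactness argument", but the conclusion is needed at every scale of the recursion, not just at the top level, so a single compactness extraction would not propagate. In the paper this uniformity is made quantitative at each level $j$: from $\varepsilon_j$-density one explicitly selects elements near $\exp(\varepsilon_j^{1-\delta}X_k)$ satisfying the uniform angle and norm bounds \eqref{eq:aligned}, and then extracts a commutator basis satisfying \eqref{eq:aligned-commutator}, with constants depending only on $G$. If you rewrite your plan as an induction on scales with this quantitative basis-selection at each stage, it collapses to the paper's argument.
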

\begin{proof}
		We divide the proof into steps. We will show that for any $\delta \in (0, \tfrac{1}{2})$, there is a constant $C > 0$, such that for $\varepsilon$ small enough, the sequence defined by $n_0 := 1$ and $\varepsilon_0 := \varepsilon$ and inductively by
		\[
			\varepsilon_{j + 1} := C \varepsilon_j^{2(1 - \delta)}, \quad n_{j + 1} := Cn_j \varepsilon_j^{-1 + 2\delta}, \quad j \geq 0,
		\]
		satisfies that $\mc{W}_{n_j}$ is $\varepsilon_j$-dense. This completes the proof: indeed, setting $\beta := 2(1 -\delta)$ (which is $> 1$), a straightforward computation gives for any $j \in \mathbb{Z}_{\geq 0}$
		\[
			\varepsilon_j = C^{\frac{1}{1 - \beta}} \Big(C^{\frac{1}{\beta - 1}} \varepsilon\Big)^{\beta^j}, \quad n_j = C^{2k - 1 - \frac{\beta (\beta^{k - 1} -1)}{\beta - 1}} \varepsilon^{1 - \beta^k}.
		\]
		Then, given any $\eta$ small enough (depending on $\varepsilon$ and $C$), there is a unique $j \in \mathbb{Z}_{\geq 0}$ such that
		\[
			\varepsilon_{j + 1} \leq \eta < \varepsilon_j.
		\]
		By assumption, $\mc{W}_{n_{j + 1}}$ is then $\eta$-dense, and so to have $n_{j + 1} < \eta^{-\alpha}$ for some $\alpha > 0$, it suffices to show $n_{j + 1} < \varepsilon_{j}^{-\alpha}$. By the above formulas this is equivalent to
		\[
			C^{1 + \frac{\beta - \alpha}{\beta - 1}} \varepsilon \times \Big(C^{\frac{2j}{\beta^j(\alpha - \beta)} + \frac{1}{\beta - 1}} \varepsilon\Big)^{\beta^j(\alpha - \beta)} < 1,
		\]
		which is true for $j$ large enough (i.e. $\varepsilon_0$ small enough) under the condition $\alpha > \beta$. Since $\beta = 2(1 - \delta)$ and we may take $\delta \in (0, \tfrac{1}{2})$, we conclude that $\alpha$ can be taken to be arbitrarily close to $1$, completing the proof. (Note that as opposed to Definition \ref{def:diophantine-action} for the convenience of the proof we have taken here $\alpha$ to be the exponent in the time $n \sim \eta^{-\alpha}$ for which $\mc{W}_n$ is $\eta$-dense; the relation with $\alpha$ in the mentioned definition is given simply by taking $\tfrac{1}{\alpha}$.)
		
		We are left to prove the existence of such positive $C$ inductively in $j$.
		\medskip
		
		\emph{Step 1.} Equip $\mathfrak{g}$ with a bi-invariant inner product $\langle{\bullet, \bullet}\rangle$ inducing a norm $\|\bullet\|$ on $\mathfrak{g}$ and take a basis $(X_k^{(j)})_{k = 1}^d$ of $\mathfrak{g}$ with
		\begin{equation}\label{eq:aligned}
			\frac{1}{2} \leq \|X_k^{(j)}\| \leq 2, \quad |\langle{X_k^{(j)}, X_{k}^{(j)}}\rangle| \leq \frac{1}{2} \|X_k^{(j)}\| \|X_{k}^{(j)}\|, \quad k \neq \ell 
		\end{equation}
		 such that $x_{k}^{(j)} := \exp(\varepsilon_j^{1 - \delta} X_k^{(j)}) \in W_{n_j}$ (it is possible to do so by $\varepsilon_j$-density of $\mc{W}_{n_j}$). By the Baker-Campbell-Hausdorff formula we have
		\begin{equation}\label{eq:BCH}
			x_{k \ell}^{(j)} := [x_{k}^{(j)}, x_{\ell}^{(j)}] = \exp\Big(\varepsilon_j^{2(1 - \delta)} [X_k^{(j)}, X_{\ell}^{(j)}] + \mc{O}(\varepsilon_j^{3(1 - \delta)})\Big) \in \mc{W}_{4n_j}.
		\end{equation}
		Since $G$ is semisimple we have $[\mathfrak{g}, \mathfrak{g}] = \mathfrak{g}$, and by linearity, continuity, and by compactness of the space of bases satisfying \eqref{eq:aligned}, we may extract a basis $(Y_k^{(j)})_{k = 1}^d$ from $([X_k^{(j)}, X_{\ell}^{(j)}])_{k, \ell = 1}^d$ satisfying for some $c \in (0, 1)$
		\begin{equation}\label{eq:aligned-commutator}
			c \leq \|Y_k^{(j)}\| \leq \frac{1}{c}, \quad |\langle{Y_k^{(j)}, Y_\ell^{(j)}}\rangle| \leq (1 - c) \|Y_k^{(j)}\| \|Y_\ell^{(j)}\|, \quad k \neq \ell.
		\end{equation}
		
		\medskip
		
		\emph{Step 2.} We claim that for each $\eta > 0$, there exists $C_1 > 0$, such that for each $C_2 \geq C_1$, there exists $C > 0$, such that for each $Y \in \mathfrak{g}$ with $C_1 \leq \|Y\| \leq C_2$, there exist integers $(p_k)_{k = 1}^d$, with $|p_k| \leq C$, satisfying
		\begin{equation}\label{eq:estimate}
			\Big\|Y - \sum_{k = 1}^d p_k Y_k^{(j)}\Big\| \leq \eta \|Y\|. 
		\end{equation}
		Indeed, write $\tfrac{Y}{\|Y\|} = \sum_{k = 1}^d a_k Y_k^{(j)}$ for some non-zero $Y$ and $(a_k)_{k = 1}^d \subset \mathbb{R}$, and choose integers $(p_k)_{k = 1}^d$ such that $|a_k - \tfrac{p_k}{\|Y\|}| \leq \frac{1}{\|Y\|}$. Then 
		\[
			\Big|\frac{Y}{\|Y\|} - \sum_{k = 1}^d \frac{p_k}{\|Y\|} Y_k^{(j)} \Big| \leq \sum_{k = 1}^d \Big|a_k - \frac{p_k}{\|Y\|}\Big| \|Y_k^{(j)}\| \leq \frac{\mc{O}(1)}{\|Y\|},
		\]
		where we used triangle inequality, and in the last inequality we used \eqref{eq:aligned-commutator} (by $\mc{O}(1)$ we denote a quantity bounded uniformly in $j$). Therefore, there exists $C_1$ such that \eqref{eq:estimate} holds; we just need to verify that $(p_k)_{k = 1}^d$ is uniformly bounded. Firstly, using \eqref{eq:aligned-commutator} again (this time, also its second inequality) and compactness of the unit sphere, we get that $|a_k| = \mc{O}(1)$ for $k = 1, \dotsc, d$. It then follows that
		\[
			|p_k| \leq 1 + |a_k| \|Y\| = \mc{O}(1), \quad k = 1, \dotsc, d,
		\]
		where we directly see how the choice of $C_2$ affects the value of $C$.
		\medskip
		
		\emph{Step 3.} Since the derivative of the exponential map at $0 \in \mathfrak{g}$ is equal to the identity, there is a neighbourhood $U$ of the identity in $G$ onto which $\exp$ maps diffeomorphically, and a constant $C \in (0, 1)$ such that
		\[	
			C\|X - Y\| \leq d(\exp(X), \exp(Y)) \leq \frac{1}{C} \|X - Y\|, \quad X, Y \in \exp^{-1}(U),
		\]
		where $d(\bullet, \bullet)$ denotes distance in $G$ induced by the Riemannian metric. Let $\eta > 0$; then, by Step 2, and the previous inequality, there exist $C_1 > 0$ such that for any $C_2 > C_1$, for any $y \in G$ satisfying $C_1\varepsilon_j^{2(1 - \delta)} \leq d(y, \id) \leq C_2 \varepsilon_j^{2(1 - \delta)}$, there exist integers $(a_{k \ell})_{k, \ell = 1}^d$ with $|a_{k \ell}| = \mc{O}(1)$, such that
		\[
			d\Big(y, \prod_{k, \ell = 1}^d (x_{k \ell}^{(j)})^{a_{k \ell}}\Big) \leq \eta \varepsilon_j^{2(1 - \delta)}.
		\]
		It is important to note that here we use \eqref{eq:BCH} to justify the scaling in the preceding equation.
		\medskip
		
		
		\emph{Step 4.} Partition $U$ into coordinate cubes $(\mc{C}_{i})_{i \in I}$ with side length $C \varepsilon^{2(1 - \delta)}$ for some $C > 0$ to be chosen later, satisfying $C = \mc{O}(1)$ and $C^{-1} = \mc{O}(1)$. By the induction hypothesis, for each $i_1 \in I$, there exists $i_2 \in I$ with $\mc{C}_{i_2} \cap \mc{W}_{n_j} \neq \emptyset$, such that $d(\mc{C}_{i_1}, \mc{C}_{i_2}) \leq \mc{O}(\varepsilon_j)$. Therefore, we can join $\mc{C}_{i_1}$ to $\mc{C}_{i_2}$ by a sequence $\mc{C}_{i_1} = \mc{C}^{(1)}, \mc{C}^{(2)}, \dotsc, \mc{C}^{(N)} = \mc{C}_{i_2}$ of $N$ adjacent cubes, such that $N = \mc{O}(\varepsilon_j^{-1 + 2\delta})$ (by equivalence of $\ell^1$ and $\ell^2$ norms in $\mathfrak{g}$, and as long as $2(1 - \delta) > 1$, that is, $\delta < \tfrac{1}{2}$). Now for some $n$ and $k$, assume $w \in \mc{W}_{n} \cap \mc{C}^{(k)} \neq \emptyset$; we claim that $\mc{W}_{n + \mc{O}(n_j)} \cap \mc{C}^{(k + 1)} \neq \emptyset$. Indeed, let $yw \in \mc{C}^{(k + 1)}$ be the midpoint of the cube, so 
		\[
			\tfrac{C}{2} \varepsilon^{2(1 - \delta)} \leq d(y, \id) \leq 2dC \varepsilon^{2(1 - \delta)}.
		\] 
By taking $\eta = 1$, $C_1 = \tfrac{C}{2}$, and $C_2 = 2dC$ in Step 3, with $C$ large enough, we conclude there exists $x = 	\prod_{k, \ell = 1}^d (x_{k \ell}^{(j)})^{a_{k \ell}}$ with $|a_k| = \mc{O}(1)$, so that $d(yw, xw) \leq \eta \varepsilon^{2(1 - \delta)}$ and $xw \in \mc{C}^{(k + 1)}$. This proves the claim.
		
		Therefore, for some $N_j := \mc{O}(n_j \varepsilon_j^{-1 + 2\delta})$ we have $\mc{W}_{N_j}$ is $\mc{O}(\varepsilon_{j}^{2(1 - \delta)})$-dense in $U$. It is left to observe that any element of $G$ can be written as a (uniformly) bounded product of elements in $U$ (for instance, as $\exp: \mathfrak{g} \to G$ is surjective by the fact that $G$ is connected and compact), and so for some $n_{j + 1} = \mc{O}(n_j \varepsilon_j^{-1 + 2\delta})$ we have shown that $\mc{W}_{n_{j + 1}}$ is $\varepsilon_{j + 1}$-dense in $G$, completing the induction step. It is left to observe that taking $\varepsilon$ small enough we achieve that $\varepsilon_j \to 0$ as $j \to \infty$, which completes the proof.
\end{proof}

\begin{corollary}\label{corollary:diophantine-flag-manifold}
	Let $T$ be a maximal torus in compact connected Lie group $G$. Then there exists $\varepsilon = \varepsilon(G) > 0$ such that any $W \subset G$ which is $\varepsilon$-dense, $W$ has a Diophantine action on $G/T$ (by left multiplication). Moreover, we may take any $\alpha \in (0, 1)$ in Definition \ref{def:diophantine-action}. 
\end{corollary}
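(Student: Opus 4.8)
The plan is to deduce the corollary from Lemma~\ref{lemma:diophantine-semisimple} by exploiting the fact that the flag manifold $G/T$ only depends on the semisimple part of $G$. Since $Z(G) \leqslant T$ by Cartan's theorem \cite[Chapter IV, Theorem 1.6]{Brocker-Dieck-85}, left translation by any element of $Z(G)$ acts trivially on $G/T$; consequently the left $G$-action on $G/T$ descends to an action of the adjoint group $G_{\mathrm{ad}} := G/Z(G)$ via the quotient homomorphism $\rho : G \to G_{\mathrm{ad}}$. The group $G_{\mathrm{ad}}$ is compact, connected, and \emph{semisimple}, since its Lie algebra is $\mathfrak{g}/\mathfrak{z}(\mathfrak{g}) \cong [\mathfrak{g},\mathfrak{g}]$ by the splitting \eqref{equation:lie-algebra-g}; moreover $T_{\mathrm{ad}} := \rho(T) = T/Z(G)$ is a maximal torus of $G_{\mathrm{ad}}$, and one has a $G_{\mathrm{ad}}$-equivariant identification $G/T \cong G_{\mathrm{ad}}/T_{\mathrm{ad}}$. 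Equipping $G_{\mathrm{ad}}$ with the bi-invariant metric descending from $G$ (it descends because $Z(G)$ acts by isometries) makes $\rho$ a Riemannian submersion, in particular $1$-Lipschitz, and identifies the quotient metric on $G/T$ with the one on $G_{\mathrm{ad}}/T_{\mathrm{ad}}$ coming from $G_{\mathrm{ad}}$. If $G$ is abelian then $G/T$ is a single point and there is nothing to prove, so we may assume $G_{\mathrm{ad}}$ is nontrivial.

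First I would set $\varepsilon(G) := \varepsilon(G_{\mathrm{ad}})$, the constant furnished by Lemma~\ref{lemma:diophantine-semisimple} applied to the semisimple group $G_{\mathrm{ad}}$. Given $W \subset G$ that is $\varepsilon(G)$-dense, the image $\rho(W) \subset G_{\mathrm{ad}}$ is $\varepsilon(G_{\mathrm{ad}})$-dense: indeed, a surjective $1$-Lipschitz map sends $\varepsilon$-dense subsets to $\varepsilon$-dense subsets (lift a point, approximate it, and push forward). Hence Lemma~\ref{lemma:diophantine-semisimple} applies to $\rho(W)$ acting on $G_{\mathrm{ad}}$ itself: for every $\alpha \in (0,1)$ there is $C>0$ such that $\mc{W}_n(\rho(W))\cdot e$ is $Cn^{-\alpha}$-dense in $G_{\mathrm{ad}}$ for all $n \geq 1$, where $\mc{W}_n(\rho(W))$ is the word set \eqref{eq:diophantine-w_n-def} built from $\rho(W)$.

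Next I would push this density down to $G/T$. Because $\rho$ is a homomorphism, $\mc{W}_n(\rho(W)) = \rho(\mc{W}_n(W))$; and because the quotient map $q : G_{\mathrm{ad}} \to G_{\mathrm{ad}}/T_{\mathrm{ad}}$ is surjective and $1$-Lipschitz, the set $q(\mc{W}_n(\rho(W)))$ is $Cn^{-\alpha}$-dense in $G_{\mathrm{ad}}/T_{\mathrm{ad}} \cong G/T$. Finally, since the $G$-action on $G/T$ factors through $\rho$ and the identification $G/T \cong G_{\mathrm{ad}}/T_{\mathrm{ad}}$ is $G_{\mathrm{ad}}$-equivariant (sending the base point $eT$ to $eT_{\mathrm{ad}}$), the set $q(\mc{W}_n(\rho(W)))$ is precisely the orbit $\mc{W}_n(W)\cdot x_0$ of $x_0 = eT$ under the left $G$-action on $G/T$. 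By the observation recorded just after Definition~\ref{def:diophantine-action}, it suffices to establish density of the orbit of a single point, so this shows that $W$ has a Diophantine action on $G/T$ with $\alpha$ any prescribed number in $(0,1)$.

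\textbf{Main obstacle.} There is no genuinely hard analytic step: all the quantitative content sits inside Lemma~\ref{lemma:diophantine-semisimple}, and the rest is bookkeeping with $1$-Lipschitz maps. The only point requiring real care is the first paragraph — correctly identifying $G/T$ with $G_{\mathrm{ad}}/T_{\mathrm{ad}}$, verifying that $G_{\mathrm{ad}}$ is semisimple and that $\rho$ kills exactly the part of $G$ invisible to the flag manifold, and making sure the Riemannian metrics are matched so that the $1$-Lipschitz estimates are legitimate with respect to the metric used in Definition~\ref{def:diophantine-action}.
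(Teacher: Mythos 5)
Your argument is correct and follows essentially the same route as the paper: reduce to the semisimple case of Lemma~\ref{lemma:diophantine-semisimple} by quotienting out the centre and pushing density down through $1$-Lipschitz quotient maps to $G/T$. The only difference is cosmetic — you quotient by the full centre $Z(G)$ (obtaining the adjoint group, which needs Cartan's theorem to see $Z(G) \leqslant T$), whereas the paper quotients by its identity component $Z_0(G)$ (for which $Z_0(G) \leqslant T$ is immediate from $\mathfrak{z} \subset \mathfrak{t}$); both quotients are compact, connected, semisimple and have $G/T$ as a further quotient, so either choice works.
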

\begin{proof}
	Denote by $Z(G)$ the centre of $G$ and by $Z_0(G)$ its identity component. Observe firstly that $G_0:= G/Z_0(G)$ is a compact semisimple Lie group: indeed, $\mathfrak{g} = \mathfrak{z} \oplus \mathfrak{g}'$ where $\mathfrak{g}'$ is semisimple and $\mathfrak{z}$ is the centre of $\mathfrak{g}$ (and so the Lie algebra of $Z_0(G)$); thus the Lie algebra of $G/Z_0(G)$ is $\mathfrak{g}'$ which proves the claim. Denote by $p: G \to G_0$ the quotient projection. By Lemma \ref{lemma:diophantine-semisimple}, there exists $\varepsilon > 0$ depending on $G_0$ such that any $\varepsilon$-dense subset of $G_0$ has a Diophantine action on $G_0$ and so any $\varepsilon$-dense subset of $G$ has a Diophantine action on $G_0$ (for any $\alpha \in (0, 1)$); here we use that since $W \subset G$ is $\varepsilon$-dense, then $p(W) \subset G_0$ is $\varepsilon$-dense. 
	
	For the main claim it suffices to observe that as $Z_0(G) \leqslant T$ there is a natural quotient map $G_0 \to G/T$. Let $gT \in G/T$ be an arbitrary point; by the previous paragraph and by Lemma \ref{lemma:diophantine-semisimple}, for any $\alpha \in (0, 1)$ there exists a $C > 0$ such that $\mc{W}_n \cdot gZ_0(G)$ is $Cn^{-\alpha}$-dense in $G_0$. This immediately implies that $\mc{W}_n \cdot gT$ is $Cn^{-\alpha}$-dense in $G/T$ and completes the proof.
\end{proof}

\section{Analysis on principal bundles}

We let $M$ be a closed manifold, $P \to M$ be a $G$-principal bundle. We use indistinctly $R_g w$ or $w \cdot g$ to denote the right-action of $g \in G$ on a point $w \in P$. The aim of this section is to describe in a convenient way the space $C^\infty(P)$ of smooth functions over $P$. We assume that the reader is familiar with the terminology of principal bundles (see \cite{Kobayashi-Nomizu-63,Kobayashi-Nomizu-69}).

\subsection{Fourier transform} The purpose of this paragraph is to introduce the Fourier transform of functions on $C^\infty(P)$. Fourier transforms are convenient ways of representing smooth functions on $C^\infty(P)$ via representation theory. Using the associated vector bundles of \S\ref{sssection:associated-vector-bundles}, we define the (fiberwise) Fourier transform on $P$ as follows:

\begin{definition}[Fourier transform]
\label{definition:fourier-transform}
The \emph{Fourier transform} on $P$ is the map
\[
\mc{F} : C^\infty(P) \to \bigoplus_{\lambda \in \widehat{G}} C^\infty(M,\mathrm{Hom}(V^\lambda,E^\lambda))
\]
such that for all $f \in C^\infty(P)$, $\lambda \in \widehat{G}$, and $x \in M$,
\begin{equation}
\label{equation:fourier}
\mc{F}f(\lambda,x) := \int_{G} f(x,w \cdot g) w \cdot \lambda(g) ~\dd g \in \mathrm{Hom}(V^\lambda, E^\lambda_x),
\end{equation}
where $\dd g$ is the Haar measure on $G$ and $w \in P_x$ is arbitrary.
\end{definition}

It can be checked that \eqref{equation:fourier} is independent of the choice of point $w \in P_x$; indeed, for any other $w' \in P_x$, there exists a unique $h \in G$ such that $w' = w \cdot h$ and thus using \eqref{equation:id-inv}:
\[
\int_{G} f(x,w' \cdot g) w' \cdot \lambda(g) ~\dd g = \int_{G} f(x,w \cdot (hg)) w \cdot \lambda(hg) ~\dd g = \int_{G} f(x,w \cdot g) w \cdot \lambda(g) ~\dd g,
\]
where in the first equality we used \eqref{equation:inv} and in the second one the invariance of the measure $\dd g$ under the group action. Note that, after the identification of $P_x$ with $G$ and of $E^\lambda_x$ with $V_\lambda$ (equivalently, taking $M$ to be a point), formula \eqref{equation:fourier} reduces to the Fourier transform on $G$ up to factor of $(\dim V^\lambda)^{1/2}$, see \cite[Definition 3.37]{Sepanski-07}. Also, it is immediate that $\mc{F}(\lambda, x)$ is smooth in $x$ for each fixed $\lambda$. 

Moreover, observe that $\mathrm{Hom}(V^\lambda,E^\lambda)$ can be identified with $(E^\lambda)^{\dim V^\lambda}$ by picking an arbitrary basis of $V^\lambda$; we will freely use this identification in what follows. We emphasise that somewhat counter-intuitively, this splitting is typically not stable by the group action, i.e. that the image inside $C^\infty(P_x)$ under the inversion map \eqref{equation:inversion} below of $E^\lambda_x$ is not invariant by the right multiplication. However, it is invariant under the parallel transport map (introduced below), see Lemma \ref{lemma:ft-intertwines-flow}. 

\begin{remark}We also emphasize here that an element in $\widehat{G}$ is an equivalence class of irreducible representations (i.e. representations modulo $G$-isomorphisms). As a consequence, when writing \eqref{equation:fourier} and in what follows, it is implicit that a certain representative $\lambda$ of the equivalence class $[\lambda] \in \widehat{G}$ was chosen. We will describe in \S\ref{ssection:borel-weil} a specific \emph{geometric realization} of classes in $\widehat{G}$ (Borel-Weil correspondence). 
\end{remark}

The inversion formula for the Fourier transform is given formally by
\begin{equation}
\label{equation:inversion}
f(x,w) = \sum_{\lambda \in \widehat{G}} \dim(V^\lambda) \Tr(w^{-1}\mc{F}f(\lambda,x)).
\end{equation}
Again, this agrees with the Fourier inversion formula on $G$ when $M$ is a point up to a factor of $(\dim V^\lambda)^{1/2}$, see \cite[Definition 3.37 and Theorem 3.38]{Sepanski-07}. For each fixed $x \in M$ and $\lambda \in \widehat{G}$, the image of $\Tr(w^{-1} S)$ for $S \in \mathrm{Hom}(V^\lambda, E^\lambda_x)$ inside $C^\infty(P_x)$ is a subspace invariant under the right action of $G$ and which splits as a direct sum of $\dim V^\lambda$ copies of the representation $\lambda$. Moreover, this subspace is uniquely determined by the following property: it is spanned by all $G$-invariant subspaces of $L^2(P_x)$ which are isomorphic as (right) representations to $\lambda$. All such subspaces for distinct $\lambda \in \widehat{G}$ are pairwise $L^2$-orthogonal. 

Next, if $f \in C^\infty(P)$ (resp. $L^2(P)$), then \eqref{equation:inversion} converges in $C^\infty$ (resp. $L^2$)-topology, see \cite{Taylor-68} (where the statement is formulated on $G$ in terms of \emph{matrix coefficients}) and \cite[Theorem 3.38]{Sepanski-07}. 
(Here, the Haar measure of the group $G$ is normalized to $1$ so it does not appear in the formula.) Note that for $G = \mathrm{U}(1)$, $\widehat{G} \simeq \Z$ and one recovers the usual Fourier transform. We refer to \S\ref{ssection:u1} where the $\mathrm{U}(1)$ case is further discussed. We point out that the summand $\dim(V^\lambda) \Tr(w^{-1}\mc{F}f(\lambda,\bullet))$ is an eigenfunction associated to the fiberwise Laplacian (Casimir) operator on $P$ (induced by the bi-invariant Riemannian metric on $G$) by Lemma \ref{lemma:laplace-eigenvalue}.



\subsection{Fiberwise holomorphic sections}


\subsubsection{Flag manifold bundle}

\label{sssection:flag-manifold-bundle}

Let $F := P/T$ be the associated homogeneous bundle over $M$, all of whose fibers are biholomorphic to the flag manifold $G/T$. Let $\pi_F : F \to M$ be the projection, $\V_F := \ker \dd \pi_F$ be the vertical bundle and $\V_F^*$ its dual. A crucial observation is that there is a fiberwise complex structure on $F$, that is every fiber of $F$ admits a complex structure and is biholomorphic to $G/T$. We denote by $\V_F^{0,1}$ (resp. $\V_F^{1,0}$) the $(0,1)$ part of the complexification
\[
\V_F \otimes \C =  \V_F^{1,0} \oplus \V_F^{0,1}.
\]

Notice that $\mathfrak{n}^\pm \subset \mathfrak{g}_{\mathbb{C}}$ are $\Ad(T)$-invariant Lie sub-algebras, and their associated vector bundles may be identified with the holomorphic and anti-holomorphic tangent bundle of the fibres of $P/T$
\[
	\V_F^{1,0} = P \times_{\Ad(T)} \mathfrak{n}^+, \quad \V_F^{0,1} \otimes \mathbb{C}= P \times_{\Ad(T)} \mathfrak{n}^-.
\]

Now, consider the representation $\rho_{\mathbf{k}} : G \to \mathrm{Aut}(\mathbf{J}^{\otimes \mathbf{k}})$, given by the $G$-action on $\mathbf{J}^{\otimes \mathbf{k}} = J_1^{\otimes k_1} \otimes \dotsm \otimes J_d^{\otimes k_d}$. We can then form the associated bundle $\mathbf{L}^{\otimes \mathbf{k}} := P \times_{\rho_{\mathbf{k}}} \mathbf{J}^{\otimes \mathbf{k}}$ over $F$. (Equivalently, it can be verified that this is the same as taking the highest weight $\phi(\mathbf{k}) := \sum_j k_j \lambda_j : T \to i\mathbb{R}$ and forming $\mathbf{L}^{\otimes \mathbf{k}} := P \times_{\phi(\mathbf{k})} \C$, where we identified $\phi(\mathbf{k})$ with the corresponding global weight.) As in \eqref{equation:tower}, there is a tower of fiber bundles $\mathbf{L}^{\otimes \mathbf{k}} \to F \to M$. Note that $\mathbf{L}^{\otimes \mathbf{k}}$ is a Hermitian line bundle (it is naturally equipped with a metric) which yields an $L^2$-scalar product on $C^\infty(F,\mathbf{L}^{\otimes \mathbf{k}})$ (this is the bundle version of \eqref{equation:metric-j-gamma} over $M$). As on $G/T$, a section $s \in C^\infty(F,\Lk)$ is equivalent to the data of a function $\overline{s} \in C^\infty(P)$ satisfying the equivariant property \eqref{equation:t-equivariance}.

\subsubsection{Sections on the flag bundle}

Similarly, there is a fiberwise holomorphic structure for the complex line bundle $\mathbf{L}^{\otimes \mathbf{k}} \to F$. More precisely, freezing $x \in M$, the fiber $F_x$ of $F$ over $x$ is (by construction) biholomorphic to $G/T$ and $\mathbf{L}^{\otimes \mathbf{k}}|_{F_x} \to F_x$ is fiberwise biholomorphic to $\mathbf{J}^{\otimes \mathbf{k}} \to G/T$. (This biholomorphism is not canonical but only depends on a choice of basepoint in the fiber $F_x$.)

Calling $\iota_x : \mathbf{L}^{\otimes \mathbf{k}}|_{F_x} \to \mathbf{J}^{\otimes \mathbf{k}}$ this biholomorphism, one can then pullback via $\iota_x$ the $\overline{\partial}$ operator on $\mathbf{J}^{\otimes \mathbf{k}}$ (defined in \eqref{equation:fiberwise-dbar}) in order to define a fiberwise operator $\overline{\partial}_{\mathbf{k}}$ acting on sections of $\mathbf{L}^{\otimes \mathbf{k}}$. (It is immediate to verify that this operator is indeed well-defined.) We then have:
\begin{equation}
\label{equation:del-bar-lambda}
\overline{\partial}_{\mathbf{k}} : C^\infty(F,\mathbf{L}^{\otimes \mathbf{k}}) \to C^\infty(F,\mathbf{L}^{\otimes \mathbf{k}} \otimes {\V_F^*}^{0,1}).
\end{equation}
This leads to the following definition:

\begin{definition}[Fiberwise holomorphic sections] A section $f \in C^\infty(F,\mathbf{L}^{\otimes \mathbf{k}})$ is \emph{fiberwise holomorphic} if it lies in $\ker \overline{\partial}_{\mathbf{k}}$. We shall use the notation
\[
C^\infty_{\mathrm{hol}}(F,\mathbf{L}^{\otimes \mathbf{k}}) := C^\infty(F,\mathbf{L}^{\otimes \mathbf{k}}) \cap \ker \overline{\partial}_{\mathbf{k}}.
\]
\end{definition}

Notice that, if $s \in C^\infty(F,\mathbf{L}^{\otimes \mathbf{k}})$ is identified with a function $\overline{s} \in C^\infty(P)$ satisfying the equivariant property \eqref{equation:t-equivariance}, then the equation $\overline{\partial}_{\mathbf{k}} s = 0$ translates into $Y \overline{s} = 0$ for all $Y \in \mathfrak{n}^+ \subset \mathfrak{g} \simeq \V_P$.

Since $\mathbf{L}^{\otimes \mathbf{k}} \to F$ is Hermitian and admits a (fiberwise) holomorphic structure, there is a unique (fiberwise) partial Chern connection 
\begin{equation}
\label{equation:partial-chern}
D^{\mathrm{Chern}}_{\mathbf{k}} : C^\infty(F,\mathbf{L}^{\otimes \mathbf{k}}) \to C^\infty(F,\mathbf{L}^{\otimes \mathbf{k}} \otimes \V^*_F)
\end{equation}
characterized by the fact that it is metric and its $(0,1)$-part is given by $\overline{\partial}_{\mathbf{k}}$. (In other words, $D^{\mathrm{Chern}}_{\mathbf{k}}$ allows to differentiate sections only in vertical directions, i.e. in directions that are tangent to the fibers of $F \to M$.) Notice that this connection is simply the fiberwise version of \eqref{equation:fiberwise-chern}.


Finally, using the Borel-Weil theorem, that is the realization of $V^\lambda$ as a space of holomorphic sections $H^0(G/T,\mathbf{J}^{\otimes \mathbf{k}})$, we now rewrite the Fourier transform introduced in Definition \ref{definition:fourier-transform} more explicitly as:
\begin{equation}\label{eq:ft-bw-isomorphism}
	\mc{F} : C^\infty(P) \to \bigoplus_{\mathbf{k} \in \widehat{G}} C^\infty_{\mathrm{hol}}(F, \mathbf{L}^{\otimes \mathbf{k}})^{\oplus d_{\mathbf{k}}},
\end{equation}
where $d_{\mathbf{k}}$ is the dimension of the irreducible representation with highest weight $\sum_i k_i \lambda_i$. Given $f \in C^\infty(P)$ we will write $f_{\mathbf{k}, i}$, $i = 1, \dotsc, d_{\mathbf{k}}$, to denote the components of the Fourier transform $\mc{F}f$. Since most of the arguments will be the same for $i = 1, \dotsc, d_{\mathbf{k}}$, we will sometimes suppress the index $i$ and simply write $f_{\mathbf{k}}$.

We conclude this paragraph with the following:

\begin{lemma}
\label{lemma:ft-isometry}
The Fourier transform $\mc{F}$ is an $L^2$ isometry. Namely, for all $f \in L^2(P)$:
\[
\|f\|^2_{L^2(P)} = \sum_{\mathbf{k} \in \widehat{G}} d_{\mathbf{k}}\sum_{i=1}^{d_{\mathbf{k}}} \|f_{\mathbf{k}, i}\|^2_{L^2(F,\mathbf{L}^{\otimes \mathbf{k}})}.
\]
\end{lemma}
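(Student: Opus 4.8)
The plan is to deduce the identity from Parseval's theorem (Peter--Weyl) applied fiberwise along $\pi : P \to M$, and then to transport the resulting formula through the Borel--Weil realization of the irreducible representations.

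\emph{Step 1: reduction to a single fiber.} By construction, the measure on $P$ defining $L^2(P)$ restricts on each fiber $P_x$ (identified with $G$) to the normalized Haar measure $\dd g$, so that $\|f\|^2_{L^2(P)} = \int_M \|f|_{P_x}\|^2_{L^2(P_x)}\,\dd\mu_M(x)$; this is a standard consequence of Fubini applied in local trivializations $U \times G$, patched with a partition of unity. Similarly, the $L^2$-metric on $\mathbf{L}^{\otimes\mathbf{k}} \to F$ is, by definition, the bundle version of \eqref{equation:metric-j-gamma}, hence obtained by integrating over $M$ the fiberwise $L^2$-products on $\mathbf{L}^{\otimes\mathbf{k}}|_{F_x} \to F_x$; thus $\|f_{\mathbf{k},i}\|^2_{L^2(F,\mathbf{L}^{\otimes\mathbf{k}})} = \int_M \|f_{\mathbf{k},i}|_{F_x}\|^2_{L^2(F_x,\mathbf{L}^{\otimes\mathbf{k}}|_{F_x})}\,\dd\mu_M(x)$. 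Since all terms of the double sum over $\mathbf{k}$ and $i$ are nonnegative, Tonelli lets us interchange $\sum_{\mathbf{k},i}$ with $\int_M$, and we are reduced to proving, for fixed $x \in M$ and arbitrary $h \in L^2(P_x)$, the fiberwise identity
\[
\|h\|^2_{L^2(P_x)} = \sum_{\mathbf{k}\in\widehat{G}} d_{\mathbf{k}} \sum_{i=1}^{d_{\mathbf{k}}} \|h_{\mathbf{k},i}\|^2_{L^2(F_x,\mathbf{L}^{\otimes\mathbf{k}}|_{F_x})},
\]
where the $h_{\mathbf{k},i}$ are the components of $\mc{F}h(\mathbf{k},x)$ as in \eqref{eq:ft-bw-isomorphism}.

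\emph{Step 2: Peter--Weyl and unwinding.} Fix $w \in P_x$; this isometrically identifies $P_x \cong G$ (via $g \mapsto w\cdot g$), $E^{\mathbf{k}}_x \cong V^{\mathbf{k}}$ (via the map denoted $w$ in \S\ref{sssection:associated-vector-bundles}), $F_x \cong G/T$, and $\mathbf{L}^{\otimes\mathbf{k}}|_{F_x} \cong \mathbf{J}^{\otimes\mathbf{k}}$. Under these identifications $\mc{F}h(\mathbf{k},x)$ becomes $\int_G h(wg)\,\rho_{\mathbf{k}}(g)\,\dd g \in \mathrm{End}(V^{\mathbf{k}})$, precisely (up to a $g\leftrightarrow g^{-1}$ relabelling, which does not affect Hilbert--Schmidt norms) the Fourier transform of $h$ on $G$. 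Pairing the inversion formula \eqref{equation:inversion} with $h$ and invoking the Schur orthogonality relations for matrix coefficients then yields Parseval's identity in the form
\[
\|h\|^2_{L^2(G)} = \sum_{\mathbf{k}\in\widehat{G}} \dim(V^{\mathbf{k}})\,\big\|\mc{F}h(\mathbf{k},x)\big\|^2_{\mathrm{HS}}
\]
(equivalently, this is \cite[Theorem 3.38]{Sepanski-07}, whose normalization is the one making $\{(\dim V^{\mathbf{k}})^{1/2}(\rho_{\mathbf{k}})_{ij}\}$ an orthonormal basis of $L^2(G)$). Now pick an orthonormal basis $(e_i)_{i=1}^{d_{\mathbf{k}}}$ of $V^{\mathbf{k}}$, taken to be the basis implementing the identification $\mathrm{Hom}(V^{\mathbf{k}},E^{\mathbf{k}}) \cong (E^{\mathbf{k}})^{\oplus d_{\mathbf{k}}}$; by definition of the Hilbert--Schmidt norm, $\|\mc{F}h(\mathbf{k},x)\|^2_{\mathrm{HS}} = \sum_{i=1}^{d_{\mathbf{k}}} \|\mc{F}h(\mathbf{k},x)(e_i)\|^2_{E^{\mathbf{k}}_x}$, and, passing through the Borel--Weil isomorphism \eqref{equation:bw} for the fiber $F_x$, the $i$-th summand equals $\|h_{\mathbf{k},i}\|^2_{L^2(F_x,\mathbf{L}^{\otimes\mathbf{k}}|_{F_x})}$. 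Substituting into the displayed Parseval identity and reverting the (isometric) identifications made via $w$ gives the fiberwise identity of Step 1; integrating over $x\in M$ finishes the proof.

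\emph{The main difficulty.} The argument itself is short; the one delicate point — and the step I would be most careful about — is that all the normalizations match up. Concretely, one must ensure that the $G$-invariant Hermitian inner product on the representation space $V^{\mathbf{k}}$ entering the Fourier transform, the Hilbert--Schmidt normalization in Parseval, the normalization of the Haar measure $\dd g$, and the $L^2$-inner product on the Borel--Weil model $H^0(G/T,\mathbf{J}^{\otimes\mathbf{k}})$ from \eqref{equation:metric-j-gamma} are mutually consistent, so that the identification $E^{\mathbf{k}}_x \cong H^0(F_x,\mathbf{L}^{\otimes\mathbf{k}}|_{F_x})$ used to define the components $h_{\mathbf{k},i}$ is a genuine \emph{isometry} and not merely a $G$-equivariant isomorphism (by Schur's lemma the two inner products on the irreducible space $V^{\mathbf{k}}$ can a priori differ by a positive scalar). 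This is resolved by a consistent choice of conventions — namely, working throughout with the geometric (Borel--Weil) realization of the classes in $\widehat{G}$, equipped with the $L^2$-inner product \eqref{equation:metric-j-gamma} — rather than being a genuine obstacle; everything else reduces to Fubini and Peter--Weyl.
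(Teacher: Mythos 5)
Your proof is correct and follows essentially the same route as the paper, which simply cites the Plancherel theorem \cite[Corollary 3.40]{Sepanski-07}; you have spelled out the Fubini reduction to a single fibre and the normalization bookkeeping (the $(\dim V^{\mathbf{k}})^{1/2}$ factor in Sepanski's convention, the $g\leftrightarrow g^{-1}$ discrepancy, and the isometry of the Borel--Weil identification) that the paper treats as immediate. Your observation about making the Borel--Weil model $H^0(G/T,\mathbf{J}^{\otimes\mathbf{k}})$ with the inner product \eqref{equation:metric-j-gamma} the \emph{chosen} representative of the class in $\widehat{G}$ is exactly the convention the paper adopts, and it also helps to note that the Plancherel identity $\|h\|^2_{L^2(G)}=\sum_{\mathbf{k}}\dim(V^{\mathbf{k}})\|\mc{F}h(\mathbf{k})\|^2_{\mathrm{HS}}$ is in fact insensitive to the overall scaling of the $G$-invariant inner product on $V^{\mathbf{k}}$, since the Hermitian structure on $E^{\mathbf{k}}_x$ is induced from that on $V^{\mathbf{k}}$ and the two rescalings cancel in the Hilbert--Schmidt norm.
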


\begin{proof}
This follows immediately from the Plancherel theorem for the Fourier transform, \cite[Corollary 3.40]{Sepanski-07}. 
\end{proof}

\subsubsection{$\mathrm{U}(1)$ case} 

\label{ssection:u1}

For $G = \mathrm{U}(1)$, $G=T$ and thus the flag manifold bundle $F$ is equal to the base manifold $M$ itself; the previous constructions can therefore be considerably simplified. In this case, $\widehat{G}=\Z$ and we will use the letter $k \in \Z$ (instead of $\lambda$) to index irreducible representations; note that they correspond to the representations $\rho_k : \mathrm{U}(1) \to \mathrm{U}(1)$ given by $\rho_k(z)=z^k$.

Taking the tautological representation $\rho : \mathrm{U}(1) \to \mathrm{U}(1)$ (identity), one can form the associated Hermitian complex line bundle $L := P \times_\rho \C$. By construction, $P$ is the unit circle bundle of $L$. The associated vector bundles $E^{\lambda}$ of \S\ref{sssection:associated-vector-bundles} then correspond to $L^{\otimes k}$.

Let $V \in C^\infty(P,TP)$ be the generator of the rotation in the circle fibers of $P$, that is $V = \partial_\theta R_\theta|_{\theta=0}$, where $\theta \in \mathrm{U}(1)\simeq \R/2\pi \Z$. Any smooth function $f \in C^\infty(P)$ can be decomposed in Fourier series in each circle of the fiber bundle $P \to M$, that is 
\begin{equation}
\label{equation:circle-fourier}
f = \sum_{k \in \Z} f_k, \qquad V f_k = i k f_k.
\end{equation}
The function $f_k \in C^\infty(P)$ corresponds to the $k$-th Fourier mode of $f$; alternatively, $f_k$ can be seen as a section $f_k \in C^\infty(M,L^{\otimes k})$.

Note that if $U \subset M$ is a contractible open set, then $P|_{U} \simeq U \times \mathrm{U}(1)$. The decomposition \eqref{equation:circle-fourier} then reads for $x \in U$, $\theta \in \mathrm{U}(1) \simeq \R/2\pi\Z$:
\[
f(x,\theta) = \sum_{k \in \Z} a_k(x)e^{ik\theta}, 
\]
that is $f_k(x,\theta) = a_k(x)e^{ik\theta}$. The (local) section $s(x,\theta) := e^{i \theta}$ gives a local trivialization of $L$, and $s^{\otimes k} = e^{i k \theta}$ is a (local) section of $L^{\otimes k}$.

\subsection{Connections on associated bundles}

\label{section:connection-principal-bundles}

We now further equip the $G$-principal bundle $\pi : P \to M$ with a $G$-equivariant connection $\nabla$ (see \cite[Chapter II]{Kobayashi-Nomizu-63} for a background on connections). This is equivalent to the choice of a horizontal subbundle $\HH_P \subset TP$ that is transverse to the vertical bundle $\V_P \subset TP$ and invariant by the right-action of $G$. We explain how the connection $\nabla$ induces functorially a connection on all associated bundles, that is it equips naturally all associated bundles with a horizontal space.

\subsubsection{Connection on associated vector bundles}

We first describe the parallel transport induced on each associated vector bundle $E^\lambda \to M$ constructed in \eqref{equation:e-lambda} for $\lambda \in \widehat{G}$. Given $x,y \in M$, and a path $\gamma : [0,1] \to M$ joining $x$ to $y$, we denote by $\tau_\gamma : P_x \to P_y$ the parallel transport along $\gamma$ with respect to $\nabla$. The parallel transport of vectors in $E^\lambda_x$ along $\gamma$ is then defined by the formula
\begin{equation}
\label{equation:transport}
	\tau_\gamma^{\lambda} : E^\lambda_x \to E^\lambda_{y}, \quad \tau_\gamma^{\lambda}(x, [w, \xi]) := (y, [u, \xi]),
\end{equation}
where $w \in P_x$ is arbitrary, $\xi \in V^\lambda$, $u \in P_{y}$ is defined by $\tau_\gamma(x, w) = (y, u)$ (parallel transport of $w$ along $\gamma$), and $[\bullet, \bullet]$ is the equivalence class defined in \eqref{equation:right-product}. This is well-defined: indeed, if $w' = w \cdot g \in P_x$ for some $g \in G$, since we have $\tau_\gamma(x, w') = (y, u \cdot g)$ (as $\tau_\gamma$ commutes with the right $G$-action), then

\[
	\tau_\gamma^{\lambda}(x, [w \cdot g, \lambda(g^{-1}) \xi]) = (y, [u \cdot g, \lambda(g^{-1}) \xi]) = \tau_\gamma^{\lambda}(x, [w, \xi]).
\]
(More precisely, $\tau_\gamma^\lambda$ is defined initially on $P \times V^{\lambda}$ and by this computation it descends to the quotient $E^{\lambda}$.) 
To give another point of view, we discuss what happens in a local trivialisation $P|_U \simeq U \times G$ for some open contractible set $U \subset M$; then $\tau_\gamma(x,g) = (y, b_\gamma \cdot g)$ for some $b_\gamma \in G$. The same trivialization also allows to write locally $E^\lambda|_U \simeq U \times V^\lambda$ and we then have, by definition
\begin{equation}
\label{equation:phi-lambda}
\tau_\gamma^\lambda(x,\xi) =\tau_\gamma^\lambda(x, [e, \xi]) = (y, [b_\gamma, \xi]) = (y, \lambda(b_\gamma)\xi),
\end{equation}
where we identified $[e, \xi]$ with $\xi$.

The collection of all parallel transports $\tau^\lambda$ defines a horizontal space $\HH^\lambda \subset TE^\lambda$ or, equivalently, a covariant derivative
\[
\nabla^\lambda : C^\infty(M,E^\lambda) \to C^\infty(M,E^\lambda \otimes T^*M).
\]
Given $x,y \in M$, a path $\gamma$ joining $x$ to $y$, and a function $f \in C^\infty(P_y)$, one can define the pullback $\tau_\gamma^* f \in C^\infty(P_x)$ by $\tau_\gamma^*f(x,w) := f(\tau_\gamma(x,w))$.

The following lemma asserts that the Fourier transform intertwines parallel transport on functions of $P$ and on sections of the associated bundles $E^\lambda \to M$: 

\begin{lemma}
\label{lemma:ft-intertwines-flow}
Let $x,y \in M$, $\gamma$ a path joining $x$ to $y$, $f \in C^\infty(P_y)$, and $\lambda \in \widehat{G}$. Then
\[
\mc{F}(\tau_\gamma^* f)(\lambda,x)= (\tau_\gamma^\lambda)^{-1}\left[\mc{F}f(\lambda,y)\right].
\]
\end{lemma}

\begin{proof}

Write $\tau_\gamma(x, w) = (y, u)$ for some $w \in P_x$ and $u \in P_{y}$. Then, we have for any $\xi \in V^\lambda$
\begin{align*}
	\mc{F}(\tau_\gamma^*f)(\lambda, x)\xi &= \int_G f(y, u \cdot g) w(\lambda (g)\xi) \dd g\\
	&= \Big[w, \int_G f(y, u \cdot g) \lambda (g)\xi \dd g \Big]\\
	&= (\tau_\gamma^\lambda)^{-1} \Big[u, \int_G f(y, u \cdot g) \lambda (g)\xi \dd g \Big]\\
	&= (\tau_\gamma^\lambda)^{-1} \int_G f(y, u \cdot g) u(\lambda (g)\xi) \dd g\\
	&= (\tau_\gamma^\lambda)^{-1} (\mc{F}f(\lambda,y) \xi),
\end{align*}
as required. 
%
\end{proof}

\subsubsection{Connection on the flag bundle}

\label{sssection:extension-flag-manifold}

Parallel transport on $P$ induces a parallel transport on the flag bundle $F = P/T$ (more generally, on any homogeneous $G$-space bundle) by
\[
\tau^F_\gamma(x,wT) := (y, u T),
\]
where $w \in P_x$ and $u \in P_{y}$ is defined by $\tau_\gamma(x,w)=(y,u)$; here $\bullet T$ denotes the orbit of $\bullet \in P$ in $P/T$; this is well-defined since $\tau_\gamma$ commutes with the right $G$-action. When the context is clear, we will simply write $\tau_\gamma$ instead of $\tau_\gamma^F$. This defines a horizontal subspace
\[
(\HH_F)_{x,wT} := \{\partial_t \tau^F_{\gamma(t)}(x,wT)|_{t=0} ~|~ \gamma(0)=x\} \subset TF
\]
which is transverse to the vertical bundle $\V_F := \ker d\pi_F$, where $\pi_F : F \to M$ is the footpoint projection.



\subsubsection{Connection on homogeneous line bundles}\label{sssection:homogeneous-line}

Parallel transport on $F$ can be lifted to a fiberwise (i.e. preserving the fibers of $\mathbf{L}^{\otimes \mathbf{k}}$) parallel transport $\mathbf{L}^{\otimes \mathbf{k}} \to F$ along horizontal directions, for all $\mathbf{k} \in \widehat{G}$. Indeed, by construction, $\mathbf{L}^{\otimes \mathbf{k}} = P \times_{\phi(\mathbf{k})} \C$, where $\phi(\mathbf{k}) : T \to \C$ is the weight corresponding to $\mathbf{k}$ (see \eqref{equation:ecriture}, and note that we identify $\phi(\mathbf{k})$ with the corresponding global weight). We can thus set for $x,y \in M$, $\gamma$ a path joining $x$ to $y$, $w \in P_x$ and $\xi \in \C$
\[
	\tau_\gamma^{\mathbf{k}}(x,[w,\xi]) = (y, [u,\xi]),
\]
where $u$ is defined by $\tau_\gamma(x,w)=(y,u)$ and $[w,\xi]$ denotes the equivalence modulo the relation \eqref{equation:relation-0}. (Equivalently, this may be seen as
\[
\tau_\gamma^{\mathbf{k}}(w'T, [w,\xi]) = (\tau_\gamma^F(w'T), [u, \xi]),
\]
where $w'T \in P/T$ is such that $w'T = wT$.) This is well-defined since any other representative is given by $[w \cdot t, \gamma(t^{-1})\xi]=[w,\xi]$ for $t \in T$, $\tau_\gamma(x, w \cdot t) = (y, u \cdot t)$, and thus 
\[
\tau_\gamma^{\mathbf{k}}(x, [w \cdot t, \gamma(t^{-1})\xi]) = (y, [u \cdot t,\gamma(t^{-1})\xi]) = \tau_\gamma^{\mathbf{k}}(x, [w,\xi]).
\]
Let $\pi : \mathbf{L}^{\otimes \mathbf{k}} \to F$ be the footpoint projection. By definition, for all $wT \in F$, $\xi \in \mathbf{L}^{\otimes \mathbf{k}}_{wT}$ and $t \in \R$, we have
\begin{equation}
\label{equation:proj}
\pi \circ \tau_\gamma^{\mathbf{k}}(wT,\xi) = \tau_\gamma^F(wT) = \tau_\gamma^F \circ \pi (wT,\xi).
\end{equation}
In other words, $\tau^{\mathbf{k}}$ is an extension of $\tau^F$ on the line bundle $\mathbf{L}^{\otimes \mathbf{k}} \to F$.

We next show that parallel transport respects the fibrewise holomorphic structures.
\begin{proposition}\label{proposition:parallel-transport-fibrewise-holomorphic}
	Fix $x, y \in M$ and a path $\gamma$ between $x$ and $y$. Then
	\[
		\tau_\gamma^F: F_x \to F_y, \quad \tau^{\mathbf{k}}_\gamma: \mathbf{L}^{\otimes \mathbf{k}}|_{F_x} \to \mathbf{L}^{\otimes \mathbf{k}}|_{F_y}
	\]
	are biholomorphisms, and $\tau^{\mathbf{k}}_\gamma$ is a unitary holomorphic isomorphism of vector bundles covering $\tau_{\gamma}^F$.
\end{proposition}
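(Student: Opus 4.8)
The plan is to reduce all three claims to the single observation that, in a suitable pair of trivializations, $G$-equivariant parallel transport along $\gamma$ becomes the identity map.

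First I would fix a point $w \in P_x$ and set $u := \tau_\gamma(w) \in P_y$. Since the principal parallel transport $\tau_\gamma : P_x \to P_y$ is $G$-equivariant, $\tau_\gamma(w \cdot g) = u \cdot g$ for all $g \in G$. Recall (from \S\ref{sssection:flag-manifold-bundle} and the discussion following it) that the fiberwise complex structure on $F_x$ is, by definition, the one transported from $G/T$ via the identification $\iota_w : F_x \to G/T$, $(w \cdot g) T \mapsto gT$, and that two such identifications $\iota_w, \iota_{w'}$ with $w' = w \cdot h$ differ by the left translation $L_{h^{-1}}$ on $G/T$, which is a biholomorphism because the $G$-action on $G/T$ is holomorphic; hence the complex structure is independent of the chosen base point. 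A direct computation from the definition of $\tau_\gamma^F$ then gives $\iota_u \circ \tau_\gamma^F \circ \iota_w^{-1} = \mathrm{id}_{G/T}$, so $\tau_\gamma^F$ is a biholomorphism.

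Next, for the line bundle I would run the same argument one level up. The identification $\iota_w$ lifts to a bundle map $\widetilde{\iota}_w : \mathbf{L}^{\otimes \mathbf{k}}|_{F_x} \to \mathbf{J}^{\otimes \mathbf{k}}$, $[w \cdot g, \xi] \mapsto [g, \xi]$, covering $\iota_w$; this is precisely (up to the $G$-action ambiguity) the biholomorphism $\iota_x$ used to define the fiberwise holomorphic structure $\overline{\partial}_{\mathbf{k}}$ via \eqref{equation:fiberwise-dbar}, and two such lifts differ by the action of $L_{h^{-1}}$ on the homogeneous bundle $\mathbf{J}^{\otimes \mathbf{k}}$, which is holomorphic, so $\overline{\partial}_{\mathbf{k}}$ is well defined. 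From the definition of $\tau_\gamma^{\mathbf{k}}$, which leaves the $\C$-coordinate $\xi$ unchanged, one obtains $\widetilde{\iota}_u \circ \tau_\gamma^{\mathbf{k}} \circ \widetilde{\iota}_w^{-1} = \mathrm{id}_{\mathbf{J}^{\otimes \mathbf{k}}}$, which shows $\tau_\gamma^{\mathbf{k}}$ is a holomorphic vector bundle isomorphism covering $\tau_\gamma^F$. Fiberwise linearity is immediate since $[p, \xi] \mapsto [\tau_\gamma(p), \xi]$ is $\C$-linear in $\xi$; unitarity follows because the Hermitian norm on $\mathbf{L}^{\otimes \mathbf{k}}$ is $\|[p, \xi]\| = |\xi|$ (well defined as $\phi(\mathbf{k})$ is a unitary character), and $\tau_\gamma^{\mathbf{k}}$ preserves $\xi$.

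The only genuinely delicate point is bookkeeping: checking that the various non-canonical identifications ($\iota_w$, $\widetilde{\iota}_w$, and the $\iota_x$ of the excerpt) are used consistently, i.e. that the complex and holomorphic structures on $F_x$ and $\mathbf{L}^{\otimes \mathbf{k}}|_{F_x}$ really are well defined and agree with those transported through $\widetilde{\iota}_w$. This is handled once and for all by the remark that changing the base point post-composes with the holomorphic $G$-action, so I expect no substantial obstacle: the proposition is essentially a formal consequence of $G$-equivariance of parallel transport together with $G$-homogeneity of the pair $(G/T, \mathbf{J}^{\otimes \mathbf{k}})$.
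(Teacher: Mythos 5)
Your proof is correct, and it is essentially built on the same key input as the paper's proof --- namely the $G$-equivariance of the principal parallel transport $\tau_\gamma : P_x \to P_y$ --- but it packages it more synthetically. The paper argues infinitesimally: it identifies $TP = \HH \oplus \mathfrak{g}$, observes that $d\tau_\gamma$ is the identity on the $\mathfrak{g}$-factor, and then checks that $d\tau_\gamma^F$ commutes with the $(0,1)$-projection via the root-space decomposition $\mathfrak{n}^\pm$; the line-bundle case is handled the same way on $P \times \C$. You instead make the global observation that, after conjugating by the basepoint-dependent identifications $\iota_w : F_x \to G/T$ and $\iota_{\tau_\gamma(w)} : F_y \to G/T$ (and their lifts to $\mathbf{J}^{\otimes \mathbf{k}}$), the transport becomes literally the identity map, so biholomorphy, fiberwise linearity, and unitarity are all immediate. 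Your route is arguably cleaner, since it avoids differential bookkeeping, but it does shift the burden to one well-definedness remark: you must (and do) note that the complex/holomorphic structures on $F_x$ and $\Lk|_{F_x}$ are independent of the choice of $w \in P_x$ because changing $w$ post-composes $\iota_w$ with a left translation, which is holomorphic on $(G/T, \mathbf{J}^{\otimes \mathbf{k}})$. That remark is exactly the content that the paper's infinitesimal computation (identity on $\mathfrak{g}$, $\Ad(T)$-invariance of $\mathfrak{n}^\pm$) encodes at the tangent-space level; the two proofs are therefore two faces of the same argument. Both are complete.
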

\begin{proof}
	Denote by $\pi_{x/y}$ the projection from $P_{x/y}$ to $F_{x/y}$. By definition, the parallel transport $\tau_\gamma^P$ on $P$ along $\gamma$ satisfies 
	\begin{equation}\label{eq:commutation-trivial}
		\tau_{\gamma}^F \circ \pi_x = \pi_y \circ \tau_{\gamma}^P. 
	\end{equation}
	Using fundamental vector fields we identify $TP_{x/y} = \mathbb{H}_{x/y} \oplus \mathfrak{g}$. Since $\tau_{\gamma}^P$ commutes with right $G$-action, $d\tau_{\gamma}(p)$ is equal to the identity on $\mathfrak{g}$. Recall that $TF_x \otimes \mathbb{C} = \mathbb{V}_F^{1, 0} \oplus \mathbb{V}_{F}^{0, 1}$, where $\mathbb{V}_{F}^{0, 1} = d\pi_x(\mathfrak{n}^-)$ and similarly for the $(1, 0)$-part. Collecting the two preceding facts and using \eqref{eq:commutation-trivial}, we conclude that $d\tau_{\gamma}^F$ commutes with the projection $\pi^{0, 1}$ to $\mathbb{V}_{F}^{0, 1}$ and so is a biholomorphism (it is clearly a diffeomorphism).
	
	 Denote by $\Pi_{x/y}$ the projection $P_{x/y} \times \mathbb{C} \to \mathbf{L}^{\otimes \mathbf{k}}|_{F_{x/y}}$. There is a commutation similar to above
	 \[
	 	\tau_{\gamma}^{\mathbf{k}} \circ \Pi_x = \Pi_y \circ \tau_{\gamma}^{P \times \mathbb{C}},
	 \]
	 where $\tau^{P \times \mathbb{C}}_\gamma(p, z) = (\tau^P_{\gamma}p, z)$ for $(p, z) \in P_x \times \C$. Moreover, there is a splitting $T(P_{x/y} \times \mathbb{C}) = TP_{x/y} \oplus T\mathbb{C}$. The differential $d\tau_{\gamma}^{\mathbf{k}}$ acts as the identity on $\mathfrak{g} \oplus T\mathbb{C}$, and therefore similarly to above, we have that $\tau^{\mathbf{k}}_{\gamma}$ is a biholomorphism, which is linear in the fibres. Notice also that $\tau^F_{\gamma} \circ \pi_{x, \mathbf{k}} = \pi_{y, \mathbf{k}} \circ \tau^{\mathbf{k}}_\gamma$, where $\pi_{x/y, \mathbf{k}}$ are the projections $\Lk|_{F_{x/y}} \to F_{x/y}$; this shows that the biholomorphism $\tau^{\mathbf{k}}_{\gamma}$ is compatible with the projections. Finally, $\tau_{\gamma}^{\mathbf{k}}$ also preserves the Hermitian structure since $\tau_{\gamma}^{P \times \mathbb{C}}$ does so, which concludes the proof.	 
\end{proof}

The previous discussion allows to define a partial horizontal connection
\begin{equation}
\label{equation:connection-horizontale-lk}
\nabla_{\mathbf{k}} : C^\infty(F,\mathbf{L}^{\otimes \mathbf{k}}) \to C^\infty(F,\mathbf{L}^{\otimes \mathbf{k}} \otimes \HH^*)
\end{equation}
on $\mathbf{L}^{\otimes \mathbf{k}} \to F$ as follows. Given a vector field $X \in C^\infty(M,TM)$, let $\varphi$ be the flow generated on $M$ and $\psi^F$ be the flow generated by its horizontal lift $X^{\HH_F}$ to $F$. Note that $\psi^F_t(x,wT) = (\varphi_t x, \tau_{\gamma(t)}^F(wT))$, where $\gamma(t)$ denotes the flow segment $(\varphi_sx)_{s \in [0,t]}$. We then set for $s \in C^\infty(F,\mathbf{L}^{\otimes \mathbf{k}})$,
\begin{equation}
\label{equation:def-conn}
(\nabla_{\mathbf{k}})_{X^{\HH_F}}s(wT) := \partial_{t}|_{t = 0} (\tau_{\gamma(t)}^{\mathbf{k}})^{-1}(s(\psi_t^F(wT))), \quad wT \in F.
\end{equation}
It is immediate to verify that \eqref{equation:def-conn} is linear in $X^{\HH_F}$.

The partial connection \eqref{equation:connection-horizontale-lk} satisfies the following Leibniz property
\[
	\nabla_{\mathbf{k}} (fs) = df|_{\HH_F} \otimes s + f \nabla_{\mathbf{k}}s, \quad s \in C^\infty(F,\mathbf{L}^{\otimes \mathbf{k}}), \quad f \in C^\infty(F).
\]
(In other words, \eqref{equation:connection-horizontale-lk} allows to differentiate sections of $\mathbf{L}^{\otimes \mathbf{k}} \to F$ in the horizontal directions $\HH_F \subset TF$.) Combining \eqref{equation:connection-horizontale-lk} and the vertical Chern connection defined in \eqref{equation:partial-chern}, one can form the global connection
\begin{equation}
\label{equation:dynamical-lambda}
\overline{\nabla}_{\mathbf{k}} := D^{\mathrm{Chern}}_{\mathbf{k}} + \nabla_{\mathbf{k}} : C^\infty(F,\mathbf{L}^{\otimes \mathbf{k}}) \to C^\infty(F,\mathbf{L}^{\otimes \mathbf{k}} \otimes T^*F).
\end{equation}

For later use, we also make the following useful observation. If $s \in C^\infty(F,\mathbf{L}^{\otimes \mathbf{k}})$ is identified with an equivariant function $\overline{s} \in C^\infty(P)$ satisfying $\overline{s}(wt) = \gamma_{\mathbf{k}}(t^{-1})\overline{s}(w)$ (where $\gamma_{\mathbf{k}}\circ \exp=\exp \circ \phi(\mathbf{k})$, see \eqref{equation:ecriture}) for all $w \in P, t \in T$, then
\begin{equation}
\label{equation:useful2}
\nabla_\mathbf{k} s = d\overline{s}|_{\HH_P}, \qquad D^{\mathrm{Chern}}_\mathbf{k} s = d\overline{s}|_{\mathfrak{m}},
\end{equation}
where $\mathfrak{m} \subset \mathfrak{g} \simeq \V_P$.

Finally, let us conclude this section with a remark about first order differential operators and the Fourier transform. Expanding on the discussion before \eqref{equation:def-conn}, write $X_P$ and $X_F$ for horizontal lifts of $X$ to $P$ and $F$, and $\X_{\mathbf{k}} := (\nabla_{\mathbf{k}})_{X_F} = (\overline{\nabla}_{\mathbf{k}})_{X_F}$. Thanks to \eqref{equation:def-conn} and Lemma \ref{lemma:ft-intertwines-flow} (by taking $\gamma$ to be a flowline of $X$), writing $\mathbf{k}$ instead of $\lambda$, we get
\begin{equation}\label{eq:ft-intertwines-infinitesimal}
	\mc{F}(X_P f)(\mathbf{k}, x) = \X_{\mathbf{k}} \mc{F}f(\mathbf{k}, x), \quad x\in M, f \in C^\infty(P).
\end{equation}

\subsubsection{One-dimensional representations revisited}\label{sssection:abelian-rep} In this section we revisit \S \ref{sssection:abelian-rep-0}, and show that the identification established in Lemma \ref{lemma:1d-rep} behaves well under the associated constructions. In many applications of our theory, one-dimensional representations will be treated separately from other representations.

We assume that the global weight $\gamma: T \to \mathbb{S}^1$ corresponding to $\mathbf{k}$ via \eqref{equation:ecriture}, extends to $\beta: G \to \mathbb{S}^1$. In this section we will denote the footpoint projection $F \to M$ by $\pi$. Then similarly to \S \ref{sssection:abelian-rep-0} we set $\Lk_M := P \times_\beta \mathbb{C}$ to be the complex line bundle over $M$ and by definition we have

\begin{align*}
	\pi^*\Lk_M = \{(pT, [p, z]_\beta) \mid p \in P,\,\, z \in \mathbb{C}\}.
\end{align*}
and there is a commutative diagram
	\begin{equation*}
	\begin{tikzcd}
		\Lk_M \dar & \lar[swap] \pi^*\Lk_M \dar\\
		M & \lar P/T,
	\end{tikzcd}
	\qquad
	\begin{tikzcd}[arrows={|->}]
		[p, z]_\beta \dar & \lar[swap] (pT, [p, z]_\beta) \dar\\
		\pi(pT) & \lar pT.
	\end{tikzcd}
	\end{equation*}
	Note that the definition \eqref{eq:map-Y} of the map $\Upsilon$ and the result of Lemma \ref{lemma:1d-rep} carry over to define a fibrewise holomorphic isomorphisms of fibrewise holomorphic line bundles
	\[
		\Upsilon: P\times_\gamma \mathbb{C} = \Lk \to \pi^*\Lk_M, \quad [p, z]_\gamma \mapsto (pT, [p, z]_\beta).
	\]
	There is a well-defined pullback action 
	\[
		\pi^*s (pT) = \big(pT, s(\pi(p))\big) \in \pi^*\Lk_M(pT), \quad p \in P, s \in C^\infty(M, \Lk_M).
	\]
	Dual to pullback, we have the \emph{pushforward}:
	\[
		\pi_*: C^\infty(F, \pi_F^*\Lk_M) \to C^\infty(M, \Lk_M), \quad \pi_* s (x) := \int_{pT \in F_x} s(pT)\, d\vol_{F_x},
	\]
	where $d\vol_{F_x}$ denotes the volume measure in the fibre $F_x$. The following identities are straightforward to check:
	\begin{equation}\label{eq:L^2-duality}
		\pi_* \pi^*s = \vol(G/T) s, \quad \langle{\pi^*s, u}\rangle_{L^2(F, \pi^*\Lk_M)} = \langle{s, \pi_*u}\rangle_{L^2(M, \Lk_M)},
	\end{equation}
	where $s\in C^\infty(M, \Lk_M)$ and $u \in C^\infty(F, \pi^*\Lk_M)$. The pushforward can also be defined on $1$-forms with values in $\pi^*\Lk_M$ in the following way:
	\begin{align*}
		&\pi_*: C^\infty(F, T^*F \otimes \pi^*\Lk_M) \to C^\infty(M, T^*M \otimes \Lk_M),\quad \pi_* \alpha (x)(v) := 	\int_{F_x} \alpha',
	\end{align*}
	where $x \in M$, $v \in T_x M$, and $\alpha'$ is the $\ell$-form on $F_x$ (where $\ell = \dim G/T$) with values in $\Lk_M(x)$ defined by the formula
	\[
		\alpha'(v_1, \dotsc, v_{\ell}) = d\vol_{F_x} \wedge \alpha (v_1, \dotsc, v_{\ell}, \widetilde{v}), \quad v_1, \dotsc, v_{\ell} \in T(F_x),
	\]
	where $\widetilde{v} \in T(F_x)$ is an arbitrary lift of $v$, i.e. $d\pi(\widetilde{v}) = v$. It is straightforward to show that (see \cite[Propositions 6.14.1 and 6.15]{Bott-Tu-82} where the non-twisted case is treated, but the proof carries over to include the setting of differential forms with values in vector bundles)
	\begin{equation}\label{eq:pushforward-identity}
		\pi_*(s\wedge \pi^*\alpha) =\alpha \wedge \pi_*s, \quad \alpha \in C^\infty(M, T^*M),\,\, s \in C^\infty(F, \pi^*\Lk_M).
	\end{equation}
	
	
	
	We now adopt the notation of \S \ref{sssection:homogeneous-line}, and in particular we fix a connection on $P$, a vector field $X$ on $M$ with horizontal lift to $P$ denoted by $X_P$. Write $\nabla_{\mathbf{k}, M}$ for the associated connection on $\Lk_M$ and $\X_{\mathbf{k}, M} := (\nabla_{\mathbf{k}, M})_X$. Write $\pi^*\nabla_{\mathbf{k}, M}$ for the pullback connection on $\pi^*\Lk_M$ and $\pi^*\X_{\mathbf{k}, M} := (\pi^*\nabla_{\mathbf{k}, M})_{X_F}$.
	
\begin{lemma}\label{lemma:pullback-equivalence}
	The following relations hold:
	\begin{enumerate}[itemsep=5pt]
	 \item We have the equivalence of connections: $\Upsilon^*(\pi^* \nabla_{\mathbf{k}, M}) = \overline{\nabla}_{\mathbf{k}}$.
%
	 \item Also, we have
	 \begin{equation}\label{eq:pushforward-identity-2}
	 	\pi_* (\Upsilon^{-*} \nabla_{\mathbf{k}}) s = \nabla_{\mathbf{k}, M} \pi_* s, \quad s \in C^\infty(F, \pi^*\Lk_M).
	 \end{equation}
	\end{enumerate}
\end{lemma}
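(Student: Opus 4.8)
The plan is to prove both identities locally over $M$, in a trivialisation $P|_U \simeq U \times G$ over a contractible $U \subset M$, after translating everything into statements about equivariant functions on $P$. Write $q : P \to F = P/T$ for the canonical projection. The first observation is that, \emph{as bundles over $F$}, both $\Lk = P \times_\gamma \mathbb{C}$ and $\pi^*\Lk_M$ are encoded by the same data, namely a function $\overline s \in C^\infty(P)$ with $\overline s(pt) = \gamma(t^{-1})\overline s(p)$: for $\Lk$ this is \eqref{equation:t-equivariance}, and for $\pi^*\Lk_M$ one checks directly that the fibre over $pT$ is the fibre of $\Lk_M$ over $\pi(pT) = \pi_M(p)$, so writing a section as $pT \mapsto (pT,[p,\hat\sigma(p)]_\beta)$ forces $\hat\sigma(pt) = \beta(t^{-1})\hat\sigma(p) = \gamma(t^{-1})\hat\sigma(p)$ because $\beta|_{T} = \gamma$. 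Under these two identifications the map $\Upsilon$ of Lemma \ref{lemma:1d-rep} is literally the identity on $\gamma$-equivariant functions, so $\Upsilon^{\pm *}$ disappear and the lemma becomes a comparison of two connections written on the same objects. The other key input is \eqref{equation:useful2}, which says $\overline\nabla_{\mathbf{k}} s = d\overline s|_{\HH_P} + d\overline s|_{\mathfrak m}$, i.e. $\overline\nabla_{\mathbf{k}}$ is the connection on the $T$-bundle $P \to F$ attached to the $\Ad(T)$-invariant horizontal distribution $\HH_P \oplus \mathfrak m \subset TP$ (with $\mathfrak m \subset \mathfrak g \simeq \V_P$).

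For item 1 I would show that $\pi^*\nabla_{\mathbf{k},M}$, written on $\gamma$-equivariant functions, is also $d\overline s|_{\HH_P \oplus \mathfrak m}$, treating horizontal and vertical directions separately. In the horizontal directions $\HH_F = dq(\HH_P)$ this is immediate: a pullback connection satisfies $\pi^*\nabla_{\mathbf{k},M}(\pi^* u) = \pi^*(\nabla_{\mathbf{k},M} u)$ and the Leibniz rule, and $\nabla_{\mathbf{k},M}$ is by construction the connection $d\overline u|_{\HH_P}$ on $\beta$-equivariant functions, so $\pi^*\nabla_{\mathbf{k},M}$ acts as $d\overline s|_{\HH_P}$ on $\gamma$-equivariant functions. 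In the vertical directions $\V_F = dq(\mathfrak m)$ the pullback connection is flat along the fibres of $F \to M$ — its local connection $1$-form is pulled back from $M$ and thus annihilates $\V_F$ — and I claim that, on $\gamma$-equivariant functions, this flat-along-fibres connection equals $d\overline s|_{\mathfrak m}$. Writing a section locally as $s = f\cdot \pi^* e$ with $e$ a local frame of $\Lk_M$ and $f \in C^\infty(F|_U)$, the associated $\gamma$-equivariant function is $\overline s = (f\circ q)\,\overline e$, so $d\overline s|_{\mathfrak m} = (d(f\circ q))|_{\mathfrak m}\,\overline e + (f\circ q)\,d\overline e|_{\mathfrak m}$; the second term vanishes because $\overline e$ is $\beta$-equivariant and $d\beta(e)$ kills $\mathfrak m \subset [\mathfrak g,\mathfrak g]$ (as $\beta$ is valued in the abelian group $\mathbb{S}^1$), while the first term is exactly the derivative of the coefficient $f$ along $\V_F$. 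Combining the two cases with \eqref{equation:useful2} gives $\Upsilon^*(\pi^*\nabla_{\mathbf{k},M}) = d\overline s|_{\HH_P} + d\overline s|_{\mathfrak m} = \overline\nabla_{\mathbf{k}}$.

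For item 2 I would first restrict item 1 to horizontal directions, obtaining $\Upsilon^{-*}\nabla_{\mathbf{k}} = (\pi^*\nabla_{\mathbf{k},M})|_{\HH_F}$, and then combine this with the definition and elementary properties of the pushforward $\pi_*$, in particular the projection formula \eqref{eq:pushforward-identity} and $\pi_*\pi^* = \vol(G/T)$ from \eqref{eq:L^2-duality}. Concretely, with $s = f\cdot\pi^* e$ as above and $\theta$ the local connection $1$-form of $\nabla_{\mathbf{k},M}$ in the frame $e$, one has $(\pi^*\nabla_{\mathbf{k},M}) s = (df + f\,\pi^*\theta)\otimes \pi^* e$; restricting to $\HH_F$ and pushing forward, and using that $\pi_*$ of an $\HH^*$-valued object is computed via horizontal lifts (so prior restriction to $\HH_F$ is harmless), the identity $\pi_*(u\wedge \pi^*\alpha) = \alpha\wedge \pi_* u$ turns the $\pi^*\theta$-term into $(\pi_* f)\,\theta\otimes e$ and, since fibre integration commutes with $d$, the $df$-term into $d(\pi_* f)\otimes e$. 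Since $\pi_* s = (\pi_* f)\,e$, this gives $\pi_*\big((\Upsilon^{-*}\nabla_{\mathbf{k}}) s\big) = \big(d(\pi_* f) + (\pi_* f)\theta\big)\otimes e = \nabla_{\mathbf{k},M}(\pi_* s)$.

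I expect the only genuine obstacle to be bookkeeping: keeping straight the three pictures (the $T$-equivariant-function description of $\Lk$, the same description of $\pi^*\Lk_M$, and the $G$-equivariant-function description of $\Lk_M$ itself) and computing the pullback connection correctly. The single non-formal point is the vanishing of $d\beta(e)$ on $\mathfrak m$, which is precisely what makes the vertical part of $\pi^*\nabla_{\mathbf{k},M}$ coincide with the Chern connection $D^{\mathrm{Chern}}_{\mathbf{k}}$ rather than differing from it by a nonzero $1$-form; this is where the hypothesis that $\gamma$ extends to a homomorphism $\beta : G \to \mathbb{S}^1$ is used.
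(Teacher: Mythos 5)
Your proof is correct, and it takes a genuinely different route from the paper's. For Item~1 you work entirely in the picture of $\gamma$-equivariant functions on $P$, observe that $\Upsilon$ becomes the identity there, and match both connections against $d\overline s|_{\HH_P} + d\overline s|_{\mathfrak{m}}$ via \eqref{equation:useful2}; the vertical comparison then rests only on $d\beta(e)$ annihilating $\mathfrak{m} \subset [\mathfrak{g},\mathfrak{g}]$, since $\beta$ takes values in the abelian group $\mathbb{S}^1$. The paper also splits into horizontal and vertical directions, but argues by comparison of parallel transports for the horizontal part and by uniqueness of Chern connections (using that $\Upsilon$ is a fibrewise biholomorphic unitary isomorphism) for the vertical part. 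For Item~2 the paper passes to $L^2$-duality and reduces the identity to the defining property $\pi^*\circ(\nabla_{\mathbf{k},M})_Y = (\pi^*\nabla_{\mathbf{k},M})_{Y^{\HH}}\circ\pi^*$ of a pullback connection, whereas you compute directly in a local frame of $\Lk_M$ using the projection formula \eqref{eq:pushforward-identity} together with the commutation of fibre integration with $d$. Both routes are sound. Your version is more explicit throughout but carries slightly more bookkeeping: in Item~2 the step ``$\pi_* d = d\pi_*$'' quietly uses that horizontal parallel transport in $F \to M$ preserves the fibre volume form, and one also needs that the pushforward of the $\V_F^*$-valued part of $df\otimes\pi^*e$ vanishes (which you do flag). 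The paper's duality argument avoids these local considerations at the expense of being less constructive.
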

\begin{proof}
\emph{Item 1.} We will prove that $\nabla_{\mathbf{k}}' := \Upsilon^*(\pi^*\nabla_{\mathbf{k}, M}|_{\mathbb{H}_F}) = \nabla_{\mathbf{k}}$ and $\Upsilon^*(\pi^*\nabla_{\mathbf{k}, M}|_{\mathbb{V}_F}) = D^{\mathrm{Chern}}_{\mathbf{k}}$. For the former, let $\eta$ be a path in $M$, which lifts horizontally to a path $\eta^{\mathbb{H}_{P}}$ in $P$ between $p$ and $q$; then $\eta^{\mathbb{H}_F} := \eta^{\mathbb{H}_{P}}T$ is the corresponding horizontal lift in $F$. We compute, for any $z \in \mathbb{C}$
\begin{multline*}
	\tau^{\nabla_{\mathbf{k}}'}_{\eta^{\mathbb{H}_F}} [p, z]_{\gamma} = \Upsilon^{-1} \tau^{\pi^*\nabla_{\mathbf{k}, M}}_{\eta^{\mathbb{H}_F}} \Upsilon [p, z]_\gamma\\ 
	=\Upsilon^{-1} \tau^{\pi^*\nabla_{\mathbf{k}, M}}_{\eta^{\mathbb{H}_F}} [p, z]_\beta = \Upsilon^{-1} [q, z]_\beta = [q, z]_\gamma = \tau_{\eta^{\mathbb{H}_F}}^{\mathbf{k}} [p, z]_\gamma,
\end{multline*}
where by $\tau^{\bullet}$ we denote the parallel transport with respect to $\bullet$. This shows $\nabla_{\mathbf{k}} = \nabla_{\mathbf{k}}'$.

For the latter claim, note that $\pi^*\nabla_{\mathbf{k}, M}|_{\mathbb{V}_F} = d|_{\mathbb{V}_F}$ (more precisely, the right hand side acts on $s \in C^\infty(F, \pi^*\Lk_M)$ as $d|_{\mathbb{V}_F}s(pT) = d(s|_{F_{\pi(pT)}})(pT)$). By Lemma \ref{lemma:1d-rep} we know that $\Upsilon$ is a fibrewise biholomorphic unitary isomorphism of line bundles, where $\pi^*\Lk_M$ is equipped with the fibrewise trivial holomorphic structure, i.e. it is an equivalence between $\overline{\partial}$-operators. By uniqueness of Chern connections, the result immediately follows.
\medskip

\emph{Item 2.} By Item 1, we know that $\Upsilon^{-*} \nabla_{\mathbf{k}} = \pi^*\nabla_{\mathbf{k}, M}|_{\mathbb{H}_F}$. By definition \eqref{eq:pushforward-identity}, the pushforward of a $1$-form with values in $\mathbb{V}_F^*$ is zero; it thus suffices to show $\pi_* \circ \pi^*\nabla_{\mathbf{k}, M} = \nabla_{\mathbf{k}, M} \circ \pi_*$. Again using \eqref{eq:pushforward-identity}, it suffices to show $\pi_* \circ (\pi^*\nabla_{\mathbf{k}, M})_{Y^{\HH}} = (\nabla_{\mathbf{k}, M})_Y \circ \pi_*$ for any vector field $Y$ on $M$. Observe that 
\[
	(\nabla_{\mathbf{k}, M})_Y^* = -(\nabla_{\mathbf{k}, M})_Y - \operatorname{div}(Y), \quad 
	(\pi^*\nabla_{\mathbf{k}, M})_{Y^{\HH}}^* = -(\pi^*\nabla_{\mathbf{k}, M})_{Y^{\HH}} - \pi^*\operatorname{div}(Y),
\]
where the divergence is taken with respect to the Riemannian measure on $M$, the adjoint on $F$ with respect to the product of this measure and the quotient measure on $G/T$, and we used that the connections are unitary; we also used that the divergence of $Y^{\HH}$ is equal to $\pi^*\operatorname{div}(Y)$ (see \eqref{eq:pullback-divergence} below). By taking $L^2$ duals and using \eqref{eq:L^2-duality}, $\pi_* \circ (\pi^*\nabla_{\mathbf{k}, M})_{Y^{\HH}} = (\nabla_{\mathbf{k}, M})_Y \circ \pi_*$ is equivalent to $\pi^* \circ (\nabla_{\mathbf{k}, M})_Y = (\pi^*\nabla_{\mathbf{k}, M})_{Y^{\HH}} \circ \pi^*$, which is the definition of being a pullback connection. This completes the proof.
\end{proof}

Using Lemma \ref{lemma:pullback-equivalence} and Lemma \ref{lemma:extension}, when $k_{a + 1} = \dotsb = k_{a + b} = 0$, from now using $\Upsilon$ we will freely identify sections of $\Lk$ with sections of $\pi^*\Lk_M$ and the operators $\X_{\mathbf{k}}$ with $(\pi^*\nabla_{\mathbf{k}, M})_{X_F}$, and connections $\overline{\nabla}_{\mathbf{k}}$ with $\pi^*\nabla_{\mathbf{k}, M}$.

\subsubsection{Fibrewise holomorphic structure on the horizontal bundle}\label{sssection:horizontal-holomorphic} We will consider the complexifications $(\HH_F^*)_{\C} := \HH_F^* \otimes_{\R} \C$ and $(\mathbb{H}_F)_{\C} := \mathbb{H}_F \otimes_{\mathbb{R}} \mathbb{C}$ of the dual horizontal and horizontal bundles, respectively. We equip these bundles with a fibrewise holomorphic structure as follows. When clear from context, we will drop the index $F$ and write simply $\mathbb{H}_{\C}^*$. For an arbitrary $1$-form $\alpha$ on $M$, its horizontal lift $\alpha^{\HH} \in C^\infty(F, \mathbb{H}_{\C}^*)$ to $F$ is defined as
\begin{equation}\label{eq:horizontal-lift-1-form}
	\alpha^{\HH}(w) := \alpha(\pi(w)) \circ d\pi(w), \quad w \in F.
\end{equation}
Given $U\subset M$ an open set and a local orthonormal frame $(\e_1, \dotsc, \e_n)$ of $TU$, let $(\e_1^*, \dotsc, \e_n^*)$ denote the dual orthonormal frame on $T^*U$. We see that $({\e_1^*}^{\HH}, \dotsc, {\e_n^*}^{\HH})$ is a local frame for $\HH_{\C}^*$ over $F|_U$. Moreover, by choosing another orthonormal frame $(\f_1, \dotsc, \f_n)$ over $U$, we see that the transition function between $({\e_1^*}^{\HH}, \dotsc, {\e_n^*}^{\HH})$ and $({\f_1^*}^{\HH}, \dotsc, {\f_n^*}^{\HH})$ is constant in the fibres of $F|_U \to U$, and hence fibrewise holomorphic. This shows that $\mathbb{H}_{\mathbb{C}}^*$ carries a fibrewise holomorphic structure with the fibrewise $\overline{\partial}$-operator defined locally as
\[
	\overline{\partial}^{\HH}(u_1 {\e_1^*}^{\HH} + \dotsb + u_n {\e_n^*}^{\HH}) := \overline{\partial}(u_1)  {\e_1^*}^{\HH} + \dotsb + \overline{\partial}(u_n) {\e_n^*}^{\HH}, \quad (u_i)_{i = 1}^n \subset C^\infty(F|_U).
\]

Observe that the bundle $\HH^*_{\C} \otimes \mathbf{L}^{\otimes \mathbf{k}}$ also carries a natural tensor product holomorphic structure. Then we have:

\begin{proposition}\label{prop:nabla-preserves-holomorphicity}
	The connection $\nabla^{\mathbf{k}}$ preserves fibrewise holomorphicity, i.e.
	\[
		\nabla_{\mathbf{k}} : C^\infty_{\mathrm{hol}}(F,\mathbf{L}^{\otimes \mathbf{k}}) \to C^\infty_{\mathrm{hol}}(F,\mathbf{L}^{\otimes \mathbf{k}} \otimes \HH^*_{\C}).
	\]
\end{proposition}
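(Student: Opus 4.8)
The plan is to reduce everything to a commutation identity between the fibrewise $\overline{\partial}$-operators and the horizontal parallel transport, using the key structural fact established in Proposition \ref{proposition:parallel-transport-fibrewise-holomorphic}: parallel transport $\tau^{\mathbf{k}}_\gamma : \mathbf{L}^{\otimes\mathbf{k}}|_{F_x} \to \mathbf{L}^{\otimes\mathbf{k}}|_{F_y}$ is a holomorphic isomorphism covering the biholomorphism $\tau^F_\gamma : F_x \to F_y$. First I would fix a vector field $X$ on $M$ with horizontal lift $X^{\HH_F}$ to $F$, let $\varphi_t$ be its flow on $M$ and $\psi^F_t$ the flow of $X^{\HH_F}$ on $F$, so that $\psi^F_t(x,wT) = (\varphi_t x, \tau^F_{\gamma(t)}(wT))$. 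Given $s \in C^\infty_{\mathrm{hol}}(F,\mathbf{L}^{\otimes\mathbf{k}})$, the definition \eqref{equation:def-conn} reads $(\nabla_{\mathbf{k}})_{X^{\HH_F}}s = \partial_t|_{t=0}(\tau^{\mathbf{k}}_{\gamma(t)})^{-1}(s\circ\psi^F_t)$. The point is that for each fixed $t$, the section $(\tau^{\mathbf{k}}_{\gamma(t)})^{-1}(s\circ\psi^F_t)$ is again fibrewise holomorphic: indeed, restricted to a fibre $F_x$, it is the composition of $s|_{F_{\varphi_t x}}$ (holomorphic by hypothesis) with the biholomorphism $\tau^F_{\gamma(t)}: F_x \to F_{\varphi_t x}$ and the holomorphic bundle isomorphism $(\tau^{\mathbf{k}}_{\gamma(t)})^{-1}$ — both holomorphic by Proposition \ref{proposition:parallel-transport-fibrewise-holomorphic}. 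Since $C^\infty_{\mathrm{hol}}(F,\mathbf{L}^{\otimes\mathbf{k}})$ is the kernel of the (fibrewise, hence $t$-independent as an operator on the fixed bundle) operator $\overline{\partial}_{\mathbf{k}}$, and this kernel is a closed subspace, the $t$-derivative at $t=0$ stays in the kernel, which gives that $(\nabla_{\mathbf{k}})_{X^{\HH_F}}s$ is fibrewise holomorphic as a section of $\mathbf{L}^{\otimes\mathbf{k}}$.

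The remaining point to address is that the statement claims more: $\nabla_{\mathbf{k}}s$ should be holomorphic as a section of $\mathbf{L}^{\otimes\mathbf{k}}\otimes\HH^*_{\C}$ with respect to the tensor product holomorphic structure introduced just above the proposition (where $\HH^*_{\C}$ carries the fibrewise holomorphic structure coming from horizontal lifts of local orthonormal coframes on $M$). So I would work in a local orthonormal coframe $(\e_1^*,\dotsc,\e_n^*)$ over $U\subset M$ and write $\nabla_{\mathbf{k}}s = \sum_i ((\nabla_{\mathbf{k}})_{\e_i^{\HH_F}}s)\otimes {\e_i^*}^{\HH}$ over $F|_U$. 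By the definition of $\overline{\partial}^{\HH}$ recalled before the proposition (it acts componentwise in the frame $({\e_i^*}^{\HH})_i$, which is fibrewise holomorphic and has fibrewise-constant transition functions), the tensor-product $\overline{\partial}$ of $\nabla_{\mathbf{k}}s$ is $\sum_i \overline{\partial}_{\mathbf{k}}((\nabla_{\mathbf{k}})_{\e_i^{\HH_F}}s)\otimes{\e_i^*}^{\HH}$, and each summand vanishes by the previous paragraph applied to $X=\e_i$ (a local vector field, which is enough since the statement is local and the operators are local). This closes the argument.

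I expect the main obstacle to be making rigorous the interchange of the $t$-derivative at $t=0$ with the condition "lies in $\ker\overline{\partial}_{\mathbf{k}}$": one wants to say that a smooth curve $t\mapsto \sigma_t$ in the Fréchet space $C^\infty(F,\mathbf{L}^{\otimes\mathbf{k}})$ that stays in the closed subspace $\ker\overline{\partial}_{\mathbf{k}}$ has its derivative $\partial_t\sigma_t$ in that subspace — which follows because $\overline{\partial}_{\mathbf{k}}$ is a continuous (differential) operator commuting with $\partial_t$, so $\overline{\partial}_{\mathbf{k}}(\partial_t|_{t=0}\sigma_t) = \partial_t|_{t=0}\overline{\partial}_{\mathbf{k}}\sigma_t = 0$. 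The only subtlety is checking smoothness in $t$ of the curve $t\mapsto (\tau^{\mathbf{k}}_{\gamma(t)})^{-1}(s\circ\psi^F_t)$ with values in sections, which is standard from smooth dependence of the flow $\psi^F_t$ and of parallel transport on the time parameter; alternatively one can bypass the functional-analytic language entirely by differentiating the local equivariant-function description, using that (via \eqref{equation:useful2}) holomorphicity of $s$ corresponds to $Y\overline{s}=0$ for all $Y\in\mathfrak{n}^+$ and that horizontal lifts commute with vertical derivations up to curvature terms valued in $\mathfrak{g}$ — but the clean route through Proposition \ref{proposition:parallel-transport-fibrewise-holomorphic} is preferable.
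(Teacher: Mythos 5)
Your proof is correct, and it reduces to the same key step as the paper's — namely, showing that $(\nabla_{\mathbf{k}})_{X^{\HH_F}} s$ is fibrewise holomorphic for a vector field $X$ on $M$, after splitting off the holomorphic frame $({\e_i^*}^{\HH})$ of $\HH^*_{\C}$ — but it proves this step by a genuinely different (integrated rather than infinitesimal) route. You appeal to Proposition~\ref{proposition:parallel-transport-fibrewise-holomorphic} to see that each $(\tau^{\mathbf{k}}_{\gamma(t)})^{-1}(s\circ\psi^F_t)$ is fibrewise holomorphic and then differentiate in $t$, using that $\ker\overline{\partial}_{\mathbf{k}}$ is closed and $\overline{\partial}_{\mathbf{k}}$ commutes with $\partial_t$. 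The paper instead works directly with the $T$-equivariant lift $\overline{s}\in C^\infty(P)$: by \eqref{equation:useful2}, $\overline{(\nabla_{\mathbf{k}})_{X^{\HH_F}} s} = X^{\HH_P}\overline{s}$, and since the flow of $X^{\HH_P}$ commutes with right $G$-multiplication, it commutes with the fundamental vector fields generated by $\mathfrak{n}^+$ that define $\overline{\partial}_{\mathbf{k}}$ via \eqref{equation:fiberwise-dbar}; this kills the $\overline{\partial}$-derivative pointwise with no functional-analytic detour. Both arguments ultimately rest on the same structural fact — horizontal transport commutes with the right $G$-action — which you outsource to the earlier proposition while the paper applies it directly; your version is a bit more modular, the paper's a bit more elementary. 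One small inaccuracy in your closing remark: horizontal lifts $X^{\HH_P}$ commute \emph{exactly} with the fundamental (vertical, right-invariant) vector fields — there are no curvature correction terms in $[X^{\HH_P},\xi^\#]$ for $\xi\in\mathfrak{g}$. Curvature appears only in brackets of two horizontal lifts. This exact commutation is precisely what makes the paper's shortcut work.
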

\begin{proof}	
	It suffices to prove claim locally. Let $s \in C^\infty(M, \mathbf{L}^{\otimes \mathbf{k}})$, and as above let $U \subset M$ be an open set and $(\e_1, \dotsc, \e_n)$ a local orthonormal frame. Then
	\begin{equation}\label{eq:formula-stupid}
		\nabla_{\mathbf{k}}s = \sum_{i=1}^n \iota_{\e_i^{\HH}} \nabla_{\mathbf{k}}s \otimes {\e_i^*}^{\HH}.
	\end{equation}
	Note that ${\e_i^*}^{\HH}$ is holomorphic by definition, so it suffices to show that $(\nabla_{\mathbf{k}})_{X^{\HH_F}} s$ is holomorphic for any vector field $X$ on $M$. Indeed, let $\overline{s}: P \to \C$ be the $T$-invariant and holomorphic lift of $s$ to $P$ (see \eqref{equation:t-equivariance} and \eqref{equation:fiberwise-dbar}). Then, by definition of the connection $\nabla_{\mathbf{k}}$ (see \eqref{equation:useful2})
	\[
		\overline{(\nabla_{\mathbf{k}})_{X^{\HH_F}} s}(p) = X^{\HH_P} \overline{s}(p) = \partial_t|_{t = 0} \overline{s}(\psi_t^Pp), \quad p \in P,
	\]
	where $X^{\HH_P}$ is the horizontal lift of $X$ to $P$ and $(\psi_t^P)_{t \in \mathbb{R}}$ is its flow. Since this flow is the horizontal lift of the flow of $X$ on $M$, it commutes with the right $G$-action on $P$. This implies that $\overline{(\nabla_{\mathbf{k}})_{X^{\HH_F}} s}$ is $T$-equivariant and holomorphic (recall that the fibrewise $\overline{\partial}$-operator is defined using only right multiplication, see \eqref{equation:fiberwise-dbar}). This completes the proof.
\end{proof}

\subsubsection{Fiberwise holomorphic projections}\label{sssection:fibrewise-holomorphic-projection}

For all $\mathbf{k} \in \widehat{G}$, we let
\[
\Pi_{\mathbf{k}} : L^2(F,\mathbf{L}^{\otimes \mathbf{k}}) \to L^2_{\mathrm{hol}}(F,\mathbf{L}^{\otimes \mathbf{k}})
\]
be the $L^2$-orthogonal fiberwise projection onto fiberwise holomorphic sections. Explicitly, at $(x, pT) \in F$ this is given by
\[
	\Pi_{\mathbf{k}}f (x, pT) = \sum_{i = 1}^r \langle{f(x, \bullet), f_i(\bullet)}\rangle_{L^2(F_x, \mathbf{L}^{\otimes \mathbf{k}}|_{F_x})} f_i(pT), \quad f \in C^\infty(F, \mathbf{L}^{\otimes k}), 
\]
where $(f_i)_{i = 1}^r$ is an $L^2$-orthonormal basis of the space of holomorphic sections of $\mathbf{L}^{\otimes k}|_{F_x} \to F_x$. It extends to $L^2$ spaces by continuity and satisfies $\Pi_{\mathbf{k}}^2 = \Pi_{\mathbf{k}}$. Moreover, it is straightforward to check that $L^2_{\mathrm{hol}}(F,\mathbf{L}^{\otimes \mathbf{k}}) \subset L^2(F,\mathbf{L}^{\otimes \mathbf{k}})$ is a closed subspace, and its $L^2$-orthogonal complement consists of sections which are fibrewise $L^2$-orthogonal to the space of holomorphic sections. Therefore, also $\Pi_{\mathbf{k}}^* = \Pi_{\mathbf{k}}$, as well as $\|\Pi_{\mathbf{k}}\|_{L^2 \to L^2} = 1$. 

Recall that $\HH_{\C}^*$ carries a fibrewise holomorphic structure, see \S \ref{sssection:horizontal-holomorphic}. Thus, we may also define the fiberwise orthogonal projection onto fiberwise holomorphic sections
\[
\Pi_{\mathbf{k}} : L^2(F,\HH^*_{\C} \otimes \mathbf{L}^{\otimes \mathbf{k}}) \to L^2_{\mathrm{hol}}(F,\HH^*_{\C} \otimes \mathbf{L}^{\otimes \mathbf{k}}).
\]
To avoid cumbersome notation, we still denote it by $\Pi_{\mathbf{k}}$; it will be clear from the context on sections of which bundle $\Pi_{\mathbf{k}}$ acts.

\begin{lemma}
The following holds: for all $\mathbf{k} \in \widehat{G}$, 
\begin{equation}
\label{equation:commutation-nablak-pik}
[\nabla_{\mathbf{k}},\Pi_{\mathbf{k}}]=0, \qquad [(\nabla_{\mathbf{k}})^*\nabla_{\mathbf{k}},\Pi_{\mathbf{k}}]=0.
\end{equation}
\end{lemma}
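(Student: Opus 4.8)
The plan is to exploit the identity $\nabla_{\mathbf{k}} = \Pi_{\mathbf{k}} \overline{\nabla}_{\mathbf{k}}$ on fibrewise holomorphic sections, together with the fact (Proposition \ref{prop:nabla-preserves-holomorphicity}) that $\nabla_{\mathbf{k}}$ maps $C^\infty_{\mathrm{hol}}(F,\mathbf{L}^{\otimes \mathbf{k}})$ into $C^\infty_{\mathrm{hol}}(F,\mathbf{L}^{\otimes \mathbf{k}} \otimes \HH^*_{\C})$, so that on holomorphic sections $\Pi_{\mathbf{k}}$ acts as the identity. First I would establish the first commutation relation: given $f \in C^\infty(F,\mathbf{L}^{\otimes \mathbf{k}})$, decompose $f = \Pi_{\mathbf{k}} f + (1 - \Pi_{\mathbf{k}})f$. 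On $\Pi_{\mathbf{k}}f$, which is fibrewise holomorphic, Proposition \ref{prop:nabla-preserves-holomorphicity} gives $\nabla_{\mathbf{k}}(\Pi_{\mathbf{k}}f) \in C^\infty_{\mathrm{hol}}(F,\mathbf{L}^{\otimes \mathbf{k}} \otimes \HH^*_{\C})$, hence $\Pi_{\mathbf{k}}\nabla_{\mathbf{k}}\Pi_{\mathbf{k}}f = \nabla_{\mathbf{k}}\Pi_{\mathbf{k}}f$. It then remains to see that $\Pi_{\mathbf{k}}\nabla_{\mathbf{k}}(1 - \Pi_{\mathbf{k}})f = 0$, i.e. that $\nabla_{\mathbf{k}}$ maps the fibrewise $L^2$-orthogonal complement of the holomorphic sections into (fibrewise) orthogonal complement of the holomorphic sections of $\mathbf{L}^{\otimes \mathbf{k}} \otimes \HH^*_{\C}$. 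This is the crux and I expect it to be the main obstacle: one should argue fibrewise (freezing $x \in M$) and use that $\nabla_{\mathbf{k}}$ is the \emph{horizontal} partial connection, so it differentiates only in directions transverse to the fibre and therefore commutes with the fibrewise structure, or equivalently that $\nabla_{\mathbf{k}}^*$ preserves fibrewise holomorphicity on the dual side. Concretely, I would show $\Pi_{\mathbf{k}}\nabla_{\mathbf{k}} = \Pi_{\mathbf{k}}\nabla_{\mathbf{k}}\Pi_{\mathbf{k}}$ by taking adjoints: $(\Pi_{\mathbf{k}}\nabla_{\mathbf{k}})^* = \nabla_{\mathbf{k}}^*\Pi_{\mathbf{k}}$ (using $\Pi_{\mathbf{k}}^* = \Pi_{\mathbf{k}}$), and use Proposition \ref{prop:nabla-preserves-holomorphicity} applied to the formal adjoint (or a local frame computation as in \eqref{eq:formula-stupid}, writing $\nabla_{\mathbf{k}}$ in the orthonormal coframe ${\e_i^*}^{\HH}$ where the coefficients ${\e_i^*}^{\HH}$ are fibrewise holomorphic and constant along fibres, so $\Pi_{\mathbf{k}}$ and $\nabla_{\mathbf{k}}$ act on the scalar coefficients independently).

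The cleanest route is probably the local-frame one: over $U \subset M$ with orthonormal coframe $({\e_i^*})$, by \eqref{eq:formula-stupid} we have $\nabla_{\mathbf{k}}s = \sum_i (\nabla_{\mathbf{k}})_{\e_i^{\HH}} s \otimes {\e_i^*}^{\HH}$, and since the ${\e_i^*}^{\HH}$ form a fibrewise holomorphic frame that is moreover \emph{constant along the fibres} of $F|_U \to U$, the fibrewise projection $\Pi_{\mathbf{k}}$ on $\HH^*_{\C} \otimes \mathbf{L}^{\otimes \mathbf{k}}$ acts componentwise: $\Pi_{\mathbf{k}}(\nabla_{\mathbf{k}}s) = \sum_i \Pi_{\mathbf{k}}\big((\nabla_{\mathbf{k}})_{\e_i^{\HH}} s\big) \otimes {\e_i^*}^{\HH}$. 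So the first identity in \eqref{equation:commutation-nablak-pik} reduces to $\Pi_{\mathbf{k}} (\nabla_{\mathbf{k}})_{X^{\HH_F}} = (\nabla_{\mathbf{k}})_{X^{\HH_F}} \Pi_{\mathbf{k}}$ for each vector field $X$ on $M$. For this, recall from the proof of Proposition \ref{prop:nabla-preserves-holomorphicity} that $\overline{(\nabla_{\mathbf{k}})_{X^{\HH_F}} s}(p) = \partial_t|_{t=0} \overline{s}(\psi^P_t p)$ where $\psi^P$ is the horizontal-lift flow, which commutes with the right $G$-action; equivalently, at the level of parallel transport, Proposition \ref{proposition:parallel-transport-fibrewise-holomorphic} says $\tau^{\mathbf{k}}_\gamma$ is a \emph{unitary holomorphic} isomorphism $\mathbf{L}^{\otimes \mathbf{k}}|_{F_x} \to \mathbf{L}^{\otimes \mathbf{k}}|_{F_y}$. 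Hence conjugation by $\tau^{\mathbf{k}}_{\gamma(t)}$ intertwines the fibrewise holomorphic projections $\Pi_{\mathbf{k}}$ over $F_{\varphi_t x}$ and $F_x$ (a unitary biholomorphism carries the holomorphic subspace onto the holomorphic subspace, hence conjugates the orthogonal projections). Differentiating the relation $(\tau^{\mathbf{k}}_{\gamma(t)})^{-1} \Pi_{\mathbf{k}} \,\psi^F_t{}^* = \Pi_{\mathbf{k}} (\tau^{\mathbf{k}}_{\gamma(t)})^{-1} \psi^F_t{}^*$ at $t = 0$ and using the definition \eqref{equation:def-conn} of $(\nabla_{\mathbf{k}})_{X^{\HH_F}}$ gives exactly $[(\nabla_{\mathbf{k}})_{X^{\HH_F}}, \Pi_{\mathbf{k}}] = 0$. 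I expect this differentiation step — making rigorous that the $t$-dependent family of projections is smooth and that the product rule applies, given that $\Pi_{\mathbf{k}}$ is a nonlocal operator along the fibre — to require the mild care that is the real content of the lemma.

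Once $[\nabla_{\mathbf{k}}, \Pi_{\mathbf{k}}] = 0$ is established, the second identity $[(\nabla_{\mathbf{k}})^*\nabla_{\mathbf{k}}, \Pi_{\mathbf{k}}] = 0$ is essentially formal. Taking the $L^2(F, \mathbf{L}^{\otimes \mathbf{k}} \otimes \HH^*_{\C})$-adjoint of $\nabla_{\mathbf{k}}\Pi_{\mathbf{k}} = \Pi_{\mathbf{k}}\nabla_{\mathbf{k}}$ and using $\Pi_{\mathbf{k}}^* = \Pi_{\mathbf{k}}$ on both bundles, one gets $\Pi_{\mathbf{k}}(\nabla_{\mathbf{k}})^* = (\nabla_{\mathbf{k}})^*\Pi_{\mathbf{k}}$; composing $(\nabla_{\mathbf{k}})^*\nabla_{\mathbf{k}}\Pi_{\mathbf{k}} = (\nabla_{\mathbf{k}})^*\Pi_{\mathbf{k}}\nabla_{\mathbf{k}} = \Pi_{\mathbf{k}}(\nabla_{\mathbf{k}})^*\nabla_{\mathbf{k}}$ then finishes the proof. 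A small remark one might want to record: the same argument shows $\overline{\nabla}_{\mathbf{k}}\Pi_{\mathbf{k}} = \Pi_{\mathbf{k}}\nabla_{\mathbf{k}}$ (not $\Pi_{\mathbf{k}}\overline{\nabla}_{\mathbf{k}}$ in general, because the vertical Chern part $D^{\mathrm{Chern}}_{\mathbf{k}}$ of $\overline{\nabla}_{\mathbf{k}}$ applied to a holomorphic section need not be holomorphic), but for the stated lemma only the horizontal connection $\nabla_{\mathbf{k}}$ matters and the clean relation $[\nabla_{\mathbf{k}},\Pi_{\mathbf{k}}]=0$ suffices.
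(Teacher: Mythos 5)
Your proposed argument is correct, but your preferred route is genuinely different from the paper's. The paper also begins with the local coframe expansion \eqref{eq:formula-stupid} and the inclusion $\X_i \Pi_{\mathbf{k}} = \Pi_{\mathbf{k}} \X_i \Pi_{\mathbf{k}}$ coming from Proposition \ref{prop:nabla-preserves-holomorphicity} (this matches your first reduction), but it then closes the argument by computing the formal adjoint $\X_i^* = -\X_i - \Div(\e_i^{\HH})$ and noting the pullback identity $\Div(\e_i^{\HH}) = \pi^*\Div(\e_i)$, so that the divergence term is constant along fibres and commutes with $\Pi_{\mathbf{k}}$; taking adjoints of $\X_i\Pi_{\mathbf{k}} = \Pi_{\mathbf{k}}\X_i\Pi_{\mathbf{k}}$ then yields $\Pi_{\mathbf{k}}\X_i = \X_i\Pi_{\mathbf{k}}$. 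Your route instead differentiates the parallel-transport intertwining $\Pi_{\mathbf{k}}(\varphi_t x)\,\tau^{\mathbf{k}}_{\gamma(t)} = \tau^{\mathbf{k}}_{\gamma(t)}\,\Pi_{\mathbf{k}}(x)$ (an immediate consequence of Proposition \ref{proposition:parallel-transport-fibrewise-holomorphic}, recorded in the paper as \eqref{eq:parallel-transport-fibrewise-holomorphic}) inside the defining formula \eqref{equation:def-conn}: since $\Pi_{\mathbf{k}}(x)$ is $t$-independent it passes through $\partial_t|_{t=0}$, and the intertwining converts $\Pi_{\mathbf{k}}(x)(\tau^{\mathbf{k}}_{\gamma(t)})^{-1}$ into $(\tau^{\mathbf{k}}_{\gamma(t)})^{-1}\Pi_{\mathbf{k}}(\varphi_t x)$, giving $[\X_i,\Pi_{\mathbf{k}}]=0$ directly. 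Your version is more conceptual, tying the commutation to the fact that parallel transport is a unitary biholomorphism, and avoids the adjoint/divergence computation entirely; the paper's version is a short algebraic argument that sidesteps the mild analytic care you yourself flag (differentiating a $t$-dependent family of nonlocal projections). Both handle the second identity by the same one-line adjoint argument. One small correction to your closing aside: the claimed relation $\overline{\nabla}_{\mathbf{k}}\Pi_{\mathbf{k}} = \Pi_{\mathbf{k}}\nabla_{\mathbf{k}}$ is false in general, since on a fibrewise holomorphic $s$ the difference is $D^{\mathrm{Chern}}_{\mathbf{k}}s = \partial_{\mathbf{k}}s$, which need not vanish. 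This does not affect the proof of the lemma.
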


\begin{proof}
The second equality in \eqref{equation:commutation-nablak-pik} is a consequence of the first one (by taking adjoints) so it suffices to prove that $[\nabla_{\mathbf{k}},\Pi_{\mathbf{k}}]=0$. Let $U \subset M$ be an open subset, and let $(\e_1, \dotsc,\e_n)$ be a local orthonormal frame of $TU$, $(\e_1^*, \dotsc,\e_n^*)$ its dual frame, and let $({\e_1^*}^{\HH}, \dotsc,{\e_n^*}^{\HH})$ be its horizontal lift to $\HH_{\C}^* \to F$ (see \eqref{eq:horizontal-lift-1-form}). Since $\nabla_{\mathbf{k}}$ is a local operator and $\Pi_{\mathbf{k}}$ acts on each fibre, it suffices to show the claim on $F|_U$.

Since (by definition) ${\e_i^*}^{\HH}$ are holomorphic on $F$, the statement boils down to proving that $[\X_i,\Pi_{\mathbf{k}}] = 0$, where $\X_i = \iota_{\e_i^{\HH}} \nabla_{\mathbf{k}}$ (see \eqref{eq:formula-stupid}). By Proposition \ref{prop:nabla-preserves-holomorphicity}, $\X_i$ preserves fibrewise holomorphicity, and so
\begin{equation}
\label{equation:lala}
\X_i \Pi_{\mathbf{k}} = \Pi_{\mathbf{k}} \X_i \Pi_{\mathbf{k}}.
\end{equation}
Observe that $\X_i^* = -\X_i - \Div(\e_i^{\HH})$, where $\Div(\e_i^{\HH})$ is the divergence of the vector field $\e_i^{\HH}$ (computed with respect to the measure given locally by the product of the Riemannian volume on $M$ and the projection of the Haar measure on $G/T$). It is straightforward to check that 
\begin{equation}\label{eq:pullback-divergence}
	\Div(\e_i^{\HH}) = \pi^*\Div(\e_i),
\end{equation}
where $\Div(\e_i)$ is computed with respect to the Riemannian volume. In particular, it is constant along the fibers of $F$ and therefore commutes with $\Pi_{\mathbf{k}}$. Taking the adjoint in \eqref{equation:lala}, we then find that
\[
	\Pi_{\mathbf{k}} \X_i  = \Pi_{\mathbf{k}} \X_i \Pi_{\mathbf{k}} =\X_i \Pi_{\mathbf{k}}.
\]
This proves the claim.
\end{proof}

Finally, by Proposition \ref{proposition:parallel-transport-fibrewise-holomorphic} we observe that $\Pi_{\mathbf{k}}$ commutes with the parallel transport $\tau^{\mathbf{k}}_\gamma$, where $\gamma$ is any path between $x, y \in M$, i.e. we have
\begin{equation}\label{eq:parallel-transport-fibrewise-holomorphic}
	\Pi_{\mathbf{k}}(y) \tau^{\mathbf{k}}_{\gamma} s =  \tau^{\mathbf{k}}_{\gamma} \Pi_{\mathbf{k}}(x) s, \quad \forall s\in C^\infty(F_x, \mathbf{L}^{\otimes \mathbf{k}}|_{F_x}).
\end{equation}

\subsubsection{Fourier transform commutes with Laplacians} Finally, we show that the Fourier transform commutes with the horizontal Laplacian and behaves well under the action of the vertical Laplacian. To this end, we first extend the action of $\mc{F}$ 

\begin{equation}\label{eq:ft-vectors}
	\mc{F}: C^\infty(P, \mathbb{H}^*) \to \bigoplus_{\mathbf{k} \in \widehat{G}} C^\infty_{\mathrm{hol}}(F, \mathbf{L}^{\otimes \mathbf{k}} \otimes \HH_F^*)^{\oplus d_{\mathbf{k}}},
\end{equation}
as follows. Let $(\e_i)_{i = 1}^n$ be a basis of $T_xM$ and $\e_i^*$ its dual basis; denote by $\e_i^{*\HH_{F/P}}$ the horizontal lifts to $F_x/P_x$ as in \eqref{eq:horizontal-lift-1-form}. Then, for $p \in P_x$, we evaluate at $pT \in F$
\begin{equation*}
	(\mc{F}\alpha)_{\mathbf{k}, i} := \sum_{j = 1}^n \e_j^{*\HH_F} \otimes (\mc{F}\alpha_j)_{\mathbf{k}, i}.
\end{equation*}
where we wrote $\alpha(p) = \sum_{j = 1}^n \e_j^{*\HH_P}(p) \otimes \alpha_j(p)$ for some smooth functions $(\alpha_j)_{j = 1}^{n}$ in $C^\infty(P_x)$, and $\mc{F}\alpha_j$ denotes the Fourier transform on functions. It is straightforward to check that this is well-defined.

Write $d_{\HH} := d|_{\HH^*}$ acting as $C^\infty(P) \to C^\infty(P, \HH^*)$. There is natural inner product on fibres of $\HH_{P/F}^*$ induced from $(M, g)$; this equips the bundles on both sides of \eqref{eq:ft-vectors} with $L^2$ structures. We may then define the \emph{horizontal Laplacian} on $C^\infty(P)$ as $\Delta_{\HH} := (d_{\HH})^*d_{\HH}$, and on $C^\infty(F, \Lk)$ by $\Delta_{\mathbf{k}} := (\nabla_{\mathbf{k}})^* \nabla_{\mathbf{k}}$. 

Recall from \S\ref{sssection:laplace-eigenvalue} that $G$ was equipped with a bi-invariant Riemannian metric $g_{G}$; it gives rise to the Laplacian (Casimir) operator $\Delta_G$. In turn, since every fiber of $P$ can be identified with $G$, this allows to define a vertical Laplacian $\Delta_{\V}$ on $P$. More precisely, given $f \in C^\infty(P)$, and $(x,p) \in P$, define $\tilde{f}(g) := f(x,p\cdot g)$. Then:
\[
\Delta_{\V}f (x,p) := \Delta_{G}\tilde{f}|_{g=\mathbf{1}_G}.
\]

Finally, denote by $T_{\mathbf{d}_k}$ the diagonal operator that acts by mutiplication by $d_{\mathbf{k}}$ on the right hand side of \eqref{eq:ft-bw-isomorphism}. Then we have:

\begin{lemma}\label{lemma:ft-horizontal-laplacian}
	The Fourier transform satisfies $\mc{F}^*T_{d_\mathbf{k}} \mc{F} \equiv \id$ on both $C^\infty(P)$ and $C^\infty(P, \HH^*)$. Moreover, we have for any $f\in C^\infty(P)$ and any $\mathbf{k} \in \widehat{G}$
	\begin{enumerate}[label=\emph{(\roman*)}]
		\item $(d_{\HH} f)_{\mathbf{k}} = \nabla_{\mathbf{k}} f_{\mathbf{k}}$.	
		\item $(\Delta_{\HH}f)_{\mathbf{k}} = \Delta_{\mathbf{k}} f_{\mathbf{k}}$.
		\item $(\Delta_{\V}f)_{\mathbf{k}} = c(\mathbf{k})f_{\mathbf{k}}$,
	\end{enumerate}
	where $c(\mathbf{k})$ was defined in \eqref{equation:definition-ck}.
\end{lemma}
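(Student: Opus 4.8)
### Proof proposal for Lemma \ref{lemma:ft-horizontal-laplacian}

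The plan is to reduce everything to the Plancherel/inversion identities for $\mc{F}$ already established (Lemma \ref{lemma:ft-isometry} and the inversion formula \eqref{equation:inversion}), together with the intertwining property \eqref{eq:ft-intertwines-infinitesimal} for horizontal lifts and Lemma \ref{lemma:laplace-eigenvalue} for the vertical part. First I would dispatch the identity $\mc{F}^*T_{d_\mathbf{k}}\mc{F}\equiv\id$: on $C^\infty(P)$ this is exactly the statement that the inversion formula \eqref{equation:inversion} inverts \eqref{eq:ft-bw-isomorphism}, with the multiplicity factor $d_\mathbf{k}=\dim(V^\lambda)$ absorbed into $T_{d_\mathbf{k}}$; combined with Lemma \ref{lemma:ft-isometry} this says $\mc{F}^*T_{d_\mathbf{k}}$ is the two-sided inverse of $\mc{F}$. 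On $C^\infty(P,\HH^*)$ the statement follows componentwise: writing $\alpha = \sum_j \e_j^{*\HH_P}\otimes\alpha_j$ locally and applying the scalar case to each $\alpha_j \in C^\infty(P_x)$, using that the frame $(\e_j^{*\HH_F})$ on $F$ and $(\e_j^{*\HH_P})$ on $P$ are compatible under the definition \eqref{eq:ft-vectors} of the extended Fourier transform, and that pointwise in $x$ the horizontal cotangent spaces of $P$ and $F$ over $x$ are canonically identified via $d\pi_F$.

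For item (i), I would apply \eqref{eq:ft-intertwines-infinitesimal} directly. Fix an open set $U\subset M$ with orthonormal frame $(\e_i)$ of $TU$ and dual frame $(\e_i^*)$; then $d_{\HH}f = \sum_i \e_i^{*\HH_P}\otimes (\e_i^{\HH_P}f)$ on $P|_U$ and, by the definition \eqref{eq:ft-vectors} of $\mc{F}$ on $\HH^*$-valued functions, $(d_{\HH}f)_{\mathbf{k}} = \sum_i \e_i^{*\HH_F}\otimes \mc{F}(\e_i^{\HH_P}f)_{\mathbf{k}}$. By \eqref{eq:ft-intertwines-infinitesimal} with $X=\e_i$ (whose horizontal lift to $P$ is $\e_i^{\HH_P}$), $\mc{F}(\e_i^{\HH_P}f)_{\mathbf{k}} = \X_{\mathbf{k},i}\,f_{\mathbf{k}} = \iota_{\e_i^{\HH_F}}\nabla_{\mathbf{k}}f_{\mathbf{k}}$, and summing over $i$ using \eqref{eq:formula-stupid} gives $(d_{\HH}f)_{\mathbf{k}} = \nabla_{\mathbf{k}}f_{\mathbf{k}}$. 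One subtlety to address: $\mc{F}$ as extended in \eqref{eq:ft-vectors} lands in fiberwise \emph{holomorphic} sections, so I should note that $\nabla_{\mathbf{k}}f_{\mathbf{k}}$ is indeed fiberwise holomorphic — this is precisely Proposition \ref{prop:nabla-preserves-holomorphicity}, so the two sides live in the same space and the identity makes sense.

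For item (ii), I would combine (i) with the compatibility of adjoints under $\mc{F}$. Since $\mc{F}$ is (up to the factor $T_{d_\mathbf{k}}$) an $L^2$-isometry, the formal adjoint $(d_{\HH})^*$ corresponds fiberwise to $(\nabla_{\mathbf{k}})^*$; concretely, $(d_{\HH})^* = \mc{F}^* T_{d_\mathbf{k}} (\nabla_{\mathbf{k}})_{\mathbf{k}} \mc{F}$ where $(\nabla_{\mathbf{k}})^*$ is taken with respect to the $L^2(F,\mathbf{L}^{\otimes\mathbf{k}})$ structures — here I use that the $L^2$ inner product on $\HH^*$-valued functions on $P$ matches, under $\mc{F}$, the sum over $\mathbf{k}$ of the $d_{\mathbf{k}}$-weighted inner products on $C^\infty_{\mathrm{hol}}(F,\mathbf{L}^{\otimes\mathbf{k}}\otimes\HH_F^*)$, which follows from the isometry statement applied componentwise together with the fact that the horizontal metrics on $P$ and $F$ are both pulled back from $(M,g)$. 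Then $(\Delta_{\HH}f)_{\mathbf{k}} = ((d_{\HH})^*d_{\HH}f)_{\mathbf{k}} = (\nabla_{\mathbf{k}})^*(d_{\HH}f)_{\mathbf{k}} = (\nabla_{\mathbf{k}})^*\nabla_{\mathbf{k}}f_{\mathbf{k}} = \Delta_{\mathbf{k}}f_{\mathbf{k}}$. Item (iii) is the most self-contained: the $\mathbf{k}$-component of $f$ corresponds, via the inversion formula, to the summand $d_{\mathbf{k}}\Tr(w^{-1}\mc{F}f(\mathbf{k},\bullet))$, which by Lemma \ref{lemma:laplace-eigenvalue} (applied fiberwise, identifying each fiber $P_x$ with $G$) is an eigenfunction of the fiberwise Casimir $\Delta_{\V}$ with eigenvalue $c(\mathbf{k})$; since $\Delta_{\V}$ acts purely in the fibers it commutes with $\mc{F}$, giving $(\Delta_{\V}f)_{\mathbf{k}} = c(\mathbf{k})f_{\mathbf{k}}$.

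The main obstacle I anticipate is bookkeeping rather than conceptual: carefully tracking the multiplicity factors $d_{\mathbf{k}}$ through the adjoint computation in (ii), and verifying that the extended Fourier transform \eqref{eq:ft-vectors} genuinely intertwines the $L^2$ and differential structures on $\HH^*$-valued objects (in particular that $d_{\HH}$ on $P$, which does \emph{not} preserve fiberwise holomorphicity as an operator on $C^\infty(P)$ component-by-component, nonetheless has image matching $\nabla_{\mathbf{k}}$ on the holomorphic sections after Fourier transform — this is where Proposition \ref{prop:nabla-preserves-holomorphicity} and the commutation $[\nabla_{\mathbf{k}},\Pi_{\mathbf{k}}]=0$ from \eqref{equation:commutation-nablak-pik} are used). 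Once those compatibilities are pinned down, each of (i)--(iii) is a one-line consequence of a result already proved.
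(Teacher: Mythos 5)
Your proof is correct and follows essentially the same route as the paper's: the first claim is Plancherel (Lemma \ref{lemma:ft-isometry}) plus the observation that $\HH_P^*$ and $\HH_F^*$ are identified isometrically; Item (i) is a local-frame computation via \eqref{eq:ft-intertwines-infinitesimal}; Item (ii) reduces to the codifferential identity by duality, using the isometry property of $\mc{F}$ and Item (i); Item (iii) is Lemma \ref{lemma:laplace-eigenvalue}. (Minor typo in your adjoint formula for Item (ii) — the middle factor should carry the adjoint, something like $\mc{F}^*T_{d_\mathbf{k}}\bigl((\nabla_{\mathbf{k}})^*\bigr)_{\mathbf{k}}\mc{F}$ — but the surrounding sentence makes the intent clear and the argument is unaffected.)
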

\begin{proof}
	The first claim on $C^\infty(P)$ follows directly from Lemma \ref{lemma:ft-isometry}. The claim on $C^\infty(P, \HH^*)$ is then a consequence of this, as well as of the fact that the projection identifies $\HH_P^*$ and $\HH_F^*$ isometrically.
	\medskip
	
	\emph{Item 1.} Let $(\e_i)_{i = 1}^n$ be an orthonormal frame over an open set $U \subset M$ containing $x \in M$. Then we evaluate at $pT$, $p \in P_x$:
	\[
		(d_{\HH}f)_{\mathbf{k}} = \sum_{i = 1}^n \e_i^{*\HH_F} \otimes (\e_i f)_{\mathbf{k}} = \sum_{i = 1}^n \e_i^{*\HH_F} \otimes (\nabla_{\mathbf{k}})_{\e_i^{\HH_F}} f_{\mathbf{k}} = \nabla_{\mathbf{k}} f_{\mathbf{k}},
	\]
	where we used \eqref{eq:ft-intertwines-infinitesimal} in the second equality. This completes the proof.
	\medskip
	
	\emph{Item 2.} Using Item 1, it suffices to prove that $(d_{\HH}^*\alpha)_{\mathbf{k}} = (\nabla_{\mathbf{k}})^*\alpha_{\mathbf{k}}$ for any $\alpha \in C^\infty(P, \HH^*)$, i.e. the Fourier transform commutes with the codifferentials. Taking $L^2$-adjoints, this is equivalent to $d_{\HH} \mc{F}^* = \mc{F}^* \nabla_{\mathbf{k}}$, which follows from the formula $\mc{F}^* = \mc{F}^{-1} T_{d_{\mathbf{k}}^{-1}}$ and by using Item 1. \medskip
	
	\emph{Item 3.} This is a mere rewriting of Lemma \ref{lemma:laplace-eigenvalue}.
\end{proof}

\subsection{Curvature} Let $\mathrm{Ad}(P) := P \times_{\mathrm{Ad}} \mathfrak{g}$ be the adjoint bundle associated over $M$ to the adjoint representation $\mathrm{Ad} : G \to \mathrm{End}(\mathfrak{g})$. Denote by $F_{\mathrm{Ad}(P)} \in C^\infty(M,\Lambda^2 T^*M \otimes \mathrm{Ad}(P))$ the curvature of the connection $\nabla$ on $P$ and recall that
\[
(F_{\mathrm{Ad}(P)})_x(X,Y) = d\Theta(w)(X^{\HH_P}, Y^{\HH_P}), \qquad x \in M, X,Y \in T_xM,
\]
where $w \in P_x$ is arbitrary, and $X^{\HH_P}$, $Y^{\HH_P}$ denote horizontal lifts of $X$, $Y$, respectively, and $\Theta$ denotes the connection $1$-form $\Theta$ on $P$.

\subsubsection{Curvature on the flag bundle}\label{sssection:curvature}

We now investigate the properties of the curvature of the connection $\overline{\nabla}_{\mathbf{k}}$. Denote by
\[
\mathbf{F}_{\overline{\nabla}} := (F_{\overline{\nabla}_1}, \dotsc , F_{\overline{\nabla}_d}) \in C^\infty(F, \Lambda^2 T^*F)^{\oplus d}
\]
the vector of curvatures of the respective line bundles $L_1, \dotsc, L_d \to F$. Observe that, by construction, the curvature $F_{\overline{\nabla}_{\mathbf{k}}}$ of $\overline{\nabla}_{\mathbf{k}}$ satisfies
\[
F_{\overline{\nabla}_{\mathbf{k}}} = \mathbf{k}\cdot\mathbf{F}_{\overline{\nabla}} = \sum_{i=1}^d k_i F_{\overline{\nabla}_i}.
\]
Notice that the curvature $\mathbf{F}_{\overline{\nabla}}$ admits a vertical part
\begin{equation}
\label{equation:curvature-vertical}
\mathbf{F}_{\nabla^{\V_F}} \in C^\infty(F, \Lambda^2 \V_F^*)^{\oplus d},
\end{equation}
obtained by restriction of $\mathbf{F}_{\overline{\nabla}} $ to $\V_F$. Equivalently, this is the vector of curvatures of the fiberwise Chern connections $\nabla^{\V_F}_j$ on the fibers of $F$. 

We next show that the curvature $\mathbf{F}_{\overline{\nabla}} $ does not have mixed components, i.e. it vanishes on $\V_F \times \HH_F$. 

\begin{lemma}\label{lemma:vanishing-curvature}
The following holds:
\begin{equation}
\label{equation:vanishing-curvature}
\mathbf{F}_{\overline{\nabla}} (\V_F,\HH_F) = 0.
\end{equation}
\end{lemma}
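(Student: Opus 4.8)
The plan is to deduce \eqref{equation:vanishing-curvature} from the Cartan structure equation for the connection $1$-form $\Theta$ of $\nabla$ on $P$. Since $F_{\overline{\nabla}_{\mathbf{k}}} = \sum_i k_i F_{\overline{\nabla}_i}$ for every $\mathbf{k}\in\widehat{G}$, it suffices to show $F_{\overline{\nabla}_{\mathbf{k}}}(\V_F,\HH_F)=0$ for all $\mathbf{k}$ and then restrict to $\mathbf{k}$ running over the standard generators. First I would identify $\overline{\nabla}_{\mathbf{k}}$ with the connection induced on the associated line bundle $\mathbf{L}^{\otimes\mathbf{k}}=P\times_{\gamma_{\mathbf{k}}}\C\to F$ by the principal $T$-connection on $P\to F$ whose connection $1$-form is $\theta := \Pi_{\mathfrak{t}}\circ\Theta\in\Omega^1(P;\mathfrak{t})$, where $\Pi_{\mathfrak{t}}:\mathfrak{g}\to\mathfrak{t}$ is the projection along $\mathfrak{m}$. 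The form $\theta$ is a genuine principal $T$-connection form because $\mathfrak{t}$ and $\mathfrak{m}$ are $\Ad(T)$-invariant; moreover $\ker\theta=\HH_P\oplus\mathfrak{m}$, so the covariant derivative it induces differentiates the equivariant lift $\overline{s}\in C^\infty(P)$ of a section $s\in C^\infty(F,\mathbf{L}^{\otimes\mathbf{k}})$ along lifts valued in $\HH_P\oplus\mathfrak{m}$, which by \eqref{equation:useful2} is precisely $\nabla_{\mathbf{k}}+D^{\mathrm{Chern}}_{\mathbf{k}}=\overline{\nabla}_{\mathbf{k}}$.

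Granting this, the curvature of $\overline{\nabla}_{\mathbf{k}}$ is $F_{\overline{\nabla}_{\mathbf{k}}}=\phi(\mathbf{k})(\Omega_\theta)$ (up to the usual normalization, irrelevant here), where $\Omega_\theta$ is the curvature $2$-form of $\theta$; as $T$ is abelian, $\Omega_\theta=d\theta=\Pi_{\mathfrak{t}}\,d\Theta$ and it is basic, hence descends to $F$. Now take $V\in\V_F$, $H\in\HH_F$ and lift them to a vertical vector $\widetilde{V}\in\V_P$ and the horizontal vector $\widetilde{H}\in\HH_P$. The structure equation $d\Theta=\Omega-\tfrac12[\Theta,\Theta]$, the horizontality of the curvature $\Omega$ of $\nabla$, and $\Theta(\widetilde{H})=0$ give $d\Theta(\widetilde{V},\widetilde{H})=\Omega(\widetilde{V},\widetilde{H})-[\Theta(\widetilde{V}),\Theta(\widetilde{H})]=0$, so $\Omega_\theta(V,H)=0$ and therefore $F_{\overline{\nabla}_{\mathbf{k}}}(V,H)=0$. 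Specializing $\mathbf{k}$ to the standard generators yields $F_{\overline{\nabla}_i}(\V_F,\HH_F)=0$ for each $i$, which is \eqref{equation:vanishing-curvature}.

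I do not expect a real obstacle here: the content is entirely the structure equation plus the two bookkeeping facts above (that $\Pi_{\mathfrak{t}}\Theta$ is a connection form, using the $\Ad(T)$-invariant splitting $\mathfrak{g}=\mathfrak{t}\oplus\mathfrak{m}$ from \S\ref{sssection:holomorphic-line-bundle}, and that it induces $\overline{\nabla}_{\mathbf{k}}$ via \eqref{equation:useful2}). If one prefers to avoid principal-connection language, the same conclusion follows directly: with $T$-invariant lifts $\widetilde{V}$ (valued in $\mathfrak{m}\subset\V_P$) and $\widetilde{H}\in\HH_P$, \eqref{equation:useful2} gives $\overline{F_{\overline{\nabla}_{\mathbf{k}}}(V,H)s}=-\phi(\mathbf{k})\big(\Pi_{\mathfrak{t}}[\widetilde{V},\widetilde{H}]\big)\overline{s}$, and expanding $\widetilde{V}=\sum_i v^i X_{e_i}$ in a basis $(e_i)$ of $\mathfrak{m}$ gives $[\widetilde{V},\widetilde{H}]=\sum_i v^i[X_{e_i},\widetilde{H}]-(\widetilde{H}v^i)X_{e_i}$, whose first summands are horizontal (as $\HH_P$ is $R_g$-invariant) and whose second summands lie in $\mathfrak{m}$; hence $\Pi_{\mathfrak{t}}[\widetilde{V},\widetilde{H}]=0$.
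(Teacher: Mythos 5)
Your proof is correct and follows essentially the same route as the paper: both view $P\to F$ as a principal $T$-bundle with connection $\theta=\Pi_{\mathfrak{t}}\Theta$ whose horizontal space $\HH_P\oplus\mathfrak{m}$ already contains the lifts of both $\V_F$ and $\HH_F$, reducing the claim to the vanishing of $d\theta=\Pi_{\mathfrak{t}}\,d\Theta$ on $\mathfrak{m}\times\HH_P$. The paper establishes this via the identity $d\theta(X,Y)=X\theta(Y)-Y\theta(X)-\theta([X,Y])$ together with $[X,Y]=0$ for a horizontal lift against a fundamental vector field, while your primary route uses the structure equation $d\Theta=\Omega-\tfrac12[\Theta,\Theta]$ and horizontality of $\Omega$, a mild computational variation; the alternative you sketch at the end coincides with the paper's calculation.
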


\begin{proof}
The bundle $\pi : P \to F$ is a principal $T$-bundle. Identifying $\V_P \simeq \mathfrak{g}$, the connection on $P$ yields a splitting
\[
TP = \mathfrak{t} \oplus \mathfrak{m} \oplus \HH_P,
\]
where $\mathfrak{m}$ is the real part of $\mathfrak{n}^+ \oplus \mathfrak{n}^-$ (see \S\ref{sssection:holomorphic-line-bundle}).  Observe that $d\pi(\mathfrak{m}) = \V_F, d\pi(\HH_P) = \HH_F$.

The space $\HH' := \mathfrak{m} \oplus \HH_P$ is $T$-invariant; it is therefore the horizontal space of a $T$-connection on $P \to P/T$. The connection $1$-form $\omega \in C^\infty(P,T^*P \otimes \mathfrak{t})$ is defined by $\omega(H) = 0$ for $H \in \HH'$, and $\omega(H)=H$ for $H \in \mathfrak{t}$.

Let $X \in \HH_P, Y \in \mathfrak{m}$; we claim that $d\omega(X,Y) = 0$. Indeed, we can always assume that $X$ is the horizontal lift of a vector on $M$ and $Y \in \mathfrak{m}$ is constant. Then:
\[
d\omega(X,Y) = X \omega(Y) - Y \omega(X) - \omega([X,Y]) = 0,
\]
as the first two terms are $0$ (because $X,Y \in \HH')$ and $[X,Y]=0$ (see \cite[Chapter II, Proposition 1.2]{Kobayashi-Nomizu-63}). This implies that the curvature of the connection on $P \to P/T$ verifies a similar vanishing property as \eqref{equation:vanishing-curvature}.

In turn, since the connection $\overline{\nabla}_j$ on $L_j \to F$ is associated to the connection on $P \to F$ (as $L_j = P \times_{\gamma} \C$, where $\gamma : T \to \C$ is the corresponding weight), it implies that \eqref{equation:vanishing-curvature} is verified.
\end{proof}

We now relate the connection $\mathbf{F}_{\overline{\nabla}}$ to the curvature $F_{\mathrm{Ad}(P)}$ of the connection on $P$. Recall the following notation: $\alpha_\mathbf{k} := \phi(\mathbf{k})$ is the highest weight corresponding to the representation $H^{0}(G/T,\mathbf{J}^{\otimes \mathbf{k}})$ (see \eqref{equation:ecriture}), $\Pi_{\mathfrak{t}} : \mathfrak{g} \to \mathfrak{t}$ is the orthogonal projection. The following holds:

\begin{lemma}
Let $wT \in F$, $X^{\HH_F}, Y^{\HH_F} \in \HH_F(wT)$. Then:
\begin{equation}
\label{equation:courbure-horizontale}
F_{\overline{\nabla}_{\mathbf{k}}}(X^{\HH_F},Y^{\HH_F}) = \alpha_{\mathbf{k}}\left(\Pi_{\mathfrak{t}}(d\Theta(w)(X^{\HH_P}, Y^{\HH_P}))\right),
\end{equation}
where $X^{\HH_P}, Y^{\HH_P} \in \HH_P(w)$ are the horizontal lifts of $X^{\HH_F},Y^{\HH_F}$.
\end{lemma}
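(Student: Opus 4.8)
The plan is to identify $\overline{\nabla}_{\mathbf{k}}$ with the connection induced on the associated line bundle $\mathbf{L}^{\otimes\mathbf{k}} = P\times_{\gamma_{\mathbf{k}}}\C$ by the principal $T$-connection on $P\to F$ already isolated in the proof of Lemma~\ref{lemma:vanishing-curvature}, and then to invoke the standard Kobayashi--Nomizu formula for the curvature of an associated bundle. Recall from that proof that $\HH' := \mathfrak{m}\oplus\HH_P$ is the horizontal distribution of a $T$-connection on $P\to P/T=F$, whose connection $1$-form $\omega\in C^\infty(P,T^*P\otimes\mathfrak{t})$ is characterised by $\omega|_{\HH'}=0$ and $\omega|_{\mathfrak{t}}=\mathrm{id}$. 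Since the full connection $1$-form $\Theta$ on $P\to M$ satisfies $\Theta|_{\HH_P}=0$ and $\Theta(\xi)=\xi$ for $\xi\in\mathfrak{g}\simeq\V_P$, one reads off at once that $\omega = \Pi_{\mathfrak{t}}\circ\Theta$.

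First I would check that $\overline{\nabla}_{\mathbf{k}}$ is precisely the connection on $\mathbf{L}^{\otimes\mathbf{k}}=P\times_{\gamma_{\mathbf{k}}}\C$ associated to $(P\to F,\omega)$. This is a matter of unwinding definitions. The horizontal part $\nabla_{\mathbf{k}}$ was defined in \S\ref{sssection:homogeneous-line} via the parallel transport $\tau^{\mathbf{k}}_\gamma(x,[w,\xi])=(y,[u,\xi])$ with $u$ the $\nabla$-parallel transport of $w$, i.e.\ via $\HH_P$-horizontal lifts of paths; since $\HH_P\subset\HH'$, the $\HH_P$-horizontal lift of $\gamma$ is also the $\HH'$-horizontal lift of the curve $\gamma^{\HH_F}$ in $F$, so this coincides with the associated-bundle parallel transport of $\omega$ in horizontal directions. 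Likewise the vertical part $D^{\mathrm{Chern}}_{\mathbf{k}}$ is, by \eqref{equation:fiberwise-chern} and \eqref{equation:useful2}, obtained by $\mathfrak{m}$-valued lifts, which is exactly the associated-bundle connection of $\omega$ in the fibre directions. Hence $\overline{\nabla}_{\mathbf{k}}$ is the associated connection. It then remains to apply the curvature formula for associated line bundles (\cite[Chapter III, Theorem 5.1]{Kobayashi-Nomizu-63}, already used in the proof of Proposition~\ref{prop:line-bundle-topology}): for $X,Y\in T_xM$ and $wT\in F_x$,
\[
F_{\overline{\nabla}_{\mathbf{k}}}(X^{\HH_F},Y^{\HH_F}) = d\gamma_{\mathbf{k}}(e)\big(d\omega(w)(X^{\HH_P},Y^{\HH_P})\big),
\]
where we used that the $\HH'$-horizontal lift of $X^{\HH_F}$ is $X^{\HH_P}$ (since $X^{\HH_P}\in\HH_P\subset\HH'$ and $d\pi(X^{\HH_P})=X^{\HH_F}$) and that the $T$-curvature $2$-form equals $d\omega$ because $T$ is abelian. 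Since $\Pi_{\mathfrak{t}}$ is a fixed linear projection, $d\omega = d(\Pi_{\mathfrak{t}}\Theta) = \Pi_{\mathfrak{t}}\,d\Theta$, and $d\gamma_{\mathbf{k}}(e)=\phi(\mathbf{k})=\alpha_{\mathbf{k}}$ by definition of the weight attached to $\mathbf{k}$; combining these gives \eqref{equation:courbure-horizontale}.

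I do not expect a genuine obstacle here: the only non-formal input is the Kobayashi--Nomizu curvature formula, already cited in the paper, and the rest is bookkeeping to match the two descriptions of $\overline{\nabla}_{\mathbf{k}}$ — the one via the parallel transports $\tau^{\mathbf{k}}$ and the fibrewise Chern connection, and the one as the connection associated to the principal $T$-connection $\omega=\Pi_{\mathfrak{t}}\Theta$ on $P\to F$. The one point to keep consistent throughout is the normalisation: here $F_{\overline{\nabla}_{\mathbf{k}}}$ is taken with the same convention as in Proposition~\ref{prop:line-bundle-topology} (so that $\alpha_{\mathbf{k}}$, valued in $i\R$, appears without a spurious factor of $2\pi i$), and with this convention the identity holds on the nose.
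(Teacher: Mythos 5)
Your proof is correct and follows essentially the same route as the paper: identify $\overline{\nabla}_{\mathbf{k}}$ with the connection associated to the principal $T$-connection $\omega = \Pi_{\mathfrak{t}}\Theta$ on $P \to F$ (exactly the reduction isolated in the proof of Lemma \ref{lemma:vanishing-curvature}), then apply the Kobayashi--Nomizu curvature formula as in Proposition \ref{prop:line-bundle-topology}. Your version just spells out the bookkeeping — that $\HH_P$-lifts are also $\HH'$-lifts for horizontal curves, that $d\omega = \Pi_{\mathfrak{t}}\,d\Theta$, and that $d\gamma_{\mathbf{k}}(e)=\alpha_{\mathbf{k}}$ — which the paper takes as "immediate."
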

\begin{proof}
	As in the proof of Lemma \ref{lemma:vanishing-curvature}, the connection $\overline{\nabla}_{\mathbf{k}}$ is associated to the connection on $P \to P/T$ with horizontal space $\mathbb{H} \oplus \mathfrak{m}$. Then, as in the proof of Proposition \ref{prop:line-bundle-topology}, the formula is an immediate consequence of the definition of curvature given in \cite[Chapter III, Section 5]{Kobayashi-Nomizu-63}.
\end{proof}

Finally, note that the adjoint representation $\Ad(G)$ of $G$ on the centre $\mathfrak{z}$ is trivial. Therefore, the associated bundle $P \times_{\Ad(G)} \mathfrak{z}$ can be identified with the trivial vector bundle $M \times \mathfrak{z}$ via the map 
\[
	M \times \mathfrak{z} \ni (x, v) \mapsto [p, v] \in P \times_{\Ad(G)} \mathfrak{z},  
\] 
where $p \in P_x$ is arbitrary. By definition, the curvature of $P$ is a $2$-form on $M$ with values in $\Ad(P)$; since the splitting $\mathfrak{z} \oplus [\mathfrak{g}, \mathfrak{g}]$ is $\Ad(G)$ invariant, projecting we get the component $F_{\mathfrak{z}}$ with values in $\mathfrak{z}$ defined by
\begin{equation}\label{eq:curvature-abelian-def}
	F_{\mathfrak{z}}(x)(X, Y) := d\Theta_{\mathfrak{z}}(p)(X^{\HH}, Y^{\HH}), \quad x \in M, X,Y \in T_xM,
\end{equation}
where $p \in P_x$ is arbitrary, and $X^{\HH}$, $Y^{\HH}$ denote horizontal lifts of $X$, $Y$, respectively, and $\Theta_{\mathfrak{z}}$ denotes the $\mathfrak{z}$ component of the connection $1$-form $\Theta$ on $P$. 

For $i = 1, \dotsc, a$, we note by Lemma \ref{lemma:extension} that there are global weights $\gamma_i: T \to \mathbb{S}^1$ and their extensions $\beta_i: T \to \mathbb{S}^1$ such that $\lambda_i = d\gamma_i(e)$. There are complex line bundles $L_{i, M} := P \times_{\beta_i} \mathbb{C}$ over $M$ equipped with associated connections $\nabla_{i, M}$ whose (purely imaginary) curvature $2$-form we denote by $F_{\nabla_i, M}$.

\begin{lemma}\label{lemma:abelian-curvature}
	For $i = 1, \dotsc, a$, we have $F_{\overline{\nabla}_i} = \pi^* F_{\nabla_i, M}$ and $F_{\nabla_i, M} = \lambda_i(F_{\mathfrak{z}})$.
\end{lemma}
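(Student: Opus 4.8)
The plan is to prove the two identities separately, in each case reducing to material already established in this chapter rather than doing a direct computation from scratch.

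For the first identity $F_{\overline{\nabla}_i} = \pi^* F_{\nabla_i, M}$, I would invoke Lemma \ref{lemma:pullback-equivalence}(1). Indeed, for $i \leq a$ the corresponding $\mathbf{k} = e_i$ satisfies $k_{a + 1} = \dotsb = k_{a + b} = 0$, so by Lemma \ref{lemma:extension} the global weight $\gamma_i$ extends to $\beta_i : G \to \mathbb{S}^1$, and Lemma \ref{lemma:pullback-equivalence}(1) gives the identification of connections $\overline{\nabla}_i = \Upsilon^*(\pi^*\nabla_{i,M})$ via the biholomorphic unitary bundle isomorphism $\Upsilon : L_i \to \pi^*L_{i,M}$ covering the identity. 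Now the curvature of a connection on a line bundle is a globally defined (purely imaginary) $2$-form on the base, and it is unchanged under conjugation by a bundle isomorphism covering the identity (on a line bundle the conjugation acts on a scalar form and is trivial); since moreover $\pi^*$ commutes with taking curvature, this yields $F_{\overline{\nabla}_i} = F_{\Upsilon^*(\pi^*\nabla_{i,M})} = F_{\pi^*\nabla_{i,M}} = \pi^*F_{\nabla_i,M}$. (Alternatively one could extract this from \eqref{equation:courbure-horizontale} together with Lemma \ref{lemma:vanishing-curvature} and the vertical flatness of $J_i$ for $i\leq a$, but the route through $\Upsilon$ is shorter.)

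For the second identity $F_{\nabla_i, M} = \lambda_i(F_{\mathfrak z})$, I would compute $F_{\nabla_i, M}$ directly using the standard curvature formula for an associated line bundle — the same formula from \cite[Chapter III, Theorem 5.1]{Kobayashi-Nomizu-63} already used in the proof of Proposition \ref{prop:line-bundle-topology}. Since $L_{i,M} = P \times_{\beta_i} \mathbb{C}$ carries the connection induced by $\nabla$, this formula reads, for $x \in M$, $X, Y \in T_xM$ and arbitrary $p \in P_x$,
\[
	F_{\nabla_i, M}(x)(X,Y) = d\beta_i(e)\big(d\Theta(p)(X^{\HH}, Y^{\HH})\big),
\]
with $X^{\HH}, Y^{\HH}$ the horizontal lifts of $X, Y$. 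The key step is then to identify $d\beta_i(e)$: since $\beta_i : G \to \mathbb{S}^1$ is a homomorphism into an abelian group, it is trivial on $[G,G]$, so $d\beta_i(e)$ vanishes on $[\mathfrak g, \mathfrak g]$, hence $d\beta_i(e) = d\beta_i(e) \circ \Pi_{\mathfrak z}$ where $\Pi_{\mathfrak z} : \mathfrak g \to \mathfrak z$ is the projection along $[\mathfrak g, \mathfrak g]$. Moreover $\mathfrak z \subset \mathfrak t$ and $\beta_i|_T = \gamma_i$, so $d\beta_i(e)|_{\mathfrak z} = d\gamma_i(e)|_{\mathfrak z} = \lambda_i$, using that $\lambda_i \in (i\mathfrak z)^*$ for $i \leq a$ (consistency of the normalisation \eqref{equation:ecriture}). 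Substituting and using $\Theta_{\mathfrak z} = \Pi_{\mathfrak z} \circ \Theta$ together with the definition \eqref{eq:curvature-abelian-def} of $F_{\mathfrak z}$ then gives $F_{\nabla_i, M}(x)(X,Y) = \lambda_i\big(d\Theta_{\mathfrak z}(p)(X^{\HH}, Y^{\HH})\big) = \lambda_i\big(F_{\mathfrak z}(x)(X,Y)\big)$, as claimed.

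I do not expect a genuine obstacle: the statement is essentially a bookkeeping exercise combining Lemma \ref{lemma:pullback-equivalence}, the character property of $\beta_i$, and the associated-bundle curvature formula. The only points requiring a little care are (i) that the curvature of a line bundle is insensitive to the fibre identification, so the isomorphism $\Upsilon$ genuinely transports curvatures; and (ii) getting $d\beta_i(e) = \lambda_i \circ \Pi_{\mathfrak z}$ exactly right — in particular, that $\lambda_i$ viewed as an element of $(i\mathfrak z)^*$ coincides with the restriction $d\gamma_i(e)|_{\mathfrak z}$, which is precisely the compatibility of \eqref{equation:ecriture} with the splitting $\mathfrak g = \mathfrak z \oplus [\mathfrak g, \mathfrak g]$.
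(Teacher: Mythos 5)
Your proposal is correct and follows essentially the same route as the paper: the first identity via Lemma \ref{lemma:pullback-equivalence} (the paper states it slightly more tersely as "$(\pi^*L_{i,M}, \pi^*\nabla_{i,M})$ is isomorphic to $(L_i, \nabla_i)$" and reads off the curvature identity), and the second by applying the associated-bundle curvature formula to $L_{i,M} = P \times_{\beta_i}\mathbb{C}$ and observing that $d\beta_i(e)$ vanishes on $[\mathfrak g, \mathfrak g]$ (since $\beta_i$ kills $[G,G]$) and equals $\lambda_i$ on $\mathfrak z$. You spell out a couple of points the paper leaves implicit — notably that curvature of a line bundle is invariant under a bundle isomorphism covering the identity, and the explicit identity $d\beta_i(e) = \lambda_i \circ \Pi_{\mathfrak z}$ — but these are the same steps.
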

\begin{proof}
	 The first claim follows from Lemma \ref{lemma:pullback-equivalence} which implies that $(\pi^*L_{i, M}, \pi^*\nabla_{i, M})$ is isomorphic to $(L_i, \nabla_i)$. Therefore, in particular, $F_{\overline{\nabla}_i} = \pi^* F_{\nabla_i, M}$. For the other claim, observe that by definition, for any $x \in M$, $X, Y \in T_xM$,
	 \begin{equation}
	 \label{equation:ehoui}
	 	F_{\nabla_i, M}(x)(X, Y) = d\beta_i(e) d\Theta(x)(X^{\HH}, Y^{\HH}) = \lambda_i (F_{\mathfrak{z}}(x)(X, Y)),
	 \end{equation}
	 where in the last equality we used that $d\beta_i(e)$ is equal to $\lambda_i$ on $\mathfrak{z}$ and to zero on $[\mathfrak{g}, \mathfrak{g}]$ (since $\beta_i$ is trivial on $[G, G]$), and in the second equality we used \eqref{eq:curvature-abelian-def}.
\end{proof}

\subsubsection{Non-degenerate curvature} \label{sssection:non-degenerate-curvature}
Given $x \in M$, the curvature $\nabla$ on $P$ is \emph{non-degenerate} at $x$ if the following holds:
\begin{equation}
\label{equation:non-degenerate-curvature}
\mathrm{Span}\left(F_{\mathrm{Ad}(P)}(x)(X,Y) \mid X,Y \in T_xM\right) = \mathrm{Ad}(P_x).
\end{equation}
The curvature is \emph{globally non-degenerate} if it is non-degenerate at every $x \in M$. By the Ambrose-Singer theorem, this is equivalent to the local holonomy group being equal to $G$ at every point $x \in M$.

Since the splitting $\mathfrak{g} = \mathfrak{z} \oplus [\mathfrak{g},\mathfrak{g}]$ is $\Ad$-invariant and the adjoint representation of $G$ on $\mathfrak{z} \cong \mathbb{R}^a$ is trivial, $\Ad(P) \cong \mathfrak{z} \oplus E$, where $\mathfrak{z} \cong M \times \R^a$ is trivial over $M$ and $E$ is associated to $P$ by restricting the $\Ad$-action to $[\mathfrak{g}, \mathfrak{g}]$. We shall denote by $\Pi_E : \mathrm{Ad}(P) \to E$ the fiberwise orthogonal projection onto $E$.
For $\mathbf{l} = (\ell_1, \dotsc,\ell_d) \in \R^a \times \R^b_+$, we set
\[
\mathbf{l}\cdot \mathbf{F}_{\overline{\nabla}} = \sum_{i=1}^d \ell_i F_{\overline{\nabla_i}}.
\]
Finally, recall by \eqref{equation:boundary-infinity} the identification $\partial_\infty \mathfrak{a}_+ \simeq \mathbb{S}^{d-1} \cap (\R^a \times \R^b_+)$.

The following holds:

\begin{lemma}
\label{lemma:enfin}
Fix $x \in M$. The following assertions are equivalent:
\begin{enumerate}[label=\emph{(\roman*)}]
\item The curvature is non-degenerate at $x$.
\item For all $\mathbf{l} \in \partial_\infty \mathfrak{a}_+$, for all $wT \in F_x$,  $\mathbf{l} \cdot \mathbf{F}_{\overline{\nabla}}(\bullet,\bullet)|_{\HH_F(x,wT) \times \HH_F(x,wT)} \neq 0$.
%
\end{enumerate}
\end{lemma}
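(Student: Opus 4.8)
The plan is to prove both implications by reducing everything to the already-established relation \eqref{equation:courbure-horizontale}, which identifies the horizontal curvature of $\overline{\nabla}_{\mathbf k}$ with the weight $\alpha_{\mathbf k}$ applied to the $\mathfrak t$-projection of the $\mathrm{Ad}(P)$-curvature $F_{\mathrm{Ad}(P)}$. First I would fix a point $w \in P_x$ and use it to identify $\mathrm{Ad}(P_x) \cong \mathfrak g$ and $\mathrm{Ad}_x(P) \cong \mathfrak t$ after conjugation; the key observation is that as $wT$ ranges over $F_x$, i.e.\ as $w$ ranges over $P_x/T$, the subspace of $\mathfrak g$ obtained by conjugating $\Pi_{\mathfrak t}\big(\mathrm{Span}\{d\Theta(w)(X^{\HH_P},Y^{\HH_P})\}\big)$ by $\mathrm{Ad}(g)$ sweeps out, as $g$ ranges over $G$, all the conjugates $\mathrm{Ad}(g)(\mathfrak t)$; and the union $\bigcup_{g \in G}\mathrm{Ad}(g)(\mathfrak t)$ is all of $\mathfrak g$ by the classical fact that every element of $\mathfrak g$ is conjugate into the Cartan subalgebra (this is the infinitesimal version of Cartan's theorem, already used in the excerpt). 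Concretely: using a local trivialization as in \S\ref{section:connection-principal-bundles}, the curvature form at the frame $w\cdot g$ is $\mathrm{Ad}(g^{-1})$ applied to the curvature at $w$, so $\Pi_{\mathfrak t} \circ \mathrm{Ad}(g^{-1})$ applied to the span of $\{d\Theta(w)(X^{\HH_P},Y^{\HH_P}) : X,Y\}$ is what governs the horizontal curvature over the point $wg\cdot T$.

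For the implication (i) $\Rightarrow$ (ii): assume the curvature is non-degenerate at $x$, so the span $S := \mathrm{Span}\{d\Theta(w)(X^{\HH_P},Y^{\HH_P}) : X,Y \in T_xM\}$ equals $\mathfrak g$ (after identifying $\mathrm{Ad}(P_x) \cong \mathfrak g$ via $w$). Fix $\mathbf l \in \partial_\infty\mathfrak a_+$ and $wg\cdot T \in F_x$. By \eqref{equation:courbure-horizontale} (applied with real coefficients $\mathbf l$, which is legitimate since \eqref{equation:courbure-horizontale} is linear in $\mathbf k$ and both sides extend continuously), the restriction $\mathbf l \cdot \mathbf F_{\overline\nabla}|_{\HH_F \times \HH_F}$ at $wg\cdot T$ vanishes iff $\mathbf l\big(\Pi_{\mathfrak t}(\mathrm{Ad}(g^{-1})\zeta)\big) = 0$ for all $\zeta \in S = \mathfrak g$, i.e.\ iff $\mathbf l$ annihilates $\Pi_{\mathfrak t}(\mathrm{Ad}(g^{-1})\mathfrak g) = \mathfrak t$ (since $\mathrm{Ad}(g^{-1})$ is an isomorphism and projection onto $\mathfrak t$ is surjective). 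But $\mathbf l \in \partial_\infty\mathfrak a_+ \subset (i\mathfrak t)^*\setminus\{0\}$ is a nonzero functional on $\mathfrak t$, so this cannot happen; hence the restriction is nonzero. For (ii) $\Rightarrow$ (i) I would argue by contraposition: if the curvature is degenerate at $x$, then $S \subsetneq \mathfrak g$ is a proper $\mathrm{Ad}$-invariant (by Ambrose--Singer, $S$ is the Lie algebra of the local holonomy, hence an ideal) subalgebra. Then $\mathfrak g/S \neq 0$, and since $\mathfrak g = \mathfrak z \oplus [\mathfrak g,\mathfrak g]$ with $[\mathfrak g,\mathfrak g]$ semisimple, the quotient has either a nontrivial central part or a nontrivial semisimple summand; in either case one can choose a nonzero $\mathrm{Ad}(T')$-weight direction (for a suitable Cartan) — more precisely, one uses that $S$ being a proper ideal means there is a simple factor or a central line not contained in $S$, whose Cartan part gives a nonzero element $\mathbf l$ of some Weyl chamber with $\mathbf l|_{\Pi_{\mathfrak t}(S)} = 0$ after passing to a conjugate putting the relevant Cartan in standard position. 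By \eqref{equation:courbure-horizontale} this $\mathbf l$ (rescaled to lie in $\partial_\infty\mathfrak a_+$, possibly after a Weyl-group adjustment of the chamber, which is harmless since $\partial_\infty\mathfrak a_+$ is $W$-invariant up to relabelling) makes $\mathbf l\cdot\mathbf F_{\overline\nabla}|_{\HH_F\times\HH_F}$ vanish at the corresponding frame, contradicting (ii).

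The main obstacle I anticipate is the bookkeeping in (ii) $\Rightarrow$ (i): translating "a proper $\mathrm{Ad}$-invariant subalgebra $S$" into "there exists $\mathbf l \in \partial_\infty\mathfrak a_+$ and a frame $wT$ with $\mathbf l$ annihilating $\Pi_{\mathfrak t}(\mathrm{Ad}(g^{-1})S)$" requires care about which Cartan subalgebra one works in and whether the resulting direction genuinely lands in (the closure at infinity of) the \emph{positive} Weyl chamber rather than some wall or a non-dominant chamber. The clean way to handle this is: since $S$ is an ideal, $\mathfrak g = S \oplus S^\perp$ with $S^\perp$ a nonzero ideal (sum of a subspace of $\mathfrak z$ and some simple factors); pick a maximal torus $\mathfrak t^\perp$ of $S^\perp$ and extend to a maximal torus $\mathfrak t_0 = (\mathfrak t \cap S) \oplus \mathfrak t^\perp$ of $\mathfrak g$ conjugate to $\mathfrak t$ via some $g_0 \in G$; any nonzero dominant weight supported on $\mathfrak t^\perp$ then pulls back under $g_0$ to the desired $\mathbf l$, and the associated frame is $w g_0 \cdot T$. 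I would also need to double-check the continuity/linearity extension of \eqref{equation:courbure-horizontale} from $\mathbf k \in \widehat G$ to real $\mathbf l \in \mathfrak a_+$ and to its boundary at infinity (up to an overall positive scalar, which does not affect vanishing), but this is routine given the explicit formula. Everything else is linear algebra plus the standard structure theory already invoked in the excerpt.
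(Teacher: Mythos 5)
The direction (i) $\Rightarrow$ (ii) is essentially the paper's argument and is fine: since $R(w) := \mathrm{Span}\{d\Theta(w)(X^{\HH_P},Y^{\HH_P})\} = \mathfrak{g}$ and $\Pi_{\mathfrak{t}}$ is surjective, no nonzero $\mathbf{l} \in (i\mathfrak{t})^*$ can annihilate $\Pi_{\mathfrak{t}}(R(w))$.

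In (ii) $\Rightarrow$ (i), however, there is a genuine gap at the very first step. You claim that $S := R(w_0) = \mathrm{Span}\{d\Theta(w_0)(X^{\HH_P},Y^{\HH_P})\}$ is ``the Lie algebra of the local holonomy, hence an ideal,'' and you then rely on the decomposition $\mathfrak{g} = S \oplus S^{\perp}$ into ideals. Neither claim is correct: $R(w_0)$ is merely the span of curvature values \emph{at the single point} $w_0$, which is a linear subspace of $\mathfrak{g}$ with no reason to be a subalgebra, let alone an ideal. The Ambrose--Singer theorem produces the holonomy algebra by taking curvature values over the \emph{whole} holonomy sub-bundle (equivalently, at all nearby points of $P$, not just $w_0$); these are different objects. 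A concrete counterexample: take $G = \mathrm{SU}(2)$ with $R(w_0)$ a one-dimensional line $\mathbb{R}X$; this is a proper subspace (so the curvature is degenerate at $x$) but certainly not an ideal, since $\mathfrak{su}(2)$ is simple. Your subsequent construction of $\mathfrak{t}_0 = (\mathfrak{t}\cap S) \oplus \mathfrak{t}^{\perp}$ and a dominant weight supported on $\mathfrak{t}^{\perp}$ therefore does not go through.

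The paper avoids this entirely and the fix is simpler than what you attempted: since $R(w_0) \subsetneq \mathfrak{g}$, pick \emph{any} nonzero $\xi \in (i\mathfrak{g})^*$ with $\xi|_{R(w_0)} = 0$ (no ideal structure needed), propagate it to a $G$-equivariant family $\xi(w_0 g) = \xi \circ \rho_{\Ad}(g)$, and invoke the standard fact (cited in the paper as \cite[Theorems 5.9, 6.43(c)]{Sepanski-07}) that every coadjoint orbit meets the closed positive Weyl chamber $\mathfrak{a}_+$ in exactly one point. This produces $g_0 \in G$ with $\alpha := \xi(w_0 g_0) \in \mathfrak{a}_+$, unit norm, hence $\alpha = \alpha_{\mathbf{l}}$ for some $\mathbf{l} \in \partial_\infty \mathfrak{a}_+$, and one reads off from \eqref{equation:courbure-horizontale} that $\mathbf{l}\cdot\mathbf{F}_{\overline{\nabla}}$ vanishes on $\HH_F \times \HH_F$ at the frame $w_0 g_0 T$. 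You implicitly used the coadjoint-orbit fact too, but the extra ideal-decomposition machinery you wrapped around it is both unnecessary and, as written, invalid.
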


\begin{proof}
\emph{Step 1: (i) $\implies$ (ii).} If the curvature is non-degenerate at $x \in M$, then for all $w \in P_x$:
\[
R(w) := \mathrm{Span}\left(d\Theta(w)(X^{\HH_P}, Y^{\HH_P}) \mid X, Y \in T_xM\right) = \mathfrak{g}.
\]
Define the weight $\alpha_\mathbf{l} := \phi(\mathbf{l}) = \sum_{i=1}^d \ell_i \lambda_i \neq 0$, seen as an element in $(i\mathfrak{g})^*$ (by extending it by $0$ on the orthogonal complement of $\mathfrak{t}$). By \eqref{equation:courbure-horizontale}, we obtain that for all $wT \in F_x$, there exist $X,Y \in T_xM$ such that
\[
\mathbf{l} \cdot \mathbf{F}_{\overline{\nabla}}(X^{\HH_F}(wT),Y^{\HH_F}(wT)) = \alpha_\mathbf{l}\left(\Pi_{\mathfrak{t}}d\Theta(w)(X^{\HH_P}, Y^{\HH_P})\right) \neq 0.
\]
This proves (ii). \\

\emph{Step 2: (ii) $\implies$ (i).} Conversely, assume that the curvature is degenerate at $x$, that is $R(w) \neq \mathfrak{g}$. Notice that $P_x \ni w \mapsto R(w) \subset \mathfrak{g}$ is a family of $G$-invariant subspaces, namely $R(wg) = \rho_{\mathrm{Ad}(G)}(g^{-1})R(w)$ for all $w \in P_x, g \in G$. We can further assume that $R(w)$ is embedded into a $G$-invariant family $H(w)$ of codimension one hyperplanes, and let $w \mapsto \xi(w) \in (i\mathfrak{g})^*$ be the corresponding family of $1$-forms of unit norm such that $\xi(w)(H(w)) = 0$. Indeed, take an arbitrary $w_0 \in P_x$ and $\xi \in (i\mathfrak{g})^*$ such that $\xi(R(w_0)) = 0$, and define for any $g \in G$, $\xi(w_0 g) := \rho_{\mathrm{Ad}(G)}(g^{-1})\xi := \xi \circ \rho_{\mathrm{Ad}(G)}(g)$; then $H(w) := \ker \xi(w)$. 
By \cite[Theorems 5.9, 6.43 (c)]{Sepanski-07}, the co-adjoint orbits intersect the Weyl chamber $\mathfrak{a}_+$ (in a unique point). Hence for arbitrary $w_0 \in P_x$, there exists (a unique) $g_0 \in G$ such that $\alpha := \xi(w_0g_0) = \rho_{\mathrm{Ad}(G)}(g_0^{-1})\xi(w_0) \in \mathfrak{a}_+$. Notice that $\xi(w_0)$ is a unit covector and so is $\alpha$; thus $\alpha \in \mathfrak{a}_+ \cap \mathbb{S}^{d-1} \simeq \partial_\infty \mathfrak{a}_+$, that is $\alpha = \alpha_{\mathbf{l}}$ for some $\mathbf{l}$ (seen as a $1$-form on $\mathfrak{g}$ by extending it by $0$ on $\mathfrak{t}^\perp$). But then
\[
\alpha_{\mathbf{l}}(\Pi_{\mathfrak{t}}R(w_0g_0)) = \alpha_{\mathbf{l}}(R(w_0g_0)) = 0 = \mathbf{l} \cdot \mathbf{F}_{\overline{\nabla}}(X^{\HH_F}(w_0g_0T),Y^{\HH_F}(w_0g_0T)),
\]
for all $X^{\HH_F},Y^{\HH_F}$ horizontal vector fields at $w_0g_0T \in F_x$. This contradicts our assumption and completes the proof.
\end{proof}

For $\mathbf{l} \in \partial_\infty \mathfrak{a}_+ \simeq \mathbb{S}^{d-1} \cap (\R^a \times \R^b_+)$, define the following number:
\[
F_{\mathrm{min}}(\mathbf{l}) := \min_{(x,w) \in F} \max_{\substack{X,Y \in T_xM \\ |X|=|Y|=1}} -i \times \mathbf{l} \cdot\mathbf{F}_{\overline{\nabla}}(w)(X^{\HH_F},Y^{\HH_F}) \geq 0,
\]
where $X^{\HH_F},Y^{\HH_F}$ are the horizontal lifts of $X,Y$ respectively to $F$. Finally, define
\[
F_{\mathrm{min}} := \min_{\mathbf{l} \in \partial_\infty \mathfrak{a}_+} F_{\mathrm{min}}(\mathbf{l}) \geq 0.
\]
Notice that $\partial_\infty \mathfrak{a}_+$ is compact so this is indeed a minimum. Finally, by Lemma \ref{lemma:enfin} the curvature $F_{\mathrm{Ad}(P)}$ is globally non-degenerate in the sense of \eqref{equation:non-degenerate-curvature} if and only if $F_{\mathrm{min}} > 0$.

 \subsection{Bergman kernel expansion}

The aim of this preliminary section is to describe Toeplitz operators on $G/T$. For simplicity, and \textbf{only in this paragraph}, we introduce $X := G/T$. Let
\[
m := \dim_{\C}(X) = (\dim(G) - \mathrm{rk}(G))/2.
\]
Recall that $d_\mathbf{k} := \dim H^0(X,\mathbf{L}^{\otimes \mathbf{k}})$. For a line bundle $L$, denote by $\overline{L}$ its dual line bundle, and given vector bundles $E, F \to X$, write $E \boxtimes F := \pi_r^*E \otimes \pi_{\ell}^*F \to X \times X$, where $\pi_{\ell}$ and $\pi_r$ denote left and right projections $X \times X \to X$, respectively. Let $\Delta \subset X \times X$ be the diagonal. In what follows, we will use the sub-indices $\ell$ and $r$ to denote the action in the left and right variables, respectively.

The Schwartz kernel of $\Pi_{\mathbf{k}}$ is a smooth section of $\mathbf{L}^{\otimes \mathbf{k}} \boxtimes \overline{\Lk} \to X \times X$. Since $\overline{\partial}_{\mathbf{k}} \Pi_{\mathbf{k}} = \Pi_{\mathbf{k}} (\overline{\partial}_{\mathbf{k}})^* \equiv 0$, it is holomorphic in the left variable and antiholomorphic in the right variable.

A \emph{multisection} $\mathbf{s} : \Z \to C^\infty(X,\mathbf{L})$ is the data of $\mathbf{s} := (s_1, \dotsc,s_d)$ such that $\mathbf{s}^{\otimes \mathbf{k}} := s_1^{\otimes k_1} \otimes \dotsm \otimes s_d^{\otimes k_d}$. We will prove the following:

 \begin{theorem}
 \label{theorem:bergman}
There exists a unique multisection
\[
\mathbf{E} : \Z^a \times \Z^b_{\geq 0} \to C^\infty(X \times X, \mathbf{L} \boxtimes \overline{\mathbf{L}})
\]
such that
\begin{equation}
\label{equation:partie-simple}
\Pi_{\mathbf{k}}(x_\ell,x_r) = \dfrac{d_\mathbf{k}}{\vol(X)} \mathbf{E}^{\otimes \mathbf{k}}, \qquad \mathbf{E}^{\otimes \mathbf{k}}(x,x) = \mathbbm{1}_{\mathbf{L}^{\otimes \mathbf{k}}}
\end{equation}
and $\mathbf{E}$ is holomorphic in the left variable, antiholomorphic in the right variable. Moreover, if $\mathbf{E} = (E_1, \dotsc, E_d)$, the multifunction 
\[
\boldsymbol{\varphi} := (\varphi_1, \dotsc, \varphi_d) = (-\log(|E_1|^2), \dotsc,-\log(|E_d|^2)) \in C^\infty(X \times X)
\]
satisfies
 \begin{equation}
 \label{equation:phase-important}
 \mathbf{\boldsymbol{\varphi}}|_{\Delta} = 0, \quad d\boldsymbol{\varphi}|_{\Delta} = 0, \quad \partial_{\ell} \overline{\partial}_\ell \boldsymbol{\varphi}|_{\Delta}((\xi_1,0),(\xi_2,0)) = - i \mathbf{F}_{D^{\mathrm{Chern}}}(\xi_1,J\xi_2).
 \end{equation}
 \end{theorem}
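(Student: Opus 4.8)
The plan is to realize the Schwartz kernel of $\Pi_{\mathbf k}$ explicitly using the Borel--Weil description of $H^0(X,\mathbf J^{\otimes\mathbf k})$ as a $G$-representation, exploit the $G$-equivariance to pin down its pointwise structure, and then extract the multisection $\mathbf E$ by a factorization argument. First I would observe that, since the $G$-action on $J^\gamma$ is transitive on $X=G/T$ and unitary on $L^2(X,\mathbf J^{\otimes\mathbf k})$, the reproducing (Bergman) kernel $\Pi_{\mathbf k}(x_\ell,x_r)$ is $G$-equivariant: $g\cdot\Pi_{\mathbf k}(g^{-1}x_\ell,g^{-1}x_r) = \Pi_{\mathbf k}(x_\ell,x_r)$. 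By transitivity, the restriction to the diagonal is determined by a single value at the base point $eT$, and equivariance together with irreducibility forces $\Pi_{\mathbf k}(x,x)$ to be a constant multiple of $\mathbbm 1_{\mathbf L^{\otimes\mathbf k}}$; integrating the trace and using $\dim H^0(X,\mathbf L^{\otimes\mathbf k})=d_{\mathbf k}$ gives the constant $d_{\mathbf k}/\vol(X)$, which is the normalization forced on $\mathbf E^{\otimes\mathbf k}$ in \eqref{equation:partie-simple}.

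Next I would address the multiplicativity, i.e. the existence of $\mathbf E=(E_1,\dots,E_d)$ with $\Pi_{\mathbf k} = \tfrac{d_{\mathbf k}}{\vol(X)}\mathbf E^{\otimes\mathbf k}$ holomorphic (resp. antiholomorphic) in the left (resp. right) variable. The point is that $H^0(X,J^\gamma\otimes J^{\gamma'}) \cong H^0(X,J^{\gamma+\gamma'})$ is compatible with tensor product on the level of highest-weight vectors (Cartan multiplication / the PRV component), so the normalized reproducing kernels multiply: if $K_i(x_\ell,x_r)$ denotes the normalized Bergman kernel of $J_i$, then $K_1^{\otimes k_1}\otimes\cdots\otimes K_d^{\otimes k_d}$ is holomorphic$\times$antiholomorphic, agrees with the identity on the diagonal, and reproduces $H^0(X,\mathbf J^{\otimes\mathbf k})$ — hence by uniqueness of the Bergman projection it equals $\tfrac{\vol(X)}{d_{\mathbf k}}\Pi_{\mathbf k}$. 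This identifies $E_i := K_i$, and uniqueness of $\mathbf E$ follows since a holomorphic$\times$antiholomorphic section agreeing with the identity on $\Delta$ is determined by its germ at $\Delta$, which the reproducing property fixes. (Here one must be a little careful about the equivalence relation $\mathbf k\sim\mathbf k'$ from \eqref{equation:ecriture} when $G$ is not simply connected; but since $\mathbf E$ is built bundle-by-bundle from the $J_i$, it descends to $\mathbf J^{\otimes\mathbf k}$ consistently.)

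For the phase expansion \eqref{equation:phase-important}, set $\varphi_i = -\log|E_i|^2$, where $|E_i|$ is measured in the Hermitian metric on $J_i\boxtimes\overline{J_i}$. On the diagonal $E_i = \mathbbm 1$, so $\varphi_i|_\Delta=0$. For the first derivative: $E_i$ is holomorphic in $x_\ell$ and antiholomorphic in $x_r$, and $|E_i|^2$ has a critical point along $\Delta$ because $E_i(x,x)=\mathbbm 1$ is the maximum of $|E_i(x_\ell,x_r)|$ by Cauchy--Schwarz for the reproducing kernel (the diagonal value dominates); hence $d\varphi_i|_\Delta=0$. For the Hessian, I would compute $\partial_\ell\overline\partial_\ell\varphi_i|_\Delta$ by differentiating $-\log|E_i|^2$ twice: since $E_i$ is holomorphic in the left variable, $\partial_\ell\overline\partial_\ell(-\log|E_i|^2) = \partial_\ell\overline\partial_\ell(-\log\langle E_i,E_i\rangle)$ reduces on the diagonal, using $E_i|_\Delta=\mathbbm 1$ and $\partial_\ell E_i|_\Delta=0$ (the latter from the critical point condition together with holomorphy), to a curvature term; in fact $-\partial\overline\partial\log\|s\|^2$ for a local holomorphic frame $s$ is precisely the Chern curvature of the Hermitian holomorphic line bundle, which gives $-i\,\mathbf F_{D^{\mathrm{Chern}}}(\xi_1,J\xi_2)$ after pairing the two copies of the holomorphic tangent space via the complex structure $J$ — exactly the stated identity. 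The main obstacle I anticipate is the bookkeeping in this last step: correctly matching the holomorphic/antiholomorphic splitting of $T(X\times X)$ along $\Delta$, tracking the factors of $i$ and the sign conventions in the definitions \eqref{equation:fiberwise-chern}--\eqref{equation:fiberwise-dbar}, and verifying that the mixed $\partial_\ell\overline\partial_r$ and $\partial_\ell\partial_r$ terms do not contribute to the $((\xi_1,0),(\xi_2,0))$ component (which uses Lemma~\ref{lemma:vanishing-curvature}-type vanishing of mixed curvature, here in the purely fibrewise setting). Everything else is a standard consequence of equivariance and the reproducing-kernel formalism.
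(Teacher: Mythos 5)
Your opening and closing steps match the paper: equivariance plus irreducibility pins down the diagonal value, and the Hessian computation is the standard $\partial\overline\partial\log\|s\|^2$ curvature identity, with the pairing by $J$ coming from the $(1,1)$-type of the curvature. The Cauchy--Schwarz argument for $d\boldsymbol{\varphi}|_\Delta=0$ is a perfectly good alternative to the paper's Taylor-expansion route.

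However, the factorization step has a genuine gap. You claim that because Cartan multiplication $H^0(X,J^\gamma)\otimes H^0(X,J^{\gamma'})\twoheadrightarrow H^0(X,J^{\gamma+\gamma'})$ is surjective, the product $K_1^{\otimes k_1}\otimes\cdots\otimes K_d^{\otimes k_d}$ of normalized kernels \emph{reproduces} $H^0(X,\mathbf{J}^{\otimes\mathbf{k}})$, and you then conclude by ``uniqueness of the Bergman projection.'' This does not follow: if $\{e_i\}$ and $\{f_j\}$ are orthonormal bases of $H^0(X,J^\gamma)$ and $H^0(X,J^{\gamma'})$, the pointwise products $\{e_i f_j\}$ do span $H^0(X,J^{\gamma+\gamma'})$, but they are not orthonormal (the Cartan map has a nontrivial kernel and is not an isometry), so $\sum_{i,j}(e_if_j)(x_\ell)\overline{(e_if_j)(x_r)} = K_\gamma K_{\gamma'}$ is \emph{not} a priori the Bergman kernel of $H^0(X,J^{\gamma+\gamma'})$. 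The reproducing property of the tensor product is precisely what needs to be proved, and Cartan multiplication does not supply it. The correct argument is the complex-analytic uniqueness statement you state at the end of that paragraph: a section of $\mathbf{L}\boxtimes\overline{\mathbf{L}}$ that is holomorphic in the left variable, antiholomorphic in the right, and equal to $\mathbbm{1}$ on $\Delta$ is unique, because in local coordinates $(z,\overline w)\mapsto f(z,\overline w)=\sum a_{\alpha\beta}z^\alpha\overline w^\beta$ and the restriction to $z=w$ determines all coefficients. Apply this uniqueness directly to $\tfrac{\vol X}{d_{\mathbf{k}}}\Pi_{\mathbf{k}}$ and to $\mathbf{E}^{\otimes\mathbf{k}}$ (both hol$\times$antihol, both $\mathbbm{1}$ on $\Delta$, the first by the equivariance computation, the second by construction), and you are done---no representation-theoretic input beyond what you already used for the diagonal value. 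This is what the paper does. A small additional remark: the appeal to a Lemma~\ref{lemma:vanishing-curvature}-type mixed-curvature vanishing at the end is not needed; since the statement in \eqref{equation:phase-important} only evaluates $\partial_\ell\overline\partial_\ell$ on vectors of the form $(\xi,0)$, only left-variable derivatives enter and there are no mixed terms to control.
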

 
 In the last line, $J$ denotes the complex structure and $\mathbf{F}_{D^{\mathrm{Chern}}}$ denotes the (multi)curvature of the Chern connection on $\mathbf{L}$. The proof is rather standard so we only sketch it.
 
 \begin{proof}
For all $g \in G$, $g^* \Pi_{\mathbf{k}} = \Pi_{\mathbf{k}} g^*$, where the $G$-action is given by by \eqref{eq:G-action-holomorphic}. This yields at the level of Schwartz kernels the following equivariance property:
\begin{equation}
\label{equation:equivariance-schwartz}
\Pi_{\mathbf{k}}(gx_\ell,gx_r) = g_{\mathrm{left}} \cdot \Pi_{\mathbf{k}}(x_\ell,x_r) \cdot g^{-1}_{\mathrm{right}} \in \mathbf{L}^{\otimes \mathbf{k}}_{gx_\ell} \otimes (\mathbf{L}^{\otimes \mathbf{k}})^*_{gx_r},
\end{equation}
where the left (resp. right) action $g_{\mathrm{left}}$ is given by \eqref{eq:G-action-holomorphic} on the left (resp. right) factor. Now, $\Pi_{\mathbf{k}}(x,x) \in \mathbf{L}^{\otimes \mathbf{k}}_{x} \otimes (\mathbf{L}^{\otimes \mathbf{k}})^*_{x}$ is given by $\Pi_{\mathbf{k}}(x,x) = \lambda(x) \mathrm{id}_{\Lk_x}$ for some function $\lambda \in C^\infty(X)$. By \eqref{equation:equivariance-schwartz} and the transitivity of the $G$-action on the flag space $X$, we obtain that $\lambda(x) = \lambda_0 \in \C$ is a constant. Moreover,
\[
\Tr(\Pi_{\mathbf{k}}) = \int_M \Tr(\Pi_{\mathbf{k}}(x,x)) \dd x = \lambda_0 \vol(X) = \dim H^0(X,\mathbf{L}^{\otimes \mathbf{k}}) = d_\mathbf{k}.
\]
This proves that $\Pi_{\mathbf{k}} = \tfrac{d_{\mathbf{k}}}{\vol(X)} \mathbf{E}_{\mathbf{k}}$ for some section $\mathbf{E}_{\mathbf{k}}$ equal to the identity on the diagonal, (anti-)holomorphic in the left (resp. right) variable. It remains to show that $\mathbf{E}_{\mathbf{k}} = \mathbf{E}^{\otimes \mathbf{k}} = E_1^{\otimes k_1} \otimes \dotsm \otimes E_d^{\otimes k_d}$, where $E_i := \mathbf{E}_{(0, \dotsc,0,1,0, \dotsc, 0)} \in C^\infty(X\times X, L_i \boxtimes \overline{L}_i)$. Observe that, by construction, $\mathbf{G}_{\mathbf{k}}:=\mathbf{E}_{\mathbf{k}}-\mathbf{E}^{\otimes \mathbf{k}}$ vanishes on $\Delta$, and is also (anti-)holomorphic in the left (resp.) variable; it can then be shown that $\mathbf{G}_{\mathbf{k}}$ is identically $0$ on $X \times X$ (see the next paragraph for a justification using local coordinates). This proves \eqref{equation:partie-simple}.

We now go for the proof of \eqref{equation:phase-important}. Fix a point $x_0 \in X$ and consider $U \subset X \times X$, a neighborhood of the diagonal near $(x_0,x_0) \in X \times X$. It is a standard fact that this neighborhood can be biholomorphically identified with an open subset of $\C^{2m}$ via $\phi : U \to V \subset \C^{2m}$ such that $\phi(\Delta) \subset \R^{2m}$ and $\phi(x_0,x_0) = 0$. Moreover, a function $f \in C^\infty(U)$ near $(x_0,x_0)$ is holomorphic in the left variable, and antiholomorphic in the right-variable if and only if it can be written in these coordinates as $f(z,\overline{w})\footnote{The notation $f(z,\overline{w})$ is standard in complex analysis to insist on (anti-)holomorphic properties of the function in its variables.} = \sum_{\alpha \beta} a_{\alpha \beta} z^\alpha \overline{w}^\beta$. Observe that, if $f$ vanishes on the diagonal $z=w$, then $f \equiv 0$, which accounts for the claim made in the previous paragraph on the vanishing of $\mathbf{G}_{\mathbf{k}}$.

Fix $i=1, \dotsc,d$ and choose a holomorphic frame $s_i$ for $L_i \to X$ near $x_0$. The section $E_i \in C^\infty(X \times X, L_i \boxtimes \overline{L}_i)$ can be written locally as $E_i(x_\ell,x_r)=e^{\psi_i(x_\ell,x_r)} s_i(x_\ell) \otimes \overline{s}_i(x_r)$ for some function $\psi_i$. Let $\theta_i := - \log |s_i|^2$. Taking $x=y$ and using $E_i(x,x)=\mathbbm{1}$, we obtain $\psi_i(x,x)=\theta_i(x)$. Moreover, $\psi_i$ is (anti-)holomorphic in the left (resp. right) variable  so it is locally determined by its values on the diagonal. More precisely, writing near $x_0$ in the holomorphic coordinates $(z_i)_{1 \leq i \leq m}$, the function $\theta_i(z) = \sum_{\alpha \beta} a_{\alpha \beta} z^\alpha \overline{z}^\beta$ with $a_{\beta \alpha} = \overline{a}_{\alpha \beta}$, we get $\psi_i(z,\overline{w}) = \sum_{\alpha \beta} a_{\alpha \beta} z^\alpha \overline{w}^\beta$.

We then have:
\[
\begin{split}
\varphi_i(x_\ell,x_r) & = \theta_i(x_\ell) + \theta_i(x_r) - (\psi_i(x_\ell,x_r)+\overline{\psi}_i(x_\ell,x_r)) \\
&= \sum_{\alpha, \beta} a_{\alpha \beta}(z^\alpha - w^{\alpha})(\overline{z}^\beta - \overline{w}^\beta)\\
& = \sum_{jk} a_{jk} (z_j-w_j)(\overline{z}_k-\overline{w}_k) + \mc{O}(|z|^2|w|+|w|^2|z|),
\end{split}
\]
where $a_{jk} := \partial^2 \theta_i/\partial z_j \overline{\partial} z_k$. This proves the first two equations of \eqref{equation:phase-important}; it now remains to relate the complex Hessian to the curvature.

We freeze the point $y=x_0$ (identified with $w=0$ in the previous coordinates) and compute the complex Hessian of $z \mapsto \varphi_i(z,0)$ with respect to $z$ at $z=0$. Write $D^{\mathrm{Chern}}_i s_i = \beta_i \otimes s_i$ with $\beta_i$ a $(1,0)$-form; the curvature of $D^{\mathrm{Chern}}_i$ is given by $F_{D^{\mathrm{Chern}}_i} = \overline{\partial} \beta_i$. Differentiating $\theta_i(z)=-\log(|s_i|^2(z))$, we first see that
\[
	\partial \theta_i = \frac{\partial |s_i|^2}{|s_i|^2} =- \frac{\partial s_i \cdot \overline{s}_i}{|s_i|^2} = - \frac{\nabla_i s_i \cdot \overline{s}_i}{|s_i|^2} = - \beta_i, 
\]
and so we obtain, using also that $\psi_i$ is holomorphic in the $z$-variable:
\[
	\partial_{\ell} \overline{\partial}_\ell \varphi_i (0,0) = \partial \overline{\partial} \theta_i (0) = - \overline{\partial}\beta_i = - F_{D^{\mathrm{Chern}}_i}.
\]
Finally, one obtains the claim by using $F_{D^{\mathrm{Chern}}_i}(\xi_1,J\xi_2) = -iF_{D^{\mathrm{Chern}}_i}(\xi_1,\xi_2)$ since the curvature is of type $(1,1)$.
 \end{proof}

\chapter{Semiclassical analysis on principal bundles}

The purpose of this chapter is to introduce a semiclassical formalism to study (pseudo)differential operators acting on principal bundles over closed manifolds, and commuting with the right action of the group.

\label{chapter:analytic}

\minitoc

\newpage



We introduce the Borel-Weil calculus in three steps. We first introduce a notion of uniform semiclassical operators. This is then used to introduce the twisted semiclassical quantization on line bundles. Finally, we introduce the calculus on the flag bundle $F := P/T$ of a principal bundle and relate it to a fiberwise Toeplitz quantization.

\section{Uniform semiclassical quantization}

\label{ssection:uniform-scl}


\subsection{Definition}

Semiclassical analysis is the study of pseudodifferential operators, where the $\xi$ variable is rescaled by an asymptotic parameter $h > 0$. In what follows, we shall need a slight variant of it, where operators are also allowed to depend uniformly on an auxiliary parameter $\omega \in \Omega$, where $\Omega$ is an arbitrary set. 

\begin{definition} Given an open subset $U \subset \R^n$, define $S^m_{h,\omega}(T^*U)$ the space of symbols of order $m \in \R$ (depending on both $h > 0$ and $\omega \in \Omega$) as the space of smooth functions on $T^*U$ satisfying the symbolic estimates: for all open subsets $V \Subset U$, for all $\alpha,\beta \in \Z_{\geq 0}^{n}$, there exists $C_{V, \alpha, \beta} > 0$ such that for all $h \in (0,1]$, for all $\omega \in \Omega(h) \subset \Omega$,
\begin{equation}
\label{equation:local-symbol}
|\partial^\beta_x \partial^\alpha_\xi a_{h,\omega}(x,\xi)| \leq C_{V,\alpha,\beta} \langle \xi \rangle^{m-|\alpha|}, \qquad \forall (x,\xi) \in T^*V.
\end{equation}
\end{definition}
Note that $a$ may depend on both $h$ and $\omega$ but we require the estimate \eqref{equation:local-symbol} to be uniform in both parameters.

These symbols can be quantized in order to produce the class of pseudodifferential operators $\Psi^m_{h,\omega}(U)$. For $a_{h,\omega}\in S^m_{h,\omega}(T^*U)$ and $f \in C^\infty_{\comp}(U)$, define the operator $\Op^{\R^n}_{h}(a_{h,\omega}) \in \Psi^m_{h,\omega}(U)$ as
\begin{equation}
\label{equation:quantization-rn}
\Op^{\R^n}_h(a_{h,\omega})f(x) := \dfrac{1}{(2\pi h)^n} \int_{\R^n_\xi} \int_{\R^n_y} e^{\tfrac{i}{h}\xi\cdot(x-y)} a_{h,\omega}(x,\xi)f(y)\, \dd y \dd \xi,
\end{equation}
where $\cdot$ denotes Euclidean inner product.

If $M$ is a smooth closed manifold, the class $S^m_{h,\omega}(T^*M)$ is defined on $M$ by requiring \eqref{equation:local-symbol} to hold in any local patch of coordinates. Let $E_1, E_2 \to M$ be two Hermitian vector bundles on $M$. Similarly, one can define the class $S^m_{h,\omega}(T^*M, \End(E_1, E_2))$ as the space of $\End(E_1, E_2)$-valued symbols. This is achieved by taking matrix-valued symbols in local patches of coordinates and local trivialisations. (Note that $E_1, E_2$ are here seen as bundles over $T^*M$ by taking their pullback via the projection $\pi : T^*M \to M$; we refrain from writing $\pi^*E_1, \pi^*E_2$ in order to keep the notation simple.)


Denote by $\Op_{h} : S^m_{h,\omega}(T^*M) \to \Psi^m_{h,\omega}(M)$ an arbitrary quantization (this is defined by patching together local quantizations defined by \eqref{equation:quantization-rn}, see \cite[Proposition E.15]{Dyatlov-Zworski-19}). Let $\Psi^m_{h,\omega}(M)$ be the space of semiclassical pseudodifferential operators obtained by quantizing $S^m_{h,\omega}(T^*M)$, that is
\[
	\Psi_{h,\omega}^m(M) := \{\Op_h(a) + R ~|~ a \in S^m_{h,\omega}(T^*M),\, R \in h^\infty \Psi^{-\infty}_{h,\omega}(M)\},
\]
where $h^\infty \Psi^{-\infty}_{h,\omega}(M)$ denotes the space of smoothing operators $R$, such that each $C^\infty$ seminorm of the Schwartz kernel of $A$ is $\mc{O}(h^\infty)$ uniformly in $h$ and $\omega \in \Omega(h)$. Similarly, one can define the space $\Psi^{m}_{h,\omega}(M, E_1 \to E_2)$ by quantizing symbols in the class $S^m_{h,\omega}(T^*M, \End(E_1, E_2))$. Following the proofs from standard semiclassical analysis, it is straightforward to verify that $\Psi^{\bullet}_{h,\omega}(M)$ is an algebra (see \cite[Proposition E.17]{Dyatlov-Zworski-19}).

\begin{example}
\label{example:simple}
Consider $\Omega := \Z_{\geq 0}$; we will use the letter $k$ in place of $\omega$ to denote the auxiliary parameter. We will consider only the values $(h, k)$ for $0 \leq hk \leq 1$, i.e. $\Omega(h) = \{k \in \Z_{\geq 0} \mid k \leq h^{-1}\}$. Let $\alpha \in C^\infty(M,T^*M)$ be a real-valued $1$-form. The operator $P_{h,k} := hd + i hk \alpha \wedge \in \Psi^1_{h,k}(M, \C \to T^*M)$ is in the calculus (where $\C$ is a short notation for the trivial line bundle). In the usual semiclassical sense, principal symbol now depends on $k$: for $k$ fixed, $\sigma_{P_{h,k}}(x,\xi) = i \xi$ but we can also let $h$ depend on $k$ and for $h(k) := 1/k$, one obtains $\sigma_{P_{h(k), k}}(x,\xi) = i(\xi+\alpha(x))$.

An example of an operator that is \emph{not} in $\Psi^1_{h,k}(M,\C \to T^*M)$ is provided by $P'_{h,k} := hk d$. Indeed, in local coordinates, this is obtained by quantizing the symbol $a_{h,k}(x,\xi) := i k \xi$, which is not in $S_{h,k}(T^*\R^n, \End(\C,T^*\R^n))$ since it is not uniformly bounded in $k$.
\end{example}


\subsection{Principal symbol. Ellipticity}
\label{sssec:ellipticity}


Let $A \in \Psi_{h,\omega}^m(M)$ (from now on, we mostly drop the indices $(h,\omega)$ in the notation for the pseudodifferential operators). The principal symbol of $A$ is only well-defined for a given family $h \mapsto \omega(h) \in \Omega$ as pointed out in Example \ref{example:simple}.

\begin{definition}
Let $h \mapsto \omega(h) \in \Omega$ be an arbitrary function, and $A \in \Psi_{h,\omega}^m(M)$. The principal symbol $\sigma_{A,\omega(h)}$ of $A$ \emph{relative to the family $h \mapsto \omega(h)$} is defined as the (standard) semiclassical principal symbol $a_{h, \omega(h)}$ of $A_{h,\omega(h)} \in \Psi_h^m(M)$:
\begin{equation}
\label{equation:symbol-omegah}
	\sigma_{A,\omega(h)} := [a_{h,\omega(h)}] \in S_{h}^m(T^*M)/hS_{h}^{m-1}(T^*M).
\end{equation}
\end{definition}
In other words, in local coordinates the principal symbol is defined as the full symbol $a_{h,\omega(h)}$ in \eqref{equation:quantization-rn} modulo lower-order terms. Let us illustrate this with the same operator as in Example \ref{example:simple}.

\begin{example}
Consider $P:= hd + i hk \alpha \wedge \in \Psi^1(M,\C \to T^*M)$ with $hk \leq 1$. Let $h \mapsto k(h) \in \Z_{\geq 0}$ be a function and further assume that $hk(h) \to c \in [0,1]$ (note that, up to extraction, this always happens as $0 \leq hk \leq 1$). Then $\sigma_{P,k(h)}(x,\xi) = i\big(\xi + (c + o(1))\alpha(x)\big)$.
\end{example}

The principal symbol $\sigma_A = 0$ is said to vanish if $\sigma_{A,\omega(h)} = 0$ along any family $h \mapsto \omega(h)$. By definition (see \eqref{equation:symbol-omegah}), this means that $a_{h,\omega(h)} \in hS_{h}^{m-1}(T^*M)$ for any family $h \mapsto \omega(h)$.

\begin{lemma}
\label{lemma:kernel-symbol}
Let $A \in \Psi^m_{h,\omega}(M)$. Then $\sigma_A = 0$ if and only if $A \in h\Psi^{m-1}_{h,\omega}(M)$.
\end{lemma}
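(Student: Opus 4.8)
The statement is a uniform-in-$\omega$ version of the classical fact that a pseudodifferential operator whose principal symbol vanishes is one order lower. The plan is to reduce everything to a local statement in a coordinate patch, where $A$ is a genuine quantization $\Op_h^{\R^n}(a_{h,\omega})$ of a symbol in $S^m_{h,\omega}(T^*U)$, and then run the standard ``gain one power of $h$'' argument while keeping track that all estimates are uniform over $\omega \in \Omega(h)$. The nontrivial direction is ``$\sigma_A = 0 \implies A \in h\Psi^{m-1}_{h,\omega}(M)$''; the reverse implication is immediate since an operator in $h\Psi^{m-1}_{h,\omega}(M)$ is locally the quantization of a symbol in $hS^{m-1}_{h,\omega}$, whose semiclassical principal symbol (relative to any family $h\mapsto \omega(h)$) is zero by definition.

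For the forward direction, first I would fix a finite cover of $M$ by coordinate charts with a subordinate partition of unity and recall (as in \cite[Proposition E.15, E.17]{Dyatlov-Zworski-19}) that modulo $h^\infty\Psi^{-\infty}_{h,\omega}(M)$ one may assume $A = \Op_h(a)$ for a single quantization with full symbol $a = a_{h,\omega} \in S^m_{h,\omega}(T^*M)$; the off-diagonal pieces coming from the partition of unity are smoothing with all seminorms $\mathcal{O}(h^\infty)$ uniformly in $\omega$, hence harmless. The hypothesis $\sigma_A = 0$ means precisely that for every family $h \mapsto \omega(h) \in \Omega$ one has $a_{h,\omega(h)} \in hS^{m-1}_h(T^*M)$. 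The key point is to upgrade this ``for every family'' statement to the uniform statement $a_{h,\omega} \in hS^{m-1}_{h,\omega}(T^*M)$, i.e. that $h^{-1}a_{h,\omega}$ satisfies the symbolic bounds \eqref{equation:local-symbol} of order $m-1$ uniformly in $\omega \in \Omega(h)$.

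This upgrade is where the only real work lies, and it is a soft compactness/contradiction argument. Suppose $h^{-1}a_{h,\omega}$ does \emph{not} lie in $S^{m-1}_{h,\omega}(T^*U)$ uniformly: then there exist $V \Subset U$, multi-indices $\alpha,\beta$, a sequence $h_j \to 0$, points $(x_j,\xi_j) \in T^*V$, and $\omega_j \in \Omega(h_j)$ such that
\[
h_j^{-1}\langle \xi_j\rangle^{-(m-1-|\alpha|)}\,\bigl|\partial_x^\beta \partial_\xi^\alpha a_{h_j,\omega_j}(x_j,\xi_j)\bigr| \to \infty.
\]
Defining the family $h \mapsto \omega(h)$ by $\omega(h_j) := \omega_j$ (and arbitrarily elsewhere) contradicts $a_{h,\omega(h)} \in h S^{m-1}_h(T^*U)$, since the latter forces exactly this quantity to stay bounded along $h_j$. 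Hence $h^{-1}a_{h,\omega} \in S^{m-1}_{h,\omega}(T^*U)$, and patching the charts back together gives $h^{-1}a_{h,\omega} \in S^{m-1}_{h,\omega}(T^*M)$, i.e. $A = \Op_h(h \cdot (h^{-1}a_{h,\omega})) \in h\Psi^{m-1}_{h,\omega}(M)$ up to the uniformly smoothing remainder already absorbed. The bundle-valued case $\Psi^m_{h,\omega}(M,E_1\to E_2)$ is identical, working entrywise in local trivialisations. I expect the main (though still routine) obstacle to be bookkeeping: making sure the partition-of-unity reductions and the choice of quantization do not secretly destroy uniformity in $\omega$, which is handled by noting that all the relevant composition/remainder estimates in \cite[Appendix E]{Dyatlov-Zworski-19} depend only on finitely many symbol seminorms, all of which are uniform in $\omega$ by hypothesis.
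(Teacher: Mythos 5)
Your proposal is correct and follows the same route as the paper's own proof: reduce to a local full symbol via cutoffs, then argue by contradiction that failure of the uniform-in-$\omega$ bound would yield a family $h\mapsto\omega(h)$ violating $\sigma_A=0$. You spell out the extraction step a bit more explicitly, but the structure and the key idea (inverting quantifiers by contradiction, as flagged in Remark~\ref{remark:uniform}) are the same.
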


\begin{proof}
If $A \in h\Psi^{m-1}_{h,\omega}(M) \subset \Psi_{h, \omega}(M)$, it is clear that $\sigma_A = 0$. Conversely, by multiplying the operator by cutoff functions localized in a patch of coordinates $U \subset \mathbb{R}^n$, we can always assume that $A$ takes the form \eqref{equation:quantization-rn} (plus smoothing terms). It then suffices to verify that the full symbol $a_{h,\omega}$ belongs to $h S^{m-1}_{h,\omega}(T^*U)$. If not, then for some open subset $V \subset U$ and multi-indices $\alpha,\beta \in \Z^n_{\geq 0}$, the estimate \eqref{equation:local-symbol} does not hold uniformly in $h> 0$, $\omega \in \Omega(h)$, and $(x,\xi) \in T^*V$. But then, we could find in particular a subsequence $h \mapsto \omega(h)$ such that \eqref{equation:local-symbol} does not hold uniformly along this subsequence for all $(x,\xi) \in T^*V$. However, $\sigma_A = 0$ by assumption and thus $a_{h,\omega(h)} \in h S^{m-1}(T^*U)$ so \eqref{equation:local-symbol} must hold uniformly, which provides a contradiction.
\end{proof}

\begin{remark}
\label{remark:uniform}
When working with uniform semiclassical quantization, certain proofs (such as the previous one) boil down to inverting quantifiers. More precisely, one has a statement of the form: for all families $h \mapsto \omega(h)$, there exists $C_{\omega(h)} > 0$ such that for all $h > 0$, $a_{h,\omega(h)} > C_{\omega(h)}$ (where $a_{h,\omega}$ is, say, a certain function of $h > 0$ and $\omega \in \Omega$), and one wants to deduce that there exists a $C >0$ such that for all $h >0,\omega\in\Omega$, $a_{h,\omega} > C$. The argument by contradiction is straightforward: if it does not hold, then there exists a sequence $h_n > 0, \omega_n > 0$ such that $a_{h_n,\omega_n} < \tfrac{1}{n}$. But this defines (along a subsequence of $h$) a family $h \mapsto \omega(h)$ such that $a_{h,\omega(h)} = o(1)$ and this contradicts the assumption.
\end{remark}

Let $\overline{T^*M} := T^*M \sqcup \partial_\infty T^*M$ be the radial compactification of $T^*M$, see \cite[Section E.1.3]{Dyatlov-Zworski-19} for instance. (The compactification is made in such a way that $a \in S^m_{h}(T^*M)$ if and only if $a \langle{\xi}\rangle^{-m} \in C^\infty(\overline{T^*M})$.) Given $A \in \Psi^m_{h,\omega}(M)$, its (semiclassical) \emph{elliptic set relative to the family $h \mapsto \omega(h) \in \Omega(h)$} is defined as the (open) set of points 
\begin{equation}
\label{equation:elliptic-standard}
\begin{split}
\Ell_{\omega(h)}(A)&  := \{(x,\xi) \in \overline{T^*M} ~|~ \exists C > 0, |\sigma_{A,\omega(h)}(x,\xi)|/\langle\xi\rangle^m > C, \forall h > 0\}.
\end{split}
\end{equation}
For operators mapping sections of $E_1$ and to sections of $E_2$, \eqref{equation:elliptic-standard} should be replaced by: 
\[
\|\sigma_{A,\omega(h)}(x,\xi)\eta\|_{(E_2)_x} > C \langle\xi\rangle^m \|\eta\|_{(E_1)_x}, \qquad \forall \eta \in (E_1)_x,
\]
where $\|\bullet\|_{E_1/E_2}$ denote any norms on fibres of $E_1$ and $E_2$. The \emph{elliptic set} is defined as the intersection of all elliptic sets relative to arbitrary families:
\begin{equation}
\label{equation:ell}
\Ell(A) := \cap_{\{h \mapsto \omega(h)\}} \Ell_{\omega(h)}(A).
\end{equation}
By Remark \ref{remark:uniform}, it should be observed that
\[
\Ell(A) = \{(x,\xi) \in \overline{T^*M} ~|~ \exists C > 0, |a_{h,\omega}(x,\xi)|/\langle\xi\rangle^m > C, \forall h > 0, \forall \omega \in \Omega\},
\]
where $a_{h,\omega}$ denotes the full symbol (in local coordinates). (One has to use the full symbol here as the principal symbol is not well-defined unless we pick a family $h \mapsto \omega(h)$.)

\begin{example}
Once again, we illustrate this notion with $P := hd + ihk \alpha$. Given $h \mapsto k(h)$ such that $hk(h) \leq 1$ and $hk(h) \to c \in [0,1]$, the elliptic set relative to this family and the global elliptic set are given by
\[
\Ell_{\omega(h)}(A) = \overline{T^*M} \setminus \{\xi=-(c + o(1))\alpha\}, \qquad \Ell(A) =\overline{T^*M} \setminus \cup_{c \in [0,1]} \{\xi=-c\alpha\}.
\]
\end{example}

The \emph{wavefront set $\WF_{\omega(h)}(A) \subset \overline{T^*M}$ relative to $h \mapsto \omega(h)$} is the closed subset defined as the complement of the set of points $(x_0,\xi_0) \in \overline{T^*M}$ such that there exists a cutoff function $\chi \in S^0(T^*M)$ with support near $(x_0,\xi_0)$, such that $\chi(x_0,\xi_0)=1$ and 
\begin{equation}
\label{equation:wf-standard}
\Op_h(\chi)A_{h,\omega(h)},\,\,  A_{h,\omega(h)} \Op_h(\chi) \in h^\infty \Psi_{h}^{-\infty}(M).
\end{equation}
Finally, the global wavefront set is defined as the union of all wavefront sets relative to any family
\begin{equation}
\label{equation:wf}
\WF(A) := \cup_{\{h \mapsto \omega(h)\}} \WF_{\omega(h)}(A).
\end{equation}

Elliptic operators can be inverted in this calculus modulo negligible corrections. Along a family $h \mapsto \omega(h)$, this boils down to the usual parametrix for semiclassical parameters, but the parametrix construction can be done uniformly using the global elliptic and wavefront sets introduced in \eqref{equation:ell} and \eqref{equation:wf}.

\begin{proposition}
\label{proposition:ellipticity0}
Let $A \in \Psi^m_{h,\omega}(M)$, $B\in \Psi^{m'}_{h,\omega}(M)$. Assume that $\WF(B) \subset \Ell(A)$. Then, there exists $Q\in \Psi^{m'-m}_{h,\omega}(M)$ such that
\[
B = QA + \mc{O}_{\Psi^{-\infty}_{h,\omega}(M)}(h^\infty) = AQ  + \mc{O}_{\Psi^{-\infty}_{h,\omega}(M)}(h^\infty).
\]
\end{proposition}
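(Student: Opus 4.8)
The plan is to adapt the standard elliptic parametrix construction of semiclassical calculus (see \cite[Proposition E.32]{Dyatlov-Zworski-19}) to the uniform setting, being careful that all estimates are uniform in $\omega \in \Omega(h)$. First I would reduce to the case $B = \Id$ (or rather, a microlocal cutoff near $\Ell(A)$): since $\WF(B) \subset \Ell(A)$, by an argument as in Remark \ref{remark:uniform} there is a uniform constant $C>0$ and a symbol $e \in S^0_{h,\omega}(T^*M)$ with $e \equiv 1$ on a neighbourhood of $\WF(B)$ and $\supp e \subset \{|a_{h,\omega}|/\langle \xi\rangle^m > C/2\}$ (the latter set being, by the characterization of $\Ell(A)$ just below \eqref{equation:ell}, exactly where the full symbol of $A$ is uniformly elliptic); then $B = \Op_h(e) B + \mc{O}_{\Psi^{-\infty}_{h,\omega}}(h^\infty)$ by the wavefront-set version of \eqref{equation:wf-standard}. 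On $\supp e$ the full symbol $a_{h,\omega}$ is invertible with $|a_{h,\omega}^{-1}| \lesssim \langle\xi\rangle^{-m}$ uniformly; one checks the symbolic estimates \eqref{equation:local-symbol} for $a_{h,\omega}^{-1}$ by differentiating the identity $a_{h,\omega} a_{h,\omega}^{-1} = 1$ and inducting on the order of the derivative, all bounds being uniform in $h$ and $\omega$. (For the bundle-valued case one instead inverts the matrix symbol on $\supp e$, using the fibrewise norm lower bound in \eqref{equation:elliptic-standard}; Cramer's rule gives smoothness and the uniform bounds.)

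Next I would set $Q_0 := \Op_h(a_{h,\omega}^{-1} e)$ and iterate. Using that $\Psi^\bullet_{h,\omega}(M)$ is an algebra with a uniform composition formula (the proof of \cite[Proposition E.17]{Dyatlov-Zworski-19} goes through verbatim with all constants uniform in $\omega$), we get $Q_0 A = \Op_h(e) + h R_1$ with $R_1 \in \Psi^{-1}_{h,\omega}(M)$ and $\WF(R_1) \subset \supp e$. Setting $Q_N := Q_0 \sum_{j=0}^{N}(-h R_1)^j$ gives $Q_N A = \Op_h(e) + \mc{O}_{\Psi^{-N-1}_{h,\omega}}(h^{N+1})$; a Borel summation in $h$ — again uniform in $\omega$ — produces $Q' \in \Psi^{-m}_{h,\omega}(M)$ with $Q' A = \Op_h(e) + \mc{O}_{\Psi^{-\infty}_{h,\omega}}(h^\infty)$. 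Then $Q := Q' B \in \Psi^{m'-m}_{h,\omega}(M)$ satisfies $Q A = Q' B A$; but also $Q' A B = \Op_h(e) B + \mc{O}(h^\infty) = B + \mc{O}(h^\infty)$, and since $A$ and $B$ both have wavefront set inside $\Ell(A)$ and everything in sight is properly supported, one commutes $A$ past $B$ modulo $\mc{O}_{\Psi^{-\infty}_{h,\omega}}(h^\infty)$ — more cleanly, one uses the right parametrix symmetrically: the same construction gives $Q''$ with $A Q'' = \Op_h(e) + \mc{O}(h^\infty)$, and the usual algebra argument $Q' = Q'(A Q'') + \mc{O}(h^\infty) = (Q'A)Q'' + \mc{O}(h^\infty) = Q'' + \mc{O}(h^\infty)$ shows left and right parametrices agree, so $Q := Q' B$ works on both sides.

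The main obstacle, I expect, is purely bookkeeping rather than conceptual: making sure that \emph{every} constant appearing — in the symbolic estimates for $a_{h,\omega}^{-1}$, in the composition formula remainder, in the Borel summation, and in the definition of $\Op_h(e)$ acting as a microlocal cutoff to $\WF(B)$ — is genuinely uniform in $\omega \in \Omega(h)$ and not merely uniform for each fixed family $h \mapsto \omega(h)$. This is exactly the subtlety flagged in Remark \ref{remark:uniform}: one cannot argue family-by-family and then ``glue'', because the parametrix $Q$ itself must be a single operator in $\Psi^{m'-m}_{h,\omega}(M)$. The remedy is to work throughout with the \emph{full} symbol and the characterizations of $\Ell(A)$ and $\WF(A)$ via all $(h,\omega)$ simultaneously (as written just after \eqref{equation:ell} and \eqref{equation:wf}), so that the uniform lower bound on $|a_{h,\omega}|$ on $\supp e$ holds over all of $\Omega(h)$ from the start, and then every subsequent estimate inherits uniformity automatically. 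Apart from that, the argument is the standard one and I would not belabour the routine symbol calculus.
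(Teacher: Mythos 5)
Your overall approach — cut off near $\WF(B)$, invert the full symbol on the support of the cutoff, iterate via a symbolic Neumann series with Borel summation, and attend to uniformity in $\omega$ by working with $a_{h,\omega}$ and the ``intersection''-type characterization of $\Ell(A)$ rather than family-by-family — is exactly the standard construction the paper has in mind when it writes that the proof ``follows the standard one, see \cite[Theorem 4.29]{Zworski-12}.'' The discussion of why one cannot argue along each family $h \mapsto \omega(h)$ separately and then glue is the right observation, and the construction of the left parametrix $Q'$ with $Q'A = \Op_h(e) + \mc{O}(h^\infty)$ is sound.

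There is, however, a gap in the final symmetrization step. You assert $Q' = Q'(AQ'') + \mc{O}(h^\infty) = (Q'A)Q'' + \mc{O}(h^\infty) = Q'' + \mc{O}(h^\infty)$, but the first equality requires $Q'\Op_h(e) = Q' + \mc{O}(h^\infty)$, i.e.\ $\WF(Q') \subset \{e = 1\}$, while the last requires $\Op_h(e)Q'' = Q'' + \mc{O}(h^\infty)$, i.e.\ $\WF(Q'') \subset \{e=1\}$; your construction only yields $\WF(Q'), \WF(Q'') \subset \supp e$, which is strictly larger. Moreover, even granting $Q' \equiv Q''$, your choice $Q := Q'B$ gives $AQ = AQ''B + \mc{O}(h^\infty) = \Op_h(e)B + \mc{O}(h^\infty) = B + \mc{O}(h^\infty)$, but $QA = Q'BA$ cannot be simplified to $B + \mc{O}(h^\infty)$ since one cannot commute $B$ past $A$ up to $\mc{O}(h^\infty)$. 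The clean resolution is to set $Q_L := BQ'$, for which $Q_L A = B\Op_h(e) + \mc{O}(h^\infty) = B + \mc{O}(h^\infty)$, and separately $Q_R := Q''B$, for which $AQ_R = B + \mc{O}(h^\infty)$, and to accept that $Q_L$ and $Q_R$ may differ. Note that this is precisely how the paper phrases the analogous result in the twisted calculus (Proposition \ref{proposition:ellipticity}, which introduces two operators $\mathbf{Q}$ and $\mathbf{Q}'$), so the single-$Q$ statement of Proposition \ref{proposition:ellipticity0} is best read in the same spirit; your proof establishes the two-sided version once the symmetrization step is removed.
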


We omit the proof as it follows the standard one, see \cite[Theorem 4.29]{Zworski-12} for instance.

\subsection{Technical result}
We end this section with the following technical result:

\begin{lemma}
\label{lemma:appendix}
Let $U \subset \R^n$ be an open subset. Let $f_{h,\omega} \in C^\infty(U)$ be a uniform family of functions on $U$, that is such that for all open subsets $V \Subset U$, for all $\beta \in \Z_{\geq 0}^{n}$, there exists $C_{V, \alpha} > 0$ such that for all $h>0,\omega \in \Omega$,
\[
|\partial^\beta_x f_{h,\omega}(x)| \leq C_{V,\alpha}.
\]
Then
\[
A \in \Psi^m_{h,\omega}(U) \Longleftrightarrow A' := e^{-i f_{h,\omega}/h}A(e^{+i f_{h,\omega}/h}\bullet) \in \Psi^m_h(U).
\]
Moreover,
\[
\sigma_{A'_{h,\omega(h)}}(x,\xi) := \sigma_{A_{h,\omega(h)}}(x,\xi+  df_{h,\omega(h)}(x)), \qquad \forall (x,\xi) \in T^*U.
\]
and
\[
	(x, \xi) \in \WF_{h, \omega(h)}(A') \iff (x, \xi + df_{h, \omega(h)}(x)) \in \WF_{h, \omega(h)}(A). 
\]
\end{lemma}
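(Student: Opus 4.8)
The statement is a conjugation-invariance result for the uniform semiclassical calculus, and the plan is to reduce it to the analogous fact in standard semiclassical analysis (which is classical, see e.g. the oscillatory-testing characterization of $\Psi_h^m$), applied \emph{uniformly} in $\omega$. The key point is that $e^{-if_{h,\omega}/h}$ is a multiplication operator, and conjugating a pseudodifferential operator by a unimodular multiplier whose phase has uniformly bounded derivatives is a bounded operation on symbol classes. First I would observe that the family $f_{h,\omega}$ being uniform means precisely that $e^{if_{h,\omega}/h}$ is, locally, an oscillatory function of the shape $e^{i\phi/h}$ with $\phi$ having all derivatives bounded uniformly in $(h,\omega)$; this is exactly the kind of WKB-type function against which the symbol class $S^m_{h,\omega}$ is stable under composition.

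\textbf{Key steps.} (1) Localize: by multiplying by cutoffs supported in a coordinate patch $U \subset \R^n$ and using that both sides of the claimed equivalence are local statements, reduce to $A = \Op_h^{\R^n}(a_{h,\omega})$ with $a_{h,\omega} \in S^m_{h,\omega}(T^*U)$ plus a negligible remainder; note that the remainder term is handled separately since $e^{-if_{h,\omega}/h} R (e^{if_{h,\omega}/h}\bullet)$ is again $\mc{O}_{\Psi^{-\infty}_{h,\omega}}(h^\infty)$, the bound on seminorms being preserved because $|e^{\pm if_{h,\omega}/h}| = 1$ and differentiating the conjugation produces factors of $h^{-1}\partial^\beta f_{h,\omega}$, which are controlled by the negative powers of $h$ in the $h^\infty$ decay. (2) Compute the conjugated kernel: writing out \eqref{equation:quantization-rn}, one gets
\[
A' u(x) = \frac{1}{(2\pi h)^n}\int\int e^{\frac{i}{h}(\xi\cdot(x-y) - f_{h,\omega}(x) + f_{h,\omega}(y))} a_{h,\omega}(x,\xi) u(y)\, dy\, d\xi.
\]
Then perform the standard change of variables, Taylor-expanding $f_{h,\omega}(y) - f_{h,\omega}(x) = -df_{h,\omega}(x)\cdot(x-y) + \mc{O}(|x-y|^2)$ and absorbing the quadratic remainder by a non-stationary phase / Kuranishi-trick argument (integration by parts in $\xi$, which is legitimate because all $f_{h,\omega}$-derivatives are uniformly bounded), to rewrite $A'$ as $\Op_h^{\R^n}(b_{h,\omega})$ where $b_{h,\omega}(x,\xi) = a_{h,\omega}(x,\xi + df_{h,\omega}(x)) + \mc{O}_{S^{m-1}_{h,\omega}}(h)$. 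The uniformity of the error in $\omega$ is automatic because every estimate used depends only on finitely many seminorms of $a_{h,\omega}$ and of $f_{h,\omega}$, all of which are uniform by hypothesis. (3) Since $\xi \mapsto \xi + df_{h,\omega}(x)$ is, for each $(h,\omega)$, an affine shift with uniformly bounded translation, $b_{h,\omega} \in S^m_{h,\omega}(T^*U)$; hence $A' \in \Psi^m_h(U)$ (more precisely, $\Psi^m_{h,\omega}(U)$). The reverse implication follows by applying the same argument to $A'$ with phase $-f_{h,\omega}$. (4) The principal-symbol formula is then read off directly from step (2): along a fixed family $h \mapsto \omega(h)$, the leading term of $b_{h,\omega(h)}$ is $a_{h,\omega(h)}(x,\xi + df_{h,\omega(h)}(x))$ modulo $hS^{m-1}_h$, which is the assertion. (5) The wavefront-set statement follows formally: $(x_0,\xi_0) \notin \WF_{h,\omega(h)}(A')$ iff there is $\chi$ with $\chi(x_0,\xi_0)=1$ and $\Op_h(\chi) A'_{h,\omega(h)}, A'_{h,\omega(h)}\Op_h(\chi) \in h^\infty\Psi_h^{-\infty}$; conjugating back by $e^{if_{h,\omega(h)}/h}$ and using the already-proven symbol-shift (applied to the cutoff $\chi$, whose conjugate has leading symbol $\chi(x,\xi+df_{h,\omega(h)}(x))$, hence is elliptic near $(x_0,\xi_0+df_{h,\omega(h)}(x_0))$), one sees this is equivalent to $(x_0,\xi_0+df_{h,\omega(h)}(x_0)) \notin \WF_{h,\omega(h)}(A)$.

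\textbf{Main obstacle.} The only genuinely delicate point is the non-stationary-phase step (2): one must show that the quadratic-and-higher remainder in the Taylor expansion of the phase $f_{h,\omega}(y)-f_{h,\omega}(x)$ can be absorbed into a symbol that is $\mc{O}(h)$ in $S^{m-1}_{h,\omega}$ \emph{with all constants uniform in $\omega$}. This is where one needs to be careful to use only the uniform bounds on $\partial^\beta f_{h,\omega}$ and never any pointwise or $\omega$-dependent refinement; the cleanest route is probably to invoke the standard change-of-variables / composition theorem for semiclassical $\Psi$DOs (e.g. the exponential-conjugation lemma as in \cite[Section 4.5]{Zworski-12}) as a black box applied to the parametrized family, and then simply remark, as in Remark \ref{remark:uniform}, that every quantifier in that proof commutes with "uniformly in $\omega \in \Omega$" because it only ever invokes finitely many of the uniform seminorm bounds. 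Everything else is bookkeeping.
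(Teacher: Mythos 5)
Your overall plan is sound and the localisation, black-box-invocation, symbol-shift, and wavefront-set steps are all correct. However, the mechanism you describe in step (2) differs from the paper's in a way that introduces a genuine gap. You propose to Taylor-expand the phase difference pointwise,
\[
f_{h,\omega}(y) - f_{h,\omega}(x) = -df_{h,\omega}(x)\cdot(x-y) + \mc{O}(|x-y|^2),
\]
and then absorb the quadratic remainder $r(x,y)$ by "integration by parts in $\xi$." But $r$ lives in the exponent: after the shift $\xi \mapsto \xi - df_{h,\omega}(x)$ you are left with an amplitude $e^{ir(x,y)/h}a_{h,\omega}(x,\xi+df_{h,\omega}(x))$, and the oscillatory factor $e^{ir/h}$ is \emph{not} a uniform symbol in $(x,y)$: its $y$-derivatives are $\mc{O}(h^{-1}|x-y|)$. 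The standard amplitude-to-symbol reduction (Borel-summation / stationary-phase expansion) applied to this amplitude gains only roughly half a power of $h$ per step rather than a full power, and the uniform control of the tail requires extra care that your outline does not supply. Integration by parts in $\xi$ does not act on the $y$-dependent factor $e^{ir/h}$ at all and so does not by itself absorb $r$. (You also mislabel this IBP argument as the "Kuranishi trick"; the Kuranishi trick is precisely what removes the remainder, not what absorbs it afterwards.)

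The paper avoids the issue entirely by writing the phase difference \emph{exactly}, via the fundamental theorem of calculus, as
\[
f_{h,\omega}(y) - f_{h,\omega}(x) = -\theta_{h,\omega}(x,y)\cdot(x-y), \qquad \theta_{h,\omega}(x,y) = \int_0^1 (df_{h,\omega})_{y+t(x-y)}\,\dd t,
\]
so that the shift of the integration variable $\xi \mapsto \xi - \theta_{h,\omega}(x,y)$ produces the phase $\tfrac{i}{h}\xi\cdot(x-y)$ with amplitude $a_{h,\omega}(x,\xi+\theta_{h,\omega}(x,y))$ and \emph{no} extra oscillatory factor. Since $\theta_{h,\omega}$ and all its derivatives are uniformly bounded (by the hypothesis on $f_{h,\omega}$), this amplitude lies in $S^m_{h,\omega}(U\times U\times\R^n)$ and the standard reduction from amplitude to left symbol applies with no subtlety, yielding the symbol formula directly (using $\theta_{h,\omega}(x,x)=df_{h,\omega}(x)$). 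If you replace your Taylor expansion with this exact integral form, the rest of your proof goes through, and your arguments for the localisation, the reverse implication, and the wavefront-set equivalence (via conjugating the microlocalising cutoff $\chi$) are correct as written.
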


\begin{proof}
Let $u \in C^\infty_{\comp}(U)$, and $A := \Op_h(a)$ for some $a \in S^m_{h, \omega}(U)$. One has:
\begin{equation}
\label{equation:conjexp}
\begin{split}
A'u(x) & = \dfrac{1}{(2\pi h)^n} \int e^{\tfrac{i}{h}(\xi \cdot (x-y)  + f_{h,\omega}(y)-f_{h,\omega}(x))} a(x,\xi) u(y)\, \dd y \dd \xi \\
& = \dfrac{1}{(2\pi h)^n} \int e^{\tfrac{i}{h}(\xi-\theta_{h,\omega}(x,y))(x-y)} a(x,\xi) u(y)\, \dd y \dd \xi \\
& = \dfrac{1}{(2\pi h)^n} \int e^{\tfrac{i}{h}\xi \cdot (x-y)} a(x,\xi + \theta_{h,\omega}(x,y)) u(y)\, \dd y \dd \xi,
\end{split}
\end{equation}
where
\[
\theta_{h,\omega}(x,y) = \int_0^1 (df_{h,\omega})_{y+t(x-y)}(\bullet) \dd t, \qquad \theta(x,x) = (df_{h,\omega})_{x},
\]
and the integrals should be interpreted as oscillatory integrals. Observe now that $(x,y,\xi) \mapsto a(x,\xi + \theta_{h,\omega}(x,y))$ is a symbol in $S^m_{h,\omega}(U \times U \times \R^n)$ (i.e. a symbol which depends on an additional space variable, defined similarly to \eqref{equation:local-symbol}) which is uniform in $(h,\omega)$. A standard argument based on the stationary phase lemma allows to get rid of the $y$-dependence in the symbol on the last line of \eqref{equation:conjexp} up to smoothing terms, see \cite[Theorem 4.4.7]{Lefeuvre-book} for instance. This proves both claims.
\end{proof}

\section{Quantization on line bundles}

\label{ssection:quantization-line-bundles}

Let $M$ be a smooth closed manifold. Let $d \geq 1$ be an integer, $\Omega := \Z^d$, and $L_1, \dotsc, L_d$ be complex Hermitian line bundles over $M$. Given $\mathbf{k} := (k_1, \dotsc,k_d) \in \Omega$, we write $\mathbf{L}^{\otimes \mathbf{k}} := L_1^{\otimes k_1} \otimes \dotsm \otimes L_d^{\otimes k_d}$ with the convention that $L_j^{\otimes k_j} = (L_j^*)^{\otimes |k_j|}$ if $k_j \leq 0$, and set $|\mathbf{k}| := |k_1| + \dotsb + |k_d|$. Finally, let $E_1, E_2 \to M$ be two auxiliary Hermitian bundles over $M$.

\subsection{Definition. First properties} 

We introduce a family of pseudodifferential operators acting on high tensor products of sections of $\mathbf{L}^{\otimes \mathbf{k}}$:

\begin{definition}[$h$-semiclassical pseudodifferential operators on line bundles]
\label{definition:pdo-calcul-line}
The class of $h$-semiclassical pseudodifferential operators $\mathbf{A} \in \Psi^m_h(M, \mathbf{L}, E_1 \to E_2)$ is the set of families of operators defined for $h > 0$, $\mathbf{k} \in \Omega(h) := \{\mathbf{l} \in \mathbb{Z}^d \mid h |\mathbf{l}| \leq 1\}$,
\[
	\mathbf{A}_{h,\mathbf{k}} : C^\infty(M,\mathbf{L}^{\otimes \mathbf{k}} \otimes E_1) \to C^\infty(M, \mathbf{L}^{\otimes \mathbf{k}} \otimes E_2), \quad h > 0, \mathbf{k} \in \Omega(h)
\]
such that the following holds: for all contractible open subsets $U \subset M$, for all $s_1 \in C^\infty(U, L_1), \dotsc, s_d \in C^\infty(U,L_d)$ such that $|s_1|= \dotso =|s_d|=1$ (fiberwise) and $\chi,\psi \in C^\infty_{\comp}(U)$, there exists a family of pseudodifferential operators $A_{h,\mathbf{k}} \in \Psi^m_{h, \mathbf{k}} (M, E_1 \to E_2)$, uniform in $\mathbf{k} \in \Omega(h)$, such that for all $f \in C^\infty(M, E_1)$,
\begin{equation}
\label{equation:trivial}
\psi \mathbf{A}_{h,\mathbf{k}} (\chi f \otimes \mathbf{s}^{\mathbf{k}}) = A_{h,\mathbf{k}}(f) \otimes \mathbf{s}^{\mathbf{k}},
\end{equation}
where $\mathbf{s}^{\mathbf{k}} := s_1^{\otimes k_1} \otimes \dotsb \otimes s_d^{\otimes k_d}$.
\end{definition}

We used the notation $\mathbf{A}_{h,\mathbf{k}}$ in the previous definition to insist on the $(h,\mathbf{k})$ dependence of the family of operators. However, in the following, we will mostly drop the index and write $\mathbf{A}$ instead. The parameter $\mathbf{k}$ plays the role of the auxiliary parameter $\omega \in \Omega$ in \S\ref{ssection:uniform-scl}.

Examples of operators fitting in this calculus are provided below. When the auxiliary bundles $E$ and $F$ are both trivial line bundles, we shall simply write $\Psi^{m}_{h, \mathbf{k}}(M,\mathbf{L})$. To avoid cumbersome notation, we mostly discuss this case in this paragraph. As in the standard semiclassical calculus, all the results described here can be easily extended to twisted pseudodifferential operators acting on auxiliary vector bundles (principal symbols, $L^2$-boundedness, ellipticity, etc.). When $d=1$ (single line bundle $L \to M$), we write $\Psi^m_{h,k}(M,L)$; this calculus was introduced by Charles \cite{Charles-00} in his thesis, see also \cite{Guillarmou-Kuster-21} where it is further studied.

Finally, we emphasize that $h$ and $\mathbf{k}$ can be taken as two independent parameters in this calculus. This will be needed in Chapter \ref{chapter:flow} as $h$ will be (the inverse of) a spectral parameter (and $h \to 0$) while $\mathbf{k}$ will encode Fourier modes of functions on a principal bundle and these will be independent. However, in certain problems, it is natural to restrict to $h := |\mathbf{k}|^{-1}$.

\begin{remark}
Denote by $\mathrm{Vect}^1_{\C}(M)$ the group of line bundles over $M$ under the tensor product operation (with the inverse being given by taking the dual). It is well-known that the first Chern class $c_1 : \mathrm{Vect}^1_{\C}(M) \to H^2(M,\Z)$ is an isomorphism. Hence, if the set $\{c_1(L_1), \dotsc, c_1(L_d)\}$ generates additively $H^2(M,\Z)$, any line bundle $L$ over $M$ can be written as $L = \mathbf{L}^{\otimes k}$ for some $k \in \Z^d$ (up to isomorphism). The calculus defined above can thus be used to deal with \emph{all} line bundles over $M$ \emph{simultaneously}.
\end{remark}


We now proceed to study the properties of operators in $\Psi^m_{h, \mathbf{k}}(M,\mathbf{L})$. Given $\mathbf{A} \in \Psi^m_{h, \mathbf{k}}(M,\mathbf{L})$, its formal adjoint $\mathbf{A}^* \in \Psi^m_{h, \mathbf{k}}(M,\mathbf{L})$ is (uniquely) defined by the property that for all $h>0, \mathbf{k} \in \Omega$ and $f_1,f_2 \in C^\infty(M,\mathbf{L}^{\mathbf{k}})$,
\[
\langle \mathbf{A}f_1, f_2 \rangle_{L^2(M,\mathbf{L}^{\mathbf{k}})} = \langle f_1,\mathbf{A}^*f_2\rangle_{L^2(M,\mathbf{L}^{\mathbf{k}})}.
\]
In order to define a principal symbol on $\Psi^m_{h, \mathbf{k}}(M,\mathbf{L})$, we will need to equip each line bundle with a smooth unitary connection. Recall in the following proposition that $h|\mathbf{k}| \leq 1$.


\begin{proposition}
\label{proposition:proprietes}
Let $(\nabla_j)_{j = 1}^d$ be smooth unitary connections on $(L_j)_{j = 1}^d$. The following properties hold:
\begin{enumerate}[label=\emph{(\roman*)}]
\item \emph{\textbf{Principal symbol.}} Given $U \subset M$ a contractible open set, and $s_j \in C^\infty(U,L_j)$ (for $j \in \{1, \dotsc,d\}$), trivializing sections such that $|s_j|=1$ (fiberwise), write $\nabla_j s_j = i \beta_j \otimes s_j$ for some real-valued connection $1$-form $\beta_j \in C^\infty(U,T^*U)$ (that is, $\nabla_j = d + i\beta_j$ in these trivialisations). Let $h \mapsto \mathbf{k}(h) \in \Omega(h)$ be an arbitrary family. The principal symbol of $\mathbf{A} \in \Psi^m_{h, \mathbf{k}}(M,\mathbf{L})$ relative to the family $h \mapsto \mathbf{k}(h)$ is then defined as
\begin{equation}
\label{equation:symbole-principal}
\sigma_{\mathbf{A},\mathbf{k}(h)}(x,\xi) := \sigma_{A_{h,\mathbf{k}(h)}}(x,\xi-h\mathbf{k}(h)\cdot \mathbf{\beta}(x)) \in S^m_h(T^*M)/hS^{m - 1}_{h}(T^*M),
\end{equation}
for $x \in \{\chi=\psi=1\}$, where $\mathbf{\beta} := (\beta_1, \dotsc,\beta_d)$, $\mathbf{k}(h)\cdot \mathbf{\beta} := \sum_j k_j \beta_j$ and $A_{h,\mathbf{k}}$ is given by \eqref{equation:trivial}. Relative to any such $\mathbf{k}(h)$ the principal symbol is globally well-defined and depends only on $\nabla := (\nabla_1, \dotsc,\nabla_d)$.


\item \emph{\textbf{Kernel of the symbol map.}} If $\mathbf{A} \in \Psi^m_{h, \mathbf{k}}(M,\mathbf{L})$ and $\sigma_{\mathbf{A}} = 0$ (that is, relative to any family $h \mapsto \mathbf{k}(h)$), then $\mathbf{A} \in h\Psi^{m-1}_{h, \mathbf{k}}(M,\mathbf{L})$.

\item \emph{\textbf{Adjoint.}} If $\mathbf{A} \in \Psi^m_{h, \mathbf{k}}(M,\mathbf{L})$, then $\mathbf{A}^* \in \Psi^m_{h, \mathbf{k}}(M,\mathbf{L})$ and $\sigma_{\mathbf{A}^*} = \overline{\sigma_{\mathbf{A}^*}}$ (that is equality holds along any family $h \mapsto \mathbf{k}(h)$).

\item \emph{\textbf{Algebra property.}} If $\mathbf{A} \in \Psi^m_{h, \mathbf{k}}(M,\mathbf{L})$, $\mathbf{B} \in \Psi^{m'}_{h, \mathbf{k}}(M,\mathbf{L})$, then
\begin{equation}
\label{equation:mult-symbols}
\mathbf{A}\mathbf{B} \in \Psi^{m+m'}_{h,\mathbf{k}}(M,\mathbf{L}), \qquad \sigma_{\mathbf{A}\mathbf{B}} = \sigma_{\mathbf{A}}\sigma_{\mathbf{B}}.
\end{equation}

\item \emph{\textbf{$L^2$-boundedness, Calder\'{o}n-Vaillancourt Theorem.}} An operator $\mathbf{A} \in \Psi^0_{h, \mathbf{k}}(M,\mathbf{L})$ is bounded on the space $L^2(M,\mathbf{L}^{\otimes \mathbf{k}})$ as $h \to 0$ with bounds
\begin{equation}
\label{equation:cv}
\begin{split}
\|\mathbf{A}\|_{L^2(M,\mathbf{L}^{\otimes \mathbf{k}(h)}) \to L^2(M,\mathbf{L}^{\otimes \mathbf{k}(h)})} \leq \|\sigma_{\mathbf{A},\mathbf{k}(h)}\|_{L^\infty(T^*M)} + \mc{O}(h), 
\end{split}
\end{equation}
and
\begin{equation}
\label{equation:cv2}
\|\mathbf{A}\|_{L^2(M,\mathbf{L}^{\otimes \mathbf{k}}) \to L^2(M,\mathbf{L}^{\otimes \mathbf{k}})} \leq \sup_{h \mapsto \mathbf{k}(h)} \|\sigma_{\mathbf{A},\mathbf{k}(h)}\|_{L^\infty(T^*M)} + \mc{O}(h).
\end{equation}
The $\mc{O}(h)$ in \eqref{equation:cv2} only depends on $h > 0$ (it is independent of $\mathbf{k}$).
\end{enumerate}
\end{proposition}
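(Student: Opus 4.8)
\textbf{Proof strategy for Proposition \ref{proposition:proprietes}.}

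The plan is to reduce every item to its counterpart in the uniform semiclassical calculus $\Psi^{\bullet}_{h,\omega}(M)$ developed in \S\ref{ssection:uniform-scl} via the local trivialization identity \eqref{equation:trivial}, and to patch local constructions together with a partition of unity. The key preliminary observation is that the connection $1$-forms $\beta_j$ are fixed smooth functions on the contractible patch $U$, so $f_{h,\mathbf{k}} := -h\mathbf{k}\cdot\boldsymbol{\beta} = -\sum_j hk_j\beta_j$ is a uniform family of functions in the sense of Lemma \ref{lemma:appendix} (its $x$-derivatives are bounded uniformly in $\mathbf{k}\in\Omega(h)$ precisely because $h|\mathbf{k}|\le 1$). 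Hence the conjugation-by-phase Lemma \ref{lemma:appendix} applies with $\omega = \mathbf{k}$, and it is the technical engine behind the symbol formula.

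First, for item (i), I would argue that the definition \eqref{equation:symbole-principal} is independent of the choice of trivializing sections $s_j$. Changing $s_j$ to $s_j' = e^{i\theta_j}s_j$ with $\theta_j\in C^\infty(U,\R)$ changes $\beta_j$ to $\beta_j - d\theta_j$ and conjugates the local operator $A_{h,\mathbf{k}}$ by $e^{i\mathbf{k}\cdot\boldsymbol{\theta}}$; here $\mathbf{k}\cdot\boldsymbol{\theta}$ need not be small, but $f_{h,\mathbf{k}} = \mathbf{k}\cdot\boldsymbol{\theta}$ is still a uniform family after multiplying by $h$ (and $h$ cancels against the $1/h$ in $e^{if/h}$ only formally — one writes $e^{i\mathbf{k}\cdot\boldsymbol\theta} = e^{i(h\mathbf{k}\cdot\boldsymbol\theta)/h}$ and applies Lemma \ref{lemma:appendix}). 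The symbol transformation rule from Lemma \ref{lemma:appendix}, $\sigma_{A'}(x,\xi) = \sigma_A(x,\xi + df_{h,\mathbf{k}(h)}(x))$, then shows that the shift $\xi\mapsto \xi - h\mathbf{k}(h)\cdot\boldsymbol\beta(x)$ exactly compensates the change of trivialization, so $\sigma_{\mathbf{A},\mathbf{k}(h)}$ is well-defined on overlaps and glues to a global object on $T^*M$; its dependence only on $\nabla = (\nabla_j)$ (not on the $s_j$) is exactly this invariance. Items (ii), (iii), (iv): item (ii) is immediate from Lemma \ref{lemma:kernel-symbol} applied to the local operators $A_{h,\mathbf{k}}$ — if $\sigma_{\mathbf{A}} = 0$ relative to every family, then by the shift formula $\sigma_{A_{h,\mathbf{k}(h)}} = 0$ relative to every family, so each local $A_{h,\mathbf{k}} \in h\Psi^{m-1}_{h,\mathbf{k}}(M)$, and patching gives $\mathbf{A}\in h\Psi^{m-1}_{h,\mathbf{k}}(M,\mathbf{L})$. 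For (iii) and (iv) one checks that adjoints and compositions, computed in a local trivialization, again satisfy \eqref{equation:trivial} with the corresponding local operator being $A_{h,\mathbf{k}}^*$ resp. $A_{h,\mathbf{k}}B_{h,\mathbf{k}}$ (here one must verify that the trivializing section cancels correctly: $\langle f\otimes\mathbf{s}^{\mathbf{k}}, g\otimes\mathbf{s}^{\mathbf{k}}\rangle$ localizes the $L^2$ pairing on $\mathbf{L}^{\otimes\mathbf{k}}$ to the pairing on $E_i$ since $|\mathbf{s}^{\mathbf{k}}| = 1$); then the symbol identities follow from the algebra property of $\Psi^{\bullet}_{h,\mathbf{k}}(M)$ (Proposition \ref{proposition:ellipticity0} era, or \cite[Proposition E.17]{Dyatlov-Zworski-19}) together with the fact that the shift $\xi\mapsto\xi - h\mathbf{k}(h)\cdot\boldsymbol\beta$ is the same for $\mathbf{A}$, $\mathbf{A}^*$, $\mathbf{B}$, $\mathbf{A}\mathbf{B}$, so it commutes with pointwise multiplication and complex conjugation of symbols.

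Finally, for item (v), the $L^2$-boundedness, I would fix a finite cover of $M$ by contractible charts $U_\alpha$ with trivializing sections, a subordinate partition of unity $\{\chi_\alpha^2\}$, and functions $\psi_\alpha$ equal to $1$ on $\supp\chi_\alpha$; then $\|\mathbf{A}u\|^2 \le \sum_\alpha \|\psi_\alpha\mathbf{A}(\chi_\alpha^2 u)\|^2 + (\text{off-diagonal } \mc{O}(h^\infty))$, and on each chart $\|\psi_\alpha\mathbf{A}(\chi_\alpha^2 u)\|_{L^2(U_\alpha,\mathbf{L}^{\otimes\mathbf{k}})} = \|A^{(\alpha)}_{h,\mathbf{k}}(\chi_\alpha^2 \tilde u)\|_{L^2(U_\alpha,E_2)}$ reduces to the Calder\'on--Vaillancourt bound in the uniform calculus, which gives $\|A^{(\alpha)}_{h,\mathbf{k}(h)}\|_{L^2\to L^2} \le \|\sigma_{A^{(\alpha)}_{h,\mathbf{k}(h)}}\|_{L^\infty} + \mc{O}(h)$ with $\mc{O}(h)$ uniform in $\mathbf{k}\in\Omega(h)$ (this uniformity is the content of the symbol estimates \eqref{equation:local-symbol} being uniform in $\omega$, so the standard Calder\'on--Vaillancourt constant — which depends only on finitely many seminorms — is uniform). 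Translating $\sigma_{A^{(\alpha)}}$ back by the shift gives $\|\sigma_{\mathbf{A},\mathbf{k}(h)}\|_{L^\infty(T^*U_\alpha)}$, and summing over the (finitely many) charts with the partition of unity yields \eqref{equation:cv}; then \eqref{equation:cv2} follows by the compactness-of-quantifiers argument recorded in Remark \ref{remark:uniform} — if the bound failed uniformly in $\mathbf{k}$ one extracts a subsequence $h\mapsto\mathbf{k}(h)$ violating \eqref{equation:cv}, a contradiction. The main obstacle I anticipate is the bookkeeping in item (i): making rigorous that the non-small phase $\mathbf{k}\cdot\boldsymbol\theta$ (or $\mathbf{k}\cdot\boldsymbol\beta$) can be fed into Lemma \ref{lemma:appendix} by writing it as $(h\mathbf{k}\cdot\boldsymbol\theta)/h$ with $h\mathbf{k}\cdot\boldsymbol\theta$ a genuinely uniform family — everything else is a routine transcription of standard semiclassical arguments, being careful only that all $\mc{O}(h)$ and $\mc{O}(h^\infty)$ errors are uniform in $\mathbf{k}\in\Omega(h)$.
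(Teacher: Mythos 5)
Your proposal is correct and follows essentially the same route as the paper: item (i) hinges on the conjugation-by-phase Lemma \ref{lemma:appendix} applied with the uniform family $h\mathbf{k}\cdot\boldsymbol{\omega}$ (which is bounded with its derivatives precisely because $h|\mathbf{k}|\le 1$), item (ii) is Lemma \ref{lemma:kernel-symbol}, and items (iii)--(v) reduce via \eqref{equation:trivial} to the corresponding facts in the uniform calculus $\Psi^{\bullet}_{h,\mathbf{k}}(M)$. The only extra content in the paper that you should not omit is the check that the local definitions agree on chart overlaps modulo $hS_h^{m-1}$, which uses the usual change-of-coordinates formula for semiclassical symbols (\cite[Proposition E.10]{Dyatlov-Zworski-19}); your argument addresses invariance under change of trivializing section but not under change of chart, and both are needed to conclude that $\sigma_{\mathbf{A},\mathbf{k}(h)}$ is a well-defined global symbol.
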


The proof of the previous proposition is given below. It should be emphasized that the principal symbol $\sigma_{\mathbf{A}}$ \emph{does} depend on a choice of vector of connections $\nabla := (\nabla_1, \dotsc,\nabla_d)$ (we will mostly refrain from writing $\sigma^{\nabla}_{\mathbf{A}}$ in order to keep the notation simple). This choice of connection should be interpreted as fixing a '0-section' (i.e. an origin) in the phase space $T^*M$.

Changing $\nabla$ to $\nabla' = \nabla + i\beta'$ affects the principal symbol by
\begin{equation}
\label{equation:translation}
\sigma^{\nabla'}_{\mathbf{A}}(x,\xi) = \sigma^{\nabla}_{\mathbf{A}}(x,\xi-h\mathbf{k}(h)\cdot\beta'(x)) = \sigma^{\nabla}_{\mathbf{A}}(T_{\beta'}(x,\xi)),
\end{equation}
where 
\begin{equation}\label{eq:fibrewise-translation}
	T_{\beta'}(x,\xi) := (x,\xi-h\mathbf{k}(h)\cdot\beta'(x))
\end{equation} 
is the fiberwise translation. It should be observed however that most (if not \emph{all}) properties are actually independent of the choice of connection. For instance,
\eqref{equation:cv} is clearly independent of any choice. Also note that, as illustrated here, we will mostly manipulate operators by assuming that a family $h \mapsto \mathbf{k}(h)$ was chosen (we refrain from repeating it every time).

\begin{proof}
(i) Changing $s_j$ to $s'_j = e^{i \omega_j} s_j$ for some $\omega_j \in C^\infty(U)$ locally defined and real-valued, $A_{h,\mathbf{k}}$ is then changed to
\begin{equation}
\label{equation:ahprime}
A_{h,\mathbf{k}}'(\bullet) = e^{-i\mathbf{k}\cdot\omega}A_{h,\mathbf{k}}(e^{i \mathbf{k}\cdot\omega} \bullet),
\end{equation}
where $\omega = (\omega_1, \dotsc,\omega_d)$. Since $h|\mathbf{k}| \leq 1$, $A_{h,\mathbf{k}}' \in \Psi_{h,\mathbf{k}}^m(M)$ if and only if $A_{h,\mathbf{k}} \in \Psi_{h,\mathbf{k}}^m(M)$ and $\sigma_{A',\mathbf{k}(h)}(x,\xi) = \sigma_{A,\mathbf{k}(h)}(x,\xi+h \mathbf{k}(h)\cdot\dd \omega(x))$ by Lemma \ref{lemma:appendix}.

Now, we have $\nabla_j s_j = i \beta_j \otimes s_j$ and 
\[
	\nabla_j s'_j = i \dd \omega_j \otimes s'_j + i \beta_j \otimes s'_j = i \beta'_j \otimes s'_j, 
\] 
with $\beta'_j = \beta_j + \dd \omega_j$ and thus
\[
\begin{split}
\sigma_{A', \mathbf{k}(h)}(x,\xi-h\mathbf{k}(h)\cdot\beta'(x)) & = \sigma_{A,\mathbf{k}(h)}(x,\xi+ h\mathbf{k}(h)\cdot\dd \omega(x)-h\mathbf{k}(h)\cdot\beta'(x)) \\
& = \sigma_{A,\mathbf{k}(h)}(x,\xi-h\mathbf{k}(h)\cdot\beta(x)),
\end{split}
\]
so the principal symbol is intrinsically defined. On overlaps of charts, the defined symbols agree up to lower order terms in $hS_h^{m - 1}(T^*M)$, by the change of coordinates formula from standard semiclassical analysis (see \cite[Proposition E.10]{Dyatlov-Zworski-19}). Thus, the principal symbol is globally well-defined as an element of $S^m_h(T^*M)/hS^{m - 1}_{h}(T^*M)$.\\

(ii) Straightforward consequence of Lemma \ref{lemma:kernel-symbol}. \\

(iii-v) The algebra property and the computation for the adjoint are immediate and follow from similar results on $\Psi_h^m(M)$, see \cite[Chapter 14]{Zworski-12}. Similarly, the $L^2$-boundedness follows from the same result on $\Psi_h^0(M)$, see \cite[Theorem 13.13]{Zworski-12}.
\end{proof}

\begin{remark}[Principal symbols in $\Psi^m_{h,\mathbf{k}}(M,\mathbf{L})$]
It is convenient to see the principal symbol as follows. In the semiclassical parameter space $(0,1] \times \Omega \subset [0,1] \times \R^d$, define the \emph{blow-up} $\mathbf{b}$ of the point $0$ as the set of all possible limits $\lim_{h\to 0} h\mathbf{k}(h)$. Observe that $\mathbf{b}$ is compact. For instance if $\Omega = \Z$, then the blow-up is diffeomorphic to a half-circle, see Figure \ref{figure:blow-up}. One should then think of the principal symbol $\sigma_{\mathbf{A}}$ as an element of $S^m(T^*M \times \mathbf{b})$.
\end{remark}

\begin{figure}[htbp!]
\centering
\includegraphics{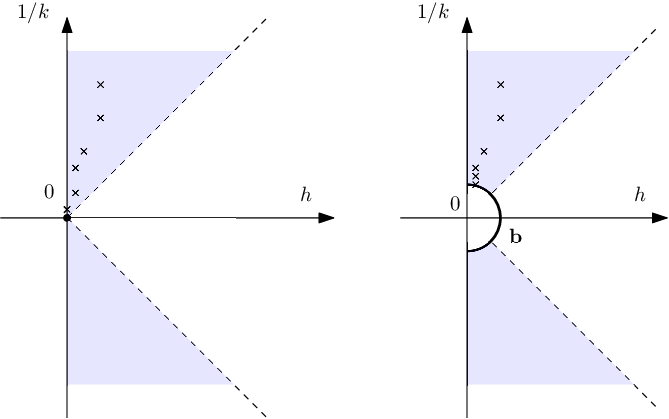}
\caption{The blowup of the point $0$ in the semiclassical parameter space. In shaded blue is the region $h |\mathbf{k}| \leq 1$.}
\label{figure:blow-up}
\end{figure}

\subsection{Examples}
\label{sssection:examples}
We now provide some examples of operators in this calculus and compute their principal symbol. Fix a vector of connections $\nabla := (\nabla_1, \dotsc,\nabla_d)$, where each $\nabla_j$ is a smooth unitary connection on the line bundle $L_j \to M$. For $\mathbf{k} \in \Omega(h)$, we set
\[
\nabla_{\mathbf{k}} : C^\infty(M,\mathbf{L}^{\otimes \mathbf{k}}) \to C^\infty(M,\mathbf{L}^{\otimes \mathbf{k}}\otimes T^*M),
\]
the connection on $\mathbf{L}^{\otimes k}$ induced (tensorially) by the connections $\nabla = (\nabla_1, \dotsc,\nabla_d)$.

\begin{enumerate}[label=(\roman*), itemsep=5pt]
\item Set $\mathbf{P} := h \nabla_{\mathbf{k}}$ such that $h|\mathbf{k}| \leq 1$. Then $\mathbf{P} \in \Psi^1_{h,\mathbf{k}}(M,\mathbf{L}, \C \to T^*M)$ (where $\C$ denotes the trivial line bundle) and:
\begin{equation}
\label{equation:symbol-nabla-example}
\sigma_{\mathbf{P}}^{\nabla}(x,\xi) = i \xi.
\end{equation}
By this, we mean that the principal symbol is actually \emph{independent} of the choice of family $h \mapsto \mathbf{k}(h)$, that is $\sigma_{\mathbf{P},\mathbf{k}(h)}^{\nabla}(x,\xi) = i \xi$ for all families $h \mapsto \mathbf{k}(h)$. 

Indeed, we let $\psi$, $\chi$, $(s_j)_{j = 1}^d$, $\mathbf{s}^{\otimes \mathbf{k}}$, and $\beta = (\beta_1, \dotsc, \beta_d)$ be as in Definition \ref{definition:pdo-calcul-line}. For $f \in C^\infty(M)$, we have
\[
	\psi\mathbf{P}\chi (f \mathbf{s}^{\otimes \mathbf{k}}) = \psi (hd + i h\mathbf{k} \cdot \beta \wedge )(\chi f) \mathbf{s}^{\mathbf{k}}.
\]
Taking an arbitrary family $h \mapsto \mathbf{k}(h)$, we then compute $\sigma_{\mathbf{P},\mathbf{k}(h)}^{\nabla}$ by \eqref{equation:symbole-principal} and conclude that it is given by \eqref{equation:symbol-nabla-example}.

This operator has the distinctive property that its principal symbol is independent of $h \mapsto \mathbf{k}(h)$. It holds because the operator is constructed naturally from the connection (it \emph{is} the connection in this case) and the principal symbol is computed with respect to the same connection. However, taking $P' := hd + i hk\alpha \wedge$ (where $d$ is the trivial connection on $M \times \C$ and $\alpha$ a smooth real-valued $1$-form), it is clear that $\sigma^d_{P',k(h)}(x, \xi) = i(\xi + ihk \alpha(x))$ \emph{does} depend on the family $h \mapsto k(h)$.
\item Given a vector field $X \in C^\infty(M,TM)$, set $\mathbf{P} := h \X_{\mathbf{k}} := \iota_X \nabla_{\mathbf{k}}$, where $\iota_X$ denotes contraction with $X$. Then $\mathbf{P} \in \Psi^1_{h,\mathbf{k}}(M,\mathbf{L})$, using (i) and the composition property of the calculus (or arguing directly as in (i)), we get
\[
	\sigma^{\nabla}_{\mathbf{P}}(x,\xi) = i\xi (X(x)).
\]
\item Assume $M$ is a complex manifold and $L_j \to M$ are holomorphic Hermitian line bundles, $j = 1, \dotsc, d$. Let $\overline{\partial}_j$ be the operator on $L_j$ induced by the holomorphic structure and denote by $\nabla_j$ the (unique) unitary Chern connection whose $(0,1)$-part is given by $\overline{\partial}_j$. The operators $(\overline{\partial}_j)_{j = 1}^d$ induce (tensorially) an operator 
\[
\overline{\partial}_{\mathbf{k}} : C^\infty(M,\mathbf{L}^{\otimes \mathbf{k}}) \to C^\infty(M,\mathbf{L}^{\otimes \mathbf{k}} \otimes T^*_{\C}M^{0,1}).
\]
on sections of $\mathbf{L}^{\otimes \mathbf{k}}$. Then $\mathbf{P} := h \overline{\partial}_{\mathbf{k}} \in \Psi^1_{h,\mathbf{k}}(M,\mathbf{L}, \C \to T^*_{\C}M^{0,1})$ and
\begin{equation}
\label{equation:01}
\sigma^{\nabla}_{\mathbf{P}}(x,\xi) = i \xi^{0,1},
\end{equation}
where $\xi^{0, 1} = \pi^{0, 1} \xi$, and $\pi^{0,1} : T^*_{\C}M \to T^*_{\C}M^{0,1}$ is the projection defined by the complex structure. This follows from \eqref{equation:symbol-nabla-example} above by observing that $\overline{\partial}_j = \pi^{0,1} \nabla_j$.

\item Assume that $F$ is the flag manifold bundle of \S\ref{sssection:flag-manifold-bundle}, and consider the operator $\overline{\partial}_{\mathbf{k}}$ introduced in \eqref{equation:del-bar-lambda}, acting on sections of $\mathbf{L}^{\otimes \mathbf{k}} \to F$. Then $\mathbf{P} := h \overline{\partial}_{\mathbf{k}} \in \Psi^1_{h,\mathbf{k}}(F,\mathbf{L}, \C \to {\V_F^*}^{0,1})$. Choose an arbitrary principal bundle connection $P \to M$, to which we can then associate partial connections $\nabla_j$ on $L_j$ for each $j = 1, \dotsc, d$. Let $\mathbb{H}_F \subset TF$ be the horizontal space of the associated connection $F \to M$; then $\mathbb{H}_F \oplus \mathbb{V}_F = TF$ with corresponding projections denoted by $\pi_{\mathbb{H}_F/\mathbb{V}_F}$. Let $\pi^{0, 1}$ be the projection from $\mathbb{V}_F^* \otimes \mathbb{C}$ to ${\mathbb{V}_F^*}^{0, 1}$ defined by the fibrewise complex structure. Also, let $\imath_{\mathbb{V}_F^*/\mathbb{H}_F^*}$ be the canonical embeddings of $\mathbb{V}_F^*/\mathbb{H}_F^*$ into $T^*F$, defined by $\xi \mapsto \xi \circ \pi_{\mathbb{V}_F/\mathbb{H}_F}$. Then, write $\pi_{\mathbb{V}_F^*/\mathbb{H}_F^*}$ for the projections from $T^*F$ onto $\imath_{\mathbb{V}_F^*}(\mathbb{V}_F^*)$ and $\imath_{\mathbb{H}_F^*}(\mathbb{H}_F^*)$, respectively. We claim that
\begin{equation}
\label{equation:symbol-bar-bundle}
	\sigma^{\nabla}_{\mathbf{P}}(x,\xi) = i  (\xi|_{\mathbb{V}_F})^{0, 1}.
\end{equation}
Indeed, we form the connections $\overline{\nabla}_{j} := \imath_{\mathbb{H}_F^*} {\nabla}_{j} +  \imath_{\mathbb{V}_F^*}D^{\mathrm{Chern}}_{j}$ (see \eqref{equation:dynamical-lambda}), and we observe that 
\[
	\overline{\partial}_{j} = \pi^{0, 1} (\imath_{\mathbb{V}_F^*})^{-1} \pi_{\mathbb{V}_F^*} \overline{\nabla}_{j},
\]
and notice that we simply have $(\imath_{\mathbb{V}_F^*})^{-1} \pi_{\mathbb{V}_F^*}(\xi) = \xi|_{\mathbb{V}_F}$. Then, a computation analogous to (i) (with $d$ replaced by $\overline{\partial}$ and $\beta$ with a suitable connection $1$-form) shows the validity of \eqref{equation:symbol-bar-bundle}.


\item Consider the case $d=1$ and a single line bundle $L \to M$. Let $g$ be a Riemannian metric over $M$, $\nabla$ be a unitary connection on $L$, and define $\mathbf{\Delta} := \mathbbm{1} + h^2(\nabla_k)^*\nabla_k : C^\infty(M,L^{\otimes k}) \to C^\infty(M,L^{\otimes k})$. Then
\[
\sigma^{\nabla}_{\mathbf{\Delta}}(x,\xi)  = 1+ |\xi|^2_g.
\] 
This follows from \eqref{equation:symbol-nabla-example}, as well as items (iii) and (iv) in Proposition \ref{proposition:proprietes}.
\end{enumerate}

\subsection{Ellipticity. Wavefront set. Sobolev spaces}



\subsubsection{Definitions} As in \S \ref{sssec:ellipticity}, one can define the elliptic set and the wavefront set of a pseudodifferential operator acting on sections (as introduced above). Once again, as for the principal symbol, these sets do depend on an arbitrary choice of connection $\mathbf{\nabla}$ but this choice is irrelevant for most purposes. They also depend on a choice of family $h \mapsto \mathbf{k}(h) \in \Omega(h) \subset \mathbb{Z}^d$.

\begin{definition}
Let $\mathbf{A} \in \Psi^m_{h,\mathbf{k}}(M,\mathbf{L})$ and $\nabla = (\nabla_1, \dotsc, \nabla_d)$. Then:
\begin{enumerate}[label=(\roman*), itemsep=5pt]
\item \textbf{Elliptic set.} The elliptic set $\Ell_{\mathbf{k}(h)}(\mathbf{A})$ relative to the family $h \mapsto \mathbf{k}(h) \subset \Omega(h)$ is defined as
\[
	\Ell^\nabla_{\mathbf{k}(h)}(\mathbf{A}) := \{(x,\xi) \in \overline{T^*M} ~|~ \exists c > 0,\,\, \forall h >0,\,\, |\sigma_{\mathbf{A},\mathbf{k}(h)}(x,\xi)| \langle\xi\rangle^{-m} \geq c\}.
\] 
The elliptic set is then defined similarly to \eqref{equation:ell} as
\[
	\Ell^\nabla(\mathbf{A}) := \cap_{\{h \mapsto \mathbf{k}(h)\}} \Ell_{\mathbf{k}(h)}(\mathbf{A}).
\]
\item \textbf{Wavefront set.} The wavefront set $\WF_h(\mathbf{A})$ relative to the family $h \mapsto \mathbf{k}(h)$ is defined as the union over all contractible open sets in Proposition \ref{proposition:proprietes}, cutoff functions $\psi$, and $\chi$
\[
	\WF^\nabla_{\mathbf{k}(h)}(\mathbf{A}) := \cup_{U, \chi, \psi} T_{\beta}^{-1}\big(\WF_{\mathbf{k}(h)}(A)|_{T^*U}\big),
\]
where $\WF_{\mathbf{k}(h)}(A)$ is the wavefront set of $A \in \Psi^m_{h,\mathbf{k}}(M)$ introduced in Proposition \ref{proposition:proprietes}, Item (i), relative to $h \mapsto \mathbf{k}(h)$ and defined in \eqref{equation:wf-standard}. (Note that this definition is independent of the choice of local trivialising sections $(s_j)_{j = 1}^d$ by Lemma \ref{lemma:appendix}, similarly as in the proof of Proposition \ref{proposition:proprietes}, Item (i).) The wavefront set is then defined as
\[
	\WF^\nabla(\mathbf{A}) := \cup_{\{h \mapsto \mathbf{k}(h)\}} \WF^\nabla_{\mathbf{k}(h)}(\mathbf{A}).
\]
We will say that $\mathbf{A}$ is \emph{compactly microlocalised} if $\WF^\nabla(\mathbf{A}) \subset T^*M$ is compact, and we will denoted the set of such operators by $\Psi_{h, \mathbf{k}}^{\comp}(M, \mathbf{L})$.
\end{enumerate}
\end{definition}

Upon changing the connection $\nabla = (\nabla_1, \dotsc, \nabla_d)$ to $\nabla' = \nabla + i \beta'$, using \eqref{equation:translation} we get that the elliptic and wavefront sets transform according to
\[
	\Ell^{\nabla'}(\mathbf{A}) = T_{\beta'}^{-1}\big(\Ell^\nabla(\mathbf{A})\big), \quad \WF^{\nabla'}(\mathbf{A}) = T_{\beta'}^{-1} \big(\WF^\nabla(\mathbf{A})\big).
\]
When clear from context, we will drop the index $\nabla$ from the notation.



Conversely, having fixed $\nabla$ (to define the principal symbol), it is often convenient to be able to produce a pseudodifferential $\mathbf{A} \in \Psi^m_{h,k}(M,\mathbf{L})$ with given principal symbol $\sigma_{\mathbf{A}} = a$.

\begin{lemma}
\label{lemma:surjectivity}
The principal symbol map 
\[
	\sigma : \Psi^m_{h,\mathbf{k}}(M,\mathbf{L}) \ni \mathbf{A} \mapsto \sigma_{\mathbf{A}} \in S_{h, \mathbf{k}}^m(T^*M)/hS_{h, \mathbf{k}}^{m - 1}(T^*M)
\] 
is surjective, that is, for all $a \in S^m_{h, \mathbf{k}}(T^*M)$, there exists $\mathbf{A} \in \Psi^m_{h,\mathbf{k}}(M,\mathbf{L})$ such that $\sigma_{\mathbf{A}, \mathbf{k}(h)} = [a_{h, \mathbf{k}(h)}]$ relative to every family $h \mapsto \mathbf{k}(h)$.
\end{lemma}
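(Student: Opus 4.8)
The plan is to exhibit a right inverse to $\sigma$ by patching together local semiclassical quantizations, exactly as one proves surjectivity of the principal symbol map in ordinary semiclassical calculus (cf. the discussion around Proposition \ref{proposition:ellipticity0}), the one new ingredient being the conjugation Lemma \ref{lemma:appendix} used to absorb the twisting and to keep all estimates uniform in $\mathbf{k}$. First I would fix a finite cover $\{U_i\}_{i\in I}$ of $M$ by contractible open sets over each of which every $L_j$ trivializes, choose unit sections $s_{i,1},\dots,s_{i,d}$ of $L_1,\dots,L_d$ over $U_i$, and write $\nabla_j=d+i\beta_{i,j}$ in the induced trivialization, so that $\nabla_{\mathbf{k}}$ acts on $\mathbf{s}_i^{\otimes\mathbf{k}}$-trivialized sections as $d+i\mathbf{k}\cdot\beta_i$. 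Pick $\rho_i\in C^\infty_{\comp}(U_i)$ with $\sum_i\rho_i\equiv 1$ and $\chi_i\in C^\infty_{\comp}(U_i)$ equal to $1$ near $\supp\rho_i$. Since $h|\mathbf{k}|\leq 1$ on $\Omega(h)$, the shift $h\mathbf{k}\cdot\beta_i(x)$ is uniformly bounded with uniformly bounded derivatives, so $(x,\xi)\mapsto a_{h,\mathbf{k}}(x,\xi+h\mathbf{k}\cdot\beta_i(x))$ lies in $S^m_{h,\mathbf{k}}(T^*U_i)$ uniformly in $\mathbf{k}\in\Omega(h)$ (one has $\langle\xi+h\mathbf{k}\cdot\beta_i\rangle\asymp\langle\xi\rangle$, and $x$-derivatives only generate the bounded factors $h\mathbf{k}\cdot\partial\beta_i$). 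Setting $A_{i,h,\mathbf{k}}:=\Op^{U_i}_h\big(a(x,\xi+h\mathbf{k}\cdot\beta_i(x))\big)\in\Psi^m_{h,\mathbf{k}}(U_i)$, I would then define
\[
\mathbf{A}_{h,\mathbf{k}}u:=\sum_{i\in I}\rho_i\,A_{i,h,\mathbf{k}}\big(\chi_i\,\widetilde u_i\big)\,\mathbf{s}_i^{\otimes\mathbf{k}},\qquad u=\widetilde u_i\,\mathbf{s}_i^{\otimes\mathbf{k}}\ \text{on}\ U_i.
\]

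To check that $\mathbf{A}:=(\mathbf{A}_{h,\mathbf{k}})\in\Psi^m_{h,\mathbf{k}}(M,\mathbf{L})$, I would verify the defining property of Definition \ref{definition:pdo-calcul-line}. Given a contractible $V$, unit sections $t_j$ of $L_j$ over $V$, and $\psi,\chi\in C^\infty_{\comp}(V)$, on $V\cap U_i$ one has $\mathbf{t}^{\otimes\mathbf{k}}=e^{i\mathbf{k}\cdot\omega_i}\mathbf{s}_i^{\otimes\mathbf{k}}$ for a real vector $\omega_i$ of functions (with $\nabla_j t_j=i(\beta_{i,j}+d\omega_{i,j})\otimes t_j$), hence
\[
\psi\,\mathbf{A}_{h,\mathbf{k}}(\chi f\otimes\mathbf{t}^{\otimes\mathbf{k}})=\Big(\sum_{i\in I}\psi\rho_i\,\big(e^{-i\mathbf{k}\cdot\omega_i}A_{i,h,\mathbf{k}}(e^{i\mathbf{k}\cdot\omega_i}\,\bullet)\big)(\chi_i\chi f)\Big)\otimes\mathbf{t}^{\otimes\mathbf{k}}.
\]
Writing $e^{i\mathbf{k}\cdot\omega_i}=e^{i(h\mathbf{k}\cdot\omega_i)/h}$ and noting that $h\mathbf{k}\cdot\omega_i$ is a uniform family of functions on a neighborhood of $\supp(\psi\rho_i)$ (again because $h|\mathbf{k}|\leq 1$), Lemma \ref{lemma:appendix} gives $e^{-i\mathbf{k}\cdot\omega_i}A_{i,h,\mathbf{k}}(e^{i\mathbf{k}\cdot\omega_i}\,\bullet)\in\Psi^m_{h,\mathbf{k}}(U_i)$ uniformly in $\mathbf{k}$, with full symbol $a\big(x,\xi+h\mathbf{k}\cdot(\beta_i+d\omega_i)(x)\big)$; multiplying by smooth cutoffs and summing over the finite set $I$ keeps us in $\Psi^m_{h,\mathbf{k}}(M)$ uniformly in $\mathbf{k}$, which is precisely what the definition demands.

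Finally, to compute the principal symbol relative to an arbitrary family $h\mapsto\mathbf{k}(h)$, I would localize near a point $x_0\in M$ in a chart with unit sections $s_j$ and connection forms $\beta=(\beta_1,\dots,\beta_d)$, take $t_j=s_j$ and $\psi,\chi,\chi_i$ equal to $1$ near $x_0$. Then for every $i$ with $\rho_i(x_0)\neq 0$ one has $\beta_i+d\omega_i\equiv\beta$ near $x_0$, so by the displayed identity and $\sum_i\rho_i\equiv 1$ the local representative $B_{h,\mathbf{k}(h)}$ of $\psi\mathbf{A}(\chi\,\cdot\,\otimes\mathbf{s}^{\otimes\mathbf{k}})$ has semiclassical principal symbol $\big[a_{h,\mathbf{k}(h)}(x,\xi+h\mathbf{k}(h)\cdot\beta(x))\big]$; plugging into \eqref{equation:symbole-principal},
\[
\sigma_{\mathbf{A},\mathbf{k}(h)}(x,\xi)=\sigma_{B_{h,\mathbf{k}(h)}}\big(x,\xi-h\mathbf{k}(h)\cdot\beta(x)\big)=\big[a_{h,\mathbf{k}(h)}(x,\xi)\big],
\]
the $\beta$-shifts cancelling; since the family was arbitrary this proves surjectivity. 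The extension to operators acting between auxiliary bundles $E_1\to E_2$ is routine, working with matrix-valued local symbols. The only genuine work is the bookkeeping in the second paragraph — tracking how the local quantizations transform under changes of trivializing sections and confirming that every estimate is uniform in $\mathbf{k}\in\Omega(h)$ — and this is exactly where the constraint $h|\mathbf{k}|\leq 1$ is used, via Lemma \ref{lemma:appendix}, to ensure $e^{i\mathbf{k}\cdot\omega}$ acts like a semiclassical factor with a uniform phase; I expect no conceptual obstacle beyond this.
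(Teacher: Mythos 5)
Your proof is correct and takes essentially the same approach as the paper's: reduce via a partition of unity to symbols supported in a single trivializing chart, shift the symbol by $h\mathbf{k}\cdot\beta$ before quantizing so that the inverse shift in the definition of the principal symbol cancels, and use $h|\mathbf{k}|\leq 1$ together with Lemma \ref{lemma:appendix} to keep everything uniform in $\mathbf{k}$. You spell out the patching across charts and the verification of membership in $\Psi^m_{h,\mathbf{k}}(M,\mathbf{L})$ in more detail than the paper does, but the underlying construction is the same one.
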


\begin{proof}
Up to taking a finite cover of $M$ and a partition of unity subordinated to that cover, we can always assume that $a$ has support in a contractible open subset $U \subset M$. Let $s_j \in C^\infty(U,L_j)$ be trivializing sections for $j=1, \dotsc,d$ (of fiberwise norm $|s_j|=1$), write $\nabla_j s_j = i\beta_j\otimes s_j$, and set $a_{\beta,h,\mathbf{k}}(x,\xi) := a(x,\xi+h\mathbf{k}\cdot\beta(x))$. Define 
\[
	\mathbf{A}_{h,\mathbf{k}}(u) := \chi \Op_h(a_{\beta,h,\mathbf{k}})(\chi u \mathbf{s}^{-\mathbf{k}(h)}) \otimes \mathbf{s}^{\mathbf{k}(h)}, \quad u \in C^\infty(M, \mathbf{L}^{\otimes \mathbf{k}}),
\]
where $\chi$ is a cutoff function in $U$ such that $\chi = 1$ on the projection to $U$ of $\supp(a)$, and $\Op_h : S^m_h(T^*M) \to \Psi_h^m(M)$ is an arbitrary quantization. Then $\sigma_{\mathbf{A}} = a$ by construction.

%

%
\end{proof}

With analogy to Proposition \ref{proposition:ellipticity0}, elliptic operators in this calculus admit parametrices:

\begin{proposition}
\label{proposition:ellipticity}
Let $\mathbf{A} \in \Psi^m_{h,\mathbf{k}}(M,\mathbf{L})$, $\mathbf{B}\in \Psi^{m'}_{h,\mathbf{k}}(M,\mathbf{L})$. Assume that $\WF(\mathbf{B}) \subset \Ell(\mathbf{A})$. Then, there exist $\mathbf{Q}, \mathbf{Q}' \in \Psi^{m'-m}_{h,\mathbf{k}}(M,L)$ such that
\[
\mathbf{B} = \mathbf{Q} \mathbf{A} + \mc{O}_{\Psi^{-\infty}_{h,\mathbf{k}}(M,\mathbf{L})}(h^\infty) = \mathbf{A} \mathbf{Q}'  + \mc{O}_{\Psi^{-\infty}_{h,\mathbf{k}}(M,\mathbf{L})}(h^\infty).
\]
\end{proposition}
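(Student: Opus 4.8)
The plan is to follow the classical elliptic parametrix construction, but carried out uniformly in the auxiliary parameter $\mathbf{k}$, exactly as in the proof of Proposition \ref{proposition:ellipticity0}. The only subtlety is that the principal symbol is connection-dependent and, for a fixed family $h\mapsto \mathbf{k}(h)$, is only defined after the fiberwise translation $T_\beta$; so one must make sure that all estimates are uniform over families, which is precisely what the hypothesis $\WF(\mathbf{B})\subset\Ell(\mathbf{A})$ (an \emph{intersection} over families on the right, a \emph{union} over families on the left) guarantees.

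First I would reduce to the local model. Cover $M$ by finitely many contractible charts $U_i$ over which all the $L_j$ are trivialised by unit sections, take a subordinate partition of unity, and use Definition \ref{definition:pdo-calcul-line} to replace $\mathbf{A},\mathbf{B}$ by their local representatives $A_{h,\mathbf{k}},B_{h,\mathbf{k}}\in\Psi^\bullet_{h,\mathbf{k}}(U_i)$ acting on $C^\infty_{\comp}(U_i,E_1/E_2)$ (here genuinely $E_1\to E_2$ valued, so symbols are $\End$-valued). On $U_i$ the condition $\WF(\mathbf{B})\subset\Ell(\mathbf{A})$ translates, after applying $T_\beta$, into: on $\WF_{\mathbf{k}(h)}(B)$ the symbol $\sigma_{A,\mathbf{k}(h)}$ is invertible with $\|\sigma_{A,\mathbf{k}(h)}(x,\xi)^{-1}\|\le C\langle\xi\rangle^{-m}$ uniformly in $h$ and uniformly over all admissible families, by Remark \ref{remark:uniform}. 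I would then build the parametrix symbol $q\sim q_0+hq_1+\dotsb$ in the usual way: choose a cutoff $\psi\in S^0$ equal to $1$ near $\WF(B)$ and supported in $\Ell(A)$, set $q_0 := \psi\,\sigma_{A}^{-1}$, and solve the transport equations $\sum_{j+|\alpha|=\ell}\frac{1}{\alpha!}\partial_\xi^\alpha \sigma_A\, D_x^\alpha q_j = 0$ (for $\ell\ge1$) recursively for $q_\ell\in S^{m'-m-\ell}$; Borel-summing gives $q\in S^{m'-m}_{h,\mathbf{k}}(T^*U_i,\End(E_2,E_1))$, and crucially every seminorm bound is uniform in $\mathbf{k}\in\Omega(h)$ because the bounds on $\sigma_A^{-1}$ and its derivatives are. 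Quantizing, $Q_i:=\Op_h(q_{\beta,h,\mathbf{k}})$ (conjugated by $\mathbf{s}^{\mathbf{k}}$ as in the proof of Lemma \ref{lemma:surjectivity}) satisfies $Q_iA_{h,\mathbf{k}}=B_{h,\mathbf{k}}+\mc{O}_{\Psi^{-\infty}_{h,\mathbf{k}}}(h^\infty)$ by the composition formula of Proposition \ref{proposition:proprietes}(iv) together with $\sigma_{QA-B}=0$ on $\WF(B)$ and the microlocal elliptic estimate off $\WF(B)$; by Proposition \ref{proposition:proprietes}(ii) (Lemma \ref{lemma:kernel-symbol}) the remainder drops an order of $h$ at each step. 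Patching the $Q_i$ with the partition of unity produces the global left parametrix $\mathbf{Q}$; the same argument with the transport equations solved "from the other side" gives the right parametrix $\mathbf{Q}'$.

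For the main statement I would then glue left and right: a standard computation $\mathbf{Q}=\mathbf{Q}(\mathbf{A}\mathbf{Q}')+\mc{O}(h^\infty)=(\mathbf{Q}\mathbf{A})\mathbf{Q}'+\mc{O}(h^\infty)=\mathbf{Q}'+\mc{O}(h^\infty)$ in $\Psi^{m'-m}_{h,\mathbf{k}}$, using associativity of the calculus (Proposition \ref{proposition:proprietes}(iv)), so that $\mathbf{Q}$ serves simultaneously as left and right parametrix, which is the asserted conclusion. Since the paper says "We omit the proof as it follows the standard one" for Proposition \ref{proposition:ellipticity0}, I expect the authors likewise to say here that the proof is identical to that of Proposition \ref{proposition:ellipticity0}, the point being simply that all symbolic estimates, being uniform in $\mathbf{k}\in\Omega(h)$, survive the construction. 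The only genuine obstacle — and it is conceptual rather than computational — is bookkeeping the connection-dependence: one must verify that $\Ell(\mathbf{A})$ and $\WF(\mathbf{B})$, defined intrinsically via the translations $T_{\beta'}$, are compatible with the local symbol calculus so that "$\WF(B)\subset\Ell(A)$ in $\overline{T^*M}$" really does yield uniform invertibility of the \emph{local} symbols $\sigma_{A,\mathbf{k}(h)}$ after translation; this is handled exactly by the transformation rules $\Ell^{\nabla'}(\mathbf{A})=T_{\beta'}^{-1}(\Ell^\nabla(\mathbf{A}))$, $\WF^{\nabla'}(\mathbf{A})=T_{\beta'}^{-1}(\WF^\nabla(\mathbf{A}))$ recorded above and by Lemma \ref{lemma:appendix}.
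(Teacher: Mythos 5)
Your approach is essentially the paper's intended one: reduce to the local representatives $A_{h,\mathbf{k}},B_{h,\mathbf{k}}$ in trivialising charts, use the translation $T_\beta$ together with Remark \ref{remark:uniform} to convert the hypothesis $\WF(\mathbf{B})\subset\Ell(\mathbf{A})$ into uniform-in-$\mathbf{k}\in\Omega(h)$ invertibility of the local symbol of $A$ near $\WF(B)$, run the standard iterative parametrix construction with seminorm bounds that are uniform in $\mathbf{k}$, quantize, and patch with a partition of unity. The paper gives no proof for this proposition (it defers to the standard argument, exactly as for Proposition \ref{proposition:ellipticity0}), and your account of where the uniformity comes from and why the statement is connection-independent via the transformation rules for $\Ell$ and $\WF$ is precisely the point that makes the extension to $\Psi^\bullet_{h,\mathbf{k}}(M,\mathbf{L})$ non-automatic.

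One genuine error: the final ``gluing'' computation $\mathbf{Q}=\mathbf{Q}(\mathbf{A}\mathbf{Q}')+\mc{O}(h^\infty)=(\mathbf{Q}\mathbf{A})\mathbf{Q}'+\mc{O}(h^\infty)=\mathbf{Q}'+\mc{O}(h^\infty)$ is false in the microlocal setting. Here you only have $\mathbf{Q}\mathbf{A}=\mathbf{B}+\mc{O}(h^\infty)$ and $\mathbf{A}\mathbf{Q}'=\mathbf{B}+\mc{O}(h^\infty)$, so associativity yields $\mathbf{Q}\mathbf{B}=\mathbf{B}\mathbf{Q}'+\mc{O}(h^\infty)$, not $\mathbf{Q}=\mathbf{Q}'$; the latter only follows when $\mathbf{B}=\mathbbm{1}$ (a global parametrix), and it is also not what the proposition asserts. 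This step is harmless only because it is superfluous: you already construct $\mathbf{Q}$ and $\mathbf{Q}'$ independently before the gluing, so simply delete that paragraph. A smaller bookkeeping issue: with $q_0=\psi\,\sigma_A^{-1}$ and the transport equations set to zero you are constructing a microlocal \emph{inverse} $\mathbf{P}$ to $\mathbf{A}$ near $\WF(\mathbf{B})$, not the left parametrix itself; you should then take $\mathbf{Q}:=\mathbf{B}\mathbf{P}$ (using that $\mathbf{B}$ is $\mc{O}(h^\infty)$ off $\WF(\mathbf{B})$), or alternatively take $q_0=\psi\,\sigma_B\sigma_A^{-1}$ and put the terms of $\sigma_B$ on the right-hand side of the transport equations. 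Either is fine, but they should not be mixed as they currently are.
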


Once again, we emphasize that the choice of connection is irrelevant. In the above proposition, the condition $\WF(\mathbf{B}) \subset \Ell(\mathbf{A})$ holds for a specific connection if and only if it holds for \emph{all} connections. Finally, following standard arguments, if $\mathbf{A} \in \Psi^m_{h,\mathbf{k}}(M,\mathbf{L})$ is such that $\Ell(\mathbf{A}) = \overline{T^*M}$, then $\mathbf{A}$ is invertible for $h$ small enough and $\mathbf{A}^{-1} \in \Psi^{-m}_{h,\mathbf{k}}(M,\mathbf{L})$. (Indeed, by the existence of parametrices $\exists \mathbf{Q} \in \Psi^{-m}_{h, \mathbf{k}}(M, \mathbf{L})$ such that $\mathbf{Q}\mathbf{A} = \mathbbm{1} + \mc{O}_{\Psi^{-\infty}_{h, \mathbf{k}}}(h^{\infty})$; the right hand side is invertible for $h$ small enough.)

\subsubsection{Sobolev spaces I}

\label{sssection:sobolev-spaces-1}


We now introduce Sobolev spaces. As usual, we fix a connection on $P \to M$ and set:
\[
	\Delta_{\mathbf{k}} := \nabla_{\mathbf{k}}^*\nabla_{\mathbf{k}}.
\]
Recall $\overline{\partial}_{\mathbf{k}}$ is the fibrewise holomorphic derivative of $\mathbf{L}^{\otimes \mathbf{k}} \to F$. Note that $h^2\Delta_{\mathbf{k}}$ and $h^2 \overline{\partial}_{\mathbf{k}}^*\overline{\partial}_{\mathbf{k}}$ commute with $\Pi_{\mathbf{k}}$ thanks to \eqref{equation:commutation-nablak-pik}, and the fact that
\begin{equation}\label{eq:commutation-vertical-derivative}
	\Pi_{\mathbf{k}} \overline{\partial}_{\mathbf{k}}^* = 0,\quad \overline{\partial}_{\mathbf{k}} \Pi_{\mathbf{k}} = 0, \quad [\Pi_{\mathbf{k}}, \overline{\partial}_{\mathbf{k}}^* \overline{\partial}_{\mathbf{k}}] = 0,
\end{equation}
where the second equality follows by definition, the first one by taking $L^2$ adjoints, and the last one then follows immediately. Moreover, $h^2(\Delta_{\mathbf{k}} + \overline{\partial}_{\mathbf{k}}^*\overline{\partial}_{\mathbf{k}})$ is elliptic in $\Psi^{\bullet}_{h,\mathbf{k}}(F,\mathbf{L})$ thanks to \S \ref{sssection:examples}, Items (iv) and (v). One can then form the operator, for $s \in \mathbb{R}$,
\[
	\mathbf{\Delta}(s) := (\mathbbm{1}+h^2(\Delta_{\mathbf{k}}  + \overline{\partial}_{\mathbf{k}}^*\overline{\partial}_{\mathbf{k}}))^{\frac{s}{2}} \in \Psi^s_{h, \mathbf{k}}(F,\mathbf{L}),
\]
where the operator is defined by the spectral theorem; that $(\mathbbm{1}+h^2(\Delta_{\mathbf{k}}  + \overline{\partial}_{\mathbf{k}}^*\overline{\partial}_{\mathbf{k}}))^{\frac{s}{2}} \in \Psi^s_{h, \mathbf{k}}(F,\mathbf{L})$ follows similarly as in the standard semiclassical calculus, see \cite[Theorem 4.9]{Zworski-12} and \cite[Chapter 8]{Dimassi-Sjostrand-99}. One can define $H^s_{h}(F,\mathbf{L}^{\otimes \mathbf{k}})$ as the completion of $C^\infty(F,\mathbf{L}^{\otimes \mathbf{k}})$ with respect to the norm
\begin{equation}\label{eq:sobolev-norm}
	\|f\|_{H^s_{h}(F,\mathbf{L}^{\otimes \mathbf{k}})} := \|\mathbf{\Delta}(s)f\|_{L^2(F,\mathbf{L}^{\otimes \mathbf{k}})}.
\end{equation}

Next we give a criterion for an operator to be negligible.

\begin{lemma}
\label{lemma:help}
An operator $\mathbf{R}$ belongs to $h^\infty\Psi^{-\infty}_{h,\mathbf{k}}(F,\mathbf{L})$ if and only if the following holds: for all $N > 0$, there exists a constant $C := C(N) > 0$ such that for all $h,\mathbf{k}$ such that $h|\mathbf{k}|\leq1$, 
\[
\mathbf{R} : H^{-N}_h(F,\mathbf{L}^{\otimes \mathbf{k}}) \to H^{+N}_h(F,\mathbf{L}^{\otimes \mathbf{k}}),
\]
is bounded with norm $\leq C h^N$.
\end{lemma}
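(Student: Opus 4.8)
Suppose $\mathbf{R}\in h^\infty\Psi^{-\infty}_{h,\mathbf{k}}(F,\mathbf{L})$. By definition of this class, for every $N$ the rescaled operator $\mathbf{R}_N:=h^{-N}\mathbf{R}$ still lies in $\Psi^{-\infty}_{h,\mathbf{k}}(F,\mathbf{L})$, with all the seminorms of its Schwartz kernel bounded uniformly in $h$ and $\mathbf{k}$. Recalling the operator $\mathbf{\Delta}(N)\in\Psi^{N}_{h,\mathbf{k}}(F,\mathbf{L})$ used in \eqref{eq:sobolev-norm}, the algebra property of the calculus (Proposition \ref{proposition:proprietes}, Item (iv)) gives $\mathbf{\Delta}(N)\,\mathbf{R}_N\,\mathbf{\Delta}(N)\in\Psi^{-\infty}_{h,\mathbf{k}}(F,\mathbf{L})$, and by the Calderón–Vaillancourt bound \eqref{equation:cv2} this composition is bounded on $L^2(F,\mathbf{L}^{\otimes\mathbf{k}})$ with norm $\mathcal{O}(1)$ \emph{uniformly} in $h$ and $\mathbf{k}$. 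Unwinding the definition \eqref{eq:sobolev-norm} of the $H^{\pm N}_h$ norms, this says exactly that $\mathbf{R}_N\colon H^{-N}_h(F,\mathbf{L}^{\otimes\mathbf{k}})\to H^{+N}_h(F,\mathbf{L}^{\otimes\mathbf{k}})$ is bounded uniformly, and therefore $\mathbf{R}=h^{N}\mathbf{R}_N$ has the claimed $\mathcal{O}(h^N)$ bound. Since $N$ is arbitrary, this is the desired implication.

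\textbf{Reverse direction.} Assume now the uniform Sobolev bounds. The claim amounts to showing that, in each trivializing chart of $F$ (and after conjugating by a unitary trivializing section of $\mathbf{L}^{\otimes\mathbf{k}}$, as in Definition \ref{definition:pdo-calcul-line}), the operator $\mathbf{R}$ has an ordinary smooth Schwartz kernel $K_{h,\mathbf{k}}$, all of whose derivative seminorms are $\mathcal{O}(h^\infty)$ uniformly in $\mathbf{k}$; this is precisely membership in $h^\infty\Psi^{-\infty}_{h,\mathbf{k}}$. Equivalently, working globally, we must bound every covariant-derivative seminorm of the kernel. Since the hypothesis in particular guarantees that $\mathbf{R}$ extends boundedly $H^{-N}_h\to H^{+N}_h$, it makes sense to apply $\mathbf{R}$ to distributional sections such as $\nabla^{\beta}\delta_y\in H^{-N}_h$ (for $N$ large, $n:=\dim F$), and one has
\[
\big(\nabla_x^{\alpha}\nabla_y^{\beta}K_{h,\mathbf{k}}\big)(x,y)=\pm\big\langle\, \mathbf{R}(\nabla^{\beta}\delta_y),\ \nabla^{\alpha}\delta_x\,\big\rangle_{L^2(F,\mathbf{L}^{\otimes\mathbf{k}})}.
\]
For any $M$, the $L^2$-duality between $H^{M}_h$ and $H^{-M}_h$ and the hypothesis yield
\[
\big|\big(\nabla_x^{\alpha}\nabla_y^{\beta}K_{h,\mathbf{k}}\big)(x,y)\big|\ \leq\ C(M)\,h^{M}\ \|\nabla^{\beta}\delta_y\|_{H^{-M}_h}\,\|\nabla^{\alpha}\delta_x\|_{H^{-M}_h}.
\]

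\textbf{The remaining estimate.} It then suffices to bound $\|\nabla^{\alpha}\delta_y\|_{H^{-M}_h(F,\mathbf{L}^{\otimes\mathbf{k}})}\leq C_{\alpha,M}\,h^{-n-|\alpha|}$ uniformly in $y$ and in $\mathbf{k}$: granting this, the displayed inequality gives $|\nabla_x^{\alpha}\nabla_y^{\beta}K_{h,\mathbf{k}}(x,y)|\leq C\,h^{M-2n-|\alpha|-|\beta|}$, and letting $M\to\infty$ shows each seminorm is $\mathcal{O}(h^\infty)$, uniformly in $\mathbf{k}$, which is what we want. To get the delta-section bound, note that $\|\nabla^{\alpha}\delta_y\|_{H^{-M}_h}^2$ is the value on the diagonal of the Schwartz kernel of $\nabla^{\alpha}\,\mathbf{\Delta}(-M)^2\,(\nabla^{\alpha})^{*}$, an operator in $\Psi^{2|\alpha|-2M}_{h,\mathbf{k}}(F,\mathbf{L})$; computing this on-diagonal value in a local trivializing chart, and using that the effect of the trivialization is merely the bounded fiberwise translation $T_\beta$ from \eqref{eq:fibrewise-translation} (bounded because $h|\mathbf{k}|\le1$, cf. Lemma \ref{lemma:appendix}), one finds it equals $(2\pi h)^{-n}\int_{\R^n}\xi^{2\alpha}\langle\xi\rangle^{-2M}\,\dd\xi+\mathcal{O}(h^{1-n})$, which for $M$ large is $\mathcal{O}(h^{-n-|\alpha|})$ uniformly in $\mathbf{k}$.

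\textbf{Main obstacle.} The only genuinely nontrivial point is this last uniform-in-$\mathbf{k}$ control: the norm \eqref{eq:sobolev-norm} is built from the honestly $\mathbf{k}$-dependent elliptic operator $h^2(\Delta_{\mathbf{k}}+\overline{\partial}_{\mathbf{k}}^{*}\overline{\partial}_{\mathbf{k}})$, and one needs that the resulting scale of Sobolev norms is comparable, with constants independent of $\mathbf{k}$ (as long as $h|\mathbf{k}|\le1$), to the flat semiclassical Sobolev norms in local trivializations. This rests on the ellipticity of $\mathbf{\Delta}(s)$ in $\Psi^{s}_{h,\mathbf{k}}(F,\mathbf{L})$ (Proposition \ref{proposition:ellipticity}) and on the structural fact, established throughout \S\ref{ssection:quantization-line-bundles}, that passing to a trivialization changes symbols only through the bounded translation $T_\beta$. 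Once this comparison is in hand, the argument above is just the classical characterization of residual operators, transplanted to the present uniform calculus; I expect it to occupy at most a short verification building on \S\ref{sssection:sobolev-spaces-1}.
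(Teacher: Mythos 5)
Your proof reproduces, in the present uniform calculus, the classical characterization of residual operators that the paper cites without detail (Zworski, Theorem~9.12, ``verbatim''); both directions are sound, and the point you flag about uniform-in-$\mathbf{k}$ comparability of $H^s_h$ with flat local norms via the bounded translation $T_\beta$ is indeed the one thing to verify. One small arithmetic slip at the end: since $\nabla^\alpha = h^{-|\alpha|}(h\nabla)^\alpha$, the on-diagonal kernel of $\nabla^\alpha\mathbf{\Delta}(-M)^2(\nabla^\alpha)^*$ carries an extra factor $h^{-2|\alpha|}$ which your displayed integral omits, so the sharp bound is $\|\nabla^\alpha\delta_y\|_{H^{-M}_h}=\mathcal{O}(h^{-n/2-|\alpha|})$ rather than what your computation shows --- but since $M$ is arbitrary this has no effect on the conclusion.
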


The proof is standard and follows verbatim the one for standard semiclassical pseudodifferential operators (see e.g.\ \cite[Theorem 9.12]{Zworski-12}). We record the following fact which will be used later on:

\begin{lemma}
\label{lemma:eat-pik}
Let $\mathbf{R} \in h^\infty\Psi^{-\infty}_{h,\mathbf{k}}(F,\mathbf{L})$. Then
\[
\Pi_{\mathbf{k}} \mathbf{R}, \quad \mathbf{R} \Pi_{\mathbf{k}} \quad \in h^\infty\Psi^{-\infty}_{h,\mathbf{k}}(F,\mathbf{L}).
\]
\end{lemma}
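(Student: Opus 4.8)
The plan is to reduce the statement to the Sobolev-space characterization of negligible operators in Lemma~\ref{lemma:help}. The one input beyond that characterization is that the elliptic operator used to define the Sobolev norms \eqref{eq:sobolev-norm} commutes with the fiberwise holomorphic projection $\Pi_{\mathbf{k}}$, uniformly in the parameters.

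First I would record that $\Pi_{\mathbf{k}}$ commutes with $\mathbf{\Delta}(s) = (\mathbbm{1}+h^2(\Delta_{\mathbf{k}} + \overline{\partial}_{\mathbf{k}}^*\overline{\partial}_{\mathbf{k}}))^{s/2}$ for every $s \in \mathbb{R}$. Indeed, by \eqref{equation:commutation-nablak-pik} we have $[\Delta_{\mathbf{k}},\Pi_{\mathbf{k}}]=0$ (recall $\Delta_{\mathbf{k}} = \nabla_{\mathbf{k}}^*\nabla_{\mathbf{k}}$), and by \eqref{eq:commutation-vertical-derivative} we have $[\overline{\partial}_{\mathbf{k}}^*\overline{\partial}_{\mathbf{k}},\Pi_{\mathbf{k}}]=0$. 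Hence the non-negative self-adjoint operator $A := \mathbbm{1}+h^2(\Delta_{\mathbf{k}} + \overline{\partial}_{\mathbf{k}}^*\overline{\partial}_{\mathbf{k}})$ commutes with the bounded self-adjoint projection $\Pi_{\mathbf{k}}$; by the spectral theorem, $\Pi_{\mathbf{k}}$ then preserves the domain of every Borel function of $A$ and commutes with it, in particular with $A^{s/2} = \mathbf{\Delta}(s)$. It follows that $\Pi_{\mathbf{k}}$ is a contraction on $H^s_h(F,\mathbf{L}^{\otimes \mathbf{k}})$ for every $s$, uniformly in $h$ and $\mathbf{k}$, since for $f \in C^\infty(F,\mathbf{L}^{\otimes \mathbf{k}})$,
\[
\|\Pi_{\mathbf{k}} f\|_{H^s_h(F,\mathbf{L}^{\otimes \mathbf{k}})} = \|\mathbf{\Delta}(s)\Pi_{\mathbf{k}} f\|_{L^2} = \|\Pi_{\mathbf{k}}\mathbf{\Delta}(s) f\|_{L^2} \leq \|\mathbf{\Delta}(s) f\|_{L^2} = \|f\|_{H^s_h(F,\mathbf{L}^{\otimes \mathbf{k}})},
\]
using $\|\Pi_{\mathbf{k}}\|_{L^2 \to L^2} = 1$.

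Then I would apply Lemma~\ref{lemma:help}. Fix $N > 0$ and let $C = C(N) > 0$ be such that $\mathbf{R} : H^{-N}_h(F,\mathbf{L}^{\otimes \mathbf{k}}) \to H^{+N}_h(F,\mathbf{L}^{\otimes \mathbf{k}})$ has norm $\leq C h^N$ for all $h, \mathbf{k}$ with $h|\mathbf{k}| \leq 1$. Since $\Pi_{\mathbf{k}} \mathbf{R}$ and $\mathbf{R}\Pi_{\mathbf{k}}$ map $C^\infty$ sections to $C^\infty$ sections (as $\Pi_{\mathbf{k}}$ is fiberwise of finite rank with smooth kernel), Lemma~\ref{lemma:help} applies, and for $f \in C^\infty(F,\mathbf{L}^{\otimes \mathbf{k}})$ we get, using the contraction property just established,
\[
\|\Pi_{\mathbf{k}} \mathbf{R} f\|_{H^N_h} \leq \|\mathbf{R} f\|_{H^N_h} \leq C h^N \|f\|_{H^{-N}_h}, \qquad \|\mathbf{R}\Pi_{\mathbf{k}} f\|_{H^N_h} \leq C h^N \|\Pi_{\mathbf{k}} f\|_{H^{-N}_h} \leq C h^N \|f\|_{H^{-N}_h}.
\]
Since $N > 0$ was arbitrary, the converse direction of Lemma~\ref{lemma:help} yields $\Pi_{\mathbf{k}} \mathbf{R}, \mathbf{R}\Pi_{\mathbf{k}} \in h^\infty\Psi^{-\infty}_{h,\mathbf{k}}(F,\mathbf{L})$, as claimed. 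The only genuinely non-automatic point is the commutation of $\mathbf{\Delta}(s)$ with $\Pi_{\mathbf{k}}$ for all real (in particular negative and fractional) $s$, which needs the functional calculus for the commuting pair $(A, \Pi_{\mathbf{k}})$ rather than only the algebraic relations \eqref{equation:commutation-nablak-pik}--\eqref{eq:commutation-vertical-derivative}; everything else is a direct application of Lemma~\ref{lemma:help} together with $\|\Pi_{\mathbf{k}}\|_{L^2 \to L^2} = 1$.
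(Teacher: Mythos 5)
Your proof is correct and follows essentially the same route as the paper's: both reduce to Lemma~\ref{lemma:help} after showing that $\Pi_{\mathbf{k}}$ commutes with $\mathbf{\Delta}(s)$ (hence is a uniformly bounded operator, in fact a contraction, on $H^s_h$), using the algebraic commutations \eqref{equation:commutation-nablak-pik} and \eqref{eq:commutation-vertical-derivative} together with the functional calculus. The only cosmetic difference is that the paper invokes the Cauchy-integral representation in the spectral theorem, whereas you state the commutation of the functional calculus for the commuting pair $(A,\Pi_{\mathbf{k}})$ more abstractly; the content is identical.
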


\begin{proof}
By Lemma \ref{lemma:help}, it suffices to check that the projection $\Pi_{\mathbf{k}} : H^{s}_h(F,\mathbf{L}^{\otimes \mathbf{k}}) \to H^{s}_h(F,\mathbf{L}^{\otimes \mathbf{k}})$ is bounded with norm $\leq C(s)$ for all $h,\mathbf{k}$ such that $h|\mathbf{k}| \leq 1$. Now note that in fact $\Pi_{\mathbf{k}}$ commutes with $\mathbf{\Delta}(s)$: indeed, by \eqref{equation:commutation-nablak-pik} and \eqref{eq:commutation-vertical-derivative}, it commutes with $\Delta_\mathbf{k}$ and $\overline{\partial}_{\mathbf{k}}^* \overline{\partial}_{\mathbf{k}}$ (and so it commutes with the Cauchy integrals in the spectral theorem defining $\mathbf{\Delta}(s)$). This shows that we may take $C(s) \equiv 1$ and completes the proof. (Alternatively, by interpolation and duality, it suffices to prove the bound for integer values of $s \geq 0$; this can be checked directly.)
\end{proof}

\subsection{Commutators. Propagation of singularities} 

\label{sssection:propagation1}

We turn to the study of propagation of singularities in $T^*M$.  Let $\pi : T^*M \to M$ be the projection. Let $\mathbf{F}_{\nabla} := (F_{\nabla_1}, \dotsc, F_{\nabla_d}) \in C^\infty(M,\Lambda^2 T^*M)^{\oplus d}$ be the curvatures (which are purely imaginary $2$-forms) of the unitary connections $\nabla_j$ on the line bundles $L_j \to M$. Define
\begin{equation}
\label{equation:omega-twisted}
\omega_{h,\mathbf{k}} := \omega_0 + i h \mathbf{k} \cdot \pi^* \mathbf{F}_\nabla = \omega_0 + ih \sum_{j = 1}^d k_j \pi^*F_{\nabla_j},
\end{equation}
where $\omega_0$ is the Liouville $2$-form on $T^*M$. In the following, we will be taking as before arbitrary families $h \mapsto \mathbf{k}(h)$ and will sometimes use the shorthand notation $\omega_{\mathbf{k}(h)}$ for $\omega_{h,\mathbf{k}(h)}$.

\begin{lemma}
The $2$-form $\omega_{h,\mathbf{k}}$ is closed and non-degenerate.
\end{lemma}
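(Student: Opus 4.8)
The plan is to verify the two assertions — closedness and non-degeneracy — separately, working directly from the definition \eqref{equation:omega-twisted} of $\omega_{h,\mathbf{k}}$.

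\textbf{Closedness.} This is immediate: $\omega_0$ is the canonical symplectic (Liouville) $2$-form on $T^*M$, hence closed, and each $F_{\nabla_j}$ is a closed $2$-form on $M$ (the curvature of a connection on a line bundle satisfies the Bianchi identity $dF_{\nabla_j}=0$, equivalently $2\pi c_1(L_j)=[\tfrac{1}{i}F_{\nabla_j}]$ represents a de Rham class, cf.\ the discussion around Proposition~\ref{prop:line-bundle-topology}). Pullback commutes with $d$, so $d(\pi^*F_{\nabla_j})=\pi^*(dF_{\nabla_j})=0$. Therefore $d\omega_{h,\mathbf{k}} = d\omega_0 + ih\sum_j k_j\, d(\pi^*F_{\nabla_j}) = 0$ for every fixed $h>0$ and $\mathbf{k}\in\Omega$.

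\textbf{Non-degeneracy.} The point is that $ih\mathbf{k}\cdot\pi^*\mathbf{F}_\nabla$ is a ``horizontal'' perturbation, i.e.\ its contraction with any vertical vector vanishes, so it cannot kill the pairing between vertical and horizontal directions that makes $\omega_0$ non-degenerate. Concretely I would work in canonical coordinates $(x,\xi)$ on $T^*U$ for a chart $U\subset M$, where $\omega_0 = \sum_i d\xi_i\wedge dx_i$ and $\pi^*F_{\nabla_j} = \sum_{p<q} (F_{\nabla_j})_{pq}(x)\, dx_p\wedge dx_q$ involves only the $dx$'s. Thus in the block form relative to the splitting $T(T^*M) = T^{\mathrm{vert}}\oplus T^{\mathrm{horiz}}$ (spanned by $\partial_{\xi}$ and $\partial_x$ respectively), the matrix of $\omega_{h,\mathbf{k}}$ is
\[
\begin{pmatrix} 0 & -I_n \\ I_n & B_{h,\mathbf{k}}(x) \end{pmatrix},
\]
where $B_{h,\mathbf{k}}(x)$ is the (real) antisymmetric matrix with entries $ih\sum_j k_j (F_{\nabla_j})_{pq}(x)$ — note $F_{\nabla_j}$ is purely imaginary so $ih k_j (F_{\nabla_j})_{pq}$ is real. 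A $2n\times 2n$ matrix of this block shape always has determinant $\det(I_n)^2 = 1 \neq 0$ regardless of $B$, so $\omega_{h,\mathbf{k}}$ is non-degenerate at every point. (Equivalently: if $\iota_V\omega_{h,\mathbf{k}}=0$, decompose $V = V^{\mathrm{vert}} + V^{\mathrm{horiz}}$; pairing against vertical vectors shows $V^{\mathrm{horiz}}=0$ using non-degeneracy of $\omega_0$ on the vertical–horizontal pairing and the vanishing of $\pi^*\mathbf{F}_\nabla$ on verticals, and then pairing against horizontal vectors forces $V^{\mathrm{vert}}=0$.) Since non-degeneracy is a pointwise, coordinate-independent condition, this suffices.

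There is no real obstacle here; the only thing to be slightly careful about is the reality of the coefficients (the factor $i$ compensates the purely imaginary curvature) and the fact that the argument is uniform in $h,\mathbf{k}$ so that the statement holds for the whole family, which is what the later propagation-of-singularities arguments will need. I would present the non-degeneracy via the block-triangular determinant computation as it is the cleanest.
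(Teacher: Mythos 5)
Your proof is correct and follows essentially the same approach as the paper: both verify closedness from $d\omega_0 = dF_{\nabla_j} = 0$, and both establish non-degeneracy by a local coordinate computation exploiting that the twist $ih\mathbf{k}\cdot\pi^*\mathbf{F}_\nabla$ lives entirely in the $dx\wedge dx$ block. The paper phrases the non-degeneracy step as ``$\iota_H\omega = 0 \Rightarrow H = 0$'' via the explicit contraction formula, while you read it off from the block determinant $\det\begin{pmatrix} 0 & -I_n \\ I_n & B \end{pmatrix} = 1$; these are two presentations of the same local fact.
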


\begin{proof}
That $\omega_{h,\mathbf{k}}$ is closed is immediate since $d \omega_0 = 0 = dF_{\nabla_j}$ (for any $j = 1, \dotsc, d$). To show non-degeneracy, it suffices to consider a local coordinate patch $U$ and to show that for any real $1$-form $\beta$, the $2$-form $\omega = \omega_0 + \pi^*d\beta$ is non-degenerate. In local coordinates $((x_i, \xi_i))_{i = 1}^n$ on $T^*U$ we have
\[
	\omega = \sum_{k=1}^n \left(d\xi_k + \sum_{\ell=1}^n \partial_{x_\ell}\beta_k dx_\ell\right) \wedge dx_k.
\]
For an arbitrary vector field $H = \sum (H_{x_k} \partial_{x_k} + H_{\xi_k} \partial_{\xi_k})$, we obtain
\begin{equation}
\label{equation:ihw}
\iota_H \omega = - \sum_{k=1}^n H_{x_k} d\xi_k + \sum_{k=1}^n \left(H_{\xi_k} + \sum_{\ell=1}^n H_{x_{\ell}}(\partial_{x_\ell} \beta_k-\partial_{x_k} \beta_\ell)\right)dx_k.
\end{equation}
This vanishes if and only if $H \equiv 0$.
\end{proof}

Given a function $p \in C^\infty(T^*M)$, we can define using \eqref{equation:omega-twisted} its Hamiltonian vector field $H_p^{\omega_{h,\mathbf{k}}}$ with respect to $\omega_{h,\mathbf{k}}$ by requiring the following identity
\begin{equation}
\label{equation:hp-vfield}
\omega_{h,\mathbf{k}}(H_p^{\omega_{h,\mathbf{k}}},\bullet) = dp(\bullet)
\end{equation}
to be satisfied. In a patch of local coordinates, using \eqref{equation:ihw}, we obtain:
\begin{equation}
\label{equation:hvfield}
	H_p^{\omega_{h,\mathbf{k}}} = \sum_{k=1}^n -\partial_{\xi_k}p~ \partial_{x_k} + \sum_{k=1}^n \left(\partial_{x_k}p + h\mathbf{k} \cdot \sum_{\ell=1}^n \partial_{\xi_\ell}p(\partial_{x_\ell}\boldsymbol{\beta}_k-\partial_{x_k}\boldsymbol{\beta}_\ell) \right)\partial_{\xi_k} 	
\end{equation}
where $\boldsymbol{\beta}$ is the vector of $1$-forms such that $d\boldsymbol{\beta} = i\mathbf{F}_{\nabla}= (iF_{\nabla_1}, \dotsc, iF_{\nabla_d})$. 

Given a vector field $Y \in C^\infty(M,TM)$ generating a flow $\phi := (\phi_t)_{t \in \R}$, its standard \emph{symplectic lift} $\Phi^{\omega_0} := (\Phi_t^{\omega_0})_{t \in \R}$ is the flow on $T^*M$ given by
\[
\Phi_t^{\omega_0}(x,\xi) = (\phi_t(x), \dd {\phi_t}^{-\top}(x)\xi),
\]
where ${}^{-\top}$ denotes the inverse transpose. The flow $\Phi^{\omega_0}$ preserves $\omega_0$ and is generated by the Hamiltonian vector field $H_{p_Y}^{\omega_0}$, where $p_Y(x,\xi) := \langle\xi,Y(x)\rangle$. More generally, we will denote by $\Phi^{\omega_{h,\mathbf{k}}}$ the flow generated by $H_{p_Y}^{\omega_{h,\mathbf{k}}}$. Using \eqref{equation:hvfield}, it is straightforward to check that 
\begin{equation}
\label{equation:salut}
H_{p_Y}^{\omega_{h,\mathbf{k}}} p_Z = p_{[Z, Y]} + h\mathbf{k}\cdot\mathbf{F}_{\nabla}(Y,Z).
\end{equation}

Let $\Y_{\mathbf{k}} := \iota_Y \nabla_{\mathbf{k}} : C^\infty(M,\Lk) \to C^\infty(M,\Lk)$. Given $\mathbf{A} \in \Psi^m_{h,\mathbf{k}}(M,\mathbf{L})$, we denote by $e^{t \Y} \mathbf{A} e^{-t \Y}$ the family of operators $e^{t \Y_{\mathbf{k}}} \mathbf{A}_{h,\mathbf{k}} e^{-t \Y_{\mathbf{k}}}$ induced on each $C^\infty(M,\Lk)$. In the following proposition, all symbols are computed with respect to $\nabla$.

\begin{proposition}[Egorov's Theorem in $\Psi^m_{h,\mathbf{k}}(M,\mathbf{L})$]
\label{proposition:egorov}
Let $Y$ be an arbitrary vector field and $\Y_{\mathbf{k}} := \iota_Y \nabla_{\mathbf{k}}$. The following holds:
\begin{enumerate}[label=\emph{(\roman*)}, itemsep=5pt]
\item For all $\mathbf{A} \in \Psi^m_{h,\mathbf{k}}(M,\mathbf{L})$, $[\Y,\mathbf{A}] \in \Psi^m_{h,\mathbf{k}}(M,\mathbf{L})$. For all $t \in \R$,
\[
\mathbf{A}(t) := e^{t \Y} \mathbf{A} e^{-t \Y} \in \Psi^m_{h,\mathbf{k}}(M,\mathbf{L}),
\]
with smooth dependence in the parameter $t \in \R$ (see the comment after the proposition for the topology on $\Psi^m_{h,\mathbf{k}}(M,\mathbf{L})$).

\item The principal symbol of $\mathbf{A}(t)$ is given by
\begin{equation}
\label{equation:integral2}
\sigma_{\mathbf{A}(t),\mathbf{k}(h)}(x,\xi) = \sigma_{\mathbf{A}_h}(\Phi^{\omega_{\mathbf{k}(h)}}_t(x,\xi)), \qquad \forall (x,\xi) \in T^*M,
\end{equation}
and
\begin{equation}
\label{equation:egorov-differentiel2}
\sigma_{[\Y,\mathbf{A}],\mathbf{k}(h)} = H_{p_Y}^{\omega_{\mathbf{k}(h)}} \sigma_{A}.
\end{equation}

\item If $\iota_Y \mathbf{F}_{\nabla} = 0$, then
\begin{equation}
\label{equation:integral}
\sigma_{\mathbf{A}(t), \mathbf{k}(h)}(x,\xi) = \sigma_{\mathbf{A}_h}(\Phi^{\omega_0}_t(x,\xi)), \qquad \forall (x,\xi) \in T^*M,
\end{equation}
and
\begin{equation}
\label{equation:egorov-differentiel}
\sigma_{[\Y,\mathbf{A}], \mathbf{k}(h)} = H_{p_Y}^{\omega_0} \sigma_{A}.
\end{equation}

\end{enumerate}
\end{proposition}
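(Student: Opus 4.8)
\textbf{Proof strategy for Proposition \ref{proposition:egorov}.}

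The plan is to reduce everything to the standard Egorov theorem in the usual semiclassical calculus $\Psi^m_{h}(M)$, applied along an arbitrary fixed family $h \mapsto \mathbf{k}(h)$, and then to invoke the uniformity principle of Remark \ref{remark:uniform} to obtain uniform statements. First I would treat Item (i): working in a local trivialisation as in Definition \ref{definition:pdo-calcul-line}, the operator $\Y_{\mathbf{k}}$ becomes $\iota_Y(hd + ih\mathbf{k}\cdot\boldsymbol{\beta})/h$ (up to the $1/h$ normalisation absorbed into $\Y$, i.e. $\Y$ here is the unscaled $\iota_Y\nabla_{\mathbf{k}}$), which is a first-order differential operator with coefficients that are uniform in $\mathbf{k}$ in the sense that $h\mathbf{k}\cdot\boldsymbol{\beta}$ is a uniformly bounded family of functions since $h|\mathbf{k}|\leq 1$. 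The commutator $[\Y,\mathbf{A}]$ is then handled by the algebra property (Proposition \ref{proposition:proprietes} (iv)): writing $\mathbf{P}:=h\Y_{\mathbf{k}}\in\Psi^1_{h,\mathbf{k}}(M,\mathbf{L})$ with $\sigma_{\mathbf{P}}^\nabla(x,\xi)=i\xi(Y(x))=ip_Y$ by Example (ii) of \S\ref{sssection:examples}, we get $h[\Y,\mathbf{A}]=[\mathbf{P},\mathbf{A}]\in\Psi^{m+1}_{h,\mathbf{k}}$, but the principal symbols of $\mathbf{P}$ and $\mathbf{A}$ Poisson-commute to leading order so in fact $[\mathbf{P},\mathbf{A}]\in h\Psi^{m}_{h,\mathbf{k}}$, whence $[\Y,\mathbf{A}]\in\Psi^m_{h,\mathbf{k}}(M,\mathbf{L})$. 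For the conjugated operator $\mathbf{A}(t)=e^{t\Y}\mathbf{A}e^{-t\Y}$, I would argue that $B(t):=\mathbf{A}(t)$ solves the Heisenberg ODE $\dot B(t)=[\Y,B(t)]$ with $B(0)=\mathbf{A}$; since $[\Y,\cdot\,]$ maps $\Psi^m_{h,\mathbf{k}}$ into itself boundedly (in the appropriate Fréchet sense, uniformly in $h,\mathbf{k}$), a Picard iteration / Duhamel argument in the space $\Psi^m_{h,\mathbf{k}}(M,\mathbf{L})$ produces a solution in the calculus with smooth $t$-dependence, and uniqueness of the ODE identifies it with $\mathbf{A}(t)$; this is the standard proof of Egorov, e.g.\ as in \cite[Theorem 11.1]{Zworski-12}, carried out verbatim in the line-bundle calculus.

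Next, for Item (ii), along a fixed family $h\mapsto\mathbf{k}(h)$ I would compute $\sigma_{[\Y,\mathbf{A}],\mathbf{k}(h)}$. In the local trivialisation of Proposition \ref{proposition:proprietes}, conjugating by $\mathbf{s}^{\otimes\mathbf{k}}$ turns $\mathbf{A}$ into $A_{h,\mathbf{k}}\in\Psi^m_{h,\mathbf{k}}(M)$ and turns $\Y_{\mathbf{k}}$ into $\iota_Y(d+i\mathbf{k}\cdot\boldsymbol\beta)$; by Lemma \ref{lemma:appendix}, the principal symbol $\sigma_{\mathbf{A},\mathbf{k}(h)}^\nabla$ is the symbol of $A_{h,\mathbf{k}(h)}$ shifted by $-h\mathbf{k}(h)\cdot\boldsymbol\beta$. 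The standard semiclassical symbol calculus gives $\sigma_{[\iota_Y(hd+ih\mathbf{k}\boldsymbol\beta),A_{h,\mathbf{k}}]}=\tfrac{h}{i}\{p_Y+\mathbf{k}\cdot\boldsymbol\beta\cdot Y,\,\sigma_{A_{h,\mathbf{k}}}\}$ (Poisson bracket with respect to $\omega_0$), and translating by $-h\mathbf{k}\cdot\boldsymbol\beta$ in the $\xi$-variable converts the standard Poisson bracket with the shifted Hamiltonian into the twisted Poisson bracket $H_{p_Y}^{\omega_{h,\mathbf{k}}}$ applied to the untranslated symbol: concretely, one checks using \eqref{equation:hvfield} that $T_{\boldsymbol\beta}^*(H^{\omega_0}_{p_Y+h\mathbf{k}\boldsymbol\beta Y})=H^{\omega_{h,\mathbf{k}}}_{p_Y}$ under the fibrewise translation \eqref{eq:fibrewise-translation}, since $d(h\mathbf{k}\cdot\boldsymbol\beta)=ih\mathbf{k}\cdot\mathbf{F}_\nabla$ on $T^*M$ pulls back to precisely the twisting term in $\omega_{h,\mathbf{k}}$. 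This establishes \eqref{equation:egorov-differentiel2}. Then \eqref{equation:integral2} follows by integrating the Heisenberg ODE at the symbol level: $\partial_t\sigma_{\mathbf{A}(t),\mathbf{k}(h)}=H_{p_Y}^{\omega_{\mathbf{k}(h)}}\sigma_{\mathbf{A}(t),\mathbf{k}(h)}$ with initial condition $\sigma_{\mathbf{A}_h}$, whose solution is the pullback $\sigma_{\mathbf{A}_h}\circ\Phi_t^{\omega_{\mathbf{k}(h)}}$. One must check here that the flow $\Phi_t^{\omega_{h,\mathbf{k}}}$ is well-defined and complete (it is, since $H^{\omega_{h,\mathbf{k}}}_{p_Y}$ projects to the complete flow of $Y$ on $M$ and is linear-in-$\xi$ in the fibres by \eqref{equation:hvfield}) and that $\sigma_{\mathbf{A}_h}\circ\Phi^{\omega_{\mathbf{k}(h)}}_t$ remains a symbol of order $m$ uniformly — this is where the bound $h|\mathbf{k}|\leq 1$ is used to control the fibre-linear growth of the flow.

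Finally, Item (iii) is the special case $\iota_Y\mathbf{F}_\nabla=0$: then the twisting $2$-form $ih\mathbf{k}\cdot\pi^*\mathbf{F}_\nabla$ is annihilated by $H^{\omega_0}_{p_Y}$ (equivalently $\iota_{H^{\omega_0}_{p_Y}}\pi^*\mathbf{F}_\nabla=\pi^*(\iota_Y\mathbf{F}_\nabla)=0$, using \eqref{equation:salut} with $Z$ arbitrary, which gives $H^{\omega_{h,\mathbf{k}}}_{p_Y}=H^{\omega_0}_{p_Y}$ as vector fields), so $\Phi_t^{\omega_{h,\mathbf{k}}}=\Phi_t^{\omega_0}$ is the ordinary symplectic lift and \eqref{equation:integral}--\eqref{equation:egorov-differentiel} drop out of \eqref{equation:integral2}--\eqref{equation:egorov-differentiel2}. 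The main obstacle I anticipate is bookkeeping the $\boldsymbol\beta$-translations consistently: one must verify carefully that the local symbol computation, performed in a trivialisation where the natural object is the $\omega_0$-Poisson bracket with a $\mathbf{k}$-dependent Hamiltonian, glues to the global, trivialisation-independent statement \eqref{equation:egorov-differentiel2} featuring the twisted form $\omega_{\mathbf{k}(h)}$ — this is exactly the content of Lemma \ref{lemma:appendix} and the proof of Proposition \ref{proposition:proprietes}(i), but it requires attention to keep the direction of the shift straight. Everything else is a routine transcription of the classical Egorov argument, with Remark \ref{remark:uniform} invoked at the end to pass from "for every family $h\mapsto\mathbf{k}(h)$" to uniform membership in $\Psi^m_{h,\mathbf{k}}(M,\mathbf{L})$.
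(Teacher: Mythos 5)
Your overall structure is sound for Items (ii) and (iii) — establishing the differential version \eqref{equation:egorov-differentiel2} first via a local Poisson-bracket computation and then integrating is a legitimate alternative to what the paper does (which is to prove the \emph{integrated} version \eqref{equation:integral2} directly by tracking the time-evolved trivialising section $s_t = e^{t\Y}s$, computing the evolved connection $1$-form $\beta(t) = \varphi_t^*\beta - \int_0^t\varphi_q^*\iota_Y d\beta\,dq$ via Cartan's formula, and proving the flow conjugacy $T_{-\beta}\circ\Phi^{\omega_0}_t\circ T_{\beta(t)} = \Phi^{\omega_{\mathbf{k}(h)}}_t$ by comparing generators). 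Your key identity $dT_{-\boldsymbol\beta}\bigl(H^{\omega_0}_{p_Y+h\mathbf{k}\cdot\boldsymbol\beta(Y)}\circ T_{\boldsymbol\beta}\bigr) = H^{\omega_{h,\mathbf{k}}}_{p_Y}$ is correct (modulo the missing factor of $h$ you dropped in ``$p_Y + \mathbf{k}\cdot\boldsymbol\beta\cdot Y$''), and is in fact a special case of the identity $dT_{-\beta}(H^{\omega_0}_{\sigma_A}\circ T_\beta) = H^{\omega_{\mathbf{k}(h)}}_{\sigma_\mathbf{A}}$ that the paper proves in the commutator lemma immediately after the Egorov proposition.

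However, there is a real gap in your Item (i). You propose to establish $\mathbf{A}(t) = e^{t\Y}\mathbf{A}e^{-t\Y} \in \Psi^m_{h,\mathbf{k}}(M,\mathbf{L})$ by ``Picard iteration / Duhamel'' for the Heisenberg ODE $\dot B = [\Y, B]$ in the Fréchet space $\Psi^m_{h,\mathbf{k}}$, arguing that $[\Y,\cdot]$ preserves this space ``boundedly.'' But $[\Y,\cdot]$ is \emph{not} bounded on the natural Fréchet seminorms: the Hamiltonian vector field $H_{p_Y}$ differentiates the symbol once, so the $j$-th seminorm of $[\Y,B]$ is controlled by the $(j+1)$-st seminorm of $B$ — a loss of one derivative. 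Picard iteration then requires one more derivative at each step and does not converge. This is exactly the reason the standard Egorov argument (including \cite[Theorem~11.1]{Zworski-12}, which you cite) is \emph{not} a Picard/Duhamel argument: it is a ``build and compare'' argument, where one first explicitly constructs a candidate $B(t)$ with the evolved symbol, verifies it satisfies the ODE up to $O(h^\infty)$, and then compares with $\mathbf{A}(t)$ by an operator-norm estimate. Even that route is a detour here: since $\Y$ is the twisted lift of a genuine vector field (not a semiclassical propagator $e^{itP/h}$), the conjugation $e^{t\Y}\mathbf{A}e^{-t\Y}$ is given by an \emph{exact} pullback formula in each local trivialisation. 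The paper exploits this directly: using $e^{t\mathbf{Y}}s^k = e^{ik\int_0^t\varphi_q^*\iota_Y\beta\,dq}\,s^k$ one obtains $\chi_t e^{t\mathbf{Y}}\mathbf{A}_{h,k}e^{-t\mathbf{Y}}(\psi_t f\,s_t^k) = e^{tY}A_{h,k}(e^{-tY}f)\,s_t^k$, reducing membership in the calculus to standard (scalar) Egorov for $e^{tY}A_{h,k}e^{-tY}$. This both closes the membership question and sets up the symbol computation without any abstract ODE argument. You should replace the Picard/Duhamel step by this explicit trivialisation argument; once you have that, the rest of your proposal — including the derivation of \eqref{equation:egorov-differentiel2} by translating the local Poisson bracket and the integration to obtain \eqref{equation:integral2} — goes through.
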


%

The main consequences of this Egorov-type result are propagation estimates, see Proposition \ref{proposition:propagation} below and \S\ref{ssection:radial-estimates} for radial estimates. In Item (i), the Fréchet topology on $\Psi^m_{h,\mathbf{k}}(M,\mathbf{L})$ is analogous to the usual one for standard pseudodifferential operators and defined in terms of local symbols, see \cite[Section 2.1]{Guillarmou-Knieper-Lefeuvre-22} for instance.

\begin{proof}
For simplicity, we assume that $d=1$. The general case is treated similarly. \\

(i) The first conclusion follows from Example (ii) in \S \ref{sssection:examples} and Proposition \ref{proposition:proprietes}. Let $s \in C^\infty(U, L)$ be a local trivializing section such that $|s|=1$ (fiberwise) on some open subset $U \subset M$, and let $\chi$ and $\psi$ be some cutoffs supported in $U$. By definition, there exists a uniform pseudodifferential operator $A \in \Psi^m_{h,\mathbf{k}}(M)$, such that $\chi \mathbf{A}_{h,k}( \psi f s^{k}) = A_{h,k}(f) s^{k}$ for all $f \in C^\infty(M)$. Let $h \mapsto k(h)$ be a family such that $h|k(h)| \leq 1$.

Let us write $\nabla s = i\beta \otimes s$. We first claim that (on the domain where the left hand side is defined)
\begin{equation}\label{eq:propagate-s^k}
	e^{t \mathbf{Y}} s^k = e^{ik \int_0^t \varphi_q^* \iota_Y \beta\, dq} s^k.
\end{equation}
Indeed, it suffices to consider $k = 1$ and we have
\begin{align*}
	\nabla_Y(e^{i \int_0^t \varphi_q^* \iota_Y \beta\, dq} s) &= i e^{i \int_0^t \varphi_q^* \iota_Y \beta\, dq} \left(\partial_{q'}|_{q' = 0} \int_0^t \varphi_{q + q'}^* \iota_Y\beta\, dq + \iota_Y \beta\right)\\ 
	&= i e^{i \int_0^t \varphi_q^* \iota_Y \beta\, dq} \varphi_t^* \iota_Y\beta = \partial_t \left(e^{i \int_0^t \varphi_q^* \iota_Y \beta\, dq} s \right).
\end{align*}
We will write $\chi_t = e^{tY} \chi$, $\psi_t = e^{tY} \psi$, and $s_t = e^{t\mathbf{Y}} s$ (since $\nabla$ is unitary, $s_t$ has fibrewise unit norm). Then for any $f \in C^\infty(M)$:
\begin{align}\label{eq:intertwined-t-easy}
	\chi_t e^{t \mathbf{Y}} \mathbf{A}_{h, k} e^{-t \mathbf{Y}} (\psi_t f s_t^k) = e^{tY} A_{h, k} (e^{-tY} f) s_t^k.
\end{align}
Since $U$, $\chi$, and $\psi$ were arbitrary, using the standard version of Egorov's theorem shows that $\mathbf{A}(t) \in \Psi^m_{h, \mathbf{k}}(M, \mathbf{L})$. The smooth dependence in the $t$-variable is straightforward. \\

(ii) We will use the same notation as in (i); write $\nabla s_t = i \beta(t) \otimes s_t$ and $A'_{h, k(h)}(t) := e^{tY} A_{h, k} e^{-tY}$. Using \eqref{eq:intertwined-t-easy}, we compute the principal symbol of $\mathbf{A}(t)$:
\begin{equation}\label{eq:general-symbol-computation}
	\sigma_{\mathbf{A}(t), k(h)} = \sigma_{A'(t)} \circ T_{\beta(t)} = \sigma_{A} \circ \Phi_t^{\omega_0} \circ T_{\beta(t)} = \sigma_{\mathbf{A}, k(h)} \circ T_{-\beta} \circ \Phi_t^{\omega_0} \circ T_{\beta(t)},
\end{equation}
where in the first and third equalities we used the definition of the principal symbol, while in the second one we used the standard Egorov's theorem. Therefore, it suffices to prove
\begin{equation}\label{eq:hamiltonian-flow-conjugacy}
	T_{-\beta} \circ \Phi_t^{\omega_0} \circ T_{\beta(t)} = \Phi_t^{\omega_{\mathbf{k}(h)}}, \quad \forall t \in \mathbb{R}.
\end{equation}

To compute $\beta(t)$, we use \eqref{eq:propagate-s^k}:
\begin{equation}\label{eq:beta(t)}
\begin{split}
	&\nabla e^{t \mathbf{Y}} s = i s_t \otimes \left(d \left(\int_0^t \varphi_q^* \iota_Y \beta\, dq\right) + \beta\right)\\ 
	&= is_t \otimes \left(-\int_0^t \varphi_q^* \iota_Y d\beta\, dq + \int_0^t \partial_q \varphi_q^*\beta\,dq + \beta\right) = is_t \otimes \underbrace{\left(\varphi_t^*\beta - \int_0^t \varphi_q^* \iota_Yd\beta\, dq\right)}_{=\beta(t)},
\end{split}
\end{equation}
where in the second equality we used Cartan's magic formula $\Lie_Y = \iota_Y d + d \iota_Y$.

Next, we observe that
\begin{equation}\label{eq:hamiltonian-flow-conjugacy-intermediate}
	T_{-\beta} \circ \Phi_t^{\omega_0} \circ T_{\varphi_t^*\beta} (x, \xi) = T_{-\beta}(\phi_t x, \xi \circ d\phi_{-t}(\phi_tx) - hk \beta(\phi_t x)) = \Phi_t^{\omega_0}(x, \xi).
\end{equation}
Therefore, to show \eqref{eq:hamiltonian-flow-conjugacy}, it suffices to prove
\begin{equation}\label{eq:hamiltonian-flow-conjugacy'}
		T_{-\beta} \circ \Phi_t^{\omega_0} \circ T_{\beta(t)} = \Phi_t^{\omega_0} \circ T_{-\int_0^t \varphi_q^* \iota_Yd\beta\, dq} = \Phi_t^{\omega_{\mathbf{k}(h)}}, \quad \forall t \in \mathbb{R},
\end{equation}
where we used the formula for $\beta(t)$ in \eqref{eq:beta(t)} and \eqref{eq:hamiltonian-flow-conjugacy-intermediate}. It is a straightforward exercise to show that the left hand side of the preceding equation is a one-parameter group in $t$, and therefore it suffices to prove that the generators of both sides agree. We compute the generator of the left hand side, in a patch of local coordinates:
\begin{align*}
	&\partial_t|_{t = 0} \Phi_t^{\omega_0} \circ T_{-\int_0^t \phi_q^* \iota_Yd\beta\, dq}(x, \xi)\\ 
	&= \partial_t|_{t = 0} \left(\phi_t x, \xi \circ d\phi_{-t}(\phi_t x) + hk \int_0^t (\phi_q^* \iota_Yd\beta)(x) \circ d\phi_{-t}(\phi_t x) \, dq\right)\\
	&= \partial_t|_{t = 0} \left(\phi_t x, \sum_i \xi_i dx_i \circ d\phi_{-t}(\phi_t x) + hk \sum_{j, \ell}\int_0^t \phi_q^*\big(Y_{\ell} (\partial_{x_\ell} \beta_j - \partial_j \beta_{\ell}) dx_j\big) (\phi_tx)\, dq\right)\\
	&= H_{p_Y}^{\omega_0}(x, \xi) + hk \sum_{j, \ell} Y_{\ell} (\partial_{x_\ell} \beta_j - \partial_j \beta_{\ell}) \partial_{\xi_j} = H^{\omega_{\mathbf{k}(h)}}_{p_Y}(x, \xi),
\end{align*}
where in the third line we change the variable inside the integral and expanded the terms under the integral sign using $Y = \sum_{\ell} Y_{\ell} \partial_{x_{\ell}}$ and $\beta = \sum_j \beta_j dx_j$, in the second to last equality we used that $(\phi_t x, \xi \circ d\phi_{-t}(\phi_t x))$ is generated by $H_{p_Y}^{\omega_0}$, as well as the formula for differentiation under the integral sign, and finally in the last equality we used \eqref{equation:hvfield}.

Lastly, \eqref{equation:egorov-differentiel} follows by differentiating \eqref{equation:integral} in the $t$-variable and evaluating at $t=0$. \\

(iii) If $\iota_Y d\mathbf{F} = 0$, the formula for $\beta(t)$ in \eqref{eq:beta(t)} gives $\beta(t) = \varphi_t^*\beta$. Using the formula \eqref{eq:hamiltonian-flow-conjugacy-intermediate}, this implies that $\Phi_t^{\omega_{\mathbf{k}(h)}} = \Phi_t^{\omega_0}$, i.e. the twisted and non-twisted Hamiltonian flows agree. Thus the claim follows from (ii).
\end{proof}

Finally, we conclude with a remark on connections with Hölder regularity.

\begin{remark}
\label{remark:regularity}
Proposition \ref{proposition:egorov} will be used later on in the next chapter with the dynamical connection $\nabla := \nabla^{\mathrm{dyn}}$ and $Y := X$, the Anosov vector field. This connection is only Hölder continuous. We emphasize that, despite its lack of regularity, \eqref{equation:integral} still holds. As we shall see from the proof, the crucial fact is that, trivializing locally $\nabla^{\mathrm{dyn}} = d + i \beta$, one has $\dd \iota_X \beta = \mc{L}_X \beta \in C^\infty(U,T^*U)$ (since $\iota_X d\beta = \iota_X F_{\nabla^{\mathrm{dyn}}}=0$, see Lemma \ref{lemma:computations}).

Equivalently, one could also work with an arbitrary background smooth unitary connection $\nabla' = \nabla^{\mathrm{dyn}} + i\beta'$ on $L$ in order to compute principal symbols. In this case, \eqref{equation:integral} is replaced by (according to the transformation law \eqref{equation:translation})
\begin{equation*}
\sigma^{\nabla'}_{\mathbf{A}(t),k(h)}(x,\xi) = \sigma^{\nabla'}_{\mathbf{A},k(h)}(T_{\beta'}^{-1}\Phi_t^{\omega_0} T_{\beta'}(x,\xi)), \qquad \forall (x,\xi) \in T^*M,
\end{equation*}
where we recall $T_{\beta'}(x,\xi) = (x,\xi-hk(h)\beta'(x))$.

Since $\beta'$ is only Hölder continuous (but $\beta + \beta'$ is smooth), $T_{\beta'}$ is a homeomorphism of $T^*M$ but the flow $(T_{\beta'}^{-1}\Phi_t^{\omega_0}T_{\beta'})_{t \in \R}$ is smooth (since it is equal to the Hamiltonian flow computed with respect to the connection $\nabla'$), topologically conjugate to $(\Phi_t^{\omega_0})_{t \in \R}$, and generated by $H_{p_X} + hkV_{\mc{L}_X\beta'}$, where $V_\alpha \in C^\infty(TM,T(TM))$ is defined for a smooth $1$-form $\alpha \in C^\infty(M,T^*M)$ as the vertical vector field generating the flow $\psi_t(x,\xi) := (x,\xi+t\alpha(x))$. (Notice that $H_{p_X} + hkV_{\mc{L}_X\beta}= \dd T_{\beta'}^{-1}(H_{p_X} \circ T_\beta)$; although $T_{\beta'}^{-1}$ is not $C^1$, its differential is well-defined in the direction of $H_{p_X} \circ T_\beta$.)

 We also point out that, for the dynamical connection, $H_{p_X} \sigma^{\nabla^{\mathrm{dyn}}}_{A}$ is always well-defined. Indeed, writing locally $\nabla^{\mathrm{dyn}} = d + i\beta$, and $\mathbf{A}$ as in \eqref{equation:trivial}, we have $\sigma^{\nabla^{\mathrm{dyn}}}_{\mathbf{A},k(h)}(x,\xi) = \sigma_{A,k(h)}(T_\beta(x,\xi))$ and thus
 \[
 H_{p_X} \sigma_{\mathbf{A},k(h)}^{\nabla^{\mathrm{dyn}}} = \dd \sigma_{A,k(h)}(\dd T_\beta(H_{p_X})) = \dd \sigma_{A, k(h)}((H_{p_X}+ hkV_{\mc{L}_X\beta}) \circ T_\beta)
 \]
 is well-defined (and Hölder-continuous).
\end{remark}

More generally, the following holds regarding commutation of operators:

\begin{lemma}
Let $\mathbf{A} \in \Psi^m_{h,\mathbf{k}}(M,\mathbf{L}), \mathbf{B} \in \Psi^{m'}_{h,\mathbf{k}}(M,\mathbf{L})$. Then
\[
[\mathbf{A},\mathbf{B}] \in h\Psi^{m+m'-1}_{h,\mathbf{k}}(M,\mathbf{L}).
\]
Moreover, given a family $h \mapsto \mathbf{k}(h)$, one has:
\[
\sigma_{h^{-1}[\mathbf{A},\mathbf{B}],\mathbf{k}(h)} := -\dfrac{1}{i} H^{\omega_{\mathbf{k}(h)}}_{\sigma_{\mathbf{A},\mathbf{k}(h)}} \sigma_{\mathbf{B},\mathbf{k}(h)} = -\dfrac{1}{i}\{\sigma_{\mathbf{A},\mathbf{k}(h)},\sigma_{\mathbf{B},\mathbf{k}(h)} \}^{\omega_{\mathbf{k}(h)}},
\]
where $\{\bullet, \bullet\}^{\omega_{\mathbf{k}(h)}}$ denotes the Poisson bracket with respect to $\omega_{\mathbf{k}(h)}$.
\end{lemma}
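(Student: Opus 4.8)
The plan is to reduce the statement to the already-established Egorov theorem (Proposition \ref{proposition:egorov}) together with the algebra and symbol properties of Proposition \ref{proposition:proprietes}. First I would observe that $[\mathbf{A},\mathbf{B}] \in \Psi^{m+m'}_{h,\mathbf{k}}(M,\mathbf{L})$ is immediate from the algebra property \eqref{equation:mult-symbols}, and that its principal symbol vanishes: relative to any family $h \mapsto \mathbf{k}(h)$ we have $\sigma_{[\mathbf{A},\mathbf{B}],\mathbf{k}(h)} = \sigma_{\mathbf{A},\mathbf{k}(h)}\sigma_{\mathbf{B},\mathbf{k}(h)} - \sigma_{\mathbf{B},\mathbf{k}(h)}\sigma_{\mathbf{A},\mathbf{k}(h)} = 0$ since the (scalar-valued, in the untwisted case) symbols commute. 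By the kernel-of-the-symbol-map statement, Proposition \ref{proposition:proprietes}(ii), this forces $[\mathbf{A},\mathbf{B}] \in h\Psi^{m+m'-1}_{h,\mathbf{k}}(M,\mathbf{L})$, which is the first claim. (The case of operators acting on auxiliary vector bundles $E_1,E_2$ would instead give $h\Psi^{m+m'}$ in general, but for scalar bundles the symbols commute pointwise, so the gain of one order is genuine.)

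For the symbol identity, I would localise: fix a contractible chart $U$, unit trivialising sections $s_j$ with $\nabla_j s_j = i\beta_j\otimes s_j$, and cutoffs $\chi,\psi$, so that $\mathbf{A},\mathbf{B}$ are represented by uniform operators $A,B \in \Psi_{h,\mathbf{k}}(M)$ via \eqref{equation:trivial}. Then $h^{-1}[\mathbf{A},\mathbf{B}]$ is represented by $h^{-1}[A,B]$, and by the standard semiclassical commutator formula (e.g. \cite[Chapter 14]{Zworski-12}) one has $\sigma_{h^{-1}[A,B],\mathbf{k}(h)} = -\tfrac{1}{i}\{\sigma_{A,\mathbf{k}(h)},\sigma_{B,\mathbf{k}(h)}\}$, the Poisson bracket being taken with respect to the \emph{standard} symplectic form $\omega_0$ on $T^*M$. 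Now I would use the defining relation \eqref{equation:symbole-principal}, namely $\sigma_{\mathbf{A},\mathbf{k}(h)} = \sigma_{A,\mathbf{k}(h)}\circ T_{-\beta}$ where $T_{\beta}(x,\xi) = (x,\xi - h\mathbf{k}(h)\cdot\beta(x))$ is the fibrewise translation of \eqref{eq:fibrewise-translation}, together with the pullback behaviour of Poisson brackets under symplectomorphisms. The key point is that $T_{-\beta}$ pulls back $\omega_0$ to $\omega_{h,\mathbf{k}(h)} = \omega_0 + ih\mathbf{k}(h)\cdot\pi^*\mathbf{F}_\nabla$: indeed $T_{-\beta}^*(\sum d\xi_k\wedge dx_k) = \sum d\xi_k\wedge dx_k + h\mathbf{k}\cdot\pi^*d\boldsymbol{\beta}$, and $d\boldsymbol{\beta} = i\mathbf{F}_\nabla$ by the normalisation fixed before \eqref{equation:hvfield}. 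Hence $\{f\circ T_{-\beta}, g\circ T_{-\beta}\}^{\omega_0} = \{f,g\}^{\omega_{h,\mathbf{k}}}\circ T_{-\beta}$, and applying this with $f = \sigma_{A,\mathbf{k}(h)}$, $g = \sigma_{B,\mathbf{k}(h)}$ and composing with $T_{-\beta}$ on both sides gives exactly $\sigma_{h^{-1}[\mathbf{A},\mathbf{B}],\mathbf{k}(h)} = -\tfrac{1}{i}\{\sigma_{\mathbf{A},\mathbf{k}(h)},\sigma_{\mathbf{B},\mathbf{k}(h)}\}^{\omega_{\mathbf{k}(h)}}$.

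Finally I would check chart-independence and globality: the local formulas patch because the principal symbol is globally well-defined by Proposition \ref{proposition:proprietes}(i) and the twisted symplectic form $\omega_{h,\mathbf{k}}$ is intrinsically defined on $T^*M$ (it does not depend on the trivialisation, only on $\nabla$), so the Poisson bracket $\{\cdot,\cdot\}^{\omega_{\mathbf{k}(h)}}$ is too; the $d\geq 2$ case is identical with $\beta,\mathbf{F}_\nabla,\mathbf{k}$ read as vectors and $\mathbf{k}\cdot\beta = \sum_j k_j\beta_j$ throughout. The main obstacle I anticipate is purely bookkeeping: carefully tracking that the $T_{-\beta}$ appearing in the definition of the principal symbol is the \emph{same} translation (and in the same direction) as the one computing the pullback of $\omega_0$, so that the signs in $H^{\omega_{\mathbf{k}(h)}}_{\sigma_{\mathbf{A}}}\sigma_{\mathbf{B}} = \{\sigma_{\mathbf{A}},\sigma_{\mathbf{B}}\}^{\omega_{\mathbf{k}(h)}}$ come out consistently with the convention $\omega_{h,\mathbf{k}}(H_p^{\omega_{h,\mathbf{k}}},\cdot) = dp$ fixed in \eqref{equation:hp-vfield}; this is essentially the same computation already carried out in the proof of Proposition \ref{proposition:egorov}(ii) (cf. \eqref{eq:hamiltonian-flow-conjugacy}), so I would reuse that argument rather than redo it. $\qed$
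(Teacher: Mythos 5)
Your proof is correct in outline and takes a genuinely cleaner route than the paper's.

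For the first claim, you spell out what the paper compresses into a citation of Proposition \ref{proposition:proprietes}(iv): the algebra property gives $[\mathbf{A},\mathbf{B}]\in\Psi^{m+m'}_{h,\mathbf{k}}$ with principal symbol $\sigma_{\mathbf{A}}\sigma_{\mathbf{B}}-\sigma_{\mathbf{B}}\sigma_{\mathbf{A}}=0$ (scalar case), and then item (ii) delivers the $h$-gain. That is the correct combination, and your parenthetical caveat about operators with genuinely matrix-valued symbols is a sound observation, even though the statement here is scalar. For the symbol identity, the paper localises to $(AB'-B'A)$ in the uniform calculus, differentiates the Hamiltonian flow conjugacy, and verifies the identity $dT_{-\beta}(H^{\omega_0}_{\sigma_A}\circ T_\beta)=H^{\omega_{\mathbf{k}(h)}}_{\sigma_{\mathbf{A}}}$ by an explicit computation in local coordinates. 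You instead invoke the general fact that $T_\beta$ is a symplectomorphism from $(T^*M,\omega_{h,\mathbf{k}})$ to $(T^*M,\omega_0)$ and that Poisson brackets are natural under symplectomorphisms. This replaces the paper's coordinate computation with a one-line conceptual argument, and is arguably the ``right'' way to present the proof; the two approaches encode the same content (the coordinate computation is exactly the verification that $T_\beta^*\omega_0=\omega_{h,\mathbf{k}}$ at the level of Hamiltonian vector fields), but your route makes the geometric mechanism transparent and would also cover the general Egorov statement more uniformly.

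There is, however, a cluster of sign slips in your write-up that you should fix, even though they happen to cancel. From \eqref{equation:symbole-principal}, the defining relation is $\sigma_{\mathbf{A},\mathbf{k}(h)}(x,\xi)=\sigma_{A,\mathbf{k}(h)}(x,\xi-h\mathbf{k}(h)\cdot\beta(x))$, i.e. $\sigma_{\mathbf{A}}=\sigma_A\circ T_\beta$, not $\sigma_A\circ T_{-\beta}$. Correspondingly, since $\nabla_j s_j=i\beta_j\otimes s_j$ forces $F_{\nabla_j}=i\,d\beta_j$ and hence $d\boldsymbol{\beta}=-i\mathbf{F}_\nabla$, one computes $T_\beta^*\omega_0=\omega_0-h\mathbf{k}\cdot\pi^*d\boldsymbol{\beta}=\omega_0+ih\mathbf{k}\cdot\pi^*\mathbf{F}_\nabla=\omega_{h,\mathbf{k}}$; it is $T_\beta$, not $T_{-\beta}$, that pulls $\omega_0$ back to the twisted form. (Be aware that the paper's own text before \eqref{equation:hvfield} contains the stray sign $d\boldsymbol{\beta}=i\mathbf{F}_\nabla$ which you adopted; the proof of this very lemma uses the opposite, correct, convention $iF_\nabla=-d\beta$.) Once these two signs are corrected the argument reads: $\sigma_{h^{-1}[\mathbf{A},\mathbf{B}]}=\sigma_{h^{-1}[A,B]}\circ T_\beta=-\tfrac{1}{i}\{\sigma_A,\sigma_B\}^{\omega_0}\circ T_\beta=-\tfrac{1}{i}\{\sigma_A\circ T_\beta,\sigma_B\circ T_\beta\}^{\omega_{\mathbf{k}(h)}}=-\tfrac{1}{i}\{\sigma_{\mathbf{A}},\sigma_{\mathbf{B}}\}^{\omega_{\mathbf{k}(h)}}$, and the Poisson-bracket naturality is applied with the symplectomorphism and the symbols on matching sides. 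As written, your two slips happen to compensate, so the final formula you state is the right one, but the intermediate formula $\{f\circ T_{-\beta},g\circ T_{-\beta}\}^{\omega_0}=\{f,g\}^{\omega_{h,\mathbf{k}}}\circ T_{-\beta}$ is not the instance of naturality that your chain of equalities actually needs, and a careful reader will stumble on it.
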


\begin{remark}
\label{remark:egorov-general} The previous commutator relation could be integrated to yield a general version of Egorov's theorem, namely to prove that $e^{it\mathbf{B}/h}\mathbf{A}e^{-it\mathbf{B}/h} \in \Psi^m_{h,\mathbf{k}}(M,\mathbf{L})$. The proof of that fact follows \emph{verbatim} the proof for the standard semiclassical calculus, see \cite[Theorem 15.2]{Zworski-12} for instance.
\end{remark}

\begin{proof}
That $[\mathbf{A},\mathbf{B}] \in h\Psi^{m+m'-1}_{h,\mathbf{k}}(M,\mathbf{L})$ follows from Proposition \ref{proposition:proprietes}, item (iv). To compute the principal symbol, for simplicity we assume $d = 1$. Let $U$ be an open set and $s$ a trivialising section of $(L, \nabla)$ over $U$, and $\nabla s = i\beta \otimes s$ for some $1$-form $\beta$ on $U$. Then let $\chi, \psi \in C^\infty_{\comp}(U)$, and let $\theta, \theta' \in C^\infty_{\comp}(U)$ such that $\theta = 1$ on $\supp(\psi)$ and $\theta' = 1$ on $\supp(\theta)$. By definition, there are operators $A, B, A', B' \in \Psi^\bullet_h(M)$ such that 
\begin{align*}
	s^{-k}\chi \mathbf{A} \theta'(f s^k) &= A(f),\quad \chi s^{-k}\mathbf{B} \theta'(f s^k) = B'(f)\\
	s^{-k}\theta \mathbf{B} \psi(fs^k) &= B(f),\quad s^{-k}\theta \mathbf{A} \psi(f s^k) = A'(f).
\end{align*}
Then we have
\begin{align*}
	s^{-k} \chi [\mathbf{A}, \mathbf{B}] \psi (f s^k) = (AB' - B'A)(f) + \mc{O}_{\Psi_h^{-\infty}}(h^\infty) (f).
\end{align*}
Therefore the principal symbol on $\{\theta = \theta' = \chi = \psi = 1\}$ is given by
\begin{align*}
	\sigma_{h^{-1}[\mathbf{A}, \mathbf{B}]} = -\frac{1}{i} H_{\sigma_A}^{\omega_0} \sigma_B \circ T_\beta = -\frac{1}{i} \partial_t|_{t = 0} \sigma_B \circ T_\beta \circ T_{-\beta} \circ \Phi_{t, \sigma_A}^{\omega_0} \circ T_\beta,
\end{align*}
where $\Phi_{t, \sigma_A}^{\omega_0}$ is the Hamiltonian flow of $\sigma_A$ with respect to $\omega_0$. Thus, it suffices to prove that 
\[
	T_{-\beta} \circ \Phi_{t, \sigma_A}^{\omega_0} \circ T_\beta = \Phi_{t, \sigma_{\mathbf{A}}}^{\omega_{\mathbf{k}(h)}},
\]
where on the right hand side is the Hamiltonian flow of $\sigma_{\mathbf{A}} = \sigma_A \circ T_\beta$ with respect to $\omega_{\mathbf{k}(h)}$. Differentiating at zero, it suffices to show
\[
	dT_{-\beta} (H^{\omega_0}_{\sigma_A} \circ T_\beta) = H^{\omega_{\mathbf{k}(h)}}_{\sigma_{\mathbf{A}}}.
\]
In local coordinates, the vector field on the right hand side is given by, using \eqref{equation:hvfield},
\begin{align*}
	&H^{\omega_{\mathbf{k}(h)}}_{\sigma_{\mathbf{A}}} = \sum_{j = 1}^n\big(-(\partial_{\xi_j} \sigma_A \circ T_\beta) \cdot \partial_{x_j} + (\partial_{x_j} \sigma_A \circ T_\beta) \cdot \partial_{\xi_j}\big)\\ 
	&- hk \sum_{i, j = 1}^n \partial_{x_j} \beta_i \cdot (\partial_{\xi_i} \sigma_A \circ T_\beta) \cdot \partial_{\xi_j} + hk \sum_{i, j = 1}^n (\partial_{\xi_i} \sigma_A \circ T_\beta) \cdot (\partial_{x_j} \beta_{\ell} - \partial_{x_i} \beta_j) \cdot \partial_{\xi_j}\\
	&= -\sum_{j = 1}^n\big((\partial_{\xi_j} \sigma_A \circ T_\beta) \cdot \partial_{x_j} + (\partial_{x_j} \sigma_A \circ T_\beta) \cdot \partial_{\xi_j}\big) - hk \sum_{i, j = 1}^n(\partial_{\xi_i} \sigma_A \circ T_\beta) \cdot \partial_{x_i} \beta_j \cdot \partial_{\xi_j}\\
	&= dT_{-\beta} (H^{\omega_0}_{\sigma_A} \circ T_\beta),
\end{align*}
where in the first equality we used that $iF_{\nabla} = -d\beta$. This proves the preceding claim and completes the proof.
\end{proof}

A consequence of Proposition \ref{proposition:egorov} is the standard lemma of propagation of singularities. We emphasize that the elliptic and wavefront sets in the next proposition are all computed with respect to the same connection $\nabla$. 

\begin{proposition}
\label{proposition:propagation}
Let $Y \in C^\infty(M,TM)$ be a vector field, $\nabla$ a unitary connection on $L$ such that $\iota_Y \mathbf{F}_{\nabla} = 0$, and set $\Y := \nabla_Y$. Let $\mathbf{A}, \mathbf{A}' \in \Psi^{\comp}_{h,\mathbf{k}}(F,\mathbf{L})$ be compactly microlocalised such that the following holds: for all $(x,\xi) \in \WF(\mathbf{A})$, there exists $t \in \R$ such that $\Phi_t^{\omega_0}(x,\xi) \in \Ell(\mathbf{A}')$. Then, there exists $\mathbf{B} \in \Psi^{\comp}_{h,\mathbf{k}}(F,\mathbf{L})$, compactly microlocalised such that for all $s \in \R, N > 0$, there exists $C >0$ such that for all $f_{h,\mathbf{k}} \in C^\infty(M,\Lk)$, the following holds:
\begin{equation}
\label{equation:propagation-classique}
\|\mathbf{A} f_{h,\mathbf{k}}\|_{H^s_h} \leq C\left(\|\mathbf{B} \mathbf{Y}f_{h,\mathbf{k}}\|_{H^s_h} + \|\mathbf{A}' f_{h,\mathbf{k}}\|_{H^s_h} + h^N \|f_{h,\mathbf{k}}\|_{H^{-N}_h} \right).
\end{equation}
Moreover, if $f_{h,\mathbf{k}}$ is merely a distribution and the right-hand side is finite, then $\mathbf{A}f_{h,\mathbf{k}} \in H^s_h$ and \eqref{equation:propagation-classique} holds.
\end{proposition}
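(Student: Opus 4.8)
The statement is the standard semiclassical propagation of singularities estimate, now in the line-bundle calculus $\Psi^\bullet_{h,\mathbf{k}}(F,\mathbf{L})$. The plan is to reduce it to the already-established Egorov theorem (Proposition \ref{proposition:egorov}), the elliptic parametrix construction (Proposition \ref{proposition:ellipticity}), and a positive-commutator (energy) argument. Since $\iota_Y \mathbf{F}_\nabla = 0$ by hypothesis, Proposition \ref{proposition:egorov}(iii) applies: conjugation by $e^{t\mathbf{Y}}$ transports symbols along the \emph{untwisted} symplectic lift $\Phi_t^{\omega_0}$, which is exactly what is needed for the bicharacteristics to be those of the flow of $Y$ on $M$, lifted to $\overline{T^*F}$. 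I will carry the argument at a fixed family $h \mapsto \mathbf{k}(h)$ throughout, since all the relevant sets (elliptic set, wavefront set) are defined relative to such families and the uniform statement follows by the standard quantifier-inversion argument (Remark \ref{remark:uniform}); alternatively one runs the whole argument uniformly in $\mathbf{k}$ using the global $\Ell$, $\WF$ from \eqref{equation:ell}, \eqref{equation:wf}.

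\textbf{Key steps.} First I would fix $(x_0,\xi_0) \in \WF(\mathbf{A})$ and, by compactness of $\WF(\mathbf{A})$, reduce to a microlocal neighbourhood; by hypothesis there is $T = T(x_0,\xi_0)$ with $\Phi_T^{\omega_0}(x_0,\xi_0) \in \Ell(\mathbf{A}')$. Second, following the classical scheme (as in \cite[Theorem 4.29, Chapter 5]{Zworski-12} or \cite[Appendix E]{Dyatlov-Zworski-19}), I would construct an escape/propagation symbol: pick a compactly supported symbol $a$ supported near the flow segment $\{\Phi_t^{\omega_0}(x_0,\xi_0) : t \in [0,T]\}$, elliptic near $(x_0,\xi_0)$, and such that $H_{p_Y}^{\omega_0} a = -b^2 + e$, where $b$ is elliptic on $\WF(\mathbf{A})$ near $(x_0,\xi_0)$ and $e$ is supported in $\Ell(\mathbf{A}')$; quantize to get $\mathbf{A}_0 = \Op(a)$, $\mathbf{B} = \Op(b)$ (this defines the operator $\mathbf{B}$ in the statement), all compactly microlocalised. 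Third, by Proposition \ref{proposition:egorov}(iii)–\eqref{equation:egorov-differentiel}, the principal symbol of $[\mathbf{Y}, \mathbf{A}_0]$ (here $\mathbf{Y}$ is skew-adjoint up to $h^0$ terms from $\Div$, handled as lower order) is $H_{p_Y}^{\omega_0}a$, so
\[
\tfrac{1}{h}\big([\mathbf{Y},\mathbf{A}_0^*\mathbf{A}_0]\big) = -\mathbf{B}^*\mathbf{B} + \mathbf{E}^*\mathbf{E} + \mathbf{R}_1, \qquad \mathbf{R}_1 \in h\Psi^{\comp}_{h,\mathbf{k}}(F,\mathbf{L}),
\]
with $\WF(\mathbf{E}) \subset \Ell(\mathbf{A}')$. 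Fourth, I would pair this against $f_{h,\mathbf{k}}$: integrate $\partial_t \langle \mathbf{A}_0(t)^*\mathbf{A}_0(t) f, f\rangle$ over $[0,T]$, or equivalently apply the commutator identity directly to $\langle [\mathbf{Y},\mathbf{A}_0^*\mathbf{A}_0]f,f\rangle = 2\Re\langle \mathbf{A}_0^*\mathbf{A}_0 f, \mathbf{Y}f\rangle + (\text{l.o.t.})$, to obtain
\[
\|\mathbf{B}f\|_{L^2}^2 \leq C\big(\|\mathbf{A}_0 f\|\,\|\mathbf{B}'\mathbf{Y}f\| + \|\mathbf{E}f\|^2 + h\|f\|_{H^{-N}}^2 + h^\infty\|f\|_{H^{-N}}^2\big),
\]
for a suitable microlocal cutoff $\mathbf{B}'$; absorbing $\|\mathbf{A}_0 f\|$ (controlled by $\|\mathbf{B}f\| + \|\mathbf{E}f\| + h^\infty$ via ellipticity of $\mathbf{A}_0$ on a neighbourhood where $b,e$ are elliptic, Proposition \ref{proposition:ellipticity}) gives control of $\|\mathbf{A}f\|_{L^2}$ by $\|\mathbf{B}\mathbf{Y}f\| + \|\mathbf{A}'f\| + h^N\|f\|_{H^{-N}}$. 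Fifth, to get the statement in general Sobolev order $s$, conjugate by the elliptic operator $\mathbf{\Delta}(s) \in \Psi^s_{h,\mathbf{k}}(F,\mathbf{L})$ from \S\ref{sssection:sobolev-spaces-1}: $\mathbf{\Delta}(s)\mathbf{Y}\mathbf{\Delta}(-s) = \mathbf{Y} + h\Psi^{\comp}$ and commutators with $\mathbf{A},\mathbf{B}$ are lower order, so the $H^s_h$-estimate reduces to the $L^2$ one. Finally, for the distributional statement: run the standard regularization argument, replacing $f_{h,\mathbf{k}}$ by $\mathbf{\Delta}(-\epsilon)f_{h,\mathbf{k}} \in H^\infty$, deriving the estimate with constants uniform in $\epsilon$, and letting $\epsilon \to 0$; finiteness of the right-hand side and a commutator bound $[\mathbf{A},\mathbf{\Delta}(-\epsilon)] = \mathcal{O}(1)$ in the appropriate sense give $\mathbf{A}f \in H^s_h$.

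\textbf{Main obstacle.} The routine part is the symbol calculus and the energy estimate, which transcribe verbatim from \cite[Chapter 5]{Zworski-12}. The one genuine point requiring care is that the Egorov/commutator formula \eqref{equation:egorov-differentiel} is stated relative to a \emph{fixed} connection $\nabla$ and a fixed family $h \mapsto \mathbf{k}(h)$, whereas the hypothesis ``$\Phi_t^{\omega_0}(x,\xi) \in \Ell(\mathbf{A}')$ for some $t$'' and the conclusion should be read with the \emph{global} $\Ell$ and $\WF$. The resolution is that, because $\iota_Y\mathbf{F}_\nabla = 0$, the bicharacteristic flow $\Phi_t^{\omega_0}$ is \emph{independent} of $\mathbf{k}(h)$ — this is precisely the content of Proposition \ref{proposition:egorov}(iii) — so the same escape function $a$ works simultaneously for every family, and one gets the estimate uniformly in $\mathbf{k}$ by the quantifier-inversion argument of Remark \ref{remark:uniform}. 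A secondary technical nuisance is that $\mathbf{Y} = \iota_Y \nabla_{\mathbf{k}}$ is not exactly skew-adjoint: $\mathbf{Y}^* = -\mathbf{Y} - \Div(Y)$, with $\Div(Y) \in C^\infty(M)$ a fixed zeroth-order multiplier commuting with everything modulo $h$; this contributes only lower-order terms to the commutator identity and is absorbed into $\mathbf{R}_1$.
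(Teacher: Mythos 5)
Your proof is correct and takes the same approach the paper indicates. The paper's own proof is a one-line deferral (``verbatim the same as the non-twisted case, see \cite[Chapter 6, \S6.2]{Lefeuvre-book}''), and you have supplied exactly the standard positive-commutator/escape-function argument that this reference contains, correctly identifying the one genuinely twist-specific point: since $\iota_Y \mathbf{F}_\nabla = 0$, Proposition \ref{proposition:egorov}(iii) gives that symbols are transported by the untwisted flow $\Phi_t^{\omega_0}$ uniformly in the family $h\mapsto\mathbf{k}(h)$, so a single escape function serves all families and the global $\Ell$/$\WF$ sets can be used (via Remark \ref{remark:uniform}).
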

\begin{proof}
	The proof is \emph{verbatim} the same as in the non-twisted case. We refer to \cite[Chapter 6, \S6.2]{Lefeuvre-book} for details.
\end{proof}
In the general case $\iota_Y \mathbf{F}_{\nabla} \neq 0$, a similar result holds but the Hamiltonian flows $\Phi^{\omega_{\mathbf{k}(h)}}$ vary according to the family $h \mapsto \mathbf{k}(h)$, and the statement is more complicated to formulate. In particular, the assumption ``for all $(x,\xi) \in \WF(\mathbf{A})$, there exists $t \in \R$ such that $\Phi_t^{\omega_{\mathbf{k}(h)}}(x,\xi) \in \Ell(\mathbf{A}')$'' can be made only along a single family $h \mapsto \mathbf{k}(h)$ and in this case, the conclusions of the previous proposition hold along this subsequence only. The assumption $\iota_Y \mathbf{F}_{\nabla} = 0$ allows to reduce to a single flow $\Phi_t^{\omega_0}$ on $T^*M$ (see Proposition \ref{proposition:egorov}, proof of Item (iii)).



\section{Quantization on principal bundles}

\label{ssection:twiste-line-bundles}

We now introduce the algebra of Borel-Weil operators. These operators are defined as families of operators acting diagonally on the Fourier decomposition of a function on $P$ into a sum of fiberwise holomorphic sections on $F$ (with values in $\mathbf{L}^{\otimes \mathbf{k}}$). Furthermore, it is required that this action on $C^\infty_{\mathrm{hol}}(F,\mathbf{L}^{\otimes \mathbf{k}})$ is the restriction of a larger action on \emph{all} sections $C^\infty(F,\mathbf{L}^{\otimes \mathbf{k}})$ by an operator that fits in the previous calculus introduced in \S\ref{ssection:quantization-line-bundles}. Of course, this calculus is designed in such a way that any natural right-invariant differential operator on $P$ (e.g. horizontal lift of a vector field, connection, horizontal or vertical Laplacian, etc.) will belong to it.

Let $M$ be a closed manifold, $P \to M$ be a $G$-principal bundle, where $G$ is a compact Lie group. Denote by $F\to M$, the flag bundle over $M$. Recall that the irreducible $G$-representations are classified by $\widehat{G} \simeq (\Z^a \times \Z^b_{\geq 0})/\sim$ (see \eqref{equation:ecriture}). Let $d=a+b$ and let $L_1, \dotsc, L_d \to F$ be the fiberwise holomorphic complex Hermitian line bundles over $F$ appearing in the Borel-Weil theorem (see \S \ref{sssection:flag-manifold-bundle}). 


\subsection{Basic notions} We will use the calculus introduced in \S\ref{ssection:quantization-line-bundles}. The base manifold will be $F$, the flag bundle, and the line bundles $L_1, \dotsc, L_a, L_{a+1}, \dotsc, L_d \to F$ are the ones that appear in the Borel-Weil theorem. Recall that $\Pi_{\mathbf{k}}$ is the fibrewise holomorphic projection introduced in \S \ref{sssection:fibrewise-holomorphic-projection}.

\subsubsection{Definition} We proceed to define the Borel-Weil calculus. We start with the following definition:

\begin{definition}[Admissible operators]
An \emph{admissible} $h$-semiclassical pseudodifferential operator $\mathbf{A} \in \Psi^m_{h, \mathbf{k}}(F,\mathbf{L})$ is a family of operators such that for $h > 0$, $\mathbf{k} \in \widehat{G}$ and $h|\mathbf{k}| \leq 1$,
\begin{equation}
\label{equation:commutation-holomorphic}
[\mathbf{A},\Pi_{\mathbf{k}}], \quad \Pi_{\mathbf{k}}\mathbf{A}(\mathbbm{1}-\Pi_{\mathbf{k}}), \quad (\mathbbm{1}-\Pi_{\mathbf{k}}) \mathbf{A}~\Pi_{\mathbf{k}} \quad \in h^\infty\Psi^{-\infty}_{h,\mathbf{k}}(F,\mathbf{L}).
\end{equation}
In other words, $\mathbf{A}$ preserves fiberwise holomorphicity modulo negligible remainders.
\end{definition}

Note that by Lemma \ref{lemma:eat-pik}, the first condition in \eqref{equation:commutation-holomorphic} implies the other two. Moreover, we note that negligibility of the first term in \eqref{equation:commutation-holomorphic} follows from that of the latter two. We can now define the pseudodifferential operators in the Borel-Weil calculus:

\begin{definition}[$h$-semiclassical Borel-Weil pseudodifferential operators]
For $m \in \R$, define the class of $h$-semiclassical Borel-Weil pseudodifferential operators as
\[
\Psi^m_{h, \mathrm{BW}}(P) := \{ \Pi_{\mathbf{k}} \mathbf{A}\Pi_{\mathbf{k}} ~|~ \mathbf{A} \in \Psi^m_{h, \mathbf{k}}(F,\mathbf{L}) \text{ admissible}\}.
\]
\end{definition}

We shall verify below that $\Psi^\bullet_{h,\mathrm{BW}}(P)$ is indeed an algebra satisfying all the standard properties of the pseudodifferential calculus. 

\begin{remark}
One can also define the Borel-Weil operators as $\Pi_{\mathbf{k}}\mathbf{A}\Pi_{\mathbf{k}}$, where $\mathbf{A} \in \Psi^m_{h,\mathbf{k}}(F,\mathbf{L})$ is \emph{any} pseudodifferential operator (not necessarily admissible). It can be verified that this would actually define the same classes. The advantage of working directly with admissible operators is twofold: first, it is clear that all natural geometric operators that appear in various problems (such as in Chapter \ref{chapter:flow} and \ref{chapter:hypoelliptic}) actually \emph{commute} with the holomorphic projection and are thus admissible; second, the algebra property is trivial to verify for admissible operators whereas it is not for general operators, and the proof of this fact would actually go through verifying that one can replace $\Pi_{\mathbf{k}}\mathbf{A}\Pi_{\mathbf{k}}$ (for an arbitrary $\mathbf{A} \in \Psi^m_{h,\mathbf{k}}(F,\mathbf{L})$) by $\Pi_{\mathbf{k}}\mathbf{A}'\Pi_{\mathbf{k}}$ where $\mathbf{A}'$ is admissible, modulo smoothing remainder. We point out that a similar discussion already appears in the foundational paper by Boutet de Monvel and Guillemin on Toeplitz operators, see \cite[Theorem 2.9, Proposition 2.13]{BoutetDeMonvel-Guillemin-81}.
\end{remark}

Operators in $\Psi^m_{h, \mathrm{BW}}(P)$ only act non-trivially on fiberwise holomorphic sections. Given a family $f_{h,\mathbf{k}} \in C^\infty_{\mathrm{hol}}(F,\mathbf{L}^{\otimes \mathbf{k}})$, since $h \overline{\partial}_{\mathbf{k}} f_{h,\mathbf{k}} = 0$ and $h \overline{\partial}_{\mathbf{k}}$ is elliptic outside of $\overline{\HH^*_F}$ (see \S \ref{sssection:examples}, Item (iv)), the microlocal mass of $f_{h,\mathbf{k}}$ can only accumulate on $\overline{\HH^*_F}$ (i.e. $\mathbf{A} f_{h, \mathbf{k}(h)} = \mc{O}_{C^\infty}(h^\infty)$ if $\WF(\mathbf{A}) \cap \overline{\HH^*_F} = \emptyset$). As a consequence, only the microlocal behaviour of operators on $\overline{\HH^*_F}$ is relevant in the calculus $\Psi^m_{h, \mathrm{BW}}(P)$. From now on, we drop the index $F$ for $\HH_F^*,\V_F^*$; unless explicitly mentioned, $\HH^*$ and $\V^*$ will always refer to $\HH_F^*$ and $\V_F^*$, respectively.

We will also further assume that a $G$-equivariant connection $\nabla$ has been chosen on $P$. This induces a connection on all associated bundles. In particular, we have a connection $\overline{\nabla}_{\mathbf{k}}$ on all $\mathbf{L}^{\otimes \mathbf{k}} \to F$ as defined in \eqref{equation:dynamical-lambda}. For each $m$, define the \emph{horizontal symbol space} $S^m_{h, \mathrm{BW}}(\HH^*)$ as the space of restrictions of symbols in $S^m_{h,\mathbf{k}}(T^*F)$ to $\HH^* \subset T^*F$.

\subsubsection{Relation to fiberwise Toeplitz quantization} We now discuss how the Borel-Weil quantization is related to a fiberwise Toeplitz quantization on the flag manifold. This is the content of the following:

\begin{theorem}
\label{theorem:penible}
	There exists $\psi \in C^\infty(P \times P)$ such that for all admissible $\mathbf{A}' \in \Psi_{h, \mathbf{k}}(F, \mathbf{L})$, $\mathbf{A} := \Pi_{\mathbf{k}} \mathbf{A}' \Pi_{\mathbf{k}} \in \Psi^m_{h, \mathrm{BW}}(P)$ can be written as
\begin{equation}
\label{equation:penible}
\begin{split}
\mathbf{A}f(w_{\ell}) & = \dfrac{1}{(2\pi h)^n} \int_M \int_{T^*_{x_\ell}M} e^{-\tfrac{i}{h}\xi\cdot\exp_{x_\ell}^{-1}(x_r)} \\
& \tau_{x_r \to x_\ell} \left(\Pi_{\mathbf{k}}(x_r) \left(\psi(w_\ell, \bullet) a_{h,\mathbf{k}}(w_{\ell}, \bullet, d\pi^{\top}\xi) f(\bullet)|_{F_{x_r}}\right)\right)(w_\ell) \, \dd \xi\dd x_r   + \mc{O}_{C^\infty}(h^\infty),
\end{split}
\end{equation}
where $a_{h,\mathbf{k}} \in S^m_{h,\mathbf{k}}(F \times \HH^*)$, satisfies for $w \in F, \xi \in T^*_xM$, $x=\pi(w)$,
\begin{equation}
\label{equation:symbol-expansion}
a_{h,\mathbf{k}}(w,w,d\pi^\top\xi) = \sigma_{\mathbf{A}'}|_{\HH^*}(w,d\pi^\top\xi) + \mc{O}_{S^{m-1}(\HH^*)}(h),
\end{equation}
and \eqref{equation:penible} holds for all $f \in C^\infty(F,\mathbf{L}^{\otimes \mathbf{k}})$. 
\end{theorem}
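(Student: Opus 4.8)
\textbf{Proof plan for Theorem \ref{theorem:penible}.}

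The plan is to reduce the statement to a purely local computation in a coordinate patch where $P$ trivializes, then assemble the pieces via a partition of unity. First I would choose a finite cover of $M$ by contractible open sets $U_\alpha$ over which $P|_{U_\alpha} \simeq U_\alpha \times G$, hence $F|_{U_\alpha} \simeq U_\alpha \times G/T$ and $\mathbf{L}^{\otimes\mathbf{k}}|_{F|_{U_\alpha}}$ trivializes as in Definition \ref{definition:pdo-calcul-line} with unimodular frames $\mathbf{s}^{\otimes\mathbf{k}}$. Pick a subordinate partition of unity and, for a fixed patch, write the admissible operator $\mathbf{A}'$ in the form \eqref{equation:trivial} so that locally it becomes a uniform semiclassical operator $A'_{h,\mathbf{k}}$ acting on $C^\infty(U_\alpha, E)$ with values in the fibrewise spaces $C^\infty(G/T,\mathbf{J}^{\otimes\mathbf{k}})$. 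Using the quantization formula \eqref{equation:quantization-rn} with geodesic normal coordinates centered at $x_\ell$ (replacing $x-y$ by $\exp_{x_\ell}^{-1}(x_r)$ and absorbing the Jacobian into a smooth amplitude), one gets an oscillatory-integral representation of $A'_{h,\mathbf{k}}$ whose amplitude is a symbol $a'_{h,\mathbf{k}}(x_\ell,x_r,\xi)$ with values in fibrewise operators on $G/T$; by the principal symbol formula \eqref{equation:symbole-principal} of Proposition \ref{proposition:proprietes}(i) together with the computation of $\sigma_{h\overline{\partial}_\mathbf{k}}$ in \S\ref{sssection:examples}(iv), the restriction of this amplitude to $\HH^*$ (the only relevant part, since fibrewise holomorphic sections are microlocalized to $\overline{\HH^*}$) reproduces \eqref{equation:symbol-expansion}.

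The next step is to conjugate by the fibrewise projections. Writing $\mathbf{A} = \Pi_{\mathbf{k}}\mathbf{A}'\Pi_{\mathbf{k}}$ and inserting the explicit Bergman-type kernel of $\Pi_{\mathbf{k}}$ from Theorem \ref{theorem:bergman} (with phase $\boldsymbol\varphi$), the left $\Pi_{\mathbf{k}}(x_\ell)$ acts in the $w_\ell$-variable, while the right $\Pi_{\mathbf{k}}(x_r)$ is carried inside the $x_r$-integral; since parallel transport $\tau^{\mathbf{k}}_\gamma$ commutes with $\Pi_{\mathbf{k}}$ by \eqref{eq:parallel-transport-fibrewise-holomorphic} (Proposition \ref{proposition:parallel-transport-fibrewise-holomorphic}), one can commute the transport $\tau_{x_r\to x_\ell}$ past the projection. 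The smooth function $\psi(w_\ell,\cdot)$ appearing in \eqref{equation:penible} will be manufactured from the Bergman kernel normalization $\mathbf{E}^{\otimes\mathbf{k}}(x,x)=\mathbbm{1}$ and a microlocal cutoff that localizes $x_r$ near $x_\ell$ (beyond the injectivity radius the phase $\xi\cdot\exp_{x_\ell}^{-1}(x_r)$ is non-stationary and contributes $\mc O_{C^\infty}(h^\infty)$ after integration by parts); admissibility \eqref{equation:commutation-holomorphic} is exactly what guarantees that the error terms generated when commuting $\Pi_{\mathbf{k}}$ through $\mathbf{A}'$ and through the oscillatory integral are in $h^\infty\Psi^{-\infty}_{h,\mathbf{k}}(F,\mathbf{L})$, hence contribute $\mc O_{C^\infty}(h^\infty)$ by Lemma \ref{lemma:help} and Lemma \ref{lemma:eat-pik}.

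Finally I would check that the local representations glue: the transition functions between frames $\mathbf{s}^{\otimes\mathbf{k}}$ on overlaps are phases $e^{i\mathbf{k}\cdot\omega}$ with $h|\mathbf{k}|\le 1$, and Lemma \ref{lemma:appendix} shows the oscillatory integral transforms correctly under such phase conjugations, while the change-of-coordinates invariance of the principal symbol (Proposition \ref{proposition:proprietes}(i)) ensures \eqref{equation:symbol-expansion} is patch-independent up to $\mc O(h)$. Summing over the partition of unity then yields the global formula \eqref{equation:penible} with $a_{h,\mathbf{k}}\in S^m_{h,\mathbf{k}}(F\times\HH^*)$ and $\psi\in C^\infty(P\times P)$. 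The main obstacle I expect is the careful bookkeeping in the second step: tracking that \emph{all} the remainders produced when interchanging the $x_r$-integral, the two projections, the parallel transport, and the stationary-phase reduction of the $y$-variable (as in Lemma \ref{lemma:appendix}) are genuinely $\mc O_{C^\infty}(h^\infty)$ uniformly in $\mathbf{k}$ with $h|\mathbf{k}|\le1$ — this requires combining the uniform semiclassical estimates of \S\ref{ssection:uniform-scl} with the off-diagonal Gaussian decay of the Bergman kernel encoded in the non-degeneracy of the Hessian of $\boldsymbol\varphi$ in \eqref{equation:phase-important}, and is where the bulk of the technical work lies.
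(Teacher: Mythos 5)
You have the right ingredients in view -- the Bergman kernel expansion, dropping or commuting one projection via admissibility, localization near the diagonal by non-stationarity, and the relevance of the Hessian of $\boldsymbol\varphi$ -- but the structural core of the proof is missing, and the way you frame the role of the Hessian is off.

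The central technical step, not named in your plan, is a \emph{complex} stationary phase in the \emph{flag-fiber variables}. After you use admissibility to write $\mathbf{A}=\mathbf{A}'\Pi_{\mathbf{k}}+\mathcal{O}_{C^\infty}(h^\infty)$, insert the Bergman kernel $\Pi_{\mathbf{k}}(w,w_r)=b_{\mathbf{k}}\,e^{i\mathbf{k}\cdot\boldsymbol\psi(w,w_r)}\mathbf{s}^{\otimes\mathbf{k}}(w)\overline{\mathbf{s}}^{\otimes\mathbf{k}}(w_r)$ and the oscillatory-integral form of $\mathbf{A}'$, you end up with an integral over \emph{all of} $(w,\xi)\in F\times T^*_{w_\ell}F$, i.e.\ $\dim F$ space variables and $\dim F$ frequency variables, normalized by $(2\pi h)^{-\dim F}$. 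To arrive at the claimed formula \eqref{equation:penible}, which integrates only over $T^*_{x_\ell}M$ with normalization $(2\pi h)^{-n}$, you must integrate out the vertical pair $(w\in F_{x_r},\,\xi_{\V^*})$ by stationary phase. That phase
\[
\Omega_{w_\ell,w_r,\xi_{\HH^*}}(w,\xi_{\V^*})=h\mathbf{k}\cdot\Bigl(\boldsymbol\psi(w,w_r)-\int_w^{w_\ell}\boldsymbol\beta\Bigr)-\xi\cdot\exp_{w_\ell}^{-1}(w)
\]
has a non-trivial imaginary part (through $\boldsymbol\varphi=-\log|\mathbf{E}|^2$), so you need the Melin--Sjöstrand complex stationary phase theorem rather than the real one used in Lemma \ref{lemma:appendix} (which is what you actually cite when you say ``stationary-phase reduction of the $y$-variable''). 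This requires: (a) computing the critical point $w=\tau_{x_\ell\to x_r}w_\ell$ and the associated $\xi_{\V^*}$, a genuinely geometric computation involving parallel transport and Jacobi fields along geodesics in $F$; (b) verifying that the imaginary part of the phase vanishes to the right order at the critical point on the diagonal; and (c) computing the determinant of the Hessian (it equals $1$ on the diagonal). None of these steps appears in your plan.

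Relatedly, you treat the non-degeneracy of $\mathrm{Hess}\,\boldsymbol\varphi$ as merely producing ``off-diagonal Gaussian decay'' used to bound remainders. In the actual argument it plays the opposite role: it is the hypothesis of the complex stationary phase lemma that lets you \emph{extract the principal term} and identify $a_{h,\mathbf{k}}$, and its value at the diagonal (via the determinant being $1$) is precisely what yields the clean symbol relation \eqref{equation:symbol-expansion} without spurious normalization factors. Without running the stationary phase, you cannot deduce \eqref{equation:symbol-expansion} simply from restricting the local amplitude to $\HH^*$ — the integration over fiber variables genuinely alters the symbol, and showing it does not (to leading order) is the content of the computation. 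Finally, a cosmetic point: the partition-of-unity gluing you propose as the outer skeleton is not really how the argument is structured; the proof works directly with Schwartz kernels near the diagonal, with localization coming from the phase being non-stationary away from it.
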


Conversely, we shall see in the next subsection how to define a quantization in the Borel-Weil calculus using an oscillatory integral as in \eqref{equation:penible}. We note that the expression under the integral sign \eqref{equation:penible} is \emph{not} automatically holomorphic in $w_\ell$, since $a_{h, \mathbf{k}}$ also depends on $w_\ell$.


\begin{proof}
Equality \eqref{equation:penible} is established using the stationary phase lemma. We identify operators and Schwartz kernels. Set $b_{\mathbf{k}} := d_{\mathbf{k}} \vol(G/T)^{-1}$. By \eqref{equation:partie-simple}, we can write locally near the diagonal
\begin{equation}
\label{equation:local-e}
\Pi_{\mathbf{k}}(w_\ell,w_r) = b_{\mathbf{k}} \mathbf{E}^{\otimes \mathbf{k}}(w_\ell,w_r)= b_{\mathbf{k}} e^{i \mathbf{k}\cdot\boldsymbol{\psi}(w_\ell,w_r)} \mathbf{s}^{\otimes \mathbf{k}}(w_\ell) \overline{\mathbf{s}}^{\otimes \mathbf{k}}(w_r),
\end{equation}
where $\mathbf{s}$ is a multisection of $\mathbf{L}$ defined locally with constant fiberwise norm equal to $1$. Let $\pi : F \to M$ be the footpoint projection. For $w \in F$, write $x=\pi(w)$.
 Also write:
\[
\begin{split}
&\mathbf{A}'(f \mathbf{s}^{\otimes \mathbf{k}})(w_\ell) \\
& = \dfrac{1}{(2\pi h)^{\dim F}}\int_{w, \xi} e^{-\tfrac{i}{h}\xi\cdot\exp^{-1}_{w_\ell}(w)} a(w_\ell,\xi) \tau_{w \to w_\ell}(f(w)\mathbf{s}^{\otimes \mathbf{k}}(w)) \, \dd w \dd \xi + \mc{O}_{C^\infty}(h^\infty),
\end{split}
\]
for some symbol $a \in S^{m}_{h,\mathbf{k}}(T^*F)$. (Here, we work in local coordinates close to $w_\ell$.) Define locally the real-valued (multi) $1$-form $\boldsymbol{\beta}$ by the identity $\overline{\nabla} \mathbf{s}(w) = i\boldsymbol{\beta}(w) \otimes \mathbf{s}(w)$ for $w \in F$. Then we have 
\begin{equation}\label{eq:parallel}
	\tau_{w \to w_\ell}(\mathbf{s}^{\otimes \mathbf{k}}(w)) = e^{-i\int_{w}^{w_\ell} \mathbf{k}\cdot \boldsymbol{\beta}} \mathbf{s}^{\otimes \mathbf{k}}(w_\ell).
\end{equation}
In particular, the Schwartz kernel of $\mathbf{A}'$ takes the form
\begin{equation}\label{eq:schwartz-kernel-A'}
\begin{split}
	\mathbf{A}'(w_\ell, w_r) = & (2\pi h)^{- \dim F} \int_{\xi \in T^*_{w_\ell}} e^{-\frac{i}{h} \xi \cdot \exp_{w_\ell}^{-1} (w_r)} e^{-i\int_{w_r}^{w_\ell} \mathbf{k}\cdot \boldsymbol{\beta}} a(w_\ell, \xi)\, d\xi\, \mathbf{s}^{\otimes \mathbf{k}}(w_\ell) \otimes \overline{\mathbf{s}}^{\otimes \mathbf{k}}(w_r) \\
	& \hspace{5cm} + \mc{O}_{C^\infty}(h^\infty).
	\end{split}
\end{equation}
In all the following computations, one would formally need to introduce cutoff functions for the various quantities to make sense; we refrain from doing so in order to keep the notation simple but the reader should bear this fact in mind.

Using \eqref{equation:commutation-holomorphic}, the Schwartz kernel of $\mathbf{A}$ at $(w_\ell,w_r) \in F^2$ satisfies
\[
 \mathbf{A}(w_\ell,w_r)=\Pi_{\mathbf{k}} \mathbf{A}' \Pi_{\mathbf{k}}(w_\ell,w_r) = \mathbf{A}' \Pi_{\mathbf{k}}(w_\ell,w_r) + \mc{O}(h^\infty)_{C^\infty(F\times F)}.
\]

Using \eqref{eq:parallel}, we thus obtain:
\begin{equation}
\label{equation:XX}
\begin{split}
 \mathbf{A}(w_\ell,w_r) &= \dfrac{b_{\mathbf{k}}}{(2\pi h)^{\dim F}} \int_{\xi \in T_{w_\ell}^*F,w \in F_{x_r}} e^{\tfrac{i}{h}( h\mathbf{k} \cdot (\boldsymbol{\psi}(w, w_r)-\int_{w}^{w_\ell} \boldsymbol{\beta})-\xi \cdot \exp_{w_\ell}^{-1}(w))} a(w_\ell,\xi)  \,  \dd w \dd \xi \\
& \hspace{2cm} \mathbf{s}^{\otimes \mathbf{k}}(w_\ell) \overline{\mathbf{s}}^{\otimes \mathbf{k}}(w_r) + \mc{O}_{C^\infty}(h^\infty).
\end{split}
\end{equation}
Indeed, observe that $\Pi_{\mathbf{k}}(w_1, w_2) = 0$ if $w_1$ and $w_2$ do not belong to the same fibers of $F \to M$, since locally $\Pi_{\mathbf{k}}$ is the tensor product of the identity on $M$ and the holomorphic projection in the fibers. This restricts $w$ to $F_{x_r}$. 
Define the phase
\[
\Omega_{w_\ell,w_r,\xi_{\HH^*}}(w,\xi_{\V^*}) := h\mathbf{k}\cdot\left(\boldsymbol{\psi}(w, w_r) - \int_{w}^{w_\ell}\boldsymbol{\beta}\right)-\xi \cdot \exp_{w_\ell}^{-1}(w).
\]
We shall write
\[
\xi \cdot \exp_{w_\ell}^{-1}(w) = \xi_{\HH^*} \cdot \pi_{\HH}  \exp_{w_\ell}^{-1}(w)  + \xi_{\V^*} \cdot \pi_{\V}  \exp_{w_\ell}^{-1}(w).
\]
Notice that $(w,\xi_{\V^*}) \mapsto \Omega_{w_\ell,w_r,\xi_{\HH^*}}(w,\xi_{\V^*})$ extends naturally to $\V^*_{\C}$ by simply allowing $\xi_{\V^*} \in \V^*_\C$ in the previous expression. (This corresponds to the analytic extension of $\Omega_{w_\ell,w_r,\xi_{\HH^*}}$ to $\V^*_{\C}$.) We also consider an \emph{almost analytic extension} of $a$ to $\V^*_{\C}$, see \cite[Section 1 and Theorem 1.3]{Melin-Sjostrand-75} for the definition. The critical point of the phase is denoted by $(w,\xi_{\V^*}) = Z(w_\ell,w_r,\xi_{\HH^*})$ and satisfies
\[
\pi_{\V}(\exp^{-1}_{w_\ell}(w)) = 0, \qquad d_{\V, w}\Omega = 0,
\]
where $d_{\V,w}$ stands for exterior derivative in the vertical directions in $w$. Also notice that for $w_\ell = w_r$, $\Omega_{w_\ell,w_r,\xi_{\HH^*}}(w_r,\xi_{\V^*})=0$ (in particular, the imaginary part of the phase vanishes).


\begin{lemma}
\label{lemma:critical-Omega}
One has:
\begin{equation*}
\begin{split}
	w &= \tau_{x_{\ell} \to x_r} w_\ell,\\
		\xi_{\V^*} \cdot \pi_{\V} (d \exp_{w_\ell})^{-1}(w) &= h \mathbf{k} \cdot (i \partial_{\V,L}\boldsymbol{\varphi}(w, w_r) + \alpha_{w_\ell}(w)) - \xi_{\HH^*} \cdot \pi_{\HH}  (d \exp_{w_\ell})^{-1}(w),
\end{split}
\end{equation*}
where $\alpha_{w_\ell}(w) \in \V^*(w)$ is the $1$-form defined as
\[
	\alpha_{w_\ell}(w)Z := \int_{0}^{d(w_\ell, w)} d \boldsymbol{\beta}_{\gamma(t)}(J(Z, t), \dot{\gamma}_0(t)) \, dt.
\]
Here, $t \mapsto \gamma(t)$ is the unit speed parametrization of the (unique) geodesic from $w_\ell$ to $w$, and, for $Z \in \V(w)$, $t \mapsto J(Z,t)$ is the Jacobi field along $\gamma(t)$ such that $J(Z,0) = 0$, $J(Z, d(w_\ell,w)) = Z$.

In particular, when $w_\ell = w_r$, we have
\[
	Z(w_\ell, w_\ell, \xi_{\HH^*}) = (w_\ell, 0), \quad \Omega_{w_\ell, w_\ell, \xi_{\HH^*}}(Z(w_\ell, w_r, \xi_{\HH^*})) = 0.
\]
\end{lemma}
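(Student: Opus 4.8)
The strategy is to compute the critical point of the phase $\Omega_{w_\ell,w_r,\xi_{\HH^*}}$ in the two variables $(w,\xi_{\V^*})$ (with $w$ restricted to the fiber $F_{x_r}$ and extended to its complexification $\V^*_{\C}$ via almost analytic extensions, following Melin--Sj\"ostrand). The two critical-point equations are $\partial_{\xi_{\V^*}}\Omega = 0$ and $d_{\V,w}\Omega = 0$. The first equation, differentiating $-\xi\cdot\exp_{w_\ell}^{-1}(w)$ in $\xi_{\V^*}$, gives $\pi_{\V}(\exp_{w_\ell}^{-1}(w))=0$; combined with the fact that $w$ lies in the fiber $F_{x_r}$ over $x_r$, this forces $w$ to be the endpoint of the horizontal geodesic from $w_\ell$ landing in $F_{x_r}$, i.e. $w = \tau_{x_\ell \to x_r}w_\ell$ (here I use that the horizontal lift of the minimizing geodesic from $x_\ell$ to $x_r$ realizes the parallel transport). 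This is essentially a statement about the geometry of the Riemannian submersion $F \to M$, so I would first isolate this geometric lemma: a geodesic of $F$ emanating from $w_\ell$ whose tangent vector becomes horizontal (after the projection-to-vertical vanishes) projects to a geodesic in $M$, and its endpoint in a prescribed fiber is the parallel transport.

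\textbf{Second critical equation.} For the equation $d_{\V,w}\Omega = 0$, I would expand $\Omega$ term by term. The term $h\mathbf{k}\cdot\boldsymbol{\psi}(w,w_r)$ contributes, upon vertical differentiation in $w$, the quantity $h\mathbf{k}\cdot d_{\V,w}\boldsymbol{\psi}(w,w_r)$; using the relation $\boldsymbol{\varphi} = -\log|\mathbf{E}|^2$ and $\boldsymbol{\psi}$ being the phase of $\mathbf{E}$ (so that $\boldsymbol{\varphi}$ and $\boldsymbol{\psi}$ differ by the $\overline{\partial}$-closed/antiholomorphic structure encoded in Theorem~\ref{theorem:bergman}), the vertical derivative of $\boldsymbol{\psi}$ at the critical point is expressed through $i\partial_{\V,L}\boldsymbol{\varphi}(w,w_r)$ (the holomorphic-in-the-left-variable vertical derivative). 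The term $-h\mathbf{k}\cdot\int_w^{w_\ell}\boldsymbol{\beta}$ contributes, by differentiating the endpoint of a path integral of a $1$-form, a combination of $\boldsymbol{\beta}$ evaluated at $w$ and a curvature-type correction coming from how the geodesic from $w_\ell$ to $w$ varies; this correction is precisely packaged into the Jacobi-field integral $\alpha_{w_\ell}(w)$, which I would derive by the standard first-variation-of-arc-length-type computation: varying $w$ vertically moves the geodesic $\gamma$, and $d_{\V,w}\big(\int_\gamma \boldsymbol{\beta}\big)$ picks up $\int_0^{d(w_\ell,w)} d\boldsymbol{\beta}(J(Z,t),\dot\gamma(t))\,dt$ plus the boundary term $\boldsymbol{\beta}(w)$ (the latter being absorbed into the $\alpha$ notation or cancelling—I should be careful about exactly which boundary terms survive). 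Finally, the term $-\xi\cdot\exp_{w_\ell}^{-1}(w)$ differentiated vertically in $w$ gives $-\xi_{\HH^*}\cdot\pi_{\HH}(d\exp_{w_\ell})^{-1}(w) - \xi_{\V^*}\cdot\pi_{\V}(d\exp_{w_\ell})^{-1}(w)$; rearranging $d_{\V,w}\Omega=0$ to solve for the $\xi_{\V^*}$ term yields exactly the displayed second equation.

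\textbf{The degenerate case $w_\ell = w_r$.} When $w_\ell = w_r$, the point $w = \tau_{x_\ell\to x_\ell}w_\ell = w_\ell$ by the first equation, so $Z(w_\ell,w_\ell,\xi_{\HH^*}) = (w_\ell, 0)$ once I check that the second equation forces $\xi_{\V^*}=0$ there: indeed at $w=w_\ell$ we have $\boldsymbol{\varphi}|_\Delta = 0$ and $d\boldsymbol{\varphi}|_\Delta = 0$ (first two identities of \eqref{equation:phase-important}), so $\partial_{\V,L}\boldsymbol{\varphi}(w_\ell,w_\ell)=0$, and $\alpha_{w_\ell}(w_\ell) = 0$ since the geodesic is trivial (integral over $[0,0]$), and $\xi_{\HH^*}\cdot\pi_{\HH}(d\exp_{w_\ell})^{-1}(w_\ell) \cdot (\text{vertical directions}) = 0$ since $d\exp_{w_\ell}$ at the origin is the identity and horizontal covectors annihilate vertical vectors; hence $\xi_{\V^*}\cdot(\text{id on }\V) = 0$, giving $\xi_{\V^*}=0$. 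Then $\Omega_{w_\ell,w_\ell,\xi_{\HH^*}}(w_\ell,0) = 0$ follows directly from the vanishing of each summand ($\boldsymbol{\psi}(w_\ell,w_\ell)=0$ since $\mathbf{E}|_\Delta = \mathbbm{1}$, the path integral is over a trivial path, and $\exp_{w_\ell}^{-1}(w_\ell)=0$).

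\textbf{Main obstacle.} The delicate point is the bookkeeping in the second critical equation: correctly identifying which boundary terms arise when differentiating the path integral $\int_w^{w_\ell}\boldsymbol{\beta}$ with respect to the moving endpoint $w$ in vertical directions only (since $w$ is constrained to the fiber), and matching the curvature correction with the Jacobi-field formula for $\alpha_{w_\ell}(w)$. One must also justify that the almost-analytic extension of $\boldsymbol{\psi}$ (and of the symbol $a$) does not affect the location of the critical point to the order needed, which is where the Melin--Sj\"ostrand machinery of stationary phase with complex-valued non-degenerate phase is invoked; I would lean on Theorem~\ref{theorem:bergman} to control $\boldsymbol{\varphi}$ near the diagonal (its vanishing to second order and its complex Hessian being the curvature) to verify the non-degeneracy of the critical point and thus the applicability of the complex stationary phase lemma.
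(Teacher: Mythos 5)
Your proposal follows essentially the same route as the paper: computing the two critical-point equations $\partial_{\xi_{\V^*}}\Omega = 0$ (giving $w = \tau_{x_\ell\to x_r}w_\ell$ via the Riemannian submersion structure) and $d_{\V,w}\Omega = 0$ (giving the formula for $\xi_{\V^*}$ via a Stokes/Jacobi-field variation of the path integral and differentiation of the Bergman phase). The one place you flag genuine uncertainty — exactly which boundary terms survive when differentiating $\int_w^{w_\ell}\boldsymbol{\beta}$ and how they interact with $d_{\V,w}\boldsymbol{\psi}$ — is precisely where the paper inserts an explicit auxiliary claim: by differentiating $|\mathbf{E}|^2 = e^{-\boldsymbol{\varphi}}$ and using that $\mathbf{E}$ is holomorphic in the left variable, antiholomorphic in the right, together with unitarity of the induced connection, one derives
\[
	d_{\V,\ell}\boldsymbol{\psi}(w,w_r) = i\,\partial_{\ell}\boldsymbol{\varphi}(w,w_r) - \boldsymbol{\beta}^{\V}(w),
\]
so that the boundary term $\boldsymbol{\beta}^{\V}(w)$ arising from the path integral cancels against the $-\boldsymbol{\beta}^{\V}(w)$ in this identity and one is left with $i\,\partial_{\V,\ell}\boldsymbol{\varphi}$ as in the statement. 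Your instinct that this is the delicate step is correct; supplying that identity (rather than absorbing the boundary term into the notation, which it is not) closes the argument. The $w_\ell = w_r$ case is handled exactly as you describe.
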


Before proving this lemma, we note that \eqref{eq:xi_V} uniquely determines $\xi_{\V}^*$, because we may always assume that $\pi_{\V} (d \exp_{w_\ell})^{-1}(w): \V(w) \to \V(w_\ell)$ is an isomorphism (for $w_\ell$ and $w$ close enough). We first prove an auxiliary claim.

\begin{claim}
The following holds:
\begin{equation}
\label{equation:utile}
\begin{split}
&d_{\V} \boldsymbol{\psi}(w_\ell,w_r) = i(\partial_\ell \boldsymbol{\varphi}(w_\ell,w_r)+\overline{\partial}_r \boldsymbol{\varphi}(w_\ell,w_r)) + \boldsymbol{\beta}^{\V}(w_r) - \boldsymbol{\beta}^{\V}(w_\ell), \\
& d_{\V, \ell} \boldsymbol{\psi}(w_\ell,w_r) = i\partial_\ell \boldsymbol{\varphi}(w_\ell,w_r) - \boldsymbol{\beta}^{\V}(w_\ell).
\end{split}
\end{equation}
\end{claim}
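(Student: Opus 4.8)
The plan is to work one line bundle at a time and deduce every formula from the fiberwise (anti)holomorphicity of the Bergman kernel together with the normalization $|\mathbf{s}|\equiv 1$. Fix $i\in\{1,\dots,d\}$; since the claimed identities are componentwise it suffices to treat the single bundle $L_i$, its unit-norm local frame $s_i$, the function $\psi_i$ from \eqref{equation:local-e}, the intrinsic function $\varphi_i=-\log|E_i|^2$ from Theorem~\ref{theorem:bergman}, and the real-valued $1$-form $\beta_i$ determined by $\overline{\nabla}_i s_i=i\beta_i\otimes s_i$ (its vertical part $\beta_i^{\V}$ being the Chern connection $1$-form of $L_i$ restricted to the fibers of $F$). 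Writing $E_i(w_\ell,w_r)=e^{i\psi_i(w_\ell,w_r)}\,s_i(w_\ell)\otimes\overline{s_i}(w_r)$ near the diagonal — a smooth branch of $\psi_i$ exists there since $E_i$ is nonvanishing and $E_i|_\Delta=\mathbbm{1}$ — and using $|s_i|\equiv1$ gives $|E_i|^2=e^{i(\psi_i-\overline{\psi_i})}$, hence the elementary identity
\[
\varphi_i=-\log|E_i|^2=-i\bigl(\psi_i-\overline{\psi_i}\bigr),
\]
valid (and with $\varphi_i$ real-valued) near the diagonal.

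Next I would extract the two relations coming from holomorphicity. Since $\overline{\partial}_{\V}s_i=i\beta_i^{\V,(0,1)}\otimes s_i$ is the $(0,1)$-part of $\overline{\nabla}_i s_i$ for the fiberwise complex structure, fiberwise holomorphicity of $E_i$ in the left variable, $\overline{\partial}_\ell E_i=0$, forces $\overline{\partial}_\ell\psi_i(w_\ell,w_r)=-\beta_i^{\V,(0,1)}(w_\ell)$; dually $\partial_{\V}\overline{s_i}=-i\beta_i^{\V,(1,0)}\otimes\overline{s_i}$, so fiberwise antiholomorphicity of $E_i$ in the right variable, $\partial_r E_i=0$, forces $\partial_r\psi_i(w_\ell,w_r)=\beta_i^{\V,(1,0)}(w_r)$.

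Then I would recover the two missing derivatives by differentiating $\varphi_i=-i(\psi_i-\overline{\psi_i})$. Applying $\partial_\ell$ and using $\partial_\ell\overline{\psi_i}=\overline{\overline{\partial}_\ell\psi_i}=-\beta_i^{\V,(1,0)}(w_\ell)$ and $\overline{\overline{\partial}_\ell\varphi_i}=\partial_\ell\varphi_i$ ($\varphi_i$ real) gives $\partial_\ell\psi_i=i\,\partial_\ell\varphi_i-\beta_i^{\V,(1,0)}(w_\ell)$; applying $\overline{\partial}_r$ and using $\overline{\partial}_r\overline{\psi_i}=\overline{\partial_r\psi_i}=\beta_i^{\V,(0,1)}(w_r)$ gives $\overline{\partial}_r\psi_i=i\,\overline{\partial}_r\varphi_i+\beta_i^{\V,(0,1)}(w_r)$. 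Summing the $(1,0)$ and $(0,1)$ pieces yields
\[
d_{\V,\ell}\psi_i=i\,\partial_\ell\varphi_i-\beta_i^{\V}(w_\ell),\qquad d_{\V,r}\psi_i=i\,\overline{\partial}_r\varphi_i+\beta_i^{\V}(w_r),
\]
the first of which is the second displayed identity of the Claim; adding both and using $d_{\V}=d_{\V,\ell}+d_{\V,r}$ gives the first. Reassembling the $d$ components recovers the boldface statement.

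There is no genuine obstacle here: the work is purely bookkeeping of the factors of $i$, of the type decomposition $\beta_i^{\V}=\beta_i^{\V,(1,0)}+\beta_i^{\V,(0,1)}$, and of the conjugation rules $\overline{\partial f}=\overline{\partial}\,\overline{f}$. A convenient internal consistency check, should the signs need verifying, is the Hermitian symmetry $E_i(w_\ell,w_r)=\overline{E_i(w_r,w_\ell)}$ coming from self-adjointness of $\Pi_{\mathbf{k}}$, which forces $\psi_i(w_\ell,w_r)=-\overline{\psi_i(w_r,w_\ell)}$ and lets one deduce the right-variable formulas from the left-variable ones. Throughout one must keep in mind that all derivatives in the statement are fiberwise (in $\V_F$-directions only), so the horizontal part of $\boldsymbol{\beta}$ never enters.
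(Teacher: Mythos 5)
Your proof is correct and follows essentially the same route as the paper's. Where the paper encodes the information abstractly by writing $\nabla\mathbf{E}=\eta\otimes\mathbf{E}$ and then combines unitarity ($d|\mathbf{E}|^2=(\eta+\overline{\eta})|\mathbf{E}|^2$) with the vanishing of $\eta^{(0,1),\ell}$ and $\eta^{(1,0),r}$, you unwind the same two ingredients directly at the level of the scalar phase — the identity $\varphi_i=-i(\psi_i-\overline{\psi_i})$ is precisely the logarithm of the unitarity relation, and your two constraints $\overline{\partial}_\ell\psi_i=-\beta_i^{\V,(0,1)}(w_\ell)$, $\partial_r\psi_i=\beta_i^{\V,(1,0)}(w_r)$ are exactly the statement that the left $(0,1)$ and right $(1,0)$ parts of $\eta$ vanish; the rest is the same type decomposition.
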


Here, we denote by $\partial_{\ell}$ (resp. $\overline{\partial}_r$) the $(1,0)$ (resp. $(0,1)$) part of the vertical differential in the left (resp. right) variable. Also, $d_{\V, \ell}$ is the exterior vertical derivative in the left variable.

\begin{proof}
Differentiating $|\mathbf{E}(w_\ell,w_r)|^2 = e^{-\boldsymbol{\varphi}(w_\ell,w_r)}$, one finds
\begin{align*}
	d |\mathbf{E}|^2 = -d\varphi \cdot |\mathbf{E}|^2 = -(\partial_\ell + \overline{\partial}_\ell + \partial_r + \overline{\partial}_r + d_{\HH})\varphi \cdot |\mathbf{E}|^2.
\end{align*}
On the other hand, write $\nabla \mathbf{E} = \eta \otimes \mathbf{E}$ for some $1$-form $\eta$. In fact, using that $\mathbf{E} = e^{i \boldsymbol{\psi}(w_\ell, w_r)} \mathbf{s}(w_\ell) \overline{\mathbf{s}}(w_r)$, we get 
\[
	\eta(w_\ell, w_r) = i d\boldsymbol{\psi}(w_\ell, w_r) + i\boldsymbol{\beta}(w_\ell) - i\boldsymbol{\beta}(w_r). 
\]
Using that $\mathbf{E}$ is holomorphic in the left variable, antiholomorphic in the right variable, we get that the left $(0, 1)$ and the right $(1, 0)$ parts of $\eta$ are zero. Using that the induced connection on $\mathbf{L}^{\otimes \mathbf{k}} \otimes \overline{\mathbf{L}}^{\otimes \mathbf{k}}$ is unitary, we get $d|\mathbf{E}|^2 = (\eta + \overline{\eta}) \otimes |\mathbf{E}|^2$, and therefore
\[
	-\partial_\ell \boldsymbol{\varphi} = \eta^{(1, 0), \ell}, \quad -\overline{\partial}_r \boldsymbol{\varphi} = \eta^{(0, 1), r}, \quad -d_{\HH} \boldsymbol{\varphi} = 2\Re \eta^{\HH}.
\]
It follows that
\[
	-(\partial_\ell + \overline{\partial}_r) \varphi = \eta^{\V} = i d\boldsymbol{\psi}^{\V}(w_\ell, w_r) + i\boldsymbol{\beta}^{\V}(w_\ell) - i\boldsymbol{\beta}^{\V}(w_r).
\]
Then
\begin{equation}
\label{equation:prout}
D^{\mathrm{Chern}}_{\V} \mathbf{E} = - (\partial_\ell \boldsymbol{\varphi}+\overline{\partial}_r\boldsymbol{\varphi}) \otimes \mathbf{E}.
\end{equation}

\end{proof}

\begin{proof}[Proof of Lemma \ref{lemma:critical-Omega}]
Since $\pi: F \to M$ is a Riemannian submersion, horizontal lifts of geodesics in $M$ are geodesics in $F$ and, conversely, geodesics in $F$ with initial speed in the horizontal direction are horizontal curves which project to geodesics in $M$. (We note that $\pi$ projects geodesics to geodesics in this setting if and only if the curvature of $F$ is zero, see \cite{Vilms-70}.) This implies:
\[
w = \tau_{x_\ell \to x_r} w_\ell.
\]
Next, we claim that 
\begin{equation}\label{eq:xi_V}
	\xi_{\V^*} \cdot \pi_{\V} (d \exp_{w_\ell})^{-1}(w) = h \mathbf{k} \cdot (d_{\V, L} \boldsymbol{\psi} + \boldsymbol{\beta}^{\V}(w) + \alpha_{w_\ell}(w)) - \xi_{\HH}^* \cdot \pi_{\HH}  (d \exp_{w_\ell})^{-1}(w).
\end{equation}

To prove the formula \eqref{eq:xi_V}, consider a parametrisation $(s, t) \mapsto \gamma(s, t)$, where $\gamma(s, \bullet)$ is the unit speed geodesic starting at $w_\ell$ and ending at $\exp_{w}(tZ)$. Here $s \in (-\varepsilon, \varepsilon)$ for some $\varepsilon > 0$ and for $r > 0$ small enough we set
\[
	(s, t) \in D_r := \{(s, t) \mid 0 \leq s \leq r,\,\, 0 \leq t \leq d(w_\ell, \exp_{w}(rZ))\}.
\]
By Stokes' formula, we then get
\[
	\int_{w_\ell}^{\exp_{w}(rZ)} \boldsymbol{\beta} = \int_{\gamma(D_r)} d\boldsymbol{\beta} + \int_{w_\ell}^{w} \boldsymbol{\beta} + \int_{w}^{\exp_{w} (tZ)} \boldsymbol{\beta}.
\]
(Formally, by $\int_{\gamma(D_r)}$ we mean the integral of the pullback by $\gamma$ over $D_r$.) Using the parametrization $\gamma$, we get
\begin{align*}
	&\partial_r|_{r = 0} \int_{w_\ell}^{\exp_{w}(rZ)} \boldsymbol{\beta}\\ 
	&= \boldsymbol{\beta}(w)(Z) + \partial_r|_{r = 0} \int_{s = 0}^r \int_{t = 0}^{d(w_\ell, \exp_{w}(sZ))} d\boldsymbol{\beta}(\gamma(s, t))(\partial_t \gamma(s, t), \partial_s \gamma(s, t))\, ds dt\\
	&= \boldsymbol{\beta}(w)(Z) + \int_0^{d(w_\ell, w)} d\boldsymbol{\beta}(\gamma(t)) (\dot{\gamma}(t), \partial_s \gamma(0, t))\, dt.
\end{align*}
Since $\partial_s \gamma(0, t) = J(Z, t)$ by definition of Jacobi fields, this computes the first part of \eqref{eq:xi_V}. The derivative of $\xi \cdot \exp_{w_\ell}^{-1}(w)$ is immediately computed, and this proves \eqref{eq:xi_V}. 

Furthermore, by \eqref{equation:utile}, $d_{\V, \ell} \boldsymbol{\psi} + \boldsymbol{\beta}^{\V}(w) = i \partial_{\V,\ell}\boldsymbol{\varphi}$, which completes the proof. (As a side remark, we note that $d\boldsymbol{\beta} = -i \mathbf{F}_{\overline{\nabla}}$, and by Lemma \ref{lemma:vanishing-curvature} this vanishes on $\HH \times \V$. However, the Jacobi field $J(Z, t)$ need not not be vertical if the curvature is non-zero.)

\end{proof}

Observe then that
\[
\begin{split}
\Omega_{w_\ell,w_r,\xi_{\HH^*}}&(Z(w_\ell,w_r,\xi_{\HH^*}))\\
& = h \mathbf{k} \cdot \left(\boldsymbol{\psi}(\tau_{x_\ell \to x_r}w_\ell,w_r) - \int_{\tau_{x_\ell \to x_r} w_\ell}^{w_\ell} \boldsymbol{\beta}\right) - \xi_{\HH^*}\cdot(\exp^{-1}_{w_\ell}(\tau_{x_\ell \to x_r} w_\ell)).
\end{split}
\]
The last term simplifies to
\[
\xi_{\HH^*}\cdot\exp^{-1}_{w_\ell}(\tau_{x_\ell \to x_r}w_\ell) = \xi\cdot \exp^{-1}_{x_\ell}(x_r),
\]
where $\xi \in T^*_{x_\ell}M$ is such that $\xi_{\HH^*} = d\pi^\top \xi$. In what follows, we write $\mathrm{Hess}$ for the Hessian matrix (defined at a critical point).

\begin{claim}\label{claim:hessian}There exists a smooth, non-zero function $G = G(w_\ell, w_r)$ defined near $w_\ell = w_r$ such that
\[
	\det \mathrm{Hess} (\Omega_{w_\ell, w_r,\xi_{\HH^*}})(Z(w_\ell,w_r,\xi_{\HH^*})) = G(w_\ell, w_r).
\]
Moreover, $G(w_\ell, w_\ell) = 1$.
\end{claim}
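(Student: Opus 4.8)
The plan is to compute the Hessian of the phase $\Omega_{w_\ell,w_r,\xi_{\HH^*}}$ at its critical point $Z(w_\ell,w_r,\xi_{\HH^*})$ using the structure already assembled: the phase consists of a term $h\mathbf{k}\cdot(\boldsymbol{\psi}(w,w_r)-\int_w^{w_\ell}\boldsymbol{\beta})$, which near $w_\ell=w_r$ on the diagonal is governed by the Bergman phase $\boldsymbol{\varphi}$, and a term $-\xi\cdot\exp_{w_\ell}^{-1}(w)$, which is quadratic in $\pi_{\V}\exp_{w_\ell}^{-1}(w)$ to leading order. The variables of the stationary phase are $(w,\xi_{\V^*})$ with $w$ restricted to the fiber $F_{x_r}$; so the Hessian is a block matrix in the $w$-variable (vertical directions, dimension $2m$) and the $\xi_{\V^*}$-variable (also dimension $2m$).

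First I would set up local coordinates adapted to the fiber: holomorphic coordinates $z$ on $F_{x_r}$ centered at the critical point $w=\tau_{x_\ell\to x_r}w_\ell$, and dual fiber coordinates for $\xi_{\V^*}$. The key observation is that the off-diagonal block $\partial_w\partial_{\xi_{\V^*}}\Omega$ at the critical point is, up to the invertible linear map $\pi_{\V}(d\exp_{w_\ell})^{-1}(w)\colon \V(w)\to\V(w_\ell)$ appearing in Lemma~\ref{lemma:critical-Omega}, essentially the identity (since $-\xi_{\V^*}\cdot\pi_{\V}\exp_{w_\ell}^{-1}(w)$ is bilinear in $(\xi_{\V^*},w)$ to leading order), while the $\xi_{\V^*}\xi_{\V^*}$-block vanishes identically (the phase is linear in $\xi_{\V^*}$). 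For such a block structure $\begin{pmatrix} H_{ww} & B \\ B^\top & 0\end{pmatrix}$ the determinant equals $\pm(\det B)^2$ regardless of $H_{ww}$; hence $\det\mathrm{Hess}(\Omega)$ at the critical point is $(\det \pi_{\V}(d\exp_{w_\ell})^{-1}(w))^2$ up to a universal sign coming from the coordinate conventions. I would absorb that sign and this squared Jacobian determinant into the smooth non-vanishing function $G(w_\ell,w_r)$ — it is smooth and non-zero near the diagonal precisely because $\pi_{\V}(d\exp_{w_\ell})^{-1}(w)$ is an isomorphism there, as noted after Lemma~\ref{lemma:critical-Omega}.

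For the normalization $G(w_\ell,w_\ell)=1$: when $w_\ell=w_r$ the critical point is $(w_\ell,0)$ and $w=w_\ell$, so $d\exp_{w_\ell}$ at the origin is the identity, hence $\pi_{\V}(d\exp_{w_\ell})^{-1}(w_\ell)=\mathrm{id}_{\V(w_\ell)}$ and its determinant is $1$; one checks the sign convention is chosen so that the overall value is $+1$ (this is forced anyway by consistency with the stationary phase expansion reproducing $\Pi_{\mathbf{k}}$ on the diagonal, i.e. \eqref{equation:partie-simple}). I should double-check that the $h\mathbf{k}\cdot(\cdots)$ contribution to $H_{ww}$, which does depend on $\boldsymbol{\varphi}$ and on the curvature via $\alpha_{w_\ell}(w)$ and $d\boldsymbol{\beta}$, genuinely only enters the $H_{ww}$ block and therefore does not affect the determinant — this follows because that term does not involve $\xi_{\V^*}$ at all, so it cannot contribute to the $B$ or $\xi\xi$ blocks.

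The main obstacle I anticipate is bookkeeping the two sources of $w$-dependence carefully enough to be sure the off-diagonal block $B$ is exactly $\pi_{\V}(d\exp_{w_\ell})^{-1}(w)$ and not something more complicated: the term $\boldsymbol{\psi}(w,w_r)$ does depend on $w$, so one might worry about a cross term $\partial_w\partial_{\xi_{\V^*}}$ arising from it — but it does not, since $\boldsymbol{\psi}$ has no $\xi_{\V^*}$ dependence. Similarly one must confirm that differentiating $-\xi_{\V^*}\cdot\pi_{\V}\exp_{w_\ell}^{-1}(w)$ once in $\xi_{\V^*}$ and once in $w$ and evaluating at the critical point (where $\pi_{\V}\exp_{w_\ell}^{-1}(w)=0$) yields precisely $-d_w(\pi_{\V}\exp_{w_\ell}^{-1})(w) = -\pi_{\V}(d\exp_{w_\ell})^{-1}(w)$, with higher-order corrections vanishing at the critical point. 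Once these two points are verified, the computation of $G$ and its value on the diagonal is routine linear algebra, and the smoothness and non-vanishing near the diagonal are automatic.
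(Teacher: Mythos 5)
Your proof is correct and follows essentially the same route as the paper's: both observe that the phase is linear in $\xi_{\V^*}$ (so the $\xi_{\V^*}\xi_{\V^*}$-block vanishes), identify the off-diagonal block as $\mp\pi_{\V}\circ(d\exp_{w_\ell}(\exp_{w_\ell}^{-1}(w)))^{-1}$, and use the resulting block structure to conclude that the determinant is a smooth nonvanishing function of $(w_\ell,w_r)$ that equals $1$ on the diagonal. The only difference is that you make the block-determinant identity $\det\begin{pmatrix}H_{ww}&B\\B^\top&0\end{pmatrix}=\pm(\det B)^2$ fully explicit where the paper leaves it implicit.
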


\begin{proof}
Given $\xi \in \V_F^*(w_\ell)$ and $W \in \V_F(w)$, at $(w, \xi_{\V}^*) = Z$ we may compute
\begin{align*}
	 \mathrm{Hess} (\Omega_{w_\ell, w_r,\xi_{\HH^*}})(Z(w_\ell,w_r,\xi_{\HH^*}))(W, \xi) &= \frac{\partial^2}{\partial s \partial t} \Omega_{w_\ell, w_r, \xi_{\HH}^*}(\exp_{w}(sW), \xi_{\V}^* + t\xi)\big|_{s = t = 0}\\ 
	&= -\xi \cdot \pi_{\V} \circ (d\exp_{w_\ell}(\exp_{w_\ell}^{-1}(w)))^{-1} (W).
\end{align*}
Using that $\Omega_{w_\ell, w_r, \xi_{\HH}^*}(w, \xi_{\V}^*)$ is linear in $\xi_{\V}^*$, we get that the second derivative in this variable vanishes. Since $\pi_{\V} \circ d\exp_{w_\ell}(\exp_{w_\ell}^{-1}(w)))^{-1}: \V(w) \to \V(w_\ell)$ is an isomorphism, this proves the first claim. For the second claim, we have $Z(w_\ell, w_r, \xi_{\HH}^*) = (w_\ell, 0)$, so the preceding formula simplifies to
\[
	\mathrm{Hess} (\Omega_{w_\ell, w_r,\xi_{\HH^*}})(Z(w_\ell,w_r,\xi_{\HH^*}))(W, \xi) = -\xi \cdot W.
\]
The conclusion follows, i.e. $G(w_\ell, w_\ell) = 1$.
\end{proof}

At $w_\ell = w_r$, the critical point of the phase is given by $w = w_\ell = w_r$ and $\xi_{\V^*}=0$. Moreover, the imaginary part of the phase vanishes at this point. As a consequence, we can apply the complex stationary phase lemma (see \cite[Theorem 2.3]{Melin-Sjostrand-75}) to get that, near $w_\ell = w_r$,
\begin{equation}
\label{equation:temporaire}
\begin{split}& \mathbf{A}(w_\ell,w_r) \\
&  = \dfrac{b_{\mathbf{k}}}{(2\pi h)^{\dim F}} \int_{\xi_{\HH^*} \in \HH^*_{w_\ell}} e^{\tfrac{i}{h}\Omega_{w_\ell,w_r,\xi_{\HH^*}}(Z(w_\ell,w_r,\xi_{\HH^*}))}  (2\pi h)^{2\dim(G/T)/2} p_{h,\mathbf{k}}(w_\ell,w_r,\xi_{\HH^*})\, \dd \xi_{\HH^*} \\
&  \hspace{2cm} \mathbf{s}^{\otimes \mathbf{k}}(w_\ell) \overline{\mathbf{s}}^{\otimes \mathbf{k}}(w_r)+ \mc{O}_{C^\infty}(h^\infty) \\
& = \dfrac{b_{\mathbf{k}}}{(2\pi h)^{\dim M}} \int_{\xi \in T^*_{x_\ell}M} e^{-\tfrac{i}{h} \xi\cdot \exp^{-1}_{x_\ell}(x_r)}  p_{h,\mathbf{k}}(w_\ell,w_r,d\pi^\top\xi)\, \dd \xi \\
& \hspace{2cm} e^{i \mathbf{k} \cdot (\boldsymbol{\psi}(\tau_{x_{\ell}\to x_r}w_\ell,w_r)- \int_{\tau_{x_\ell \to x_r} w_\ell}^{w_\ell} \boldsymbol{\beta})} \mathbf{s}^{\otimes \mathbf{k}}(w_{\ell}) \overline{\mathbf{s}}^{\otimes \mathbf{k}}(w_r)+ \mc{O}_{C^\infty}(h^\infty) \\
&= \dfrac{b_{\mathbf{k}}}{(2\pi h)^{\dim M}} \int_{\xi \in T^*_{x_\ell}M} e^{-\tfrac{i}{h} \xi\cdot \exp^{-1}_{x_\ell}(x_r)}  p_{h,\mathbf{k}}(w_\ell,w_r,d\pi^\top\xi)\, \dd \xi \\
& \hspace{2cm} e^{i \mathbf{k} \cdot \boldsymbol{\psi}(\tau_{x_{\ell}\to x_r}w_\ell,w_r)} \tau_{(\tau_{x_\ell \to x_r}w_\ell) \to w_\ell }\mathbf{s}^{\otimes \mathbf{k}}(\tau_{x_\ell \to x_r}w_{\ell}) \overline{\mathbf{s}}^{\otimes \mathbf{k}}(w_r)+ \mc{O}_{C^\infty}(h^\infty) \\
& =  \dfrac{1}{(2\pi h)^{\dim M}} \int_{\xi \in T^*_{x_\ell}M} e^{-\tfrac{i}{h} \xi\cdot \exp^{-1}_{x_\ell}(x_r)}  p_{h,\mathbf{k}}(w_\ell,w_r,d\pi^\top\xi)\, \dd \xi ~ \tau^{\ell}_{x_r \to x_\ell} \left(\Pi_{\mathbf{k}}(\bullet, w_r)|_{x_r}\right)(w_\ell)\\
& \hspace{2cm} + \mc{O}_{C^\infty}(h^\infty).
\end{split}
\end{equation}
Here, in the last line we used \eqref{equation:local-e}, as well as that for sections $S$ of $H^0(F, \Lk) \to M$, we have for $w \in F_y$
\[
	\tau_{x \to y} (S|_x) (w) = \tau_{(\tau_{y \to x}w) \to w} S(\tau_{y \to x}w).
\]
Also, here $p_{h,\mathbf{k}} \in S^m_{h,\mathbf{k}}(F \times \HH^*)$ is a symbol such that
\[
p_{h,\mathbf{k}}(w_\ell,w_r,\xi_{\HH^*}) = |\det \mathrm{Hess}(\Omega_{w_\ell,w_r,\xi_{\HH^*}})(Z)|^{-1/2} a^{(m)}(w_\ell,w_r,\xi_{\HH^*})+ \mc{O}_{S^{m-1}(F \times \HH^*)}(h),
\]
and
\[
a^{(m)}(w_\ell,w_r,\xi_{\HH^*}) :=  a(w_\ell,\xi_{\HH^*},\xi_{\V^*}(Z(w_\ell,w_r,\xi_{\HH^*}))) \in S^m_{h,\mathbf{k}}(F \times \HH^*),
\]
and $\tau^\ell_{x_\ell \to x_r}$ denotes parallel transport in the left variable. The remainder term in $p_{h,\mathbf{k}}$ is indeed in $S^{m-1}(F \times \HH^*)$ because in the remainder term of the complex stationary phase lemma, using that $a(w_\ell, \xi)$ does not depend on $w$, there is at least one derivative in the direction $\V$ (which lowers the order by one). We emphasize that this is only valid for $w_\ell$ close to $w_r$, that is close to the diagonal in $F \times F$. As \eqref{equation:temporaire} corresponds exactly to \eqref{equation:penible}, and this concludes the proof; here, we use that 
\begin{align*}
	&\int_{w_r \in F_{x_r}} a_{h,\mathbf{k}}(w_{\ell}, w_r, d\pi^{\top}\xi) \tau_{x_r \to x_\ell}^{\ell} (\Pi_{\mathbf{k}} (\bullet, w_r)|_{x_r}) (w_\ell) f (w_r)\, \dd w_r\\ 
	&= \tau^{\ell}_{(\tau_{x_\ell \to x_r}w_\ell) \to w_\ell } \int_{w_r \in F_{x_r}} a_{h,\mathbf{k}}(w_{\ell}, w_r, d\pi^{\top}\xi) \Pi_{\mathbf{k}} (\tau_{x_\ell \to x_r} w_\ell, w_r) f (w_r)\, \dd w_r\\
	&= \tau^{\ell}_{(\tau_{x_\ell \to x_r}w_\ell) \to w_\ell } \left(\Pi_{\mathbf{k}}(a_{h,\mathbf{k}}(w_{\ell}, \bullet, d\pi^{\top}\xi) f(\bullet)) (\tau_{x_\ell \to x_r} w_\ell)\right)\\
	&= \tau_{x_r \to x_\ell} \Pi_{\mathbf{k}}(a_{h,\mathbf{k}}(w_{\ell}, \bullet, d\pi^{\top}\xi) f(\bullet)|_{F_{x_r}}) (w_\ell).
\end{align*}

Since the determinant of the Hessian matrix at $w_\ell = w_r$ is equal to $1$ by Claim \eqref{claim:hessian}, we obtain
\begin{equation}\label{eq:blablabla}
	p_{h,\mathbf{k}}(w_\ell, w_\ell, \xi_{\HH^*}) = a(w_\ell, \xi_{\HH^*}) + \mc{O}_{S^{m-1}(\HH^*)}(h) = \sigma_{\mathbf{A}'}|_{\HH^*}(w_\ell,\xi_{\HH^*})+ \mc{O}_{S^{m-1}(\HH^*)}(h).
\end{equation}
This proves \eqref{equation:symbol-expansion}.
\end{proof}

\begin{remark}
Observe that $\overline{\partial}_{\V,\ell} \mathbf{A}(w_\ell,w_r) = 0 = \partial_{\V,r} \mathbf{A}(w_\ell,w_r)$, since $\mathbf{A} = \Pi_{\mathbf{k}}\mathbf{A}'\Pi_{\mathbf{k}}$. By Proposition \ref{proposition:parallel-transport-fibrewise-holomorphic}, $\tau^{\ell}_{x_\ell \to x_r}$ preserves fiberwise holomorphicity (in the left variable); we thus find that on the diagonal
\begin{equation}
\label{equation:catue}
\overline{\partial}_{\V,\ell} p_{h,\mathbf{k}}(w_\ell, w_\ell, \xi_{\HH^*}) = \mc{O}_{S^{m - 1}(\HH^*)}(h) = \partial_{\V,r} p_{h,\mathbf{k}}(w_\ell, w_\ell, \xi_{\HH^*}).
\end{equation}
Indeed, applying a holomorphic covariant derivative $(D^{\mathrm{Chern}}_{\V})_{\overline{X}_\ell}$ to the Schwartz kernel of $\mathbf{A}$ in \eqref{equation:temporaire}, we obtain
\[
\begin{split}
& \dfrac{1}{(2\pi h)^{\dim M}} \int_{\xi \in T^*_{x_\ell}M} e^{-\tfrac{i}{h} \xi\cdot \exp^{-1}_{x_\ell}(x_r)}  \overline{X}_\ell p_{h,\mathbf{k}}(w_\ell,w_r,d\pi^\top\xi) \dd \xi ~ \tau^{\ell}_{x_r \to x_\ell} \left(\Pi_{\mathbf{k}}(\bullet, w_r)|_{x_r}\right) \\
& \hspace{2cm} = \mc{O}_{C^\infty}(h^\infty),
\end{split}
\]
which then implies by Fourier inversion the first equality in \eqref{equation:catue}. (The second equality is obtained similarly.)
\end{remark}

\begin{remark}
The function $a^{(m)}$ is obtained modulo $\mc{O}_{S^{m-1}}(h)$ locally near the diagonal as the unique (local) extension of $w \mapsto \sigma_{\mathbf{A}}^{\mathrm{BW}}(w,\xi_{\HH^*})$ which is holomorphic (resp. antiholomorphic) in the left (resp. right) variable, as in the proof of Theorem \ref{theorem:bergman}.
\end{remark}

More generally, it is possible to obtain a version of the preceding theorem for non-admissible operators.
\begin{theorem}
	With the notation of Theorem \ref{theorem:penible}, for \emph{any} $\mathbf{A}' \in \Psi^m_{h, \mathbf{k}}(F, \Lk)$, there exists $a_{h, \mathbf{k}} \in S^m(F \times \HH^*)$ such that
\begin{equation}
\label{equation:penible2}
\begin{split}
\mathbf{A}f(w_{\ell}) & = \dfrac{1}{(2\pi h)^n} \int_{x_2 \in M} \int_{T^*_{x_\ell}M} e^{-\tfrac{i}{h}\xi\cdot\exp_{x_\ell}^{-1}(x_2)} \Pi_{\mathbf{k}} \big(\tau_{x_2 \to x_\ell} \\
& \hspace{3cm} \Pi_{\mathbf{k}} \big(\psi(w_\ell, \bullet ) a_{h,\mathbf{k}}(w_1, \bullet, d\pi^{\top}\xi)  f(\bullet)|_{F_{x_2}})\big)\big)(w_\ell) \, \dd \xi \dd x_2   + \mc{O}_{C^\infty}(h^\infty).
\end{split}
\end{equation}
\end{theorem}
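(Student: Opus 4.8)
The idea is to reduce the general case to the admissible case already handled in Theorem \ref{theorem:penible}. Given an arbitrary $\mathbf{A}' \in \Psi^m_{h, \mathbf{k}}(F, \mathbf{L})$, we do \emph{not} attempt to rewrite it as an admissible operator; instead we simply insert the definition $\mathbf{A} = \Pi_{\mathbf{k}} \mathbf{A}' \Pi_{\mathbf{k}}$ and re-run the stationary phase computation of Theorem \ref{theorem:penible} essentially verbatim, noting where admissibility was (and was not) actually used. The key observation is that the proof of Theorem \ref{theorem:penible} only used admissibility at one place: to pass from $\Pi_{\mathbf{k}} \mathbf{A}' \Pi_{\mathbf{k}}(w_\ell, w_r)$ to $\mathbf{A}' \Pi_{\mathbf{k}}(w_\ell, w_r) + \mc{O}_{C^\infty}(h^\infty)$, i.e.\ to drop the left projection $\Pi_{\mathbf{k}}$ up to a negligible error. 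For a general $\mathbf{A}'$ this step fails, so we must keep the outer $\Pi_{\mathbf{k}}$; this is exactly why \eqref{equation:penible2} carries an extra $\Pi_{\mathbf{k}}(\tau_{x_2 \to x_\ell}(\dotsc))$ wrapped around the expression that appears (already composed with one $\Pi_{\mathbf{k}}$) in \eqref{equation:penible}.

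First I would write, as in \eqref{eq:schwartz-kernel-A'}, the Schwartz kernel of $\mathbf{A}'$ in a local trivialisation of $\mathbf{L}$ near the diagonal, using a fibrewise-unit-norm multisection $\mathbf{s}$ and the connection $1$-form $\boldsymbol{\beta}$ defined by $\overline{\nabla}\mathbf{s} = i\boldsymbol{\beta} \otimes \mathbf{s}$. Then I would compose on the right with $\Pi_{\mathbf{k}}$ using \eqref{equation:partie-simple} and \eqref{equation:local-e}, obtaining exactly the oscillatory integral \eqref{equation:XX} for the kernel of $\mathbf{A}' \Pi_{\mathbf{k}}$ (the argument restricting $w$ to the fibre $F_{x_r}$ since $\Pi_{\mathbf{k}}$ vanishes between distinct fibres is unchanged). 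Next I would apply the complex stationary phase lemma \cite[Theorems 2.3]{Melin-Sjostrand-75} in the vertical variables $(w, \xi_{\V^*})$ — this is where Lemma \ref{lemma:critical-Omega} and Claim \ref{claim:hessian} are invoked to locate the critical point $Z(w_\ell, w_r, \xi_{\HH^*})$, compute the critical value, and identify the Hessian determinant (equal to $1$ on the diagonal) — producing a symbol $a_{h,\mathbf{k}} \in S^m(F \times \HH^*)$ and the representation \eqref{equation:penible} for the kernel of $\mathbf{A}' \Pi_{\mathbf{k}}$. Finally, applying the left projection $\Pi_{\mathbf{k}}$ to this kernel in the left variable, and rewriting the parallel transport identity $\tau_{x_r \to x_\ell}(\Pi_{\mathbf{k}}(\bullet, w_r)|_{x_r})(w_\ell)$ using $\tau_{x \to y}(S|_x)(w) = \tau_{(\tau_{y\to x}w)\to w} S(\tau_{y\to x}w)$ exactly as in the last displayed computation of the proof of Theorem \ref{theorem:penible}, collects the action of $\Pi_{\mathbf{k}}$ on both sides into the form \eqref{equation:penible2}, with the relabelling $x_r \mapsto x_2$, $w_r \mapsto \bullet$. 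The function $\psi$ is the same one furnished by Theorem \ref{theorem:penible} (it only depends on the Bergman-kernel phase $\boldsymbol{\psi}$, not on $\mathbf{A}'$), and the error is $\mc{O}_{C^\infty}(h^\infty)$ throughout by the remainder estimates in the complex stationary phase lemma together with Lemma \ref{lemma:eat-pik}.

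\textbf{Main obstacle.} The only genuine point requiring care is the bookkeeping of the \emph{two} projections: in the admissible case one commutes $\Pi_{\mathbf{k}}$ past $\mathbf{A}'$ and absorbs it, leaving a single $\Pi_{\mathbf{k}}$, whereas here one must carry the outer $\Pi_{\mathbf{k}}$ all the way through and verify that it lands cleanly outside the oscillatory integral after the parallel-transport rewriting — in particular that the outer $\Pi_{\mathbf{k}}$ acts in the $w_\ell$-variable on a section that is already the restriction to a fibre of something, so that composing it makes sense and yields precisely $\Pi_{\mathbf{k}}(\tau_{x_2 \to x_\ell}\Pi_{\mathbf{k}}(\dotsc))$. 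One should also check that no holomorphicity of $a_{h,\mathbf{k}}$ in $w_\ell$ is needed or claimed (it is not, exactly as remarked after Theorem \ref{theorem:penible}), and that the symbol class $S^m(F \times \HH^*)$ is preserved — both follow from the same stationary-phase asymptotics as before since the phase $\Omega_{w_\ell, w_r, \xi_{\HH^*}}$ and the almost-analytic extension procedure are identical; the computation that $a(w_\ell, \xi)$ is independent of $w$ in the original integrand, which made the remainder gain a vertical derivative and hence live in $S^{m-1}$, is unchanged. Consequently no new analytic input is required beyond what is already established, and the proof is a careful re-tracing of the argument for Theorem \ref{theorem:penible} with the reduction step suppressed.
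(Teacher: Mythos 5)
Your proposal is correct and is essentially the same argument as the paper's, differing only in the bookkeeping order: the paper inserts both Bergman kernels $\Pi_{\mathbf{k}}(w_\ell,w_1)$ and $\Pi_{\mathbf{k}}(w_2,w_r)$ into a single oscillatory integral and then does stationary phase in $(w_2,\xi_{\V^*})$ with $w_1$ left as a spectator, whereas you first perform the stationary phase to obtain the kernel of $\mathbf{A}'\Pi_{\mathbf{k}}$ (exactly the computation of Theorem \ref{theorem:penible} with the admissibility reduction suppressed) and only then compose on the left with $\Pi_{\mathbf{k}}$. Since the stationary phase is in the same variables $(w_2,\xi_{\V^*})$ with the same phase $\Omega$ and Hessian, and the left $\Pi_{\mathbf{k}}$ commutes trivially with the outer $\xi,x_2$ integrals, the two orderings give the same result.
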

\begin{proof}
	Following the proof of Theorem \ref{theorem:penible}, we may write
	\begin{equation*}
\begin{split}
 \mathbf{A}(w_\ell,w_r) &= \dfrac{b_{\mathbf{k}}^2}{(2\pi h)^{\dim F}} \int_{\xi \in T_{w_1}^*F, w_1 \in F_{x_\ell}, w_2 \in F_{x_r}} e^{\tfrac{i}{h}( h\mathbf{k} \cdot (\boldsymbol{\psi}(w_\ell, w_1)+ \boldsymbol{\psi}(w_2, w_r) - \int_{w_2}^{w_1} \boldsymbol{\beta}) - \xi \cdot \exp_{w_1}^{-1}(w_2))}\\
& \hspace{2cm} a(w_1, \xi)  \,  \dd w_1 \dd w_2 \dd \xi\, \mathbf{s}^{\otimes \mathbf{k}}(w_\ell) \overline{\mathbf{s}}^{\otimes \mathbf{k}}(w_r) + \mc{O}_{C^\infty}(h^\infty).
\end{split}
	\end{equation*}
	The proof then follows by applying the complex stationary phase lemma as before with the phase 
	\[
		\Omega_{w_1, w_r, \xi_{\HH^*}} (w_2, \xi_{\V^*}) =  h\mathbf{k} \cdot \left(\boldsymbol{\psi}(w_2, w_r) - \int_{w_2}^{w_1} \boldsymbol{\beta}\right) - \xi \cdot \exp_{w_1}^{-1}(w_2).
	\]
\end{proof}


\subsubsection{Properties of $\Psi^\bullet_{h,  \mathrm{BW}}(P)$}

\label{sssection:properties-bw}


We now verify that operators in $\Psi^m_{h, \mathrm{BW}}(P)$ satisfy all required properties of semiclassical pseudodifferential calculus (algebra, invertibility of elliptic elements, etc.). 

\begin{lemma}
$\Psi^\bullet_{h, \mathrm{BW}}(P)$ is an algebra.
\end{lemma}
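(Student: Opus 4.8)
The claim is that $\Psi^\bullet_{h,\mathrm{BW}}(P)$ is closed under composition. The plan is to reduce this entirely to the algebra property of the underlying line-bundle calculus $\Psi^\bullet_{h,\mathbf{k}}(F,\mathbf{L})$ (Proposition~\ref{proposition:proprietes}, Item~(iv)) together with the defining admissibility condition \eqref{equation:commutation-holomorphic} and the absorption lemma (Lemma~\ref{lemma:eat-pik}). So the first step is purely formal: given $\mathbf{A} = \Pi_{\mathbf{k}}\mathbf{A}'\Pi_{\mathbf{k}} \in \Psi^m_{h,\mathrm{BW}}(P)$ and $\mathbf{B} = \Pi_{\mathbf{k}}\mathbf{B}'\Pi_{\mathbf{k}} \in \Psi^{m'}_{h,\mathrm{BW}}(P)$ with $\mathbf{A}', \mathbf{B}'$ admissible, write
\[
\mathbf{A}\mathbf{B} = \Pi_{\mathbf{k}}\mathbf{A}'\Pi_{\mathbf{k}}\Pi_{\mathbf{k}}\mathbf{B}'\Pi_{\mathbf{k}} = \Pi_{\mathbf{k}}\mathbf{A}'\Pi_{\mathbf{k}}\mathbf{B}'\Pi_{\mathbf{k}},
\]
using $\Pi_{\mathbf{k}}^2 = \Pi_{\mathbf{k}}$, and then use $[\mathbf{B}',\Pi_{\mathbf{k}}] \in h^\infty\Psi^{-\infty}_{h,\mathbf{k}}(F,\mathbf{L})$ (admissibility of $\mathbf{B}'$) to replace $\Pi_{\mathbf{k}}\mathbf{B}'$ by $\mathbf{B}'\Pi_{\mathbf{k}}$ modulo a negligible term, so that $\mathbf{A}\mathbf{B} = \Pi_{\mathbf{k}}\mathbf{A}'\mathbf{B}'\Pi_{\mathbf{k}} + \Pi_{\mathbf{k}}\mathbf{A}'\mathbf{R}\Pi_{\mathbf{k}}$ with $\mathbf{R} = \Pi_{\mathbf{k}}\mathbf{B}' - \mathbf{B}'\Pi_{\mathbf{k}} \in h^\infty\Psi^{-\infty}_{h,\mathbf{k}}(F,\mathbf{L})$.

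The second step is to observe that $\mathbf{A}'\mathbf{B}' \in \Psi^{m+m'}_{h,\mathbf{k}}(F,\mathbf{L})$ by the composition property of the line-bundle calculus, and that it is again admissible: indeed $[\mathbf{A}'\mathbf{B}',\Pi_{\mathbf{k}}] = \mathbf{A}'[\mathbf{B}',\Pi_{\mathbf{k}}] + [\mathbf{A}',\Pi_{\mathbf{k}}]\mathbf{B}'$, and each summand lies in $h^\infty\Psi^{-\infty}_{h,\mathbf{k}}(F,\mathbf{L})$ because this ideal is closed under left and right multiplication by elements of $\Psi^\bullet_{h,\mathbf{k}}(F,\mathbf{L})$ (a standard fact, following e.g.\ from Lemma~\ref{lemma:help}). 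Hence $\Pi_{\mathbf{k}}(\mathbf{A}'\mathbf{B}')\Pi_{\mathbf{k}}$ is by definition an element of $\Psi^{m+m'}_{h,\mathrm{BW}}(P)$. The third step handles the error term: $\mathbf{A}'\mathbf{R} \in h^\infty\Psi^{-\infty}_{h,\mathbf{k}}(F,\mathbf{L})$ since this ideal absorbs multiplication by $\mathbf{A}'$, and then by Lemma~\ref{lemma:eat-pik} both $\Pi_{\mathbf{k}}$-sandwiches of it stay in $h^\infty\Psi^{-\infty}_{h,\mathbf{k}}(F,\mathbf{L})$; in particular $\Pi_{\mathbf{k}}\mathbf{A}'\mathbf{R}\Pi_{\mathbf{k}} \in \Psi^{m+m'}_{h,\mathrm{BW}}(P)$ trivially (it is of the required form with admissible representative the negligible operator itself, or simply note the class is stable under adding such remainders). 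Combining, $\mathbf{A}\mathbf{B} \in \Psi^{m+m'}_{h,\mathrm{BW}}(P)$.

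There is essentially no hard step here — the whole point of working with \emph{admissible} operators, as emphasized in the remark following the definition of the calculus, is precisely that this argument is a one-line manipulation rather than requiring a Toeplitz-style reduction (which would be needed had the calculus been defined with arbitrary, non-admissible representatives). The only mild care needed is the bookkeeping that $h^\infty\Psi^{-\infty}_{h,\mathbf{k}}(F,\mathbf{L})$ is a two-sided ideal stable under the projections $\Pi_{\mathbf{k}}$, which is exactly Lemma~\ref{lemma:eat-pik} together with the elementary composition estimates, and that the unit $\mathbbm{1} = \Pi_{\mathbf{k}}\,\mathbbm{1}\,\Pi_{\mathbf{k}}$ restricted to holomorphic sections lies in $\Psi^0_{h,\mathrm{BW}}(P)$ so that the algebra is unital when restricted to its natural domain. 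I would present the proof in three short displayed lines along exactly the lines above.

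\begin{proof}
Let $\mathbf{A} = \Pi_{\mathbf{k}}\mathbf{A}'\Pi_{\mathbf{k}} \in \Psi^m_{h,\mathrm{BW}}(P)$ and $\mathbf{B} = \Pi_{\mathbf{k}}\mathbf{B}'\Pi_{\mathbf{k}} \in \Psi^{m'}_{h,\mathrm{BW}}(P)$ with $\mathbf{A}', \mathbf{B}' \in \Psi^\bullet_{h,\mathbf{k}}(F,\mathbf{L})$ admissible. Using $\Pi_{\mathbf{k}}^2 = \Pi_{\mathbf{k}}$ and the admissibility $[\mathbf{B}',\Pi_{\mathbf{k}}] =: \mathbf{R} \in h^\infty\Psi^{-\infty}_{h,\mathbf{k}}(F,\mathbf{L})$, we obtain
\[
\mathbf{A}\mathbf{B} = \Pi_{\mathbf{k}}\mathbf{A}'\Pi_{\mathbf{k}}\mathbf{B}'\Pi_{\mathbf{k}} = \Pi_{\mathbf{k}}\mathbf{A}'\mathbf{B}'\Pi_{\mathbf{k}} - \Pi_{\mathbf{k}}\mathbf{A}'\mathbf{R}\Pi_{\mathbf{k}}.
\]
By Proposition \ref{proposition:proprietes}, Item (iv), $\mathbf{A}'\mathbf{B}' \in \Psi^{m+m'}_{h,\mathbf{k}}(F,\mathbf{L})$, and it is admissible since
\[
[\mathbf{A}'\mathbf{B}',\Pi_{\mathbf{k}}] = \mathbf{A}'[\mathbf{B}',\Pi_{\mathbf{k}}] + [\mathbf{A}',\Pi_{\mathbf{k}}]\mathbf{B}' \in h^\infty\Psi^{-\infty}_{h,\mathbf{k}}(F,\mathbf{L}),
\]
using that $h^\infty\Psi^{-\infty}_{h,\mathbf{k}}(F,\mathbf{L})$ is a two-sided ideal in $\Psi^\bullet_{h,\mathbf{k}}(F,\mathbf{L})$ (immediate from Lemma \ref{lemma:help} and the composition estimates). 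Hence $\Pi_{\mathbf{k}}\mathbf{A}'\mathbf{B}'\Pi_{\mathbf{k}} \in \Psi^{m+m'}_{h,\mathrm{BW}}(P)$ by definition. Finally $\mathbf{A}'\mathbf{R} \in h^\infty\Psi^{-\infty}_{h,\mathbf{k}}(F,\mathbf{L})$, and by Lemma \ref{lemma:eat-pik} so is $\Pi_{\mathbf{k}}\mathbf{A}'\mathbf{R}\Pi_{\mathbf{k}}$; in particular it belongs to $\Psi^{m+m'}_{h,\mathrm{BW}}(P)$. Therefore $\mathbf{A}\mathbf{B} \in \Psi^{m+m'}_{h,\mathrm{BW}}(P)$, and $\Psi^\bullet_{h,\mathrm{BW}}(P)$ is an algebra.
\end{proof}
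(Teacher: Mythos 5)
Your proof is correct and follows the same route as the paper: commute $\Pi_{\mathbf{k}}$ past $\mathbf{B}'$ using admissibility, invoke the composition property of $\Psi^\bullet_{h,\mathbf{k}}(F,\mathbf{L})$, and absorb the negligible remainder via Lemma~\ref{lemma:eat-pik}. The one thing you do more carefully than the paper is to explicitly check that the composite $\mathbf{A}'\mathbf{B}'$ is again \emph{admissible} (via the Leibniz identity $[\mathbf{A}'\mathbf{B}',\Pi_{\mathbf{k}}]=\mathbf{A}'[\mathbf{B}',\Pi_{\mathbf{k}}]+[\mathbf{A}',\Pi_{\mathbf{k}}]\mathbf{B}'$ and the ideal property of $h^\infty\Psi^{-\infty}_{h,\mathbf{k}}$), which is actually needed to conclude $\Pi_{\mathbf{k}}\mathbf{A}'\mathbf{B}'\Pi_{\mathbf{k}}\in\Psi^{m+m'}_{h,\mathrm{BW}}(P)$ directly from the definition; the paper's proof leaves this step implicit. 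Worth keeping that line in.
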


\begin{proof}
Let $\mathbf{A} := \Pi_{\mathbf{k}}\mathbf{A}' \Pi_{\mathbf{k}} \in \Psi^m_{h, \mathrm{BW}}(P)$, $\mathbf{B} := \Pi_{\mathbf{k}}\mathbf{B}' \Pi_{\mathbf{k}} \in \Psi^{m'}_{h,\mathrm{BW}}(P)$, with $\mathbf{A}',\mathbf{B}' \in \Psi^m_{h,\mathbf{k}}(F,\mathbf{L})$ admissible. Then, using \eqref{equation:commutation-holomorphic},
\[
\mathbf{A}\mathbf{B} = \Pi_{\mathbf{k}} \underbrace{\mathbf{A}' (\mathbf{B}' +[\Pi_{\mathbf{k}},\mathbf{B}'])}_{\in \Psi^{m+m'}_{h,\mathbf{k}}(F,\mathbf{L})}\Pi_{\mathbf{k}} \in \Psi^{m+m'}_{h,\mathrm{BW}}(P),
\]
by Lemma \ref{lemma:eat-pik}, which completes the proof.
\end{proof}


Let $\mathbf{I} \in \Psi^m_{h,\mathbf{k}}(F,\mathbf{L})$ be an operator such that $\mathbf{I} \equiv \mathbbm{1}$ microlocally in a conic neighborhood $\mc{C}$ of $\overline{\HH^*} \subset \overline{T^*F} = T^*F \cup \partial_\infty T^*F$, that is $\WF(\mathbf{I}-\mathbbm{1}) \subset \complement(\mc{C})$ (where $\complement$ denotes complement), and $\mathbf{I}^* = \mathbf{I}$. 

\begin{lemma}
\label{lemma:i}
One has $\Pi_{\mathbf{k}}\mathbf{I} \Pi_{\mathbf{k}} \in \Psi^0_{h, \mathrm{BW}}(P)$ and $\Pi_{\mathbf{k}}\mathbf{I} \Pi_{\mathbf{k}} - \mathbbm{1} \in h^{\infty}\Psi^{-\infty}_{h,\mathrm{BW}}(P)$. \end{lemma}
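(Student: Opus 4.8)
The plan is to reduce the statement to an elliptic parametrix argument for the fibrewise $\overline{\partial}$-operator together with the absorption property of $\Pi_{\mathbf{k}}$ against negligible operators (Lemma \ref{lemma:eat-pik}). Set $\mathbf{J} := \mathbbm{1} - \mathbf{I} \in \Psi^{\bullet}_{h,\mathbf{k}}(F,\mathbf{L})$. By hypothesis $\WF(\mathbf{J}) = \WF(\mathbf{I}-\mathbbm{1}) \subset \complement(\mc{C})$, so in particular $\WF(\mathbf{J}) \cap \overline{\HH^*} = \emptyset$. The key geometric input is that $\Pi_{\mathbf{k}}$ maps into $\ker(h\overline{\partial}_{\mathbf{k}})$ fibrewise, i.e. $h\overline{\partial}_{\mathbf{k}} \Pi_{\mathbf{k}} = 0$ (this is \eqref{eq:commutation-vertical-derivative}), while $h\overline{\partial}_{\mathbf{k}}$ is elliptic precisely off $\overline{\HH^*}$; hence an operator microlocally supported away from $\overline{\HH^*}$ must annihilate the range of $\Pi_{\mathbf{k}}$ modulo $\mc{O}(h^\infty)$.

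Concretely, by \S\ref{sssection:examples}, Item (iv), the operator $\mathbf{D} := h\overline{\partial}_{\mathbf{k}} \in \Psi^1_{h,\mathbf{k}}(F,\mathbf{L},\C \to {\V_F^*}^{0,1})$ has principal symbol $i(\xi|_{\V_F})^{0,1}$, which is independent of the semiclassical family $h \mapsto \mathbf{k}(h)$ and, for a \emph{real} covector $\xi \in T^*F$, is injective (as a map on $\C$) if and only if $\xi \notin \HH_F^*$; passing to the radial compactification one gets $\Ell(\mathbf{D}) = \overline{T^*F} \setminus \overline{\HH^*}$ relative to every family. Since $\WF(\mathbf{J}) \subset \Ell(\mathbf{D})$, the parametrix construction of Proposition \ref{proposition:ellipticity} (in its version for operators acting between auxiliary bundles) furnishes $\mathbf{Q} \in \Psi^{\bullet}_{h,\mathbf{k}}(F,\mathbf{L},{\V_F^*}^{0,1} \to \C)$ and $\mathbf{R} \in h^\infty\Psi^{-\infty}_{h,\mathbf{k}}(F,\mathbf{L})$ with $\mathbf{J} = \mathbf{Q}\mathbf{D} + \mathbf{R}$. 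Multiplying on the right by $\Pi_{\mathbf{k}}$ and using $\mathbf{D}\Pi_{\mathbf{k}} = 0$, we obtain $\mathbf{J}\Pi_{\mathbf{k}} = \mathbf{R}\Pi_{\mathbf{k}}$, which belongs to $h^\infty\Psi^{-\infty}_{h,\mathbf{k}}(F,\mathbf{L})$ by Lemma \ref{lemma:eat-pik}. Applying $\Pi_{\mathbf{k}}$ on the left and Lemma \ref{lemma:eat-pik} once more yields $\Pi_{\mathbf{k}}\mathbf{J}\Pi_{\mathbf{k}} \in h^\infty\Psi^{-\infty}_{h,\mathbf{k}}(F,\mathbf{L})$; a further application of Lemma \ref{lemma:eat-pik} shows that this operator commutes with $\Pi_{\mathbf{k}}$ modulo $h^\infty\Psi^{-\infty}$, so it is admissible, whence $\Pi_{\mathbf{k}}\mathbf{J}\Pi_{\mathbf{k}} \in h^\infty\Psi^{-\infty}_{h,\mathrm{BW}}(P)$.

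It remains to assemble the pieces. Using $\Pi_{\mathbf{k}}^2 = \Pi_{\mathbf{k}}$ we write $\Pi_{\mathbf{k}}\mathbf{I}\Pi_{\mathbf{k}} = \Pi_{\mathbf{k}} - \Pi_{\mathbf{k}}\mathbf{J}\Pi_{\mathbf{k}}$. The operator $\mathbbm{1}$ is trivially admissible, so $\Pi_{\mathbf{k}} = \Pi_{\mathbf{k}}\mathbbm{1}\Pi_{\mathbf{k}} \in \Psi^0_{h,\mathrm{BW}}(P)$, and it is precisely the identity of the Borel-Weil calculus (it acts as $\mathbbm{1}$ on $C^\infty_{\mathrm{hol}}(F,\mathbf{L}^{\otimes \mathbf{k}})$), which is the meaning of $\mathbbm{1}$ in the statement. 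Therefore $\Pi_{\mathbf{k}}\mathbf{I}\Pi_{\mathbf{k}} \in \Psi^0_{h,\mathrm{BW}}(P)$ and $\Pi_{\mathbf{k}}\mathbf{I}\Pi_{\mathbf{k}} - \mathbbm{1} = -\Pi_{\mathbf{k}}\mathbf{J}\Pi_{\mathbf{k}} \in h^\infty\Psi^{-\infty}_{h,\mathrm{BW}}(P)$, as claimed. The only point that genuinely needs care is the identification of the characteristic set of $h\overline{\partial}_{\mathbf{k}}$ with $\overline{\HH^*}$ uniformly over all families $h\mapsto\mathbf{k}(h)$ — i.e. checking that the symbol formula of \S\ref{sssection:examples}, Item (iv), does not depend on the family and that its injectivity at real covectors cuts out exactly $\overline{\HH^*}$; everything else is a routine bookkeeping combination of Proposition \ref{proposition:ellipticity} and Lemma \ref{lemma:eat-pik}.
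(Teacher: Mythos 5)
Your proof is correct and follows essentially the same route as the paper's: ellipticity of $h\overline{\partial}_{\mathbf{k}}$ away from $\overline{\HH^*}$, a parametrix for $\mathbbm{1}-\mathbf{I}$ via Proposition \ref{proposition:ellipticity}, the relation $\overline{\partial}_{\mathbf{k}}\Pi_{\mathbf{k}}=0$, and absorption of negligible remainders by $\Pi_{\mathbf{k}}$ (Lemma \ref{lemma:eat-pik}). The only organisational difference is that the paper proves directly that $\mathbf{I}$ itself is admissible (it verifies the three conditions in \eqref{equation:plaf} and records \eqref{equation:useful} as a by-product), whereas you bypass admissibility of $\mathbf{I}$ by writing $\Pi_{\mathbf{k}}\mathbf{I}\Pi_{\mathbf{k}} = \Pi_{\mathbf{k}}\mathbbm{1}\Pi_{\mathbf{k}} - \Pi_{\mathbf{k}}(\mathbbm{1}-\mathbf{I})\Pi_{\mathbf{k}}$ and using that $\mathbbm{1}$ is trivially admissible while the second summand is negligible; this is a perfectly valid repackaging of the same argument.
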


\begin{proof}
By definition, that $\Pi_{\mathbf{k}}\mathbf{I} \Pi_{\mathbf{k}} \in \Psi^0_{h, \mathrm{BW}}(P)$ amounts to showing that the following holds:
\begin{equation}
\label{equation:plaf}
[\mathbf{I}, \Pi_{\mathbf{k}}], \quad \Pi_{\mathbf{k}}\mathbf{I}(\mathbbm{1}-\Pi_{\mathbf{k}}), \quad (\mathbbm{1}-\Pi_{\mathbf{k}}) \mathbf{I}~\Pi_{\mathbf{k}} \quad \in h^\infty \Psi^{-\infty}_{h,\mathbf{k}}(F,\mathbf{L}). 
\end{equation}
By \S\ref{sssection:examples}, Item (iv), the operator $h\overline{\partial}_{\mathbf{k}}$ is elliptic outside of $\HH^*$. Hence, by Proposition \ref{proposition:ellipticity}, there exists an operator $\mathbf{B} \in \Psi^{-1}_{h,\mathbf{k}}(F,\mathbf{L})$ such that $\mathbf{B} h\overline{\partial}_{\mathbf{k}} = \mathbbm{1}-\mathbf{I} + \mc{O}_{\Psi^{-\infty}_{h,\mathbf{k}}(F,\mathbf{L})}(h^\infty)$. Composing on the right by $\Pi_{\mathbf{k}}$ and on the left by $\mathbbm{1}-\Pi_{\mathbf{k}}$, we therefore obtain $(\mathbbm{1}-\Pi_{\mathbf{k}}) \mathbf{I}~\Pi_{\mathbf{k}}= \mc{O}_{\Psi^{-\infty}_{h,\mathbf{k}}(F,\mathbf{L})}(h^\infty)$. Taking the adjoint, we also get $\Pi_{\mathbf{k}}\mathbf{I}(\mathbbm{1}-\Pi_{\mathbf{k}})= \mc{O}_{\Psi^{-\infty}_{h,\mathbf{k}}(F,\mathbf{L})}(h^\infty)$. Finally, this implies $[\mathbf{I}, \Pi_{\mathbf{k}}]\in h^\infty \Psi^{-\infty}_{h,\mathbf{k}}(F,\mathbf{L})$, which proves \eqref{equation:plaf}.

Note that the proof also gives
\begin{equation}
\label{equation:useful}
\mathbf{I}~\Pi_{\mathbf{k}} = \Pi_{\mathbf{k}} + \mc{O}_{\Psi^{-\infty}_{h,\mathbf{k}}(F,\mathbf{L})}(h^\infty), \qquad  \Pi_{\mathbf{k}}\mathbf{I} = \Pi_{\mathbf{k}} + \mc{O}_{\Psi^{-\infty}_{h,\mathbf{k}}(F,\mathbf{L})}(h^\infty).
\end{equation}
In particular, this gives $\Pi_{\mathbf{k}}\mathbf{I} \Pi_{\mathbf{k}} = \mathbbm{1} + \mc{O}_{\Psi^{-\infty}_{h,\mathrm{BW}}(P)}(h^{\infty})$ (on fibrewise holomorphic sections).
\end{proof}

\begin{definition}[Symbols, ellipticity, and wavefront set]\label{definition:BW-stuff}
Let $\mathbf{A} := \Pi_{\mathbf{k}}\mathbf{A}' \Pi_{\mathbf{k}} \in \Psi^m_{h, \mathrm{BW}}(P)$. The principal symbol $\sigma_{\mathbf{A}}^{\mathrm{BW}} \in S^m_{h, \mathrm{BW}}(\HH^*)$ in the calculus $\Psi^\bullet_{h, \mathrm{BW}}(P)$ is defined as the \emph{restriction} of the principal symbol of $\sigma_{\mathbf{A}'}$ in the calculus $\Psi^{\bullet}_{h,\mathbf{k}}(F,\mathbf{L}^{\otimes \mathbf{k}})$ to the subbundle $\HH^* \subset T^*F$. The elliptic set of $\mathbf{A}$ in $\Psi^\bullet_{h, \mathrm{BW}}(P)$ is defined as $\Ell^{\mathrm{BW}}(\mathbf{A}) := \Ell(\mathbf{A}') \cap \overline{\HH^*}$, while the wavefront set is defined to be $\WF^{\mathrm{BW}}(\mathbf{A}) := \WF(\mathbf{A}') \cap \overline{\HH^*}$.
\end{definition}

We need to show that the principal symbol is well-defined, that is independent of the operator $\mathbf{A}'$.

\begin{lemma}
	The principal symbol $\sigma^{\mathrm{BW}}_{\mathbf{A}}$ is well-defined.
\end{lemma}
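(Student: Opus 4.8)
The plan is to show that if $\mathbf{A} = \Pi_{\mathbf{k}} \mathbf{A}' \Pi_{\mathbf{k}} = \Pi_{\mathbf{k}} \mathbf{B}' \Pi_{\mathbf{k}}$ for two admissible operators $\mathbf{A}', \mathbf{B}' \in \Psi^m_{h,\mathbf{k}}(F, \mathbf{L})$, then $\sigma_{\mathbf{A}'}|_{\HH^*} = \sigma_{\mathbf{B}'}|_{\HH^*}$ (modulo $h S^{m-1}_{h}(\HH^*)$, and relative to any family $h \mapsto \mathbf{k}(h)$). Setting $\mathbf{C}' := \mathbf{A}' - \mathbf{B}' \in \Psi^m_{h,\mathbf{k}}(F,\mathbf{L})$, which is again admissible, this is equivalent to proving: if $\Pi_{\mathbf{k}} \mathbf{C}' \Pi_{\mathbf{k}} \in h^\infty \Psi^{-\infty}_{h,\mathbf{k}}(F,\mathbf{L})$ and $\mathbf{C}'$ is admissible, then $\sigma_{\mathbf{C}'}|_{\HH^*} = 0$.

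First I would use admissibility: since $[\mathbf{C}', \Pi_{\mathbf{k}}] \in h^\infty \Psi^{-\infty}_{h,\mathbf{k}}(F,\mathbf{L})$, we have $\mathbf{C}' \Pi_{\mathbf{k}} = \Pi_{\mathbf{k}} \mathbf{C}' \Pi_{\mathbf{k}} + \mc{O}_{\Psi^{-\infty}_{h,\mathbf{k}}(F,\mathbf{L})}(h^\infty)$, hence $\mathbf{C}' \Pi_{\mathbf{k}} \in h^\infty \Psi^{-\infty}_{h,\mathbf{k}}(F,\mathbf{L})$ by the hypothesis on $\Pi_{\mathbf{k}}\mathbf{C}'\Pi_{\mathbf{k}}$. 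Now I would exploit that $\Pi_{\mathbf{k}}$ is ``elliptic on $\HH^*$'' in the appropriate microlocal sense: by \S\ref{sssection:examples}, Item (iv), the operator $h\overline{\partial}_{\mathbf{k}}$ is elliptic on $\overline{T^*F} \setminus \overline{\HH^*}$, and by Lemma \ref{lemma:i} (via the operator $\mathbf{I}$ and \eqref{equation:useful}) the projection $\Pi_{\mathbf{k}}$ acts as the identity microlocally near $\overline{\HH^*}$. Concretely, pick $\mathbf{I} \in \Psi^0_{h,\mathbf{k}}(F,\mathbf{L})$ with $\mathbf{I} \equiv \mathbbm{1}$ microlocally near $\overline{\HH^*}$ and $\mathbf{I}^* = \mathbf{I}$; by \eqref{equation:useful}, $\mathbf{I}\Pi_{\mathbf{k}} = \Pi_{\mathbf{k}} + \mc{O}_{\Psi^{-\infty}_{h,\mathbf{k}}(F,\mathbf{L})}(h^\infty)$. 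Therefore
\[
\mathbf{C}' \mathbf{I} = \mathbf{C}' \mathbf{I} (\mathbbm{1} - \Pi_{\mathbf{k}}) + \mathbf{C}' \mathbf{I} \Pi_{\mathbf{k}} = \mathbf{C}' \mathbf{I}(\mathbbm{1} - \Pi_{\mathbf{k}}) + \mathbf{C}'\Pi_{\mathbf{k}} + \mc{O}_{\Psi^{-\infty}_{h,\mathbf{k}}(F,\mathbf{L})}(h^\infty).
\]
The last two terms are negligible by the previous step. For the first term, $\WF(\mathbf{I}) $ is contained in a conic neighborhood of $\overline{\HH^*}$ while $\WF(\mathbbm{1} - \Pi_{\mathbf{k}})$ — or rather, the statement that $\mathbf{I}(\mathbbm{1}-\Pi_{\mathbf{k}}) \in h^\infty\Psi^{-\infty}$ — follows again from \eqref{equation:useful}. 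So $\mathbf{C}'\mathbf{I} \in h^\infty \Psi^{-\infty}_{h,\mathbf{k}}(F,\mathbf{L})$.

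Finally, $\mathbf{C}'\mathbf{I} \in h^\infty\Psi^{-\infty}_{h,\mathbf{k}}(F,\mathbf{L})$ implies $\sigma_{\mathbf{C}'} \sigma_{\mathbf{I}} = 0$ in $S^m_h/hS^{m-1}_h$ relative to every family $h \mapsto \mathbf{k}(h)$, by the multiplicativity of the principal symbol (Proposition \ref{proposition:proprietes}, Item (iv)) and Lemma \ref{lemma:kernel-symbol}. Since $\sigma_{\mathbf{I}} \equiv 1$ on a neighborhood of $\overline{\HH^*}$, restricting to $\HH^*$ gives $\sigma_{\mathbf{C}'}|_{\HH^*} = 0$, i.e. $\mathbf{C}'$ restricted to $\HH^*$ has vanishing principal symbol, which is exactly what was needed. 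The only mild subtlety — the step I expect to require the most care — is making the microlocal cutoff argument with $\mathbf{I}$ fully rigorous in the \emph{uniform} calculus (i.e. checking that all the $\mc{O}(h^\infty)$ remainders are uniform in $\mathbf{k} \in \Omega(h)$, not just along single families), but this follows the pattern already established in the proof of Lemma \ref{lemma:i} together with Remark \ref{remark:uniform}, so it is routine. Since the elliptic and wavefront sets in Definition \ref{definition:BW-stuff} are defined as restrictions of the corresponding sets of $\mathbf{A}'$, and these are determined by $\sigma_{\mathbf{A}'}$ up to lower order (and up to the choice of connection, which only translates them by the fixed map $T_{\beta'}$), the same argument shows they are well-defined; but as the lemma only asserts well-definedness of the principal symbol, establishing $\sigma^{\mathrm{BW}}_{\mathbf{A}}$ is independent of $\mathbf{A}'$ completes the proof.
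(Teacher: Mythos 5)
Your reduction to a single admissible $\mathbf{C}':=\mathbf{A}'-\mathbf{B}'$ with $\Pi_{\mathbf{k}}\mathbf{C}'\Pi_{\mathbf{k}} \in h^\infty\Psi^{-\infty}_{h,\mathbf{k}}(F,\mathbf{L})$ is fine, as is the observation that admissibility plus the hypothesis gives $\mathbf{C}'\Pi_{\mathbf{k}},\Pi_{\mathbf{k}}\mathbf{C}' \in h^\infty\Psi^{-\infty}_{h,\mathbf{k}}(F,\mathbf{L})$. The gap is the intermediate claim $\mathbf{C}'\mathbf{I}\in h^\infty\Psi^{-\infty}_{h,\mathbf{k}}(F,\mathbf{L})$, which you derive from the assertion that $\mathbf{I}(\mathbbm{1}-\Pi_{\mathbf{k}})\in h^\infty\Psi^{-\infty}_{h,\mathbf{k}}(F,\mathbf{L})$ ``follows from \eqref{equation:useful}.'' It does not: \eqref{equation:useful} gives $\mathbf{I}\Pi_{\mathbf{k}}=\Pi_{\mathbf{k}}+\mc{O}(h^\infty)$, hence $\mathbf{I}(\mathbbm{1}-\Pi_{\mathbf{k}})=\mathbf{I}-\Pi_{\mathbf{k}}+\mc{O}(h^\infty)$, and $\mathbf{I}-\Pi_{\mathbf{k}}$ is certainly not negligible -- apply it to a section that is \emph{not} fibrewise holomorphic but microsupported near $\overline{\HH^*}$, and you get $(\mathbbm{1}-\Pi_{\mathbf{k}})u + \mc{O}(h^\infty)$, which is $\mc{O}(1)$.

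Moreover the claim $\mathbf{C}'\mathbf{I}\in h^\infty\Psi^{-\infty}$ is simply false for general admissible $\mathbf{C}'$ satisfying the hypothesis. Take $\mathbf{C}'$ self-adjoint with $\WF(\mathbf{C}')$ disjoint from $\overline{\HH^*}$ but meeting $\WF(\mathbf{I})$ (recall $\WF(\mathbf{I})$ is an open conic neighborhood of $\overline{\HH^*}$, so this is easy to arrange). Then $\mathbf{C}'\Pi_{\mathbf{k}}$, $\Pi_{\mathbf{k}}\mathbf{C}'$ are negligible by the $\overline{\partial}$-ellipticity argument from Lemma \ref{lemma:i}, so $\mathbf{C}'$ is admissible and $\Pi_{\mathbf{k}}\mathbf{C}'\Pi_{\mathbf{k}}\in h^\infty\Psi^{-\infty}$; the lemma's conclusion $\sigma_{\mathbf{C}'}|_{\HH^*}=0$ holds; but $\sigma_{\mathbf{C}'\mathbf{I}}=\sigma_{\mathbf{C}'}\sigma_{\mathbf{I}}\neq 0$ on the overlap of microsupports, so $\mathbf{C}'\mathbf{I}\notin h\Psi^{m-1}_{h,\mathbf{k}}(F,\mathbf{L})$, let alone $h^\infty\Psi^{-\infty}$. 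The underlying difficulty is that $\Pi_{\mathbf{k}}$ is a Toeplitz-type projection, not an element of $\Psi^0_{h,\mathbf{k}}(F,\mathbf{L})$: it is not microlocally $\equiv\mathbbm{1}$ near $\overline{\HH^*}$, so negligibility of $\mathbf{C}'\Pi_{\mathbf{k}}$ does not by itself control the full symbol of $\mathbf{C}'$ near $\overline{\HH^*}$. Bridging this gap is exactly what the Bergman-kernel expansion (Theorem \ref{theorem:bergman}) and the stationary phase analysis of Theorem \ref{theorem:penible} accomplish; the paper's actual proof applies Theorem \ref{theorem:penible} to both $\Pi_{\mathbf{k}}\mathbf{A}_i\Pi_{\mathbf{k}}$ and reads off the symbol from the resulting twisted oscillatory integral via Fourier inversion -- this is the step your argument is trying, and failing, to shortcut.
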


\begin{proof}
	By definition, it suffices to take two admissible operators $\mathbf{A}_1$ and $\mathbf{A}_2$ such that $\Pi_{\mathbf{k}} \mathbf{A}_1 \Pi_{\mathbf{k}} = \Pi_{\mathbf{k}} \mathbf{A}_2 \Pi_{\mathbf{k}}$, and show that $\sigma_{\mathbf{A}_1}^{\mathrm{BW}} - \sigma_{\mathbf{A}_2}^{\mathrm{BW}} = \mc{O}_{S^{m-1}(\HH^*)}(h)$. By Theorem \ref{theorem:penible} applied to the Schwartz kernels of $\Pi_{\mathbf{k}} \mathbf{A}_i \Pi_{\mathbf{k}}$, we find:
	\[
\begin{split}
& \dfrac{1}{(2\pi h)^{\dim M}} \int_{\xi \in T^*_{x_\ell}M} e^{-\tfrac{i}{h} \xi\cdot \exp^{-1}_{x_\ell}(x_r)} (\widetilde{a}_1 - \widetilde{a}_2)(w_\ell,w_r,d\pi^\top\xi) \dd \xi ~ \tau^{\ell}_{x_r \to x_\ell} \left(\Pi_{\mathbf{k}}(\bullet, w_r)|_{x_r}\right) \\
& \hspace{2cm} = \mc{O}_{C^\infty}(h^\infty),
\end{split}
	\]
	for some $\widetilde{a}_1, \widetilde{a}_2 \in S^\bullet(F \times \HH^*)$ such that $\widetilde{a}_i(w, w, \xi)  = \sigma|_{\mathbf{A}_i}|_{\HH^*}(w,\xi) + \mc{O}_{S^{m-1}(\HH^*)}(h)$. Now by Fourier inversion, this implies that the symbol $b(w_\ell,\xi) := \widetilde{a}_1(w_\ell,w_\ell,\xi) - \widetilde{a}_2(w_\ell,w_\ell,\xi)$ satisfies $b= \mc{O}_{S^{m-1}(\HH^*)}(h)$. This proves the claim.
\end{proof}

Finally, the following lemma shows that elliptic elements of $\Psi^\bullet_{h, \mathrm{BW}}(P)$ are also invertible among $\Psi^\bullet_{h, \mathrm{BW}}(P)$.

\begin{lemma}
\label{lemma:parametrix-bw}
Let $\mathbf{A} \in \Psi^m_{h, \mathrm{BW}}(P)$ such that $\Ell^{\mathrm{BW}}(\mathbf{A}) = \overline{\HH^*}$. Then, there exists $\mathbf{B} \in \Psi^{-m}_{h, \mathrm{BW}}(P)$ such that $\mathbf{A}\mathbf{B} = \mathbbm{1} + \mc{O}_{\Psi^{-\infty}_{h,\mathrm{BW}}(P)}(h^\infty)$.
\end{lemma}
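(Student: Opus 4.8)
The strategy is to lift the ellipticity hypothesis on $\mathbf{A}$ to a genuine ellipticity statement in the calculus $\Psi^\bullet_{h,\mathbf{k}}(F,\mathbf{L})$, build a parametrix there, and then project back down using $\Pi_{\mathbf{k}}$. Write $\mathbf{A} = \Pi_{\mathbf{k}} \mathbf{A}'\Pi_{\mathbf{k}}$ for some admissible $\mathbf{A}' \in \Psi^m_{h,\mathbf{k}}(F,\mathbf{L})$. By Definition \ref{definition:BW-stuff}, the assumption $\Ell^{\mathrm{BW}}(\mathbf{A}) = \overline{\HH^*}$ means $\Ell(\mathbf{A}') \supseteq \overline{\HH^*}$, but $\mathbf{A}'$ need not be elliptic away from $\overline{\HH^*}$. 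To fix this, first I would pick an operator $\mathbf{I} \in \Psi^0_{h,\mathbf{k}}(F,\mathbf{L})$ as in Lemma \ref{lemma:i} (identity microlocally near $\overline{\HH^*}$, self-adjoint, and with $\WF(\mathbf{I}-\mathbbm{1})$ disjoint from a conic neighborhood $\mc{C}$ of $\overline{\HH^*}$), and then consider the modified operator
\[
\mathbf{A}'' := \mathbf{I}\mathbf{A}'\mathbf{I} + (\mathbbm{1}-\mathbf{I}^2)\cdot c\,(\mathbbm{1} + h^2(\Delta_{\mathbf{k}} + \overline{\partial}_{\mathbf{k}}^*\overline{\partial}_{\mathbf{k}}))^{m/2},
\]
for a suitable constant $c>0$; here $(\mathbbm{1}+h^2(\Delta_{\mathbf{k}} + \overline{\partial}_{\mathbf{k}}^*\overline{\partial}_{\mathbf{k}}))^{m/2} \in \Psi^m_{h,\mathbf{k}}(F,\mathbf{L})$ as recalled in \S\ref{sssection:sobolev-spaces-1}. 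On $\mc{C}$ the symbol of $\mathbf{A}''$ agrees with that of $\mathbf{A}'$ (so it is elliptic there by hypothesis), while on $\complement(\mc{C})$ the second summand dominates, so $\mathbf{A}''$ is globally elliptic: $\Ell(\mathbf{A}'') = \overline{T^*F}$. Then, as noted after Proposition \ref{proposition:ellipticity}, $\mathbf{A}''$ is invertible for $h$ small and $(\mathbf{A}'')^{-1} \in \Psi^{-m}_{h,\mathbf{k}}(F,\mathbf{L})$.

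Next I would verify that $\mathbf{A}''$ is admissible, i.e. $[\mathbf{A}'',\Pi_{\mathbf{k}}] \in h^\infty\Psi^{-\infty}_{h,\mathbf{k}}(F,\mathbf{L})$. This follows because $\mathbf{I}$ and $(\mathbbm{1}+h^2(\Delta_{\mathbf{k}} + \overline{\partial}_{\mathbf{k}}^*\overline{\partial}_{\mathbf{k}}))^{m/2}$ both commute with $\Pi_{\mathbf{k}}$ modulo $h^\infty\Psi^{-\infty}$ — the former by Lemma \ref{lemma:i} (equation \eqref{equation:plaf}), and the latter exactly, since $\Pi_{\mathbf{k}}$ commutes with $\Delta_{\mathbf{k}}$ and $\overline{\partial}_{\mathbf{k}}^*\overline{\partial}_{\mathbf{k}}$ by \eqref{equation:commutation-nablak-pik} and \eqref{eq:commutation-vertical-derivative}, hence with the whole spectral function (as in the proof of Lemma \ref{lemma:eat-pik}) — together with admissibility of $\mathbf{A}'$ and Lemma \ref{lemma:eat-pik}. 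Since admissibility is preserved under composition with admissible operators (the computation in the proof that $\Psi^\bullet_{h,\mathrm{BW}}(P)$ is an algebra), $\mathbf{A}''$ is admissible. A key consistency check: $\Pi_{\mathbf{k}}\mathbf{A}''\Pi_{\mathbf{k}} = \mathbf{A} + \mc{O}_{\Psi^{-\infty}_{h,\mathrm{BW}}(P)}(h^\infty)$; indeed $\Pi_{\mathbf{k}}\mathbf{I}\mathbf{A}'\mathbf{I}\Pi_{\mathbf{k}} = \Pi_{\mathbf{k}}\mathbf{A}'\Pi_{\mathbf{k}} + \mc{O}(h^\infty)$ by \eqref{equation:useful}, and the term with $\mathbbm{1}-\mathbf{I}^2$ is killed on both sides by $\Pi_{\mathbf{k}}$ up to $\mc{O}(h^\infty)$, again by \eqref{equation:useful} and admissibility.

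Now I would set $\mathbf{B} := \Pi_{\mathbf{k}}(\mathbf{A}'')^{-1}\Pi_{\mathbf{k}}$. Since $(\mathbf{A}'')^{-1} \in \Psi^{-m}_{h,\mathbf{k}}(F,\mathbf{L})$ is admissible (inverting an admissible elliptic operator preserves admissibility, because $(\mathbf{A}'')^{-1} = \mathbf{A}'' \cdot (\mathbf{A}'')^{-2}$ or directly: $[\Pi_{\mathbf{k}},(\mathbf{A}'')^{-1}] = -(\mathbf{A}'')^{-1}[\Pi_{\mathbf{k}},\mathbf{A}''](\mathbf{A}'')^{-1} \in h^\infty\Psi^{-\infty}_{h,\mathbf{k}}(F,\mathbf{L})$ using Lemma \ref{lemma:eat-pik}), we have $\mathbf{B} \in \Psi^{-m}_{h,\mathrm{BW}}(P)$. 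Finally compute, using admissibility of both operators and \eqref{equation:commutation-holomorphic} to absorb or commute the middle $\Pi_{\mathbf{k}}$:
\[
\mathbf{A}\mathbf{B} = \Pi_{\mathbf{k}}\mathbf{A}''\Pi_{\mathbf{k}}(\mathbf{A}'')^{-1}\Pi_{\mathbf{k}} + \mc{O}(h^\infty) = \Pi_{\mathbf{k}}\mathbf{A}''(\mathbf{A}'')^{-1}\Pi_{\mathbf{k}} + \mc{O}(h^\infty) = \Pi_{\mathbf{k}}\Pi_{\mathbf{k}} + \mc{O}(h^\infty) = \mathbbm{1} + \mc{O}_{\Psi^{-\infty}_{h,\mathrm{BW}}(P)}(h^\infty)
\]
on fibrewise holomorphic sections, where the remainders are controlled by Lemma \ref{lemma:eat-pik} and the error estimate just established. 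The main obstacle, and the step requiring the most care, is the construction and verification of the globally elliptic \emph{admissible} extension $\mathbf{A}''$: one must check simultaneously that the extra term restores ellipticity off $\overline{\HH^*}$ without disturbing the symbol on $\mc{C}$, that it commutes with $\Pi_{\mathbf{k}}$ up to negligible errors, and that projecting back recovers $\mathbf{A}$ modulo $h^\infty\Psi^{-\infty}_{h,\mathrm{BW}}(P)$ — all uniformly in $\mathbf{k}$ with $h|\mathbf{k}|\leq 1$.
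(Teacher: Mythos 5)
The main idea of your argument—lift $\mathbf{A}'$ to a globally elliptic admissible operator $\mathbf{A}''$, invert it, and project back—is attractive, but the construction of $\mathbf{A}''$ has a genuine gap. Write $\mathbf{I}$ with a symbol interpolating between $1$ near $\overline{\HH^*}$ and $0$ outside a slightly larger conic neighbourhood. In the transition region where $0 < \sigma_{\mathbf{I}} < 1$, the principal symbol of your $\mathbf{A}''$ is
$\sigma_{\mathbf{I}}^2 \sigma_{\mathbf{A}'} + (1-\sigma_{\mathbf{I}}^2)\,c\,\langle\xi\rangle^m$.
Ellipticity of $\mathbf{A}'$ on that region only bounds $|\sigma_{\mathbf{A}'}|$ from below; it says nothing about the phase of $\sigma_{\mathbf{A}'}$. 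If, for instance, $\sigma_{\mathbf{A}'} = -\langle\xi\rangle^m$ there, your glued symbol equals $\langle\xi\rangle^m\big(c - (1+c)\sigma_{\mathbf{I}}^2\big)$, which vanishes at $\sigma_{\mathbf{I}}^2 = c/(c+1) \in (0,1)$ for every choice of $c > 0$. So $\mathbf{A}''$ is not globally elliptic in general, $(\mathbf{A}'')^{-1}$ need not exist in $\Psi^{-m}_{h,\mathbf{k}}(F,\mathbf{L})$, and the whole subsequent argument collapses. Ellipticity is not a convex condition, so this kind of interpolation cannot be made to work for an arbitrary elliptic symbol without first squaring (e.g.\ replacing $\mathbf{A}'$ by $(\mathbf{A}')^*\mathbf{I}^2\mathbf{A}'$, whose symbol is nonnegative, then inverting and composing back), which changes the structure of the proof.

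The paper takes a different and more robust route: it applies Proposition \ref{proposition:ellipticity} directly to get a \emph{microlocal} parametrix $\mathbf{B}$ of $\mathbf{A}'$ near $\WF(\mathbf{I}) \subset \Ell(\mathbf{A}')$, satisfying $\mathbf{A}'\mathbf{B} \equiv \mathbf{B}'\mathbf{A}' \equiv \mathbf{I}$, and then shows by purely algebraic manipulations with $\Pi_{\mathbf{k}}$ (using Lemma \ref{lemma:i}, equation \eqref{equation:useful}, and Lemma \ref{lemma:eat-pik}) that $\mathbf{B}$ is admissible. This avoids ever having to produce a global inverse off $\overline{\HH^*}$. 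Your computation of $[\Pi_{\mathbf{k}},(\mathbf{A}'')^{-1}]$ via the resolvent identity and the final telescoping $\mathbf{A}\mathbf{B} = \mathbbm{1} + \mc{O}(h^\infty)$ are both fine as conditional steps; the issue is solely the unverified global ellipticity of $\mathbf{A}''$.
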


In particular, one can invert $\mathbf{A}\mathbf{B} = \mathbbm{1} + \mc{O}_{\Psi^{-\infty}_{h,\mathrm{BW}}(P)}(h^\infty)$ by Neumann series for $h$ small enough, and construct $\mathbf{B}' \in \Psi^{-m}_{h,\mathrm{BW}}(P)$ such that $\mathbf{A}\mathbf{B}' = \mathbbm{1}$. This proves that the inverse of elliptic operators belongs to the calculus $\Psi^\bullet_{h,\mathrm{BW}}(P)$. To avoid cumbersome notation in the proof, we will use $\equiv$ to denote equality modulo a term in $\mc{O}_{\Psi^{-\infty}_{h,\mathbf{k}}(F,\mathbf{L})}(h^\infty)$.

\begin{proof}
Let $\mathbf{A} = \Pi_{\mathbf{k}} \mathbf{A}' \Pi_{\mathbf{k}} \in \Psi^m_{h, \mathrm{BW}}(P)$ with $\mathbf{A}' \in \Psi^m_{h,\mathbf{k}}(F,\mathbf{L})$ admissible, and with elliptic principal symbol on $\overline{\mathbb{H}^*}$. Since the elliptic set is open, $\sigma_{\mathbf{A}'}$ is elliptic on a (conic) open neighborhood of $\overline{\mathbb{H}^*}$. Let $\mathbf{I} \in \Psi^0_{h,\mathbf{k}}(F,\mathbf{L})$ be an admissible pseudodifferential operator as above, which is microlocally equal to the identity on a neighborhood of $\overline{\HH^*}$ and further assume that $\mathbf{I}$ has small enough microsupport, that is $\WF(\mathbf{I}) \subset \Ell(\mathbf{A}')$. To avoid cumbersome notation, with use the letter $\mathbf{A}$ in place of $\mathbf{A}'$ in the rest of this proof.

By Proposition \ref{proposition:ellipticity}, there exists $\mathbf{B}', \mathbf{B} \in \Psi^{-m}_{h,\mathbf{k}}(F,\mathbf{L})$ such that $\mathbf{A}\mathbf{B}\equiv \mathbf{B}'\mathbf{A} \equiv \mathbf{I}$. We now verify that $\mathbf{B}$ satisfies \eqref{equation:commutation-holomorphic}. We compute
\begin{equation}
\label{eq:000}
	\mathbf{A} \Pi_{\mathbf{k}} \mathbf{B} \equiv \Pi_{\mathbf{k}} \mathbf{A}\mathbf{B} \equiv \Pi_{\mathbf{k}}~\mathbf{I} \equiv \Pi_{\mathbf{k}} \equiv \mathbf{I}~\Pi_{\mathbf{k}} \equiv \mathbf{A}\mathbf{B} \Pi_{\mathbf{k}},
\end{equation}
where we used \eqref{equation:commutation-holomorphic} for $\mathbf{A}$ in the first equivalence, $\mathbf{A} \mathbf{B} \equiv \mathbf{I}$ in the second and last ones, and \eqref{equation:useful} elsewhere. Then we compute
\begin{equation}
\label{equation:step0}
\mathbf{I} ~\Pi_{\mathbf{k}}\mathbf{B} \equiv \mathbf{B}'\mathbf{A}\Pi_{\mathbf{k}}\mathbf{B} \equiv \mathbf{B}' \mathbf{A}\mathbf{B}\Pi_{\mathbf{k}} \equiv  \mathbf{I}\mathbf{B}\Pi_{\mathbf{k}},
\end{equation}
where we used $\mathbf{B}' \mathbf{A} \equiv \mathbf{I}$ in the first and last equivalences, and \eqref{eq:000} in the middle one. This brings us to:
\begin{equation}
\label{equation:first-step}
\Pi_{\mathbf{k}}\mathbf{B} \equiv \Pi_{\mathbf{k}}\mathbf{B}\Pi_{\mathbf{k}},
\end{equation}
where we multiplied \eqref{equation:step0} by $\Pi_{\mathbf{k}}$ on the left and used \eqref{equation:useful}.

Observe that $\mathbf{B}'\mathbf{A}\mathbf{B} \equiv \mathbf{I} \mathbf{B} \equiv \mathbf{B}'\mathbf{I}$. Since $\mathbf{I}$ is microlocally equal to $\mathbbm{1}$ near $\overline{\HH^*}$, this implies that $\mathbf{B}$ is microlocally equal to $\mathbf{B}'$ near $\overline{\HH^*}$, that is $\WF(\mathbf{B}-\mathbf{B}') \cap \overline{\HH^*} = \emptyset$. Hence, writing $\mathbf{R} := \mathbf{B}' - \mathbf{B} \in \Psi^{-m}_{h, \mathbf{k}}(F, \mathbf{L})$, with $\WF(\mathbf{R})$ disjoint from $\overline{\HH^*}$, we have $\mathbf{B}\mathbf{A} \equiv \mathbf{I}-\mathbf{R}\mathbf{A}$. Using again Proposition \ref{proposition:ellipticity} and the ellipticity of $h\overline{\partial}_{\mathbf{k}}$ outside of $\overline{\HH^*}$, there exists $\mathbf{Q} \in \Psi^{-1}_{h,\mathbf{k}}(F,\mathbf{L})$ such that $\mathbf{R}\mathbf{A} \equiv \mathbf{Q} h\overline{\partial}_{\mathbf{k}}$, and therefore
\[
	\mathbf{R}\mathbf{A} \Pi_{\mathbf{k}} \equiv \mathbf{Q} h\overline{\partial}_{\mathbf{k}} \Pi_{\mathbf{k}} \equiv 0.
\]
Similarly, $\Pi_{\mathbf{k}}\mathbf{R}\mathbf{A} \equiv 0$ by using that $(\Pi_{\mathbf{k}}\mathbf{R}\mathbf{A})^* = \mathbf{A}^*\mathbf{R}^*\Pi_{\mathbf{k}} \equiv 0$ and a similar argument as above. Hence,
\begin{equation}\label{eq:111}
	\Pi_{\mathbf{k}} \mathbf{B} \mathbf{A} \equiv \Pi_{\mathbf{k}}~ \mathbf{I} \equiv \Pi_{\mathbf{k}} \equiv ~ \mathbf{I}~\Pi_{\mathbf{k}} \equiv \mathbf{B} \mathbf{A} \Pi_{\mathbf{k}},
\end{equation}
where we also used \eqref{equation:useful} in the second and third equivalences. We conclude that
\[
	\Pi_{\mathbf{k}} \mathbf{B} \equiv  \mathbf{B} \Pi_{\mathbf{k}} \mathbf{A} \mathbf{B} \equiv \mathbf{B} \Pi_{\mathbf{k}}~ \mathbf{I} \equiv \mathbf{B} \Pi_{\mathbf{k}},
\]
where in the first equivalence we used \eqref{eq:111}, in the second one we used $\mathbf{A} \mathbf{B} \equiv \mathbf{I}$ and in the third one \eqref{equation:useful}. Combining the preceding equality and \eqref{equation:first-step} verifies \eqref{equation:commutation-holomorphic} for $\mathbf{B}$. Finally, going back to the initial notation $\mathbf{A} = \Pi_{\mathbf{k}} \mathbf{A}' \Pi_{\mathbf{k}}$, we obtain:
\[
\mathbf{A} \Pi_{\mathbf{k}}\mathbf{B} \Pi_{\mathbf{k}} \equiv \Pi_{\mathbf{k}}\mathbf{A}' \Pi_{\mathbf{k}} \Pi_{\mathbf{k}}\mathbf{B} \Pi_{\mathbf{k}} \equiv \Pi_{\mathbf{k}} \mathbf{I} \Pi_{\mathbf{k}} \equiv \mathbbm{1},
\]
by Lemma \ref{lemma:i}. This proves the claim.
%
\end{proof}

Finally, we conclude this paragraph with the sharp Gårding inequality:

\begin{lemma}
\label{lemma:garding}
Let $\mathbf{A} \in \Psi^m_{h,\mathrm{BW}}(P)$ such that $\Re(\sigma^{\mathrm{BW}}_{\mathbf{A}}) \geq 0$. Then, there exists a constant $C > 0$ such that for all $u \in C^\infty(F,\mathbf{L}^{\otimes \mathbf{k}})$, for all $h > 0$, $\mathbf{k} \in \widehat{G}$ with $h|\mathbf{k}|\leq 1$,
\[
\Re \langle \mathbf{A}u,u\rangle_{L^2(F,\mathbf{L}^{\otimes \mathbf{k}})} \geq -C h\|u\|^2_{H^{(m-1)/2}(F,\mathbf{L}^{\otimes \mathbf{k}})}.
\]
\end{lemma}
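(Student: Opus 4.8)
The strategy is to reduce the sharp Gårding inequality for $\Psi^m_{h,\mathrm{BW}}(P)$ to the sharp Gårding inequality for the ambient line-bundle calculus $\Psi^m_{h,\mathbf{k}}(F,\mathbf{L})$, which in turn follows from the standard semiclassical sharp Gårding inequality (see \cite[Theorem 9.11]{Zworski-12}) applied in local trivialisations, uniformly in $\mathbf{k}$. Write $\mathbf{A} = \Pi_{\mathbf{k}}\mathbf{A}'\Pi_{\mathbf{k}}$ with $\mathbf{A}' \in \Psi^m_{h,\mathbf{k}}(F,\mathbf{L})$ admissible. The principal symbol $\sigma^{\mathrm{BW}}_{\mathbf{A}}$ is by definition the restriction of $\sigma_{\mathbf{A}'}$ to $\overline{\HH^*}\subset\overline{T^*F}$, so the hypothesis is only that $\Re(\sigma_{\mathbf{A}'}) \geq 0$ on a neighbourhood of $\overline{\HH^*}$, not everywhere on $\overline{T^*F}$; I must first modify $\mathbf{A}'$ away from $\overline{\HH^*}$ without changing $\mathbf{A}$ modulo negligible terms.

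First I would choose $\mathbf{I} \in \Psi^0_{h,\mathbf{k}}(F,\mathbf{L})$ admissible, self-adjoint, microlocally equal to $\mathbbm{1}$ on a conic neighbourhood $\mc{C}$ of $\overline{\HH^*}$, with $\WF(\mathbf{I}-\mathbbm{1})$ disjoint from a slightly smaller neighbourhood on which $\Re(\sigma_{\mathbf{A}'})\geq 0$; by Lemma \ref{lemma:i} and \eqref{equation:useful}, $\Pi_{\mathbf{k}}\mathbf{I}^* \mathbf{A}' \mathbf{I}\Pi_{\mathbf{k}} = \Pi_{\mathbf{k}}\mathbf{A}'\Pi_{\mathbf{k}} + \mc{O}_{\Psi^{-\infty}_{h,\mathrm{BW}}(P)}(h^\infty) = \mathbf{A} + \mc{O}(h^\infty)$. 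The operator $\widetilde{\mathbf{A}} := \mathbf{I}^*\mathbf{A}'\mathbf{I}$ has principal symbol $|\sigma_{\mathbf{I}}|^2 \sigma_{\mathbf{A}'}$, which vanishes outside $\mc{C}$ and agrees with $\sigma_{\mathbf{A}'}$ on $\overline{\HH^*}$; hence $\Re(\sigma_{\widetilde{\mathbf{A}}}) \geq -Ch\langle\xi\rangle^{m-1}$ globally on $\overline{T^*F}$ after adjusting with an auxiliary nonnegative symbol if needed — more carefully, I would add to $\widetilde{\mathbf{A}}$ a term $\mathbf{J}$ with $\sigma_{\mathbf{J}} = (1-|\sigma_{\mathbf{I}}|^2)\cdot(\text{something}\geq0)$ supported off $\mc{C}$, so that $\Re(\sigma_{\widetilde{\mathbf{A}}+\mathbf{J}})\geq 0$ everywhere, while $\Pi_{\mathbf{k}}\mathbf{J}\Pi_{\mathbf{k}} = \mc{O}(h^\infty)$ because $h\overline{\partial}_{\mathbf{k}}$ is elliptic off $\overline{\HH^*}$ (as in the proof of Lemma \ref{lemma:i}).

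Then I would apply the sharp Gårding inequality in $\Psi^\bullet_{h,\mathbf{k}}(F,\mathbf{L})$ to $\widetilde{\mathbf{A}}+\mathbf{J}$: working in a finite atlas of contractible charts with trivialising sections $\mathbf{s}^{\otimes\mathbf{k}}$, Definition \ref{definition:pdo-calcul-line} reduces each local piece to a uniform operator $A_{h,\mathbf{k}}\in\Psi^m_{h,\mathbf{k}}(F)$ via \eqref{equation:trivial}, and the relation \eqref{equation:symbole-principal} between $\sigma_{\mathbf{A}'}$ and $\sigma_{A_{h,\mathbf{k}}}$ is a fibrewise translation $T_{\boldsymbol\beta}$, which preserves nonnegativity of the real part; standard sharp Gårding for $\Psi^m_{h,\mathbf{k}}(F)$ (with constants uniform in $\mathbf{k}$ since the symbol estimates \eqref{equation:local-symbol} are uniform) then gives $\Re\langle(\widetilde{\mathbf{A}}+\mathbf{J})u,u\rangle \geq -Ch\|u\|^2_{H^{(m-1)/2}_h}$ for all $u\in C^\infty(F,\mathbf{L}^{\otimes\mathbf{k}})$. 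Finally, for $u = \Pi_{\mathbf{k}}u$ a fibrewise holomorphic section, $\langle\mathbf{A}u,u\rangle = \langle(\widetilde{\mathbf{A}}+\mathbf{J})u,u\rangle + \mc{O}(h^\infty)\|u\|^2$ by the above, and since $\|\Pi_{\mathbf{k}}\|_{H^s_h\to H^s_h}=1$ (Lemma \ref{lemma:eat-pik}), the same bound passes to $\mathbf{A}$ on $C^\infty(F,\mathbf{L}^{\otimes\mathbf{k}})$ after composing with $\Pi_{\mathbf{k}}$.

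The main obstacle is the global-versus-local nature of the nonnegativity hypothesis: $\Re(\sigma^{\mathrm{BW}}_{\mathbf{A}})\geq 0$ only constrains $\sigma_{\mathbf{A}'}$ on $\overline{\HH^*}$, whereas the standard sharp Gårding inequality needs global nonnegativity of $\Re(\sigma_{\mathbf{A}'})$ on $\overline{T^*F}$. The cutting-off step (replacing $\mathbf{A}'$ by $\mathbf{I}^*\mathbf{A}'\mathbf{I}$ plus a benign correction) is where the argument has content; the uniformity in $\mathbf{k}$ of the resulting constant $C$ is then automatic from the uniform symbol calculus of \S\ref{ssection:uniform-scl}, and I would remark that one only needs $h|\mathbf{k}|\leq 1$ so that the translations $T_{h\mathbf{k}\cdot\boldsymbol\beta}$ stay in a bounded region, exactly as in Proposition \ref{proposition:proprietes}.
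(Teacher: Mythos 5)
Your reduction strategy is reasonable, but there is a genuine gap in the cutting-off step, and the correction $\mathbf{J}$ that you add does not repair it.

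The hypothesis of the lemma is $\Re\bigl(\sigma^{\mathrm{BW}}_{\mathbf{A}}\bigr)\geq 0$, which by Definition \ref{definition:BW-stuff} only controls $\Re\bigl(\sigma_{\mathbf{A}'}\bigr)$ \emph{on} $\overline{\HH^*}$. You write that you want to choose $\mathbf{I}$ so that $\WF(\mathbf{I}-\mathbbm{1})$ is disjoint from ``a slightly smaller neighbourhood on which $\Re(\sigma_{\mathbf{A}'})\geq 0$.'' But such a conic neighbourhood need not exist: for example, a symbol whose real part equals $-|\xi_{\V^*}|^2/\langle\xi\rangle^2$ vanishes on $\overline{\HH^*}$ but is strictly negative on every conic neighbourhood of it. Your fallback term $\mathbf{J}$ does not help, because you require $\sigma_{\mathbf{J}}$ to be supported \emph{off} $\mc{C}$; if $\Re(\sigma_{\widetilde{\mathbf{A}}}) = |\sigma_{\mathbf{I}}|^2\Re(\sigma_{\mathbf{A}'})$ is already negative \emph{inside} $\mc{C}$, adding something nonnegative outside $\mc{C}$ cannot make $\Re(\sigma_{\widetilde{\mathbf{A}}+\mathbf{J}})\geq 0$ everywhere. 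So the reduction to sharp G\r{a}rding in $\Psi^m_{h,\mathbf{k}}(F,\mathbf{L})$ breaks at this point.

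The fix is to change the representative $\mathbf{A}'$ \emph{before} cutting off, using the freedom in the choice of $\mathbf{A}'$. Starting from $a:=\sigma^{\mathrm{BW}}_{\mathbf{A}}\in S^m_{h,\mathbf{k}}(\HH^*)$ with $\Re a\geq 0$, define $\widetilde{a}(w,\xi):=a(w,\pi_{\HH^*}\xi)\,\chi(\xi)$ with $\chi\in S^0(T^*F,[0,1])$ a conic cutoff supported near $\overline{\HH^*}$ and $\chi\equiv 1$ in a smaller conic neighbourhood; by the argument in Remark \ref{remark:pullback} this $\widetilde{a}$ does lie in $S^m_{h,\mathbf{k}}(T^*F)$, it restricts to $a$ on $\overline{\HH^*}$, and crucially $\Re\widetilde{a}=(\Re a\circ\pi_{\HH^*})\cdot\chi\geq 0$ globally on $\overline{T^*F}$. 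Quantizing and sandwiching gives a BW operator whose symbol agrees with $\sigma^{\mathrm{BW}}_{\mathbf{A}}$, hence differing from $\mathbf{A}$ by $h\Psi^{m-1}_{h,\mathrm{BW}}(P)$, which contributes an acceptable $\mc{O}(h)\|u\|^2_{H^{(m-1)/2}_h}$ error. Applying sharp G\r{a}rding in $\Psi^m_{h,\mathbf{k}}(F,\mathbf{L})$ to this operator (uniformly in $\mathbf{k}$, exactly as you describe via \eqref{equation:symbole-principal}) then yields the claim, and no $\mathbf{I}$ or $\mathbf{J}$ is needed at all. This repaired version is a legitimate alternative route to the lemma. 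By contrast, the paper's ``verbatim'' argument adapts the anti-Wick construction directly: Theorem \ref{theorem:penible} expresses a BW operator as an oscillatory integral weighted by the Bergman kernel $\Pi_{\mathbf{k}}(x_r)$, which already plays the role of a Wick quantization in the $\V^*$ directions, so one only needs to insert the usual Gaussian in the horizontal $(x,\xi)$ variables and positivity of the kernel handles the rest -- the nonnegativity on $\overline{\HH^*}$ alone then suffices without any symbol extension.
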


\begin{proof}
The proof follows \emph{verbatim} the usual proof in $\R^n$, see \cite[Theorem 9.11]{Zworski-12} for instance.
\end{proof}

\begin{remark}
Given fibrewise holomorphic vector bundles $E_1$ and $E_2$ over $F$, it is possible to define a calculus of pseudodifferential operators $\Psi^{\bullet}_{h, \mathrm{BW}}(P, E_1 \to E_2)$ where operators send (fibrewise holomorphic) sections of $\mathbf{L}^{\otimes \mathbf{k}} \otimes E_1$ to (fibrewise holomorphic) sections of $\mathbf{L}^{\otimes \mathbf{k}} \otimes E_2$. Principal symbols, ellipticity, and wavefront sets are defined analogously.
\end{remark}

\subsection{Quantization of functions}\label{ssection:quantization-symbols} So far, it is clear how to construct operators in $\Psi^\bullet_{h, \mathrm{BW}}(P)$. We shall see that all natural examples of geometric operators (e.g. connections, horizontal Laplacians, etc.) fit in $\Psi^\bullet_{h, \mathrm{BW}}(P)$ but these operators do not allow to microlocalize in compact regions of phase space, which is a convenient procedure for several purposes. The purpose of this paragraph is to introduce a way to quantize symbols $a \in S^m_{h,\mathbf{k}}(\HH^*)$ into operators $\Op^{\mathrm{BW}}(a) \in \Psi^m_{h,\mathrm{BW}}(P)$.
%

\subsubsection{Quantization procedure} Given $a \in S^m_{h,\mathbf{k}}(\HH^*)$, and $f \in C^\infty(F,\mathbf{L}^{\otimes \mathbf{k}})$, we introduce the following quantization procedure (here $w_\ell \in F$, $x_\ell := \pi(w_\ell)$):
\[
\begin{split}
\Op^{\mathrm{BW}}_h&(a)f(w_\ell) \\
& := \dfrac{1}{(2\pi h)^n} \Pi_{\mathbf{k}}(x_\ell) \int_{x_r \in M, \xi \in T^*_{x_\ell}M} e^{-\tfrac{i}{h}\exp^{-1}_{x_\ell}(x_r)} a(\bullet,\xi) \tau_{x_r \to x_\ell} \Pi_{\mathbf{k}}(x_r)f|_{F_{x_r}} \dd x_r \dd \xi.
\end{split}
\]
We now prove that this operator fits into the Borel-Weil calculus.

\begin{theorem}
\label{theorem:quantization-bw}
The map 
\[
 S^m_{h,\mathbf{k}}(\HH^*) \ni a \mapsto \Op^{\mathrm{BW}}_h(a) \in \Psi^m_{h,\mathrm{BW}}(P)
 \]
 is well-defined and continuous.
\end{theorem}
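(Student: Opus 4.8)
The strategy is to show that $\Op^{\mathrm{BW}}_h(a)$ coincides, modulo $\mc{O}_{\Psi^{-\infty}_{h,\mathbf{k}}}(h^\infty)$, with an operator of the form $\Pi_{\mathbf{k}} \mathbf{A}' \Pi_{\mathbf{k}}$ for an admissible $\mathbf{A}' \in \Psi^m_{h,\mathbf{k}}(F,\mathbf{L})$, and then to check the continuity of $a \mapsto \Op^{\mathrm{BW}}_h(a)$ from the symbolic seminorms to the Fréchet seminorms on $\Psi^m_{h, \mathrm{BW}}(P)$. The key point is that the defining oscillatory integral for $\Op^{\mathrm{BW}}_h(a)$ is already of exactly the shape produced by Theorem \ref{theorem:penible}, namely an integral over $x_r \in M$ and $\xi \in T^*_{x_\ell}M$ of $e^{-\frac{i}{h}\xi\cdot\exp^{-1}_{x_\ell}(x_r)}$ times a horizontal symbol applied to $\tau_{x_r \to x_\ell}\Pi_{\mathbf{k}}(x_r)f|_{F_{x_r}}$, with an extra $\Pi_{\mathbf{k}}(x_\ell)$ on the left. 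So the work is to run the stationary-phase argument of Theorem \ref{theorem:penible} \emph{in reverse}: given the horizontal symbol $a$, produce a genuine symbol $\tilde a \in S^m_{h,\mathbf{k}}(T^*F)$ on the flag bundle whose quantization $\mathbf{A}' := \Op_h(\tilde a)$ (in a local trivialisation, using local holomorphic frames and the connection $1$-form $\boldsymbol\beta$ as in Definition \ref{definition:pdo-calcul-line}) satisfies $\Pi_{\mathbf{k}} \mathbf{A}' \Pi_{\mathbf{k}} = \Op^{\mathrm{BW}}_h(a) + \mc{O}_{C^\infty}(h^\infty)$.

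First I would fix a point $x_0 \in M$, work in a contractible coordinate patch $U \ni x_0$ over which $P$ (hence $F$ and each $L_j$) trivialises, and choose local unit holomorphic multisections $\mathbf{s}$ of $\mathbf{L}$ with $\overline{\nabla}\mathbf{s} = i\boldsymbol\beta \otimes \mathbf{s}$. Using the Bergman expansion \eqref{equation:partie-simple}–\eqref{equation:phase-important} of Theorem \ref{theorem:bergman} I would write $\Pi_{\mathbf{k}}$ locally as in \eqref{equation:local-e}, insert this into the definition of $\Op^{\mathrm{BW}}_h(a)$, and obtain an oscillatory integral over $(w_r, \xi) \in F_{x_r} \times T^*_{x_\ell}M$ with phase of the form $h\mathbf{k}\cdot(\boldsymbol\psi(w_r, \bullet) - \int \boldsymbol\beta) - \xi\cdot\exp^{-1}_{x_\ell}(x_r)$, plus the extra fibrewise holomorphic projection on the left. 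The task is then to choose a symbol $\tilde a(w_\ell, \xi) \in S^m_{h,\mathbf{k}}(T^*F)$ supported microlocally near $\overline{\HH^*}$, depending only on $w_\ell$ (not the integration variable), such that when one expands $\Op_h(\tilde a)$ via \eqref{eq:schwartz-kernel-A'}, sandwiches with $\Pi_{\mathbf{k}}$, and applies complex stationary phase (Melin–Sjöstrand, \cite[Theorem 2.3]{Melin-Sjostrand-75}) in the vertical fibre variables $(w, \xi_{\V^*})$ exactly as in \eqref{equation:temporaire}, the leading term reproduces the horizontal symbol $a$. By Claim \ref{claim:hessian} the Hessian determinant of the relevant phase is $1$ on the diagonal, so to leading order one simply needs $\tilde a(w_\ell, d\pi^\top\xi) = a(w_\ell, d\pi^\top\xi)$ on $\HH^*$; the subleading corrections in the stationary-phase expansion can then be absorbed by a Borel-type iterative correction of $\tilde a$ in powers of $h$, choosing $\tilde a = a + h\tilde a_1 + h^2 \tilde a_2 + \dotsb$ on $\HH^*$ and extending off $\HH^*$ in an almost-holomorphic fashion (this uses the same almost-analytic-extension device as in Theorem \ref{theorem:penible} and ensures admissibility, i.e. \eqref{equation:commutation-holomorphic}, via \eqref{equation:catue} and the ellipticity of $h\overline{\partial}_{\mathbf{k}}$ outside $\overline{\HH^*}$ together with Lemma \ref{lemma:eat-pik}).

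The well-definedness (independence of the choices of patch, frames, and partition of unity) follows because changing $\mathbf{s}$ to $e^{i\boldsymbol\omega}\mathbf{s}$ conjugates the local operator by $e^{-i\mathbf{k}\cdot\boldsymbol\omega}$ and, since $h|\mathbf{k}|\le 1$, Lemma \ref{lemma:appendix} shows this is again in the calculus and changes $\tilde a$ only by the fibrewise translation $T_{\boldsymbol\beta}$ which is already built into the definition of the horizontal symbol space $S^m_{h,\mathrm{BW}}(\HH^*)$; on overlaps of charts the standard semiclassical change-of-variables formula \cite[Proposition E.10]{Dyatlov-Zworski-19} gives agreement modulo $hS^{m-1}$, and the principal-symbol identity from Theorem \ref{theorem:penible} (as just arranged, $\sigma^{\mathrm{BW}}_{\Op^{\mathrm{BW}}_h(a)} = a$ mod $hS^{m-1}$) pins down the class. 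Continuity is then routine bookkeeping: each step (insertion of the Bergman kernel, complex stationary phase, assembly of the symbol, quantisation) is continuous for the respective seminorms, with all estimates uniform in $\mathbf{k}$ since $h|\mathbf{k}| \le 1$. The main obstacle I anticipate is the careful reverse stationary-phase analysis needed to pass from the horizontal symbol $a$ on $\HH^*$ to a genuine, admissible full symbol $\tilde a$ on $T^*F$ — in particular keeping track of the almost-holomorphic extension in the vertical directions so that the resulting $\mathbf{A}'$ commutes with $\Pi_{\mathbf{k}}$ modulo $h^\infty$, and controlling the subprincipal corrections uniformly in $\mathbf{k}$; this is essentially the content of Theorem \ref{theorem:penible} read backwards, so I would organise the proof so as to quote that theorem's stationary-phase computation rather than redo it.
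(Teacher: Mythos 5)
Your overall strategy is the one the paper uses: lift $a$ to a full symbol $\widetilde a$ on $T^*F$, quantize, sandwich with $\Pi_{\mathbf{k}}$, and show by stationary phase in the fibre variables that the result agrees with $\Op^{\mathrm{BW}}_h(a)$ modulo $\mathcal{O}(h^\infty)$. But you miss the single observation that makes the proof short, and as a result you invoke machinery the argument does not need and do not actually carry out.

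The paper takes $\widetilde a \in S^m_{h,\mathbf{k}}(T^*F)$ to agree with $a$ \emph{extended constantly in the vertical cotangent variable} $\xi_{\V^*}$ on a conic neighbourhood of $\overline{\HH^*}$ (cut off away from $\V^*$ as in Remark \ref{remark:pullback}). Since $\widetilde a$ is then independent of $\xi_{\V^*}$, every higher-order term in the stationary-phase expansion in the variables $(w_3,\xi_{\V^*})$ vanishes, because each such term contains at least one $\xi_{\V^*}$-derivative of the symbol. Hence $\Pi_{\mathbf{k}}\Op_h(\widetilde a)\Pi_{\mathbf{k}} = \Op^{\mathrm{BW}}_h(a) + \mathcal{O}(h^\infty)$ already, with no iterative correction $\widetilde a = a + h\widetilde a_1 + h^2\widetilde a_2 + \dotsb$ and no almost-holomorphic extension of the symbol. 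Your Borel-type scheme is not merely unnecessary; as written it is a gap: you never establish that each correction $\widetilde a_j \in S^{m-j}_{h,\mathbf{k}}$ exists, is supported near $\overline{\HH^*}$, and is uniform in $\mathbf{k}$ with $h|\mathbf{k}|\le 1$, and the constant extension is precisely what makes this question disappear. Likewise, your appeal to \eqref{equation:catue} to certify admissibility of $\mathbf{A}'$ is circular — that identity was \emph{derived} for operators already known to lie in $\Psi^\bullet_{h,\mathrm{BW}}(P)$ — and in any case admissibility of $\mathbf{A}'$ can be bypassed entirely: by the remark following the definition and the second (non-admissible) version of Theorem \ref{theorem:penible}, $\Pi_{\mathbf{k}}\mathbf{A}'\Pi_{\mathbf{k}}$ belongs to the Borel--Weil class for \emph{any} $\mathbf{A}' \in \Psi^m_{h,\mathbf{k}}(F,\mathbf{L})$. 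Your remaining remarks on change of trivialisation via Lemma \ref{lemma:appendix} and continuity by tracking seminorms uniformly in $\mathbf{k}$ do match the paper.
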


The proof is rather similar to that of Theorem \ref{theorem:penible} and is only sketched. Further details are given below in the specific case of pullback functions.

\begin{proof}[Sketch of proof]
We start by showing that it is well-defined. Consider a symbol $\widetilde{a} \in S^m_{h,\mathbf{k}}(T^*F)$ such that $\widetilde{a}(w,\xi_{\HH^*},\xi_{\V^*}) = a(x,\xi_{\HH^*})$ on a conic neighborhood of $\overline{\HH^*}$. Then, by the stationary phase lemma in the variable $w_3$ and $\xi_{\V^*}$, one finds
\[
\begin{split}
& \dfrac{1}{(2\pi h)^{\dim F}} \Pi_{\mathbf{k}}(x_\ell) \int_{w_3 \in F_{x_r},x_r \in M, \xi \in T^*_{x_\ell}M, \xi_{\V^*} \in \V^*_{w_3}} e^{-\tfrac{i}{h}\exp^{-1}_{x_\ell}(x_r)} e^{-\tfrac{i}{h}\xi_{\V^*}(w_3-w_r)} \\
& \hspace{6cm} \widetilde{a}(\bullet,d\pi^\top\xi) \tau_{x_r \to x_\ell} \Pi_{\mathbf{k}}(x_r)f|_{F_{x_r}} \dd w_3 \dd \xi_{\V^*} \dd x_r \dd \xi \\
& =  \dfrac{1}{(2\pi h)^n} \Pi_{\mathbf{k}}(x_\ell) \int_{x_r \in M, \xi \in T^*_{x_\ell}M} e^{-\tfrac{i}{h}\exp^{-1}_{x_\ell}(x_r)} a(\bullet,\xi) \tau_{x_r \to x_\ell} \Pi_{\mathbf{k}}(x_r)f|_{F_{x_r}} \dd x_r \dd \xi + \mc{O}(h^\infty).
\end{split}
\]
(Indeed, all the derivatives of the symbol in the $\xi_{\V^*}$ direction vanish at $\xi_{\V^*}=0$, so only the first term appears in the stationary phase expansion) The left-hand side is in the Borel-Weil calculus by construction and has principal symbol $a$. Finally, continuity (in the sense of Fréchet spaces) can be traced from the argument just exposed; we omit the details.
\end{proof}

\subsubsection{Pullback functions}

\label{sssection:pullbackfunctions}

Let $\pi : F \to M$ be the projection. Given $f \in C^\infty(T^*M)$, there is a natural pullback $\pi^* f \in C^\infty(T^*F)$ defined by
\[
	\pi^*f(z,d\pi^\top_z\xi + \eta) := f(\pi(z), \xi), \qquad z \in F, \xi \in T^*_{\pi(z)}M, \eta \in \V^*_z.
\]
Note that the decomposition $d\pi^\top_z\xi + \eta$ of an element in $T_z^*F$ is unique. Conversely, any function $h \in C^\infty(T^*F)$ that is \emph{equivariant} in the sense that
\begin{enumerate}[label=(\roman*)]
\item for all $z \in F, \xi \in T_z^*F, \eta \in \V^*_z$, we have $h(z,\xi+\eta) = h(z,\xi)$,
\item for all $x \in M$, $z,z' \in F_x$, $\xi \in T_x^*M$, we have $h(z, d\pi_z^{\top} \xi) = h(z',d\pi_{z'}^\top \xi)$,
\end{enumerate}
is the pullback $h = \pi^*f$ of a function $f \in C^\infty(T^*M)$. The first condition means that $h$ only depends on the $\mathbb{H}_z^*$ component, while the second one gives equivariance in the fibre, enabling one to identify $h$ with a function downstairs. 

\begin{remark}[Examples and non-examples of pullback symbols]
\label{remark:pullback}
	Observe that given a symbol $\sigma \in S^m_h(T^*M)$, its pullback $\pi^*\sigma$ might not be a symbol in $T^*F$. Indeed, by definition the pullback $\pi^*\sigma$ is constant in $\V^*$ directions, and so if $m = 0$ then $\pi^*\sigma$ should be of order zero. Heuristically, after differentiating in $\HH^*$ direction it remains constant in $\V^*$ directions, and so it remains in $S^0(T^*F)$, contradicting the symbol estimates. (This is the easiest to see when $\pi: F = \mathbb{R}^2 \to M = \mathbb{R}$ is the projection to the first coordinate, with $\HH$ tangent to the first copy of $\mathbb{R}$.) 
	
	Nevertheless, if $\sigma$ is \emph{polynomial} in $\xi \in T^*M$, its pullback $\pi^*\sigma$ is still a symbol in $T^*F$. More importantly for us, given a symbol $\psi \in S^0(T^*F)$ which is supported in a conic neighbourhood of $\HH^*$ (not containing $\V^*$), it is straightforward to show that $\psi \cdot \pi^* \sigma$ indeed is a symbol and belongs to $S_h^m(T^*F)$. In particular, if we take $\psi$ such that $\psi = 1$ on $\mathbb{H}^*$, then $\pi^* \sigma|_{\HH^*} \in S^m_{h, \mathrm{BW}}(\HH^*)$.
\end{remark}

\subsubsection{Quantization of pullback functions} This section gives a more pedestrian approach to the quantization of symbols in the Borel-Weil calculus in the specific case where the symbol is a pullback function.


Let $\psi \in C^\infty(M \times M)$ be a cutoff function equal to $1$ near the diagonal. For $a_{h,\mathbf{k}} \in S^m_{h,\mathbf{k}}(T^*M)$, we then define an operator acting on $f \in C^\infty_{\mathrm{(hol)}}(F,\mathbf{L}^{\otimes \mathbf{k}})$ by 
\begin{multline}
\label{equation:oph0}
\Op_h^{0}(a_{h,\mathbf{k}}) f(x, w)\\ 
:= \dfrac{1}{(2\pi h)^n} \int_M \int_{T^*_xM} e^{-\tfrac{i}{h}\xi\cdot\exp_x^{-1}(y)} a_{h,\mathbf{k}}(x,\xi) (\tau_{y\to x}(f|_{F_y}))(x, w) \psi(x,y)\, \dd \xi\dd y.
\end{multline}
We emphasize that $\Op_h^{0}(a_{h,\mathbf{k}})$ is \emph{not} an operator in $\Psi^\bullet_{h, \mathrm{BW}}(P)$ because, as we shall see below, this corresponds to the quantization of $\pi^*a_{h,\mathbf{k}}$ and this is \emph{not} a symbol in $S^m_{h,\mathbf{k}}(T^*F)$ in general, as pointed out above. By construction, and by using that $\Pi_{\mathbf{k}}$ and $\tau_{x \to y}$ commute thanks to \eqref{eq:parallel-transport-fibrewise-holomorphic}, we obtain that
\begin{equation}
\label{equation:commutation-construction}
[\Op_h^{0}(a_{h,\mathbf{k}}),\Pi_{\mathbf{k}}] = 0,
\end{equation}
We can form the operators

\begin{equation}
\label{equation:quantization-geometric}
\overline{\Op}_h(a_{h,\mathbf{k}}) := \mathbf{I} ~\Op_h^0(a_{h,\mathbf{k}})~ \mathbf{I}, \qquad \overline{\Op}_h^{\mathrm{BW}}(\pi^* a_{h,\mathbf{k}}) := \Pi_{\mathbf{k}} \overline{\Op}_h(a_{h,\mathbf{k}}) \Pi_{\mathbf{k}}.
\end{equation}
and the following holds:


\begin{lemma}
One has $\mathbf{A} := \overline{\Op}_h^{\mathrm{BW}}(\pi^* a_{h,\mathbf{k}}) \in \Psi^m_{h, \mathrm{BW}}(P)$. Moreover, the principal symbol of $\mathbf{A}$ is equivariant and given by $\sigma_{\mathbf{A}}^{\mathrm{BW}} = \pi^*a_{h,\mathbf{k}}|_{\mathbb{H}^*}$.
\end{lemma}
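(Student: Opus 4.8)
The claim has two parts: membership in the Borel–Weil calculus, and the identification of the principal symbol. The key is to relate $\Op_h^0(a_{h,\mathbf{k}})$, which is built from the pullback $\pi^*a_{h,\mathbf{k}}$ (not itself a symbol on $T^*F$), to a genuine operator in $\Psi^m_{h,\mathbf{k}}(F,\mathbf{L})$. The main obstacle, as flagged in Remark \ref{remark:pullback}, is precisely that $\pi^*a_{h,\mathbf{k}} \notin S^m_{h,\mathbf{k}}(T^*F)$ in general; the cure is to sandwich by the cutoff operator $\mathbf{I}$, which microlocally localizes near $\overline{\HH^*}$, where $\pi^*a_{h,\mathbf{k}}$ \emph{does} behave like a symbol.

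\textbf{Step 1: replace the pullback by a genuine $F$-symbol.} Choose $\mathbf{I}\in\Psi^0_{h,\mathbf{k}}(F,\mathbf{L})$ microlocally equal to $\mathbbm 1$ on a conic neighborhood $\mc C$ of $\overline{\HH^*}$, with $\WF(\mathbf{I}-\mathbbm 1)\subset\complement(\mc C)$, self-adjoint, and admissible (so Lemma \ref{lemma:i} applies). Pick a symbol $\psi\in S^0(T^*F)$, supported in $\mc C$ and not meeting $\overline{\V^*}$, with $\psi\equiv 1$ on a smaller conic neighborhood of $\overline{\HH^*}$ containing $\WF(\mathbf{I})$. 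By Remark \ref{remark:pullback}, $\widetilde a_{h,\mathbf{k}}:=\psi\cdot\pi^*a_{h,\mathbf{k}}\in S^m_{h,\mathbf{k}}(T^*F)$, and $\widetilde a_{h,\mathbf{k}}|_{\HH^*}=\pi^*a_{h,\mathbf{k}}|_{\HH^*}$. Then $\mathbf{A}':=\Op_h(\widetilde a_{h,\mathbf{k}})\in\Psi^m_{h,\mathbf{k}}(F,\mathbf{L})$. I will show that $\mathbf{I}\,\Op_h^0(a_{h,\mathbf{k}})\,\mathbf{I}=\mathbf{I}\,\mathbf{A}'\,\mathbf{I}+\mc O_{\Psi^{-\infty}_{h,\mathbf{k}}(F,\mathbf{L})}(h^\infty)$, by a stationary-phase computation essentially identical to the one in the sketch after Theorem \ref{theorem:quantization-bw}: both operators have Schwartz kernels given by oscillatory integrals whose phases have the same critical point (the fibrewise-vertical part of $\exp^{-1}$ vanishes at the critical point, and the symbol $\widetilde a_{h,\mathbf{k}}$ agrees with $\pi^*a_{h,\mathbf{k}}$ there, all derivatives in the $\V^*$-direction of $\pi^*a_{h,\mathbf{k}}$ being zero), so the difference is $\mc O_{C^\infty}(h^\infty)$ on the diagonal; away from the diagonal, non-stationary phase and the cutoffs handle the rest. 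Crucially, away from $\overline{\HH^*}$ the operator $\mathbf{I}$ kills everything since $h\overline\partial_{\mathbf k}$ is elliptic there (\S\ref{sssection:examples}, Item (iv), and Proposition \ref{proposition:ellipticity}, cf. \eqref{equation:useful}), which is why only the values of the symbol near $\overline{\HH^*}$ matter.

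\textbf{Step 2: admissibility.} Since $[\Op_h^0(a_{h,\mathbf{k}}),\Pi_{\mathbf{k}}]=0$ by \eqref{equation:commutation-construction}, and $\mathbf{I}$ is admissible, one checks $\overline{\Op}_h(a_{h,\mathbf{k}})=\mathbf{I}\,\Op_h^0(a_{h,\mathbf{k}})\,\mathbf{I}$ is admissible: using $\mathbf{I}\Pi_{\mathbf{k}}\equiv\Pi_{\mathbf{k}}\equiv\Pi_{\mathbf{k}}\mathbf{I}$ from \eqref{equation:useful} together with the exact commutation, one gets $[\overline{\Op}_h(a_{h,\mathbf{k}}),\Pi_{\mathbf{k}}]\in h^\infty\Psi^{-\infty}_{h,\mathbf{k}}(F,\mathbf{L})$ (alternatively: $\overline{\Op}_h(a_{h,\mathbf{k}})\equiv\mathbf{I}\mathbf{A}'\mathbf{I}$ and $\mathbf{A}'$ is admissible since it is microsupported where $\mathbf{I}$ is, but admissibility of $\mathbf{I}\mathbf{A}'\mathbf{I}$ just needs $\mathbf{I}$ admissible and Lemma \ref{lemma:eat-pik}). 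Therefore $\mathbf{A}=\Pi_{\mathbf{k}}\overline{\Op}_h(a_{h,\mathbf{k}})\Pi_{\mathbf{k}}=\Pi_{\mathbf{k}}(\mathbf{I}\mathbf{A}'\mathbf{I})\Pi_{\mathbf{k}}+\mc O_{\Psi^{-\infty}_{h,\mathrm{BW}}(P)}(h^\infty)\in\Psi^m_{h,\mathrm{BW}}(P)$ by definition of the calculus.

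\textbf{Step 3: principal symbol.} By Definition \ref{definition:BW-stuff}, $\sigma_{\mathbf{A}}^{\mathrm{BW}}$ is the restriction to $\overline{\HH^*}$ of the principal symbol of the representing admissible operator $\mathbf{I}\mathbf{A}'\mathbf{I}$ in $\Psi^\bullet_{h,\mathbf{k}}(F,\mathbf{L})$. Since $\mathbf{I}$ is microlocally the identity near $\overline{\HH^*}$, $\sigma_{\mathbf{I}\mathbf{A}'\mathbf{I}}=\sigma_{\mathbf{A}'}$ on a neighborhood of $\overline{\HH^*}$, and $\sigma_{\mathbf{A}'}=[\widetilde a_{h,\mathbf{k}}]=[\pi^*a_{h,\mathbf{k}}]$ there (modulo $hS^{m-1}_h$), using $\psi\equiv 1$ near $\overline{\HH^*}$ and that the quantization $\Op_h$ has principal symbol equal to the full symbol modulo lower order. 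Restricting, $\sigma_{\mathbf{A}}^{\mathrm{BW}}=\pi^*a_{h,\mathbf{k}}|_{\HH^*}$. Equivariance is immediate from the definition of the pullback: $\pi^*a_{h,\mathbf{k}}$ satisfies conditions (i)–(ii) of \S\ref{sssection:pullbackfunctions} by construction, and restriction to $\HH^*$ preserves this. This completes the proof; I expect Step 1 — the stationary-phase identification, making sure the remainder really lies in $h^\infty\Psi^{-\infty}$ uniformly in $\mathbf{k}$ with $h|\mathbf{k}|\le 1$ — to be the only delicate point, and it is entirely parallel to the argument already carried out for Theorem \ref{theorem:penible}.
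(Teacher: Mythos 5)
Your Steps 2 and 3 are fine and track the paper's approach. The gap is in Step 1, and it is a real one. You want to show
\[
\mathbf{I}\,\Op_h^0(a_{h,\mathbf{k}})\,\mathbf{I}=\mathbf{I}\,\Op_h(\widetilde a_{h,\mathbf{k}})\,\mathbf{I}+\mc O_{\Psi^{-\infty}_{h,\mathbf{k}}(F,\mathbf{L})}(h^\infty),
\]
and you argue this by ``stationary phase,'' citing the sketch after Theorem \ref{theorem:quantization-bw}. But that sketch compares two oscillatory integrals which \emph{both} carry the same parallel-transport structure (both use $\tau_{x_r\to x_\ell}\Pi_{\mathbf{k}}(x_r)$); the only thing to check there is that $\widetilde a$ is constant in $\xi_{\V^*}$ near the critical point. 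Your comparison is different in kind: $\Op_h^0(a)$ uses parallel transport $\tau_{y\to x}$ on $\Lk$ (which, after trivializing by $\mathbf{s}^{\otimes\mathbf{k}}$, contributes the extra phase factor $\exp\bigl(-i\int_w^{g(y,x)w}\mathbf{k}\cdot\beta\bigr)$), whereas $\Op_h(\widetilde a_{h,\mathbf{k}})$ is the quantization of Lemma \ref{lemma:surjectivity}, which absorbs the connection via the symbol shift $\zeta\mapsto\zeta+h\mathbf{k}\cdot\beta(w_\ell)$. Matching these two at leading order is \emph{precisely} the identity $W(x,x,z)=-\beta(x,z)(\bullet^{\HH}_z)$, i.e.\ the paper's \eqref{equation:idk}, which comes from the Taylor expansion \eqref{eq:taylor-expansion-W} of the line integral of $\boldsymbol\beta$ along the horizontal lift. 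Nothing in your argument produces or invokes this identity; the observation that ``all derivatives of $\pi^*a$ in the $\V^*$-direction vanish'' only controls the behaviour of the symbol in $\xi_{\V^*}$, not the additional $h\mathbf{k}$-dependent phase coming from the parallel transport. Without that cancellation, the leading-order symbols do not visibly agree and the claimed $\mc O(h^\infty)$ remainder is unjustified.

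Two smaller issues compound this. First, the Schwartz kernel of $\Op_h^0(a)$ has a $\delta$-factor in the fibre direction (it is not a smooth density on $F\times F$), so ``both operators have kernels given by oscillatory integrals'' elides a preliminary rewriting (Fourier inversion in a dual fibre variable $\eta$) that is exactly the paper's change of variables introducing $Q(x,y,z)^\top\eta$. Second, even granting all of this, the paper's computation shows the resulting phase symbol is $a_{h,\mathbf{k}}(x,\xi - h\mathbf{k}\cdot W(x,y,z) + Q(x,y,z)^\top\eta)$, which is a genuine symbol only after microlocalizing away from $\V^*$; your $\widetilde a$ has no $W$- or $Q$-shift baked in, so the alleged agreement is not term-by-term in the oscillatory integral but only up to the change of variables the paper makes explicit. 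In short: the approach can be made to work, but the proof as written replaces the one computation that carries the content (relating $\tau_{y\to x}$ to $\nabla$) with an appeal to stationary phase that does not see it.
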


In other words, $\overline{\Op}_h^{\mathrm{BW}}(\pi^* a_{h,\mathbf{k}}) = \Op_h^{\mathrm{BW}}(\pi^* a_{h,\mathbf{k}}) + \mc{O}(h)$, where $\Op_h^{\mathrm{BW}}$ denotes the quantization of Theorem \ref{theorem:quantization-bw}. In what follows, we will simply write $\Op_h^{\mathrm{BW}}$ as there is no need to distinguish.


\begin{remark} \label{remark:identification-lol} To avoid cumbersome notation, we will also sometimes write $\sigma_{\mathbf{A}}^{\mathrm{BW}} = a_{h,\mathbf{k}}$. By this, it is meant that the principal symbol of $\mathbf{A}  \in \Psi^m_{h, \mathrm{BW}}(P)$ is given by restricting the pullback of $a_{h,\mathbf{k}} \in S^{m}_{h, \mathbf{k}}(T^*M)$ to $\HH^*$.

\end{remark}

\begin{proof}
That $\mathbf{A}$ satisfies \eqref{equation:commutation-holomorphic} follows from Lemma \ref{lemma:i} and \eqref{equation:commutation-construction}.
Let $U \subset M$ be an open subset and let $F|_{U} \simeq U \times G/T$ be a trivialisation. Then $\tau_{y\to x}(y,z) = (x,g(y,x)z)$, where $x,y \in U, z \in G/T$ and $g(y,x)\in G$ is a smooth function of $(x, y)$ (note that $g(y,x)=g(x,y)^{-1}$). If $s_1, \dotsc,s_d$ are local sections of $L_1, \dotsc, L_d$ respectively, of unit pointwise norm, defined over some open subset $V \subset F|_U$, write $\mathbf{s}^{\otimes \mathbf{k}}=s_1^{\otimes k_1} \otimes \dotsm \otimes s_d^{\otimes k_d}$. Define locally the $1$-forms $\beta_j \in C^\infty(V,\HH^*)$ such that $\nabla_j s_j = i \beta_j \otimes s_j$, where $\nabla_j$ are the induced (horizontal) connections on $L_j \to F$, for $j = 1, \dotsc, d$.

Then
\[
\tau_{y \to x}(\mathbf{s}^{\otimes \mathbf{k}}(y, w)) = \exp\left(-i \int_w^{g(y,x)w}\mathbf{k}\cdot\beta\right)\mathbf{s}^{\otimes \mathbf{k}}(x,g(y,x)w),
\]
where $\int_w^{g(y,x)w}\mathbf{k}\cdot\beta$ denotes the integral of $\mathbf{k}\cdot\beta=\sum_j k_j \beta_j$ along the horizontal lift, starting at $w$, of the unique geodesic joining $y$ to $x$. Let $f \in C^\infty(V,\Lk)$, and write $f(x,z) = u(x,z) \mathbf{s}^{\otimes \mathbf{k}}(x,z)$ for some function $u \in C^\infty(V)$. Write $\tau_{x \to y} (x, z) = (y, w) = (y, g(x, y)z)$ and let $\chi \in C^\infty_{\comp}(U)$. Then \small
\begin{align*}
	&\mathbf{s}^{- \otimes \mathbf{k}}(x,z)\chi(x)\Op_h^{0}(a_{h,\mathbf{k}}) (\chi~u~\mathbf{s}^{\otimes \mathbf{k}})(x,z) \\
& = (2\pi h)^{-n} \chi(x) \int_U \int_{T^*_xM} e^{-\tfrac{i}{h}\xi\cdot\exp_x^{-1}(y)} a_{h,\mathbf{k}}(x,\xi) \tau_{y \to x}\big(u(y, w) \mathbf{s}^{\otimes \mathbf{k}}(y, w)\big)(x, z) \chi(y)\psi(x,y)\, \dd\xi\dd y \\
& = (2\pi h)^{-n} \chi(x) \int_U \int_{T^*_xM} e^{-\tfrac{i}{h}\xi\cdot\exp_x^{-1}(y)} a_{h,\mathbf{k}}(x,\xi) \exp\left(-i\int_w^{g(y, x)w} \mathbf{k}\cdot\beta\right) u(y, w) \chi(y) \psi(x, y)\, \dd\xi\dd y \\
& = (2\pi h)^{-n} \chi(x) \int_U \int_{T^*_xM} e^{-\tfrac{i}{h}\big(\xi + h \mathbf{k} \cdot W(x, y, z)\big) \cdot \exp_x^{-1}(y)} a_{h,\mathbf{k}}(x,\xi) u(y, w) \chi(y) \psi(x, y)\, \dd\xi\dd y \\
& = (2\pi h)^{-n} \chi(x) \int_U \int_{T^*_xM} e^{-\tfrac{i}{h}\xi \cdot \exp_x^{-1}(y)} a_{h,\mathbf{k}}(x,\xi - h \mathbf{k} \cdot W(x, y, z)) u(y, g(x, y)z) \chi(y) \psi(x, y)\, \dd\xi\dd y \\
& := A^0 u (x,z),
\end{align*} \normalsize
where we changed variables using $\xi' = \xi + h \mathbf{k} \cdot W(x, y, z)$ in the fourth equality, $W(x, y, z) \in (T^*_xM)^{\oplus d}$ depends smoothly on $(x, y, z)$ and is defined implicitly by 
\[
	F(x, y, z) := \int_w^{g(y, x)w} \beta = -\int_z^{g(x, y) z} \beta = W(x, y, z) \cdot \exp_x^{-1} y.
\]
Indeed, we can define $W(x, y, z)$ by Taylor expansion as follows
\begin{equation}\label{eq:taylor-expansion-W}
\begin{split}
	F(x, y, z) &= \int_0^1 \partial_t F(x, \exp_x(t \exp_x^{-1}y), z)\, dt = \int_0^1 d_yF \cdot d\exp_x(t\exp_x^{-1}y) (\exp_x^{-1}y)\, dt\\ 
 &= \underbrace{\left(\int_0^1 G(t, x, y) d_yF\, dt\right)}_{W(x, y, z) := } \cdot \exp_x^{-1}y,
\end{split}
\end{equation}
where $G(t, x, y) = [d\exp_x(t\exp_x^{-1}y)]^{\top}$. Then also $G(t, x, x) \equiv \id$, and for $v \in T_xM$
\begin{equation}
\label{equation:idk}
	W(x, x, z)(v) = d_y F(x, x, z) (v) = -\partial_t|_{t = 0} \int_0^t  \beta(\gamma^{\HH}(s))(\dot{\gamma^{\HH}}(s))\, ds = -\beta(x, z)(v^{\HH}_z),
\end{equation}
where for some small $\varepsilon > 0$, $(\gamma^{\HH}(s))_{s \in (-\varepsilon, \varepsilon)}$ is the horizontal lift to $F$, starting at $z$, of the unit speed geodesic $\gamma$ generated by $(x, v)$. 

We now put the operator $A^0$ in a standard pseudodifferential form. Given a smooth function $S$ compactly supported on a small open subset of $G/T$, we can write:

\[
	L_{g(x, y)}^*S(z) = S(g(x,y)z) = (2\pi h)^{-\dim G/T} \int_{G/T} \int_{T_z^*(G/T)} e^{-\frac{i}{h} (w - L_{g(x, y)} z) \cdot \eta} S(w)\, \dd w \dd\eta,
\]
where it is understood that $G/T$ is locally near $z$ identified with an open subset in $\R^{\dim(G/T)}$ and $T^*_z(G/T) \simeq \R^{\dim(G/T)}$. We then obtain, using previous computations:
\[
\begin{split}
A^0 u (x,z) & = (2\pi h)^{-\dim F} \chi(x) \int_U \int_{T^*_xM}  \int_{G/T} \int_{T_z^*(G/T)} e^{-\tfrac{i}{h}(\xi \cdot \exp_x^{-1}(y)+ (w - L_{g(x, y)} z) \cdot \eta)} \\
& \hspace{3cm} \cdot a_{h,\mathbf{k}}(x,\xi - h \mathbf{k} \cdot W(x, y, z)) u(y, w) \chi(y) \psi(x, y)\, \dd\xi\dd y \dd w \dd\eta.
\end{split}
\]
We now consider the Taylor expansion of $w-L_{g(x,y)}z$ at $x=y$, similarly to \eqref{eq:taylor-expansion-W}, and we obtain
\[
	H(x, y, z) := L_{g(x, y)}z - z = \underbrace{\left(\int_0^1 G(t, x, y) d_yH\, dt\right)}_{Q(x, y, z) := } \cdot \exp_x^{-1}y,
\]
where $Q(x, y, z) \in (T^*_xM)^{\oplus \dim G/T}$ depends smoothly on $(x, y, z)$. Therefore
\[
	(w - L_{g(x,y)}z)\cdot \eta = \big(w- z - Q(x,y,z)\cdot \exp_x^{-1}(y)\big)\cdot \eta = (w-z)\cdot \eta - \exp_x^{-1}(y) \cdot Q(x,y,z)^{\top}\eta.
\]
This allows to rewrite the phase as
\[
	\xi \cdot \exp_x^{-1}(y)+ (w - L_{g(x, y)} z) \cdot \eta = (\xi - Q(x,y,z)^\top\eta)\cdot \exp_x^{-1}(y) + (w-z)\cdot\eta.
\]
Hence, setting $\xi' := \xi - Q(x,y,z)^\top\eta$ and making the change of variable in $\xi$, we finally obtain:
\[
\begin{split}
A^0 u (x,z) & = (2\pi h)^{-\dim F} \chi(x) \int_V \int_{T^*V} e^{-\tfrac{i}{h}(\xi\cdot \exp_x^{-1}(y) + (w-z)\cdot\eta)} \\
& \cdot a_{h,\mathbf{k}}(x,\xi - h \mathbf{k} \cdot W(x, y, z) + Q(x,y,z)^\top\eta) u(y, w) \chi(y) \psi(x, y)\, \dd\xi\dd y \dd w \dd\eta,
\end{split}
\]
which brings $A^0$ to a form with the usual phase for a pseudodifferential operator. In general $A^0$ is \emph{not} a pseudodifferential operator in $\Psi^m_{h,\mathbf{k}}(F)$ because the symbol
\begin{equation}
\label{equation:symbol-tot}
(x,y,z,\xi,\eta) \mapsto a_{h,\mathbf{k}}(x,\xi-h\mathbf{k}W(x,y,z) + Q(x,y,z)^\top\eta)
\end{equation}
is not in $S^m(F \times T^*F)$. (This can be easily seen in the following case: if the connection on $P$ is flat, then $g(x,y)=\mathbf{1}_G$ and thus $Q \equiv 0$. The symbol is then invariant by translation in the $\eta \in \V^*$ variable, and therefore cannot be in $S^m(F \times T^*F)$, see Remark \ref{remark:pullback}. In other words, the obstruction for $A^0$ to being pseudodifferential lies in its microlocal behaviour as $|\xi_{\V^*}| \to \infty$.)

However, multiplication by $\mathbf{I}$ on both sides to define $\mathbf{A} := \mathbf{I} \Op_h^{0}(a_{h,\mathbf{k}}) \mathbf{I}$ microlocalizes near $\HH^*$, where $ \Op_h^{0}(a_{h,\mathbf{k}})$ is indeed a semiclassical pseudodifferential operator. Hence $\mathbf{A} \in \Psi^m_{h,\mathbf{k}}(F,\mathbf{L})$.

Finally, we compute the principal symbol. First, observe that $\HH^*$ corresponds to the set $\{\eta=0\}$ and that $\xi \in T^*U$ is canonically identified with $d\pi^{\top}\xi \in \HH^*$. Hence, by \eqref{equation:symbol-tot}, the principal symbol (relative to a family $h \mapsto \mathbf{k}(h)$) of $A^0$ on $\HH^*$ is given by $a_{h,\mathbf{k}(h)}(x,\xi - h\mathbf{k}(h) \cdot W(x,x,z))$. The principal symbol of $\mathbf{A}$ in the calculus $\Psi^m_h(F,\mathbf{L})$ is then obtained using both \eqref{equation:symbole-principal} and \eqref{equation:idk} as follows, where $\xi \in \HH^*$:
\[
	\sigma_{\mathbf{A}}(x,z,\xi) = a_{h,\mathbf{k}(h)}(x,\xi-h\mathbf{k}(h)\cdot W(x,x,z) - h\mathbf{k}(h)\cdot \beta_{\HH^*}) = \pi^*a_{h,\mathbf{k}(h)}(x,\xi).
\]
This completes the proof.
\end{proof}

\subsubsection{Examples}

\label{sssection:examples2}
We now provide some examples of operators in this calculus and compute their principal symbol. Further examples will be provided in the next chapters. As we shall see, all natural geometric operators have a principal symbol which is a pullback function in the sense of \S\ref{sssection:pullbackfunctions}.

Fix a connection $\nabla$ on $P$. Recall that by construction, for $\mathbf{k} \in \widehat{G}$, there is an induced connection
\[
\nabla_{\mathbf{k}} : C^\infty_{\mathrm{hol}}(F,\mathbf{L}^{\otimes \mathbf{k}}) \to C^\infty_{\mathrm{hol}}(F,\mathbf{L}^{\otimes \mathbf{k}}\otimes (\HH_F^*)_{\C}),
\]
see Proposition \ref{prop:nabla-preserves-holomorphicity}.


\begin{enumerate}[label=(\roman*), itemsep=5pt]
\item Set $\mathbf{P} := h \nabla_{\mathbf{k}}$ such that $h|\mathbf{k}| \leq 1$. Then $\mathbf{P}$ belongs to $\Psi^1_{h,\mathrm{BW}}(P,\C \to (\HH_F^*)_{\C})$ (where $\C$ denotes the trivial line bundle) since it commutes with the fibrewise holomorphic projection (see \eqref{equation:commutation-nablak-pik}). Similarly to \S \ref{sssection:examples}, items (i) and (iv), the principal symbol is a section $\sigma^{\mathrm{BW}}_{\mathbf{P}} \in S^1_{h, \mathrm{BW}}(\HH^*, \HH_{\C}^*)$ given for $(z,\xi) \in \HH^*$ by $\sigma^{\mathrm{BW}}_{\mathbf{P}}(z,\xi) = i\xi_{\HH^*}$. Equivalently, it can be seen as a section $\sigma^{\mathrm{BW}}_{\mathbf{P}} \in S^1_{h, \mathbf{k}}(T^*M,H^0(\HH_{\mathbb{C}}^*))$ (where $H^0(\HH_{\mathbb{C}}^*)$ stands for fiberwise holomorphic sections of $\HH^*_{\mathbb{C}}$) given for $(x,\xi) \in T^*M, z \in F_x$, by
\begin{equation}
\label{equation:symbol-nabla}
\sigma_{\mathbf{P}}^{\nabla}(x,\xi; z) = i d\pi_{z}^\top \xi.
\end{equation}
Observe that the principal symbol is independent of the choice of family $h \mapsto \mathbf{k}(h)$, that is $\sigma_{\mathbf{P},\mathbf{k}(h)}^{\nabla}= \sigma_{\mathbf{P}}^\nabla$ for all families $h \mapsto \mathbf{k}(h)$.

\item Given a vector field $X \in C^\infty(M,TM)$, let $X^{\HH} \in C^\infty(F,\HH_F)$ be its horizontal lift to $F$, and set $\mathbf{P} := h \X_{\mathbf{k}} = \iota_{X^{\HH}} \nabla_{\mathbf{k}}$. Then $\mathbf{P}$ belongs to $\Psi^1_{h,\mathrm{BW}}(P)$ as it commutes with $\Pi_{\mathbf{k}}$ using \eqref{equation:commutation-nablak-pik} (and its proof). Moreover, similarly to \S \ref{sssection:examples}, items (iii) and (iv), the principal symbol $\sigma_{\mathbf{P}}^{\mathrm{BW}} \in S^1_{h, \mathbf{k}}(T^*M)$ is given by,
\[
	\sigma^{\nabla, \mathrm{BW}}_{\mathbf{P}}(x,\xi) = i\xi (X(x)).
\]
Once again, following Remark \ref{remark:identification-lol}, the principal symbol is equivariant and identified with a function on $T^*M$ in the previous equation.

\item Finally, consider $\mathbf{P} := h^2\nabla^*_{\mathbf{k}}\nabla_{\mathbf{k}} = h^2\Delta_{\mathbf{k}}$ for $h|\mathbf{k}| \leq 1$. Then by \eqref{equation:commutation-nablak-pik}, we have $\mathbf{P} \in \Psi^2_{h,\mathrm{BW}}(P)$ and similarly to \S \ref{sssection:examples}, item (v), $\sigma^{\mathrm{BW}}_{\mathbf{P}} \in S^2_{h, \mathbf{k}}(T^*M)$ is given by
\[
\sigma^{\nabla, \mathrm{BW}}_{\mathbf{P}}(x,\xi) = |\xi_{\HH^*}|^2_g.
\]
\end{enumerate}


\subsection{Further properties}

\subsubsection{Sobolev spaces II}

\label{sssection:sobolev-spaces-2}

We state the usual Sobolev embedding estimate. 

\begin{lemma}
\label{lemma:sobolev-embedding}
The following holds: for all $h, \mathbf{k}$ such that $h|\mathbf{k}| \leq 1$, for all $\alpha > n/2$, there exists $C > 0$ such that for all $f \in C^\infty_{\mathrm{hol}}(F,\mathbf{L}^{\otimes \mathbf{k}})$, one has:
\begin{equation}
\label{equation:sobolev-embeddin-c0}
\|f\|_{C^0(F,\mathbf{L}^{\otimes \mathbf{k}})} \leq C h^{- \dim F/2}  \|f\|_{H^{\alpha}_h(F,\mathbf{L}^{\otimes \mathbf{k}})}.
\end{equation}
\end{lemma}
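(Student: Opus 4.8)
The statement is the standard semiclassical Sobolev embedding, adapted to the line bundle $\mathbf{L}^{\otimes \mathbf{k}} \to F$ in the Borel-Weil calculus. The plan is to reduce to the classical estimate in $\mathbb{R}^N$ (with $N = \dim F$) by localization, then carefully track the dependence of the constants on $h$ and $\mathbf{k}$ so that the bound is uniform over the range $h|\mathbf{k}| \leq 1$. The key point is that, since the estimate involves no gain in holomorphicity and the right-hand side is the full $H^\alpha_h(F, \mathbf{L}^{\otimes \mathbf{k}})$ norm (rather than, say, a norm adapted only to horizontal derivatives), we can simply work with the elliptic operator $\mathbf{\Delta}(\alpha) = (\mathbbm{1} + h^2(\Delta_{\mathbf{k}} + \overline{\partial}_{\mathbf{k}}^*\overline{\partial}_{\mathbf{k}}))^{\alpha/2} \in \Psi^\alpha_{h, \mathbf{k}}(F, \mathbf{L})$ that defines the norm in \eqref{eq:sobolev-norm}, and the fact that $f$ being fiberwise holomorphic plays no role here (the estimate holds for \emph{all} sections).

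First I would cover $F$ by finitely many contractible charts $U_i \subset F$ over which each line bundle $L_1, \dotsc, L_d$ is trivialized by sections of unit fiberwise norm, pick a subordinate partition of unity $(\chi_i)$ with $\chi_i \in C^\infty_{\comp}(U_i)$, and write $f = \sum_i \chi_i f$. Over each $U_i$, using the trivializing multisection $\mathbf{s}^{\otimes \mathbf{k}}$, the section $\chi_i f$ is identified with a scalar function $u_i \in C^\infty_{\comp}(U_i)$, and $|f|_{C^0} \leq \sum_i |u_i|_{C^0}$ since the $\mathbf{s}^{\otimes \mathbf{k}}$ have unit norm. Then I would apply the standard semiclassical Sobolev embedding in $\mathbb{R}^N$: for $\alpha > N/2$,
\[
\|u_i\|_{C^0(\mathbb{R}^N)} \leq C h^{-N/2} \|u_i\|_{H^\alpha_h(\mathbb{R}^N)},
\]
which is proved, e.g., via the Fourier-transform estimate $\|u\|_{C^0} \leq (2\pi h)^{-N} \int |\widehat{u}_h(\xi)|\, d\xi \leq C h^{-N/2} \|\langle \xi\rangle^\alpha \widehat{u}_h\|_{L^2}$ using Cauchy-Schwarz and $\langle\xi\rangle^{-\alpha} \in L^2(\mathbb{R}^N)$ when $\alpha > N/2$ — note this constant is \emph{independent} of $h$. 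The remaining task is to compare $\sum_i \|u_i\|_{H^\alpha_h(\mathbb{R}^N)}$ with $\|f\|_{H^\alpha_h(F, \mathbf{L}^{\otimes \mathbf{k}})}$.

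The comparison of the local Euclidean $H^\alpha_h$ norms with the intrinsic norm $\|f\|_{H^\alpha_h} = \|\mathbf{\Delta}(\alpha) f\|_{L^2}$ is where the $(h,\mathbf{k})$-uniformity must be checked. I would argue as follows: the operator $\chi_i \langle hD\rangle^\alpha \chi_i'$ (Euclidean side, with $\chi_i'$ a cutoff equal to $1$ on $\supp\chi_i$), transported to $F$ via the trivialization, is an operator in $\Psi^\alpha_{h,\mathbf{k}}(F, \mathbf{L})$ with principal symbol supported in $U_i$; since $\mathbf{\Delta}(\alpha)$ is elliptic in $\Psi^\alpha_{h,\mathbf{k}}(F,\mathbf{L})$ (by \S\ref{sssection:examples}, Items (iv)–(v), the symbol is $(1+|\xi|_g^2)^{\alpha/2}$, elliptic everywhere), Proposition \ref{proposition:ellipticity} produces $\mathbf{Q}_i \in \Psi^0_{h,\mathbf{k}}(F,\mathbf{L})$ with $\chi_i \langle hD\rangle^\alpha \chi_i' = \mathbf{Q}_i \mathbf{\Delta}(\alpha) + \mathcal{O}_{\Psi^{-\infty}_{h,\mathbf{k}}}(h^\infty)$; then $L^2$-boundedness (Proposition \ref{proposition:proprietes}, Item (v), uniform in $\mathbf{k}$) gives $\|u_i\|_{H^\alpha_h(\mathbb{R}^N)} \lesssim \|\chi_i' f\|_{H^\alpha_h(F,\mathbf{L}^{\otimes\mathbf{k}})} + \mathcal{O}(h^\infty)\|f\|_{H^{-N}_h} \lesssim \|f\|_{H^\alpha_h(F,\mathbf{L}^{\otimes\mathbf{k}})}$, uniformly in $h|\mathbf{k}|\leq 1$. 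Summing over $i$ and combining with the Euclidean embedding yields \eqref{equation:sobolev-embeddin-c0}. I would then remark that one must verify $\dim F$ rather than $n = \dim M$ appears — this is automatic since $F$ is the base manifold for the calculus here, and is consistent with $N = \dim F$ throughout.

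\textbf{Main obstacle.} The only genuinely delicate point is ensuring all constants are uniform over the two-parameter range $h|\mathbf{k}|\leq 1$ rather than just along a fixed family $h \mapsto \mathbf{k}(h)$; this is handled precisely by the uniform framework of \S\ref{ssection:uniform-scl} (Remark \ref{remark:uniform}) together with the uniform boundedness and parametrix statements already established (Propositions \ref{proposition:proprietes} and \ref{proposition:ellipticity}). Beyond this bookkeeping, the proof is routine — indeed it follows \emph{verbatim} the classical argument — so I expect it can be stated very concisely with a reference to the standard case, as is done for Lemmas \ref{lemma:help} and \ref{lemma:garding}.
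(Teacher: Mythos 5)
Your proof reproduces the standard semiclassical Sobolev embedding localized to $F$ by charts and a parametrix for $\mathbf{\Delta}(\alpha)$, which is exactly what the paper's (terse) ``verbatim'' proof intends, and that part of the argument is correct and carefully checked for $(h,\mathbf{k})$-uniformity.

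However, there is a genuine mismatch with the stated threshold that you notice and then dismiss. Your argument requires $\alpha > \dim F/2$ (since you run the $\mathbb{R}^N$ Cauchy--Schwarz step with $N = \dim F$ and need $\langle\xi\rangle^{-\alpha}\in L^2(\mathbb{R}^{\dim F})$), but the lemma as stated only assumes $\alpha > n/2$ with $n = \dim M$, and when $G \neq T$ one has $n < \dim F$, so your hypothesis is strictly stronger. Your closing remark that this ``is automatic since $F$ is the base manifold'' is precisely the spot where the verbatim argument falls short of the stated threshold: without further input the classical embedding on a $\dim F$-dimensional manifold cannot give a $C^0$ bound from only $n/2+$ derivatives. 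The way to reach $\alpha > n/2$ is to exploit fibrewise holomorphicity, as the paper's remark following the lemma signals: for $f\in C^\infty_{\mathrm{hol}}$ one has the Bergman pointwise bound $\|f\|_{C^0(F_x)} \leq (d_{\mathbf{k}}/\vol(G/T))^{1/2}\|f\|_{L^2(F_x)}$ for each $x$, after which one applies the $n$-dimensional semiclassical embedding to $x \mapsto f|_{F_x}$, giving $\|f\|_{C^0(F)} \lesssim d_{\mathbf{k}}^{1/2} h^{-n/2}\|f\|_{H^\alpha_h}$ for $\alpha > n/2$; since $d_{\mathbf{k}}^{1/2}\lesssim h^{-\dim(G/T)/4} \leq h^{-\dim(G/T)/2}$, this is even sharper than the claimed $h^{-\dim F/2}$. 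So either the lemma's threshold is a minor slip for $\dim F/2$ (in which case your proof is complete and identical in approach to the paper's), or, to prove the lemma as literally stated, the fibrewise Bergman step is needed and your proposal omits it.
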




\begin{proof}

The proof is verbatim the same as for the usual Sobolev embedding estimate.
%
\end{proof}

\begin{remark}
	We note that the estimate in Lemma \ref{lemma:sobolev-embedding} can be slightly improved by using the Borel-Weil calculus and Theorem \ref{theorem:penible}, by replacing the exponent $h^{-\dim F/2}$ with $h^{-n/2}d_{\mathbf{k}}$, which in turn can be estimated by $Ch^{-n/2 - \dim(G/T)/4}$. Note that $\dim(G/T) = 2\dim_{\mathbb{C}}(G/T)$.
\end{remark}

\subsubsection{Propagation of singularities} We now expand the discussion of \S\ref{sssection:propagation1} to the case of $F$ and the line bundles coming from the Borel-Weil theory. Write $\omega_0$ and $\omega_{h, \mathbf{k}}$ for the standard and twisted symplectic forms (with respect to the line bundles $(L_i, \nabla_i) \to F$), respectively. Let $Y \in C^\infty(M,TM)$ be a vector field on $M$ generating a flow $\phi$, and $Y^{\HH} \in C^\infty(F,\HH)$ be the horizontal lift of $Y$ to $F$ generating a flow $\psi$. Let $\Phi^{\omega_0}$ and $\Phi^{\omega_{h,\mathbf{k}}}$ be the Hamiltonian flows on $T^*F$ generated by the Hamiltonian $p_{Y^{\HH}}$, with respect to symplectic structures $\omega_0$ and $\omega_{h, \mathbf{k}}$.


\begin{lemma}
\label{lemma:invariance-h}
The flow $\Phi^{\omega_{h,\mathbf{k}}}$ preserves $\HH^*$. Moreover, the Hamiltonian flow of $|\xi_{\HH^*}|^2$ preserves $\HH^*$.
\end{lemma}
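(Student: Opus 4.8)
The plan is to work entirely in local coordinates on $T^*F$ coming from the connection splitting $TF = \HH_F \oplus \V_F$, and to show that the horizontal subbundle $\HH^* \subset T^*F$ is invariant under the Hamiltonian vector field $H^{\omega_{h,\mathbf{k}}}_{p_{Y^{\HH}}}$. The key point is the identity \eqref{equation:salut} from the previous section: for vector fields $Y$, $Z$ on $M$ with horizontal lifts $Y^{\HH}$, $Z^{\HH}$ to $F$, one has $H^{\omega_{h,\mathbf{k}}}_{p_{Y^{\HH}}} p_{Z^{\HH}} = p_{[Z^{\HH},Y^{\HH}]} + h\mathbf{k}\cdot\mathbf{F}_{\nabla}(Y^{\HH},Z^{\HH})$, where here $\mathbf{F}_{\nabla} = \mathbf{F}_{\overline{\nabla}}$ denotes the curvature $2$-form of the line bundle connections on $F$. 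First I would choose, near a point of $M$, a local frame of $TM$ and its horizontal lift to a frame of $\HH_F$, together with a local frame of $\V_F$; the functions $p_{e_i^{\HH}}$ (linear in the fibre, pairing with the horizontal lift of $e_i$) vanish precisely on $\V^*$, while the functions $p_{v_j}$ (pairing with a vertical frame vector $v_j$) vanish precisely on $\HH^*$. Thus $\HH^*$ is cut out by the equations $\{p_{v_j} = 0\}$, and invariance of $\HH^*$ under $\Phi^{\omega_{h,\mathbf{k}}}$ is equivalent to showing $H^{\omega_{h,\mathbf{k}}}_{p_{Y^{\HH}}} p_{v_j} = 0$ along $\HH^*$, i.e. that this function is in the ideal generated by the $p_{v_{j'}}$.

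The main computation is then to evaluate $H^{\omega_{h,\mathbf{k}}}_{p_{Y^{\HH}}} p_{v_j}$ using \eqref{equation:salut} (extended to the case where the second vector field is a vertical one rather than the horizontal lift of a vector on $M$; the formula \eqref{equation:salut} holds more generally, or one can redo the short computation directly from \eqref{equation:hvfield}). The bracket term contributes $p_{[v_j, Y^{\HH}]}$. Since $Y^{\HH}$ is the horizontal lift of a vector field on $M$ and $v_j$ is vertical (tangent to the fibres), the bracket $[v_j, Y^{\HH}]$ is again vertical: indeed, the flow of $Y^{\HH}$ is by construction the horizontal lift of the flow on $M$, hence commutes with the $G$-action and preserves the fibres, so its flow maps vertical vectors to vertical vectors, forcing $[Y^{\HH}, v_j] \in \V_F$. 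Therefore $p_{[v_j,Y^{\HH}]}$ vanishes on $\HH^*$. For the curvature term, $h\mathbf{k}\cdot\mathbf{F}_{\overline{\nabla}}(Y^{\HH}, v_j)$: by Lemma \ref{lemma:vanishing-curvature}, $\mathbf{F}_{\overline{\nabla}}$ vanishes on $\V_F \times \HH_F$, and since $Y^{\HH} \in \HH_F$ and $v_j \in \V_F$, this term is identically zero. Combining, $H^{\omega_{h,\mathbf{k}}}_{p_{Y^{\HH}}} p_{v_j}$ vanishes on $\HH^*$, which proves the first claim.

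For the second claim, the Hamiltonian flow of $|\xi_{\HH^*}|^2$ (with respect to the \emph{twisted} form $\omega_{h,\mathbf{k}}$ — or $\omega_0$; they agree here in the relevant directions since the metric part is insensitive to the curvature shift), I would argue similarly: it suffices to show $H^{\omega_{h,\mathbf{k}}}_{|\xi_{\HH^*}|^2} p_{v_j} = 0$ on $\HH^*$. Writing $|\xi_{\HH^*}|^2 = \sum_{i} p_{e_i^{\HH}}^2$ in a local orthonormal horizontal frame (or using that it is a polynomial in the $p_{e_i^{\HH}}$ whose coefficients are pullbacks from $M$ of the metric), one has $H_{|\xi_{\HH^*}|^2} = \sum_i 2 p_{e_i^{\HH}} H_{p_{e_i^{\HH}}} + (\text{terms from the coefficients})$. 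Applying this to $p_{v_j}$ and using the first part (each $H^{\omega_{h,\mathbf{k}}}_{p_{e_i^{\HH}}} p_{v_j}$ vanishes on $\HH^*$, being in the ideal of the $p_{v_{j'}}$) together with the fact that the coefficient functions are pullbacks from $M$ (so their Hamiltonian vector fields, applied to $p_{v_j}$, again land in the vertical ideal, or one checks directly), shows that $H^{\omega_{h,\mathbf{k}}}_{|\xi_{\HH^*}|^2} p_{v_j}$ lies in the ideal generated by $\{p_{v_{j'}}\}$ and hence vanishes on $\HH^*$.

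\textbf{The main obstacle.} I expect the only real subtlety to be bookkeeping: making sure that \eqref{equation:salut} is applied correctly when one of the two vector fields is vertical rather than a horizontal lift from $M$ (the derivation from \eqref{equation:hvfield} is the same, but one should either extend the statement or redo the half-line computation), and handling the coefficient functions $g^{ij}(x)$ in $|\xi_{\HH^*}|^2$ carefully in the second part — since these are pullbacks from $M$, their contribution to $H_{|\xi_{\HH^*}|^2} p_{v_j}$ must also be shown to vanish on $\HH^*$, which follows because $\{g^{ij} \circ \pi, p_{v_j}\}$ involves only a $\xi$-derivative of $g^{ij}\circ\pi$ (which is zero) paired against a vertical direction. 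None of this is deep; the conceptual content is entirely in Lemma \ref{lemma:vanishing-curvature} and the fibre-preserving property of horizontal lifts.
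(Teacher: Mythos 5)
Your proof is correct, and for the first claim you take a genuinely different route than the paper. The paper establishes invariance of $\HH^*$ at the level of \emph{flows}: it first shows $\Phi^{\omega_0}$ preserves $\HH^*$ directly from $\pi\circ\psi_t = \phi_t\circ\pi$, then invokes the explicit conjugacy formula \eqref{eq:hamiltonian-flow-conjugacy'} relating $\Phi^{\omega_{h,\mathbf{k}}}$ to $\Phi^{\omega_0}$ composed with a fibre translation, and reduces to checking that $\psi_q^*(\iota_{Y^{\HH}}d\beta_j)$ lies in $\HH^*$ via Lemma \ref{lemma:vanishing-curvature}. You instead work \emph{infinitesimally}: you express $\HH^*$ as the common zero locus of the linear functions $p_{v_j}$ and verify that $H^{\omega_{h,\mathbf{k}}}_{p_{Y^{\HH}}}$ kills each $p_{v_j}$ on $\HH^*$ by applying the bracket identity \eqref{equation:salut} with $Z = v_j$, splitting into $p_{[v_j, Y^{\HH}]}$ (vanishes on $\HH^*$ because the flow of $Y^{\HH}$ covers that of $Y$ and so preserves $\V_F$) and the curvature term (identically zero, again by Lemma \ref{lemma:vanishing-curvature}). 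The two approaches use the same geometric inputs, but yours dispenses with the conjugacy machinery and treats the two claims of the lemma in a uniform way as tangency statements for Hamiltonian vector fields — this is tidier and slightly more self-contained. For the second claim your argument coincides with the paper's (write $|\xi_{\HH^*}|^2 = \sum_i p_{\e_i^{\HH}}^2$ in a horizontal orthonormal frame and use the Leibniz rule together with part one); note your concern about metric coefficients is moot once you pick an orthonormal frame, since $\pi: F \to M$ is a Riemannian submersion so the horizontal lift of an orthonormal frame on $M$ is orthonormal on $\HH_F$ and no coefficients appear.
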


\begin{proof}

We first observe that $\Phi^{\omega_0}$ preserves $\HH^*$. Indeed, note that $\pi \circ \psi_t = \phi_t \circ \pi$ and thus $d\pi^\top \circ d\phi_t^\top  = d\psi_t^\top \circ d\pi^\top$. This shows that the flow $\Phi^{\omega_0}_t(x,z,\xi,\eta) = (\psi_t(z), d\psi_t^{-\top}\xi)$ preserves $\HH^*$.

Next, let $U \subset F$ be an open set over which there are trivialising pointwise unit sections $(s_j)_{j = 1}^d$ of $(L_j)_{j = 1}^d$, such that $\nabla_j s_j = i \beta_j \otimes s_j$ for $j = 1, \dotsc d$. By \eqref{eq:hamiltonian-flow-conjugacy'}, it suffices to show that $\psi_q^*(\iota_{Y^{\HH}} d\beta_j) \in \HH^*$ for each $q \in \mathbb{R}$ and $j = 1, \dotsc, d$. Indeed, by the previous paragraph $d\psi_q$ preserves $\V$, and since $id\beta_j$ is the curvature of $(L_j, \nabla_j)$, $\iota_{Y^{\HH}} d\beta_j$ is zero on $\V$ by Lemma \ref{lemma:vanishing-curvature}. This completes the proof if the first claim.

For the second claim, we argue locally by taking a local orthonormal frame $(\e_i)_{i = 1}^n$ and lift it horizontally to $\HH$ writing $(\e_i^{\HH})_{i = 1}^n$. Then $p := |\xi_{\HH^*}|^2 = \sum_{i = 1}^n p_i^2$, where $p_i = \xi_{\HH^*}(\e_i^{\HH})$. By definition of the Hamiltonian flow
\[
	H_p^{\omega_{h, \mathbf{k}}} = \sum_{i = 1}^n 2p_i H_{p_i}^{\omega_{h, \mathbf{k}}}.
\]
The claim then follows from the first part of the lemma by observing that $p_i = p_{\e_i^{\HH}}$; thus $H_{p_i}^{\omega_{h, \mathbf{k}}}$ is tangent to $\HH^*$.
\end{proof}

%

We let $\mathbf{Y} := \iota_{Y^{\HH}} \nabla_{\mathbf{k}} : C^\infty_{\mathrm{(hol)}}(F,\mathbf{L}^{\otimes \mathbf{k}}) \to C^\infty_{\mathrm{(hol)}}(F,\mathbf{L}^{\otimes \mathbf{k}})$. Similarly to Proposition \ref{proposition:propagation}, one can state a lemma on propagation of singularities within the calculus $\Psi^\bullet_{h, \mathrm{BW}}(P)$. The important point is that, thanks to Lemma \ref{lemma:invariance-h}, singularities only live and propagate in $\HH^* \subset T^*F$. Then the following holds:

\begin{proposition}
\label{proposition:propagation2}
Let $Y \in C^\infty(M,TM)$ be a vector field, $Y^\HH$ its horizontal lift to $F$, and further assume that $\iota_{Y^{\HH}} \mathbf{F}_{\overline{\nabla}} = 0$. Let $\mathbf{A}, \mathbf{A}' \in \Psi^{\comp}_{h,\mathrm{BW}}(P)$ be compactly microlocalised such that the following holds: for all $(x,\xi) \in \WF^{\mathrm{BW}}(\mathbf{A})$, there exists $t \in \R$ such that $\Phi_t^{\omega_0}(x,\xi) \in \Ell^{\mathrm{BW}}(\mathbf{A}')$. Then, there exists $\mathbf{B} \in \Psi^{\comp}_{h, \mathrm{BW}}(P)$, compactly microlocalised such that for all $s \in \R, N > 0$, there exists $C >0$ such that for all $f_{h,\mathbf{k}} \in C^\infty_{\mathrm{hol}}(M,\Lk)$, the following holds:
\begin{equation}
\label{equation:propagation-classique2}
\|\mathbf{A} f_{h,\mathbf{k}}\|_{H^s_h} \leq C\left(\|\mathbf{B} \mathbf{Y}f_{h,\mathbf{k}}\|_{H^s_h} + \|\mathbf{A}' f_{h,\mathbf{k}}\|_{H^s_h} + h^N \|f_{h,\mathbf{k}}\|_{H^{-N}_h} \right).
\end{equation}
Moreover, if $f_{h,\mathbf{k}}$ is merely a distribution and the right-hand side is finite, then $\mathbf{A}f_{h,\mathbf{k}} \in H^s_h$ and \eqref{equation:propagation-classique2} holds.
\end{proposition}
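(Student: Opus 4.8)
The plan is to reduce the statement to Proposition~\ref{proposition:propagation}, the analogous propagation estimate already established in the calculus $\Psi^\bullet_{h,\mathbf{k}}(F,\mathbf{L})$, by replacing the Borel--Weil operators with their underlying admissible representatives on all sections of $\mathbf{L}^{\otimes\mathbf{k}}$. Write $\mathbf{A}=\Pi_{\mathbf{k}}\mathbf{A}''\Pi_{\mathbf{k}}$ and $\mathbf{A}'=\Pi_{\mathbf{k}}\mathbf{A}'''\Pi_{\mathbf{k}}$ for admissible $\mathbf{A}'',\mathbf{A}'''\in\Psi^{\comp}_{h,\mathbf{k}}(F,\mathbf{L})$; since the relevant microlocal data (wavefront and elliptic sets) of $\mathbf{A},\mathbf{A}'$ in $\Psi^\bullet_{h,\mathrm{BW}}(P)$ are by Definition~\ref{definition:BW-stuff} the restrictions to $\overline{\HH^*}$ of those of $\mathbf{A}'',\mathbf{A}'''$, we may harmlessly cut $\mathbf{A}'',\mathbf{A}'''$ down so that their full wavefront sets are contained in a small conic neighborhood $\mc{C}$ of $\overline{\HH^*}$ (using the operator $\mathbf{I}$ from Lemma~\ref{lemma:i} and the fact that $\mathbf{I}\Pi_{\mathbf{k}}=\Pi_{\mathbf{k}}+\mc{O}(h^\infty)$ from \eqref{equation:useful}). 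On such a neighborhood, thanks to Lemma~\ref{lemma:invariance-h}, $\HH^*$ is invariant under $\Phi^{\omega_0}_t$ (and under $\Phi^{\omega_{h,\mathbf{k}}}_t$, but the hypothesis $\iota_{Y^{\HH}}\mathbf{F}_{\overline{\nabla}}=0$ means these coincide by Proposition~\ref{proposition:egorov}(iii)), so the propagation hypothesis ``for all $(x,\xi)\in\WF^{\mathrm{BW}}(\mathbf{A})$ there is $t$ with $\Phi^{\omega_0}_t(x,\xi)\in\Ell^{\mathrm{BW}}(\mathbf{A}')$'' upgrades to the corresponding hypothesis for $\WF(\mathbf{A}'')$ and $\Ell(\mathbf{A}''')$ in $\Psi^\bullet_{h,\mathbf{k}}(F,\mathbf{L})$ after shrinking $\mc{C}$ around the relevant flowline segments.

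Next I would apply Proposition~\ref{proposition:propagation} with $\mathbf{Y}=\iota_{Y^{\HH}}\nabla_{\mathbf{k}}$ (which by \S\ref{sssection:examples2}, Item~(ii), and the hypothesis on the curvature satisfies $\iota_{Y^{\HH}}\mathbf{F}_{\overline{\nabla}}=0$, so the commutator version of Egorov giving \eqref{equation:integral} applies verbatim, cf.\ Remark~\ref{remark:regularity}) to obtain $\mathbf{B}''\in\Psi^{\comp}_{h,\mathbf{k}}(F,\mathbf{L})$ and a constant $C$ such that for every $f\in C^\infty(F,\mathbf{L}^{\otimes\mathbf{k}})$,
\[
\|\mathbf{A}'' f\|_{H^s_h}\le C\big(\|\mathbf{B}''\mathbf{Y}f\|_{H^s_h}+\|\mathbf{A}''' f\|_{H^s_h}+h^N\|f\|_{H^{-N}_h}\big).
\]
Then I would apply this with $f=\Pi_{\mathbf{k}}f_{h,\mathbf{k}}=f_{h,\mathbf{k}}$ (for $f_{h,\mathbf{k}}$ fibrewise holomorphic). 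On the left, $\mathbf{A}'' f_{h,\mathbf{k}}=\mathbf{A}''\Pi_{\mathbf{k}}f_{h,\mathbf{k}}$; since $\mathbf{A}''$ is admissible, $\Pi_{\mathbf{k}}\mathbf{A}''\Pi_{\mathbf{k}}f_{h,\mathbf{k}}=\mathbf{A}''f_{h,\mathbf{k}}+\mc{O}_{\Psi^{-\infty}_{h,\mathbf{k}}}(h^\infty)f_{h,\mathbf{k}}$ by \eqref{equation:commutation-holomorphic}, and $\Pi_{\mathbf{k}}$ is bounded on all $H^s_h$ uniformly (Lemma~\ref{lemma:eat-pik} and its proof), so $\|\mathbf{A} f_{h,\mathbf{k}}\|_{H^s_h}\le\|\mathbf{A}'' f_{h,\mathbf{k}}\|_{H^s_h}+Ch^N\|f_{h,\mathbf{k}}\|_{H^{-N}_h}$. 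On the right, $\mathbf{A}''' f_{h,\mathbf{k}}=\mathbf{A}''' \Pi_{\mathbf{k}}f_{h,\mathbf{k}}$ is likewise $\mathbf{A}' f_{h,\mathbf{k}}+\mc{O}(h^\infty)\|f_{h,\mathbf{k}}\|_{H^{-N}_h}$; and $\mathbf{Y}f_{h,\mathbf{k}}$ is again fibrewise holomorphic by Proposition~\ref{prop:nabla-preserves-holomorphicity}, so setting $\mathbf{B}:=\Pi_{\mathbf{k}}\mathbf{B}''\Pi_{\mathbf{k}}\in\Psi^{\comp}_{h,\mathrm{BW}}(P)$ we have $\mathbf{B}''\mathbf{Y}f_{h,\mathbf{k}}=\mathbf{B}''\Pi_{\mathbf{k}}\mathbf{Y}f_{h,\mathbf{k}}$, whence $\|\mathbf{B}''\mathbf{Y}f_{h,\mathbf{k}}\|_{H^s_h}\le\|\mathbf{B}\mathbf{Y}f_{h,\mathbf{k}}\|_{H^s_h}+Ch^N\|f_{h,\mathbf{k}}\|_{H^{-N}_h}$ after absorbing commutators. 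Collecting these yields \eqref{equation:propagation-classique2}.

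The distributional statement follows the same route: one first establishes a priori regularity of $\mathbf{A}f_{h,\mathbf{k}}$ when the right-hand side is finite, exactly as in the non-twisted case (a standard regularization/iteration argument replacing $\mathbf{A}$ by a microlocal cutoff and bootstrapping the Sobolev order), and then the estimate for distributions follows by density; I would simply cite \cite[Chapter 6, \S6.2]{Lefeuvre-book} for this part since the argument is formally identical once the reduction above is in place. The main obstacle — and the only genuinely nonroutine point — is verifying cleanly that the propagation hypothesis can be transported from $\overline{\HH^*}$ (where it is posed) to a full conic neighborhood of the flowout in $\overline{T^*F}$ without introducing spurious wavefront directions: this is where the invariance of $\HH^*$ under $\Phi^{\omega_0}$ from Lemma~\ref{lemma:invariance-h}, combined with the fact that admissible operators can be microlocalized arbitrarily tightly around $\overline{\HH^*}$ via $\mathbf{I}$, does the work, but one must be careful that the microlocalization of $\mathbf{A}''$ and $\mathbf{A}'''$, the segment of the flow connecting them, and the escape function implicit in Proposition~\ref{proposition:propagation} all stay inside $\mc{C}$; shrinking $\mc{C}$ around the (compact, by compact microlocalization) flowout handles this.
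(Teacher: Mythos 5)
Your proposal is correct and follows exactly the route the paper intends: the paper's own proof is simply the one-liner ``The proof is a consequence of Proposition~\ref{proposition:propagation},'' and your argument fills in precisely the details of that reduction — microlocalizing the admissible representatives near $\overline{\HH^*}$ via $\mathbf{I}$ and \eqref{equation:useful}, transporting the propagation hypothesis from $\overline{\HH^*}$ to a conic neighborhood (using Lemma~\ref{lemma:invariance-h}, openness of the elliptic set, and compact microsupport), applying Proposition~\ref{proposition:propagation}, and commuting $\Pi_{\mathbf{k}}$ through via admissibility, Proposition~\ref{prop:nabla-preserves-holomorphicity}, and Lemma~\ref{lemma:eat-pik}. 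You have correctly isolated the one genuinely nontrivial step (extending the propagation hypothesis off $\overline{\HH^*}$) and handled it properly.
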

\begin{proof}
	The proof is a consequence of Proposition \ref{proposition:propagation}.
\end{proof}

Notice that $\iota_{Y^{\HH}} \mathbf{F}_{\overline{\nabla}}$ already vanishes on $\V$ by Lemma \ref{lemma:vanishing-curvature}, so the condition $\iota_{Y^{\HH}} \mathbf{F}_{\overline{\nabla}} = 0$ of the preceding proposition should only be verified on horizontal vector fields.

\chapter{Rapid mixing for frame flows}

\label{chapter:flow}

In this chapter, we apply the tools developed in Chapter \ref{chapter:analysis} to study rapid mixing of extensions of Anosov flows to principal bundles.

 \minitoc
 
 \newpage

\section{Introduction}

Let $M$ be a smooth closed connected $n$-dimensional manifold equipped with a volume-preserving Anosov flow $\varphi := (\varphi_t)_{t \in \R}$ (that is, it preserves a smooth measure). Recall that the Anosov property means that the tangent bundle $TM$ splits as a sum of three continuous flow-invariant subbundles
\[
TM = \R X \oplus E_s \oplus E_u,
\]
where $X$ is the generator of the flow, and there exist $C, \lambda > 0$ such that:
\begin{equation}
\label{equation:anosov}
\begin{array}{l}
\forall t \geq 0, \forall \xi \in E_s, ~~ |\dd\varphi_t(\xi)| \leq Ce^{-t\lambda}|\xi|, \\
\forall t \leq 0, \forall \xi \in E_u, ~~ |\dd\varphi_t(\xi)| \leq Ce^{-|t|\lambda}|\xi|,
\end{array}
\end{equation}
where the norm $|\bullet|$ in the fibres of $TM$ is arbitrary. 

Let $\pi : P \to M$ be a $G$-principal bundle, where $G$ is a compact connected Lie group, and let $\psi := (\psi_t)_{t \in \R}$ be an extension of $\varphi$ in the sense that:
\begin{equation}
\label{equation:g-flow}
R_g \circ \psi_t = \psi_t \circ R_g, \quad \pi \circ \psi_t = \varphi_t \circ \pi, \qquad \forall g \in G, t \in \R,
\end{equation}
where $R_g$ denotes the right multiplication in the fibres by $g \in G$. Let $\mu$ be the smooth probability measure on $P$ obtained by the product of the smooth flow-invariant probability measure on $M$ and a bi-invariant probability measure on $G$. We will say that the flow $\psi$ is \emph{rapid mixing} with respect to $\mu$ if the following holds: for every $k \in \mathbb{Z}_{\geq 0}$, there exist $C, s > 0$, such that for all $f_1,f_2 \in C^\infty(P)$, 
\begin{equation}\label{eq:rapid-mixing}
\left|\int_{P} (f_1 \circ \psi_t) \cdot f_2\, \dd \mu - \int_P f_1 \,\dd \mu \int_P f_2\, \dd \mu\right| \leq  C\|f_1\|_{H^s}\|f_2\|_{H^s} t^{-k}, \quad t \geq 1,
\end{equation}
where $\|\bullet\|_{H^s}$ denotes the Sobolev norm of degree $s$. To show \eqref{eq:rapid-mixing} it suffices to consider only $f_1$ and $f_2$ with zero average, i.e. $\int_P f_1\, \dd\mu = \int_P f_2\, \dd\mu = 0$. 

In order to state the main theorem, we now introduce some necessary vocabulary in Lie group theory and the theory of connections. Let $\mathfrak{g}$ denote the Lie algebra of $G$, let $\mathfrak{z}$ denote the centre of $\mathfrak{z}$, set $a := \dim \mathfrak{z} \in \mathbb{Z}_{\geq 0}$, and denote by $[\mathfrak{g}, \mathfrak{g}]$ the Lie subalgebra generated by all Lie brackets of elements in $\mathfrak{g}$. Then by structure theory of Lie groups $\mathfrak{g}$ is a direct sum of Lie algebras (see \S \ref{ssection:borel-weil})
\begin{equation}\label{eq:splitting-centre-commutator}
	\mathfrak{g} = \mathfrak{z} \oplus [\mathfrak{g}, \mathfrak{g}].
\end{equation}
Denote the \emph{commutator subgroup} of $G$ by $[G, G]$, the group generated by all expressions $xyx^{-1}y^{-1}$ for $x, y \in G$. Then $[G, G] \leqslant G$ is a normal Lie subgroup with Lie algebra $[\mathfrak{g}, \mathfrak{g}]$, and $G/[G, G]$ is an Abelian group. There is a quotient $G/[G, G]$-principal bundle $P/[G, G]$ and the flow $\psi$ descends to a flow on $P/[G, G]$. Denote by $T \leqslant G$ a \emph{maximal torus} of $G$, that is, a connected Abelian Lie subgroup of maximal dimension. Then, write $F := P/T$ for \emph{flag bundle} over $M$, and denote the quotient flow on $F$ by $\psi^F$. Finally, denote the \emph{adjoint} representation of $G$ on $\mathfrak{g}$ by $\Ad$ (defined by the derivative of the conjugation action).

Let $\alpha$ be the Anosov $1$-form, defined by $\ker \alpha = E_u \oplus E_s$ and $\alpha(X) = 1$; it is well-known that $\alpha$ is $\eps$-Hölder regular for some $\eps > 0$. The flow $\psi$ induces a \emph{dynamical} principal connection $\nabla^{\mathrm{dyn}}$ on the bundle $P$ whose horizontal distribution is the sum of stable, unstable, and flow directions of $\psi$ (see \S\ref{ssection:dynamical-connection}). The curvature $F_{\nabla^{\mathrm{dyn}}}$ of $\nabla^{\mathrm{dyn}}$ is an $\Ad(P)$-valued \emph{distributional} $2$-form on $M$, where $\Ad(P)$ denotes the vector bundle associated to the adjoint representation (whose each fibre can be identified with $\mathfrak{g}$). Since the splitting \eqref{eq:splitting-centre-commutator} is $\Ad$-invariant and the adjoint representation on $\mathfrak{z} \cong \mathbb{R}^a$ is trivial, $\Ad(P) \cong \mathbb{R}^a \oplus E$, where $\mathbb{R}^a$ denotes the trivial bundle $M \times \mathbb{R}^a$ over $M$ and $E$ is associated to $P$ by restricting the $\Ad$-action to $[\mathfrak{g}, \mathfrak{g}]$. The projection of $F_{\nabla^{\mathrm{dyn}}}$ to the $\mathbb{R}^a$ component can thus be identified with the vector of (real-valued) distributional $2$-forms $(F_{1}, \dotsc, F_{a})$ over $M$.

We are now in shape to state the main theorem.


\begin{theorem}
\label{theorem:main2}
Assume that the flow $\psi^F$ is ergodic. The following holds:
\begin{enumerate}[label=\emph{(\roman*)}]
\item The flow $\psi$ is rapid mixing if $(d\alpha, F_{1}, \dotsc, F_{a})$ are linearly independent over $\R$. Moreover, writing $\vartheta := \max(\dim F + 10, 3\dim F + 4) + 2$, for any $k \in \mathbb{Z}_{> 0}$, there exists $C > 0$, such that for any $f_1, f_2 \in C^\infty(P)$ with zero average,
\begin{equation}\label{eq:quantitative-rapid-mixing}
	\left|\int_P (f_1 \circ \psi_t)\cdot f_2\, \dd \mu\right| \leq C t^{-k} \|f_1\|_{H^{\vartheta (k + 1) + 4}(P)} \|f_2\|_{H^{\vartheta (k + 1) + 4}(P)}, \quad t \geq 1. 
\end{equation}
\item The flow $\psi$ on $P$ is rapid mixing if and only if the flow induced on the quotient $P/[G,G]$ is rapid mixing.
\end{enumerate}
\end{theorem}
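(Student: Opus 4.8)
\textbf{Proof plan for Theorem \ref{theorem:main2}.} The strategy is to reduce rapid mixing of the flow $\psi$ on $P$ to uniform-in-$\mathbf{k}$ resolvent estimates for the family of operators $\mathbf{X}_{\mathbf{k}}$ induced on $C^\infty_{\mathrm{hol}}(F, \mathbf{L}^{\otimes \mathbf{k}})$ via the Fourier transform $\mathcal{F}$. Concretely, by Lemma \ref{lemma:ft-isometry} and \eqref{eq:ft-intertwines-infinitesimal}, the generator $X_P$ of $\psi$ is block-diagonalised by $\mathcal{F}$ into the operators $\mathbf{X}_{\mathbf{k}}$, and the rescaled family $\mathbf{Q}_{\mathbf{k}} := |\mathbf{k}|^{-1} \mathbf{X}_{\mathbf{k}}$ (for $\mathbf{k} \neq 0$) belongs to the Borel-Weil calculus $\Psi^1_{|\mathbf{k}|^{-1}, \mathrm{BW}}(P)$, with the $\mathbf{k} = 0$ block being simply the generator of $\varphi$ on $M$, which is rapid (indeed exponentially) mixing by standard results on Anosov flows (Liverani, Tsujii, Dolgopyat). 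The correlation function $C_t(f_1, f_2)$ decomposes as a sum over $\mathbf{k} \in \widehat{G}$ of correlations of the individual blocks, and rapid mixing follows once we prove: (a) a \emph{uniform} resolvent bound $\|(\mathbf{X}_{\mathbf{k}} - z)^{-1}\|_{\mathcal{H}^s \to \mathcal{H}^s} \leq C \langle \Im z \rangle^{N_0}$ on appropriate anisotropic Sobolev spaces for $\Re z = 0$, $|\Im z|$ large, with constants independent of $\mathbf{k} \neq 0$; and (b) the absence of resonances on the imaginary axis, which is exactly where ergodicity of $\psi^F$ enters — ergodicity rules out nontrivial invariant distributions in each block (a Liv\v sic-type obstruction), giving a spectral gap at $0$ uniformly.

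The heart of the argument, carried out in the full Theorem \ref{theorem:main2}, is step (a), and I expect it to be the main obstacle. The point is that $\mathbf{Q}_{\mathbf{k}}$ is a \emph{semiclassical} operator with small parameter $h = |\mathbf{k}|^{-1}$ whose principal symbol is the pullback of the Hamiltonian $p_X(x,\xi) = \xi(X(x))$ to $\mathbb{H}^* \subset T^*F$; the associated Hamiltonian flow on $\mathbb{H}^*$ is (a lift of) the Anosov flow $\varphi$, which has a hyperbolic trapped set. Microlocally near this trapped set one must construct an escape function / anisotropic Sobolev space adapted to the stable and unstable directions of $\varphi^F$ and run a positive-commutator (radial-estimate) argument using the Egorov theorem (Proposition \ref{proposition:egorov}, in the Hölder-regular form of Remark \ref{remark:regularity}, since $\nabla^{\mathrm{dyn}}$ is only Hölder) and the propagation estimates of Proposition \ref{proposition:propagation2}. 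The subtlety, and the reason one needs the linear independence of $(d\alpha, F_1, \dotsc, F_a)$ in part (i), is the \emph{oscillatory}/non-jointly-integrable cancellation in the fibre of $F$: in the directions where the curvature $\mathbf{F}_{\overline{\nabla}}$ of the dynamical connection is nonzero, the twisted symplectic form $\omega_{h,\mathbf{k}}$ deviates from $\omega_0$ (see \eqref{equation:omega-twisted}), and this deviation must be exploited — via a Dolgopyat-type oscillatory-integral estimate combined with the Diophantine density of the transitivity group (Corollary \ref{corollary:diophantine-flag-manifold}) in the semisimple/non-abelian part, and via the non-torsion/linear-independence condition in the abelian part — to gain the polynomial decay of the resolvent in $\Im z$ uniformly as $|\mathbf{k}| \to \infty$. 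Assembling these estimates over all $\mathbf{k}$, using that the number of $\mathbf{k}$ with $|\mathbf{k}| \leq R$ grows polynomially and that $\|f_i\|_{H^s(P)}^2 = \sum_{\mathbf{k}} d_{\mathbf{k}} \sum_i \|f_{i,\mathbf{k}}\|^2$ (Lemma \ref{lemma:ft-isometry}), one sums a convergent series and reads off the explicit Sobolev loss $\vartheta(k+1)+4$, proving \eqref{eq:quantitative-rapid-mixing}.

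For part (ii), the reduction is essentially formal given the machinery: the quotient $P/[G,G]$ is a $G/[G,G]$-principal bundle over $M$ with $G/[G,G]$ abelian (a torus times a finite group), and under $\mathcal{F}$ the Fourier modes $\mathbf{k}$ of functions on $P/[G,G]$ correspond precisely to the one-dimensional representations of $G$, i.e. those $\mathbf{k} \in \widehat{G}$ with $k_{a+1} = \dotsb = k_{a+b} = 0$ (Lemma \ref{lemma:extension}, Lemma \ref{lemma:1d-rep}). Thus the correlation $C_t$ on $P$ splits as the correlation on $P/[G,G]$ (the sum over one-dimensional $\mathbf{k}$) plus the sum over the remaining $\mathbf{k}$; the forward implication (rapid mixing on $P$ $\Rightarrow$ on the quotient) is immediate since pullbacks of smooth functions on $P/[G,G]$ are smooth functions on $P$, and conversely one must show the contribution of the higher-dimensional representations is always rapid mixing under the ergodicity assumption alone — this is precisely the semisimple mechanism, where the Diophantine property of Lemma \ref{lemma:diophantine-semisimple} forces quantitative equidistribution and hence rapid decay without any curvature genericity hypothesis. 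The one point requiring care here is that ``rapid mixing of $\psi^F$'s ergodicity'' must be propagated correctly to each block: one uses that ergodicity of $\psi^F$ is equivalent to the transitivity group being dense in $G$ modulo the centre, together with the fact that each non-one-dimensional irreducible block carries a nontrivial action of $[G,G]$, so that the uniform spectral gap and uniform resolvent bounds hold for those blocks exactly as in part (i) but with the oscillatory cancellation coming ``for free'' from density rather than from a curvature condition.
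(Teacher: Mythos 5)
Your high-level architecture matches the paper's: decompose the correlation over irreducible blocks via the Fourier transform, treat the rescaled generator $\mathbf{Q}_{\mathbf{k}} = |\mathbf{k}|^{-1}\mathbf{X}_{\mathbf{k}}$ as a semiclassical operator in the Borel--Weil calculus, prove resolvent estimates via radial/propagation estimates with the anisotropic weight adapted to $\varphi^F$, and exploit the curvature non-degeneracy in the abelian directions together with the Diophantine density of the transitivity group in the semisimple directions. This is indeed the paper's strategy. Your treatment of part (ii) is also structurally correct (forward by pullback; converse by handling non-abelian blocks via Diophantine estimates and abelian blocks via the assumed rapid mixing on $P/[G,G]$).

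That said, there are three places where what you wrote would fail as stated, and they are not cosmetic.

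\textbf{First}, you claim uniform-in-$\mathbf{k}$ resolvent bounds: ``$\|(\mathbf{X}_{\mathbf{k}} - z)^{-1}\| \leq C\langle\Im z\rangle^{N_0}$ \ldots with constants independent of $\mathbf{k}\neq 0$.'' This is not achievable and is not what the paper proves. The correct estimate (Theorem~\ref{theorem:real}) has the form $\|(-\mathbf{X}_{\mathbf{k}}-z)^{-1}\| \leq C\langle\Im z\rangle^{\vartheta}\langle\mathbf{k}\rangle^{\vartheta}$, i.e.\ it degenerates polynomially in $|\mathbf{k}|$, and the loss is absorbed by the $\langle\mathbf{k}\rangle^m$-weighted bigraded spaces $\mathcal{H}^{m,s}(P)$ of \eqref{equation:space}, which cost Sobolev regularity on $P$ (Lemma~\ref{lemma:anisotropic-isotropic}). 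Your later sentence about summing over $\mathbf{k}$ and reading off Sobolev loss partially acknowledges this but contradicts the earlier ``independent of $\mathbf{k}$'' claim. If you tried to prove a $\mathbf{k}$-uniform bound you would get stuck at exactly the point where the quasimodes concentrate on $\mathcal{T}_\eta$.

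\textbf{Second}, you assert that the $\mathbf{k}=0$ block is ``rapid (indeed exponentially) mixing by standard results on Anosov flows.'' Exponential mixing is not known for general volume-preserving Anosov flows. Under $d\alpha \neq 0$ (non-joint-integrability), \emph{rapid} mixing of $\varphi$ is Dolgopyat's theorem, and in the paper this block is handled by the same resolvent mechanism as the others (taking $k(h)\equiv 0$ in \S\ref{section:hf}), not by citation. The $\mathbf{k}=0$ resolvent estimate still requires the whole quasimode/propagation argument; it is not free.

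\textbf{Third}, you are missing the step that actually converts resolvent bounds into time decay. ``Assembling these estimates and summing a convergent series'' does not, by itself, give you a power $t^{-k}$. The paper (§\ref{ssection:implication}) writes the correlation via the spectral theorem as $\int_{\mathbb{R}}e^{it\lambda}\langle dP(\lambda)f,g\rangle$, uses Stone's formula to express $dP(\lambda)$ in terms of the boundary values of the resolvent (this requires the no-resonance result of Lemma~\ref{lemma:no-resonances}, which is where ergodicity of $\psi^F$ is used), and then integrates by parts $N_1$ times in $\lambda$ using $\langle D_\lambda\rangle^{N_1}e^{it\lambda}$ together with weights $\langle\lambda\rangle^{-N_2}$ converted into powers of $X_P$. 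Each integration by parts costs a factor $\vartheta$ of Sobolev regularity via Corollary~\ref{corollary:hf}; this is where the explicit constant $\vartheta(k+1)+4$ in \eqref{eq:quantitative-rapid-mixing} comes from. Without this step your argument does not produce arbitrary polynomial decay, and the exponent you advertised is unsubstantiated. Relatedly, there is no ``spectral gap at $0$ uniformly'': resonances may approach the imaginary axis as $|\mathbf{k}|\to\infty$; what is true is absence of resonances \emph{on} the imaginary axis plus the $\mathbf{k}$-polynomial resolvent bound.

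Finally, in part (ii) you say the abelian blocks are handled ``immediately''; in fact the converse direction in the paper re-runs the quasimode argument and, precisely in Case~2 ($k_{a+1}=\dotsb=k_{a+b}=0$), replaces the curvature-linear-independence contradiction by a resolvent bound on $P/[G,G]$ extracted from the \emph{assumed} rapid mixing there. This is a non-trivial substitution, not a formality; the projection onto the abelianization does not by itself give the quantitative estimate needed for the high-frequency analysis.
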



We expect that a similar result holds in the related case of extensions of Anosov diffeomorphisms, where in Item (i), instead, we simply ask that $(F_{1}, \dotsc, F_{a})$ are linearly independent (and $\nabla^{\mathrm{dyn}}$ is similarly defined). The linear independence condition in (i) should be interpreted as a non-integrability condition; when $a = 0$, the condition $d \alpha \neq 0$ is equivalent to the non-joint integrability of $E_s$ and $E_u$, and implies rapid mixing of $\varphi$, see \S\ref{ssection:joint-integrability} where this is further discussed. The converse in item (i) is to the best of our knowledge still an open question related to Plante's conjecture \cite{Plante-72} and to the Anosov alternative which, in the volume-preserving case, ensures that the following statements are equivalent: (i) $\varphi$ is not a suspension by a constant roof function; (ii) $\varphi$ is mixing. Finally, we did not optimize the estimate in \eqref{eq:quantitative-rapid-mixing} with respect to Sobolev norms of $f_1$ and $f_2$, but we remark that interpolating between \eqref{eq:quantitative-rapid-mixing} and the trivial estimate for $k = 0$, for any $\alpha \in (0, k)$ it holds that
\begin{equation*}
	\left|\int_P (f_1 \circ \psi_t)\cdot f_2\, \dd \mu\right| \leq C t^{-\frac{k \alpha}{\vartheta(k + 1) + 4}} \|f_1\|_{H^{\alpha}(P)} \|f_2\|_{H^{\alpha}(P)}, \quad t \geq 1,
\end{equation*}
for some $C > 0$. (In particular, by taking $k \to \infty$ we see that for regularity $H^\alpha$, the best exponent we get is asymptotically $t^{-\frac{\alpha}{\vartheta}}$.) Using H\"older-Zygmund instead of Sobolev spaces as in \cite{Bonthonneau-Lefeuvre-23}, it is likely that we would obtain sharper estimates, and in particular we would obtain a dimension-independent value of $\vartheta$. 

Recall that $G$ is called \emph{semisimple} if $a = 0$, that is, if $\mathfrak{g} = [\mathfrak{g}, \mathfrak{g}]$. As an immediate corollary of our main theorem, we obtain:

\begin{corollary}
\label{corollary:ss}
If $G$ is semisimple and $d\alpha \neq 0$, then the flow $\psi$ is rapid mixing if and only if it is ergodic.
\end{corollary}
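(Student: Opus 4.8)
\textbf{Proof proposal for Corollary \ref{corollary:ss}.}

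The plan is to deduce the corollary directly from Theorem \ref{theorem:main2}, for which the key point is to check that when $G$ is semisimple the hypothesis in Item (i) of that theorem reduces precisely to $d\alpha \neq 0$, and that ergodicity of $\psi$ and of the flag flow $\psi^F$ are equivalent. First I would recall that $G$ semisimple means $a = \dim \mathfrak{z} = 0$, so that $\Ad(P) \cong E$ has no trivial summand, and consequently the tuple $(d\alpha, F_1, \dotsc, F_a)$ appearing in Theorem \ref{theorem:main2}(i) is just the single distributional $2$-form $d\alpha$. Thus "$( d\alpha, F_1, \dotsc, F_a)$ linearly independent over $\R$" becomes "$d\alpha \neq 0$", which is exactly our standing assumption.

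Next I would address ergodicity. Theorem \ref{theorem:main2} is stated under the assumption that $\psi^F$ is ergodic; so to conclude that $\psi$ rapid mixing $\iff$ $\psi$ ergodic I need: (a) if $\psi$ is ergodic then $\psi^F$ is ergodic, which is immediate since $F = P/T$ is a factor of $P$ and ergodicity passes to factors; and (b) conversely, that rapid mixing of $\psi$ (which is what Theorem \ref{theorem:main2}(i) delivers once $\psi^F$ is ergodic) implies ergodicity of $\psi$, which is a general and elementary fact — any mixing flow with respect to a probability measure is ergodic with respect to that measure (decay of correlations of $f$ against itself forces the invariant $L^2$ functions to be constant). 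Combining: if $\psi$ is ergodic then $\psi^F$ is ergodic, hence by Theorem \ref{theorem:main2}(i) (using $d\alpha \neq 0$) the flow $\psi$ is rapid mixing; and conversely rapid mixing trivially implies ergodicity. This gives the stated equivalence.

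One subtlety I would be careful to spell out is why, in the semisimple case, one does not even need the possibly-subtle implication "$\psi^F$ ergodic $\Rightarrow \psi$ ergodic" to make the statement non-vacuous: it is genuinely only needed in the direction we use, namely assuming $\psi$ (hence $\psi^F$) ergodic and concluding rapid mixing. So the logical skeleton is clean. The only place where anything of substance is invoked is Theorem \ref{theorem:main2}(i) itself; the rest is bookkeeping about $a=0$ and the soft implication mixing $\Rightarrow$ ergodic. Accordingly, I expect no real obstacle here — the proof is essentially a one-paragraph specialization — with the mild care point being to state precisely the passage of ergodicity to the factor $P \to F$ and the trivial direction mixing $\Rightarrow$ ergodic, so that the ``if and only if'' is fully justified rather than merely half-proved.
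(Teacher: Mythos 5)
Your proposal is correct and is essentially the deduction the paper has in mind when it calls this ``an immediate corollary'' of Theorem \ref{theorem:main2}: semisimplicity gives $a=0$ so the linear-independence hypothesis collapses to $d\alpha\neq 0$, ergodicity of $\psi$ passes to the factor $\psi^F$ on $F=P/T$, and the converse direction is the soft fact that mixing implies ergodicity. Nothing further is needed.
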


The ergodicity of $\psi$ is detected using the \emph{transitivity group} $H \leqslant G$, see \S\ref{sssection:transitivity-group} for further details (and also \cite{Brin-75-2,Lefeuvre-23}). In some specific cases such as frame flows over negatively curved manifolds, more can be said on the ergodicity of $\psi$, see \S\ref{ssection:frame-flows}. It should be also noted that, if $\psi$ is rapid mixing, then the induced flow on any associated bundle to $P$ with fibers isomorphic to a $G$-homogeneous space is also rapid mixing, see \S\ref{section:representation} where the associated bundle construction is recalled.

When $G=\mathrm{U}(1)$, the condition that $F_1 = F_{\nabla^{\mathrm{dyn}}}$ and $d\alpha$ are linearly independent can be sometimes easily verified by a topological argument. Recall that a $\mathrm{U}(1)$-bundle $P \to M$ is \emph{torsion} if one of its tensor powers is isomorphic to the trivial bundle $\mathrm{U}(1) \times M$. As a corollary to Theorem \ref{theorem:main2}, we obtain:

\begin{corollary}
\label{corollary1}
If $d \alpha \neq 0$, $G = \mathrm{U}(1)$, and $P \to M$ is not torsion, then $\psi$ is rapid mixing. 
\end{corollary}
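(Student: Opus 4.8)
The plan is to derive Corollary \ref{corollary1} from Theorem \ref{theorem:main2}(i) by verifying that, for a non-torsion $\mathrm{U}(1)$-bundle $P \to M$ carrying an extension $\psi$ of a volume-preserving Anosov flow with $d\alpha \neq 0$, the two distributional $2$-forms $d\alpha$ and $F_1 = F_{\nabla^{\mathrm{dyn}}}$ are linearly independent over $\mathbb{R}$. Here $a = \dim \mathfrak{z} = 1$ since $\mathrm{U}(1)$ is abelian, and ergodicity of $\psi^F = \psi$ is the standing hypothesis of Theorem \ref{theorem:main2}; so the only thing to check is the linear independence condition appearing in item (i).

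First I would recall that, because $G = \mathrm{U}(1)$ is abelian, $\mathrm{Ad}$ is trivial and $\mathrm{Ad}(P) \cong M \times \mathbb{R}$, so the dynamical curvature $F_{\nabla^{\mathrm{dyn}}}$ is a genuine real-valued distributional $2$-form on $M$, and it represents (up to the standard $2\pi i$ normalization) the real first Chern class $c_1(P) \in H^2(M,\mathbb{R})$ — this is exactly the content of Proposition \ref{prop:line-bundle-topology} transported to the base via Lemma \ref{lemma:abelian-curvature}, applied to the (Hölder) dynamical connection, whose curvature is still a closed current representing the same de Rham class. The bundle $P \to M$ being non-torsion means precisely that $c_1(P)$ has infinite order in $H^2(M,\mathbb{Z})$; since $H^2(M,\mathbb{Z})$ may have torsion, infinite order of $c_1(P)$ is equivalent to its image in $H^2(M,\mathbb{R})$ being nonzero. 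Hence $[F_{\nabla^{\mathrm{dyn}}}] \neq 0$ in $H^2_{\mathrm{dR}}(M)$. On the other hand, $d\alpha$ is by definition an exact current, so $[d\alpha] = 0$ in $H^2_{\mathrm{dR}}(M)$. Therefore any nontrivial linear relation $\mu\, d\alpha + \nu\, F_{\nabla^{\mathrm{dyn}}} = 0$ with $(\mu,\nu) \neq (0,0)$ would force, upon passing to cohomology classes, $\nu [F_{\nabla^{\mathrm{dyn}}}] = 0$, hence $\nu = 0$, hence $\mu\, d\alpha = 0$, hence $\mu = 0$ since $d\alpha \neq 0$ — a contradiction. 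This gives the linear independence of $(d\alpha, F_1)$ over $\mathbb{R}$, and Theorem \ref{theorem:main2}(i) then yields rapid mixing of $\psi$.

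The one point deserving care — and the main (mild) obstacle — is the regularity issue: $\nabla^{\mathrm{dyn}}$ is only Hölder continuous, so $F_{\nabla^{\mathrm{dyn}}}$ is a priori only a distributional $2$-form, and I must justify that it still defines a de Rham cohomology class equal to $c_1(P)_{\mathbb{R}}$. I would handle this by the standard argument that any smooth unitary connection $\nabla'$ on $P$ has $F_{\nabla'}$ smooth and representing $c_1(P)_{\mathbb{R}}$, while $F_{\nabla^{\mathrm{dyn}}} - F_{\nabla'} = d(\nabla^{\mathrm{dyn}} - \nabla')$ is the exterior derivative of a globally defined Hölder $1$-form (the difference of connection forms descends to a $1$-form on $M$ since $G$ is abelian); thus $F_{\nabla^{\mathrm{dyn}}}$ and $F_{\nabla'}$ are cohomologous as currents, and in particular $[F_{\nabla^{\mathrm{dyn}}}] = c_1(P)_{\mathbb{R}} \neq 0$ exactly when $P$ is non-torsion. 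Likewise, $d\alpha$ being exact as a current (with $\alpha$ Hölder) is immediate. With these identifications in place the cohomological argument of the previous paragraph goes through verbatim, completing the deduction of Corollary \ref{corollary1} from Theorem \ref{theorem:main2}.
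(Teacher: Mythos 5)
Your argument is correct and takes essentially the same route as the paper's: both reduce to the observation that $P$ being non-torsion is equivalent to $[F_1]\neq 0$ in $H^2(M,\mathbb{R})$, while $[d\alpha]=0$ since $\alpha$ is a globally defined (H\"older) $1$-form, so the two classes cannot satisfy a nontrivial linear relation in de Rham cohomology and a fortiori the currents $d\alpha$ and $F_1$ are $\mathbb{R}$-linearly independent. Your third paragraph, spelling out why the distributional curvature of the merely-H\"older dynamical connection still represents $c_1(P)_{\mathbb{R}}$ (via $F_{\nabla^{\mathrm{dyn}}}-F_{\nabla'}=d$ of a global H\"older $1$-form), is a careful point that the paper's one-line proof leaves implicit; it is a nice addition.

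One imprecision to correct: you write ``$\psi^F=\psi$.'' This is wrong. For $G=\mathrm{U}(1)$ the maximal torus is $T=G$, so $F=P/T=M$ and the quotient flow is $\psi^F=\varphi$, the base Anosov flow on $M$ -- not the flow $\psi$ on $P$. This matters, because the corollary does \emph{not} assume ergodicity of anything, so the ergodicity hypothesis of Theorem \ref{theorem:main2} must be verified rather than taken for granted. As written, your phrasing (``ergodicity of $\psi^F=\psi$ is the standing hypothesis \ldots so the only thing to check is the linear independence'') would, if taken literally, silently assume ergodicity of $\psi$ on $P$, which is precisely one of the non-trivial conclusions. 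The fix is immediate: since $\psi^F=\varphi$ and $\varphi$ is a volume-preserving Anosov flow, $\psi^F$ is ergodic by the classical Anosov--Sinai theorem, and no further assumption is needed. With this correction the deduction from Theorem \ref{theorem:main2}(i) goes through exactly as you wrote.
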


That the assumptions imply ergodicity of $\psi$ follows from Lemma \ref{lemma:no-resonances}; then Corollary \ref{corollary1} is an immediate consequence of Theorem \ref{theorem:main2}, Item (i), since $P \to M$ is not torsion if and only if the cohomology class of the curvature $[F_{1}] \in H^2(M, \C)$ does not vanish. Moreover, a similar result holds for arbitrary extensions under the condition that $[F_{1}], \dotsc, [F_{a}]$ are linearly independent in $H^2(M, \C)$. Finally, while writing this monograph, we learnt that Pollicott-Zhang \cite{Pollicott-Zhang-24} had obtained a similar result to Corollary \ref{corollary:ss} for general equilibrium states.

\subsection{Application to frame flows}

\label{ssection:frame-flows}

Recall that a closed Riemannian manifold $(N,g)$ of dimension $n$ is \emph{Anosov} if its geodesic flow $(\varphi_t)_{t \in \mathbb{R}}$, defined on its unit tangent bundle $SN \subset TN$, is Anosov. Typical examples are provided by negatively curved manifolds. A natural extension of the geodesic flow is given by the \emph{frame flow} on the \emph{frame bundle}, i.e. the bundle of orthonormal frames $FM$, defined by
\[
	\psi_t(x, v, \e_2, \dotsc, \e_n) = (\varphi_t(x, v), \mc{P}_t \e_2, \dotsc, \mc{P}_t \e_n), \quad x \in M, t \in \mathbb{R},
\]
where $(v, \e_2, \dotsc, \e_n)$ is an orthonormal basis of $T_xM$, and $\mc{P}_t$ denotes parallel transport along the geodesic at $x$ in the direction of $v$.

As a corollary of Theorem \ref{theorem:main2}, we obtain the following result for frame flows:

\begin{corollary}
\label{corollary}
Let $(N,g)$ be a closed oriented Anosov Riemannian manifold of dimension $n \geq 3$. Then the frame flow over $N$ is rapid mixing if and only if it is ergodic.
\end{corollary}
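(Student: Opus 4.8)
The plan is to deduce Corollary~\ref{corollary} from Theorem~\ref{theorem:main2} by checking its two hypotheses for the frame flow, namely that the geodesic flow of $(N,g)$ is a volume-preserving Anosov flow with $d\alpha \neq 0$, and that the relevant structure group is semisimple so that the linear independence condition of Item~(i) reduces to just $d\alpha \neq 0$. In our setting $M = SN$, the flow $\varphi$ is the geodesic flow (which preserves the Liouville measure, hence is volume-preserving and Anosov by assumption), $P = FN$ is the oriented orthonormal frame bundle, $G = \mathrm{SO}(n-1)$ acting on the frame by rotating the last $n-1$ vectors while fixing the direction of the geodesic, and $\psi$ is the frame flow. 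Since $n \geq 3$, the group $\mathrm{SO}(n-1)$ is compact and connected, and its Lie algebra $\mathfrak{so}(n-1)$ is semisimple for $n-1 \geq 3$, i.e. $n \geq 4$; the only exceptional case is $n = 3$, where $G = \mathrm{SO}(2) = \mathrm{U}(1)$ is abelian. I will treat $n = 3$ separately below.

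First I would recall why $d\alpha \neq 0$ for the geodesic flow of any closed Riemannian manifold with Anosov geodesic flow. The contact form of the geodesic flow (the restriction to $SN$ of the canonical Liouville $1$-form) is a smooth $1$-form $\beta$ with $\beta(X) = 1$ and $\ker\beta = E_s \oplus E_u$, and $\beta \wedge (d\beta)^{n-1}$ is a volume form on $SN$ (the contact condition), so in particular $d\beta$ is nowhere zero on $E_s \oplus E_u$. Since the Anosov $1$-form $\alpha$ of the excerpt is \emph{uniquely} characterized by $\alpha(X)=1$ and $\ker\alpha = E_s\oplus E_u$, we have $\alpha = \beta$ and hence $d\alpha = d\beta \not\equiv 0$. (Alternatively, joint non-integrability of $E_s\oplus E_u$ is classical for geodesic flows and is equivalent to $d\alpha\neq 0$, as indicated in \S\ref{ssection:joint-integrability}.) Thus the hypothesis $d\alpha \neq 0$ of Theorem~\ref{theorem:main2} is always satisfied for frame flows over Anosov manifolds.

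For $n \geq 4$, with $G = \mathrm{SO}(n-1)$ semisimple, Corollary~\ref{corollary:ss} applies directly: since $d\alpha\neq 0$, the frame flow $\psi$ is rapid mixing if and only if it is ergodic. Here I should note that Corollary~\ref{corollary:ss} is stated under the standing assumption that $\psi^F$ (the flow induced on the flag bundle $F = FN/T$) is ergodic, but for semisimple $G$ this is not an extra hypothesis: ergodicity of $\psi$ on $P$ is equivalent to ergodicity of $\psi^F$ on $F$. Indeed, $F$ is an associated $G/T$-bundle of $P$, so ergodicity of $\psi$ implies ergodicity of $\psi^F$; conversely, the transitivity group controlling ergodicity of $\psi$ and the one controlling ergodicity of $\psi^F$ have the same closure (by density considerations using that $T$ is its own normalizer's identity component and that a closed subgroup acting transitively on $G/T$ is all of $G$), so the equivalence holds. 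This is the point where I should be careful to invoke the precise ergodicity criterion via the transitivity group from \S\ref{sssection:transitivity-group}; it is the one slightly delicate bookkeeping step of the argument, but it is not a genuine obstacle.

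For $n = 3$, $G = \mathrm{SO}(2) = \mathrm{U}(1)$ and the frame bundle $FN \to SN$ is a circle extension of the geodesic flow. Here I would invoke Corollary~\ref{corollary1}: since $d\alpha\neq 0$, the frame flow is rapid mixing provided $FN \to SN$ is not torsion, and — by the discussion following Corollary~\ref{corollary1} — it is also rapid mixing when it is ergodic, because ergodicity of a circle extension forces the curvature class $[F_1] \in H^2(SN,\mathbb{C})$ to be non-torsion (Lemma~\ref{lemma:no-resonances}), which is exactly the condition needed. In the non-ergodic case the frame flow is of course not rapid mixing (a non-ergodic flow cannot mix). So in all cases $n\geq 3$ the frame flow is rapid mixing if and only if it is ergodic, which is Corollary~\ref{corollary}. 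I expect the main obstacle to be purely expository: correctly matching the abstract transitivity-group ergodicity criterion for $\psi$ with that for $\psi^F$ in the semisimple case, and citing the right statements from the earlier chapters; the analytic content is entirely carried by Theorem~\ref{theorem:main2} and its corollaries.
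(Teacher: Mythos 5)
Your plan for $n\geq 4$ is essentially the paper's, up to an unnecessary detour. You only need the easy direction ``$\psi$ ergodic $\Rightarrow$ $\psi^F$ ergodic'' (the flow on $F=P/T$ is a factor of the flow on $P$), and then Theorem~\ref{theorem:main2}, Item~(i), with $a=0$ and $d\alpha\neq 0$, directly gives rapid mixing. The converse implication you discuss at length (ergodicity of $\psi^F$ forcing ergodicity of $\psi$) is not needed for the corollary, and your heuristic argument for it via transitivity groups is not obviously correct in the generality you state it.

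There is a genuine gap in your $n=3$ case. You assert that ``ergodicity of a circle extension forces the curvature class $[F_1]\in H^2(SN,\mathbb{C})$ to be non-torsion,'' citing Lemma~\ref{lemma:no-resonances}. That implication is false, and the lemma does not say it. In fact a torsion --- even trivial --- circle bundle can carry an ergodic extension of the geodesic flow: in the notation of Example~\ref{example:trivial-extension}, for generic $a\in C^\infty(M)$ the flow $\psi_t(x,\theta)=(\varphi_t x,\theta+\int_0^t a(\varphi_qx)\,dq)$ on $M\times\mathrm{U}(1)$ is ergodic, yet the bundle is trivial and $[F_1]=0$. So ``ergodic $\Rightarrow$ non-torsion'' cannot hold. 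What Lemma~\ref{lemma:no-resonances} and the surrounding discussion actually give is the reverse logical chain used to justify Corollary~\ref{corollary1}: non-torsion (i.e. $[F_1]\neq 0$) together with $d\alpha\neq 0$ yields the linear independence of $(d\alpha,F_1)$ (because $[d\alpha]=0$ while $[F_1]\neq 0$), hence ergodicity and rapid mixing. To close the gap you must \emph{verify} that $FN\to SN$ is not torsion, independently of any dynamical hypothesis. The paper does this topologically: restrict to a fiber $FN_{x_0}\to SN_{x_0}\cong\mathbb{S}^2$, which is the standard fibration $\mathrm{SO}(3)\to\mathbb{S}^2$; since $H^2(\mathbb{S}^2,\mathbb{Z})\cong\mathbb{Z}$ has no torsion and this bundle is non-trivial (e.g. because $\mathbb{S}^2$ admits no non-vanishing vector field), it is not torsion, and a fortiori neither is $FN\to SN$.
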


Corollary \ref{corollary} is straightforward to prove from Theorem \ref{theorem:main2}, with $M := SN$, $\varphi$ the geodesic flow (note that $d\alpha \neq 0$ since $\varphi$ is a contact flow), and applied to the frame bundle $P := FN \to SN$ (which is a principal $\mathrm{SO}(n-1)$-bundle). The proof of Corollary \ref{corollary} is given in \S\ref{ssection:application-frame-flow}.

Let us briefly comment on known results on frame flow ergodicity. It was conjectured by Brin \cite[Conjecture 2.9]{Brin-82} that the frame flow should be ergodic in negative curvature, unless $(M,g)$ admits a holonomy reduction to a strict subgroup of $\mathrm{SO}(n)$. Recall that a negatively curved manifold is $\delta$-pinched for some $\delta > 0$ (resp. strictly $\delta$-pinched) if there exists a constant $C > 0$ such that the sectional curvatures take values in the interval $[-C, -C \delta]$ (resp. $(-C,-C\delta)$). In negative curvature, all holonomy reductions appear with $\delta \leq 0.25$; it was thus also conjectured by Brin \cite[Conjecture 2.6]{Brin-82} that the frame flow of strictly $0.25$-pinched manifold should be ergodic. This problem is still open in full generality; however, ergodicity of the frame flow is known in the following cases:
\begin{enumerate}[label=(\roman*)]
\item When $(N,g)$ is Anosov, and $n$ is odd and different from $7$, see \cite{Brin-Gromov-80};
\item When $(N,g)$ has negative sectional curvature, $n$ is even or equal to $7$ and the metric is $\delta$-pinched with $\delta > \delta(n)$; the values of $\delta(n)$ are given by $\delta(4k+2)\sim 0.28...$, $\delta(4k) \sim 0.56...$ except for the three exotic dimensions $n=7,8,134$ for which $\delta(7)=0.49...$, $\delta(8)=0.62...$, $\delta(134)=0.57...$, see \cite{Cekic-Lefeuvre-Moroianu-Semmelmann-21,Cekic-Lefeuvre-Moroianu-Semmelmann-22} (and also \cite{Burns-Pollicott-03, Brin-Karcher-83} for previous results).
\end{enumerate}

A similar corollary holds on Kähler manifolds and the \emph{unitary frame flow}, that is the flow of frames compatible with the complex structure. We say that $(N, g, J)$ is an Anosov K\"ahler manifold if it is K\"ahler (where $J$ denotes a complex structure which is compatible with $g$) and the underlying Riemannian manifold $(N, g)$ is Anosov.

\begin{corollary}
\label{corollary:kahler}
Let $(N, g, J)$ be a closed Anosov Kähler manifold of complex dimension $\geq 2$. Moreover, if the complex dimension of $N$ equals $2$, assume that $(N, g)$ is negatively curved. Then the unitary frame flow over $N$ is rapid mixing if and only if it is ergodic.
\end{corollary}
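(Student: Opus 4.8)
The plan is to deduce Corollary~\ref{corollary:kahler} from Theorem~\ref{theorem:main2} by identifying the unitary frame flow as an isometric extension of the geodesic flow with structure group $\mathrm{U}(m)$, where $m$ is the complex dimension of $N$, and then verifying the ergodicity hypothesis of Theorem~\ref{theorem:main2} together with the linear independence condition in item (i). More precisely, on a Kähler manifold $(N,g,J)$ the Levi-Civita connection preserves $J$, so parallel transport along geodesics maps unitary frames to unitary frames. This means the bundle $P := F^{\mathrm{U}}N \to SN$ of $J$-compatible orthonormal frames is a principal $\mathrm{U}(m)$-bundle (the frame flow on the full $\mathrm{SO}(2m)$-frame bundle restricts to it, since $\mathrm{U}(m) \leqslant \mathrm{SO}(2m)$), the geodesic flow on $M := SN$ is Anosov and contact (so $d\alpha \neq 0$), and the unitary frame flow $\psi$ is an extension of $\varphi$ in the sense of \eqref{equation:g-flow}.

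The group $\mathrm{U}(m)$ is \emph{not} semisimple: its centre is the circle of scalar matrices $e^{i\theta}\mathrm{Id}$, so $a = \dim \mathfrak{z} = 1$, and $[\mathfrak{u}(m),\mathfrak{u}(m)] = \mathfrak{su}(m)$. Hence Corollary~\ref{corollary:ss} does not apply directly and we genuinely need item (i) of Theorem~\ref{theorem:main2}. The single abelian curvature component $F_1$ is (up to a constant) the curvature of the determinant line bundle $\det: P \times_{\det} \mathbb{C} \to SN$ pulled back by the dynamical connection; since the dynamical connection on $P$ is a restriction of the Levi-Civita-induced connection (here one must check that the dynamical connection \emph{is} the Levi-Civita connection, as on the full frame bundle, because parallel transport along the stable/unstable/flow directions is precisely parallel transport along geodesics), its $\mathfrak{z}$-component is, up to normalization, the pullback to $SN$ of the Ricci form of $(N,g,J)$, which represents (a multiple of) $c_1(N)$. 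So the linear independence of $(d\alpha, F_1)$ over $\mathbb{R}$ should be established by the following dichotomy: either $[F_1] \neq 0$ in $H^2(SN;\mathbb{R})$, in which case $F_1$ cannot be a multiple of $d\alpha$ (which is exact, being $d$ of a globally defined $1$-form), and we are done; or $[F_1] = 0$, in which case I would argue, as in the proof that joint non-integrability follows from $d\alpha \neq 0$ (see \S\ref{ssection:joint-integrability}), that $F_1$ and $d\alpha$ being proportional would force a Livšic-type cohomological obstruction incompatible with the Anosov property unless $F_1 = 0$; and $F_1 \equiv 0$ (flat determinant bundle with the dynamical connection) would mean the induced $\mathrm{U}(1)$-extension is a constant-speed suspension, contradicting ergodicity of $\psi^F$. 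One subtlety: the dichotomy as stated handles linear independence granted ergodicity, so logically one should first dispose of ergodicity.

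For the ergodicity hypothesis of Theorem~\ref{theorem:main2}, namely ergodicity of the flow $\psi^F$ on the flag bundle $F = P/T$, I would invoke the known results on unitary frame flow ergodicity together with the fact that ergodicity of $\psi$ on $P$ implies ergodicity of $\psi^F$ (since $F$ is an associated bundle with homogeneous fibre $\mathrm{U}(m)/T$, and ergodicity passes to such quotients; this is the remark after Corollary~\ref{corollary:ss}). Actually the cleaner route is: the statement of Corollary~\ref{corollary:kahler} is an ``if and only if'' conditioned on ergodicity, so we assume $\psi$ ergodic, deduce $\psi^F$ ergodic, and then Theorem~\ref{theorem:main2}(i) gives rapid mixing provided we verify linear independence; conversely rapid mixing trivially implies mixing hence ergodicity. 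The dimension hypotheses ($\dim_{\mathbb{C}} N \geq 2$, and negative curvature when $\dim_{\mathbb{C}} N = 2$) are exactly the ranges in which unitary frame flow ergodicity is known (cf. the discussion for the Riemannian frame flow in \S\ref{ssection:frame-flows} and the analogous Kähler statements in \cite{Cekic-Lefeuvre-Moroianu-Semmelmann-21, Cekic-Lefeuvre-Moroianu-Semmelmann-23}), but for the present corollary we do not even need them as standing hypotheses since ergodicity is assumed.

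The main obstacle, I expect, is the rigidity step establishing linear independence of $(d\alpha, F_1)$ when $[F_1] = 0$ — i.e. ruling out the possibility that the distributional $2$-form $F_1$ is a nonzero real multiple of $d\alpha$. This requires understanding the dynamical (Levi-Civita) connection's abelian curvature component well enough to apply a Livšic obstruction argument: if $F_1 = c\, d\alpha$ then the corresponding cocycle over the geodesic flow is cohomologous to a constant multiple of time, which, combined with the structure of the determinant-bundle holonomy, should contradict either ergodicity of $\psi^F$ or the Anosov alternative. A secondary technical point is the identification of $F_1$ with the pullback of the Ricci form, which rests on the fact that the dynamical connection on the unitary frame bundle of a Kähler manifold coincides with the Levi-Civita connection — a consequence of $\nabla^{\mathrm{LC}} J = 0$ that must be spelled out. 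Once these are in place, the rapid mixing conclusion and the quantitative bound follow verbatim from Theorem~\ref{theorem:main2}.
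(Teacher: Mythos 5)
Your structure group is off: the unitary frame bundle over $SN$ constrains the first frame vector to be the unit tangent direction, so the fibre over $(x,v) \in SN$ is the set of unitary completions $(e_2,\dotsc,e_m)$ of $v$, a torsor over $\mathrm{U}(m-1)$, not $\mathrm{U}(m)$. This matters decisively for $m = 2$, where the correct group is $\mathrm{U}(1)$ and the extension is a \emph{circle} extension — the paper applies Corollary~\ref{corollary1}, which requires the circle bundle $F_{\C}N \to SN$ to be non-torsion. This is exactly where the negative curvature assumption enters: the determinant line bundle is $\pi^*\Lambda^{2,0}T^*N$, and $\pi^*$ is injective on $H^2$, so it suffices that the canonical bundle of $N$ is non-torsion, which negative curvature guarantees. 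You instead read the dimension and curvature hypotheses as being about ergodicity and then discard them because ergodicity is already assumed; but ergodicity is not where they are used — the topological non-torsion step is.

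Beyond the group, the case split is also different from the paper's. For $m \geq 3$ the paper simply invokes Theorem~\ref{theorem:main2}, item (i), with no abelian curvature to worry about (it asserts $G$ is semisimple for $m\geq 3$, so the linear independence hypothesis reduces to $d\alpha \neq 0$), while you attempt a uniform dichotomy on whether $[F_1]$ vanishes. Your first branch ($[F_1]\neq 0$ implies $F_1$ is not a multiple of the exact form $d\alpha$) is fine and is morally what Corollary~\ref{corollary1} encodes, but your second branch — a ``Livšic-type cohomological obstruction'' to rule out $F_1$ being a nonzero multiple of $d\alpha$ when $[F_1]=0$ — is not spelled out, and in fact it is not needed: on a Kähler manifold the dynamical connection on the unitary frame bundle is the Levi-Civita connection, so $F_1$ is a smooth $2$-form pulled back from $N$ (a multiple of the Ricci form), and $d\alpha$ is never a pullback from $N$ (it is non-degenerate on the fibre spheres). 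What would still be missing from your argument is an elementary reason why $F_1$ is not \emph{identically} zero without invoking negative curvature — the paper avoids this by exhibiting a nonzero integral cohomology class. As written, your proof has a wrong group, an ungrounded use of the curvature hypothesis, and an unjustified step, so it does not close.
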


We refer to \cite{Cekic-Lefeuvre-Moroianu-Semmelmann-23} for an introduction to unitary frame flows. On (complex) $2$-dimensional and odd-dimensional negatively curved Kähler manifolds, the unitary frame flow is ergodic by work of Brin-Gromov \cite{Brin-Gromov-80}. On even-dimensional Kähler manifolds (complex dimension $m \geq 6$ and $m \neq 28$), it is known to be ergodic for $\lambda(m)$-pinched holomorphic sectional curvature with $\lambda(m)\sim0.92...$, see \cite{Cekic-Lefeuvre-Moroianu-Semmelmann-23}. Finally, on hyperbolic (resp. complex hyperbolic) manifolds, the frame flow (resp. unitary frame flow) is exponentially mixing \cite{Moore-87, Guillarmou-Kuster-21}. This should hold conjecturally in variable curvature as well whenever the frame flow is ergodic. However, we are still unable to prove it using the techniques of the present chapter.

Finally, let us mention that more general frame flows can be considered as in \cite{Cekic-Lefeuvre-22} corresponding to an arbitrary vector bundle $E$ over $N$ equipped with a fibrewise inner product and a compatible connection. When the rank of $E$ is small compared to $n$, more precisely equal to $\mc{O}(\sqrt{n})$ as $n \to \infty$ (recall $n = \dim N$), the frame flow is ergodic if and only if the connection on $E$ has a full holonomy group, see \cite[Theorem 1.3]{Cekic-Lefeuvre-22}; so it is also rapid mixing thanks to Theorem \ref{theorem:main2} when the fibre is a semisimple Lie group (e.g. always true for $\rank(E) \geq 3$).

\subsection{Previous work}

Dolgopyat \cite[Theorem 3]{Dolgopyat-98} established rapid mixing for non-jointly integrable Anosov flows with respect to an arbitrary Gibbs measure. In the case $P = M$, our Theorem \ref{theorem:main2} recovers this in the case of volume-preserving flows, and provides an alternative proof. Next, Dolgopyat \cite{Dolgopyat-02} for the related case of extensions of topologically mixing Axiom A diffeomorphisms, establishes rapid mixing with respect to an arbitrary Gibbs measure under a generic assumption, and gives a reduction to the Abelian case as in Theorem \ref{theorem:main2}, Item (ii).  Still in the setting of Axiom A diffeomorphisms, \cite{Field-Melbourne-Torok-07} show that rapid mixing (on each basic set) is $C^2$-open and $C^\infty$-dense.

Using methods from harmonic analysis, the stronger property than rapid mixing, \emph{exponential mixing} (i.e., the one obtained by replacing $t^{-k}$ in \eqref{eq:rapid-mixing} by $e^{-\kappa t}$ for some $\kappa > 0$) is known to hold for frame flows over hyperbolic manifolds \cite{Moore-87, Pollicott-92} (this is also valid in the unitary frame flow setting over complex hyperbolic manifolds); see \cite{Guillarmou-Kuster-21} for a more recent approach in dimension $3$ based on spectral theory.

Compared to the mentioned works which all use \emph{symbolic coding}, it is evident that our approach is novel and spectral theoretic in nature. Moreover, determining the speed of mixing for frame flows is a highly relevant geometric questions, see for instance \cite{Kahn-Markovic-12}.

\subsection{Perspectives}

In Theorem \ref{theorem:main2}, exponential mixing is expected in place of rapid mixing. The techniques developed in the present paper appear to show this (combined with the approach of \cite{Cekic-Guillarmou-21}) when the stable/unstable foliation of the flow $\psi$ on $P$ is $C^{1+\eps}$ for some $\eps > 0$, which seems to be a very restrictive condition. In particular, for frame flows it was conjectured by Kanai \cite{Kanai-93} that the regularity $C^{1 + \varepsilon}$ is never achieved unless the manifold is locally symmetric (in which case it is $C^\infty$). We believe that the solution could lie in better understanding of the curvature of the dynamical connection seen as a distribution.



It is also interesting to investigate rapid mixing for measures obtained as a product of a Gibbs measure on $M$ (to start with, an SRB measure) with a bi-invariant measure on $G$ (in fact, in the related case of Anosov diffeomorphisms it is known that the property of rapid mixing is independent of which Gibbs measure we choose \cite[Corollary 4.10]{Dolgopyat-02}). Moreover, it might be possible to apply our techniques in the setting of Anosov actions to show rapid mixing, where the theory of Pollicott-Ruelle resonances was recently developed in \cite{Bonthonneau-Guillarmou-Weich-24, Bonthonneau-Guillarmou-Hilgert-Weich-23}.

Another related question is to extend the results on stochastic stability and Brownian motion to the setting of frame flows, following the recent sequence of papers \cite{Kolb-Weich-Wolf-20, Ren-Tao-22a, Ren-Tao-22b}, as well as \cite{Drouot-17, Dyatlov-Zworski-15}. These results recover the resonance or the Laplace spectrum as a limit of spectra of suitable hypoelliptic operators.

\subsection{Proof strategy}

The strategy of proof of Theorem \ref{theorem:main2} consists in proving high-frequency estimates on the resolvent $(X_P + z)^{-1}$ when $z \in \mathbb{C}$ belongs to the imaginary axis, i.e. as $|z| \to \infty$, and in a norm defined using spaces of anisotropic regularity tailored to the dynamics. Using Fourier analysis on the Lie group, this is reduced to studying the resolvent on Fourier modes, that is, on vector bundles associated to irreducible representations of the group. The main difficulty here is that there are ``two infinities'' to handle at the same time: the spectral infinity (as the imaginary part of $z$ goes to $\pm \infty$) and the infinity in Fourier modes. In the calculus of Chapter \ref{chapter:analysis}, this accounts for the two parameters introduced: $h$ will be the spectral parameter and $\mathbf{k}$ the Fourier mode. Arguing by contradiction, i.e. taking a suitable family of approximate solutions to $(X_{\mathbf{k}(h)} + z_h)u_{\mathbf{k}} = 0$ as $h \to 0$, we will show that $u_{\mathbf{k}}$ exhibit invariance in both stable and unstable directions (horocyclic invariance), and that it is compactly microlocalised in phase space (at the trapped set). Using \emph{Diophantine} properties of the group $G$ (see \ref{ssection:diophantine}), we will obtain a contradiction with the non-ingerability assumption in Item (i) of Theorem \ref{theorem:main2}.

\subsection{Organization of the chapter} 

In \S \ref{section:dynamics} we will summarise the preliminary dynamical concepts needed in the proof: the notions of the dynamical connection and the transitivity group (illustrated by giving explicit examples). In \S \ref{section:analytic-preliminaries} we outline the analytical preliminaries: we construct the anisotropic spaces tailored to dynamics and give a criterion for mixing in \S \ref{ssection:spectral-analysis}, and state the required propagation estimates in \S \ref{ssection:radial-estimates}. We deduce Theorem \ref{theorem:main2}, Item (i), from the spectral theorem assuming high-frequency resolvent estimates in \S \ref{section:proof-of-main-results}. Finally, in \S \ref{section:hf} we prove the high-frequency estimates in the case of $\mathrm{U}(1)$-extensions, and then in \S \ref{section:hf2} also in the case of arbitrary principal bundle extensions.

\section{Dynamical preliminaries}

\label{section:dynamics}

The purpose of this section is to understand stable and unstable holonomies on $P \to M$, the related concept of the \emph{dynamical connection}, and the associated bundles induced by the extension $\psi$. Finally, we discuss the Anosov $1$-form and the joint integrability of $E_s$ and $E_u$.

\subsection{Dynamical connection on the principal bundle}

\label{ssection:dynamical-connection}


We first describe the \emph{dynamical connection} $\nabla^{\mathrm{dyn}}$ induced by $\psi$. Recall that $G$ is a connected compact Lie group, $\pi : P \to M$ a $G$-principal bundle equipped with an extension $\psi$ of $\varphi$. Equip $M$ with an arbitrary metric and $P$ with an arbitrary smooth principal bundle connection. For $x,y \in M$ sufficiently close, write $\mc{P}_{x \to y}: P_x \to P_y$ for the parallel transport along the shortest geodesic between $x$ and $y$ induced by this connection. Here and below, $W^{s/u}_M(x)$ denotes the stable/unstable manifold of $x$.

\begin{definition}[Stable/unstable holonomies]
Let $x \in M, y \in W^{s}_M(x)$. For $w \in P_x$, define the stable holonomy $h^s_{x \to y} : P_x \to P_y$ as:
\begin{equation}
\label{equation:stable-holonomy}
h^s_{x\to y} w := \lim_{t \to \infty}  \psi_{-t}\mc{P}_{\varphi_t x \to \varphi_t y} \psi_t w.
\end{equation}
The unstable holonomy $h^u_{x \to y}$ is defined similarly.
\end{definition}

It can be checked that \eqref{equation:stable-holonomy} is well-defined (using the Ambrose-Singer formula, see \cite{Cekic-Lefeuvre-21-2}), independent of the choice of metric on $M$ and smooth principal bundle connection on $P$. Moreover, since both $\psi$ and the parallel transport $\mc{P}_{\bullet \to \bullet}$ commute with the right $G$-action, from \eqref{equation:stable-holonomy} we conclude that $h^{s/u}_{\bullet \to \bullet}$ is also $G$-equivariant: 
\[
	h^{s/u}_{x \to y} R_g = R_g h^{s/y}_{x \to y}.
\]
This defines stable/unstable bundles on $P$ by
\[
	E^{s/u}_P := \mathrm{Span}\left( \partial_t h^{s/u}_{x \to y_t} w|_{t = 0} ~|~ (y_t)_{t \in (-1,1)} \subset W^{s/u}(x),\, y_0 = x\right) \subset TP,
\]
where $(y_t)_{t \in (-1, 1)}$ denotes a smooth curve such that $y_0 = x$. We thus obtain a horizontal distribution
\[
\HH^\psi := \R X_P \oplus E^s_P \oplus E^u_P,
\]
invariant by the $G$-action, where $X_P$ is the vector field on $P$ generating $\psi$. The space $TP$ therefore splits as
\begin{equation}
\label{equation:splitting-p}
TP =  \V \oplus \HH^\psi = \V \oplus E^s_P \oplus E^u_P \oplus \R X_P,
\end{equation}
where $\V := \ker \dd \pi$ is the kernel of the basepoint projection $\pi : P \to M$. The flow $\psi$ is \emph{partially hyperbolic} (it acts isometrically on the $\V$ component in \eqref{equation:splitting-p}). Note that $\V$ can be naturally identified with $\mathfrak{g}$, the Lie algebra of $G$ via the isomorphism
\begin{equation}
\label{equation:identification}
	\mathfrak{g} \to \V_w, \qquad \xi \mapsto \partial_t (w \cdot e^{t\xi})|_{t = 0}, \quad w \in P,\, \xi \in \mathfrak{g}.
\end{equation}
Without further notice, we will use this identification and, in particular, write $\xi$ for the vertical vector field obtained by $w \mapsto \partial_t (w \cdot e^{t\xi})|_{t = 0}$. We also emphasize at this stage that, most of the time, $\HH^\psi$ is merely Hölder-continuous, see \cite[Theorem 4.11]{Crovisier-Potrie-notes}

The space $\HH^\psi$ is the horizontal space associated to a principal $G$-connection $\nabla^{\mathrm{dyn}}$ on $P$ called the \emph{dynamical connection}. Let $\Theta \in C^\eps(P,T^*P \otimes \mathfrak{g})$ be the connection $1$-form induced by the dynamical connection and defined by
\begin{equation}
\label{equation:Theta}
	\Theta|_{\HH^\psi} = 0, \qquad  \Theta(\xi) = \xi , \quad \forall \xi \in \mathfrak{g}.
\end{equation}
(Here again, we use the identification $\mathfrak{g} \simeq \V$ via \eqref{equation:identification}.) Write $F_{\nabla^{\mathrm{dyn}}}$ for the curvature of the dynamical connection $\nabla^{\mathrm{dyn}}$, which a distributional $2$-form with values in the vector bundle $\Ad(P) \to M$ associated to $P$ via the adjoint representation. We will denote by $\Lie_Y$ the Lie derivative along the vector field $Y$.

\begin{lemma}
\label{lemma:computations}
The following identities are satisfied:
\begin{enumerate}[label=\emph{(\roman*)}]
	\item $\iota_X F_{\nabla^{\mathrm{dyn}}} = 0$, 
	\item $\iota_{X_P} d \Theta = 0$,
	\item $\mc{L}_{X_P} \Theta = 0$, 
	\item $\mc{L}_\xi\Theta = - [\xi, \Theta]$, $\forall \xi \in \mathfrak{g}$.
\end{enumerate}
\end{lemma}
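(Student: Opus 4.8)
The plan is to prove the four identities of Lemma \ref{lemma:computations} by exploiting the $\psi$-invariance of the horizontal distribution $\HH^\psi$ and the general structure equations for principal connections, being careful about the merely Hölder-continuous regularity of $\Theta$. Throughout, I will use the splitting \eqref{equation:splitting-p}, namely $TP = \V \oplus E^s_P \oplus E^u_P \oplus \R X_P$, together with the defining properties \eqref{equation:Theta} of the connection $1$-form.

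First I would prove item (iii), $\mc{L}_{X_P}\Theta = 0$, which is the conceptual heart of the lemma. The key point is that, by construction, $d\psi_t$ preserves the horizontal space $\HH^\psi = \R X_P \oplus E^s_P \oplus E^u_P$: this is immediate from the definition \eqref{equation:stable-holonomy} of the stable/unstable holonomies, since $h^{s/u}_{\varphi_t x \to \varphi_t y} \circ \psi_t = \psi_t \circ h^{s/u}_{x \to y}$ by the cocycle property built into the limit. Moreover $\psi_t$ commutes with the right $G$-action, hence $d\psi_t$ fixes each fundamental vector field $\xi \in \mathfrak{g} \simeq \V$ pointwise (via the identification \eqref{equation:identification}). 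Therefore $\psi_t^*\Theta$ vanishes on $\HH^\psi$ and equals $\xi$ on $\mathfrak{g}$, i.e. $\psi_t^*\Theta = \Theta$ for all $t$. Differentiating at $t=0$ gives $\mc{L}_{X_P}\Theta = 0$. (Here one should note that $\Theta$ is only Hölder, but the curve $t \mapsto \psi_t^*\Theta$ is constant, so its derivative exists and is zero — this is exactly the mechanism highlighted in Remark \ref{remark:regularity}.)

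Next, item (ii), $\iota_{X_P}d\Theta = 0$, follows from (iii) by Cartan's magic formula: $\mc{L}_{X_P}\Theta = \iota_{X_P}d\Theta + d(\iota_{X_P}\Theta)$. Since $X_P \in \HH^\psi$, we have $\iota_{X_P}\Theta = \Theta(X_P) = 0$ by \eqref{equation:Theta}, hence $d(\iota_{X_P}\Theta) = 0$ and so $\iota_{X_P}d\Theta = \mc{L}_{X_P}\Theta = 0$. (One must interpret $d\Theta$ distributionally, but $\iota_{X_P}\Theta \equiv 0$ identically, so the identity $\iota_{X_P}d\Theta = \mc{L}_{X_P}\Theta$ holds in the distributional sense and the right-hand side is zero.) Then item (i), $\iota_X F_{\nabla^{\mathrm{dyn}}} = 0$, is the downstairs reformulation of (ii): by the Cartan structure equation, the curvature $2$-form on $P$ is $\Omega = d\Theta + \tfrac12[\Theta \wedge \Theta]$, and since $X_P$ is horizontal, $\iota_{X_P}[\Theta\wedge\Theta] = 2[\iota_{X_P}\Theta, \Theta] = 0$; thus $\iota_{X_P}\Omega = \iota_{X_P}d\Theta = 0$. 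Since $F_{\nabla^{\mathrm{dyn}}}$ is the $\Ad(P)$-valued form on $M$ represented by $\Omega$ and $X_P$ is the horizontal lift of $X$, this gives $\iota_X F_{\nabla^{\mathrm{dyn}}} = 0$.

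Finally, item (iv), $\mc{L}_\xi \Theta = -[\xi, \Theta]$ for $\xi \in \mathfrak{g}$, is the infinitesimal version of the standard equivariance $R_g^*\Theta = \Ad(g^{-1})\Theta$ of a principal connection $1$-form, applied to $g = e^{t\xi}$ and differentiated at $t = 0$: the left side gives $\mc{L}_\xi\Theta$ and the right side gives $\tfrac{d}{dt}\big|_{t=0}\Ad(e^{-t\xi})\Theta = -\ad_\xi\Theta = -[\xi,\Theta]$. One could also verify this directly from \eqref{equation:Theta}: evaluate both sides on a horizontal vector field and on a fundamental field $\eta \in \mathfrak{g}$, using that $[\xi,\eta]$ of two fundamental fields is the fundamental field of the bracket and that $[\xi, \HH^\psi] \subset \HH^\psi$ (since the flow of $\xi$ is a right multiplication, which preserves $\HH^\psi$ by $G$-invariance). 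The main obstacle throughout is bookkeeping the Hölder regularity: one must consistently interpret $d\Theta$ and the Lie derivatives distributionally, and argue that each identity genuinely makes sense in that weak setting — but because every contraction/pairing that appears ($\iota_{X_P}\Theta$, $R_g^*\Theta$, $\psi_t^*\Theta$) is controlled \emph{exactly} by the invariance properties of $\HH^\psi$, no actual differentiation of the Hölder data in bad directions is ever required, exactly as in the discussion surrounding Remark \ref{remark:regularity}.
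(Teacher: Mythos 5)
Your proposal is correct and follows essentially the same approach as the paper: item (iii) by checking $\psi_t^*\Theta = \Theta$ on horizontal and vertical vectors, item (ii) from Cartan's formula, item (i) from the structure equation and item (ii), and item (iv) by differentiating the $\Ad$-equivariance of $\Theta$. The only cosmetic differences are the ordering (the paper does (iv) first) and your added commentary on the Hölder regularity, which is a useful remark but not a different mathematical argument.
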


\begin{proof}
(iv) This follows from the general theory of connections, see \cite[Proposition 1.1, Chapter II]{Kobayashi-Nomizu-63}, which gives $R_g^* \Theta = \operatorname{Ad}(g^{-1}) \Theta$ for any $g \in G$, where $\Ad: G \to \End(\mathfrak{g})$ is the adjoint representation. Therefore:
\[
	\Lie_{\xi} \Theta(w)(v) = \partial_t|_{t = 0}\big(\Ad(e^{-t\xi}) \Theta\big)(w)(v) = - [\xi, \Theta(w)(v)], 
\]
for any $w \in P$, $v \in T_{w} P$, where in the first equality we used the mentioned formula and in the second one we used that the derivative of the adjoint action is the Lie bracket. \\

(iii) To show this formula, it suffices to show that for any $t \in \mathbb{R}$, we have $\psi_t^*\Theta = \Theta$. By definition of pullback we have
\[
	\psi_t^*\Theta(w)(v) = \Theta(\psi_t(w))(d\psi_t(w) v), \quad w \in P,\, v \in T_wP.
\]
If $v \in \HH^\psi(w)$, by the flow invariance of $\HH^\psi$ and definition of $\Theta$, the above quantity vanishes. Moreover, if we identify $v \in \V(w)$ with $\eta$ for some $\eta \in \mathfrak{g}$ via \eqref{equation:identification}, then
\[
	d\psi_t(w)v = \partial_{s}|_{s = 0}\psi_t(w \cdot e^{s\eta}) = \partial_{s}|_{s = 0} \psi_t(w) \cdot e^{s \eta},
\] 
and so $\psi_t^* \Theta(w)(v) = \eta$. This proves the claim. \\

(ii) This follows readily from Item 3 by the use of Cartan's magic formula since $\iota_{X_P} d\Theta = \mc{L}_{X_P} \Theta - d\iota_{X_P}\Theta = 0$, where the second term is zero as $\Theta$ vanishes on horizontal vectors and $X_P$ is horizontal. \\

(i) Finally, for the first equation, recall that 
\begin{equation}\label{eq:curvature-Ad(P)}
	F_{\nabla^{\mathrm{dyn}}}(x)(Y_1, Y_2) = p\big(\Omega(p)(Y_1^{\HH}(p), Y_2^{\HH}(p))\big),\quad x \in M, Y_1, Y_2 \in T_xM, 
\end{equation}
where $p \in P_x$ is arbitrary, $\Omega = d\Theta + [\Theta, \Theta]$ is the curvature $2$-form with values in $\mathfrak{g}$, $Y_1^{\HH}(p)$ and $Y_2^{\HH}(p)$ are horizontal lifts of $Y_1$ and $Y_2$ at $p$, respectively, and $p: \mathfrak{g} \to \Ad(P)(x)$ is seen as a linear isomorphism (note that the curvature is a distributional section so strictly speaking the above identity holds when integrated against smooth functions). That $\iota_{X} F_{\nabla^{\mathrm{dyn}}} = 0$ follows from Item 2 and from the fact that $\iota_{X_P}\Theta = 0$.
%
\end{proof}

Let $T \leqslant G$ be a maximal torus. As described in \S\ref{section:connection-principal-bundles}, the dynamical connection induces a connection on all associated bundles, and in particular on $F = P/T$, as well as on the line bundles $L_1, \dotsc, L_d \to F$ appearing in Borel-Weil theory (see \S \ref{ssection:borel-weil}). This implies that the tangent space to the flag bundle $F \to M$ splits as
\[
	TF = \HH^\psi_F \oplus \V_F = \R X_F \oplus E^s_F \oplus E^u_F \oplus \V_F,
\]
and there is a dual decomposition
\begin{equation}\label{eq:F-horizontal-vertical}
	T^*F = (\HH^\psi_F)^* \oplus \V_F^* = \R \pi^*\alpha \oplus (E^s_F)^* \oplus (E^u_F)^* \oplus \V_F^*,
\end{equation}
where $\pi : F \to M$ is the projection and $\alpha$ the Anosov $1$-form defined by the equalities:
\[
	\V_F^*(\HH_F^\psi)=0= (\HH^{\psi}_F)^*(\V_F), \quad (E^s_F)^*(\R X_F \oplus E^s_F \oplus \V_F)=0= (E^u_F)^*(\R X_F \oplus E^u_F \oplus \V_F)
\]
The connection $\nabla_{\mathbf{k}}$ associated to $\nabla^{\mathrm{dyn}}$ on $\mathbf{L}^{\otimes \mathbf{k}} \to F$ is only a partial connection and it is completed using the vertical Chern connection to a total connection. The vector of distributional curvatures of these total connections on $L_1, \dotsc, L_d$ is denoted by $\mathbf{F}_{\nabla^{\mathrm{dyn}}} \in \mc{D}'(F,(\Lambda^2 T^*F)^{\oplus d})$. Recall that $X_F$ is the generator of the flow induced on $F$. The following holds:

\begin{lemma}
One has $\iota_{X_F} \mathbf{F}_{\nabla^{\mathrm{dyn}}} = 0$.
\end{lemma}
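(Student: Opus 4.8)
The statement $\iota_{X_F} \mathbf{F}_{\nabla^{\mathrm{dyn}}} = 0$ for the flag bundle $F = P/T$ should follow by pushing the identity $\iota_{X_P} d\Theta = 0$ from Lemma \ref{lemma:computations}, Item (ii), down to the line bundles $L_j \to F$ appearing in Borel-Weil theory. The key structural observation (made in the proof of Lemma \ref{lemma:vanishing-curvature}) is that the connection $\overline{\nabla}_j$ on $L_j$ is the one \emph{associated} to the $T$-principal connection on $P \to P/T$ whose horizontal space is $\HH' := \mathfrak{m} \oplus \HH^\psi_P$, where $\mathfrak{m}$ is the real part of $\mathfrak{n}^+ \oplus \mathfrak{n}^-$; the connection $1$-form of this $T$-connection is $\omega^{\mathrm{dyn}} := \Pi_{\mathfrak{t}}\Theta \in C^\eps(P, T^*P \otimes \mathfrak{t})$, i.e.\ the $\mathfrak{t}$-component of the dynamical connection $1$-form $\Theta$.

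\textbf{Step 1: reduce to the $T$-connection on $P \to P/T$.} First I would recall that, by the formula for the curvature of an associated line bundle (as in \cite[Chapter III, Theorem 5.1]{Kobayashi-Nomizu-63} and the proof of Proposition \ref{prop:line-bundle-topology}), one has for $x \in M$, $v_1, v_2 \in T_xM$, and $p \in P_x$:
\[
	F_{\overline{\nabla}_j}(wT)(V_1, V_2) = d\gamma_j(e)\big(d\omega^{\mathrm{dyn}}(w)(V_1^{\HH'}, V_2^{\HH'})\big),
\]
where $wT \in F$, $V_1, V_2 \in T_{wT}F$, and $V_i^{\HH'}$ denotes the $\HH'$-horizontal lift to $P$, and $\gamma_j : T \to \mathbb{S}^1$ is the weight corresponding to $L_j$. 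Hence it suffices to show that $d\omega^{\mathrm{dyn}}$ vanishes as soon as one of its arguments is $X_P$ (the dynamical-horizontal lift of $X_F$, which equals $X_P$). Since $\omega^{\mathrm{dyn}} = \Pi_{\mathfrak{t}}\Theta$ and $\Pi_{\mathfrak{t}}$ is a fixed linear projection, $d\omega^{\mathrm{dyn}} = \Pi_{\mathfrak{t}}(d\Theta)$, so $\iota_{X_P} d\omega^{\mathrm{dyn}} = \Pi_{\mathfrak{t}}(\iota_{X_P} d\Theta) = 0$ directly from Lemma \ref{lemma:computations}, Item (ii).

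\textbf{Step 2: handle the vertical (Chern) part and the distributional nature.} The total connection on $L_j$ is $\overline{\nabla}_j = D^{\mathrm{Chern}}_j + \nabla_j$, decomposing $T^*F = \V_F^* \oplus (\HH^\psi_F)^*$. The curvature correspondingly has a vertical-vertical part, a mixed part, and a horizontal-horizontal part. By Lemma \ref{lemma:vanishing-curvature} the mixed part vanishes (this argument is purely algebraic and insensitive to regularity), and the vertical-vertical part is the curvature of the smooth fiberwise Chern connection, which clearly kills $X_F$ since $X_F$ is horizontal. The horizontal-horizontal part is $\iota_{X_F} \mathbf{F}_{\nabla^{\mathrm{dyn}}}|_{\HH_F^\psi} = 0$ by Step 1. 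The only genuine subtlety — and the main obstacle — is that $\Theta$ is merely $\eps$-Hölder, so $d\Theta$ and hence $\mathbf{F}_{\nabla^{\mathrm{dyn}}}$ are only distributional $2$-forms; the contraction $\iota_{X_F}$ and the computations of Lemma \ref{lemma:computations} must be interpreted in the distributional sense. I would handle this exactly as in the surrounding text: work with $\Theta$ tested against smooth forms, use that $\mc{L}_{X_P}\Theta = 0$ holds distributionally (it is the limit of the smooth identity along the flow, or follows from $\psi_t^*\Theta = \Theta$ which is a pointwise identity of Hölder forms), and then $\iota_{X_P} d\Theta = \mc{L}_{X_P}\Theta - d\iota_{X_P}\Theta = 0$ in $\mc{D}'$, since $\iota_{X_P}\Theta \equiv 0$. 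Projecting by $\Pi_{\mathfrak{t}}$ and applying the weight $d\gamma_j(e)$ preserves this distributional identity, giving $\iota_{X_F}\mathbf{F}_{\nabla^{\mathrm{dyn}}} = 0$ in $\mc{D}'(F,(\Lambda^2 T^*F)^{\oplus d})$. This completes the proof.
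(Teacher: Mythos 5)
Your proof is correct and takes essentially the same route as the paper: identify $\overline{\nabla}_j$ as the connection associated to the $T$-principal connection $\Pi_{\mathfrak{t}}\Theta$ on $P \to F$, use the Kobayashi--Nomizu curvature formula (and that $T$ is Abelian, so the bracket term drops) to reduce $\iota_{X_F}\mathbf{F}_{\nabla^{\mathrm{dyn}}}$ to $\Pi_{\mathfrak{t}}(\iota_{X_P}d\Theta)$, and conclude from Lemma \ref{lemma:computations}(ii). Note that your Step~2 is redundant: since for the $T$-bundle $P \to F$ \emph{every} vector in $TF$ has an $\HH'$-horizontal lift, Step~1 already handles vertical, mixed, and horizontal arguments uniformly, with no need to decompose the curvature.
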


\begin{proof}
By the proof of Lemma \ref{lemma:vanishing-curvature}, the total connection on any of $L_j \to F$ is the connection associated to the connection $1$-form $\widetilde{\Theta} = \Pi_{\mathfrak{t}} \Theta$ on $P \to F$, where $\Pi_{\mathfrak{t}}: \mathfrak{g} \to \mathfrak{t}$ is the projection along the real part of $\mathfrak{n}^+ \oplus \mathfrak{n}^-$. We then have 
\[
	\widetilde{\Omega} := d \widetilde{\Theta} + [\widetilde{\Theta}, \widetilde{\Theta}] = d\widetilde{\Theta} = \Pi_{\mathfrak{t}} d\Theta,
\] 
and using Lemma \ref{lemma:computations}, Item 2, we get that $\iota_{X_P}\widetilde{\Omega} = 0$. Since $\mathbf{F}_{\nabla^{\mathrm{dyn}}}$ is obtained through an associated construction from $\widetilde{\Theta}$ as in \eqref{eq:curvature-Ad(P)}, and $X_P$ is the horizontal lift to $P$ of $X_F$, the conclusion follows. 
\end{proof}

\subsubsection{Transitivity group}\label{sssection:transitivity-group}

For principal bundle extensions it is possible to detect ergodicity using the concept of the \emph{transitivity group} which we briefly recall. Namely, fix an arbitrary periodic point $x_\star \in M$ and consider the free monoid $\mathbf{G}$ generated by all orbits which are homoclinic to $\gamma_\star$ (that is, those orbits which accumulate to $\gamma_\star$ in both past and future). For a given homoclinic orbit $\gamma$, choose arbitrary $x_{s/u} \in \gamma \cap W^{s/u}(x_\star)$. Then $x_s = \psi_T(x_u)$ for some $T \in \mathbb{R}$, and we define \emph{Parry's representation} as
\[
	\rho_{\mathrm{Parry}}(\gamma) := h_{x_s \to x_\star}^s \circ \psi_t \circ h_{x_\star \to x_u}^u.
\]
We then extend $\rho_{\mathrm{Parry}}$ to $\mathbf{G}$ using the monoid structure. After an arbitrary identification of $P_{x_\star}$ with $G$, we may write $\rho_{\mathrm{Parry}}: \mathbf{G} \to G$. Define the \emph{transitivity} and the \emph{pre-transitivity} groups as the closure $H := \overline{\rho_{\mathrm{Parry}}(\mathbf{G})} \leqslant G$ and $\rho_{\mathrm{Parry}}(\mathbf{G}) \leqslant G$, respectively. It is shown in \cite{Brin-75-1} and \cite{Lefeuvre-23} that $\psi$ is ergodic if and only if $H = G$ (see also \cite{Cekic-Lefeuvre-22}).

Alternatively, we may define the transitivity group by using arbitrary piecewise smooth loops in $M$ at a fixed basepoint, obtained by concatenations of stable, unstable, flowlines, and stable, unstable holonomies and by applying the flow.

\subsubsection{$\mathrm{U}(1)$ extensions and flat connections}\label{sssection:U(1)} Let us first focus on the case $G=\mathrm{U}(1)$. Since $F=M$, the vertical space $\V_F = \{0\}$ is trivial and $\HH^\psi_F = TM$. Similarly to \eqref{equation:dynamical-lambda}, the action of $\psi$ on the complex line bundle $L \to M$ whose underlying circle bundle is $P$ (introduced in \S\ref{ssection:u1}) induces a dynamical connection
\begin{equation}
\label{equation:dynamical-connection-u1}
	\nabla^{\mathrm{dyn}} : C^\infty(M,L) \to C^\eps(M,L \otimes T^*M),
\end{equation}
where $\varepsilon > 0$ is the H\"older regularity of the dynamical connection on $P$. Note that the dynamical connection $\nabla_k^{\mathrm{dyn}}$ on $L^{\otimes k}$ (for $k \in \Z$) is simply obtained by taking the connection induced by \eqref{equation:dynamical-connection-u1} on the $k$-th tensor power of $L$.

In this case, the connection $1$-form $\Theta \in C^\varepsilon(P,T^*P)$ can be identified with a $1$-form on $P$ such that $\Theta(V)=1$ and $\Theta|_{\HH^\psi} = 0$, where $V$ is the vertical vector field generating the right $\mathrm{U}(1)$ action (we make the identification $\operatorname{U}(1) = \mathbb{R}/2\pi \mathbb{Z}$). Since $\Theta$ is only Hölder-continuous, the curvature form $d\Theta$ is merely a distribution on $P$. Equivalently, writing $\nabla^{\mathrm{dyn}} = d + i \beta_{\mathrm{dyn}}$ in a local trivialisation, where $\beta_{\mathrm{dyn}}$ is a real-valued Hölder-continuous $1$-form, the curvature $F_{\nabla^{\mathrm{dyn}}} := id \beta_{\mathrm{dyn}} \in \mc{D}'(M,\Lambda^2 T^*M)$ is a distribution. For $k \geq 1$, $\nabla^{\mathrm{dyn}}_k = d+ik\beta_{\mathrm{dyn}}$. It is enlightening to have in mind the case of trivial circle extensions.

\begin{example}[Trivial circle extension]
\label{example:trivial-extension}
Let $P := M \times \mathrm{U}(1)$ be the trivial circle bundle. Then
\[
\psi_t(x,\theta) := \left(\varphi_t x, \theta + \int_0^t a(\varphi_q x)\, \dd q \text{ mod } 2\pi\right),
\]
for some smooth function $a \in C^\infty(M)$ and $X_P(x,\theta) = X_M(x) + a(x) \partial_\theta$. A straightforward computation shows that
\begin{align*}
	h^s_{x\to y} \theta &= \theta + \int_0^\infty (a(\varphi_qx) - a(\varphi_q y))\, dq \mod 2\pi,\\ 
	h^u_{x\to y} \theta &= \theta - \int_0^\infty (a(\varphi_{-q}x) - a(\varphi_{-q} y))\, dq \mod 2\pi,
\end{align*}
where in the first and second equalities $x, y$ belong to stable and unstable manifolds, respectively. Differentiating the holonomy expressions when $y$ is varied over a path in stable/unstable manifolds starting at $x$, it is then immediate that
\begin{align*}
	E_P^s(x, \theta) &= \left\{v -  \left(\int_0^\infty da \circ d\varphi_q(v) \, dq\right) \partial_\theta \mid v \in E_s(x)\right\},\\
	E_P^u(x, \theta) &= \left\{v + \left(\int_0^\infty da \circ d\varphi_{-q}(v) \, dq\right)\partial_\theta \mid v \in E_u(x)\right\}.
\end{align*}
This data uniquely determines the horizontal space of the dynamical connection, and in particular it determines the horizontal lift $Y^{\HH}$ of vector fields $Y$ on $M$. The equivariant lift of a section $s$ of $L \to M$ (note that $L$ is the line bundle associated to $P$ via the representation $\rho: \mathrm{U}(1) \to \mathrm{U}(1), z \mapsto z$), to a section of $P \times \mathbb{C} \to P$ is checked to be $\overline{s}(x, z) = z^{-1}s(x)$. The associated connection $\nabla^{\mathrm{dyn}}$ on $L$ is by definition determined via
\[
	\nabla^{\mathrm{dyn}}_Y s(x) = (Y + i\beta_{\mathrm{dyn}}(Y)) s(x) = z Y^{\HH} \overline{s} (x, z),\quad x \in M,
\]
for any $z \in \mathbb{S}^1$, where we identified $L$ with $M \times \mathbb{C}$, and $z: \mathbb{C} \to L_x, \xi \mapsto [(x, z), \xi]$ with the multiplication by $z$. Writing $Y^{\HH} = Y + f \partial_{\theta}$ for some $f \in C^\infty(P)$ and using $\partial_\theta z = iz$, we get $\beta_{\mathrm{dyn}}(Y) = -f$. This then gives $\beta_{\mathrm{dyn}}(X_M) = -a$ and for $\xi_{s/u} \in E^{s/u}_M$ we compute:
%
\begin{equation}
\label{equation:integral0}
\beta_{\mathrm{dyn}}(\xi_s) = \int_0^{\infty} da \circ d\varphi_t(\xi_s)\, \dd t, \qquad \beta_{\mathrm{dyn}}(\xi_u)=-\int_0^{\infty} da \circ d\varphi_{-t}(\xi_u)\, \dd t.
\end{equation}
On $P = M \times \mathrm{U}(1)$, $V = \partial_\theta$ and the horizontal space is then given by
\[
	\HH^\psi = \{Y - \beta_{\mathrm{dyn}}(Y)\partial_\theta ~|~ Y \in C^\infty(M,TM)\} \subset TP.
\]
The connection $1$-form on $P$ is given by $\Theta = \pi^*\beta_{\mathrm{dyn}} + d\theta$ (note that the connection $1$-form on $P$ is real-valued because we identified $\mathrm{U}(1)$ with $\mathbb{R}/2\pi \mathbb{Z}$), where $\pi: P \to M$ is the projection. The curvature of the dynamical connection on $L$ is given by $id \beta_{\mathrm{dyn}}$. (We will need this computation in the proof of Lemma \ref{lemma:no-resonances} below.)

The curvature of the dynamical connection seen as a $2$-form with values in $\Ad(P) = M \times \mathbb{R}$ is by definition 
\[
	F_{\nabla^{\mathrm{dyn}}}(x)(X, Y) = p\big(d\Theta(p)(X^{\HH}, Y^{\HH})\big) = d\pi^*\beta_{\mathrm{dyn}}(p)(X^{\HH}, Y^{\HH}) = d\beta_{\mathrm{dyn}}(x)(X, Y),
\]
for $x\in M$, $X, Y \in T_xM$.

\end{example}

\begin{example}[Example of non-mixing]\label{example:trivial-extension-concrete}
	Still in the setting of Example \ref{example:trivial-extension}, consider the case $a \equiv 1$. Then $E^{s/u}_P$ agrees with $E^{s/u}$ and so $\Theta = -\alpha + d\theta$. Therefore, the condition of Theorem \ref{theorem:main2}, item (i), is not satisfied;. This is in agreement with \cite[Lemma 3.9]{Lefeuvre-23} which shows that $\psi$ is ergodic, but not mixing (for instance, the self-correlation function of $f(x, \theta) = e^{i \theta}$ does not decay as $t \to \infty$). It is tempting to conjecture that $\psi$ is not mixing if and only if $a$ is cohomologous to a constant function.
\end{example}

\begin{example}[Flat connections]
\label{example:flat-connection}
Let $G$ be a compact Lie group and $\rho: \pi_1(M) \to G$ be a representation. This determines a flat connection on a $G$-principal bundle over $M$ as follows. Let $\widetilde{M}$ be the universal cover of $M$, and define
\[
	P := \widetilde{M} \times G/\sim, \quad (\widetilde{x} \cdot [\gamma], g) \sim (\widetilde{x}, \rho([\gamma]) g), \quad \forall \widetilde{x} \in \widetilde{M}, g \in G, [\gamma] \in \pi_1(M), 
\]
so the product connection on $\widetilde{M} \times G$ descends to $P \to M$ (here, $[\gamma]$ acts on $\widetilde{M}$ by covering transformations). Then the holonomy group $\operatorname{Hol}(P,\nabla)$ of $P$ is equal to $\rho(\pi_1(M))$. Consider the horizontal lift $X_P$ of $X$. Since the connection is flat, holonomy along contractible loops is trivial, and hence we get that from \eqref{equation:stable-holonomy} that $h^s_{x\to y}: P_x \to P_y$ (where $y \in W^s(x)$) is simply parallel transport along \emph{any} path in $W^s(x)$ from $x$ to $y$ (the global leaves are diffeomorphic to $\mathbb{R}^k$ for some $k$); similar statement holds for unstable holonomy. In particular, this shows that the dynamical and the flat connections on $P$ agree. Moreover, since any loop in $M$ can be approximated by flow, unstable, and stable paths, we get that the holonomy group contains the pre-transitivity group $K$ and is contained in the transitivity group $\overline{K}$ (see \S \ref{sssection:transitivity-group} for a definition), i.e. $K \leqslant \mathrm{Hol}(P) \leqslant \overline{K}$.

In particular, $\psi$ is ergodic if and only if $\rho(\pi_1(M)) \leqslant G$ is dense, and moreover, if $G$ is semisimple, then Theorem \ref{theorem:main2} shows that if $\psi$ is ergodic and $d\alpha \neq 0$ then $\psi$ is rapid mixing. When $G$ has a non trivial centre (i.e. of dimension $\geq 1$), since the dynamical connection is flat, Theorem \ref{theorem:main2}, Item (i), does not give any information about rapid mixing.
\end{example}

\subsection{Joint integrability}

\label{ssection:joint-integrability}

Let $\alpha \in C^\eps(M,T^*M)$ be the Anosov $1$-form. We briefly discuss the condition $d\alpha=0$, its relation to joint integrability of $E^s_M$ and $E^u_M$. Recall that $E^s_M$ and $E^u_M$ are said to be jointly integrable if there exists a (smooth) foliation $\mc{F}$ on $M$ with smooth leaves $L \in \mc{F}$ such that $TL = E^s_M \oplus E^u_M$.

\begin{lemma}\label{lemma:joint-integrability}
Assume that $d\alpha=0$ in the sense of distributions. Then $\alpha \in C^\infty(M,T^*M)$ is smooth, and $E^s_M$ and $E^u_M$ are jointly integrable. Conversely, if $E^s_M$ and $E^u_M$ are jointly integrable, then $d\alpha = 0$.
\end{lemma}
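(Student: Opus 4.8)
The plan is to prove both directions by working with the contact-like structure encoded by the Anosov $1$-form $\alpha$, which satisfies $\ker\alpha = E^s_M \oplus E^u_M$ and $\alpha(X)=1$.

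For the first implication, suppose $d\alpha = 0$ in the sense of distributions. First I would invoke the well-known fact that the distributional derivative of a closed Anosov $1$-form being zero forces smoothness: concretely, $\alpha$ is closed, hence locally $\alpha = df$ for some distribution $f$; since $X f = \alpha(X) = 1$ and $\xi f = 0$ for $\xi \in E^s_M \oplus E^u_M$, the function $f$ is invariant under the (Hölder but topologically transitive) stable and unstable horocyclic directions and has prescribed nonzero derivative along the flow, which by a standard regularity argument for cohomological equations over Anosov flows (Livšic-type smoothing, or the argument that a distribution killed by the stable/unstable foliations and with smooth $X$-derivative is smooth) implies $f$, and hence $\alpha$, is smooth. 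Once $\alpha$ is smooth and closed, $\ker\alpha = E^s_M \oplus E^u_M$ is a smooth distribution of codimension one; by the Frobenius theorem a smooth corank-one distribution is integrable if and only if $\alpha \wedge d\alpha = 0$ on it, which holds trivially since $d\alpha = 0$. Therefore $E^s_M \oplus E^u_M$ is integrable and gives the desired foliation $\mc{F}$ with smooth leaves tangent to $E^s_M \oplus E^u_M$.

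For the converse, suppose $E^s_M$ and $E^u_M$ are jointly integrable, i.e. there is a smooth foliation $\mc{F}$ with $TL = E^s_M \oplus E^u_M$ for each leaf $L$. Then the distribution $\ker\alpha = E^s_M\oplus E^u_M$ is the tangent distribution of a smooth foliation; in particular it is a smooth involutive distribution, so $\alpha$ itself is smooth (it is the smooth $1$-form annihilating $T\mc{F}$ and normalized by $\alpha(X)=1$). By the Frobenius integrability criterion, involutivity of $\ker\alpha$ is equivalent to $d\alpha|_{\ker\alpha} = 0$, i.e. $d\alpha(Y_1,Y_2) = 0$ for all $Y_1,Y_2 \in E^s_M\oplus E^u_M$. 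It remains to check the mixed and $X$-directional components of $d\alpha$: by flow-invariance of $\alpha$ (since $\mc{L}_X\alpha = \iota_X d\alpha + d\iota_X\alpha = \iota_X d\alpha$, and $\alpha$ is flow-invariant because $E^{s/u}_M$ and $X$ are), we get $\iota_X d\alpha = 0$, which kills all components of $d\alpha$ involving $X$. Combining, $d\alpha$ vanishes on all of $TM = \mathbb{R}X \oplus E^s_M\oplus E^u_M$, hence $d\alpha = 0$.

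The main obstacle I anticipate is the regularity step in the first implication: deducing that a priori $\alpha$ is only Hölder, and $d\alpha = 0$ is an identity of distributions, so one must carefully justify that the primitive $f$ of $\alpha$ is smooth before applying Frobenius. This is where one genuinely uses the dynamics — the cohomological equation $Xf = 1$ together with invariance of $f$ along the stable/unstable directions, via an elliptic-type or Livšic-smoothing argument adapted to the Anosov setting (one can also phrase it through the anisotropic calculus: $d\alpha=0$ means $\alpha$ is in the kernel of $d$, and the flow-invariant structure forces smoothness). The converse direction is comparatively routine, being essentially a direct application of the Frobenius theorem plus the elementary observation $\iota_X d\alpha = 0$.
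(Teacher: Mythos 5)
Your converse direction is fine and in fact a bit more careful than the paper's: the paper's brief invocation of Frobenius implicitly relies on the elementary identity $\iota_X d\alpha = \mc{L}_X\alpha - d\iota_X\alpha = 0$ (using flow-invariance of $\alpha$ and $\alpha(X)\equiv 1$) to pass from vanishing of $d\alpha|_{\ker\alpha}$ to vanishing of the full $2$-form, and you spell this out. That part needs no changes.

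Your forward direction, however, has a genuine gap at the regularity step, and it is precisely the step you flag as the main obstacle. You propose to work with a \emph{local} primitive $\alpha = df$ on a chart and then invoke a ``Livšic-type smoothing'' for the cohomological equation $Xf = 1$. The difficulty is that the smooth Livšic regularity theorem (de la Llave--Marco--Moriyón, Journé) is a \emph{global} statement: it upgrades a globally defined continuous solution of $Xu = g$ to a smooth one by exploiting the recurrence of the Anosov flow on the whole manifold. A primitive defined only on a chart does not satisfy these hypotheses, and there is no clean local version of the theorem. Your secondary suggestion --- radial source/sink estimates from the anisotropic calculus --- is the right tool, but to apply it cleanly you still want a \emph{globally} defined scalar unknown. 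The paper obtains one by Hodge decomposition: $\alpha = \omega + df$ with $\omega$ a smooth harmonic $1$-form and $f \in C^{1+\varepsilon}(M)$ global (the coexact part vanishes because $d\alpha = 0$). Then $Xf = 1 - \iota_X\omega$ is smooth on all of $M$, and the radial estimate on Hölder--Zygmund spaces of \cite{Bonthonneau-Lefeuvre-23} --- applied to $X$ near $\overline{E_s^*}$ and to $-X$ near $\overline{E_u^*}$, with zero threshold in the volume-preserving case --- yields $f \in C^\infty(M)$, hence $\alpha$ smooth, hence Frobenius applies. To fix your argument, replace the local primitive with this global Hodge primitive and drop the Livšic phrasing in favour of the radial estimate; as written, the smoothing step is not justified.
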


\begin{proof}
Write $\alpha = \omega + df$, where $\omega$ is a smooth harmonic $1$-form and $f \in C^{1+\eps}(M)$. Then $\iota_X \alpha = 1 = \iota_X \omega + Xf$, that is $Xf \in C^\infty(M)$. By radial estimates (see \cite[Theorem 1.3]{Bonthonneau-Lefeuvre-23} for instance), we obtain that $f \in C^\infty(M)$, and thus $\alpha$ is smooth. By Frobenius' theorem it then follows that $d\alpha = 0$ is equivalent to the integrability of $E^s_M \oplus E^u_M$. 
\end{proof}

Up to our knowledge, it is still a conjecture that joint integrability (in the volume-preserving case) only occurs for suspensions of Anosov diffeomorphisms by constant roof functions. This is known as Plante's conjecture \cite{Plante-72}.

\section{Analytic preliminaries}
\label{section:analytic-preliminaries}

\subsection{Spectral analysis}

\label{ssection:spectral-analysis}

In what follows, $X$ is a volume-preserving vector field on $M$ that generates an Anosov flow $\varphi$, $G$ is a compact connected Lie group, $T$ a maximal torus in $G$, and $\psi$ is an extension of $\varphi$ to a $G$-principal bundle $P \to M$ generated by $X_P$. We denote by $\Phi$ the symplectic lift of $\varphi$ to $T^*M$ and by $H \in C^\infty(T^*M,T(T^*M))$ its generator.

We will implement the analysis developed in Chapter \ref{chapter:analysis}. We denote by $F := P/T$ the flag bundle and $\mathbf{L}^{\otimes {\mathbf k}} := L_1^{\otimes k_1} \otimes \dotsm L_d^{\otimes k_d} \to F$ (for $\mathbf{k} \in \widehat{G}$ identified with $\mathbb{Z}^a \times \mathbb{Z}^b_{\geq 0}$ via \eqref{equation:ecriture}) the fiberwise holomorphic line bundles that appear in the Borel-Weil theorem (see \S \ref{ssection:borel-weil}). We let $\nabla$ be an arbitrary smooth connection on $P$. This induces a natural connection on $\mathbf{L}^{\otimes \mathbf{k}}$, see \S \ref{section:connection-principal-bundles}. Denote for $i = 1, \dotsc, d$, $F_{\overline{\nabla}_i}$ the (purely imaginary) curvature $2$-forms $L_i \to F$ (see \S \ref{sssection:curvature} and \S \ref{sssection:homogeneous-line}), and set $\mathbf{F}_{\overline{\nabla}} = (F_{\overline{\nabla}_1}, \dotsc, F_{\overline{\nabla}_d})$.

We recall that $\X_{\mathbf{k}}$ is the first order differential operator introduced in \eqref{equation:def-conn}, and that the Fourier transform intertwines $X_P$ and $\X_{\mathbf{k}}$ by \eqref{eq:ft-intertwines-infinitesimal}. Note that the multiplicity of the representation $\mathbf{k} \in \widehat{G}$ appearing in the Fourier decomposition \eqref{eq:ft-bw-isomorphism} is $d_{\mathbf{k}}$. We will therefore write $f_{\mathbf{k}, i}$ for $i = 1, \dotsc, d_{\mathbf{k}}$ to denote the corresponding component of $f \in C^\infty(P)$; however, when the context is clear we will drop the index $i$.

\subsubsection{Resolvents. Meromorphic extension} \label{sssection:anisotropic-space}

For all $\mathbf{k} \in \widehat{G}$, the resolvent
\begin{equation}
\label{equation:resolvent}
(-\X_{\mathbf{k}} - z)^{-1} := -\int_0^{\infty} e^{-t z} e^{-t \X_{\mathbf{k}}}\, \dd t,
\end{equation}
is well-defined and holomorphic on both $L^2(M, H^0(F, \Lk))$ and $C^0(M, H^0(F, \Lk))$ for $\Re(z) > 0$ (here, we write $H^0(F, \Lk) \to M$ for the vector bundle whose fibre at $x \in M$ consists of holomorphic sections of $\Lk|_{F_x} \to F_x$). It is now a classical result due to Faure-Sjöstrand \cite{Faure-Sjostrand-11} that \eqref{equation:resolvent} admits a meromorphic extension to $\C$ as an operator $C^\infty(M, H^0(F, \Lk)) \to \mc{D}'(M, H^0(F, \Lk))$; its poles are called the \emph{Pollicott-Ruelle resonances} of $\X_{\mathbf{k}}$, see Theorem \ref{theorem:faure-sjostrand} below. To state the theorem, we need to introduce a scale of anisotropic spaces.

Consider a Riemannian metric $g$ on $M$; this induces a natural fibrewise inner product on $T^*M$ denote by the same later. Fix an \emph{order function} $m$, that is a symbol in $S^0(T^*M)$, which is $0$-homogeneous in the $\xi$-variable for $|\xi|\gg 1$ large enough such that $m \equiv 1$ near $\overline{E_s^*} \cap \partial_\infty T^*M$, $m \equiv -1$ near $\overline{E_u^*} \cap \partial_\infty T^*M$, and the function
\[
G_m(x,\xi) := m(x,\xi) \log \langle \xi \rangle_g, \qquad \langle\xi\rangle_g := (1+|\xi|^2_g)^{1/2},
\]
satisfies the following:
\begin{enumerate}[label=(\roman*)]
\item $HG_m(x,\xi) \leq 0$ near $\partial_\infty T^*M$;
\item $HG_m(x,\xi) \leq -C < 0$ near $\partial_\infty T^*M \cap (\overline{E_s^*} \cup \overline{E_u^*})$, for some constant $C >0$.
\end{enumerate}
The existence of such pairs $(g, m)$ for Anosov flows can be found in \cite{Faure-Sjostrand-11} or \cite[Chapter 9, Section 9.1]{Lefeuvre-book} for instance.


Next, we observe that the construction in Chapter \ref{chapter:analysis} carries over to the case of \emph{anisotropic symbols}, that is it is possible to define a calculus $\Psi_{h, \operatorname{BW}}^{m(x, \xi)}(P)$ satisfying similar properties (see \cite[Appendix A]{Faure-Roy-Sjostrand-08}). By the construction of \S\ref{ssection:quantization-symbols}, there exists a family $\mathbf{A}(s) := \Op^{\mathrm{BW}}_h(\pi^*e^{sG_m}) \in \Psi^{s m(x,\xi)}_{h, \operatorname{BW}}(P)$
of operators (where $\pi^*e^{sG_m}$ denotes the function pulled back to $\HH^*_F$ as in \S\ref{sssection:pullbackfunctions}) which map $C^\infty_{\mathrm{hol}}(F,\mathbf{L}^{\otimes \mathbf{k}})$ into itself 
and for $(w,\xi) \in \HH^*_F$, $x := \pi(w) \in M$:
\[
	\sigma^{\nabla, \operatorname{BW}}_{\mathbf{A}_{\mathbf k}(s)}(w,d\pi^{\top}_x\xi) = \langle\xi\rangle^{sm(x,\xi)} = \exp(sG_m(x,\xi)).
\]
Up to lower-order modifications, we can also assume that $\mathbf{A}(s)$ is invertible (for all $h,\mathbf{k}$). For $s > 0$, and $f_{\mathbf k} \in C^\infty_{(\mathrm{hol})}(F,\mathbf{L}^{\otimes \mathbf{k}})$, define:
\begin{equation}
\label{equation:norm}
\|f_{\mathbf{k}}\|_{\mc{H}^s_{h}(F,\mathbf{L}^{\otimes \mathbf{k}})} := \|\mathbf{A}_{h, \mathbf{k}}(s)f_{\mathbf k}\|_{L^2(F,\mathbf{L}^{\otimes \mathbf{k}})},
\end{equation}
and $\mc{H}^s_{h,(\mathrm{hol})}(F,\mathbf{L}^{\otimes \mathbf{k}})$ to be the completion of $C^\infty_{(\mathrm{hol})}(F,\mathbf{L}^{\otimes \mathbf{k}})$ with respect to the norm \eqref{equation:norm}. Note that by density of $C^\infty_{\mathrm{hol}}(F,\mathbf{L}^{\otimes \mathbf{k}})$ inside $\mc{H}^s_{h, \mathrm{hol}}(F,\mathbf{L}^{\otimes \mathbf{k}})$, we have that $\mc{H}^s_{h, \mathrm{hol}}(F,\mathbf{L}^{\otimes \mathbf{k}}) \subset \mc{D}'_{\mathrm{hol}}(F,\mathbf{L}^{\otimes \mathbf{k}})$, where $\mc{D}'_{\mathrm{hol}}$ denotes distributions which are fibrewise holomorphic.

The following holds:

\begin{theorem}
\label{theorem:faure-sjostrand}
There exists a constant $c > 0$ such that for all $\mathbf{k} \in \widehat{G}$, the operator
\[
(-\X_{\mathbf{k}}-z)^{-1} : \mc{H}^s_{1/\langle\mathbf{k}\rangle,\mathrm{hol}}(F,\mathbf{L}^{\otimes \mathbf{k}}) \to \mc{H}^s_{1/\langle\mathbf{k}\rangle,\mathrm{hol}}(F,\mathbf{L}^{\otimes \mathbf{k}}),
\]
initially defined and holomorphic for $\Re(z) \gg 0$ admits a meromorphic extension to $\{\Re(z)>-cs\}$. The poles are contained in the half-plane $\{\Re(z) \leq 0\}$ and independent of choices made in the construction.
\end{theorem}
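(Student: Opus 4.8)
The plan is to reduce the meromorphic continuation statement for $(-\X_{\mathbf{k}}-z)^{-1}$ on the anisotropic spaces $\mc{H}^s_{1/\langle\mathbf{k}\rangle,\mathrm{hol}}(F,\mathbf{L}^{\otimes \mathbf{k}})$ to the now-standard microlocal framework of Faure--Sjöstrand \cite{Faure-Sjostrand-11}, applied in the Borel--Weil calculus $\Psi^\bullet_{h,\mathrm{BW}}(P)$ developed in Chapter \ref{chapter:analytic}. First I would conjugate: set $P_{\mathbf{k}}(z) := \mathbf{A}_{h,\mathbf{k}}(s)(-\X_{\mathbf{k}}-z)\mathbf{A}_{h,\mathbf{k}}(s)^{-1}$ acting on $L^2_{\mathrm{hol}}(F,\mathbf{L}^{\otimes \mathbf{k}})$, with $h=1/\langle\mathbf{k}\rangle$. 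By Egorov's theorem in the calculus (Proposition \ref{proposition:egorov}, and its Borel--Weil avatar via Proposition \ref{proposition:propagation2}), together with the fact that $\iota_{X_F}\mathbf{F}_{\overline{\nabla}}=0$ which was established above so that the relevant Hamiltonian flow is just the symplectic lift $\Phi^{\omega_0}$ on $T^*F$ preserving $\HH^*_F$ (Lemma \ref{lemma:invariance-h}), the commutator $[\X_{\mathbf{k}},\mathbf{A}_{h,\mathbf{k}}(s)]\mathbf{A}_{h,\mathbf{k}}(s)^{-1}$ has principal symbol $-H_{p_X}^{\omega_0}G_m$ restricted to $\HH^*$, which by the construction of the order function $m$ is $\leq 0$ everywhere near fibre infinity and $\leq -C<0$ near $\overline{E_s^*}\cup\overline{E_u^*}$. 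Hence $\Re(\sigma^{\mathrm{BW}}_{P_{\mathbf{k}}(z)}) = -\Re(z) - s H_{p_X}^{\omega_0}G_m + \dotsb$ is uniformly positive (bounded below by $cs-\Re(z)$) outside a compact subset of $\overline{\HH^*_F}$, for $\Re(z)>-cs$.

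Next, the sharp Gårding inequality in the Borel--Weil calculus (Lemma \ref{lemma:garding}) turns this symbolic positivity into the semi-Fredholm estimate
\[
\|u\|_{L^2(F,\mathbf{L}^{\otimes\mathbf{k}})} \leq C\big(\|P_{\mathbf{k}}(z)u\|_{L^2} + \|\mathbf{B}u\|_{L^2}\big),
\]
valid for $u\in C^\infty_{\mathrm{hol}}(F,\mathbf{L}^{\otimes\mathbf{k}})$ and $\Re(z)>-cs$, where $\mathbf{B}\in\Psi^{\comp}_{h,\mathrm{BW}}(P)$ is compactly microlocalized near the trapped set $\overline{\HH^*_F}\cap(\text{support where positivity fails})$; the adjoint $P_{\mathbf{k}}(z)^*$ satisfies the dual estimate, so that $P_{\mathbf{k}}(z)$ is Fredholm of index $0$ on the appropriate domain $\{u\in\mc{H}^s_{h,\mathrm{hol}} : \X_{\mathbf{k}}u\in\mc{H}^s_{h,\mathrm{hol}}\}$. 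Analytic Fredholm theory then gives a meromorphic inverse $P_{\mathbf{k}}(z)^{-1}$ on $\{\Re(z)>-cs\}$, and unconjugating yields the meromorphic extension of $(-\X_{\mathbf{k}}-z)^{-1}$ on $\mc{H}^s_{h,\mathrm{hol}}(F,\mathbf{L}^{\otimes\mathbf{k}})$. That the poles lie in $\{\Re(z)\leq 0\}$ follows by integrating against the invariant volume: for $\Re(z)>0$ the explicit formula \eqref{equation:resolvent} converges and $-\X_{\mathbf{k}}$ is skew-adjoint on $L^2$ (the connection $\nabla$ is unitary and the flow is volume-preserving), so there can be no resonances with $\Re(z)>0$; independence of the construction is the standard argument that resolvent matrix elements $\langle(-\X_{\mathbf{k}}-z)^{-1}f_1,f_2\rangle$ for $f_1,f_2\in C^\infty_{\mathrm{hol}}$ do not depend on the escape function, by a resolvent identity on the overlap of regions of holomorphy.

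The main obstacle is bookkeeping the interplay between the two parameters: Theorem \ref{theorem:faure-sjostrand} as stated fixes $h=1/\langle\mathbf{k}\rangle$, so for the \emph{existence} of the extension one may work at fixed $\mathbf{k}$ and the estimates are genuinely semiclassical only in a trivial sense; one must check that the Faure--Sjöstrand machinery goes through verbatim in the twisted, fibrewise-holomorphic setting, i.e.\ that propagation of singularities (Proposition \ref{proposition:propagation2}), radial estimates at $\overline{E_s^*},\overline{E_u^*}$ (to be cited from \S\ref{ssection:radial-estimates}), and elliptic parametrices (Lemma \ref{lemma:parametrix-bw}) are all available in $\Psi^\bullet_{h,\mathrm{BW}}(P)$ with the anisotropic order function $m(x,\xi)$ pulled back to $\HH^*_F$. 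Since all these ingredients were assembled in Chapter \ref{chapter:analytic} precisely for this purpose, the proof is essentially a transcription of \cite{Faure-Sjostrand-11} (or \cite[Chapter 9]{Lefeuvre-book}) and I would present it as such, emphasizing only the points where fibrewise holomorphicity and the vanishing $\iota_{X_F}\mathbf{F}_{\overline{\nabla}}=0$ are used; the uniformity in $\mathbf{k}$ needed later for rapid mixing is a separate matter, deferred to the high-frequency analysis of \S\ref{section:hf} and \S\ref{section:hf2}.
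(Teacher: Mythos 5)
Your overall approach is the same as the paper's: the proof of Theorem~\ref{theorem:faure-sjostrand} is given there as a one-line citation to \cite{Faure-Sjostrand-11} and \cite[Chapter 9, Section 9.1]{Lefeuvre-book}, and your proposal simply fleshes out what ``verbatim'' means once one works in $\Psi^\bullet_{h,\mathrm{BW}}(P)$. The structural points you single out --- conjugation by $\mathbf{A}_{h,\mathbf{k}}(s)$, using $\iota_{X_F}\mathbf{F}_{\overline{\nabla}}=0$ so the relevant Hamiltonian flow is the untwisted lift $\Phi^{\omega_0}$ preserving $\HH^*_F$, Fredholm theory plus analytic Fredholm, skew-adjointness of $\X_{\mathbf{k}}$ for the pole-free half-plane, and the independence argument via overlap of holomorphy domains --- are exactly the right ones to highlight, and the observation that at fixed $\mathbf{k}$ only the fixed-$h$ content of the calculus is needed is also correct.

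There is, however, a technical misstatement in the middle of your sketch that would be an error if carried out literally. You assert that $\Re(\sigma^{\mathrm{BW}}_{P_{\mathbf{k}}(z)}) = -\Re(z) - s H_{p_X}^{\omega_0}G_m + \dotsb$ is uniformly positive, bounded below by $cs-\Re(z)$, outside a compact subset of $\overline{\HH^*_F}$. The order function $m$ does not give this: by construction, $H_{p_X}G_m \leq 0$ near fibre infinity, but the \emph{strict} negativity $H_{p_X}G_m \leq -C<0$ is guaranteed only in a conic neighborhood of $\overline{E_s^*}\cup\overline{E_u^*}$. Elsewhere at fibre infinity, on the remaining part of the characteristic set $\{\langle\xi,X\rangle=0\}$, the escape function derivative may vanish, so your lower bound degrades to $-\Re(z)$, which is negative when $\Re(z)>0$. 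A single Gårding estimate therefore does not deliver the semi-Fredholm bound. The Faure--Sjöstrand argument instead stitches together three ingredients on overlapping regions of $\overline{\HH^*_F}$: elliptic parametrices of $-\X_{\mathbf{k}}-z$ away from the characteristic set (Lemma~\ref{lemma:parametrix-bw}); the radial source/sink estimates of Theorem~\ref{theorem:source-sink} at $\overline{E_s^*}$ and $\overline{E_u^*}$, which is where the strict negativity of $H_{p_X}G_m$ enters; and propagation of singularities (Proposition~\ref{proposition:propagation2}) to carry regularity through the bounded part of the characteristic set from the source to the sink. Since you explicitly defer to \cite{Faure-Sjostrand-11} for the details, this is an imprecision in your summary rather than a wrong plan, but the positivity-only shortcut you wrote down does not close by itself and should be replaced by the three-part scheme when the proof is written out.
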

\begin{proof}
	The proof is verbatim the same as in \cite{Faure-Sjostrand-11} or \cite[Chapter 9, Section 9.1]{Lefeuvre-book}. 
\end{proof}

We will say that $z \in \mathbb{C}$ is a \emph{resonance} of $\X_{\mathbf{k}}$ if there exists $s$ large enough such that $\ker(\X_{\mathbf{k}} - z) \neq \{0\}$ on $\mc{H}^s_{1/\langle\mathbf{k}\rangle, \mathrm{hol}}(F,\mathbf{L}^{\otimes \mathbf{k}})$. Moreover, $u \in \mc{H}^s_{1/\langle\mathbf{k}\rangle, \mathrm{hol}}(F,\mathbf{L}^{\otimes \mathbf{k}})$ will be called a \emph{resonant state} or a \emph{generalised resonant state} at $z$ if $u \in \ker(\X_{\mathbf{k}} - z)$ or $u \in \ker(\X_{\mathbf{k}} - z)^J$ for some $J \geq 1$, respectively.

The key technical result in what follows will be to establish \emph{high-frequency} estimates on the norm of the resolvent $\|(-\X_{\mathbf{k}}-z)^{-1}\|_{\mc{H}^s_{1/\langle\mathbf{k}\rangle,\mathrm{hol}}(F,\mathbf{L}^{\otimes \mathbf{k}})}$ as $|\mathbf{k}| \to \infty$ and $\Im z \to \infty$, see \S\ref{ssection:statement-estimates} and \S\ref{section:hf}.

\subsubsection{Mixing. Absence of resonances of the imaginary axis}

We now prove that the operators $\X_{\mathbf{k}}$ have no resonances on the imaginary axis, except $z=0$ for $\mathbf{k}=0$ associated to constant functions or, equivalently, that $\psi$ is mixing under the assumptions of Theorem \ref{theorem:main2}, Item (i). Before stating the lemma, we remark that by Lemma \ref{lemma:abelian-curvature} this assumption is equivalent to linear independence of $(\pi^*d\alpha, -iF_{\overline{\nabla}_{1}}, \dotsc, -iF_{\overline{\nabla}_{a}})$, where $\pi: F \to M$ is the projection. Denote by $\psi^F$ the quotient flow on the flag bundle $F \to M$.

\begin{lemma}
\label{lemma:no-resonances}

Assume that $\psi^F$ is ergodic, and that $(\pi^*d\alpha, -iF_{\overline{\nabla}_1}, \dotsc, -iF_{\overline{\nabla}_a})$ are linearly independent over $\mathbb{R}$. Then the following holds:
\begin{enumerate}[label=\emph{(\roman*)}]
\item $\X_{\mathbf{k}}$ has no resonances on $\{\Re(z)=0\}$ for $\mathbf{k} \neq 0$;
\item $\X_0 = X$ has a unique simple resonance at $z=0$ on the imaginary axis, and the space of generalised resonant states is equal to $\C\cdot\mathbf{1}_M$ (constant functions).
\end{enumerate}
In particular, $\psi$ is mixing on $P$. 
\end{lemma}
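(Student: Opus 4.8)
\textbf{Proof plan for Lemma \ref{lemma:no-resonances}.}

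The plan is to reduce the absence of resonances on the imaginary axis to a statement about the transport equation $\X_{\mathbf{k}} u = i\lambda u$ with $\lambda \in \R$, and then use the classical argument (going back to Dolgopyat, and in the Pollicott–Ruelle resonance language to Guillarmou–Hilgert–Weich and others) that a resonant state on the imaginary axis must be invariant under the stable and unstable holonomies, hence descends to an equivariant function on the flag bundle that produces a nontrivial cohomological identity. Concretely, suppose $u \in \mc{H}^s_{1/\langle\mathbf{k}\rangle,\mathrm{hol}}(F,\mathbf{L}^{\otimes\mathbf{k}})$ is a (generalised) resonant state at $z = i\lambda$, $\lambda \in \R$. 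First I would run the standard wavefront/propagation argument: since $\X_{\mathbf{k}}$ is a first-order operator whose symbol is $i\xi(X)$ (pulled back to $\HH^*_F$, see \S\ref{sssection:examples2}(ii)), the radial source/sink estimates of Faure–Sjöstrand (Theorem \ref{theorem:faure-sjostrand}, in the twisted form of \S\ref{ssection:radial-estimates}) force $\WF(u) \subset E_u^*$ (the unstable conormal), and the same for the dual resonant state $u^* \subset E_s^*$. Pairing $u$ with $u^*$ and using the invariance of the measure, one shows $\langle u, u^* \rangle \neq 0$ implies $\lambda$ is a genuine resonance and $|u|^2$ (suitably interpreted via the Hermitian structure on $\mathbf{L}^{\otimes\mathbf{k}}$) is flow-invariant and nonvanishing.

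Next I would upgrade flow-invariance to holonomy-invariance. Using that $E_u^* \cap E_s^* = \{0\}$ except along the flow direction, the combination of the two wavefront constraints shows that $u$ is in fact invariant under the stable holonomies $h^s$ and unstable holonomies $h^u$ on $P$ (lifted to $\mathbf{L}^{\otimes\mathbf{k}}$); equivalently, in a local trivialisation where $\nabla^{\mathrm{dyn}} = d + i\mathbf{k}\cdot\boldsymbol{\beta}_{\mathrm{dyn}}$, the function $u$ is constant along stable and unstable leaves after the phase correction. Transitivity of the pair of foliations (the flow $\psi^F$ is ergodic, hence the stable/unstable leaves together with the flow generate everything — here is where the ergodicity of $\psi^F$ enters, via the transitivity group $H = G$ of \S\ref{sssection:transitivity-group}) then pins down $u$ up to a constant and, crucially, forces a cocycle identity: the lift $\overline{u} \in C^\infty(P)$ (or rather its $T$-equivariant avatar on $F$) must satisfy $X_P \overline{u} = i\lambda \overline{u}$ together with the constraint that parallel transport by $\nabla^{\mathrm{dyn}}$ along any closed loop in $M$ acts trivially on $\overline{u}$. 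Passing to the quotient $G/[G,G]$, i.e. projecting $\mathbf{k}$ onto the central part $(k_1,\dots,k_a)$, this becomes: the closed $1$-form $\lambda\,\pi^*\alpha + \mathbf{k}_{\mathfrak{z}}\cdot \boldsymbol{\beta}_{\mathrm{dyn}}$ is exact (it is $d$ of $\log\overline{u}/i$ modulo the winding), hence $\lambda\,\pi^*d\alpha + \sum_{i=1}^a k_i (-iF_{\overline{\nabla}_i}) = 0$ in $\mc{D}'(F,\Lambda^2T^*F)$, using Lemma \ref{lemma:abelian-curvature} to identify $F_{\overline{\nabla}_i} = \pi^*F_{\nabla_i,M}$ with the central components of the dynamical curvature. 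Since $(\lambda,k_1,\dots,k_a) \neq 0$ (as $\mathbf{k}\neq 0$, at least if the semisimple part also contributes one uses separately that $H^0(G/T,\mathbf{J}^{\otimes\mathbf{k}})$ carries no invariant vector — here the argument for the semisimple directions is that the resonant state, being fibrewise holomorphic and holonomy-invariant under a group with dense image, must be fibrewise constant, which is impossible for a nontrivial representation), this contradicts the assumed linear independence of $(\pi^*d\alpha, -iF_{\overline{\nabla}_1},\dots,-iF_{\overline{\nabla}_a})$. For $\mathbf{k}=0$ the operator is just $X$ and the classical fact (Liouville, or the Anosov mixing argument) gives the simple resonance at $0$ with resonant states the constants; this is Item (ii). Finally, absence of resonances on $\{\Re z = 0\}\setminus\{0\}$ together with the meromorphic continuation gives decay of all correlations, i.e. $\psi$ is mixing, by expanding $f_1, f_2$ in Fourier modes (Lemma \ref{lemma:ft-isometry}), applying the resolvent representation of the correlation function mode by mode, and summing — the uniformity needed to interchange sum and limit comes from the Plancherel decay of the Fourier coefficients of smooth functions.

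The main obstacle I expect is the passage from ``wavefront set in $E_u^*$'' to genuine holonomy-invariance and then to the exactness/cocycle identity \emph{with the correct book-keeping of the semisimple versus central parts of $\mathbf{k}$}: one must handle the fibrewise holomorphic structure carefully (the resonant state lives in $C^\infty_{\mathrm{hol}}(F,\mathbf{L}^{\otimes\mathbf{k}})$, not in $C^\infty(F)$), and argue that a fibrewise-holomorphic section of a nontrivial homogeneous line bundle on $G/T$ which is invariant under a dense subgroup of $G$ cannot exist unless the bundle is trivial (i.e. the semisimple components $k_{a+1},\dots,k_d$ all vanish) — this is exactly the point where the Borel–Weil realization and the transitivity group $H=G$ must be combined. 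The central-part argument is then the clean cohomological contradiction above, but keeping the two mechanisms cleanly separated (and noting that the hypothesis of the lemma only constrains the \emph{central} curvatures, consistent with Theorem \ref{theorem:main2}(ii) reducing everything to $P/[G,G]$) is the delicate structural part of the proof.
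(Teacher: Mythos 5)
Your proposal correctly identifies the target (a linear relation among $\pi^*d\alpha$ and the central components of the curvature, contradicting the hypothesis) but proposes a substantially heavier and, as written, gappy route to get there. The paper's argument is much shorter and sidesteps most of your machinery: once one knows the resonant state $f_{\mathbf{k}}$ is smooth (by the standard bootstrap at $\Re z = 0$, citing Dyatlov--Zworski), the \emph{unitarity} of $\X_{\mathbf{k}}$ gives $X_F |f_{\mathbf{k}}|^2 = \langle \X_{\mathbf{k}} f_{\mathbf{k}}, f_{\mathbf{k}}\rangle + \langle f_{\mathbf{k}}, \X_{\mathbf{k}} f_{\mathbf{k}}\rangle = 0$, and then \emph{ergodicity of $\psi^F$} immediately forces $|f_{\mathbf{k}}|$ to be a nonzero constant. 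This is the single key observation you miss: you try to pass through wavefront-set constraints on $f_{\mathbf{k}}$ and a dual state and then ``upgrade to holonomy-invariance,'' but the unitarity-plus-ergodicity shortcut makes all of that unnecessary.

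From $|f_{\mathbf{k}}|$ constant and nonzero, the paper gets a nowhere-vanishing section of $\mathbf{L}^{\otimes\mathbf{k}}$, so the bundle is topologically trivial. Proposition \ref{prop:line-bundle-topology} says this forces $k_{a+1}=\dotsb=k_d=0$; so the semisimple part is killed by \emph{topology}, not by the representation-theoretic argument you sketch (dense subgroup fixing a holomorphic section). Your alternative argument could be made to work -- it is essentially Lemma \ref{lemma:first-eigenvalue-holonomy-group-dense} -- but it presupposes holonomy-invariance of $f_{\mathbf{k}}$, a step you do not justify; having $\WF(f_{\mathbf{k}})\subset E_u^*$ and $\WF$ of a dual state in $E_s^*$ does not, by itself, give invariance under stable or unstable holonomies. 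In the remaining Abelian case, the paper trivializes $\Lk_M$ with the (now nowhere-vanishing) section $f_{\mathbf{k},M}$ and reads off, via the explicit computation of the dynamical connection for a trivial circle extension (Example \ref{example:trivial-extension}), that $\beta_{\mathrm{dyn}} = \lambda\alpha$, hence $\mathbf{k}\cdot\mathbf{F}_{\overline{\nabla}} = i\lambda\pi^*d\alpha$, directly contradicting linear independence. Your ``cocycle identity'' is the same relation, but your derivation (``the closed $1$-form $\lambda\pi^*\alpha + \mathbf{k}_{\mathfrak{z}}\cdot\boldsymbol{\beta}_{\mathrm{dyn}}$ is exact'') is asserted rather than proved, and deriving it cleanly would amount to reconstructing the paper's explicit computation anyway.

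So: the strategy is right in outline, and your remark that the hypothesis only constrains the central curvatures (mirroring Theorem \ref{theorem:main2}(ii)) is accurate. But the route you propose is both more complicated and incomplete. The genuine gap is the unjustified passage from microlocal constraints to holonomy-invariance; the fix (and the paper's choice) is to instead use unitarity of $\X_{\mathbf{k}}$ and ergodicity of the flag-bundle flow to conclude $|f_{\mathbf{k}}|$ is constant, after which everything reduces to a purely topological statement (Proposition \ref{prop:line-bundle-topology}) plus a one-line curvature computation.
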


%

\begin{proof}
We first show that (i-ii) implies that $\psi$ is mixing. For any $f, g \in C^\infty(P)$ with zero average, we have 
\[
	\langle{e^{tX_P}f, g}\rangle_{L^2} = \sum_{\mathbf{k} \in \widehat{G}} d_{\mathbf{k}} \sum_{i = 1}^{d_{\mathbf{k}}} \langle{e^{t\X_{\mathbf{k}}}f_{\mathbf{k}, i}, g_{\mathbf{k}, i}}\rangle_{L^2},
\]
where $f_{\mathbf{k}, i}, g_{\mathbf{k}, i} \in C^\infty_{\mathrm{hol}}(F,\Lk)$ are the Fourier modes of $f, g$, respectively, and we used the inverse Fourier transform \eqref{equation:inversion}, as well as that the Fourier transform takes $X_P$ to $\X_{\mathbf{k}}$ (see \eqref{eq:ft-intertwines-infinitesimal}), and Plancherel's theorem (see Lemma \ref{lemma:ft-isometry}). Next, we may identify $\X_0$ with $X$ since for $\mathbf{k} = 0$, $\Lk \to F$ is naturally identified with a pullback bundle from the base, see \S \ref{sssection:abelian-rep}; also, $f_0, g_0$ have zero mean. Following the proof of \cite[Lemma 3.7, bottom of page 22]{Lefeuvre-23}, an application of Stone's formula and the spectral theorem shows that for each $\mathbf{k}$, $i = 1, \dotsc, \mathbf{k}$, $\langle{e^{t\X_{\mathbf{k}}}f_{\mathbf{k}, i}, g_{\mathbf{k}, i}}\rangle_{L^2} \to 0$ as $t \to \infty$, which upon using the Dominated Convergence Theorem proves the claim.

For condition (ii), note that by the same argument as in the preceding paragraph (see the proof of \cite[Lemma 3.7]{Lefeuvre-23}), the absence of non-zero resonances of $X$ on the imaginary axis is equivalent to mixing of $\varphi$ which, in turn, by the Anosov alternative is equivalent to $\varphi$ not being a suspension by a constant roof function (see \cite[Appendix A]{Lefeuvre-23} for instance). That we are not in the latter scenario is ensured by Lemma \ref{lemma:joint-integrability} and the assumption that $d\alpha \neq 0$, proving (ii).



It remains to show that $\X_{\mathbf{k}}$ has no resonances on the imaginary axis for $\mathbf{k} \neq 0$. Assume that there exists a non-zero $f_{\mathbf{k}} \in \mc{H}^s_{1/\langle\mathbf{k}\rangle,\mathrm{hol}}(F,\Lk)$ such that $\X_{\mathbf{k}} f_{\mathbf{k}} = i\lambda  f_{\mathbf{k}}$ for some $\lambda \in \R$. Then, by \cite[Lemma 2.3]{Dyatlov-Zworski-17} (see also the proof of \cite[Lemma 4]{Faure-Roy-Sjostrand-08} and \cite[Proposition 9.3.3]{Lefeuvre-book}), $f_{\mathbf{k}} \in C^\infty_{\mathrm{hol}}(F,\Lk)$. Since $\X_{\mathbf{k}}$ is unitary, we obtain $X_F|f_{\mathbf{k}}|^2 = \langle\X_{\mathbf{k}}f_{\mathbf{k}},f_{\mathbf{k}}\rangle+\langle f_{\mathbf{k}},\X_{\mathbf{k}}f_{\mathbf{k}}\rangle = 0$, where $\langle{\bullet, \bullet}\rangle$ denotes the inner product in the fibres of $\Lk \to F$. By ergodicity of $\psi^F$, this implies that $|f_{\mathbf{k}}|$ is a non-zero constant so $\mathbf{L}^{\otimes \mathbf{k}} \to F$ is topologically trivial. We then split to cases according to the value of $\mathbf{k}$. \medskip

\emph{Case 1: $(k_{a + 1}, \dotsc, k_d)$ is non-zero.} By Proposition \ref{prop:line-bundle-topology}, then $\Lk \to F$ is topologically a non-trivial line bundle, which is a contradiction.
\medskip

\emph{Case 2: $k_{a + 1} = \dotsc = k_{d} = 0$}. By Lemma \ref{lemma:extension}, Lemma \ref{lemma:1d-rep}, and \S \ref{sssection:abelian-rep}, we view $f_{\mathbf{k}}$ as a section of $\pi^*\Lk_M$, which is holomorphic and hence constant in the fibres of $F$. Here, we recall $\Lk_M := P \times_{\widetilde{\gamma}} \mathbb{C}$ where $\widetilde{\gamma}: G \to \mathbb{S}^1$, $\widetilde{\gamma}|_{T} = \gamma$. Thus, we may write $f_{\mathbf{k}} = \pi^*f_{\mathbf{k}, M}$, where $f_{\mathbf{k}, M}$ is a section of $\Lk_M$ and by Lemma \ref{lemma:pullback-equivalence} it satisfies that $\X_{\mathbf{k}, M} f_{\mathbf{k}, M} = i\lambda f_{\mathbf{k}, M}$. The section $s := f_{\mathbf{k}, M}$ of $\Lk_M$ is nowhere vanishing since $f_{\mathbf{k}}$ is and re-normalising, we may assume $s$ has pointwise unit norm and a section of the underlying circle bundle $\mc{C}_{\mathbf{k}, M} \subset \Lk_M$. 

Using $s$, we identify $\Lk_M$ and $\mc{C}_{\mathbf{k}, M}$ with $M \times \mathbb{C}$ and $M \times \operatorname{U}(1)$, respectively. Then, the operator $\X_{\mathbf{k}, M}$ is conjugated to $X + i\lambda$, and the flow induced on $M \times \mathrm{U}(1)$ is generated by $X - \lambda\partial_\theta$ (in the notation of Example \ref{example:trivial-extension} it corresponds to $a \equiv -\lambda$). Since the construction of the dynamical connection behaves well under associated construction (i.e. the dynamical connection of the associated flow is the associated dynamical connection), and using Example \ref{example:trivial-extension} (in particular \eqref{equation:integral0}), the $1$-form of the dynamical connection on $M \times \mathbb{C}$ is given by $\beta_{\mathrm{dyn}} = \lambda \alpha$, where $\alpha$ is the Anosov $1$-form. Thus the curvature of the connection on $M \times \mathbb{C}$ and so of $\pi^*\nabla_{\mathbf{k}, M}$ on $\pi^*\Lk_M$ is given by $\mathbf{k} \cdot \mathbf{F}_{\overline{\nabla}} = i\lambda \pi^*d\alpha$, which contradicts the assumption of the lemma, and completes the proof. 
\end{proof}


\begin{remark}
	We observe that in the proof of Lemma \ref{lemma:no-resonances} we actually needed a much weaker assumption than linear independence of $(\pi^*d\alpha, -iF_{\overline{\nabla}_1}, \dotsc, -iF_{\overline{\nabla}_a})$, that is, we only required that for all $\mathbf{k} \in \mathbb{Z}^a \times \{0\}^b$ we have $\mathbf{k} \cdot \mathbf{F}_{\overline{\nabla}} \not \in i \mathbb{R} \pi^*d\alpha$. Therefore, in particular we get that this weaker assumption, as well as ergodicity of $\psi^F$, implies mixing of $\psi$ on $P$.
\end{remark}

\subsection{Radial source/sink estimates} \label{ssection:radial-estimates} Radial source and sink estimates were first introduced by Melrose \cite{Melrose-94} in scattering theory. In the context of Anosov flows, they were discovered by Dyatlov and Zworski \cite{Dyatlov-Zworski-16}.


We now work specifically with the Anosov vector field $X$ and $\X := \nabla^{\mathrm{dyn}}_X$. However, what follows also holds in greater generality (i.e. of first order differential operators with source/sink structure), and we stick to the dynamical setting for simplicity. We introduce the thresholds:
\begin{align}
\label{equation:omega+}
\omega_+(X) &:= \inf\left\{\rho > 0 ~\big|~ \sup_{x \in M} \lim_{t \to +\infty} \tfrac{1}{t}\log\big(\mathrm{Jac}_{\varphi_{-t}x}(\varphi_t)^{1/2}\|\dd \varphi_{-t}|_{E_u(x)}\|^\rho\big) < 0\right\}, \\
 \label{equation:omega-}
\omega_-(X) &:=  \inf\left\{\rho > 0 ~\big|~ \sup_{x \in M} \lim_{t \to +\infty} \tfrac{1}{t}\log\big(\mathrm{Jac}_{x}(\varphi_t)^{1/2}\|\dd \varphi_{t}|_{E_s(x)}\|^\rho\big) < 0\right\}.
\end{align}
Here, the norms are computed with respect to an arbitrary background Riemannian metric $g$ on $M$, and we denote by $\mathrm{Jac}_x(\varphi_t)$ the determinant of $d\varphi_t(x): T_xM \to T_{\varphi_t x}M$ with respect to $g$. Note that, if $X$ is volume-preserving, then $\omega_\pm(X) = 0$.
In the next proposition, elliptic and wavefront set are computed with respect to $\nabla^{\mathrm{dyn}}$.

\begin{theorem}[Source/sink estimates]
\label{theorem:source-sink}
The following holds:
\begin{enumerate}[label=\emph{(\roman*)}, itemsep=5pt]
\item \emph{\textbf{Source estimate.}} For all $s > \omega_+(X)$, $N > 0$ large enough, for all $\mathbf{A} \in \Psi^0_{h, \mathrm{BW}}(P)$ with wavefront set near $\overline{(E^s_F)^*} \cap \partial_\infty (\HH^\psi_F)^*$, there exists $\mathbf{B} \in \Psi^0_{h, \mathrm{BW}}(P)$ with wavefront set and elliptic on a slightly larger neighborhood than $\WF^{\mathrm{BW}}(\mathbf{A})$, and a constant $C > 0$ such that for all $h > 0$, $\mathbf{k}\in \widehat{G}$, such that $h|\mathbf{k}| \leq 1$, for all families $f_{h,\mathbf{k}} \in C^\infty_{\mathrm{hol}}(F,\mathbf{L}^{\otimes \mathbf{k}})$,
\begin{equation}
\label{equation:source-estimate}
\|\mathbf{A} f_{h,\mathbf{k}}\|_{H^s_h} \leq C\left( \|\mathbf{B} \X_{\mathbf{k}} f_{h,\mathbf{k}}\|_{H^s_h} + h^N\|f_{h,\mathbf{k}}\|_{H^{-N}_h} \right).
\end{equation}
If $f_{h,\mathbf{k}} \in \mc{D}'_{\operatorname{hol}}(F,\mathbf{L}^{\otimes \mathbf{k}})$ is merely a family of distributions such that $\mathbf{B}\X_{\mathbf{k}} f_{h,\mathbf{k}} \in H^{s_0}_h$ for some $s_0 > \omega_+(X)$, then $\mathbf{A}f_{h,\mathbf{k}} \in H^s_h$ for all $s > \omega_+(X)$ and \eqref{equation:source-estimate} holds.

\item \emph{\textbf{Sink estimate.}} For all $s < \omega_-(X)$, $N > 0$, for all $\mathbf{A} \in \Psi^0_{h, \mathrm{BW}}(P)$ with wavefront set near $\overline{(E^u_F)^*} \cap \partial_\infty (\HH^\psi_F)^*$, there exists $\mathbf{B} \in \Psi^0_{h, \mathrm{BW}}(P)$ with wavefront set and elliptic on a slightly larger neighborhood than $\WF^{\mathrm{BW}}(\mathbf{A})$, $\mathbf{A}' \in \Psi^0_{h, \mathrm{BW}}(P)$ such that $\WF^{\mathrm{BW}}(\mathbf{A}') \subset \WF^{\mathrm{BW}}(\mathbf{B})$ and $\WF^{\mathrm{BW}}(\mathbf{A}') \cap \overline{(E^u_F)^*} \cap \partial_\infty (\HH^\psi_F)^* = \emptyset$, and a constant $C > 0$ such that for all $h > 0$, $\mathbf{k}\in \widehat{G}$, such that $h|\mathbf{k}| \leq 1$, for all families $f_{h,\mathbf{k}} \in C^\infty_{\mathrm{hol}}(F,\mathbf{L}^{\otimes \mathbf{k}})$,
\begin{equation}
\label{equation:sink-estimate}
\|\mathbf{A} f_{h,\mathbf{k}}\|_{H^s_h} \leq C\left( \|\mathbf{B} \X_{\mathbf{k}} f_{h,\mathbf{k}}\|_{H^s_h} + \|\mathbf{A}' f_{h,\mathbf{k}}\|_{H^s_h}+h^N\|f_{h,\mathbf{k}}\|_{H^{-N}_h} \right).
\end{equation}
If $f_{h,\mathbf{k}} \in \mc{D}'_{\operatorname{hol}}(F,\mathbf{L}^{\otimes \mathbf{k}})$ is merely a family of distributions such that $\mathbf{B}\X_{\mathbf{k}} f_{h,\mathbf{k}} \in H^{s_0}_h$ and $\mathbf{A}' f_{h,\mathbf{k}} \in H^{s_0}_h$ for some $s_0 < \omega_-(\X)$, then $\mathbf{A} f_{h,\mathbf{k}} \in H^s_h$ for all $s < \omega_-(X)$ and \eqref{equation:sink-estimate} holds.
\end{enumerate}
\end{theorem}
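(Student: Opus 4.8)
The plan is to follow the well-established microlocal scheme for radial estimates (Melrose, Dyatlov--Zworski, Faure--Roy--Sjöstrand), but carried out \emph{uniformly} in $h$ and $\mathbf{k}$ inside the Borel--Weil calculus $\Psi^{\bullet}_{h,\mathrm{BW}}(P)$. The key point making this possible is that, by Lemma \ref{lemma:invariance-h} and the assumption $\iota_{X_F}\mathbf{F}_{\overline{\nabla}}=0$ (Lemma \ref{lemma:computations} for the dynamical connection), the twisted Hamiltonian flow $\Phi^{\omega_0}$ of $p_{X^{\HH_F}}$ preserves $\overline{\HH^*_F}$, so everything can be read off the base dynamics on $\overline{T^*M}$ and only the fiberwise-horizontal part $\overline{(E^s_F)^*},\overline{(E^u_F)^*}$ of the sources/sinks at infinity matters. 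Thus the radial points of $\Phi^{\omega_0}$ restricted to $\partial_\infty(\HH^\psi_F)^*$ are exactly $\overline{(E^s_F)^*}\cap\partial_\infty(\HH^\psi_F)^*$ (a source) and $\overline{(E^u_F)^*}\cap\partial_\infty(\HH^\psi_F)^*$ (a sink), and the escape/order function $G_m$ from \S\ref{sssection:anisotropic-space} is tailored precisely to these sets.

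First I would set up the commutator identity. Pick $\mathbf{A}\in\Psi^0_{h,\mathrm{BW}}(P)$ with $\WF^{\mathrm{BW}}(\mathbf{A})$ in a small conic neighbourhood $U$ of $\overline{(E^s_F)^*}\cap\partial_\infty(\HH^\psi_F)^*$, and choose $\mathbf{G}\in\Psi^{sm}_{h,\mathrm{BW}}(P)$ with principal symbol (a microlocalised version of) $\langle\xi\rangle^{sm}$, using $\Op^{\mathrm{BW}}_h$ from \S\ref{ssection:quantization-symbols}; form the positive operator $\mathbf{\Lambda}:=\mathbf{G}^*\mathbf{A}^*\mathbf{A}\mathbf{G}$ and compute $2\,\Re\langle \mathbf{\Lambda}\X_{\mathbf{k}}f,f\rangle = \langle [\mathbf{\Lambda},\X_{\mathbf{k}}]f,f\rangle + (\text{div term})$. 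By the Egorov-type result Proposition \ref{proposition:egorov}/\ref{proposition:propagation2} applied with $Y=X$ (valid despite the Hölder regularity of $\nabla^{\mathrm{dyn}}$, by Remark \ref{remark:regularity}), the principal symbol of $h^{-1}[\mathbf{\Lambda},\X_{\mathbf{k}}]$ is $H^{\omega_0}_{p_X}$ applied to $|\sigma_{\mathbf{A}}|^2 e^{2G_m s}$, which by the defining properties (i)--(ii) of the order function $m$ is $\leq -C\langle\xi\rangle^{2s}|\sigma_{\mathbf{A}}|^2$ near the radial source plus terms microsupported where an auxiliary elliptic cutoff $\mathbf{B}$ is elliptic (the ``outgoing'' directions where the flow escapes $U$) plus lower order. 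Feeding this into the energy estimate and applying the sharp Gårding inequality for the Borel--Weil calculus (Lemma \ref{lemma:garding}) to the nonnegative part, and Cauchy--Schwarz with $\X_{\mathbf{k}}f$ absorbed by $\mathbf{B}\X_{\mathbf{k}}f$, yields \eqref{equation:source-estimate} with the $h^N\|f\|_{H^{-N}_h}$ remainder coming from the negligible wavefront-disjoint pieces (Lemma \ref{lemma:help}). The threshold $s>\omega_+(X)$ is exactly what is needed for the sign in the commutator symbol after including the Jacobian factor $\mathrm{Jac}^{1/2}$ from the half-density/divergence term; this is where \eqref{equation:omega+} enters. The sink estimate \eqref{equation:sink-estimate} is dual: one runs the same argument with $s<\omega_-(X)$, the sign flips, and now the commutator is controlled from \emph{outside} the radial sink, which forces the extra term $\|\mathbf{A}'f\|_{H^s_h}$ with $\mathbf{A}'$ microsupported away from $\overline{(E^u_F)^*}\cap\partial_\infty(\HH^\psi_F)^*$ but inside $\WF^{\mathrm{BW}}(\mathbf{B})$; propagation of singularities along $\Phi^{\omega_0}$ (Proposition \ref{proposition:propagation2}) is used to move microlocal control into that region. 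The ``merely a distribution'' upgrade in both items is the standard regularization argument: insert $\mathbf{A}_\delta=\mathbf{A}(1+i\delta h^{-1}\langle\xi\rangle)^{-s}$-type smoothing, obtain uniform-in-$\delta$ bounds from the estimate just proved (the thresholds being open conditions), and pass to the limit.

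The main obstacle I expect is verifying that all of this genuinely holds \emph{uniformly} in the two independent semiclassical parameters $h$ and $\mathbf{k}$ with $h|\mathbf{k}|\leq 1$, rather than just along a fixed family $h\mapsto\mathbf{k}(h)$. Concretely: the anisotropic calculus $\Psi^{m(x,\xi)}_{h,\mathrm{BW}}(P)$ with an order function (as in \cite[Appendix A]{Faure-Roy-Sjostrand-08}) must be shown to be an algebra with a well-behaved principal symbol, Gårding inequality, and parametrices \emph{uniformly} in $\mathbf{k}$; this is asserted in \S\ref{sssection:anisotropic-space} but the radial estimate genuinely exercises it. The subtlety is that the Hamiltonian flow $\Phi^{\omega_{h,\mathbf{k}}}$ in general depends on $\mathbf{k}$ through the curvature twist $h\mathbf{k}\cdot\pi^*\mathbf{F}_{\overline{\nabla}}$ --- but here the hypothesis $\iota_{X_F}\mathbf{F}_{\overline{\nabla}}=0$ collapses it to the single $\mathbf{k}$-independent flow $\Phi^{\omega_0}$ (Proposition \ref{proposition:egorov}(iii)), which is exactly why the uniform statement is even true; I would make sure this reduction is invoked cleanly. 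The other delicate point, already flagged in Remark \ref{remark:regularity}, is the Hölder regularity of $\nabla^{\mathrm{dyn}}$: one must work with a smooth background connection $\nabla'=\nabla^{\mathrm{dyn}}+i\beta'$ to define symbols and the anisotropic weight, observe that the relevant flow $T_{\beta'}^{-1}\Phi_t^{\omega_0}T_{\beta'}$ is still smooth and topologically conjugate to $\Phi_t^{\omega_0}$, and check that $H_{p_X}\sigma^{\nabla^{\mathrm{dyn}}}_{\mathbf{A}}$ is well-defined and Hölder (using $\mathcal{L}_X\beta_{\mathrm{dyn}}\in C^\infty$, which follows from $\iota_X d\beta_{\mathrm{dyn}}=\iota_X F_{\nabla^{\mathrm{dyn}}}=0$). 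Apart from these structural checks, the proof is, as the authors say, verbatim the classical one, so I would state it as a consequence of the cited references (\cite{Dyatlov-Zworski-16}, \cite[Chapter 5]{Dyatlov-Zworski-19}, \cite[Section 9.2]{Lefeuvre-book}) adapted to the Borel--Weil calculus via the results of Chapter \ref{chapter:analytic}.
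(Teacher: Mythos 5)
Your proposal is correct and takes essentially the same approach as the paper, which itself declines to write out the positive commutator argument and simply notes that it carries over verbatim from the classical Anosov case (citing \cite[Chapter 9, Section 2]{Lefeuvre-book} and \cite[Appendix B]{Bonthonneau-Lefeuvre-23}) once the Egorov-type result Proposition \ref{proposition:egorov} is available in $\Psi^{\bullet}_{h,\mathrm{BW}}(P)$. You have correctly identified all the points that actually require checking in the Borel--Weil setting --- the reduction to the single $\mathbf{k}$-independent flow $\Phi^{\omega_0}$ via $\iota_{X}\mathbf{F}_{\overline{\nabla}}=0$, the H\"older regularity of $\nabla^{\mathrm{dyn}}$ handled as in Remark \ref{remark:regularity}, the uniformity in $(h,\mathbf{k})$, and the fact that unitarity of the connection on $\Lk$ is what makes the thresholds \eqref{equation:omega+}--\eqref{equation:omega-} unchanged from the base case.
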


Once again, we emphasize that working with the dynamical connection (to define principal symbols, elliptic/wavefront sets) is actually irrelevant in the previous theorem. More precisely, the inequality \eqref{equation:source-estimate} holds \emph{regardless} of the connection chosen to compute the symbols and elliptic/wavefront sets, similarly to the parametrix construction in Proposition \ref{proposition:ellipticity} for instance. However, it is convenient for practical purposes to work with the dynamical connection because the underlying Hamiltonian flow associated to the operator $\X$ is merely the symplectic lift of $(\varphi_t)_{t \in \R}$ to $T^*M$. If one works with another (smooth) connection $\nabla'$, then the new Hamiltonian flow of $\X$ computed with respect to this new connection is topologically conjugate to the previous one and the source/sink structure of the flow is preserved.

Typically, the operator $\mathbf{A}$ in the source estimate (i) is constructed by $\mathbf{A} := \Op^{\mathrm{BW}}_h(a_{h,\mathbf{k}})$ (see \eqref{equation:quantization-geometric}) and $a_{h,\mathbf{k}}$ is chosen equal to $1$ near $\overline{E_s^*} \cap \partial_\infty T^*M$. The only (slight) issue at this stage when working with the dynamical connection is that the $\Op^{\mathrm{BW}}_h$ quantization relies on a choice of \emph{smooth} connection (see \eqref{equation:quantization-geometric}) and one cannot take the dynamical connection to define it here. Nevertheless, taking another smooth connection $\nabla'$ (arbitrarily close to $\nabla^{\mathrm{dyn}}$), it is straightforward to verify that the principal symbol of ${\Op^{\mathrm{BW}}_h}^{\nabla'}(a_{h,\mathbf{k}})$ computed with respect to $\nabla^{\mathrm{dyn}}$ is still equal to $1$ near $\overline{E_s^*} \cap \partial_\infty T^*M$.

The proof of Theorem \ref{theorem:source-sink} is a (non-trivial) consequence of the Egorov-type theorem stated in Proposition \ref{proposition:egorov}. Within the calculus $\Psi^\bullet_{h, \mathrm{BW}}(P)$, it is \emph{verbatim} the same proof as in the non-twisted case so we omit the details and refer the reader to \cite[Chapter 9, Section 2]{Lefeuvre-book} for a proof.

Regarding the thresholds \eqref{equation:omega+} and \eqref{equation:omega-}, the line bundle $\mathbf{L}^{\otimes \mathbf{k}}$ does not play any role since the connection is unitary. The threshold computed in \cite[Chapter 9, Section 2]{Lefeuvre-book} is slightly weaker and we refer to \cite[Appendix B]{Bonthonneau-Lefeuvre-23} for a proof of the radial estimates with the right thresholds \eqref{equation:omega+} and \eqref{equation:omega-} (in the main body of \cite{Bonthonneau-Lefeuvre-23} the authors work with thresholds on H\"older-Zygmund spaces).

Finally, one can replace $\X_{\mathbf{k}}$ by $\X_{\mathbf{k}} + \lambda$ in \eqref{equation:source-estimate} for $\lambda \in \C$. The threshold is then changed to $\max(\omega_+(X)-\Re(\lambda),0)$ (the $\Re(\lambda)$ factor comes from estimating the propagator $e^{-t(\X_{\mathbf{k}} + \lambda)}$ pointwise, see \cite[Appendix B]{Bonthonneau-Lefeuvre-23}).

\section{Proof of main results}
\label{section:proof-of-main-results}

\subsection{Estimates on the resolvent}

\label{ssection:statement-estimates}

Let us fix $s \geq 0$ and $m \in \mathbb{R}$. We now introduce a bi-graded scale of anisotropic spaces $\mc{H}^{m,s}(P)$ defined as follows. For $f \in C^\infty(P)$, decomposing its Fourier transform $\mc{F}f = (f_{\mathbf{k}, i})_{\mathbf{k}\in \widehat{G}, i = 1, \dotsc, d_{\mathbf{k}}}$ with $f_{\mathbf{k}, i} \in C^\infty_{\mathrm{hol}}(F,\mathbf{L}^{\otimes \mathbf{k}})$, we set


\begin{equation}
\label{equation:space}
\|f\|_{\mc{H}^{m,s}(P)}^2 := \sum_{\mathbf{k} \in \widehat{G}} d_{\mathbf{k}} \sum_{i = 1}^{d_{\mathbf{k}}} \langle \mathbf{k} \rangle^{2m} \|f_{\mathbf{k}, i}\|^2_{\mc{H}^s_{1/\langle\mathbf{k}\rangle}(F,\mathbf{L}^{\otimes \mathbf{k}})},
\end{equation}
where we recall that the anisotropic space $\mc{H}^s_{1/\langle\mathbf{k}\rangle}(F,\mathbf{L}^{\otimes \mathbf{k}})$ was introduced in \eqref{equation:norm}. The space $\mc{H}^{m,s}(P)$ is then defined as the completion of $C^\infty(P)$ with respect to the norm \eqref{equation:space}. We also set $\mc{H}^{m,s}_0(P) := \mc{H}^{m,s}(P) \cap (\C\mathbf{1})^\perp$ ($L^2$-orthogonal to constant functions). The following lemma asserts that the $\mc{H}^{m,s}(P)$ norm is bounded by some isotropic Sobolev norm.

\begin{lemma}\label{lemma:anisotropic-isotropic}
Let $s, m\in \R$. Define $p:=\max(m+|s|, |s|)$. Then, there exists $C > 0$ such that for all $f \in C^\infty(P)$,
\[
	\|f\|_{\mc{H}^{m,s}(P)}^2 \leq C \|f\|_{H^{p}(P)}^2.
\]
\end{lemma}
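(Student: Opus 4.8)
The statement compares the anisotropic norm $\|f\|_{\mc{H}^{m,s}(P)}$, defined fibrewise through the Fourier decomposition and the semiclassical anisotropic norms $\|f_{\mathbf{k},i}\|_{\mc{H}^s_{1/\langle\mathbf{k}\rangle}(F,\mathbf{L}^{\otimes\mathbf{k}})}$, against a plain (non-semiclassical) Sobolev norm on $P$. The natural strategy is to pass through the Fourier side: by Lemma \ref{lemma:ft-isometry} together with Lemma \ref{lemma:ft-horizontal-laplacian}, powers of $\mathbbm{1}+\Delta_{\HH}+\Delta_{\V}$ on $P$ correspond, under $\mc{F}$, to the operators $(\mathbbm{1}+c(\mathbf{k})+\Delta_{\mathbf{k}})^{\bullet}$ acting on each $C^\infty_{\mathrm{hol}}(F,\mathbf{L}^{\otimes\mathbf{k}})$. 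So the first step is to identify $\|f\|_{H^p(P)}^2$ (up to equivalence of norms, using e.g. $\mathbbm{1}+\Delta_{\HH}+\Delta_{\V}$ as an elliptic operator on $P$) with $\sum_{\mathbf{k}}d_{\mathbf{k}}\sum_i \|(\mathbbm{1}+c(\mathbf{k})+\Delta_{\mathbf{k}})^{p/2}f_{\mathbf{k},i}\|^2_{L^2(F,\mathbf{L}^{\otimes\mathbf{k}})}$. Using \eqref{equation:ck-asymptotic}, $1+c(\mathbf{k})\asymp \langle\mathbf{k}\rangle^2$, so $(\mathbbm{1}+c(\mathbf{k})+\Delta_{\mathbf{k}})^{p/2}$ is comparable to $\langle\mathbf{k}\rangle^p(\mathbbm{1}+\langle\mathbf{k}\rangle^{-2}\Delta_{\mathbf{k}})^{p/2} = \langle\mathbf{k}\rangle^p(\mathbbm{1}+h^2\Delta_{\mathbf{k}})^{p/2}$ with $h=1/\langle\mathbf{k}\rangle$; the latter is (up to lower order, uniformly in $\mathbf{k}$) the semiclassical Sobolev weight $\mathbf{\Delta}(p)$ of \eqref{eq:sobolev-norm}, by \S\ref{sssection:sobolev-spaces-1}. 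Thus $\|f\|_{H^p(P)}^2 \asymp \sum_{\mathbf{k}}d_{\mathbf{k}}\sum_i\langle\mathbf{k}\rangle^{2p}\|f_{\mathbf{k},i}\|^2_{H^p_{1/\langle\mathbf{k}\rangle}(F,\mathbf{L}^{\otimes\mathbf{k}})}$.

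With this reduction in hand, it remains to prove the fibrewise estimate: for each $\mathbf{k}$ with $h|\mathbf{k}|\leq 1$ (in fact $h=1/\langle\mathbf{k}\rangle$ here),
\[
\langle\mathbf{k}\rangle^{2m}\|u\|^2_{\mc{H}^s_{1/\langle\mathbf{k}\rangle}(F,\mathbf{L}^{\otimes\mathbf{k}})} \leq C\langle\mathbf{k}\rangle^{2p}\|u\|^2_{H^p_{1/\langle\mathbf{k}\rangle}(F,\mathbf{L}^{\otimes\mathbf{k}})}, \qquad u\in C^\infty_{\mathrm{hol}}(F,\mathbf{L}^{\otimes\mathbf{k}}),
\]
with $C$ uniform in $\mathbf{k}$. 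Since $p=\max(m+|s|,|s|)$, we have $p\geq m$, so $\langle\mathbf{k}\rangle^{2m}\leq\langle\mathbf{k}\rangle^{2p}$ and it suffices to show $\|u\|_{\mc{H}^s_{1/\langle\mathbf{k}\rangle}}\leq C\|u\|_{H^p_{1/\langle\mathbf{k}\rangle}}$, uniformly in $h$ and $\mathbf{k}$. By definition \eqref{equation:norm}, $\|u\|_{\mc{H}^s_h} = \|\mathbf{A}(s)u\|_{L^2}$ where $\mathbf{A}(s)=\Op^{\mathrm{BW}}_h(\pi^*e^{sG_m})\in\Psi^{sm(x,\xi)}_{h,\mathrm{BW}}(P)$ has order $\leq|s|$ as a standard-order operator (the order function $m(x,\xi)$ takes values in $[-1,1]$). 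Hence $\mathbf{A}(s)\mathbf{\Delta}(-|s|)\in\Psi^0_{h,\mathbf{k}}(F,\mathbf{L})$ is uniformly $L^2$-bounded by the Calderón–Vaillancourt estimate \eqref{equation:cv2}, which gives $\|\mathbf{A}(s)u\|_{L^2}\leq C\|\mathbf{\Delta}(|s|)u\|_{L^2}=C\|u\|_{H^{|s|}_h}\leq C\|u\|_{H^p_h}$ since $p\geq|s|$; here we use that $\mathbf{\Delta}(|s|)\mathbf{\Delta}(p)^{-1}$ is uniformly bounded on $L^2$, being an elliptic-order-$\leq 0$ operator. Throughout, the uniformity in $\mathbf{k}$ is exactly what the uniform calculus of \S\ref{ssection:uniform-scl}–\S\ref{ssection:quantization-line-bundles} is designed to provide, and the fact that everything is restricted to fibrewise holomorphic sections, on which $\Pi_{\mathbf{k}}$ commutes with $\mathbf{\Delta}(\bullet)$ (Lemma \ref{lemma:eat-pik}'s proof), causes no difficulty.

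The main technical obstacle is the bookkeeping of the two different gradings: one must be careful that the anisotropic weight $\mathbf{A}(s)$, which is of variable order $sm(x,\xi)\in[-|s|,|s|]$, is dominated by the \emph{isotropic} semiclassical weight $\mathbf{\Delta}(p)$ uniformly in $h$ and $\mathbf{k}$, and then that the $\langle\mathbf{k}\rangle$-powers match up after Plancherel. The cleanest way to organize this is to prove first the uniform fibrewise comparison $\|\cdot\|_{\mc{H}^s_h}\lesssim\|\cdot\|_{H^{|s|}_h}$ via Calderón–Vaillancourt, then note $H^{|s|}_h\hookrightarrow H^p_h$ with norm $\leq 1$ (since $p\geq|s|$ and $\mathbf{\Delta}(p-|s|)$ has uniformly bounded inverse... more precisely $\|\mathbf{\Delta}(|s|)\mathbf{\Delta}(p)^{-1}\|_{L^2\to L^2}\leq C$ uniformly), and finally assemble the $\ell^2$ sum over $\mathbf{k}$ using $p\geq m$ and \eqref{equation:ck-asymptotic}. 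No step requires anything beyond results already established in the excerpt; the proof is essentially a uniform-in-$\mathbf{k}$ symbol-calculus comparison followed by Plancherel.
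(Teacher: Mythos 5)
Your argument is correct and follows essentially the same route as the paper's: both pass to the Fourier side via Plancherel, use the uniform Calder\'on--Vaillancourt bound to compare the anisotropic norm $\mc{H}^s_{1/\langle\mathbf{k}\rangle}$ with the isotropic semiclassical $H^{|s|}_{1/\langle\mathbf{k}\rangle}$, exploit $1+c(\mathbf{k})\asymp\langle\mathbf{k}\rangle^2$ to re-assemble the $\langle\mathbf{k}\rangle$-weights into $\Delta_{\V}$-powers, and split into $m\geq 0$ vs.\ $m<0$. The only cosmetic difference is that the paper first treats $m\in 2\Z_{\geq 0}$, $s\in 2\Z$ explicitly and then interpolates, whereas you handle general real orders directly through the spectral weight $\mathbf{\Delta}(p)$; that is fine, since $\mathbf{\Delta}(p)$ is already established to lie in $\Psi^p_{h,\mathbf{k}}$ for all real $p$ in \S\ref{sssection:sobolev-spaces-1}.
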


\begin{proof}
By construction and Proposition \ref{proposition:proprietes}, Item (v), the space $H^{s}_{1/\langle\mathbf{k}\rangle}(F,\mathbf{L}^{\otimes \mathbf{k}})$ embeds continuously into the space $\mc{H}^{|s|}_{1/\langle\mathbf{k}\rangle}(F,\mathbf{L}^{\otimes \mathbf{k}})$, and the embedding is uniform as $|\mathbf{k}| \to \infty$. Namely, there exists a constant $C>0$ such that for all $\mathbf{k} \in \widehat{G}$,
\[
	\|f\|_{\mc{H}^s_{1/\langle\mathbf{k}\rangle}(F,\mathbf{L}^{\otimes \mathbf{k}})} \leq C \|f\|_{H^{|s|}_{1/\langle\mathbf{k}\rangle}(F,\mathbf{L}^{\otimes \mathbf{k}})}, \qquad \forall f \in C^\infty_{\mathrm{hol}}(F,\mathbf{L}^{\otimes \mathbf{k}}).
\]
Recall that the norm on $H^{s}_{1/\langle\mathbf{k}\rangle}(F,\mathbf{L}^{\otimes \mathbf{k}})$ is defined using the operator $(1 + \langle{\mathbf{k}}\rangle^{-2} (\Delta_{\mathbf{k}} + \overline{\partial}_{\mathbf{k}}^*\overline{\partial}_{\mathbf{k}}))^{\tfrac{s}{2}}$ (see \eqref{eq:sobolev-norm}). By \eqref{equation:ck-asymptotic}, $ \langle\mathbf{k}\rangle^2 = \mc{O}(c(\mathbf{k}))$. By Lemmas \ref{lemma:ft-isometry} and \ref{lemma:ft-horizontal-laplacian}, the Fourier transform is an $L^2$ isometry and it intertwines the horizontal Laplace operators, and respects the vertical Laplacian. In the case $m \in 2\mathbb{Z}_{\geq 0}$ and $s \in 2\mathbb{Z}$, we thus obtain:
\[
\begin{split}
\|f\|_{\mc{H}^{m,s}(P)}^2 &= \sum_{\mathbf{k} \in \widehat{G}} d_{\mathbf{k}} \sum_{i=1}^{d_{\mathbf{k}}} \langle\mathbf{k}\rangle^{2m} \|f_{\mathbf{k},i}\|^2_{\mc{H}^s_{1/\langle\mathbf{k}\rangle}} \\
&  \leq C\sum_{\mathbf{k} \in \widehat{G}} d_{\mathbf{k}} \sum_{i=1}^{d_{\mathbf{k}}} \langle\mathbf{k}\rangle^{2m} \|(1+\langle{\mathbf{k}}\rangle^{-2}\Delta_{\mathbf{k}})^{|s|/2} f_{\mathbf{k},i}\|^2_{L^2} \\
& \leq C\sum_{\mathbf{k} \in \widehat{G}} d_{\mathbf{k}} \sum_{i=1}^{d_{\mathbf{k}}} \|\langle\mathbf{k}\rangle^{m}(1+\Delta_{\mathbf{k}})^{|s|/2} f_{\mathbf{k},i}\|^2_{L^2} \\
& \leq C \|\langle\Delta_{\V}\rangle^{m/2}(1+\Delta_{\HH})^{|s|/2}f\|^2_{L^2(P)}\\
& \leq  C\|f\|_{H^{m+|s|}(P)}^2.
\end{split}
\]
where $C > 0$ could vary from line to line. In the case $m \leq 0$, the proof is the same up to the fourth line, where we now simply use that $\langle{\mathbf{k}}\rangle^m \leq 1$. 

The general case $s, m \in \mathbb{R}$ then follows by interpolation; this completes the proof.

\end{proof}



Define the following band in the complex plane
\begin{equation}\label{eq:band}
	\B:=\{z \in \C ~|~ 0 \leq \Re(z) \leq 1\}.
\end{equation}
The following bound is key to Theorem \ref{theorem:main2}:
\begin{theorem}
\label{theorem:real}
Assume that $\psi^F$ is ergodic and that $(\pi^*d\alpha, -iF_{\overline{\nabla}_1}, \dotsc, -iF_{\overline{\nabla}_a})$ are linearly independent over $\mathbb{R}$. Fix $s > 0$. There exist $C, \vartheta > 0$ such that for all $z \in \B$, $\mathbf{k} \in \widehat{G}$,
\begin{equation}
\label{equation:uniform-bound}
\|(-\X_{\mathbf{k}} - z)^{-1}\|_{\mc{H}^s_{1/\langle\mathbf{k}\rangle,\mathrm{hol}}(F,\mathbf{L}^{\otimes \mathbf{k}}) \circlearrowleft} \leq C \langle \Im(z) \rangle^\vartheta \langle \mathbf{k} \rangle^\vartheta,
\end{equation}
with the exception that for $\mathbf{k}=0$, \eqref{equation:uniform-bound} holds in restriction to distributions with $0$ average. Moreover, we may take $\vartheta = \max(\dim F + 10,3\dim F + 4) + 2$ in \eqref{equation:uniform-bound}.
\end{theorem}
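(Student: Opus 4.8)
\textbf{Proof strategy for Theorem \ref{theorem:real}.} The plan is to argue by contradiction, in the spirit of the standard semiclassical contradiction-compactness scheme, but now keeping track of the \emph{two} parameters $h$ and $\mathbf{k}$ simultaneously via the Borel-Weil calculus of Chapter \ref{chapter:analytic}. Suppose the estimate \eqref{equation:uniform-bound} fails. Then there exists a sequence $\mathbf{k}_n \in \widehat{G}$, $z_n \in \B$, and normalised $u_n \in \mc{H}^s_{1/\langle\mathbf{k}_n\rangle,\mathrm{hol}}(F,\mathbf{L}^{\otimes \mathbf{k}_n})$ (orthogonal to constants when $\mathbf{k}_n = 0$) with $\|u_n\| = 1$ and $\|(-\X_{\mathbf{k}_n} - z_n) u_n\|_{\mc{H}^s_{1/\langle\mathbf{k}_n\rangle,\mathrm{hol}}} = o\big(\langle\Im z_n\rangle^{-\vartheta}\langle\mathbf{k}_n\rangle^{-\vartheta}\big)$. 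Setting $h_n := \langle \lambda_n\rangle^{-1}$ where $\lambda_n := \Im z_n$ (or, more precisely, a rescaling tracking the competition between $|\mathbf{k}_n|$ and $|\lambda_n|$ — one would first extract so that $h_n |\mathbf{k}_n|$ converges, and split cases according to whether the spectral parameter or the Fourier mode dominates), this places us inside the calculus $\Psi^\bullet_{h,\mathrm{BW}}(P)$ with semiclassical parameter $h_n \to 0$ and auxiliary parameter $\mathbf{k}_n$. The rescaled equation becomes $(h_n \X_{\mathbf{k}_n} - \widetilde z_n) u_n = o(h_n^{\vartheta})$ in the anisotropic norm, for $\widetilde z_n = h_n z_n$ with $\Re \widetilde z_n = o(1)$ and $|\Im \widetilde z_n| \lesssim 1$.

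\textbf{Key steps.} First I would use the radial source/sink estimates of Theorem \ref{theorem:source-sink} together with the elliptic parametrix (Proposition \ref{proposition:ellipticity0}, \ref{proposition:ellipticity}, Lemma \ref{lemma:parametrix-bw}) to show that the microlocal mass of $u_n$ concentrates on the trapped set $\{\xi_{\HH^*} = 0\}$ inside $\overline{\HH^*_F}$: away from the characteristic variety of $h\X_{\mathbf{k}}$ (i.e. away from $\R X_F^* $ in the fibre) the operator is elliptic, and near $\overline{(E^s_F)^*}$, $\overline{(E^u_F)^*}$ the radial estimates force smallness (the thresholds are governed by $\omega_\pm(X) = 0$ since $X$ is volume-preserving). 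This gives $\WF$-localisation and, combined with propagation of singularities (Proposition \ref{proposition:propagation2}), the estimate that $u_n$ is, up to $O(h_n^\infty)$, compactly microlocalised near the trapped set. Second, I would exploit the energy identity: since $\X_{\mathbf{k}}$ is skew-adjoint (the connection is unitary), pairing the equation with $u_n$ shows $X_F |u_n|^2 \to 0$ in a weak sense, and one propagates invariance along the unstable and stable horocyclic directions — the standard ``horocyclic invariance'' argument using commutators $[\mathbf{U}_s, h\X_{\mathbf{k}}]$, $[\mathbf{U}_u, h\X_{\mathbf{k}}]$ and the Egorov/commutator computation from Proposition \ref{proposition:egorov} and the commutator lemma — together with Lemma \ref{lemma:vanishing-curvature} ($\iota_{X_F}\mathbf{F}_{\overline\nabla}=0$, and curvature vanishes on $\HH\times\V$) to control the twisting terms $h\mathbf{k}\cdot\mathbf{F}$. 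Third, the semiclassical defect measure of $(u_n)$ (constructed on $T^*M$ or rather $\HH^*_F$ descended to $M$, as in \eqref{equation:op-lim}) is then a probability measure invariant under $\varphi$, supported on the flow direction, and \emph{simultaneously} invariant under the stable and unstable horocyclic flows; a bootstrap then shows $u_n$ itself is close to a genuine resonant state, i.e. one produces a nonzero resonance on $\{\Re z = 0\}$ for some $\X_{\mathbf{k}}$ — in the limit, either $\X_{\mathbf{k}_\infty}$ has a resonance on the imaginary axis for some fixed $\mathbf{k}_\infty$ (contradicting Lemma \ref{lemma:no-resonances}), or, when $|\mathbf{k}_n|\to\infty$, the Diophantine property of $G$ (Lemma \ref{lemma:diophantine-semisimple}, Corollary \ref{corollary:diophantine-flag-manifold}) together with the non-degeneracy hypothesis forces the mass to spread out, contradicting the probability normalisation. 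The final polynomial exponent $\vartheta$ is then read off from: (a) the number of derivatives lost in the radial estimates and the elliptic parametrix (of order $\dim F$-ish, since each application costs $h^{-N}$ with $N$ dictated by the Sobolev orders in play); (b) the number of commutator iterations needed for horocyclic invariance; (c) the loss in Lemma \ref{lemma:sobolev-embedding} ($h^{-\dim F/2}$) when passing from $L^2$ bounds to pointwise control; the stated value $\max(\dim F + 10, 3\dim F + 4) + 2$ comes from bookkeeping these three contributions.

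\textbf{Main obstacle.} The hardest part will be the case $|\mathbf{k}_n| \to \infty$, i.e. when the two ``infinities'' genuinely compete and the defect measure lives not on $T^*M$ but on the non-compact locus $\HH^*_F \subset T^*F$ over which the fibrewise Toeplitz structure interacts with the base dynamics. Here one must run the horocyclic-invariance argument with the twisted symplectic form $\omega_{h,\mathbf{k}}$ (whose curvature contribution $h\mathbf{k}\cdot\mathbf{F}_\nabla$ need not vanish in the limit), and then convert the resulting invariance into a contradiction using \emph{quantitative} density of the transitivity/holonomy group rather than mere density — this is exactly where Lemma \ref{lemma:diophantine-semisimple} enters, and where the non-Abelian structure genuinely matters. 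Controlling the dependence of all microlocal constants on $\mathbf{k}$ (uniformity in the sense of \S\ref{ssection:uniform-scl}, Remark \ref{remark:uniform}) throughout the source/sink and propagation estimates, so that the final exponent $\vartheta$ is genuinely independent of $\mathbf{k}$, is the technical crux. I would handle it by first establishing the ``easy'' case $|\mathbf{k}_n|$ bounded (reducing to the usual Faure--Sjöstrand picture plus Lemma \ref{lemma:no-resonances}), and then, for the hard case, normalising by a further rescaling $h' := (|\mathbf{k}_n|^2 + \lambda_n^2)^{-1/2}$ and working on the compactified Weyl chamber $\overline{\mathfrak a_+}$ of \S\ref{sssection:compactification}, where $\mathbf{l}_0 := \lim \mathbf{k}_n/|\mathbf{k}_n| \in \partial_\infty\mathfrak a_+$ specifies the limiting twisted symbol and the non-degeneracy condition $F_{\mathrm{min}} > 0$ of \S\ref{sssection:non-degenerate-curvature} (or its weaker version sufficient for Lemma \ref{lemma:no-resonances}) can be invoked uniformly.
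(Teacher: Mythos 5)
Your overall architecture matches the paper's: argue by contradiction via quasimodes, split into the $|\mathbf{k}|$-dominant and $|\Im z|$-dominant regimes, rescale by the dominant parameter, concentrate the microsupport at the shifted trapped set $\mc{T}_\eta$ using radial source/sink estimates and propagation (Theorem \ref{theorem:source-sink}, Propositions \ref{proposition:propagation}, \ref{proposition:propagation2}), force $\nu := \lim \Re z_n h_n = 0$ via the $L^2$ resolvent bound, establish horocyclic invariance, and invoke the Diophantine property of $G$ when $|\mathbf{k}_n| \to \infty$. These match the paper's Lemmas \ref{lemma:microsupport}, \ref{lemma:2}, \ref{lemma:parallel} and \S\ref{section:hf2}.

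However, there is a genuine gap in the final step. You propose to pass to a semiclassical defect measure, invariant under the geodesic flow and the horocyclic flows, and deduce either a resonance on the imaginary axis for a fixed $\mathbf{k}_\infty$ or a spreading-out contradiction. The paper does something sharper and essentially different: it never passes to a weak limit. Instead, Lemma \ref{lemma:parallel} is a quantitative $C^0$ estimate asserting that the compactly microlocalised quasimode $v_h = \mathbf{E}_h f_h$ is nearly parallel for the connection $\nabla^{\mathrm{dyn}} + \tfrac{z_h}{h}\alpha$ with explicit polynomial error $o(h^{\ell/2 - \dim F/2})$ in $C^0$, obtained from Sobolev embedding (Lemma \ref{lemma:sobolev-embedding}) and the convergence of the resolvent of $\X^u_{\mathbf{k}}$ on $(E_s^*)$-valued sections. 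This gives $d|v_h|^2 = o_{C^0}(h^{\ell/2 - \dim F - 1})$, a uniform \emph{lower} bound on $|v_h|$ (Lemma \ref{lemma:lower-bound}), and then the contradiction is purely scalar: if the line bundle $\Lk$ is fibrewise topologically non-trivial (Case 1, $(k_{a+1}, \dotsc, k_{a+b}) \neq 0$), $v_h$ must vanish somewhere, immediately contradicting the lower bound; otherwise (Case 2) the quasimode descends to a section of a line bundle over $M$, and \emph{Roth's theorem} (Diophantine approximation for algebraic irrationals, \eqref{equation:irrationnel} in Lemma \ref{lemma:contradiction}) is used on the holonomy phases $\exp(-ik(h)\int_\gamma(\beta_{\mathrm{dyn}}+\eta\alpha))$ to show the curvature $\mathbf{l}\cdot\mathbf{F}_{\overline\nabla} + \eta\,d\alpha$ vanishes, contradicting the linear-independence hypothesis. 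None of this appears in a defect-measure argument, and a defect measure would discard precisely the quantitative information (the polynomial exponent $\vartheta$) the theorem asserts.

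Two further points. First, your suggestion to obtain horocyclic invariance via commutators $[\mathbf{U}_{s/u}, h\X_{\mathbf{k}}]$ runs into the regularity obstruction: the stable/unstable subbundles $E_s, E_u$ and hence the dynamical connection are only H\"older continuous (Remark \ref{remark:regularity}), so one cannot quantize horocyclic vector fields in the standard way; the paper circumvents this by writing $\X^u_{\mathbf{k}}\nabla^u_{\mathbf{k}} = \nabla^u_{\mathbf{k}}\X_{\mathbf{k}}$ and inverting $\mathbf{P}^u_h$ directly, using only the contraction of the linearized propagator on $E_s^*$. Second, the ``either a resonance for fixed $\mathbf{k}_\infty$'' branch of your alternative does not occur in the hard case: when $|\mathbf{k}_n| \to \infty$, there is no limiting operator $\X_{\mathbf{k}_\infty}$, and the contradiction must be obtained uniformly along the sequence. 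So the heart of the theorem — closing the argument when the two frequencies compete — is not resolved by your outline, and the missing ingredients are the $C^0$ parallel-transport estimate, the pointwise lower bound, and the Roth-theorem argument on holonomies.
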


This bound is trivial if $\mathbf{k}$ and $z$ are both in a fixed compact window. The proof of Theorem \ref{theorem:real} is postponed to \S\ref{section:hf} ($\mathrm{U}(1)$ case) and \S\ref{section:hf2} (general case). Denote by $\mc{L}(A, B)$ the space of continuous linear maps between normed spaces $A$ and $B$ equipped with operator norm. Then Theorem \ref{theorem:real} implies as a corollary:

\begin{corollary}
\label{corollary:hf}
Assume that $\psi^F$ is ergodic and that $(\pi^*d\alpha, -iF_{\overline{\nabla}_1} \dotsc, -iF_{\overline{\nabla}_a})$ are linearly independent over $\mathbb{R}$. Fix $p \geq 0, m \in \R, s > 0$. Let $\vartheta > 0$ be the constant given by Theorem \ref{theorem:real}. Then
\begin{enumerate}[label=\emph{(\roman*)}]
\item The resolvent
\[
\R \ni \lambda \mapsto (- X_P - i\lambda)^{-1} \in \mc{L}\left(\mc{H}^{p\vartheta+m,s}_0(P),\mc{H}^{m,s}_0(P)\right)
\]
is well-defined on the imaginary axis and $C^p$-regular with respect to $\lambda$.

\item For all $p \geq 0, m \in \R$, there exists $C > 0$ such that
\[
\|\partial^p_\lambda(- X_P-i\lambda)^{-1}\|_{\mc{H}^{(p + 1)\vartheta+m,s}_0\to\mc{H}^{m,s}_0} \leq C \langle \lambda \rangle^{\vartheta (p + 1)}.
\]
\end{enumerate}
\end{corollary}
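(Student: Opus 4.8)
The plan is to deduce Corollary \ref{corollary:hf} from the uniform resolvent bound in Theorem \ref{theorem:real} together with the Fourier decomposition of $C^\infty(P)$ from \eqref{eq:ft-bw-isomorphism} and the definition \eqref{equation:space} of the anisotropic spaces $\mc{H}^{m,s}(P)$. The key point is that the operator $X_P$ acts \emph{diagonally} on the Fourier modes: by \eqref{eq:ft-intertwines-infinitesimal} we have $\mc{F}(X_P f)(\mathbf{k},x) = \X_{\mathbf{k}} \mc{F}f(\mathbf{k},x)$, so on the zero-average space $\mc{H}^{m,s}_0(P)$ the resolvent $(-X_P - i\lambda)^{-1}$ decomposes as the direct sum over $\mathbf{k} \in \widehat{G}$ (and $i = 1, \dotsc, d_{\mathbf{k}}$) of the operators $(-\X_{\mathbf{k}} - i\lambda)^{-1}$ acting on $\mc{H}^s_{1/\langle\mathbf{k}\rangle,\mathrm{hol}}(F,\mathbf{L}^{\otimes\mathbf{k}})$. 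For $\mathbf{k} = 0$ one restricts to $\X_0 = X$ acting on functions with zero mean, where by Lemma \ref{lemma:no-resonances} there is no resonance at $z = i\lambda$ with $\lambda \in \R$, so $(-X - i\lambda)^{-1}$ is holomorphic near the imaginary axis and Theorem \ref{theorem:real} applies.

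First I would record that $i\lambda \in \B$ for all $\lambda \in \R$ (taking $z$ with $\Re z = 0$), so Theorem \ref{theorem:real} gives a pointwise-in-$\lambda$ bound $\|(-\X_{\mathbf{k}} - i\lambda)^{-1}\|_{\mc{H}^s_{1/\langle\mathbf{k}\rangle,\mathrm{hol}} \circlearrowleft} \leq C\langle\lambda\rangle^\vartheta \langle\mathbf{k}\rangle^\vartheta$, uniformly in $\mathbf{k}$. Squaring, multiplying by $d_{\mathbf{k}}\langle\mathbf{k}\rangle^{2m}$, summing over $\mathbf{k}$ and $i$, and comparing with \eqref{equation:space}, this yields the $p = 0$ case of (ii): for $u \in \mc{H}^{\vartheta + m, s}_0(P)$,
\[
\|(-X_P - i\lambda)^{-1} u\|_{\mc{H}^{m,s}_0}^2 = \sum_{\mathbf{k}} d_{\mathbf{k}} \sum_{i} \langle\mathbf{k}\rangle^{2m} \|(-\X_{\mathbf{k}} - i\lambda)^{-1} u_{\mathbf{k},i}\|^2_{\mc{H}^s_{1/\langle\mathbf{k}\rangle}} \leq C^2 \langle\lambda\rangle^{2\vartheta} \sum_{\mathbf{k}} d_{\mathbf{k}} \sum_i \langle\mathbf{k}\rangle^{2m + 2\vartheta}\|u_{\mathbf{k},i}\|^2_{\mc{H}^s_{1/\langle\mathbf{k}\rangle}},
\]
the last sum being $\|u\|^2_{\mc{H}^{\vartheta + m, s}_0}$. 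This proves (i) for $p = 0$ (boundedness and continuity in $\lambda$; continuity follows because each $(-\X_{\mathbf{k}} - i\lambda)^{-1}$ is norm-continuous in $\lambda$ and the family is uniformly bounded on compact $\lambda$-intervals, so dominated convergence in the $\ell^2$-sum applies).

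Next I would treat higher $p$ by differentiating the resolvent identity. On each Fourier mode, $\partial_\lambda (-\X_{\mathbf{k}} - i\lambda)^{-1} = i\,(-\X_{\mathbf{k}} - i\lambda)^{-2}$, and more generally $\partial_\lambda^p(-\X_{\mathbf{k}} - i\lambda)^{-1} = i^p\, p!\,(-\X_{\mathbf{k}} - i\lambda)^{-(p+1)}$. Applying the $p = 0$ bound $p+1$ times in a telescoping fashion — each application costs one factor of $\langle\lambda\rangle^\vartheta \langle\mathbf{k}\rangle^\vartheta$ and shifts the domain by $\langle\mathbf{k}\rangle^\vartheta$, i.e. by $\vartheta$ in the $m$-index of $\mc{H}^{\bullet, s}$ — gives the fiberwise estimate $\|\partial_\lambda^p(-\X_{\mathbf{k}} - i\lambda)^{-1}\|_{\mc{H}^s_{1/\langle\mathbf{k}\rangle} \circlearrowleft} \leq C_p \langle\lambda\rangle^{\vartheta(p+1)}\langle\mathbf{k}\rangle^{\vartheta(p+1)}$. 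Summing over $\mathbf{k}$ as above, with the extra weight $\langle\mathbf{k}\rangle^{\vartheta(p+1)}$ absorbed into the domain space $\mc{H}^{(p+1)\vartheta + m, s}_0(P)$, gives exactly the bound in (ii). The $C^p$-regularity in (i) then follows from the standard fact that a uniformly-on-compacts bounded family of difference quotients converging pointwise (here, on each Fourier mode, by norm-differentiability of $(-\X_{\mathbf{k}} - i\lambda)^{-1}$) converges in the $\ell^2$-sum, so $\lambda \mapsto (-X_P - i\lambda)^{-1}$ is $p$ times differentiable with derivatives the claimed operators, which are themselves continuous by the same argument one level up.

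The main (mild) obstacle is purely bookkeeping: one must be careful that the domain/codomain shifts in the anisotropic scale $\mc{H}^{\bullet,s}_0(P)$ are tracked correctly, in particular that iterating the $\mathbf{k}$-uniform resolvent bound produces precisely the power $(p+1)\vartheta$ claimed rather than something larger, and that the $\mathbf{k} = 0$ mode is consistently excised (this is why we work on the zero-average subspaces $\mc{H}^{m,s}_0(P)$). There is no genuine analytic difficulty beyond Theorem \ref{theorem:real} itself, which is the result whose proof is deferred; everything here is a clean Fourier-mode-by-Fourier-mode reduction combined with the elementary identity for derivatives of a resolvent and Plancherel (Lemma \ref{lemma:ft-isometry}) built into the definition \eqref{equation:space}.
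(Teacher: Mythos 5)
Your proof is correct in its essentials and takes the same route as the paper: Fourier decomposition (Lemma \ref{lemma:ft-isometry} and \eqref{eq:ft-intertwines-infinitesimal}), the uniform fibrewise bound of Theorem \ref{theorem:real} to control each mode, Plancherel summation against the weighted norm \eqref{equation:space}, and the explicit resolvent-derivative formula $\partial_\lambda^p(-\X_{\mathbf{k}}-i\lambda)^{-1} = i^p p!\,(-\X_{\mathbf{k}}-i\lambda)^{-(p+1)}$ iterated to produce the $(p+1)\vartheta$ shift. The one place where your argument is looser than the paper's is the continuity/differentiability step. You invoke ``dominated convergence in the $\ell^2$-sum'' to pass from mode-by-mode norm-continuity to continuity of the full direct sum; but pointwise continuity of each block plus a uniform bound does not by itself give \emph{norm}-continuity of an infinite direct sum (the moduli of continuity of the $\mathbf{k}$-blocks could degenerate as $|\mathbf{k}|\to\infty$). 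The paper sidesteps this by proving $C^p$-regularity directly from the resolvent identity $(-X_P-i\lambda)^{-1} - (-X_P-i\lambda')^{-1} = i(\lambda-\lambda')(-X_P-i\lambda)^{-1}(-X_P-i\lambda')^{-1}$, which yields a quantitative modulus of continuity (and differentiability) at the cost of a further $\vartheta$-shift in the domain; your own telescoping of the $p=0$ bound is exactly this argument in disguise, so you should phrase the $C^p$ step that way rather than via dominated convergence. (Note also that for the application in \S\ref{ssection:implication} only smoothness of the pairing $\lambda\mapsto\langle R_{\pm}(\mp i\lambda)f,g\rangle$ is used, for which strong/weak-* regularity suffices, so the gap is harmless there.) The other small divergence is that the paper first proves uniqueness of the solution in $\cap_m\mc{H}^{m,s}_0$ and then constructs $(-X_P-i\lambda)^{-1}$ as the $\varepsilon\to 0^+$ limit of the genuine resolvents $(-X_P-(i\lambda+\varepsilon))^{-1}$, whereas you define it directly as the direct sum of the meromorphically-continued mode resolvents. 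Both are legitimate; the paper's $\varepsilon$-limit makes transparent that the object constructed really is the boundary value of the resolvent and is unique, while your direct definition is quicker once one trusts that $\mc{F}$ intertwines $X_P$ with $\oplus_{\mathbf{k}}\X_{\mathbf{k}}$ on the anisotropic spaces.
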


\begin{proof}
(i) Fix $\lambda \in \R$ and consider for $u \in C^\infty(P)$, $\eps > 0$,
\begin{equation}
\label{equation:feps}
f^{(\eps)} := (-X_P - (i \lambda + \eps))^{-1} u.
\end{equation}
Since $X_P$ is skew-adjoint on $L^2(P)$, this is well-defined for $\eps > 0$ and $\|f^{(\eps)}\|_{L^2(P)} \leq 1/\eps$. We claim that there exists a unique $f_\star \in \cap_{m \in \R} \mc{H}^{m,s}_0(P)$ such that for any $m \in \R$,
\[
f^{(\eps)} \to_{\eps \to 0} f_\star \qquad \text{ in } \mc{H}^{m,s}(P).
\]
Moreover, $\zeta = f_\star \in \mc{H}^{m,s}(P)$ is the unique solution in $\cap_{m \in \R} \mc{H}^{m,s}_0(P)$ to the equation $(-X_P-i\lambda) \zeta = u$. This defines $f_\star =: (-X_P-i\lambda)^{-1}u$.

First, to show uniqueness, consider $\zeta \in \cap_{m \in \R} \mc{H}^{m,s}_0(P)$ such that $(-X_P-i\lambda)\zeta = 0$. Decomposing in Fourier modes, this is equivalent to $(-\X_{\mathbf{k}}-i\lambda) \zeta_{\mathbf{k}} = 0$ for all ${\mathbf{k}} \in \widehat{G}$ and $\zeta_{\mathbf{k}} \in \mc{H}^s_{\mathrm{hol}}(F,\Lk)$.
By Lemma \ref{lemma:no-resonances}, there are no resonances on the imaginary axis, so this forces $\zeta_{\mathbf{k}} \equiv 0$ and thus $\zeta \equiv 0$.

We now show that $f_\star$ is well-defined. Observe that \eqref{equation:feps} is equivalent to
\[
(-\X_{\mathbf{k}} -(i\lambda+\eps))f^{(\eps)}_{\mathbf{k}} = u_{\mathbf{k}}, \qquad \forall {\mathbf{k}} \in \widehat{G}.
\]
By Theorem \ref{theorem:faure-sjostrand} the resolvent $(-\X_{\mathbf{k}}-z)^{-1}$ is meromorphic on a strip $\Re(\lambda) > - cs$ (where $c$ is independent of ${\mathbf{k}}$, it only depends on the flow $\varphi$ on $M$) as a bounded operator in $\mc{H}^s_{1/|{\mathbf{k}}|,\mathrm{hol}}(F,\Lk)$. By Lemma \ref{lemma:no-resonances}, there are no poles on the imaginary axis, hence
\[
f^{(\eps)}_{\mathbf{k}} = (-\X_{\mathbf{k}}-(i\lambda+\eps))^{-1}u_{\mathbf{k}} \to_{\eps \to 0} (-\X_{\mathbf{k}}-i\lambda)^{-1}u_{\mathbf{k}} =: w_{\mathbf{k}} \in \mc{H}^s_{1/|{\mathbf{k}}|,\mathrm{hol}}(F,\Lk),
\]
with bound
\[
\|w_{\mathbf{k}}\|_{\mc{H}^s_{1/|{\mathbf{k}}|}(F,\Lk)} \leq C \langle \lambda\rangle^\vartheta \langle {\mathbf{k}} \rangle^\vartheta  \|u_{\mathbf{k}}\|_{\mc{H}^s_{1/|{\mathbf{k}}|}(F,\Lk)},
\]
by \eqref{equation:uniform-bound}. Also note that \eqref{equation:uniform-bound} implies $\|f^{(\eps)}_{\mathbf{k}}\|_{\mc{H}^s_{1/|{\mathbf{k}}|}} \leq 2 C \langle \lambda\rangle^\vartheta \langle {\mathbf{k}} \rangle^\vartheta  \|u_{\mathbf{k}}\|_{\mc{H}^s_{1/|{\mathbf{k}}|}}$ uniformly as $\eps \to 0$.

As a consequence, defining $f_\star := \mc{F}^{-1}(w_{\mathbf{k}})_{{\mathbf{k}}\in\widehat{G}}$ (where for simplicity we drop the index $i = 1,\dotsc, d_{\mathbf{k}}$), we see that $f_\star \in \mathcal{H}^{m,s}_0(P)$ since $u \in \mc{H}^{\vartheta +m,s}_0(P)$ with bound $\|f_\star\|_{\mc{H}^{m,s}(P)} \leq C \langle \lambda \rangle^\vartheta  \|u\|_{\mc{H}^{m+\vartheta ,s}(P)}$, and by dominated convergence that $f^{(\eps)} \to f_\star$ in $\mc{H}^{m,s}(P)$. This shows that
\[
(-X_P-i\lambda)^{-1} \in \mc{L}\left(\mc{H}^{\vartheta +m,s}_0(P),\mc{H}^{m,s}_0(P)\right)
\]
is well-defined on the imaginary axis and
\begin{equation}\label{eq:poly-estimate}
\|(-X_P-i\lambda)^{-1}\|_{\mc{H}^{\vartheta +m,s}_0(P) \to \mc{H}^{m,s}_0(P)} \leq C \langle \lambda \rangle^\vartheta .
\end{equation}

We now want to show smoothness with respect to the parameter $\lambda$. For that, it suffices to use the resolvent formula
\[
(-X_P-i\lambda)^{-1} - (-X_P-i\lambda')^{-1} = i(\lambda-\lambda')(-X_P-i\lambda)^{-1}(-X_P-i\lambda')^{-1},
\]
which shows that
\[
\lambda \mapsto (-X_P-i\lambda)^{-1} \in C^1\left(\R, \mc{L}(\mc{H}^{2\vartheta +m,s}_0(P),\mc{H}^{m,s}_0(P))\right)
\]
with derivative
\[
\partial_\lambda (-X_P-i\lambda)^{-1} = i (-X_P-i\lambda)^{-2}.
\]
A similar argument for higher order derivatives gives that
\[
\lambda \mapsto (-X_P-i\lambda)^{-1} \in C^p\left(\R, \mc{L}(\mc{H}^{(p + 1)\vartheta +m,s}_0,\mc{H}^{m,s}_0)\right),
\]
with derivatives 
\begin{equation}\label{eq:derivative-formula}
	\partial^p_\lambda (-X_P-i\lambda)^{-1} = i^{p} p! (-X_P-i\lambda)^{-(p+1)}.
\end{equation}

(ii) This follows immediately by iterating the estimate \eqref{eq:poly-estimate} and using the formula \eqref{eq:derivative-formula}.
\end{proof}

\subsection{Proof of main results}

\label{ssection:implication}

Using Corollary \ref{corollary:hf}, we can now complete the proof of Theorem \ref{theorem:main2}.

\begin{proof}[Proof of Theorem \ref{theorem:main2}, Item \emph{(i)}]
The proof is based on the spectral theorem. The (unbounded) operator $-iX_P$ on $L^2(P)$ with domain
\[
\mc{D}_{L^2} := \{ f \in L^2(P) ~|~ X_Pf \in L^2(P)\}
\]
is self-adjoint. Hence, it admits a spectral measure $\dd P(\lambda)$ such that $\mathbbm{1}_{L^2(P)} = \int_{\R} \dd P(\lambda)$.

We claim that $\dd P(\lambda)$ is smooth with respect to $\lambda$ and given by
\begin{equation}
\label{equation:p}
\dd P(\lambda) = -\dfrac{1}{2\pi}(R_+(-i\lambda) + R_-(i\lambda))\, \dd \lambda.
\end{equation}
Indeed, first $\dd P(\lambda)$ has no atoms by Lemma \ref{lemma:no-resonances} on the space of $L^2$ functions with zero average (and the only atom on $L^2$ is at $\lambda = 0$ corresponding to constant functions). Then, by using Stone's formula, we get that for $f,g \in C^\infty(P)$ with zero average,
\[
\int_a^b \langle\dd P(\lambda)f, g\rangle_{L^2(P)} = - \dfrac{1}{2\pi} \int_a^b \langle (R_+(-i\lambda) + R_-(i\lambda)) f, g \rangle_{L^2(P)}\, \dd \lambda,
\]
for all $a < b$, where $R_{\pm}(i\lambda) := (\pm X_P + i\lambda)^{-1}$. This proves \eqref{equation:p}. Moreover, by Corollary \ref{corollary:hf},
\[
\lambda \mapsto \langle (R_+(-i\lambda) + R_-(i\lambda)) f, g \rangle_{L^2(P)}
\]
is smooth.

Now, for $f,g \in C^\infty(P)$ such that $\int f\, \dd \mu = 0$, we have:
\[
\int_{P} (f \circ \psi_t) g ~\dd\mu = \langle e^{tX_P}f,g \rangle_{L^2(P)} = \int_{\R} e^{it\lambda} \langle \dd P(\lambda)f,g\rangle_{L^2(P)}.
\]
We will use $X_P\dd P(\lambda) = \dd P(\lambda) X_P = i\lambda \dd P(\lambda)$ and $D_\lambda e^{it\lambda} = t e^{it\lambda}$ with $D_\lambda := -i\partial_{\lambda}$. Write $\langle{D_\lambda}\rangle^2 = 1 + D_\lambda^2$ and $\langle{X_P}\rangle^2 = 1 - X_P^2$. Taking $N_1 \gg 1$ arbitrary and even, and $N_2 > \vartheta (N_1+1)+1$ even, we get:
\[
\begin{split}
 \int_{\R} e^{it\lambda} \langle \dd P(\lambda)f,g\rangle_{L^2(P)} & = \langle t\rangle^{-N_1} \int_{\R} \langle D_\lambda\rangle^{N_1} (e^{it\lambda}) \langle \lambda\rangle^{-N_2} \langle \dd P(\lambda)\langle X_P\rangle^{N_2}f,g\rangle_{L^2(P)} \\
 & = \langle t\rangle^{-N_1} \int_{\R} e^{it\lambda} \langle D_\lambda\rangle^{N_1}\left( \langle \lambda\rangle^{-N_2} \langle \dd P(\lambda)\langle X_P\rangle^{N_2}f,g\rangle_{L^2(P)}\right),
 \end{split}
\]
where the last equality follows by integration by parts. Indeed, we claim that the previous integral converges absolutely. This can be seen by expanding the term
\begin{multline*}
\langle D_\lambda\rangle^{N_1}\left( \langle \lambda\rangle^{-N_2} \langle \dd P(\lambda)\langle X_P\rangle^{N_2}f,g\rangle_{L^2(P)}\right) = (-1)^{N_1/2}\langle \lambda \rangle^{-N_2} \langle \partial_\lambda^{N_1} \dd P(\lambda) \langle X_P \rangle^{N_2}f,g\rangle_{L^2}\\ 
+ \mathrm{l.o.t},
\end{multline*}
where $\mathrm{l.o.t}$ are lower-order terms in the $\lambda$ variable. Note that
\[
\begin{split}
\partial_\lambda^{N_1}\dd P(\lambda) & = - \dfrac{1}{2\pi} \partial_\lambda^{N_1} \left((-X_P+i\lambda)^{-1}+(X_P-i\lambda)^{-1}\right)\, \dd \lambda \\
& = -\dfrac{i^{N_1} N_1!}{2\pi}\left( (-1)^{N_1}(-X_P+i\lambda)^{-(N_1+1)} + (X_P-i\lambda)^{-(N_1+1)}\right)\, \dd \lambda,
\end{split}
\]
Write for any $s \geq 0$ and $m \in \mathbb{R}$, $\mc{H}^{m, s}_\pm := \mc{H}^{m, \pm s}$ and  $(\mc{H}^{m, s}_\pm)'$ for the $L^2$-dual space. It is straightforward to check that $(\mc{H}^{m, s}_\pm)'$ can be identified with $\mc{H}^{-m, \mp s}$. Also, by assumption we may write $-N_2+\vartheta (N_1+1) = -(1 + \varepsilon)$ for some $\varepsilon \in (0, 2]$. Therefore we get
\begin{equation}\label{eq:absolute-convergence}
\begin{split}
 &|\langle \lambda \rangle^{-N_2} \langle \partial_\lambda^{N_1} \dd P(\lambda) \langle X_P \rangle^{N_2}f,g\rangle_{L^2}|\\ 
 &\leq C \langle \lambda \rangle^{-(1+\eps)}\left(\|\langle X_P\rangle^{N_2}f\|_{\mc{H}^{m+\vartheta (N_1+1),s}_+} \|g\|_{(\mc{H}^{m,s}_+)'} +\|\langle X_P\rangle^{N_2}f\|_{\mc{H}^{m+\vartheta (N_1+1),s}_-} \|g\|_{(\mc{H}^{m,s}_-)'}\right) \\
&\leq C \langle \lambda \rangle^{-(1+\eps)}\left(\|\langle{X_P}\rangle^{N_2}f\|_{\mc{H}^{m+\vartheta (N_1+1),s}} \|g\|_{\mc{H}^{-m,-s}}  + \|\langle X_P\rangle^{N_2}f\|_{\mc{H}^{m+\vartheta (N_1+1), -s}} \|g\|_{\mc{H}^{-m,s}}\right)\\
& \leq C \langle \lambda \rangle^{-(1+\eps)} \|f\|_{H^{s + \vartheta(N_1 + 1) + 1 + \varepsilon}(P)} \|g\|_{H^{s + \vartheta(N_1 + 1) + 1 + \varepsilon}(P)},
\end{split}
\end{equation}
where $C > 0$ varies from line to line, and in last line we specified to $m = -N_2$ and used Lemma \ref{lemma:anisotropic-isotropic}. This proves the absolute convergence of the integral.

Moreover, we see from the previous bound that
\begin{equation}\label{eq:quantitative}
\int_{P} (f \circ \psi_t) g \,\dd\mu = \mc{O}\left(\langle t\rangle^{-N_1} \|f\|_{H^{s + \vartheta(N_1 + 1) + 1 + \varepsilon}(P)} \|g\|_{H^{s + \vartheta(N_1 + 1) + 1 + \varepsilon}(P)}\right),\quad t \to \infty,
\end{equation}
which proves the claim (in \eqref{eq:quantitative-rapid-mixing} we apply this together with $s \leq 1$ and $\varepsilon \leq 2$; note that any $s > 0$ would work).
\end{proof}

\begin{remark}
	A different choice of $m \in \mathbb{R}$ in \eqref{eq:absolute-convergence} gives a different estimate in \eqref{eq:quantitative}. For instance, setting $m = 0$ we get
\begin{equation*}
	\int_{P} (f \circ \psi_t) g \,\dd\mu = \mc{O}\left(\langle t\rangle^{-N_1} \|f\|_{H^{s + 2\theta (N_1 + 1) + 1 + \varepsilon}(P)} \|g\|_{H^{s}(P)}\right), \quad t \to \infty.
\end{equation*}
\end{remark}

%
%

\subsection{Application to rapid mixing of frame flows}

\label{ssection:application-frame-flow}

\begin{proof}[Proof of Corollary \ref{corollary}] 
It is clear that rapid mixing implies ergodicity; we will now show the converse. The frame bundle is a $G$-principal bundle over the unit tangent bundle $SN$ over $N$, where $G = \mathrm{SO}(n-1)$. If $n \geq 4$, this is a semisimple Lie group and by Theorem \ref{theorem:main2}, Item (i), the frame flow is rapid mixing if $d\alpha \neq 0$; the latter follows from the contact property of the geodesic flow. For $n = 3$, we have $G = \mathrm{SO}(2) = \mathrm{U}(1)$. Hence by Corollary \ref{corollary1}, it suffices to prove that the frame bundle $FN \to SN$ is not a torsion circle bundle over $SN$. In particular, it suffices to show that for any $x_0 \in N$, $FN_{x_0} \to SN_{x_0} \cong \mathbb{S}^2$ is not torsion. This is isomorphic to the standard fibration $\mathrm{SO}(3) \to \mathbb{S}^2 = \mathrm{SO}(3)/\mathrm{SO}(2)$ corresponding to the unit circle bundle of $\mathbb{S}^2$, and this is clearly not torsion (for instance, because $H^2(\mathbb{S}^2, \mathbb{Z}) \cong \mathbb{Z}$, so there are no non-trivial torsion circle bundles, and it is non-trivial as there are no non-vanishing vector fields on $\mathbb{S}^2$).
%
%
%
%
\end{proof}

\begin{proof}[Proof of Corollary \ref{corollary:kahler}]
As above, it is clear that rapid mixing implies ergodicity; we will now show the converse. If $\dim_{\mathbb{C}} N = m$, the unitary frame bundle is a $G$-principal bundle over $SN$, where $G = \mathrm{U}(m-1)$. For $m \geq 3$, $G$ is semisimple; hence by Theorem \ref{theorem:main2}, item (i), the unitary frame flow is rapid mixing if $d\alpha \neq 0$ (which as before holds for geodesic flows). For $m = 2$, by Corollary \ref{corollary1}, it suffices to show that $F_{\C}N \to SN$ is not torsion. Note that for any $x_0 \in N$, the bundle $\operatorname{U}(2) \to \operatorname{U}(2)/\operatorname{U}(1) \cong \mathbb{S}^3$ is trivial, so an argument as above cannot work. This is where the negatively curved assumption appears: the circle bundle $F_{\C}N \to SN$ is not torsion by the argument on \cite[p. 13]{Cekic-Lefeuvre-Moroianu-Semmelmann-23}. (Indeed, in the notation of \cite{Cekic-Lefeuvre-Moroianu-Semmelmann-23}, this is the underlying circle bundle of the complex line bundle $\mc{N} \to SN$, which is isomorphic to $\pi^*\Lambda^{2, 0} T^*M$. Since $\pi^*$ is injective on the second cohomology, it suffices to show that the canonical line bundle $\Lambda^{2, 0} T^*M$ is not torsion. This follows from the negative curvature assumption.)
\end{proof}

%
%
%
%

\section{High-frequency estimates I. $G = \mathrm{U}(1)$}

\label{section:hf}

It remains to show Theorem \ref{theorem:real}. For the sake of clarity, we start with the case $G = \mathrm{U}(1)$; then by \S \ref{ssection:u1} we have $\widehat{G} \simeq \Z$. In this case, we will write $L$ for the Hermitian line bundle whose underlying circle bundle is $P$; write $k$ instead of $\mathbf{k}$ and $\nabla^{\mathrm{dyn}}$ for the associated dynamical connection on $L$. Also, note that we have that the flag bundle $F$ is equal to $M$. Recall that the anisotropic spaces $\mc{H}^s_h(M, L^{\otimes k})$ were introduced in \S \ref{sssection:anisotropic-space}.

\subsection{Technical reductions} Define
\[
\ell := \max(n + 10, 2n + 2) + 1.
\]
This choice ensures that the following inequalities are satisfied:
\begin{equation}
\label{equation:inegalites}
\ell/2- n -1 > 0, \qquad \ell/2- n/2 - 1 > 4.
\end{equation}
Recall that the notation $\mathbb{B} = [0, 1] \times \mathbb{R} \subset \mathbb{C}$ was introduced in \eqref{eq:band}. We will show the following more technical result:

\begin{proposition}
\label{proposition:technical-reduction}
Assume that $(\pi^*d\alpha, -iF_{\overline{\nabla}_1})$ are linearly independent over $\mathbb{R}$. There exists $C > 0$ such that for all $z \in \B, k \in \Z$, setting $\lambda := \Im(z)$:
\begin{align}
\label{equation:bound1}
& \|(-\X_k - z)^{-1}\|_{\mc{H}^s_{\langle{k}\rangle^{-1}}(M,L^{\otimes k}) \to \mc{H}^s_{\langle{k}\rangle^{-1}}(M,L^{ \otimes k})}\leq C \langle k \rangle^\ell, \qquad |k|\geq |\lambda|, \\
\label{equation:bound2}
& \|(-\X_k - z)^{-1}\|_{\mc{H}^s_{\langle{\lambda}\rangle^{-1}}(M,L^{\otimes k}) \to \mc{H}^s_{\langle{\lambda}\rangle^{-1}}(M,L^{\otimes k})}\leq C \langle \lambda \rangle^\ell, \qquad |\lambda| \geq |k|.
\end{align}
\end{proposition}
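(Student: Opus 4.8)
\textbf{Plan for the proof of Proposition~\ref{proposition:technical-reduction}.} The plan is to argue by contradiction, separating a trivial compact regime from the genuine high-frequency regime, and in the latter to run a microlocal concentration argument in the twisted calculus $\Psi^{\bullet}_{\langle k\rangle^{-1},k}(M,L)=\Psi^{\bullet}_{h,\mathrm{BW}}(P)$ (recall $F=M$ and $\Pi_k=\id$ when $G=\mathrm{U}(1)$). Suppose \eqref{equation:bound1} fails; then there are sequences $z_j\in\B$, $k_j\in\Z$ with $|k_j|\ge|\lambda_j|$ (where $\lambda_j:=\Im z_j$), and $u_j\in C^\infty(M,L^{\otimes k_j})$ with $\|u_j\|_{\mc H^s_{\langle k_j\rangle^{-1}}}=1$ and $f_j:=(-\X_{k_j}-z_j)u_j$ satisfying $\|f_j\|_{\mc H^s_{\langle k_j\rangle^{-1}}}=o(\langle k_j\rangle^{-\ell})$; set $h_j:=\langle k_j\rangle^{-1}$, so $h_j|k_j|\le1$ and $h_j|\lambda_j|\le1$. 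If $\{k_j\}$ is bounded then $\{z_j\}$ is bounded too and $(z_j,k_j)$ ranges in a fixed compact subset of $\B\times\Z$; by Theorem~\ref{theorem:faure-sjostrand} the resolvent $(-\X_k-z)^{-1}$ is meromorphic with no poles in $\B$ for $k\neq0$ (and, for $k=0$, none on $\{\Re z=0\}$ off constants) by Lemma~\ref{lemma:no-resonances}, hence uniformly bounded on such compact sets — a contradiction. The bound \eqref{equation:bound2} is handled identically with $h_j:=\langle\lambda_j\rangle^{-1}$, so from now on I treat the non-compact regime, where up to extraction $h_j\to0$, $h_jk_j\to\kappa$ and $h_j\lambda_j\to\mu$ with $(\kappa,\mu)\neq(0,0)$ (indeed $|h_jk_j|\to1$ in the setting of \eqref{equation:bound1} and $|h_j\lambda_j|\to1$ in that of \eqref{equation:bound2}).

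\textbf{Microlocal localisation at the trapped section.} The crucial simplification is to compute principal symbols, elliptic and wavefront sets with respect to the \emph{dynamical} connection $\nabla^{\mathrm{dyn}}$: by Lemma~\ref{lemma:computations} one has $\iota_X F_{\nabla^{\mathrm{dyn}}}=0$, so $h_j\X_{k_j}$ has principal symbol $ip_X$ (with $p_X(x,\xi)=\xi(X)$, well-defined and Hölder by Remark~\ref{remark:regularity}) and its associated Hamiltonian flow is the plain symplectic lift $\Phi^{\omega_0}$ of the Anosov flow, with no $k$-dependence. The operator $-h_j\X_{k_j}-h_jz_j$ is then elliptic in $\Psi^1_{h_j,k_j}(M,L)$ on the complement of a neighbourhood of the characteristic set $\{p_X=-\mu\}\cup\big(\partial_\infty T^*M\cap\overline{(E_s^*\oplus E_u^*)}\big)$; applying the uniform elliptic parametrix of Proposition~\ref{proposition:ellipticity}, propagation of singularities along $\Phi^{\omega_0}$ (Proposition~\ref{proposition:propagation}, legitimate since the flow is untwisted), and the radial source/sink estimates of Theorem~\ref{theorem:source-sink} for the shifted operator $\X_{k_j}+z_j$ (whose thresholds are $\le\max(\omega_\pm(X)-\Re z_j,0)=0<s$ because $X$ is volume preserving and $\Re z_j\ge0$), and crucially using that all of these estimates are \emph{uniform in} $k_j$ with polynomial-in-$h_j^{-1}$ constants, one obtains an a priori bound $\|u_j\|_{\mc H^s_{h_j}}\le C\big(h_j^{-N_0}\|f_j\|_{\mc H^s_{h_j}}+\|\chi u_j\|_{L^2}\big)$ for a fixed $N_0$ depending only on $n=\dim M$ and a compactly supported $\chi$ microlocalised near the trapped section $\Sigma_\mu:=\{(x,-\mu\,\alpha(x)):x\in M\}$. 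The inequalities \eqref{equation:inegalites} in the choice of $\ell$ are precisely what guarantee $h_j^{-N_0}\|f_j\|_{\mc H^s_{h_j}}=o(1)$, so that after normalising $v_j:=u_j/\|\chi u_j\|_{L^2}$ (which remains bounded in $\mc H^s_{h_j}$) we get a family $v_j$ with $\|\chi v_j\|_{L^2}\ge c>0$, microlocally concentrated near $\Sigma_\mu$, solving $(-h_j\X_{k_j}-h_jz_j)v_j=o(h_j^\infty)$ microlocally near $\Sigma_\mu$.

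\textbf{Horocyclic invariance and the non-integrability contradiction.} Up to extraction, $|v_j|^2\,\mathrm{vol}$ converges weakly to a nonzero probability measure $\nu$ on $M$, carried by (the projection of) $\Sigma_\mu$; since $h_j\X_{k_j}$ is skew-adjoint modulo $o(h_j)$ one has $X|v_j|^2=o(1)$, so $\nu$ is invariant under the Anosov flow $\varphi$. The heart of the argument is to promote this to invariance under the stable and unstable horocyclic directions: commuting $\X_{k_j}$ with $\nabla^{\mathrm{dyn}}_{U^{s/u}}$ for $U^{s/u}$ tangent to $E_{s/u}$, using that $[X,E_{s/u}]\subset E_{s/u}$ with contracting/expanding rates together with $\iota_XF_{\nabla^{\mathrm{dyn}}}=0$, and running an Anosov averaging argument in the spirit of Dolgopyat (pushing the equation forward/backward by the flow and integrating) shows that $h_j\nabla^{\mathrm{dyn}}_{U^{s/u}}v_j\to0$ in the relevant microlocal sense; hence $\nu$ is invariant under the group generated by $X$, $E_s$ and $E_u$, which by the Anosov structure is transitive, so $\nu$ is the normalised Liouville (SRB) measure. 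Tracking the $L^{\otimes k_j}$-phase through this averaging — exactly as in Case~2 of the proof of Lemma~\ref{lemma:no-resonances} and Example~\ref{example:trivial-extension}, where the holonomy of the dynamical connection along horocycles is computed — the invariance forces in the limit a relation $\kappa\,(-iF_{\overline{\nabla}_1})+\mu\,\pi^*d\alpha=0$ (as currents, equivalently on the support of $\nu$), which contradicts the assumed $\R$-linear independence of $(\pi^*d\alpha,-iF_{\overline{\nabla}_1})$ since $(\kappa,\mu)\neq(0,0)$. This contradiction establishes \eqref{equation:bound1}; \eqref{equation:bound2} follows by the same scheme, and together they prove Proposition~\ref{proposition:technical-reduction}.

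\textbf{Main obstacle.} The delicate part is the last paragraph: establishing that the approximate resonant states acquire stable/unstable horocyclic invariance in the semiclassical limit — a twisted, spectral incarnation of Dolgopyat's non-integrability argument — and extracting from the $L^{\otimes k}$-phase the precise linear relation between $F_{\overline\nabla_1}$ and $d\alpha$ that is ruled out by hypothesis. A secondary but essential technical point is the uniform-in-$k$ bookkeeping of all the $h^{-1}$-power losses coming from the norm comparisons $\mc H^s_h\leftrightarrow H^{\pm s}_h\leftrightarrow L^2$ and the iterated propagation/radial estimates, which is exactly what dictates the value $\ell=\max(n+10,2n+2)+1$ through \eqref{equation:inegalites}.
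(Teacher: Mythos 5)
Your architecture (contradiction, concentration on the trapped section $\mc{T}_\eta$, horocyclic invariance via commutation with the dynamical connection, curvature relation ruled out by the non-integrability hypothesis) matches the paper's, and the use of $\nabla^{\mathrm{dyn}}$ and $\iota_X F_{\nabla^{\mathrm{dyn}}}=0$ to untwist the Hamiltonian flow is exactly right. But there is a genuine gap in your last step, and it is the hardest part of the paper's proof.

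You conclude from the approximate horocyclic invariance that ``the invariance forces in the limit a relation $\kappa(-iF_{\overline\nabla_1})+\mu\,\pi^*d\alpha=0$, exactly as in Case 2 of Lemma~\ref{lemma:no-resonances}.'' This does not carry over. Lemma~\ref{lemma:no-resonances} deals with an \emph{exact} resonant state, so $|f_{\mathbf k}|$ is exactly constant and the holonomy of $\nabla^{\mathrm{dyn}}+i\eta\alpha$ along contractible loops is exactly trivial, whence $F=0$ by Lemma~\ref{lemma:flat}. Here you only control a quasimode: what you get (after the pointwise lower bound $|v_h|>C$, which your defect-measure formulation does not by itself supply and is established separately in Lemma~\ref{lemma:lower-bound}) is the bound $\bigl|1-\exp(-ik(h)\int_\gamma(\beta_{\mathrm{dyn}}+(\eta+\eps_h)\alpha))\bigr|=o(h^{\ell/2-n/2-1})$ with $k(h)\sim h^{-1}$. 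This says the phase $k(h)\int_\gamma(\dots)$ is near $2\pi\Z$ to polynomial accuracy, \emph{not} that the integral vanishes — a priori the integral could be a fixed nonzero number whose $k(h)$-multiples happen to come close to $2\pi\Z$ along the sequence, and the drift $\eps_h\to0$ makes this worse. The paper closes this gap with a genuine Diophantine argument (Lemma~\ref{lemma:contradiction}): it introduces a one-parameter family of loops $\gamma_s$, chooses two parameter values $s_0,s_1$ for which $F(s_0)/F(s_1)$ is an \emph{algebraic irrational}, and pits Roth's theorem (a lower bound $\gtrsim h^4$ for the discrepancy) against the quasimode upper bound $o(h^{\ell/2-n/2-1})$. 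The inequality $\ell/2-n/2-1>4$ in \eqref{equation:inegalites} is there precisely to make this clash. Without this step, your argument does not reach a contradiction: it only shows the curvature relation holds ``up to an unknown error tending to zero in a weak sense,'' which the linear-independence hypothesis is powerless to exclude. So the missing idea is the Roth-theorem argument converting approximate holonomy triviality along families of loops into exact vanishing of $F_{\nabla^{\mathrm{dyn}}}+\eta\,d\alpha$, and it cannot be bypassed by citing the exact-eigenfunction case.

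A secondary point: your horocyclic invariance step is phrased as a Dolgopyat-style averaging at the level of defect measures, whereas the paper derives it from the commutation identity $\X_k^u\nabla^u_k=\nabla^u_k\X_k$ (itself a consequence of $\iota_XF_{\nabla^{\mathrm{dyn}}}=0$) together with uniform boundedness of the resolvent $(\mathbf P^u_h)^{-1}$ on $E_s^*\otimes L^{\otimes k}$ in both $L^2$ and $C^0$, coming from the exponential contraction of the flow on $E_s^*$. The resulting bound is pointwise ($C^0$ control on $h\nabla^{\mathrm{dyn}}_{k(h)}v_h$), which is what feeds into the lower bound and the holonomy estimate; a purely measure-theoretic statement about a defect measure being SRB is weaker and would not by itself get you to the holonomy bound \eqref{equation:upper-bound}.
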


Note that for $k$ and $\lambda$ lying both in a compact window, estimates \eqref{equation:bound1} and \eqref{equation:bound2} are trivial. These estimates are only meaningful when at least one of the parameters $k$ or $\lambda$ tends to $\pm \infty$. Let us show that Proposition \ref{proposition:technical-reduction} implies Theorem \ref{theorem:real}.

\begin{proof}[Proposition \ref{proposition:technical-reduction} implies Theorem \ref{theorem:real}]
It is obvious that \eqref{equation:bound1} implies \eqref{equation:uniform-bound} for the range $|k| \geq |\lambda|$. In the complementary range, we have that \eqref{equation:bound2} implies \eqref{equation:uniform-bound} based on equivalence of norms as follows. The spaces $\mc{H}^s(M, L^{\otimes k})$ for varying $h$ are all the same; only their norms differ, but we can say the following. There exists a constant $C > 0$ such that for all $h,h' \leq 1/k$, for all $f \in C^\infty(M,L^{\otimes k})$,
\[
	\|f\|_{\mc{H}^s_h(M,L^{\otimes k})} \leq C\max\left(\left(\frac{h}{h'}\right)^s, \left(\frac{h'}{h}\right)^s\right) \|f\|_{\mc{H}^s_{h'}(M,L^{\otimes k})}.
\]
This can be proved directly by using local coordinates and the formula for quantization, see \cite[Lemma 4.3]{Guillarmou-Kuster-21} where the same bound is established. (More precisely, this follows from the observation that $\overline{\Op}_{h', k}(a) = \overline{\Op}_{h, k}(\widetilde{a})$ for any symbol $a$, where $\widetilde{a}(x, \xi) := a\left(x, \tfrac{h}{h'}\xi\right)$ (by changing variables in the quantisation formula). Then, the bound is established by using the composition formula in the $\Psi^{\bullet}_{h, \mathrm{BW}}(P)$ calculus and Proposition \ref{proposition:proprietes}, item (v).) This completes the proof, and as we may take $s > 0$ arbitrarily small, shows furthermore that the estimate is satisfied with $\vartheta = \ell + 1$ (in fact $\vartheta = \ell + \varepsilon$ for any $\varepsilon > 0$ would also work).
\end{proof}

The proof of both inequalities in Proposition \ref{proposition:technical-reduction} is based on an argument by contradiction: if this estimate does not hold, one can construct quasi-modes associated to a certain semiclassical operator (a renormalization of $\X_k$). In turn, these quasi-modes give rise to a non-trivial semiclassical defect measure with specific properties (compact support in phase space, invariance by the flow $\varphi$, etc.). Using the assumptions $\dd \alpha \neq 0, \omega \neq c \cdot \dd \alpha$, we show that such a measure cannot exist.

\subsection{Proof of estimate \eqref{equation:bound1}}

\label{ssection:dur}

Assume that \eqref{equation:bound1} does not hold. Then, there exists a sequence $z_n \in \B$, $k_n \in \Z $ with $|k_n| \geq |\Im (z_n)|$ and $f_n \in \mc{H}^s_{\langle{k_n}\rangle^{-1}}(M,L^{\otimes k_n})$ such that
\begin{equation}
\label{equation:quasimode1}
	\|f_n\|_{\mc{H}^s_{\langle{k_n}\rangle^{-1}}(M,L^{\otimes k_n})}=1, \qquad \|(-\X_{k_n}-z_n)f_n\|_{\mc{H}^s_{\langle{k_n}\rangle^{-1}}(M,L^{\otimes k_n})} = o(\langle k_n\rangle^{-\ell}).
\end{equation}
We can always assume that $(k_n)_{n \geq 0}$ is unbounded, otherwise $(k_n)_{n \geq 0}$ and $(\Im(z_n))_{n \geq 0}$ are both bounded and the existence of a quasimode in \eqref{equation:quasimode1} is impossible since \eqref{equation:bound1} is trivially satisfied. Hence, after passing to a subsequence, we may assume $k_n \to_{n \to \infty} \pm \infty$ and $|k_n| \geq 1$ for all $n$.

In order to simplify notation, we will further assume $k_n \geq 0$, $\Im(z_n) \geq 0$. The symmetric cases are treated similarly. Up to passing to a subsequence, we can also assume that $\Re(z_n) \to \nu \in [0,1]$ and $\Im(z_n)/k_n \to \eta \in [0,1]$.

To be consistent with the notation of \S\ref{chapter:analytic}, we drop the dependence on $n$ and write $h := \langle{k_n}\rangle^{-1}$ (one can bear in mind that $h$ only belongs to a subset of $\{(1 + k^2)^{-1/2} ~|~ k \in \mathbb{Z}_{\geq 0}\}$ but this is actually quite irrelevant in what follows) and take the function $k(h) := \sqrt{h^{-2} - 1}$. Hence, we can rewrite \eqref{equation:quasimode1} as:
\begin{equation}
\label{equation:quasimode2}
\|f_h\|_{\mc{H}^s_h(M,L^{\otimes k(h)})} = 1, \qquad \|\mathbf{P}_h f_h\|_{\mc{H}^s_{h}} = o(h^{\ell+1}),
\end{equation}
where 
\begin{align*}
	\mathbf{P}_h &:= -h\X_{k(h)} - h(\nu+o(1)) - i(\eta +o(1))(1 + \mc{O}(h^3))\\
	&= -h\X_{k(h)} - h(\nu+o(1)) - i(\eta +o(1)),
\end{align*}
and the ``$o$" and ``$\mc{O}$" in the expression of $\mathbf{P}_h$ are \emph{real} constants (depending on $h$). (Note that the $\mc{O}(h^3)$ comes from the expression $-1 + \tfrac{k_n}{\langle{k_n}\rangle} = \mc{O}(h^3)$.)

Throughout this paragraph, elliptic and wavefront sets (as introduced in \S\ref{chapter:analytic}) are all computed with respect to $\nabla^{\mathrm{dyn}}$.

\subsubsection{Microsupport of quasimodes}

Define $\mc{T}_\eta := \{(x,- \eta \alpha(x)) \in T^*M ~|~ x \in M\}$. The following holds:

\begin{lemma}
\label{lemma:microsupport}
Let $\mathbf{E}_h \in \Psi_{h, k}^{\comp}(M,L)$. Then 
\[
\WF_h(\mathbf{E}_h) \cap \mc{T}_\eta = \emptyset \implies \mathbf{E}_h f_h = o_{L^2}(h^{\ell/2}) = o_{C^0}(h^{\ell/2 - n/2}).
\]
\end{lemma}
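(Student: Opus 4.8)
\textbf{Proof plan for Lemma \ref{lemma:microsupport}.}

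The strategy is the standard elliptic-parametrix/propagation argument adapted to the semiclassical twisted calculus on line bundles, combined with the microlocal mass bound that comes from the quasimode equation \eqref{equation:quasimode2}. First I would record the key ingredient: from \eqref{equation:quasimode2} and the source/sink (radial) estimates of Theorem \ref{theorem:source-sink}, the mass of $f_h$ that is microlocally far from the classical trajectory through the covector field $-\eta\alpha$ can be controlled. More precisely, the principal symbol of $\mathbf{P}_h$ (computed with respect to $\nabla^{\mathrm{dyn}}$, relative to the family $h\mapsto k(h)$ with $hk(h)\to\eta$) is, by Proposition \ref{proposition:proprietes}, Item (i) and the examples in \S\ref{sssection:examples}, equal to $-i(\xi(X(x)) + \eta) - (\nu + o(1))$ up to the $o(1)$ corrections; its characteristic set is contained in $\{\xi(X)= -\eta\}\cap\{\nu=0\}$, and on this set the Hamiltonian flow $\Phi_t^{\omega_0}$ of $p_X$ (note $\iota_X F_{\nabla^{\mathrm{dyn}}}=0$ by Lemma \ref{lemma:computations}, so by Proposition \ref{proposition:egorov}, Item (iii) the twisted and untwisted flows agree) preserves $\mc{T}_\eta$. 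The first step is then: away from $\mc{T}_\eta$, either we are in the elliptic set of $\mathbf{P}_h$, where the elliptic parametrix of Proposition \ref{proposition:ellipticity} gives $\mathbf{E}_h f_h = \mc{O}(h^\infty)\|f_h\|_{H^{-N}} + \mc{O}(\|\mathbf{B}\mathbf{P}_h f_h\|)$; or the backward/forward flowout of $\WF_h(\mathbf{E}_h)$ escapes to the radial sources/sinks $\overline{E_s^*},\overline{E_u^*}$ at infinity, where the radial estimates of Theorem \ref{theorem:source-sink} apply (after renormalizing $\mathbf{P}_h$, the relevant threshold is $\max(\omega_+(X)-\nu,0)$ and $\max(\omega_-(X)+\nu,0)$, which since $X$ is volume-preserving and $\nu\ge 0$ is just $0$, compatible with $s>0$); or we use Proposition \ref{proposition:propagation2} to propagate from $\WF_h(\mathbf{E}_h)$ along $\Phi_t^{\omega_0}$ until we reach such a region.

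Concretely, I would fix a small conic neighbourhood of $\mc{T}_\eta$ disjoint from $\WF_h(\mathbf{E}_h)$, and note that every point of $T^*M\setminus\mc{T}_\eta$ in the characteristic set of $\mathbf{P}_h$ flows (forward or backward) into a neighbourhood of the radial source $\overline{E_s^*}$ or sink $\overline{E_u^*}$; points off the characteristic set are in $\Ell(\mathbf{P}_h)$. Chaining the elliptic estimate, Proposition \ref{proposition:propagation2}, and Theorem \ref{theorem:source-sink}, one obtains, for suitable auxiliary operators $\mathbf{B}$,
\[
\|\mathbf{E}_h f_h\|_{H^s_h} \leq C\big(\|\mathbf{B}\mathbf{P}_h f_h\|_{H^s_h} + h^N\|f_h\|_{H^{-N}_h}\big).
\]
By \eqref{equation:quasimode2}, $\|\mathbf{P}_h f_h\|_{\mc{H}^s_h} = o(h^{\ell+1})$, and since on the compactly microlocalised region the anisotropic and the $H^s_h$ norms are uniformly equivalent (the order function $m$ is bounded, so $\mathbf{A}(s)$ is a uniformly elliptic operator of order $0$ on a compact set of phase space), this gives $\|\mathbf{B}\mathbf{P}_h f_h\|_{H^s_h} = o(h^{\ell+1})$. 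Meanwhile $\|f_h\|_{H^{-N}_h} = \mc{O}(1)$ follows from $\|f_h\|_{\mc{H}^s_h}=1$ and the continuity of $\mathbf{A}(s)^{-1}$ together with Lemma \ref{lemma:help}-type bounds (alternatively, crudely bound $\|f_h\|_{H^{-N}_h} \leq \|f_h\|_{L^2}$ since $f_h$ is compactly microlocalised there is no loss). Taking $N$ large, the right-hand side is $o(h^{\ell})$ (in fact $o(h^{\ell+1})$); hence $\|\mathbf{E}_h f_h\|_{H^s_h} = o(h^{\ell})$, and passing from $H^s_h$ to $L^2$ costs at most a power $h^{-|s|}$ which, taking $s>0$ as small as we like, still yields $\mathbf{E}_h f_h = o_{L^2}(h^{\ell/2})$. (The exponent $\ell/2$ rather than $\ell$ here is the margin we keep for later use; it clearly suffices.) The $C^0$ bound then follows from the semiclassical Sobolev embedding Lemma \ref{lemma:sobolev-embedding}: applied to $\mathbf{E}_h f_h\in C^\infty_{\mathrm{hol}}(F,\mathbf{L}^{\otimes k})$ with $F=M$, it gives $\|\mathbf{E}_h f_h\|_{C^0}\leq Ch^{-n/2}\|\mathbf{E}_h f_h\|_{H^\alpha_h}$ for $\alpha>n/2$, and by the same propagation argument applied with the exponent $\alpha$ in place of $s$ one controls $\|\mathbf{E}_h f_h\|_{H^\alpha_h}=o(h^{\ell/2})$, whence $\mathbf{E}_h f_h = o_{C^0}(h^{\ell/2-n/2})$.

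The main obstacle I anticipate is bookkeeping rather than conceptual: one must ensure that the propagation step (Proposition \ref{proposition:propagation2}) can genuinely chain a neighbourhood of any point of $T^*M\setminus\mc{T}_\eta$ to a region where either ellipticity or the radial estimates apply, \emph{uniformly} in $h$, and that the renormalization constants ($\nu+o(1)$, $\eta+o(1)$) — which only shift the characteristic set and the thresholds by $o(1)$ — do not interfere. Here the volume-preserving hypothesis is crucial: it forces $\omega_\pm(X)=0$, so that any $s>0$ (resp.\ $\alpha>n/2$) is above/below the radial thresholds once $\nu\ge 0$, and the radial estimates close without any lower bound requirement on $s$. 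A secondary point is the uniform norm-equivalence between $\mc{H}^s_h$ and $H^s_h$ on compactly microlocalised operators, which is immediate from the definition \eqref{equation:norm} and Proposition \ref{proposition:proprietes}, Item (v), since the weight $e^{sG_m}$ is elliptic of order $0$ on any fixed compact subset of $T^*M$.
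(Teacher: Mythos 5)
There is a genuine gap in the region $\mc{T}_\eta + E_u^*$. You argue that every characteristic point off $\mc{T}_\eta$ flows (forward or backward) into a radial source/sink and that elliptic estimates, propagation (Proposition \ref{proposition:propagation}), and the radial estimates of Theorem \ref{theorem:source-sink} then close the argument. That works for the complement of $\mc{T}_\eta + E_u^*$: a characteristic point with nonzero $E_s^*$-component flows \emph{backward} to the radial source $\overline{E_s^*}$, and the source estimate \eqref{equation:source-estimate} closes on its own (no extra term). But for a point in $\mc{T}_\eta + E_u^*$ with nonzero $E_u^*$-component, the forward flowout goes to the radial \emph{sink} $\overline{E_u^*}$, and the backward flowout goes straight back to $\mc{T}_\eta$. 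The sink estimate \eqref{equation:sink-estimate} carries the additional term $\|\mathbf{A}'f_h\|_{H^s_h}$ with $\WF(\mathbf{A}')$ disjoint from $\overline{E_u^*}$, and to control that term you would need $f_h$ to be already small on the backward flowout of $\WF(\mathbf{A}')$ — which lands near $\mc{T}_\eta$, precisely where you have no a priori control. So neither the elliptic parametrix, nor propagation, nor the radial estimates gives bound on $\mathbf{E}_h f_h$ when $\WF_h(\mathbf{E}_h)$ meets $\mc{T}_\eta + E_u^*$ but not $\mc{T}_\eta$ itself.

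The paper handles this region with a separate positive-commutator/averaging argument: one takes a nonnegative $\mathbf{A}_h$ microlocalised near $\mc{T}_\eta + E_u^*$ with $\sigma_{\mathbf{A}_h}\equiv 1$ near $\mc{T}_\eta$, conjugates it by the propagator $e^{t\X_k}$, and uses the contraction of the flow on $E_u^*$ to write
\[
\mathbf{A}_h - e^{-2(\nu+o(1))t}\,e^{t\X_k}\mathbf{A}_h e^{-t\X_k} = \mathbf{C}_h^*\mathbf{C}_h + \mathbf{D}_h,
\]
where $\mathbf{D}_h$ is microsupported away from $\mc{T}_\eta + E_u^*$ (so controlled by the previous steps) and $\mathbf{C}_h$ is elliptic on an annular subset of $\mc{T}_\eta + E_u^*$ (containing $\mc{T}_\eta$ when $\nu>0$, disjoint from it when $\nu=0$). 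Pairing against $f_h$ and absorbing the $\mathbf{D}_h$ term gives $\|\mathbf{C}_h f_h\|_{L^2}^2 = o(h^\ell)$, hence $\|\mathbf{C}_h f_h\|_{L^2} = o(h^{\ell/2})$, and the rest of $\mc{T}_\eta + E_u^*$ is then reached by propagation. The square root in this step is exactly where the exponent $\ell/2$ comes from; your remark that $\ell/2$ is merely ``the margin we keep for later use'' misidentifies the source of the loss. Your Steps $1$ and $2$ (ellipticity off the characteristic set, source estimate plus propagation away from $\mc{T}_\eta + E_u^*$) and the Sobolev-embedding conclusion agree with the paper, but without the positive-commutator step the proof does not close.
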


\begin{proof}
\emph{Step 1.} First, the operator $\mathbf{P}_h$ has principal symbol $\sigma_{\mathbf{P}_h}(x,\xi) = -i(\langle \xi,X(x)\rangle + \eta + o(1))$. Hence, if $\WF_h(\mathbf{E}_h) \cap \{\langle\xi,X(x)\rangle=-\eta\} = \emptyset$, one obtains by ellipticity of $\PP_h$ on $\WF_h(\mathbf{E}_h)$ (see Proposition \ref{proposition:ellipticity}) that $\|\mathbf{E}_h f_h\|_{L^2} = o(h^{\ell+1})$. (We emphasise that the principal symbol of $\mathbf{P}_h$ depends on $h$ through the $o(1)$ term. In this chapter we will always work with a subsequence $h \mapsto k(h)$, and so here and in what follows the elliptic and wavefront sets are computed with respect to a subsequence. By definition these sets depend only on properties as $h \to 0$ and so using that any point in the complement of $\{\langle\xi,X(x)\rangle=-\eta\}$ is in the elliptic set of $\mathbf{P}_h$ for $h$ small enough, we obtain the claimed result.)
\medskip

\emph{Step 2.} Second, assume that $\WF_h(\mathbf{E}_h) \cap (\mc{T}_\eta + E_u^*) = \emptyset$. We claim that $\mathbf{E}_h f_h = o_{L^2}(h^\ell)$. Indeed, we apply the source estimate  \eqref{equation:source-estimate}. The threshold is $\omega_+ = 0$ so we may apply the estimate with exponent $s$ which appears in the anisotropic space $\mc{H}^s_h$. Taking $\mathbf{A}_h \in \Psi^0_{h, k}(M,L)$ elliptic and localized near $E_s^* \cap \partial_\infty T^*M$, we get:
\[
\|\mathbf{A}_h f_h\|_{H^s_h} \leq C\left( h^{-1}\|\mathbf{B}_h \PP_h f_h\|_{H^s_h} + h^N\|f_h\|_{H^{-N}_h} \right) = o(h^\ell),
\]
with $\mathbf{B}_h$ as in \eqref{equation:source-estimate}.

Now, for all $(x,\xi) \in \{\langle \xi,X(x)\rangle=-\eta\}$ such that $(x,\xi) \notin \mc{T}_\eta + E_u^*$, there is a finite time $T \in \R$ such that $\Phi_T(x,\xi) \in \Ell_h(\mathbf{A}_h)$. (We recall that $\Phi_t(x, \xi) = (\varphi_tx, \xi \circ (d\varphi_t)^{-1}(x))$ denotes the symplectic lift of the flow $\varphi$.) In particular, this holds for any $(x,\xi) \in \WF_h(\mathbf{E}_h)$. As a consequence, by standard propagation of singularities (Proposition \ref{proposition:propagation}), we deduce that $\mathbf{E}_h f_h = o_{L^2}(h^\ell)$.
\medskip

\emph{Step 3.} Finally, it remains to deal with $\mc{T}_\eta + E_u^*$. Consider formally self-adjoint $\mathbf{A}_h \in \Psi_{h, k}^{\comp}(M,L)$ such that $\WF_h(\mathbf{A}_h)$ is localized near $\mc{T}_\eta + E_u^*$, $\sigma_{\mathbf{A}_h} \geq 0$, and $\sigma_{\mathbf{A}_h} \equiv 1$ on a neighborhood of $\mc{T}_\eta$. Up to modifying $\mathbf{A}_h$ by lower order terms, we can also further assume that $\mathbf{A}_h \geq 0$ in the sense that for all $f_h \in C^\infty(M,L^{\otimes k(h)})$, $\langle \mathbf{A}_h f_h,f_h\rangle_{L^2} \geq 0$.

Since $\mathbf{P}_h f_h = o_{\mc{H}^{s}_h}(h^{\ell+1})$ and the propagator $t \mapsto e^{t \mathbf{P}_h/h} \in \mc{L}(\mc{H}^{s}_h)$ is uniformly bounded for $t \in [0,T]$ in a fixed compact interval, by Egorov's Theorem (Proposition \ref{proposition:egorov}), one gets
\[
e^{t\mathbf{P}_h/h} f_h = f_h + o_{\mc{H}^{s}_h}(h^{\ell}),\quad t\in [0, T].
\]
Hence, for $t \in [0, T]$ we obtain
\[
\begin{split}
\langle \mathbf{A}_hf_h, f_h\rangle_{L^2} & = \langle \mathbf{A}_h e^{t\mathbf{P}_h/h} f_h, e^{t\mathbf{P}_h/h} f_h\rangle_{L^2} + o(h^\ell) \\
& = e^{-2(\nu+o(1))t} \langle e^{t\X_k} \mathbf{A}_h e^{-t\X_k}f_h,f_h\rangle_{L^2} +  o(h^\ell).
\end{split}
\]
Using the dynamics of $\Phi$, for $t$ large enough one can write
\[
\mathbf{A}_h - e^{-2(\nu+o(1))t} e^{t\X_k} \mathbf{A}_h e^{-t\X_k} = \mathbf{C}_h^* \mathbf{C}_h + \mathbf{D}_h,
\]
where $\WF_h(\mathbf{D}_h) \cap (\mc{T}_\eta + E_u^*) = \emptyset$, $\WF_h(\mathbf{C}_h)$ is close to $\mc{T}_\eta + E_u^*$, and $\mathbf{C}_h$ is elliptic on an open subset $\Omega \subset \mc{T}_\eta + E_u^*$ which contains a non-empty region homeomorphic to an annulus, such that:
\begin{enumerate}[label=(\roman*)]
\item If $\nu > 0$, then $\mc{T}_\eta \subset \Omega$;
\item If $\nu = 0$, then $\Omega \cap \mc{T}_\eta = \emptyset$.
\end{enumerate}
Applying Step 2 with $\mathbf{D}_h$ and using Step 1, one has $\langle\mathbf{D}_hf_h,f_h\rangle_{L^2}=o(h^\ell)$. Hence $\|\mathbf{C}_hf_h\|^2_{L^2} = o(h^\ell)$ and so
\[
	\|\mathbf{C}_h f_h\|_{L^2} = o(h^{\ell/2}).
\] 
Then, using elliptic estimates (Proposition \ref{proposition:ellipticity}) and standard propagation of singularities (Proposition \ref{proposition:propagation}), it is straightforward to control $\|\mathbf{E}_h f_h\|_{L^2}$ by $\|\mathbf{C}_h f_h\|_{L^2}$ modulo $o(h^\ell)$ remainders. This proves the claim.

Finally, it remains to prove the $C^0$-estimate. Observe that, by Sobolev embeddings (see Lemma \ref{lemma:sobolev-embedding}), for all $N > n/2$, there exists a constant $C > 0$ such that for all $u_h \in C^\infty(M,L^{\otimes k(h)})$, $\|u_h\|_{C^0} \leq C h^{-n/2} \|u_h\|_{H^N_h}$. Now, since $\mathbf{E}_h$ has compact support, the $L^2$-norm in the previous estimates is irrelevant and one has $\|\mathbf{E}_hf_h\|_{H^N_h} = o(h^{\ell/2})$ similarly. Therefore, this yields
\[
\|\mathbf{E}_hf_h\|_{C^0} \leq Ch^{-n/2}\|\mathbf{E}_hf_h\|_{H^N_h} = o(h^{\ell/2 - n/2}).
\]
This concludes the proof.
\end{proof}

Next, we have the following:

\begin{lemma}
\label{lemma:2}
Let $\mathbf{E}_h \in \Psi^{\comp}_{h, k}(M, L)$ such that $\mathbf{E}_h \equiv \mathbbm{1}$ microlocally near $\mc{T}_\eta$. Then, there exists $c > 1$ such that:
\begin{equation}
\label{equation:bounds-p}
\PP_h \mathbf{E}_h f_h = o_{L^2}(h^{\ell/2+1}) = o_{C^0}(h^{\ell/2 + 1 - n/2}), \quad 1/c \leq \|\mathbf{E}_h f_h\|_{L^2} \leq c.
\end{equation}
Furthermore, $\nu = 0$ and $\mathbf{P}_h = -h\X_k - z_h$, where $\Re(z_h) \geq 0$ and
\begin{equation}\label{eq:z_h}
	z_h = i(\eta + o(1)) + o(h^{\ell/2+1}).
\end{equation}	
\end{lemma}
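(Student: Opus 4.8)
The plan is to establish the three assertions of the lemma in turn: the estimate on $\mathbf{P}_h\mathbf{E}_h f_h$ by commuting $\mathbf{P}_h$ through the compactly microlocalised cutoff $\mathbf{E}_h$ and invoking Lemma \ref{lemma:microsupport}; the two-sided bound on $\|\mathbf{E}_h f_h\|_{L^2}$ by showing that $f_h$ is microlocalised at $\mc{T}_\eta$ in the anisotropic norm; and finally the values of $\nu$ and $z_h$ by pairing against $\mathbf{E}_h f_h$.

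\emph{The estimate $\mathbf{P}_h\mathbf{E}_h f_h = o_{L^2}(h^{\ell/2+1}) = o_{C^0}(h^{\ell/2+1-n/2})$.} I would write $\mathbf{P}_h\mathbf{E}_h f_h = \mathbf{E}_h\mathbf{P}_h f_h + [\mathbf{P}_h,\mathbf{E}_h]f_h$. Since $\mathbf{E}_h\in\Psi_{h,k}^{\comp}(M,L)$ is compactly microlocalised it is bounded $\mc{H}^s_h(M,L^{\otimes k})\to H^N_h(M,L^{\otimes k})$ for every $N$, uniformly in $h,k$, so by \eqref{equation:quasimode2} the first term is $o(h^{\ell+1})$ in $L^2$ and, by Lemma \ref{lemma:sobolev-embedding}, $o(h^{\ell+1-n/2})$ in $C^0$ — both much smaller than claimed. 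For the commutator, the scalar part of $\mathbf{P}_h$ drops out, so $[\mathbf{P}_h,\mathbf{E}_h] = -h[\X_{k(h)},\mathbf{E}_h] =: -h\mathbf{G}_h$; using that $\mathbf{E}_h$ is compactly microlocalised and microlocally equal to $\mathbbm 1$ near $\mc{T}_\eta$, and that commutators with the first-order differential operator $\X_{k(h)}$ do not enlarge wavefront sets, one gets $\mathbf{G}_h\in\Psi_{h,k}^{\comp}(M,L)$ with $\WF_h(\mathbf{G}_h)\cap\mc{T}_\eta = \emptyset$. Lemma \ref{lemma:microsupport} then yields $\mathbf{G}_h f_h = o_{L^2}(h^{\ell/2}) = o_{C^0}(h^{\ell/2-n/2})$, hence $[\mathbf{P}_h,\mathbf{E}_h]f_h = o_{L^2}(h^{\ell/2+1}) = o_{C^0}(h^{\ell/2+1-n/2})$, and adding the two contributions proves the claim.

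\emph{The bound $1/c\le\|\mathbf{E}_h f_h\|_{L^2}\le c$.} The upper bound is immediate from uniform boundedness of $\mathbf{E}_h\colon\mc{H}^s_h\to L^2$ and $\|f_h\|_{\mc{H}^s_h}=1$. For the lower bound the point is that $f_h$ is microlocalised at $\mc{T}_\eta$ in the anisotropic norm: I would fix a compactly microlocalised $\mathbf{Q}_h\equiv\mathbbm 1$ near $\mc{T}_\eta$ with $\WF_h(\mathbf{Q}_h)$ contained in the region where $\mathbf{E}_h\equiv\mathbbm 1$, and show $(\mathbbm 1-\mathbf{Q}_h)f_h = o(1)$ in $\mc{H}^s_h(M,L^{\otimes k})$ by covering $\overline{T^*M}$ with: the elliptic set of $\mathbf{P}_h$ (handled by Proposition \ref{proposition:ellipticity} and \eqref{equation:quasimode2}); a conic neighbourhood of $\overline{E_s^*}\cap\partial_\infty T^*M$, where the order function equals $+1$ and the source estimate of Theorem \ref{theorem:source-sink} applies since $s>0=\omega_+(X)$ (and the sign $\Re(z_h)\ge0$ only improves the threshold), giving $o(h^\ell)$; a conic neighbourhood of $\overline{E_u^*}\cap\partial_\infty T^*M$, where the order function equals $-1$ and the sink estimate applies since $-s<0=\omega_-(X)$, its auxiliary term being removed by propagation of singularities; the remaining points of $\partial_\infty T^*M$, reached from the radial sets by the flow, handled by Proposition \ref{proposition:propagation}; and the compact part away from $\mc{T}_\eta$, handled by Lemma \ref{lemma:microsupport}. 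Granting this, $\|\mathbf{Q}_h f_h\|_{\mc{H}^s_h}\ge 1-o(1)$; since $\mathbf{Q}_h$ is compactly microlocalised the $\mc{H}^s_h$ and $L^2$ norms are uniformly equivalent on its range, and $\mathbf{E}_h\mathbf{Q}_h=\mathbf{Q}_h+\mc{O}_{\Psi^{-\infty}_{h,k}}(h^\infty)$, so $\mathbf{E}_h f_h=\mathbf{Q}_h f_h+o_{L^2}(1)$ and $\|\mathbf{E}_h f_h\|_{L^2}\ge 1/c$ for $h$ small. I expect this anisotropic bookkeeping — checking that these pieces cover all of $\overline{T^*M}$ and combine without conflicting powers of $h$ — to be the main obstacle, although it is entirely standard within the Faure--Sj\"ostrand / Dyatlov--Zworski framework.

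\emph{The value of $\nu$ and the form of $z_h$.} Here I would use that $\X_{k(h)}=\nabla^{\mathrm{dyn}}_X$ is skew-adjoint on $L^2(M,L^{\otimes k(h)})$, since the dynamical connection is unitary and $X$ is volume-preserving; as $\mathbf{E}_h f_h$ is smooth for fixed $h$ we get $\Re\langle\X_{k(h)}\mathbf{E}_h f_h,\mathbf{E}_h f_h\rangle_{L^2}=0$. Writing $\mathbf{P}_h=-h\X_{k(h)}-z_h$ with $z_h:=h(\nu+o(1))+i(\eta+o(1))$ and pairing the first assertion against $\mathbf{E}_h f_h$ gives $\Re(z_h)\,\|\mathbf{E}_h f_h\|_{L^2}^2 = -\Re\langle\mathbf{P}_h\mathbf{E}_h f_h,\mathbf{E}_h f_h\rangle_{L^2} = o(h^{\ell/2+1})$, so the lower bound just proved forces $\Re(z_h)=o(h^{\ell/2+1})$. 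Since $\Re(z_h)=h\,\Re(z_n)\ge 0$ and $\Re(z_n)\to\nu$, this gives $\nu=0$; feeding this back and keeping the sharp bound $\Re(z_h)=o(h^{\ell/2+1})$ yields $\mathbf{P}_h=-h\X_{k(h)}-z_h$ with $\Re(z_h)\ge 0$ and $z_h=i(\eta+o(1))+o(h^{\ell/2+1})$, as required.
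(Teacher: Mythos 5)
Your proposal is correct, and the first two assertions are handled exactly as in the paper: commuting $\mathbf{P}_h$ past $\mathbf{E}_h$, invoking Lemma~\ref{lemma:microsupport} for the commutator, Sobolev embedding for the $C^0$ bound; and then, for the lower bound, showing that $f_h$ concentrates near $\mc{T}_\eta$ in the anisotropic norm by combining ellipticity of $\mathbf{P}_h$ away from the characteristic set, the source estimate near $\overline{E_s^*}$, the sink estimate near $\overline{E_u^*}$, and propagation elsewhere.

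Where you genuinely diverge is in the last paragraph, and your route is cleaner. The paper proves $\nu=0$ and $\Re(z_h)=o(h^{\ell/2+1})$ in two separate steps: first, if $\nu>0$ then in Step~3 of the proof of Lemma~\ref{lemma:microsupport} the annular region $\Omega$ can be chosen to contain $\mc{T}_\eta$, whence $\|\mathbf{C}_h f_h\|=o(h^{\ell/2})$ and elliptic estimates give $\|\mathbf{E}_h f_h\|_{L^2}=o(h^{\ell/2})$, contradicting the lower bound; then, writing $\mathbf{P}_h=-h\X_k-w_h-i(\eta+o(1))$ with $w_h\ge 0$, it applies the resolvent bound $\|(-\X_k-z)^{-1}\|_{L^2\to L^2}\le(\Re z)^{-1}$ to $\mathbf{P}_h\mathbf{E}_h f_h=o_{L^2}(h^{\ell/2+1})$ and argues by contradiction along a subsequence. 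You instead pair the quasimode identity against $\mathbf{E}_h f_h$ and exploit skew-adjointness of $\X_{k(h)}$ on $L^2$ (unitary connection, volume-preserving flow), which yields $\Re(z_h)\,\|\mathbf{E}_h f_h\|_{L^2}^2=-\Re\langle\mathbf{P}_h\mathbf{E}_h f_h,\mathbf{E}_h f_h\rangle=o(h^{\ell/2+1})$, and the lower bound $\|\mathbf{E}_h f_h\|_{L^2}\ge1/c$ forces $\Re(z_h)=o(h^{\ell/2+1})$ and hence $\nu=0$ in a single stroke. This is the same information the paper's resolvent estimate encodes, but your version avoids both the subsequence extraction and the appeal back to the internal case analysis of Lemma~\ref{lemma:microsupport}, at the cost of having to observe explicitly that $\X_{k(h)}$ is skew-adjoint — which the paper in any case also uses (``by unitarity of the propagator of $\X_k$'').
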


The previous lemma asserts that quasimodes have microsupport on $\mc{T}_\eta$.

\begin{proof}
 The first estimate is a consequence of Lemma \ref{lemma:microsupport} since
\[
	\PP_h \mathbf{E}_h f_h = \mathbf{E}_h \PP_h f_h + [\PP_h,\mathbf{E}_h] f_h = o_{L^2}(h^{\ell+1}) + o_{L^2}(h^{\ell/2+1}) = o_{L^2}(h^{\ell/2+1}),
\]
where in the second equality we used \eqref{equation:quasimode2}, as well as that $[\PP_h,\mathbf{E}_h] \in h \Psi_{h, k}^{\comp}(M,L)$ and $\WF_h(h^{-1}[\PP_h,\mathbf{E}_h]) \cap \mc{T}_{\eta} = \emptyset$ by construction. The $C^0$-estimate comes from Sobolev embedding as in the proof of Lemma \ref{lemma:microsupport}.

It remains to establish the bounds $1/c \leq \|\mathbf{E}_h f_h\|_{L^2} \leq c$. For that we use the sink estimate \eqref{equation:sink-estimate} (where the threshold $\omega_- = 0$). Taking $\mathbf{A}_h$ microlocally supported and elliptic near $E_u^*$, we obtain:
\[
\|\mathbf{A}_h f_h \|_{H^{-s}_h} \leq C\left(h^{-1}\|\mathbf{B}_h \PP_{h} f_h\|_{H^{-s}_h} + \|\mathbf{A}'_hf_h\|_{H^{-s}_h} + h^N\|f_h\|_{H^{-N}_h} \right),
\]
where we recall that $s > 0$ is the exponent in the anisotropic space, and where $\mathbf{B}_h,\mathbf{A}'_h$ are as in \eqref{equation:sink-estimate}. Since $\mathbf{B}_h \in \Psi^0_{k, h}(M, L)$ is microlocalised near $E_u^*$, where $\mc{H}^s_h$ is microlocally equivalent to $H^{-s}_h$, we get
\[
h^{-1}\|\mathbf{B}_h \PP_{h} f_h\|_{H^{-s}_h} \leq C h^{-1}\|\PP_h f_h\|_{\mc{H}^s_h} =  o(h^{\ell}).
\]
The operator $\mathbf{A}'_h$ has wavefront set disjoint from $E_u^* \cap \partial_\infty T^*M$. Hence, using standard propagation of singularities (Proposition \ref{proposition:propagation}) and the estimates in the proof of Lemma \ref{lemma:microsupport}, we get that $\|\mathbf{A}'_hf_h\|_{H^{-s}_h} = o(h^{\ell/2})$. As a consequence,
\begin{equation}
\label{equation:sink2}
\|\mathbf{A}_h f_h \|_{H^{-s}_h} = o(h^{\ell/2}).
\end{equation}

Now, we write
\[
\|f_h\|_{\mc{H}^s_h} = 1 \leq \|\mathbf{E}_hf_h\|_{\mc{H}^s_h} + \|(\mathbbm{1}-\mathbf{E}_h)f_h\|_{\mc{H}^s_h}.
\]
Observe that $\mathbbm{1}-\mathbf{E}_h$ has wavefront set outside of a small neighborhood of $\mc{T}_\gamma$. Hence, using \eqref{equation:sink2} and the estimates from the proof of Lemma \ref{lemma:microsupport}, we get that $\|(\mathbbm{1}-\mathbf{E}_h)f_h\|_{\mc{H}^s_h} = o(h^{\ell/2})$. Taking $h > 0$ small enough, we therefore obtain $\|\mathbf{E}_hf_h\|_{L^2} > 1/c_1$ for some $c_1 > 0$. Moreover, since $\mathbf{E}_h$ has compact support, there exists a constant $c_2 > 0$ such that $\|\mathbf{E}_hf_h\|_{L^2} \leq c_2\|f_h\|_{\mc{H}^s_h} \leq c_2$. Hence, the claim is verified with $c := \max(c_1, c_2)$.

We now show that $\nu = 0$. If $\nu > 0$, by point (ii) in the proof of Lemma \ref{lemma:microsupport} and elliptic estimates, we get that $\|\mathbf{E}_h f_h\|_{L^2} = o(h^{\ell/2})$ but this contradicts $\|\mathbf{E}_hf_h\|_{L^2} > 1/c$ if $h$ is small enough. Hence $\nu = 0$. 

It remains to show that $\PP_h$ admits the desired expression. Write $w_h \geq 0$ for the $o(h)$ in the definition of $\PP_h$, that is 
\[
	\mathbf{P}_h = -h\X_k - w_h - i(\eta+ o(1)). 
\]
We want to show that $w_h = o(h^{\ell/2+1})$ (we already have $w_h = o(h)$). By unitarity of the propagator of $\X_k$ on $L^2(M,L^{\otimes k(h)})$, for $\Re(z) > 0$, we can bound the resolvent
\[
(-\X_h-z)^{-1} = - \int_0^{+\infty} e^{-t\X_k}e^{-tz}\, \dd t,
\]
by
\begin{equation}
\label{equation:bound-res-l2}
\|(-\X_k - z)^{-1}\|_{L^2 \to L^2} \leq \int_0^{+\infty} \|e^{-t\X_k}\|_{L^2 \to L^2} e^{-t\Re(z)}\, \dd t = (\Re z)^{-1}.
\end{equation}
Hence, if $w_h = o(h^{\ell/2+1})$ does not hold, it means that $w_{h_n} \geq C {h_n}^{\ell/2+1}$ for some positive constant $C >0$ along a subsequence $(h_n)_{n = 1}^\infty$. Hence, along that subsequence (we drop the index $n$ for the sake of simplicity), we get by applying \eqref{equation:bound-res-l2} to \eqref{equation:bounds-p} that
\[
\|\mathbf{E}_h f_h\|_{L^2} \leq w_h^{-1} \|\PP_h \mathbf{E}_h f_h\|_{L^2} = o(1),
\]
and this contradicts the bound $\|\mathbf{E}_h f_h\|_{L^2} > 1/c$ established earlier.
\end{proof}

\subsubsection{Horocyclic invariance} Recall that the sequence $z_h$ was defined in \eqref{eq:z_h}. We now prove that the following holds:

\begin{lemma}
\label{lemma:parallel}
Let $\mathbf{E}_h \in \Psi^{\comp}_{h, k}(M,L)$ such that $\mathbf{E}_h \equiv \mathbbm{1}$ microlocally near $\mc{T}_\eta$. Then 
\begin{equation}
\label{equation:parallel}
(-h\nabla^{\mathrm{dyn}}_{k(h)} - z_h \alpha \wedge)\mathbf{E}_h f_h = o_{L^2}(h^{\ell/2}) = o_{C^0}(h^{\ell/2-n/2}).
\end{equation}
\end{lemma}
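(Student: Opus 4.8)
The plan is to prove the $L^2$ bound first; the $C^0$ bound then follows from the semiclassical Sobolev embedding (Lemma \ref{lemma:sobolev-embedding}) together with the fact that $u:=\mathbf{E}_h f_h$ is compactly microsupported (so its $H^N_h$-norms are controlled by its $L^2$-norm uniformly in $h$), at the cost of the factor $h^{-n/2}=h^{-\dim F/2}$. For the $L^2$ bound, write $\mathbf{Q}_h:=-h\nabla^{\mathrm{dyn}}_{k(h)}-z_h\,\alpha\wedge$, whose symbol is $-i(\xi+\eta\alpha(x))+o(1)$ and vanishes on $\mc{T}_\eta$. Since $TM=\R X\oplus E_s\oplus E_u$, with $\alpha$ annihilating $E_s\oplus E_u$ and $\alpha(X)=1$, controlling $\mathbf{Q}_h u$ reduces to: (a) $\iota_X\mathbf{Q}_h u=-h\X_{k(h)}u-z_h u=\PP_h u=o_{L^2}(h^{\ell/2+1})$, which is exactly Lemma \ref{lemma:2}; and (b) $h\nabla^{\mathrm{dyn}}_{k(h),Y}u=o_{L^2}(h^{\ell/2})$ for $Y$ running over a fixed (H\"older) finite frame of $E_s$ and of $E_u$. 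Only (b) requires work.

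The key mechanism is the intertwining identity
\[
e^{-t\X_{k(h)}}\circ\nabla^{\mathrm{dyn}}_{k(h),Y}\circ e^{t\X_{k(h)}}=\nabla^{\mathrm{dyn}}_{k(h),(\varphi_t)_*Y},\qquad (\varphi_t)_*Y:=d\varphi_t\circ Y\circ\varphi_{-t}.
\]
Differentiating the left-hand side in $t$ produces $-[\X_{k(h)},\nabla^{\mathrm{dyn}}_{k(h),Y}]=-\nabla^{\mathrm{dyn}}_{k(h),[X,Y]}$, the curvature term $ik(h)F_{\nabla^{\mathrm{dyn}}}(X,Y)$ that would normally appear vanishing by Lemma \ref{lemma:computations} ($\iota_X F_{\nabla^{\mathrm{dyn}}}=0$); at the level of holonomies this is simply the statement that the flow on $P$ preserves the dynamical connection ($\mathcal{L}_{X_P}\Theta=0$), so it requires no differentiation of the merely H\"older objects $\nabla^{\mathrm{dyn}}$ and $E_{s/u}$. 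The second ingredient is the Anosov contraction: for $Y\in E_u$ one has $\|(\varphi_{-T})_*Y\|_{C^0}\le Ce^{-cT}$ for $T\ge 0$ (and symmetrically $\|(\varphi_T)_*Y\|_{C^0}\le Ce^{-cT}$ for $Y\in E_s$), with $(\varphi_{\mp T})_*Y$ staying in $E_{u/s}$.

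Now fix $T>0$ and let $Y\in E_u$. Since $e^{-T\X_{k(h)}}$ is unitary on $L^2$ ($\X_{k(h)}$ is skew-adjoint, $\Div X=0$), $\|h\nabla^{\mathrm{dyn}}_{k(h),Y}u\|_{L^2}=\|h\nabla^{\mathrm{dyn}}_{k(h),(\varphi_{-T})_*Y}(e^{T\X_{k(h)}}u)\|_{L^2}$. Expanding $(\varphi_{-T})_*Y=\sum_j c_j Y_j$ in the $E_u$-frame (coefficients $\mc{O}(e^{-cT})$) and $e^{T\X_{k(h)}}u=e^{-Tz_h/h}u+r_h$ — where $r_h=-e^{-Tz_h/h}h^{-1}\int_0^T e^{-s\PP_h/h}\PP_h u\,ds$ has $\|r_h\|_{L^2}=o(h^{\ell/2})$, using $|e^{-Tz_h/h}|\le 1$ (as $\Re z_h\ge 0$), the uniform boundedness of $e^{-s\PP_h/h}$ on $[0,T]$ (as $\Re z_h=o(h^{\ell/2+1})$), and $\PP_h u=o(h^{\ell/2+1})$ — and noting that $u$, $r_h$ and $e^{\pm T\X_{k(h)}}u$ are all compactly microsupported uniformly in $h$ for $T$ fixed (by Egorov, Remark \ref{remark:regularity}), so that $h\nabla^{\mathrm{dyn}}_{k(h),Y_j}$ is $L^2$-bounded on them with uniform norm, one obtains $\|h\nabla^{\mathrm{dyn}}_{k(h),Y}u\|_{L^2}\le Ce^{-cT}\big(A_h+o(h^{\ell/2})\big)$, where $A_h$ denotes the maximum of $\|h\nabla^{\mathrm{dyn}}_{k(h),Y_j}u\|_{L^2}$ over the frame. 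The same argument for $Y\in E_s$ (conjugating instead by $e^{+T\X_{k(h)}}$) gives the analogous bound, so taking the maximum yields $A_h\le Ce^{-cT}(A_h+o(h^{\ell/2}))$; choosing $T$ with $Ce^{-cT}\le\tfrac12$ forces $A_h=o(h^{\ell/2})$, and reassembling the components in the frame $\{X\}\cup\{Y_j^s\}\cup\{Y_j^u\}$ gives $\mathbf{Q}_h u=o_{L^2}(h^{\ell/2})$.

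The step I expect to be the main obstacle is the bookkeeping forced by the H\"older regularity: $h\nabla^{\mathrm{dyn}}_{k(h),Y}$ is \emph{not} a genuine semiclassical pseudodifferential operator (since $\nabla^{\mathrm{dyn}}$ and $E_{s/u}$ are only H\"older), so every $L^2$-estimate on it must be carried out by hand via the crude pointwise bound $\|h\nabla^{\mathrm{dyn}}_{k(h),Y}g\|_{L^2}\le\|Y\|_{C^0}\|h\,dg\|_{L^2}+\mc{O}(h)\|g\|_{L^2}$, which in turn requires checking, at each stage, that every function to which such a derivative is applied is compactly microsupported uniformly in $h$ (so that $\|h\,dg\|_{L^2}\le C\|g\|_{L^2}$). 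The delicate points are the error term $r_h$ and the factor $e^{\pm T\X_{k(h)}}u$: their compact microsupport (uniform in $h$) holds only because $T$ is frozen \emph{before} the contraction estimate is invoked — one cannot let $T\to\infty$, since then $\Phi_{\pm T}$ would spread the microsupport off to infinity along the (un)stable conormal directions. The remaining manipulations (the Duhamel expansion of $e^{T\X_{k(h)}}u$, the frame expansion of $(\varphi_{\mp T})_*Y$, and the final Sobolev-embedding upgrade to $C^0$) are routine.
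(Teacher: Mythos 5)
Your argument is correct and rests on exactly the two facts the paper's proof uses: (1) the flow preserves the dynamical connection, so $\iota_X F_{\nabla^{\mathrm{dyn}}}=0$ and hence $\PP_h$ intertwines with covariant differentiation in the stable/unstable directions; and (2) the Anosov contraction of $d\varphi_{\mp T}$ on $E_{u/s}$. The difference is in the packaging. The paper views $\nabla^{u}_{k(h)}$ as an operator valued in $E_s^*\otimes L^{\otimes k}$, extends $\PP_h$ to $\PP_h^u$ on that bundle by twisting with $\mc{L}_X$ on the $E_s^*$ factor, and shows $(\PP_h^u)^{-1}$ is bounded by $Ch^{-1}$ on both $L^2$ and $C^0$ in one shot via the resolvent integral $-\int_0^\infty e^{-t\X^u_{k(h)}}e^{-tz}\,dt$, convergent on $\{\Re z\geq 0\}$ because $e^{-t\mc{L}_X}|_{E_s^*}$ contracts. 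Applying $(\PP_h^u)^{-1}$ to the commuted equation then yields the $L^2$ and $C^0$ bounds simultaneously. You instead fix a single time $T$, conjugate by $e^{\pm T\X_{k(h)}}$, expand in a H\"older frame of $E_{u/s}$, and close a self-improving estimate $A_h\leq Ce^{-cT}(A_h+o(h^{\ell/2}))$; this is the ``unrolled'' version of the same resolvent bound, your geometric-series gain $Ce^{-cT}<1$ playing the role of the integrability of $e^{-\delta t}$. Your route trades the paper's clean one-shot invertibility statement on $E_s^*\otimes L^{\otimes k}$ for a frame expansion, an a-priori finiteness check of $A_h$ (which holds by compact microsupport of $u$), explicit tracking of the Duhamel remainder $r_h$ and its compact microsupport, and a separate Sobolev-embedding upgrade to $C^0$ — the paper gets $C^0$ for free from the resolvent bound holding on $C^0$ as well. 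In return, your observation that the conjugation identity $e^{-t\X_{k(h)}}\nabla^{\mathrm{dyn}}_{k(h),Y}e^{t\X_{k(h)}}=\nabla^{\mathrm{dyn}}_{k(h),(\varphi_t)_*Y}$ can be checked at the level of parallel transports (the flow commutes with holonomy of $\nabla^{\mathrm{dyn}}$) makes the H\"older bookkeeping a touch cleaner than the paper's commutator computation, which writes $\mc{L}_X\e_i^*$ for a merely H\"older frame $\e_i^*$. One cosmetic slip: your crude pointwise bound should absorb the connection-form contribution as $\mc{O}(1)\|g\|_{L^2}$, not $\mc{O}(h)\|g\|_{L^2}$, since $hk(h)\sim 1$; but since all the iteration needs is $\|h\nabla^{\mathrm{dyn}}_{k(h),Y}g\|_{L^2}\leq C\|g\|_{L^2}$ for compactly microsupported $g$, nothing changes.
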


Note that $\mathbf{E}_h f_h \in C^\infty(M,L^{\otimes k(h)})$ since $\mathbf{E}_h$ has compact support and thus
\[
\nabla^{\mathrm{dyn}}_{k(h)} \mathbf{E}_h f_h \in C^\eps(M,T^*M \otimes L^{\otimes k(h)})
\]
is well-defined despite the non-smoothness of $\nabla^{\mathrm{dyn}}$.

\begin{proof}
Let
\[
	\nabla^{u/s}_{k(h)} : C^\infty(M,L^{\otimes k(h)}) \to C^\eps(M,E_{s/u}^* \otimes L^{\otimes k(h)}),
\]
be the restriction of the dynamical connection to $E_{s/u}^*$ (that is, we can plug in vectors in $E_{u/s}$ inside $\nabla^{u/s}_{k(h)} f$ for a section $f$, hence the use of the letter $u/s$). We claim that
\begin{equation}
\label{equation:todo}
	-h\nabla^{u}_{k(h)}\mathbf{E}_h f_h = o_{L^2}(h^{\ell/2}) = o_{C^0}(h^{\ell/2-n/2}).
\end{equation}
By a symmetric argument, a similar bound will hold for $\nabla^{s}_{k(h)}$ and, combining with \eqref{equation:bounds-p}, this will prove \eqref{equation:parallel}. Hence, it remains to show \eqref{equation:todo}.

The operator $\X_{k}$ acting on sections of $L^{\otimes k}$ extends naturally to an operator $\X_k^u$ acting on $E_s^* \otimes L^{\otimes k}$ by twisting with the Lie derivative on the $E_s^*$ factor. Hence $\mathbf{P}_h$ extends naturally to $\mathbf{P}_h^u$ in the same fashion. Moreover, it is straightforward to check that the following commutation relation holds:
\[
\X_k^u \nabla^u_k = \nabla^u_k \X_k, \qquad \mathbf{P}_h^u \nabla^u_{k(h)} = \nabla^u_{k(h)} \PP_h.
\]
Indeed, the second equality is a consequence of the first one, and the first is proved as follows. Choose a local frame $(\e_i)_{i = 1}^r$ for $E_u$, write $(\e_i^*)_{i = 1}^r$ for the dual frame of $E_s^*$, and for a local section $s$ compute
\begin{align*}
	\X_k^u \nabla^u_k s &= \X_k^u \sum_{i = 1}^r \e_i^* \otimes \nabla^{\mathrm{dyn}}_{\e_i} s = \sum_{i = 1}^r \Lie_X \e_i^* \otimes \nabla^{\mathrm{dyn}}_{\e_i} s + \sum_{i = 1}^r \e_i^* \otimes \nabla^{\mathrm{dyn}}_X \nabla^{\mathrm{dyn}}_{\e_i} s\\
	&= \sum_{i = 1}^r \Lie_X \e_i^* \otimes \nabla^{\mathrm{dyn}}_{\e_i} s + \sum_{i = 1}^r \e_i^* \otimes [\nabla^{\mathrm{dyn}}_{X}, \nabla^{\mathrm{dyn}}_{\e_i}] s + \nabla^{u}_k \X_k s.
\end{align*}
It suffices to show that the first two terms on the right hand side sum to zero. Plugging in $\e_j$ for some fixed $j$ and using the definition of curvature, we get
\[
	-\sum_{i = 1}^r \e_i^*([X, \e_j]) \nabla^{\mathrm{dyn}}_{\e_i} s + \nabla^{\mathrm{dyn}}_{[X, \e_j]} s + F_{\nabla^\mathrm{dyn}}(X, \e_j) s = 0,
\]
where $F_{\nabla^\mathrm{dyn}}$ denotes the curvature of $\nabla^{\mathrm{dyn}}$, and we used the fact that $\iota_X F_{\nabla^\mathrm{dyn}} = 0$ (see Lemma \ref{lemma:computations}, Item 1). Since $j$ was arbitrary this proves the claim.

As a consequence,
\begin{equation}\label{eq:as-a-consequence}
	-\PP_h^u h\nabla^u_{k(h)} \mathbf{E}_h f_h = - h\nabla^u_{k(h)} \PP_h \mathbf{E}_h f_h = - h\nabla^u_{k(h)} \mathbf{E}_h \PP_h f_h + h\nabla^u_{k(h)} [\mathbf{E}_h, \PP_h] f_h.
\end{equation}
The first term on the right hand side is bounded by
\[
	\|h\nabla^u_{k(h)} \mathbf{E}_h \PP_h f_h\|_{L^2} \leq C \|\mathbf{E}_h \PP_h f_h\|_{H^1_h} \leq C \|\PP_h f_h\|_{\mc{H}^s_h}= o(h^{\ell+1}),
\]
where we used \eqref{equation:quasimode2} in the last equality, while the second term is bounded by
\[
\|h\nabla^u_{k(h)} [\mathbf{E}_h, \PP_h] f_h\|_{L^2} \leq C\|[\mathbf{E}_h, \PP_h] f_h\|_{H^1_h} =  o(h^{\ell/2+1}),
\]
by Lemma \ref{lemma:microsupport} since $\WF_h(h^{-1}[\mathbf{E}_h, \PP_h]) \cap \mc{T}_\eta = \emptyset$. Hence 
\begin{equation}
\label{equation:petit}
-\PP_h^u h\nabla^u_{k(h)} \mathbf{E}_h f_h = o_{L^2}(h^{\ell/2+1})
\end{equation}

Similarly, in the $C^0$-topology, by Sobolev embeddings (using Lemma \ref{lemma:sobolev-embedding}), one has  for all $N > n/2+1$:
\[
\begin{split}
\|h\nabla^u_{k(h)} [\mathbf{E}_h, \PP_h] f_h\|_{C^0}  \leq C\|[\mathbf{E}_h, \PP_h] f_h\|_{C^1_h}\leq C h^{-n/2} \|[\mathbf{E}_h, \PP_h] f_h\|_{H^N_h} = o(h^{\ell/2+1-n/2}),
\end{split}
\]
where $C^1_h$ denotes the space $C^1$ with semiclassical norm $\|f\|_{C^1_h} := \|f\|_{C^0} + h \|df\|_{C^0}$, and again we used Lemma \ref{lemma:microsupport} and $\WF_h(h^{-1}[\mathbf{E}_h, \PP_h]) \cap \mc{T}_\eta = \emptyset$. Next, as above we have that
\[
	\|h\nabla^u_{k(h)} \mathbf{E}_h \PP_h f_h\|_{C^0} \leq C \|\mathbf{E}_h \PP_h f_h\|_{C^1_h} \leq C \|\PP_h f_h\|_{\mc{H}^s_h}= o(h^{\ell+1}).
\]
Combining the previous two inequalities and using \eqref{eq:as-a-consequence}, we thus get 
\begin{equation}
\label{equation:petit2}
-\PP_h^u h\nabla^u_{k(h)} \mathbf{E}_h f_h = o_{C^0}(h^{\ell/2+1-n/2}).
\end{equation}

Now, using the contraction property of the propagator $e^{-t\mc{L}_X}$ on $E_s^*$, it is straightforward to check that the resolvent
\[
	(-\X^u_{k(h)}-z)^{-1} = - \int_0^{+\infty} e^{-t\X^u_{k(h)}} e^{-tz}\, \dd t
\]
converges both in the $C^0$- and the $L^2$-topology on a half-space $\{z \in \C\mid \Re(z) > -\delta\}$ for some $\delta > 0$. Namely, given $u_h \in C^0(M,E_s^* \otimes L^{\otimes k(h)})$,
\[
\begin{split}
\|(-\X^u_{k(h)}-z)^{-1} u\|_{C^0,L^2} & \leq \int_0^{+\infty} \|e^{-t \X^u_{k(h)}}\|_{C^0 \to C^0,L^2 \to L^2} e^{-t\Re(z)}  \dd t~ \|u\|_{C^0,L^2} \\
& \leq \int_0^{+\infty} \|e^{-t \mc{L}_X|_{E_s^*}}\|_{C^0 \to C^0,L^2 \to L^2} e^{-t\Re(z)} \dd t \|u\|_{C^0,L^2} \\ 
& \leq \int_0^{+\infty} e^{-\delta t} e^{-t\Re(z)} ~ \dd t ~\|u\|_{C^0,L^2} \leq (\delta + \Re(z))^{-1}\|u\|_{C^0,L^2}.
\end{split}
\]
where $\delta$ is the contraction rate of the flow on $E_s^*$ (see \eqref{equation:anosov}). In particular, the resolvent is defined and uniformly bounded for $\{z \in \C \mid \Re(z) \geq 0\}$ on both $C^0$ and $L^2$. Hence, $\mathbf{P}^u_h$ is invertible on both $C^0$ and $L^2$ with norm bounded by $\leq Ch^{-1}$. Applying $({\mathbf{P}^u_h})^{-1}$ to \eqref{equation:petit} and \eqref{equation:petit2}, we thus obtain:
\[
-h\nabla^u_{k(h)} \mathbf{E}_h f_h = o_{L^2}(h^{\ell/2}) = o_{C^0}(h^{\ell/2- n/2}).
\]
This completes the proof.
\end{proof}

\subsubsection{Vanishing of the curvature}

Recall that $\mathbf{E}_h \in \Psi_{h, k}^{\comp}(M,L)$ satisfies $\mathbf{E}_h \equiv \mathbbm{1}$ microlocally near $\mc{T}_\eta$ and that we set $v_h :=  \mathbf{E}_h f_h$.

\begin{lemma}
We have that
\begin{equation}
\label{equation:d-small}
\dd |v_h|^2 = o_{C^0}(h^{\ell/2 - n- 1}).
\end{equation}
\end{lemma}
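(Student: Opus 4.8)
The plan is to compute $d|v_h|^2$ using the metric property of the dynamical connection and then control each resulting term using the horocyclic invariance established in Lemma \ref{lemma:parallel}. Since $\nabla^{\mathrm{dyn}}_{k(h)}$ is unitary, for any (real) tangent vector field $Z$ on $M$ one has the Leibniz rule
\[
	Z|v_h|^2 = \langle \nabla^{\mathrm{dyn}}_{k(h),Z} v_h, v_h \rangle + \langle v_h, \nabla^{\mathrm{dyn}}_{k(h),Z} v_h \rangle = 2\Re \langle \nabla^{\mathrm{dyn}}_{k(h),Z} v_h, v_h \rangle,
\]
where $\langle\bullet,\bullet\rangle$ is the Hermitian product in the fibres of $L^{\otimes k(h)}$. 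Decomposing $Z = \zeta_X X + \zeta_s + \zeta_u$ along $\R X \oplus E_s \oplus E_u$ with bounded coefficients, the plan is to estimate each of the three pieces $\langle h\nabla^{\mathrm{dyn}}_{k(h),X} v_h, v_h\rangle$, $\langle h\nabla^{\mathrm{dyn}}_{k(h),\zeta_s} v_h, v_h\rangle$, $\langle h\nabla^{\mathrm{dyn}}_{k(h),\zeta_u} v_h, v_h\rangle$ in the $C^0$ norm. For the flow direction, $h\nabla^{\mathrm{dyn}}_{k(h),X} v_h = -\mathbf{P}_h v_h - z_h v_h + h\nabla^{\mathrm{dyn}}_{k(h),X}v_h + \mathbf{P}_h v_h + z_h v_h$; more cleanly, by the definition of $\mathbf{P}_h = -h\X_{k(h)} - z_h$ (using $\nu = 0$ and \eqref{eq:z_h}) we have $h\X_{k(h)} v_h = -\mathbf{P}_h v_h - z_h v_h$, and $\mathbf{P}_h v_h = o_{C^0}(h^{\ell/2 + 1 - n/2})$ by \eqref{equation:bounds-p}, while $z_h = \mc{O}(1)$ and $v_h = \mc{O}_{C^0}(h^{-n/2})$ by Sobolev embedding, so $h\X_{k(h)}v_h = o_{C^0}(h^{1 - n/2})$. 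For the stable/unstable directions, Lemma \ref{lemma:parallel} gives $(-h\nabla^{\mathrm{dyn}}_{k(h)} - z_h \alpha\wedge)v_h = o_{C^0}(h^{\ell/2 - n/2})$; since $\alpha$ vanishes on $E_s \oplus E_u$, contracting with $\zeta_{s/u} \in E_s \oplus E_u$ kills the $z_h\alpha\wedge$ term and yields $h\nabla^{\mathrm{dyn}}_{k(h),\zeta_{s/u}} v_h = o_{C^0}(h^{\ell/2 - n/2})$.

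Combining these, the plan is to pair each with $v_h = \mc{O}_{C^0}(h^{-n/2})$: the flow term contributes $o_{C^0}(h^{1-n/2}) \cdot \mc{O}_{C^0}(h^{-n/2}) = o_{C^0}(h^{1-n})$, and the stable/unstable terms contribute $o_{C^0}(h^{\ell/2 - n/2}) \cdot \mc{O}_{C^0}(h^{-n/2}) = o_{C^0}(h^{\ell/2 - n})$. Since $\ell/2 - n > 1 - n$ is false in general — rather we should compare to $\ell/2 - n - 1$; more carefully, using $\ell/2 - n - 1 > 0$ from \eqref{equation:inegalites} and that $1 - n \geq \ell/2 - n - 1$ would require $\ell \leq 4$, which is false — so actually the dominant (worst) term is the flow term $o_{C^0}(h^{1-n})$, and since $1 - n > \ell/2 - n - 1$ is equivalent to $\ell < 4$, again false. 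Let me instead simply note that all terms are $o_{C^0}(h^{\min(1-n,\,\ell/2-n)}) = o_{C^0}(h^{1-n})$ as $1 - n \le \ell/2 - n$; and since $h \le 1$, $o_{C^0}(h^{1-n}) \subset o_{C^0}(h^{\ell/2 - n - 1})$ precisely when $1 - n \ge \ell/2 - n - 1$, i.e. $\ell \le 4$ — this fails. The correct bookkeeping: the claimed bound $o_{C^0}(h^{\ell/2 - n - 1})$ should be the \emph{weakest} of the available estimates; since $\ell \ge 10$ we have $\ell/2 - n - 1 \le 1 - n \le \ell/2 - n$, so every term above is indeed $o_{C^0}(h^{\ell/2-n-1})$, and summing the finitely many coordinate contributions (over a finite atlas, with a partition of unity) preserves the $o_{C^0}$ bound. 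This gives \eqref{equation:d-small}.

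The main obstacle I anticipate is a purely technical one: making precise the $C^0$ bound on $v_h = \mathbf{E}_h f_h$ itself, which requires the semiclassical Sobolev embedding (Lemma \ref{lemma:sobolev-embedding}) together with the fact that $\mathbf{E}_h$ has compact microsupport so that the $\mc{H}^s_h$-norm of $f_h$ controls $H^N_h$-norms of $\mathbf{E}_h f_h$ uniformly (modulo negligible tails coming from propagation of singularities, as in the proof of Lemma \ref{lemma:microsupport}); one needs $\|v_h\|_{C^0} = \mc{O}(h^{-n/2})$ with no extra loss, which follows since $\|f_h\|_{\mc{H}^s_h} = 1$ and $\mathbf{E}_h$ is compactly microlocalised. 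A secondary subtlety is that $\nabla^{\mathrm{dyn}}$ is only Hölder continuous, so $d|v_h|^2$ a priori need not be continuous; but $|v_h|^2 \in C^\infty$ since $v_h \in C^\infty(M, L^{\otimes k(h)})$ (as $\mathbf{E}_h$ is smoothing enough / has compact support) and the Hermitian metric on $L^{\otimes k(h)}$ is smooth, so $d|v_h|^2$ is genuinely smooth and the estimate \eqref{equation:d-small} is meaningful; the Hölder connection only enters through $\nabla^{\mathrm{dyn}}_{k(h),Z} v_h$, which is $C^\varepsilon$ and hence $C^0$, which is all we use. Once these regularity points are dispatched, the estimate is a direct consequence of Lemmas \ref{lemma:2} and \ref{lemma:parallel} and the arithmetic of the exponents in \eqref{equation:inegalites}.
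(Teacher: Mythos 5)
There is a genuine gap, and it is exactly in the flow direction. You estimate $h\X_{k(h)} v_h = -\mathbf{P}_h v_h - z_h v_h$ and then claim $h\X_{k(h)}v_h = o_{C^0}(h^{1-n/2})$, but this is inconsistent with the two bounds you just stated: $\mathbf{P}_h v_h = o_{C^0}(h^{\ell/2+1-n/2})$ is fine, but $z_h v_h = \mc{O}(1)\cdot\mc{O}_{C^0}(h^{-n/2}) = \mc{O}_{C^0}(h^{-n/2})$, and since $\eta\in[0,1]$ is arbitrary there is no extra power of $h$ to be had from $z_h$. The correct conclusion of that line is $h\X_{k(h)}v_h = \mc{O}_{C^0}(h^{-n/2})$, which is an order-one size in the natural scaling. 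Pairing this with $v_h = \mc{O}_{C^0}(h^{-n/2})$ gives only $\mc{O}_{C^0}(h^{-n-1})$ for $X|v_h|^2$ — far too big. Your subsequent exponent bookkeeping inherits this error and then, trying to repair it, asserts $\ell/2-n-1 \leq 1-n$ for $\ell \geq 10$, which is false ($\ell/2-n-1 \leq 1-n$ iff $\ell\leq 4$), and in fact contradicts the inequality you derived two sentences earlier.

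The idea you are missing is a cancellation that only becomes visible \emph{after} you take the real part, i.e. after you form $2\Re\langle\nabla^{\mathrm{dyn}}_X v_h, v_h\rangle$ rather than estimating $\nabla^{\mathrm{dyn}}_X v_h$ in $C^0$ by itself. When you pair with $v_h$, the term $-z_h v_h$ contributes $-z_h |v_h|^2 - \overline{z_h}|v_h|^2 = -2\Re(z_h)|v_h|^2$, and by Lemma \ref{lemma:2} the \emph{real part} of $z_h$ is $o(h^{\ell/2+1})$ even though $z_h$ itself is $\mc{O}(1)$; the $i\eta$ part of $z_h$ disappears identically from $X|v_h|^2$. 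Equivalently (and this is what the paper does), you should write
\[
Y|v_h|^2 = \langle(\nabla^{\mathrm{dyn}}_Y + \tfrac{z_h}{h}\alpha(Y))v_h,v_h\rangle + \langle v_h,(\nabla^{\mathrm{dyn}}_Y + \tfrac{z_h}{h}\alpha(Y))v_h\rangle - 2\Re(\tfrac{z_h}{h})\alpha(Y)|v_h|^2
\]
for an arbitrary bounded $Y$, control the first two terms by the $C^0$ bound of Lemma \ref{lemma:parallel} paired against $\|v_h\|_{C^0}=\mc{O}(h^{-n/2})$ to get $o(h^{\ell/2-n-1})$, and observe the last term is $o(h^{\ell/2})\cdot\mc{O}(h^{-n}) = o(h^{\ell/2-n})$, which is smaller. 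No Anosov splitting of $Y$ is needed. Your splitting is fine in spirit (and for $\zeta_{s/u}$ your argument is correct, since $\alpha(\zeta_{s/u})=0$ makes the cancellation automatic), but in the flow direction you must take the real part before estimating, and track $\Re(z_h)$ separately from $|z_h|$. Once that is done, the flow contribution becomes $o(h^{\ell/2-n})$, and the weakest of the three estimates is the stable/unstable one, $o(h^{\ell/2-n-1})$, which is exactly the claim.
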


\begin{proof}
The estimate is equivalent to showing that $Y |v_h|^2 = o_{C^0}(h^{\ell/2-n-1})$ for all smooth vector fields $Y \in C^\infty(M,TM)$ such that $\|Y\|_{C^0} \leq 1$.

Since $\mathbf{E}_h$ has compact support, we get, using the $L^2$-bound $\|v_h\|_{L^2} < c$ from \eqref{equation:bounds-p} and Sobolev embeddings (see Lemma \ref{lemma:sobolev-embedding}), that $\|v_h\|_{C^0} \leq C h^{-n/2}$. Hence, by unitarity of the dynamical connection, we get:
\[
\begin{split}
Y |v_h|^2 & = \langle\nabla^{\mathrm{dyn}}_Yv_h,v_h\rangle + \langle v_h,\nabla^{\mathrm{dyn}}_Y v_h\rangle \\
& = \langle(\nabla^{\mathrm{dyn}}_Y + \tfrac{z_h}{h} \alpha(Y))v_h,v_h\rangle + \langle v_h,(\nabla^{\mathrm{dyn}}_Y + \tfrac{z_h}{h} \alpha(Y)) v_h\rangle - 2 \Re(\tfrac{z_h}{h}) \alpha(Y).
\end{split}
\]
By Lemma \ref{lemma:2}, $\Re(\tfrac{z_h}{h}) = o(h^{\ell/2})$. The conclusion then follows immediately by appying the $C^0$-bound of \eqref{equation:parallel} and $\|v_h\|_{C^0} \leq C h^{-n/2}$ to the previous equality.
\end{proof}

\begin{lemma}
\label{lemma:lower-bound}
There exists a constant $C > 0$ such that for all $x \in M, h > 0$, $|v_h(x)| > C$.
\end{lemma}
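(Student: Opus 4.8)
\textbf{Proof plan for Lemma \ref{lemma:lower-bound}.}

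The plan is to argue by contradiction, exploiting the near-parallelism of $v_h := \mathbf{E}_h f_h$ together with the dynamics on $\mc{T}_\eta$. Suppose no such uniform lower bound exists; then there is a subsequence $h_n \to 0$ and points $x_n \in M$ with $|v_{h_n}(x_n)| \to 0$. The idea is that the quasi-parallel transport equation \eqref{equation:parallel} propagates the smallness of $|v_h|$ along flowlines (indeed along stable/unstable manifolds as well), and then a compactness/minimality argument will spread it over all of $M$, contradicting the lower bound $\|v_h\|_{L^2} > 1/c$ from \eqref{equation:bounds-p}.

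Concretely, first I would use the $C^0$ estimate \eqref{equation:d-small}, $\dd |v_h|^2 = o_{C^0}(h^{\ell/2 - n - 1})$, together with the first inequality in \eqref{equation:inegalites} (which gives $\ell/2 - n - 1 > 0$, so this really is $o(1)$). This says that the function $|v_h|^2 \in C^\infty(M)$ becomes asymptotically constant on $M$ in the $C^0$-norm: $\big\| \,|v_h|^2 - c_h \,\big\|_{C^0} = o(1)$ for some constant $c_h \geq 0$ (take $c_h$ to be the average, or the value at any fixed point). Now $c_h = \int_M |v_h|^2 \, \dd x + o(1) = \|v_h\|_{L^2(M,L^{\otimes k(h)})}^2 + o(1)$, and by \eqref{equation:bounds-p} we have $1/c \leq \|v_h\|_{L^2} \leq c$, hence $c_h \geq 1/c^2 - o(1) \geq \tfrac{1}{2c^2}$ for $h$ small. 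Therefore $|v_h(x)|^2 \geq c_h - o(1) \geq \tfrac{1}{4c^2}$ uniformly in $x \in M$ for $h$ small enough, which is exactly the claimed statement with $C := 1/(2c)$ (adjusting for the finitely many large $h$, where there is nothing to prove since $v_h$ is a fixed nonvanishing section — or rather, one simply restricts attention to the contradiction regime $h \to 0$, as the lemma is only used there).

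In fact the argument as just sketched is essentially complete and does not require the flow dynamics at all — the heavy lifting was already done in establishing \eqref{equation:d-small}. The one point that needs a little care — and is the main (mild) obstacle — is making sure the passage from ``$\dd|v_h|^2$ small in $C^0$'' to ``$|v_h|^2$ close to a constant in $C^0$'' is quantitatively clean on the compact manifold $M$: one integrates $\dd |v_h|^2$ along minimizing geodesics from a fixed basepoint $x_0$, using that $M$ has bounded diameter, so that $\big| |v_h(x)|^2 - |v_h(x_0)|^2 \big| \leq \mathrm{diam}(M) \cdot \| \dd |v_h|^2 \|_{C^0} = o(1)$ uniformly in $x$. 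Then set $c_h := |v_h(x_0)|^2$ and proceed as above. I expect no further difficulty; the estimate \eqref{equation:inegalites} was arranged precisely so that the exponent $\ell/2 - n - 1$ in \eqref{equation:d-small} is positive, making this contradiction go through, and this lemma will feed into the final step showing that the curvature $\mathbf{F}_{\overline\nabla}$ must be proportional to $d\alpha$ along $\mc{T}_\eta$, contradicting the linear independence hypothesis.
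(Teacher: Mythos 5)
Your proof is correct and takes essentially the same approach as the paper: both deduce from \eqref{equation:d-small} and the bounded diameter of $M$ that $|v_h|^2$ is nearly constant on $M$, then use the $L^2$ lower bound from \eqref{equation:bounds-p} to show this constant is bounded away from zero. The only cosmetic difference is that the paper phrases it as a contradiction argument while you argue directly (and there is a harmless normalization slip where you identify $c_h$ with $\|v_h\|_{L^2}^2$ rather than $\|v_h\|_{L^2}^2/\vol(M)$).
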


\begin{proof}
Since $M$ is compact, it has finite diameter (after equipping it with an arbitrary Riemannian metric), so we get by \eqref{equation:d-small} that uniformly for all $x,y \in M$,
\begin{equation}
\label{equation:bound-norm}
|v_h(x)|^2-|v_h(y)|^2 = o(h^{\ell/2- n-1}).
\end{equation}
Now, assume for the sake of a contradiction that (along a subsequence) there exists $x_h \in M$ such that $|v_h(x_h)|^2 \to 0$ as $h \to 0$. Then we get by \eqref{equation:bound-norm} that $\|v_h\|^2_{C^0} = o(1)$.

Hence
\[
\|\mathbf{E}_h f_h\|^2_{L^2} = \|v_h\|^2_{L^2}\leq C\|v_h\|^2_{C^0} = o(1),
\]
which contradicts the lower bound $\|\mathbf{E}_h f_h\|_{L^2} > 1/c$ from \eqref{equation:bounds-p}.  (In this lemma we use that $\ell/2 > n + 1$, that is, $\ell > 2n + 2$.)
\end{proof}

If $L$ is not a torsion bundle, we obtain immediately a contradiction proving estimate \eqref{equation:bound1}.

\begin{proof}[Proof of \eqref{equation:bound1} if $L$ is not torsion]
Since $L$ is not torsion and $v_h \in C^\infty(M,L^{\otimes k(h)})$ with $k(h)=\sqrt{h^{-2} - 1}$, there exists $x_h \in M$ such that $v_h(x_h) = 0$. This contradicts the lower bound of Lemma \ref{lemma:lower-bound}.
\end{proof}

When $L$ is torsion, the proof of \eqref{equation:bound1} is slightly more involved. (Note however that the fact that $L$ is torsion will not be used in what follows; the preceding proof was for motivational purposes only.) It relies on the following crucial observation:

\begin{lemma}
\label{lemma:contradiction}
The quasimode equation \eqref{equation:quasimode2} implies $-iF_{\nabla^{\mathrm{dyn}}} + \eta \dd \alpha = 0$.
\end{lemma}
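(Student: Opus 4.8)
The idea is to exploit the horocyclic invariance established in Lemma \ref{lemma:parallel} together with the structure of the dynamical connection (in particular $\iota_X F_{\nabla^{\mathrm{dyn}}}=0$, Lemma \ref{lemma:computations}) to show that the curvature of the \emph{renormalized} connection $\nabla^{\mathrm{dyn}}_{k(h)} + \tfrac{z_h}{h}\alpha\wedge$ must vanish in the limit $h \to 0$, and then pass this to the distributional identity $-iF_{\nabla^{\mathrm{dyn}}} + \eta\, d\alpha = 0$. First I would set $v_h := \mathbf{E}_h f_h$ and $\beta_h := \tfrac{z_h}{h}$ (a bounded sequence, since $z_h = i(\eta+o(1)) + o(h^{\ell/2+1})$ by Lemma \ref{lemma:2}, so $\beta_h \to i\eta$ along a subsequence). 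Lemma \ref{lemma:parallel} gives $(-h\nabla^{\mathrm{dyn}}_{k(h)} - z_h\alpha\wedge) v_h = o_{C^0}(h^{\ell/2-n/2})$, which I rewrite as $(\nabla^{\mathrm{dyn}}_{k(h)} + \beta_h\alpha\wedge) v_h = o_{C^0}(h^{\ell/2-n/2-1})$. Working in a local trivialization $\nabla^{\mathrm{dyn}} = d + i\beta_{\mathrm{dyn}}$ over a contractible chart $U$ and writing $v_h = u_h s^{\otimes k(h)}$ with $|s|=1$, this becomes $d u_h = -i\big(k(h)\beta_{\mathrm{dyn}} - i\beta_h\alpha\big) u_h + o_{C^0}(h^{\ell/2-n/2-1}) u_h$ after dividing by $u_h$ (valid since $|u_h| = |v_h| > C$ uniformly by Lemma \ref{lemma:lower-bound}, using $\ell/2 > n+1$). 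Hence $d\log u_h = -i\gamma_h + o_{C^0}(\cdot)$ with $\gamma_h := k(h)\beta_{\mathrm{dyn}} - i\beta_h\alpha$ a Hölder $1$-form.

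The key step is then the following: since $d(d\log u_h) = 0$ (the second exterior derivative of a function vanishes, in the distributional sense — here I must be slightly careful, as $u_h$ is only $C^1$ because $\nabla^{\mathrm{dyn}}$ is merely Hölder, but $d\log u_h \in C^\varepsilon$ and one may take $d$ of it as a current), applying $d$ to $d\log u_h = -i\gamma_h + o_{C^0}(\cdot)$ gives $d\gamma_h = o_{\mathcal{D}'}(h^{\ell/2-n/2-1})\cdot(\text{derivatives of }\log u_h)$, which after controlling the right-hand side (using again $|u_h| \geq C$ and the $C^0$-bound on $d\log u_h$, so $\|d\log u_h\|_{C^0} = O(k(h)) = O(h^{-1})$) yields $d\gamma_h = o_{\mathcal{D}'}(h^{\ell/2-n/2-2})$. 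Now $d\gamma_h = k(h)\, d\beta_{\mathrm{dyn}} - i\beta_h\, d\alpha$ (distributionally; $d\beta_h = 0$ since $\beta_h$ is a constant). Recalling $k(h) = \sqrt{h^{-2}-1} \sim h^{-1}$ and dividing by $k(h)$, we obtain $d\beta_{\mathrm{dyn}} - i\tfrac{\beta_h}{k(h)} d\alpha = o_{\mathcal{D}'}(h \cdot h^{\ell/2-n/2-2})$; but $\tfrac{\beta_h}{k(h)} = \tfrac{z_h}{h k(h)} \to \eta$ (since $z_h \to i\eta$ and $hk(h) \to 1$), so passing to the limit $h\to 0$ along the subsequence gives $d\beta_{\mathrm{dyn}} - i\eta\, d\alpha = 0$ in $\mathcal{D}'(U)$. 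Since $F_{\nabla^{\mathrm{dyn}}} = i\, d\beta_{\mathrm{dyn}}$ in this trivialization (see \S\ref{sssection:U(1)}), this reads $-iF_{\nabla^{\mathrm{dyn}}} + \eta\, d\alpha = 0$ on $U$; as $U$ was an arbitrary chart and the identity is local, it holds globally on $M$.

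\textbf{Main obstacle.} The delicate point will be justifying the distributional manipulations in the low-regularity setting: $\beta_{\mathrm{dyn}}$ and $\alpha$ are only Hölder continuous, so $d\beta_{\mathrm{dyn}}$ and $d\alpha$ are genuine distributions, and one cannot naively divide the quasimode equation by $u_h$ and differentiate. The clean way around this is to use the \emph{integral} form of horocyclic invariance: rather than differentiating, integrate the $1$-form $\gamma_h$ (plus the $o_{C^0}$ error) over the boundary of a small smooth $2$-disk $D \subset U$ and apply Stokes' theorem, which shows $\int_D d\beta_{\mathrm{dyn}} = \tfrac{\beta_h}{k(h)}\int_D i\,d\alpha + o(h^{\ell/2-n/2-1}) \cdot k(h)^{-1} \cdot (\text{boundary length})$, legitimately pairing the distributional $2$-forms against the indicator of $D$ (approximated by smooth cutoffs). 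This avoids ever taking a derivative of $u_h$ beyond the first, and the inequalities $\ell/2 - n - 1 > 0$ in \eqref{equation:inegalites} are exactly what make the error term vanish after the division by $k(h) \sim h^{-1}$. A secondary technical check is that $\mathbf{E}_h f_h$ is genuinely smooth (so the local computation makes sense pointwise), which is guaranteed by the compact microsupport of $\mathbf{E}_h$; and that the subsequence extraction (to get $\beta_h \to i\eta$) is harmless since the conclusion $-iF_{\nabla^{\mathrm{dyn}}} + \eta\,d\alpha = 0$ does not depend on it once $\eta$ is fixed by the original sequence $z_n$.
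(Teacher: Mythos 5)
Your Stokes version is essentially correct, and it follows a genuinely \emph{different} and \emph{simpler} route than the paper's. Let me spell out the difference, because the key step that makes your route work is implicit in your write-up and should be made explicit.

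The paper takes the integrated (holonomy) form of the quasimode equation, which produces a statement about $\exp\left(-ik(h)\int_\gamma(\beta_{\mathrm{dyn}}+(\eta+\varepsilon_h)\alpha)\right)$ being close to $1$. This only constrains the exponent modulo $2\pi$, and the whole business with Roth's theorem and the family $\gamma_s$ is a device to eliminate that modular ambiguity. Your approach sidesteps this entirely by working with the $1$-form equation and integrating it around a loop $\gamma = \partial D$ rather than exponentiating it. The reason this kills the $2\pi\mathbb{Z}$-ambiguity is the observation you should state explicitly: $u_h = v_h/s^{\otimes k(h)}$ is a \emph{smooth, globally defined} function on the contractible chart $U$ which, by Lemma \ref{lemma:lower-bound}, is uniformly bounded away from zero. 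Hence the $1$-form $\tfrac{du_h}{u_h}$ is smooth and closed on a simply-connected open set, so it is exact, and $\int_{\partial D}\tfrac{du_h}{u_h}=0$ (there is no winding number, because $u_h:U\to\mathbb{C}^*$ is null-homotopic). Combined with
\[
\frac{du_h}{u_h}+ik(h)\beta_{\mathrm{dyn}}+\frac{z_h}{h}\,\alpha=\frac{\mathrm{err}_h}{u_h},\qquad \frac{\mathrm{err}_h}{u_h}=o_{C^0}\!\left(h^{\ell/2-n/2-1}\right),
\]
integration over $\partial D$, division by $k(h)\sim h^{-1}$, and the limit $\tfrac{z_h}{ihk(h)}\to\eta$ directly gives $\int_{\partial D}(\beta_{\mathrm{dyn}}+\eta\alpha)=0$ for all small disks, hence $d\beta_{\mathrm{dyn}}+\eta\,d\alpha=0$, which is the claim since $F_{\nabla^{\mathrm{dyn}}}=i\,d\beta_{\mathrm{dyn}}$. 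What the paper's more elaborate argument buys, compared to yours, is robustness: it never needs the exactness of $du_h/u_h$ and would still conclude even if the integral were only known modulo $2\pi i\mathbb{Z}$. What your argument buys is economy and transparency: it dispenses with Roth's theorem and the auxiliary family $\gamma_s$ altogether, and it makes it clear that the crucial input is the pointwise nonvanishing of $v_h$ on a simply-connected chart.

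Two smaller points to correct. First, your statement that ``$u_h$ is only $C^1$ because $\nabla^{\mathrm{dyn}}$ is merely Hölder'' is not right: $v_h=\mathbf{E}_h f_h$ is $C^\infty$ because $\mathbf{E}_h$ is compactly microlocalised (hence smoothing), and so $u_h$ is smooth. It is the $1$-form $\beta_{\mathrm{dyn}}$ appearing on the other side of the equation that is only H\"older, and therefore the error $1$-form $\mathrm{err}_h$ is only H\"older — but that is all one needs to integrate it over the smooth curve $\partial D$. Second, you are right to flag that your first derivation (applying $d$ to the equation and controlling ``$d(o_{C^0})$'' via derivatives of $\log u_h$) does not close: a $C^0$-bound on a $1$-form does not control its distributional exterior derivative, and no amount of bounding $\|d\log u_h\|_{C^0}$ rescues that line of reasoning. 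The integral form over $\partial D$ — never differentiating the error — is the version that actually works, and it is the one to keep.
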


The previous lemma easily allows to conclude:

\begin{proof}[Proof of \eqref{equation:bound1}]
Since $F_{\nabla^{\mathrm{dyn}}}$ is assumed not to be proportional to $\dd \alpha$ (assumption of Theorem \ref{theorem:main2}), we get a contradiction by applying Lemma \ref{lemma:contradiction}. This proves \eqref{equation:bound1}.
\end{proof}

 It thus remains to show Lemma \ref{lemma:contradiction}. Given a unitary Hölder-continuous $\nabla$ on the (smooth) complex line bundle $L \to M$, the curvature $F_{\nabla}$ is well-defined as a distributional $2$-form. Given any (smooth) path $\gamma$ contained in a local trivialisation where $\nabla = d+i\beta$, with $\beta$ Hölder-continuous, parallel transport along $\gamma$ is well-defined as $e^{i \int_\gamma \beta}$. Thus, for any closed loop $\gamma$ the holonomy $\mathrm{Hol}_\nabla(\gamma) \in \mathrm{U}(1)$ is well-defined. The following holds:

\begin{lemma}
\label{lemma:flat}
Let $\nabla$ be a $\varepsilon$-Hölder continuous connection on $L \to M$. Then $F_\nabla \equiv 0$ if and only if for all homotopically trivial closed loops $\gamma \subset M$, $\mathrm{Hol}_\nabla(\gamma) = 1$.
\end{lemma}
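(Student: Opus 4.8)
\textbf{Proof proposal for Lemma \ref{lemma:flat}.} The statement is an essentially classical fact about flat connections, but we must be careful with the low (Hölder) regularity of $\nabla$; the key point is that parallel transport, holonomy, and the vanishing of the distributional curvature all make sense and interact correctly in this setting. The plan is to prove the two implications separately, reducing everything to the analogous smooth statement via a regularization argument.

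\emph{Step 1: Holonomy along contractible loops is well-defined and multiplicative.} First I would recall that in a local trivialization where $\nabla = d + i\beta$ with $\beta \in C^\eps(U, T^*U)$ real-valued, parallel transport along a $C^1$ path $\gamma$ is $e^{i\int_\gamma \beta}$, and that for a contractible loop $\gamma$ bounding a smooth disk $D \subset U$ contained in a single chart one has, when $\beta$ is smooth, $\int_\gamma \beta = \int_D d\beta$ by Stokes. For general $\gamma$ one decomposes it (up to homotopy with fixed endpoints, which does not change holonomy) into small loops each contained in a single trivializing chart, using that $M$ is a smooth manifold and any contractible loop can be filled by a smooth singular $2$-chain whose faces lie in charts. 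Since $\mathrm{Hol}_\nabla(\gamma) \in \mathrm{U}(1)$ and composition of paths corresponds to multiplication, $\gamma \mapsto \mathrm{Hol}_\nabla(\gamma)$ is a homomorphism from the group of (homotopy classes rel endpoints of) contractible loops.

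\emph{Step 2: $F_\nabla \equiv 0 \Rightarrow$ trivial holonomy.} Working locally, $F_\nabla = i\, d\beta$ in the distributional sense, so $d\beta = 0$ as a distribution. Mollifying, $\beta_\delta := \beta * \rho_\delta$ satisfies $d\beta_\delta = 0$ (honestly, pointwise) and $\beta_\delta \to \beta$ uniformly on compact sets as $\delta \to 0$. For a smooth loop $\gamma$ contained in a chart and bounding a smooth disk $D$, $\int_\gamma \beta_\delta = \int_D d\beta_\delta = 0$, hence $\int_\gamma \beta = \lim_{\delta \to 0} \int_\gamma \beta_\delta = 0$, so $\mathrm{Hol}_\nabla(\gamma) = 1$. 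By Step 1 this extends to all contractible loops.

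\emph{Step 3: Trivial holonomy $\Rightarrow F_\nabla \equiv 0$.} Suppose all contractible loops have trivial holonomy. I would test $d\beta$ against an arbitrary smooth compactly supported test $2$-form $\omega$ supported in a chart. The cleanest route is again mollification: $\langle d\beta, \omega \rangle = \langle d\beta_\delta, \omega \rangle + o(1)$, and one computes $\int d\beta_\delta$ against small coordinate squares. Concretely, for a small coordinate square $Q$ with boundary loop $\partial Q$ (contractible, so $\int_{\partial Q}\beta = 0$), and then $\int_Q d\beta_\delta = \int_{\partial Q}\beta_\delta \to \int_{\partial Q}\beta = 0$; since such squares generate (by linear combination and a partition argument) a dense set of test forms, $d\beta = 0$ as a distribution, hence $F_\nabla = 0$. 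Alternatively one invokes that $\beta_\delta$ is closed hence locally exact ($\beta_\delta = df_\delta$ on a ball since holonomy of the mollified connection along the boundary of a small square already tends to $1$), passes to the limit to write $\beta = df$ locally with $f \in C^{1+\eps}$, and concludes $d\beta = 0$.

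\emph{Main obstacle.} The only genuine subtlety is handling the distributional curvature rigorously at $C^\eps$ regularity: I expect the hard part is justifying the interchange of limits in Step 3 (that $\int_{\partial Q}\beta$ is the distributional pairing of $d\beta$ with $\mathbbm{1}_Q$, where $\beta$ is merely Hölder) --- this is a standard but slightly delicate density/approximation argument, cleanly dispatched by the mollification $\beta_\delta = \beta * \rho_\delta$ together with uniform convergence on compacts and the fact that $\mathrm{Hol}$ is continuous under $C^0$-convergence of the connection $1$-form. Everything else is formal once Step 1 is in place. I would also remark that this lemma will be applied with $\nabla = \nabla^{\mathrm{dyn}}$ (or a twist thereof), which is indeed $\eps$-Hölder for some $\eps > 0$, so the hypotheses are met.
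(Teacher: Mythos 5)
Your Step 2 (the direction $F_\nabla \equiv 0 \Rightarrow$ trivial holonomy) is correct and is a legitimate alternative to the paper's argument, which instead writes $\beta = df + h$ on the chart via exactness/Stokes rather than mollifying. Step 3, however, has a genuine gap. You mollify, test $d\beta_\delta$ against characteristic functions of small coordinate squares, and propose to pass to a general smooth test function $\psi$ ``by linear combination and a partition argument''; this forces an interchange of the mollification limit $\delta \to 0$ with the mesh limit $\eta \to 0$, and at H\"older regularity $\varepsilon$ the two error terms pull in opposite directions. Concretely (take $n = 2$): the step-function contribution is $\sum_k c_k\int_{\partial Q_k}(\beta_\delta - \beta) = \mathcal{O}(\delta^\varepsilon/\eta)$, since there are $\mathcal{O}(\eta^{-2})$ squares of perimeter $\mathcal{O}(\eta)$ and $\|\beta_\delta - \beta\|_\infty = \mathcal{O}(\delta^\varepsilon)$; while the residual $\int d\beta_\delta\,(\psi - \mathrm{step})$ is $\mathcal{O}(\eta)\cdot\|d\beta_\delta\|_{L^1} = \mathcal{O}(\eta\,\delta^{\varepsilon-1})$, because a $C^\varepsilon$ function convolved with $\partial_j\rho_\delta$ has size $\mathcal{O}(\delta^{\varepsilon-1})$. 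Both can vanish only if $\delta^\varepsilon \ll \eta \ll \delta^{1-\varepsilon}$, which requires $\varepsilon > 1/2$ and is therefore unavailable for the small H\"older exponents that actually occur (e.g.\ the dynamical connection). Your alternative route is also not correct: in this direction of the implication you do \emph{not} yet know $\beta_\delta$ is closed (closedness of $\beta$ is what you are trying to prove), and the observation ``$\int_{\partial Q}\beta_\delta \to 0$'' is far weaker than $d\beta_\delta \equiv 0$.

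The paper circumvents all of this by a direct construction that uses trivial holonomy as path-independence: define $f(x) := \int_{\gamma_x}\beta$ on a ball-chart (well-defined by hypothesis), then show via the $\varepsilon$-H\"older modulus of continuity, integrating along the straight segment from $0$ to $x$, that $f(x) - f(0) - \sum_i \beta_i(0)x_i = o(|x|)$, i.e.\ $f$ is differentiable with $df = \beta$; hence $f \in C^1$ and $d\beta = d^2f = 0$ distributionally, since distributional mixed partials of a $C^1$ function commute. No mollification, no grid, no limit interchange. If you want to preserve the ``boundary integrals'' flavour, the clean substitute is the coarea formula: for a.e.\ regular value $t$, $\{\psi = t\}$ is a finite union of contractible loops inside the chart and $\langle d\beta, \psi\rangle = -\int_{\mathbb{R}}\bigl(\int_{\{\psi = t\}}\beta\bigr)\,dt = 0$, directly, with no mollification whatsoever. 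But as written your Step 3 does not close.
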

The proof in the case where $\nabla$ is $C^1$ is standard.
\begin{proof}
	The condition that the holonomy is trivial along homotopically trivial closed loops is equivalent to the condition that in any local chart where $\nabla = d + i\beta$, for any path $\gamma$ the integral $\int_\gamma \beta$ depends only on the homotopy class of $\gamma$.
	
	We first prove the easy direction: if $F_\nabla = 0$, then in a local trivialisation where $\nabla = d + i\beta$, we have $d\beta = 0$. Then by Hodge decomposition $\beta = df + h$, where $h$ is a harmonic $1$-form and $f \in C^{1 + \varepsilon}$ by elliptic regularity. Now clearly $\int_{\gamma} df$ depends only on the values of $f$ at the endpoints of $\gamma$, and $\int_{\gamma} h$ is independent of the homotopy class of $\gamma$ by Stokes' theorem.
	
	In the other direction, we work locally in a chart $\mc{U} \subset \mathbb{R}^n$ diffeomorphic to a ball, such that $0 \in \mc{U}$; write $\nabla = d + i\beta$. Define for $x \in \mc{U}$
	\[
		f(x) := \int_{\gamma_x} \beta,
	\]
	where $\gamma_x$ is an arbitrary path from $0$ to $x$. We claim that $f$ is $C^{1}$ regular. It suffices to check this at $x = 0$; write $\beta = \sum_{i = 1}^n \beta_i dx_i$. Indeed, we have for $x \neq 0$
	\begin{align*}
		&f(x) - f(0) - \sum_{i = 1}^n \beta_i(0) x_i = \sum_{i = 1}^n \int_0^{|x|} \beta_i \left(\tfrac{sx}{|x|}\right) \tfrac{x_i}{|x|}\, \dd s - \sum_{i = 1}^n\beta_i(0) x_i\\ 
		&=  \sum_{i = 1}^n \int^{|x|}_0 \left( \beta_i (\tfrac{sx}{|x|}) - \beta_i(0) \right) \tfrac{x_i}{|x|}\, \dd s = \sum_{i = 1}^n o(1) \int_0^{|x|} \tfrac{x_i}{|x|}\, \dd s = o(|x|),
	\end{align*}
	as $|x| \to 0$, which proves the claim and moreover shows that $df = \beta$. This implies $F_\nabla = 0$ and completes the proof. 
\end{proof}

We now prove Lemma \ref{lemma:contradiction}. 

%

\begin{proof}[Proof of Lemma \ref{lemma:contradiction}]
Our aim is to show that the connection $\nabla := \nabla^{\mathrm{dyn}} + i\eta \alpha$, satisfies $\mathrm{Hol}_\nabla(\gamma) = 1$ for all homotopically trivial loops $\gamma$. Then $F_\nabla = F_{\nabla^{\mathrm{dyn}}} + \eta \dd \alpha \equiv 0$ by Lemma \ref{lemma:flat}.

We start with a preliminary observation. Let $U \subset M$ be an arbitrary contractible open subset and write $\nabla^{\mathrm{dyn}} = d+i\beta_{\mathrm{dyn}}$ in this trivialization. Using \eqref{equation:parallel}, one has for every smooth loop $\gamma \subset U$ of length bounded by $1$ based at $x \in U$:
\[
v_h(x)  = \exp\left(-ik(h)\int_{\gamma} \beta_{\mathrm{dyn}} - \dfrac{z_h}{h} \int_\gamma \alpha \right) v_h(x) + o(h^{\ell/2 - n/2 - 1}).
\]
By Lemma \ref{lemma:2},
\[
\dfrac{z_h}{h} = \dfrac{i}{h}(\eta+ o(1)) + \Re\left(\dfrac{z_h}{h}\right) = i k(h) (\eta+\eps_h)  + o(h^{\ell/2}),
\]
where $\eps_h \to_{h \to 0} 0$, and we used that $k(h) = \sqrt{h^{-2} - 1}$. As a consequence, using $\|v_h\|_{C^0} \leq C h^{-n/2}$, we write
\[
	v_h(x)  = \exp\left(-ik(h)\int_{\gamma} \beta_{\mathrm{dyn}} -  ik(h)(\eta+\eps_h) \int_\gamma \alpha \right) v_h(x) + o(h^{\ell/2 - n/2 - 1}).
\]
Using the lower bound of Lemma \ref{lemma:lower-bound}, this yields
\begin{equation}
\label{equation:upper-bound}
\left|1-\exp\left(-i k(h) \int_{\gamma} (\beta_{\mathrm{dyn}} +(\eta+\eps_h) \alpha)\right)\right| = o(h^{\ell/2 - n/2 - 1}).
\end{equation}
Our aim is to show that $\int_\gamma (\beta_{\mathrm{dyn}} +\eta \alpha) = 0$; this would show indeed that $\mathrm{Hol}_\nabla(\gamma) = 1$ (for all $\gamma \subset U$). (We point out that the fact that $\eps_h$ is not necessarily $0$ creates some slight technical difficulty in the proof; it might be good to have in mind the case $\eps_h \equiv 0$ at first.)
\medskip

Assume for the sake of a contradiction that there exists $\gamma \subset U$ such that $c := \int_\gamma (\beta_{\mathrm{dyn}} +\eta \alpha) \neq 0$. Let $(\gamma_s)_{s \in [0,1]}$ be a continuous family of (smooth) curves such that $\gamma_1 = \gamma$ and $\gamma_0$ is a point and further assume $\cup_{s \in [0,1]} \gamma_s \subset U$. 

For $h > 0$, define the continuous functions $F,F_h \in C^0([0,1],\R)$ by
\[
F(s) := \int_{\gamma_s} (\beta_{\mathrm{dyn}} +\eta \alpha), \qquad F_h(s) :=\int_{\gamma_s}( \beta_{\mathrm{dyn}} +(\eta +\eps_h)\alpha)
\]
By construction $F(0)=0, F(1)=c \neq 0$. By continuity, we can find $0 < s_0 < s_1 < 1$ such that $F(s_0)$ and $F(s_1)$ are non-zero, $\mathrm{x} := F(s_0)/F(s_1)$ is an irrational algebraic number, and $F(s_0), F(s_1) \in (\min F, \max F)$. By Roth's Theorem, there exists a constant $C > 0$ such that for all integers $p,q \in \Z$ (and $q$ non-zero),
\begin{equation}
\label{equation:irrationnel}
|\mathrm{x}-p/q| > Cq^{-5/2}.
\end{equation}
(Actually, we could replace $5/2$ by $2+\eps$ for any $\eps > 0$. In order to simplify the discussion, we do not optimize the parameters here.)

Again, by continuity, for all $h > 0$ small enough, we can find $s_0(h),s_1(h) \in (0, 1)$ such that 
\begin{equation}
\label{equation:sol}
F_h(s_0(h)) = F(s_0), \qquad F_h(s_1(h))  = F(s_1).
\end{equation}
Now, we claim that the combination of \eqref{equation:irrationnel} and \eqref{equation:sol} implies that there exists a constant $C > 0$ such that for all $h > 0$ small enough:
\begin{equation}
\label{equation:bound}
\left| \exp\left(-i k(h) F_h(s_0(h))\right) + \exp\left(-i k(h) F_h(s_1(h))\right) - 2 \right| > Ch^4.
\end{equation}
Indeed, assume for the sake of a contradiction that, along a subsequence $h_n \to 0$, one has:
\[
\left| \exp\left(-i k(h_n) F_{h_n}(s_0(h_n))\right) + \exp\left(-i k(h_n)F_{h_n}(s_1(h_n))\right) - 2 \right| \leq h_n^4.
\]
(We drop the parameter $n$ for the sake of simplicity.) This implies that
\begin{align*}
\left|\cos\left( k(h) F_h(s_0(h))\right) + \cos\left( k(h) F_h(s_1(h))\right) - 2\right| &\leq h^4\\ 
\left|\sin\left( k(h) F_h(s_0(h))\right) + \sin\left( k(h) F_h(s_1(h))\right)\right| &\leq h^4.
\end{align*}
In turn, this yields for $i=0,1$ (using Taylor expansion), for some uniform constant $C > 0$
\[
k(h) F_h(s_i(h)) = 2\pi p_i(h) + r_i(h), \qquad p_i(h) \in \Z, r_i(h) \in [-\pi,\pi), |r_i(h)|\leq Ch^{2}.
\]
As a consequence, we obtain by \eqref{equation:sol} and \eqref{equation:irrationnel}:
\[
\mathrm{x}-\dfrac{p_0(h)}{p_1(h)} = \dfrac{F_h(s_0(h))}{F_h(s_1(h))} - \dfrac{p_0(h)}{p_1(h)} = \dfrac{1}{2\pi} \dfrac{p_1(h)r_0(h) - p_0(h)r_1(h)}{p_1(h)(p_1(h)+r_1(h)/2\pi)} = \mc{O}(h^3), 
\]
using that $Ch^{-1} > p_i(h) > \tfrac{1}{C} h^{-1}$ for some $C > 0$, as well as $r_i = \mc{O}(h^2)$. Hence, we get for some constants $C, C' > 0$ independent of $h > 0$:
\[
C p_1(h)^{-5/2} \leq \left|\mathrm{x}-\dfrac{p_0(h)}{p_1(h)}\right| = \mc{O}(h^3) \leq C' p_1(h)^{-3},
\]
and this is a contradiction for $h > 0$ small enough. This proves the lower bound \eqref{equation:bound}.

In order to conclude, it now suffices to compare \eqref{equation:upper-bound} and \eqref{equation:bound}. This yields, for some $C > 1$:
\[
\tfrac{1}{C} h^4 \leq \left| \exp\left(-i k(h) F_h(s_0(h))\right) + \exp\left(-i k(h)F_h(s_1(h))\right) - 2 \right| \leq C h^{\ell/2 - n/2 - 1},
\]
and this is a contradiction for $h > 0$ small enough. (One needs $\ell/2 - n/2 - 1>4$, hence $\ell > n +10$.) 
\end{proof}

\subsection{Proof of estimate \eqref{equation:bound2}} The proof is almost the same as for \eqref{equation:bound1}.

\begin{proof}[Proof of \eqref{equation:bound2}] We argue by contradiction as in the proof of \eqref{equation:bound1}. Assume that \eqref{equation:bound2} does not hold. Then, there exists a sequence $z_n \in \mathbb{B} = \{z \in \C \mid 0 \leq \Re(z) \leq 1\}$, $k_n \in \Z $ with $|k_n| \leq |\Im (z_n)|$ and $f_n \in \mc{H}^s_{\langle{\lambda}\rangle^{-1}_n}(M,L^{\otimes k_n})$ (with $\lambda_n := \Im(z_n)$) such that
\begin{equation}
\label{equation:quasimode1bis}
\|f_n\|_{\mc{H}^s_{\langle{\lambda_n}\rangle^{-1}}(M,L^{\otimes k_n})}=1, \qquad \|(-\X_{k_n}-z_n)f_n\|_{\mc{H}^s_{\langle{\lambda_n}\rangle^{-1}}(M,L^{\otimes k_n})} = o(\langle \lambda_n\rangle^{-\ell}).
\end{equation}
As before, we will further assume $k_n \geq 0,\Im(z_n) \geq 0$ (the other cases are treated similarly). Moreover, we can also assume that $(\Im(z_n))_{n \geq 0}$ is unbounded otherwise both $(k_n)_{n \geq 0}$ and $(\Im(z_n))_{n \geq 0}$ are both bounded and so \eqref{equation:quasimode1bis} is immediately contradicted. Finally, up to taking a subsequence, we can also assume $\Re(z_n) \to \nu \in [0,1]$ and $k_n/\langle{\lambda_n}\rangle \to \eta \in [0,1]$. We set $h_n := \langle{\lambda_n}\rangle^{-1}$. We will drop the index $n$ in what follows and see $k_n$ as a function of $h$, namely $h \mapsto k(h)$ (one can keep in mind that $h > 0$ belongs to a discrete subset of $(0,1]$ but this is irrelevant for the argument). In this case, the relevant calculus is $\Psi^\bullet_{h, k}(M,L)$ for this specific sequence $h \mapsto k(h)$, as introduced in \S\ref{ssection:quantization-line-bundles}.

We can then rewrite \eqref{equation:quasimode1bis} as
\begin{equation}
\label{equation:quasimode2bis}
\|f_h\|_{\mc{H}^s_h(M,L^{\otimes k(h)})} = 1, \qquad \|\mathbf{P}_h f_h\|_{\mc{H}^s_{h}} = o(h^{\ell+1}),
\end{equation}
where 
\[
	\mathbf{P}_h := -h\X_{k(h)} - h(\nu+o(1)) - i(1 + \mc{O}(h^3))
\] 
and the ``$o$" and ``$\mc{O}$" in the expression of $\mathbf{P}_h$ are real constants (depending on $h$). Observe that it is almost the same expression as \eqref{equation:quasimode2} with the exception that $k(h)$ is now an arbitrary function of $h$ (it could be distinct from $k(h) = \sqrt{h^{-2} - 1}$ or $k(h) = 1/h$) such that $h \mapsto hk(h)$ is bounded and $hk(h) \to \eta$. (Note in particular, that $k(h) \equiv 0$ is an admissible function in the sense of \S \ref{ssection:quantization-line-bundles}. In this case, this amounts to proving high-frequency estimates for the resolvent of the Anosov vector field acting on functions.)

It is then immediate to verify that all the arguments developed in \S\ref{ssection:dur} apply \emph{verbatim} the same up to Lemma \ref{lemma:contradiction}. The only slight difference lies in the proof of Lemma \ref{lemma:contradiction} where one has to use that 
\[
	\frac{z_h}{h} = \frac{i}{h}(1 + \varepsilon_h) + o(h^{\ell/2}),
\]
and also that $k(h) = h^{-1}(\eta + \delta_h)$ for some sequence $\delta_h \to 0$ as $h \to 0$, so instead of \eqref{equation:upper-bound} we would get
\[
	\left|1-\exp\left(-\frac{i}{h} \int_{\gamma} ((\eta + \delta_h)\beta_{\mathrm{dyn}} +(1 + \eps_h) \alpha)\right)\right| = o(h^{\ell/2 - n/2 - 1}).
\]
Then the remainder of the argument applies in a similar way to give the vanishing of the curvature $\eta F_{\nabla_{\mathrm{dyn}}} + \dd \alpha \equiv 0$, which contradicts the assumptions of Theorem \ref{theorem:main2}. This proves \eqref{equation:bound2}. \end{proof}

\section{High-frequency estimates II. Arbitrary group $G$}

\label{section:hf2}

We now consider the general case, where $G$ is a compact connected Lie group. Recall that the anisotropic spaces $\mc{H}_h^s(F, \Lk)$ were introduced in \S \ref{sssection:anisotropic-space}, and that we write $\mc{H}_{h, \mathrm{hol}}^s(F, \Lk)$ for the subspace of fibrewise holomorphic sections in $\mc{H}^s_h(F, \Lk)$. We also recall that $\mathbb{B} = [0, 1] \times \mathbb{R} \subset \C$. Compared to the preceding section, we will use the BW-calculus, that is $\Psi^{\bullet}_{h, \mathrm{BW}}(P)$ which we introduced in \S \ref{ssection:twiste-line-bundles}, which by construction respects the fibrewise holomorphicity (up to negligible remainders). Recall also that we identified $\widehat{G}$ with a quotient of $\mathbb{Z}^a \times \mathbb{Z}^b_{\geq 0}$ via \eqref{equation:ecriture}.

The following holds:

\begin{proposition}
\label{proposition:technical-reduction2}
Assume that $\psi^F$ is ergodic and that $(\pi^*d\alpha, -iF_{\overline{\nabla}_1}, \dotsc, -iF_{\overline{\nabla}_a})$ are linearly independent over $\mathbb{R}$, and set $\ell := \max\big(3\dim F + 4, \dim F + 10\big) + 1$. For any $s > 0$, there exists $C > 0$ such that for all $z \in \B$, $\mathbf{k} \in \widehat{G}$, setting $\lambda := \Im(z)$:
\begin{align}
\label{equation:bound12}
& \|(-\X_{\mathbf{k}} - z)^{-1}\|_{\mc{H}^s_{\langle\mathbf{k}\rangle^{-1}, \mathrm{hol}}(F,\mathbf{L}^{\otimes \mathbf{k}}) \circlearrowleft} \leq C \langle \mathbf{k} \rangle^\ell, \qquad |\mathbf{k}| \geq |\lambda|, \\
\label{equation:bound22}
& \|(-\X_{\mathbf{k}} - z)^{-1}\|_{\mc{H}^s_{\langle{\lambda}\rangle^{-1}, \mathrm{hol}}(F,\mathbf{L}^{\otimes \mathbf{k}}) \circlearrowleft}\leq C \langle \lambda \rangle^\ell, \qquad |\lambda| \geq |\mathbf{k}|.
\end{align}
\end{proposition}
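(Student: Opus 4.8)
The plan is to reduce Proposition \ref{proposition:technical-reduction2} to Theorem \ref{theorem:real} exactly as in the $G=\mathrm{U}(1)$ case, so that the bulk of the work is proving the two high-frequency bounds \eqref{equation:bound12} and \eqref{equation:bound22} by contradiction. First I would observe that, just as in \S\ref{section:hf}, estimate \eqref{equation:bound22} follows from \eqref{equation:bound12} together with the norm-comparison lemma for the spaces $\mc{H}^s_h(F,\mathbf{L}^{\otimes\mathbf{k}})$ at different values of $h$ (the statement \cite[Lemma 4.3]{Guillarmou-Kuster-21} carries over to the Borel--Weil calculus via Theorem \ref{theorem:quantization-bw} and Proposition \ref{proposition:proprietes}, Item (v), noting $\overline{\Op}^{\mathrm{BW}}_{h',\mathbf{k}}(a)=\overline{\Op}^{\mathrm{BW}}_{h,\mathbf{k}}(\widetilde{a})$ with $\widetilde{a}(x,\xi):=a(x,\tfrac{h}{h'}\xi)$); and that both estimates together trivially yield \eqref{equation:uniform-bound} with $\vartheta=\ell+1$. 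So the core task is \eqref{equation:bound12}.

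For \eqref{equation:bound12} I would argue by contradiction and run the quasimode machinery of \S\ref{ssection:dur} verbatim, replacing the line bundle $L\to M$ by $\mathbf{L}^{\otimes\mathbf{k}}\to F$, the operator $\Psi^\bullet_{h,k}(M,L)$ by $\Psi^\bullet_{h,\mathrm{BW}}(P)$, and using the Borel--Weil source/sink and propagation estimates (Theorem \ref{theorem:source-sink}, Proposition \ref{proposition:propagation2}) — which hold because $\iota_{X_F}\mathbf{F}_{\nabla^{\mathrm{dyn}}}=0$ and, by Lemma \ref{lemma:invariance-h}, singularities live and propagate only in $\HH^*\subset T^*F$. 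After renormalising, one obtains for a subsequence $h\to 0$ and $\mathbf{k}(h)$ with $h|\mathbf{k}(h)|\to$ a limiting ray $\mathbf{l}_0\in\partial_\infty\mathfrak{a}_+$ (after further extraction, using compactness of $\overline{\mathfrak{a}_+}$) a family $v_h:=\mathbf{E}_h f_h\in C^\infty_{\mathrm{hol}}(F,\mathbf{L}^{\otimes\mathbf{k}(h)})$ concentrated microlocally on $\mc{T}_\eta:=\{(w,-\eta\,\pi^*\alpha(w))\}$, with $\Re(z_h)=o(h)$ at the relevant rate, and the horocyclic invariance $(-h\nabla^{\mathrm{dyn}}_{\mathbf{k}(h)}-z_h\,\pi^*\alpha\wedge)v_h=o_{L^2}(h^{\ell/2})$; the $C^0$ bound now costs $h^{-\dim F/2}$ through the Borel--Weil Sobolev embedding (Lemma \ref{lemma:sobolev-embedding}), which is why the threshold becomes $\ell=\max(3\dim F+4,\dim F+10)+1$ — the inequalities $\ell/2-\dim F-1>0$ and $\ell/2-\dim F/2-1>4$ must hold. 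Then $|v_h|^2$ is almost constant on $F$ and bounded below, so $\mathbf{L}^{\otimes\mathbf{k}(h)}\to F$ is ``almost topologically trivial'', forcing (for $h$ small) $k_{a+1}(h)=\dotsb=k_d(h)=0$ by Proposition \ref{prop:line-bundle-topology}; hence $\mathbf{l}_0\in\partial_\infty\mathfrak{a}_+$ has vanishing semisimple part, and via Lemma \ref{lemma:extension}, Lemma \ref{lemma:1d-rep} and Lemma \ref{lemma:pullback-equivalence} one descends $v_h$ to a section of a line bundle $\mathbf{L}^{\otimes\mathbf{k}(h)}_M\to M$ and reduces exactly to the $\mathrm{U}(1)$-type situation over $M$ with dynamical connection $1$-form $\mathbf{k}(h)\cdot\boldsymbol{\beta}_{\mathrm{dyn}}$. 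Running the algebraic-number/Roth argument of Lemma \ref{lemma:contradiction} then yields the curvature identity $\mathbf{l}_0\cdot\mathbf{F}_{\overline{\nabla}}^{\mathrm{dyn}}+\eta\,\pi^*d\alpha=0$ with $\mathbf{l}_0=(\ell_1,\dotsc,\ell_a,0,\dotsc,0)$, i.e.\ $\sum_i\ell_i(-iF_{\overline{\nabla}_i})=\eta\,\pi^*d\alpha$ up to sign, contradicting the linear independence assumption (since $(\eta,\mathbf{l}_0)\neq 0$); this closes the contradiction. Note the ergodicity of $\psi^F$ enters exactly where it did for $\mathrm{U}(1)$: via Lemma \ref{lemma:no-resonances}, to guarantee there are no resonances on $\{\Re z=0\}$ away from $\mathbf{k}=0,z=0$, which is what makes the contradiction scheme meaningful, and also in the step where $X_F|v_h|^2=o(\cdot)$ combined with ergodicity pins down $|v_h|$ as essentially constant.

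The main obstacle I anticipate is the ``descent to the abelian part'' step — making rigorous that a fibrewise-holomorphic section $v_h$ of $\mathbf{L}^{\otimes\mathbf{k}(h)}$ which is uniformly bounded above and below in $C^0$ forces $k_{a+1}(h)=\dotsb=k_d(h)=0$ for $h$ small. Topological triviality of $\mathbf{L}^{\otimes\mathbf{k}}\to F$ is a discrete condition (Proposition \ref{prop:line-bundle-topology}), but $v_h$ only has approximately constant modulus; one must argue that a nontrivial fibrewise-holomorphic bundle admits \emph{no} nowhere-vanishing holomorphic section on any fibre (true, since $H^0(G/T,\mathbf{J}^{\otimes\mathbf{k}})$ with some $k_{a+i}>0$ has a base locus or the section simply cannot be nowhere zero on the compact complex manifold $G/T$ unless the bundle is trivial) and then transfer this from fibrewise to global using the quantitative lower bound, ruling out the non-abelian directions of $\mathbf{k}(h)$ once and for all. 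Once this is in place, the rest is a careful but routine transcription of \S\ref{ssection:dur} into the Borel--Weil calculus, keeping track of the $h^{-\dim F/2}$ losses in the Sobolev embeddings and of the fact that the limiting parameter now ranges over the compactified Weyl chamber $\partial_\infty\mathfrak{a}_+$ rather than over $[0,1]$, with the curvature $\mathbf{l}\cdot\mathbf{F}_{\overline{\nabla}}$ replacing $F_{\nabla^{\mathrm{dyn}}}$ throughout (compare Lemma \ref{lemma:enfin} and Lemma \ref{lemma:abelian-curvature}).
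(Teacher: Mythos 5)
Your overall strategy matches the paper's: argue by contradiction, build a quasimode $v_h$ microlocalised at $\mc{T}_\eta$, establish the horocyclic invariance $(-h\nabla^{\mathrm{dyn}}_{\mathbf{k}(h)} - z_h\pi^*\alpha\wedge)v_h = o_{L^2}(h^{\ell/2})$, use the Diophantine density of the transitivity group to conclude that $|v_h|^2$ is almost constant on each fibre $F_{x_0}$, split into cases according to whether the semisimple part of $\mathbf{k}(h)$ vanishes, and in the abelian case descend to $M$ and run the Roth-type argument. Your key observation that a topologically non-trivial line bundle over the compact complex fibre $F_{x_0}$ has no nowhere-vanishing holomorphic section is exactly what the paper uses in Case 1. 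However, there are two concrete points where the proposal goes wrong.

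First, the reduction of \eqref{equation:bound22} to \eqref{equation:bound12} via norm comparison is incorrect. The two bounds concern disjoint parameter regimes: \eqref{equation:bound12} is for $|\mathbf{k}| \geq |\lambda|$, while \eqref{equation:bound22} is for $|\lambda| \geq |\mathbf{k}|$; in the latter regime $\mathbf{k}$ may be bounded (even zero) while $\lambda \to \infty$, so \eqref{equation:bound12} gives no information there. In the $\mathrm{U}(1)$ section (\S\ref{section:hf}) the norm-comparison argument you quote is used to deduce \emph{Theorem \ref{theorem:real}} from \emph{both} estimates \eqref{equation:bound1} and \eqref{equation:bound2}, not to deduce \eqref{equation:bound2} from \eqref{equation:bound1}. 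Each of \eqref{equation:bound12} and \eqref{equation:bound22} requires its own contradiction argument; the second one uses the semiclassical parameter $h := \langle\lambda\rangle^{-1}$ and allows $\mathbf{k}(h)$ to be an arbitrary (including constant or zero) admissible function, which is a genuinely different quasimode construction. You cannot avoid proving \eqref{equation:bound22} directly.

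Second, the threshold bookkeeping is inconsistent. You state the constraints as $\ell/2 - \dim F - 1 > 0$ and $\ell/2 - \dim F/2 - 1 > 4$, i.e.\ $\ell > 2\dim F + 2$ and $\ell > \dim F + 10$, but these do not yield the claimed $\ell = \max(3\dim F + 4, \dim F + 10) + 1$. You are missing the loss coming from the Diophantine argument: to propagate the almost-invariance of $|v_h|^2$ over all of $F_{x_0}$, one uses words of length $N \sim h^{-\beta}$ from the $\varepsilon_0$-dense set $W \subset \rho_{\mathrm{Parry}}(\mathbf{G})$, and each such word costs a factor $h^{-\beta}$; combined with the requirement that $\mc{W}_N$ be $\mc{O}(h^{\beta(1-\delta)})$-dense in $G$ (Corollary \ref{corollary:diophantine-flag-manifold}), optimising $\beta$ and $\delta$ forces $\ell > 3\dim F + 4$, which is strictly stronger than $\ell > 2\dim F + 2$ when $\dim F > 2$. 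The $\ell > \dim F + 10$ part of the threshold comes from Lemma \ref{lemma:torus-bundle-curvature} (the multi-component version of Lemma \ref{lemma:contradiction}), as you correctly anticipate.

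A smaller remark: the way you phrase ``forcing $k_{a+1}(h) = \dotsb = k_d(h) = 0$ for $h$ small'' suggests a dichotomy argument, whereas the actual proof is a subsequence case analysis: along any subsequence where the semisimple part of $\mathbf{k}(h)$ is non-zero, the bundle is topologically non-trivial on $F_{x_0}$, hence $v_h|_{F_{x_0}}$ vanishes somewhere and the near-constancy of $|v_h|^2$ gives $\|v_h\|_{C^0} = o(1)$, already a contradiction; the abelian reduction only applies to the complementary subsequences. This is the same mathematical content, but the case-splitting makes the argument cleaner and avoids the need to prove a uniform statement about ``forcing''. Finally, ergodicity of $\psi^F$ enters not through a direct $X_F|v_h|^2$ averaging step in the quasimode argument, but (i) through Lemma \ref{lemma:no-resonances} to rule out resonances on the imaginary axis, which makes the contradiction scheme possible, and (ii) through the implied density $\overline{\rho_{\mathrm{Parry}}(\mathbf{G})} = G$ needed for the Diophantine argument.
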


We remark that the value of $\ell$ in Proposition \ref{proposition:technical-reduction2} is slightly worse than in the circle extension case (Proposition \ref{proposition:technical-reduction}); the reason is that in the general Lie group extension case we will use some estimates coming from Diophantine action theory. Similarly to Proposition \ref{proposition:technical-reduction}, this implies Theorem \ref{theorem:real}.

\subsection{Proof of \eqref{equation:bound12}}

We first prove \eqref{equation:bound12}. Assume for the sake of a contradiction that \eqref{equation:bound12} does not hold. Then, there exists a sequence $\mathbf{k}_n \in \widehat{G}$, $z_n \in \B$ and $f_n \in \mc{H}^s_{\langle\mathbf{k}_n\rangle^{-1}}(F,\mathbf{L}^{\otimes \mathbf{k}_n})$ such that $\|f_n\|_{\mc{H}^s_{\langle\mathbf{k}_n\rangle^{-1}}(F,\mathbf{L}^{\otimes \mathbf{k}_n})} = 1$ and
\[
\|(-\X_{\mathbf{k}_n} - z_n)f_n\|_{\mc{H}^s_{\langle\mathbf{k}_n\rangle^{-1}}(F,\mathbf{L}^{\otimes \mathbf{k}_n})} = o(\langle \mathbf{k}_n \rangle^{-\ell}), \quad \overline{\partial}_{\mathbf{k}_n} f_n = 0.
\]
We can always assume that $|\mathbf{k}_n| \to \infty$ (otherwise both $|\mathbf{k}_n|$ and $|\lambda_n|$ are bounded and the statement is trivial). As in \S\ref{ssection:dur}, up to passing to a subsequence, we can also assume that $\Re(z_n) \to \nu \in [0,1]$ and $\Im(z_n)/\langle{\mathbf{k}_n}\rangle \to \eta \in [0,1]$. We change variables by setting $h := \langle\mathbf{k}_n\rangle^{-1}$ and write $\mathbf{k}(h)$ from now on (we drop the letter $n$ for simplicity). 

Setting
\[
\mathbf{P}_h := -h\X_{\mathbf{k}(h)} - h(\nu+o(1)) - i(\eta +o(1)) \in \Psi^1_{h,\mathrm{BW}}(P),
\]
we can rewrite the previous equations as
\begin{equation}
\label{equation:quasi-mode-twist}
\|f_h\|_{\mc{H}^s_{h}(F,\mathbf{L}^{\otimes \mathbf{k}(h)})} = 1, \quad \|\mathbf{P}_hf_h\|_{\mc{H}^s_{h}(F,\mathbf{L}^{\otimes \mathbf{k}(h)})} = o(h^{\ell+1}), \quad \overline{\partial}_{\mathbf{k}(h)} f_h = 0.
\end{equation}

%

Recall from \eqref{eq:F-horizontal-vertical} that $T^*F$ admits a splitting into horizontal and vertical bundles induced by the dynamical connection. In the following, we recall $\nabla_{\mathbf{k}}^{\mathrm{dyn}}$ denotes the (partial) dynamical covariant derivative on $\Lk \to F$ as introduced in \S\ref{ssection:dynamical-connection} and \eqref{equation:connection-horizontale-lk}. Write $\pi_F: F \to M$ for the projection, and define
\[
\mc{T}_\eta := \{(x,- \eta~ \pi_F^* \alpha(x)) \in T^*F ~|~ x \in F\}.
\]
The following holds:

\begin{lemma}\label{lemma:microlocalisation}
Let $\mathbf{E}_h \in \Psi^{\mathrm{comp}}_{h,\mathrm{BW}}(P)$. Then:
\begin{enumerate}[label=\emph{(\roman*)}]
\item We have that
\[
\WF_{\mathrm{BW}}(\mathbf{E}_h) \cap \mc{T}_\eta = \emptyset \implies \mathbf{E}_h f_h = o_{L^2}(h^{\ell/2}) = o_{C^0}(h^{\ell/2 - \dim F/2}).
\]
\item Let $e \in C^\infty_{\comp}(T^*M)$ such that $e \equiv 1$ near
\[
\{(x,-\eta\alpha(x)) \in T^*M ~|~ x \in M\},
\]
and define $\mathbf{E}_h := \Op^{\mathrm{BW}}_h(e)$ by \eqref{equation:quantization-geometric}, which is microlocalized near $\mc{T}_\eta$. Then there exists $c > 1$ such that $1/c \leq \|\mathbf{E}_h f_h\|_{L^2} \leq c$. Furthermore, $\nu = 0$, $\mathbf{P}_h = -h\X_h - z_h$ where $z_h = i(\eta+o(1)) + o(h^{\ell/2+1})$, $\Re(z_h) \geq 0$, and 
\begin{equation}
\label{equation:parallel2}
\begin{split}
& (-h\nabla_{\mathbf{k}(h)}^{\mathrm{dyn}} - z_h \pi_F^*\alpha \wedge)\mathbf{E}_h f_h = o_{L^2}(h^{\ell/2}) = o_{C^0}(h^{\ell/2-\dim F/2}).
\end{split}
\end{equation}
\end{enumerate}
\end{lemma}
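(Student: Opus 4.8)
The proof of Lemma \ref{lemma:microlocalisation} will follow the same template as the $\mathrm{U}(1)$ case (Lemmas \ref{lemma:microsupport} and \ref{lemma:2}), but carried out entirely within the Borel-Weil calculus $\Psi^\bullet_{h,\mathrm{BW}}(P)$, using that the quasimodes $f_h$ are fibrewise holomorphic and hence, by \S\ref{ssection:twiste-line-bundles}, have their microlocal mass confined to $\overline{\HH^*_F}$. For Item (i), the plan is to reproduce the three-step argument of Lemma \ref{lemma:microsupport}. First, I would compute the principal symbol of $\mathbf{P}_h$: by the examples in \S\ref{sssection:examples2} it is $\sigma^{\mathrm{BW}}_{\mathbf{P}_h}(w,\xi) = -i(\xi(X_F(w)) + \eta + o(1))$, so Proposition \ref{proposition:ellipticity} (ellipticity in $\Psi^\bullet_{h,\mathrm{BW}}(P)$, Lemma \ref{lemma:parametrix-bw}) gives $o(h^{\ell+1})$ control of $\mathbf{E}_h f_h$ whenever $\WF^{\mathrm{BW}}(\mathbf{E}_h)$ avoids $\{\xi(X_F) = -\eta\}\cap\overline{\HH^*_F}$. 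Second, on the set $\mc{T}_\eta + (E^u_F)^*$ I would invoke the source estimate (Theorem \ref{theorem:source-sink}(i), threshold $\omega_+(X)=0$ since $\varphi$ is volume-preserving) together with the propagation of singularities in the BW-calculus (Proposition \ref{proposition:propagation2}, valid because $\iota_{X_F}\mathbf{F}_{\overline{\nabla}} = 0$ by Lemma \ref{lemma:vanishing-curvature} and the lemma following it) to propagate ellipticity from a neighbourhood of $(E^s_F)^*$ to all of $\{\xi(X_F)=-\eta\}\setminus(\mc{T}_\eta + (E^u_F)^*)$. Third, for the remaining piece near $\mc{T}_\eta + (E^u_F)^*$ I would run the positive-commutator / Egorov argument: using that $e^{t\mathbf{P}_h/h} f_h = f_h + o_{\mc{H}^s_h}(h^\ell)$ (Proposition \ref{proposition:egorov}, which applies to the Hölder-regular dynamical connection by Remark \ref{remark:regularity} since $\iota_X F_{\nabla^{\mathrm{dyn}}}=0$), and decomposing $\mathbf{A}_h - e^{-2(\nu+o(1))t}e^{t\X_{\mathbf{k}}}\mathbf{A}_h e^{-t\X_{\mathbf{k}}} = \mathbf{C}_h^*\mathbf{C}_h + \mathbf{D}_h$ for large $t$, with $\mathbf{D}_h$ controlled by Step 2 and $\mathbf{C}_h$ elliptic on an annular region whose relation to $\mc{T}_\eta$ depends on whether $\nu>0$ or $\nu=0$. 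The $C^0$-bound then follows from the Sobolev embedding of Lemma \ref{lemma:sobolev-embedding}, which in the BW-setting costs $h^{-\dim F/2}$.

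For Item (ii), I would first note that $\mathbf{E}_h := \Op^{\mathrm{BW}}_h(e)$ belongs to $\Psi^0_{h,\mathrm{BW}}(P)$ and is microlocally the identity near $\mc{T}_\eta$ (by the quantization-of-pullback-functions results of \S\ref{ssection:quantization-symbols} and \S\ref{sssection:pullbackfunctions}). The lower bound $\|\mathbf{E}_h f_h\|_{L^2} \geq 1/c$ is obtained from the sink estimate (Theorem \ref{theorem:source-sink}(ii), threshold $\omega_-(X)=0$) together with propagation of singularities exactly as in Lemma \ref{lemma:2}: the terms $\mathbf{B}_h\mathbf{P}_h f_h$ and $\mathbf{A}'_h f_h$ are both negligible, forcing $\|(\mathbbm 1 - \mathbf{E}_h)f_h\|_{\mc{H}^s_h} = o(h^{\ell/2})$ against $\|f_h\|_{\mc{H}^s_h}=1$; the upper bound is compactness of $\mathbf{E}_h$. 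That $\nu=0$ follows from the dichotomy in Step 3 of Item (i) (if $\nu>0$ then $\mc{T}_\eta$ is covered by the elliptic annulus and $\|\mathbf{E}_h f_h\|_{L^2} = o(h^{\ell/2})$, contradicting the lower bound). The refined statement $z_h = i(\eta + o(1)) + o(h^{\ell/2+1})$ with $\Re(z_h)\ge 0$ is obtained as in Lemma \ref{lemma:2}: writing $\mathbf{P}_h = -h\X_{\mathbf{k}} - w_h - i(\eta+o(1))$ with $w_h \ge 0$, the $L^2$-resolvent bound $\|(-\X_{\mathbf{k}}-z)^{-1}\|_{L^2\to L^2}\le (\Re z)^{-1}$ (unitarity of $e^{-t\X_{\mathbf{k}}}$, using Lemma \ref{lemma:ft-isometry}) combined with $\mathbf{P}_h \mathbf{E}_h f_h = o_{L^2}(h^{\ell/2+1})$ forces $w_h = o(h^{\ell/2+1})$.

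Finally, the horocyclic-invariance estimate \eqref{equation:parallel2} is proved as in Lemma \ref{lemma:parallel}. Letting $\nabla^{u/s}_{\mathbf{k}}$ be the restriction of $\nabla^{\mathrm{dyn}}_{\mathbf{k}}$ to $(E^{s/u}_F)^*$, one checks the commutation $\mathbf{P}^u_h \nabla^u_{\mathbf{k}(h)} = \nabla^u_{\mathbf{k}(h)}\mathbf{P}_h$ (from $\X^u_{\mathbf{k}}\nabla^u_{\mathbf{k}} = \nabla^u_{\mathbf{k}}\X_{\mathbf{k}}$, which uses $\iota_X F_{\nabla^{\mathrm{dyn}}}=0$ as in the $\mathrm{U}(1)$ computation, now phrased with $\mathbf{F}_{\nabla^{\mathrm{dyn}}}$ and the horizontal lift $X_F$). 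Applying $\nabla^u_{\mathbf{k}(h)}$ to the quasimode equation and using Item (i) to bound the commutator $[\mathbf{E}_h, \mathbf{P}_h]f_h$ (wavefront disjoint from $\mc{T}_\eta$), together with the uniform boundedness of $(\mathbf{P}^u_h)^{-1}$ on $C^0$ and $L^2$ coming from the contraction of $e^{-t\mc{L}_X|_{E^s_F}}$, yields $-h\nabla^u_{\mathbf{k}(h)}\mathbf{E}_h f_h = o_{L^2}(h^{\ell/2}) = o_{C^0}(h^{\ell/2-\dim F/2})$, and symmetrically for $\nabla^s$; combining with the $z_h\pi_F^*\alpha$-term from $\mathbf{P}_h\mathbf{E}_h f_h$ gives \eqref{equation:parallel2}. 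The main obstacle throughout is bookkeeping the Hölder-only regularity of the dynamical connection inside the BW-calculus — ensuring that every Egorov/propagation step only ever uses $\iota_X F_{\nabla^{\mathrm{dyn}}} = 0$ (equivalently that $\mc{L}_X\beta_{\mathrm{dyn}}$ is smooth in local trivialisations, cf. Remark \ref{remark:regularity}) rather than smoothness of $\beta_{\mathrm{dyn}}$ itself — but this is exactly parallel to the $\mathrm{U}(1)$ case and requires no new idea.
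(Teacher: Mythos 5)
Your proposal is correct and follows exactly the approach the paper takes: the paper's own proof is a two-sentence remark stating that Item (i) follows by the same source-estimate argument as Lemma \ref{lemma:microsupport} (now for the partially hyperbolic flow on $F$, after first killing the mass off $\overline{\HH^*_F}$ by ellipticity of $\overline{\partial}_{\mathbf{k}(h)}$), and Item (ii) by the same arguments as Lemmas \ref{lemma:2} and \ref{lemma:parallel}. You have filled in the details faithfully, including the correct Sobolev-embedding cost $h^{-\dim F/2}$, the use of Proposition \ref{proposition:propagation2} enabled by $\iota_{X_F}\mathbf{F}_{\overline{\nabla}}=0$, and the handling of the H\"older-only regularity of the dynamical connection via Remark \ref{remark:regularity}. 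One small slip in phrasing: in your Step 2 for Item (i) you write ``on the set $\mc{T}_\eta + (E^u_F)^*$ I would invoke the source estimate'', but as the rest of the sentence (and Step 2 of Lemma \ref{lemma:microsupport}) makes clear, the source estimate and propagation handle the \emph{complement} of $\mc{T}_\eta + (E^u_F)^*$ within $\{\xi(X_F)=-\eta\}$, with $\mc{T}_\eta + (E^u_F)^*$ deferred to the positive-commutator Step 3.
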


\begin{proof}
(i)  Note that on the complement of $\mathbb{H}^*_F$ we have $\mathbf{E}_h f_h = 0$ by $\overline{\partial}_{\mathbf{k}(h)} \mathbf{E}_h f_h = 0$ and ellipticity; so it remains to study the microsupport on $\HH^*_F$. The argument is the same as in Lemma \ref{lemma:microsupport} based on source estimates (but for the partially hyperbolic flow on $F$ this time, see Theorem \ref{theorem:source-sink}). 

\medskip

(ii) The same proofs as in Lemmas \ref{lemma:2} (using the sink estimate from Theorem \ref{theorem:source-sink}) and \ref{lemma:parallel} go through (again, by using the partially hyperbolic flow on $F$).
\end{proof}

We can now complete the proof of \eqref{equation:bound12}. Define $v_h := \mathbf{E}_h f_h$, where $\mathbf{E}_h$ is as in (ii) of the previous lemma.

\begin{proof}[Proof of \eqref{equation:bound12}]
As a preliminary observation, note that ergodicity of $\psi^F$ and the (non-integrability) assumption in Theorem \ref{theorem:main2}, Item (i), imply ergodicity (and mixing) of $\psi$ by Lemma \ref{lemma:no-resonances}. Similarly to \eqref{equation:d-small}, we deduce from \eqref{equation:parallel2} and the fact that $\Re(z_h/h) = o(h^{\ell})$:
\begin{equation}\label{eq:quasimode-normed-invariance}
	d|_{\HH_F} (|v_h|^2) = o_{C^0}(h^{\ell/2-\dim F-1}).
\end{equation}
Let us fix an arbitrary point $x_0 \in M$ and identify $F_{x_0}$ and $P_{x_0}$ with $G$ and $G/T$, respectively. We obtain that $F_{x_0} \ni z \mapsto |v_h|^2(x_0,z)$ is nearly invariant by all elements $g \in G$ obtained by doing stable/unstable/flow holonomies over loops based at $x_0$ obtained by concatenating stable/unstable/flow paths over $M$ (of bounded length). More precisely, fix $\beta > 0$. If $\tau \subset M$ is an oriented piecewise smooth path based at $x_0$, obtained by concatenation of stable/unstable/flow paths, and of total length $\ell(\tau) \leq h^{-\beta}$, then setting $g_\tau := \mathrm{Hol}^{\nabla^{\mathrm{dyn}}}_\tau \in G$ (holonomy of the dynamical connection on $P$ along $\tau$), we get that over $F_{x_0}$
\[
	g_\tau^*|v_h|^2(x_0, \bullet) - |v_h|^2(x_0, \bullet) = o_{C^0}(h^{\ell/2-(\dim F+1)-\beta}).
\]
Now, take a finite subset $W \subset \rho_{\mathrm{Parry}}(\mathbf{G})$ that is $\eps_0$-dense for $\eps_0 > 0$ sufficiently small (this is always possible since $G = \overline{\rho_{\mathrm{Parry}}(\mathbf{G})}$ by assumption). By Corollary \ref{corollary:diophantine-flag-manifold}, for $\delta \in (0, 1)$ fixed (which may be taken arbitrarily close to $0$), we know that there exists a constant $C > 0$ such that $\mc{W}_n$ is $Cn^{-(1 - \delta)}$-dense in $G$. Here, $\mc{W}_n$ is defined in \eqref{eq:diophantine-w_n-def}; note that an element $g \in \mc{W}_n$ is obtained by holonomy (with respect to the dynamical connection $\nabla^{\mathrm{dyn}}$) along an oriented path of total length $\leq Cn$. We take $N := \lceil{h^{-\beta}}\rceil$. This gives us that over $F_{x_0}$:
\begin{equation}
\label{equation:inv1}
g^*|v_h|^2(x_0, \bullet) - |v_h|^2(x_0, \bullet) = o_{C^0}(h^{\ell/2 -(\dim F + 1)-\beta}), \qquad \forall g \in \mc{W}_{N},
\end{equation}
and the set $\mc{W}_{N}$ is $C h^{\beta (1 - \delta)}$-dense in $G$.

On the other hand, we know by Sobolev embedding that 
\[
	\||v_h|^2\|_{C^1} \leq C h^{-(\dim F/2 + 1)} \||v_h|^2\|_{H^N_h} = \mc{O}(h^{-(\dim F/2 + 1)})
\] 
for all $N > n/2+1$. Hence for all $z, z' \in F_{x_0}$ such that $d(z,z') \leq C h^{\beta (1 - \delta)}$, we get
\begin{equation}
\label{equation:inv2}
|v_h|^2(x_0,z) - |v_h|^2(x_0,z') = \mc{O}(d_{F_{x_0}}(z,z')h^{-(\dim F/2 + 1)}) = \mc{O}(h^{\beta (1 - \delta) - (\dim F/2+1)}).
\end{equation}
Combining \eqref{equation:inv1} and \eqref{equation:inv2}, we therefore see that there exists a constant $c_h = |v_h(x_0, z_0)|^2 \geq 0$ for some $z_0 \in F_{x_0}$, such that over $F_{x_0}$
\begin{equation}\label{eq:c_h}
	|v_h|^2(x_0,\bullet) = c_h + \mc{O}_{C^0}(h^\theta), 
\end{equation}
where 
\[
	\theta = \min\big(\beta (1 - \delta) - (\dim F/2+1), \ell/2-(\dim F+1)-\beta\big). 
\]
Thus for any $\ell > 3\dim F + 4$, there exist $\beta > \dim F/2+1$ and $\delta > 0$ small enough such that that $\theta > 0$. We next split the proof according to the properties of the weight $\gamma: T \to \mathbb{S}^1$ associated to $\mathbf{k}(h) = (k_1, \dotsc, k_a, k_{a+1}, \dotsc, k_{a+b}) \in \widehat{G}$.
\medskip

\emph{Case 1: $(k_{a + 1}, \dotsc, k_{a + b}) \neq 0$.} Assume we have this property for a sequence of $h \to 0$. By Proposition \ref{prop:line-bundle-topology}, the line bundle $\mathbf{L}^{\otimes \mathbf{k}}|_{F_{x_0}}$ is topologically non-trivial, so we deduce that $v_h|_{F_{x_0}}$ has to vanish at some point. We thus get $c_h = \mc{O}(h^\theta)$ and so $|v_h|^2|_{F_{x_0}} = \mc{O}_{C^0}(h^\theta)$. Using \eqref{eq:quasimode-normed-invariance} again, and using that any $x \in M$ is connected to $x_0$ by a concatenation of stable/unstable/flow paths $\tau$ (of uniformly bounded length), we get $\|v_h\|^2_{C^0} = o(1)$. This contradicts the fact that $\|v_h\|_{L^2(F)} > 1/c > 0$ obtained in Lemma \ref{lemma:microlocalisation}, Item (ii). 
\medskip

\emph{Case 2: $(k_{a + 1}, \dotsc, k_{a + b}) = 0$.} Assume we have this property for all but finitely many $h$. Then, by Lemma \ref{lemma:extension}, we deduce that $\gamma$ extends to a homomorphism $\widetilde{\gamma}: G \to \mathbb{S}^1$. As in \S\ref{sssection:abelian-rep}, define the line bundle $\mathbf{L}^{\otimes \mathbf{k}}_M := P \times_{\widetilde{\gamma}} \C$ over $M$; using the assumption $(k_{a + 1}, \dotsc, k_{a + b}) = 0$, $\pi_F^*\Lk_M$ is identified with $\Lk$ (see Lemma \ref{lemma:pullback-equivalence}). By Lemma \ref{lemma:pullback-equivalence}, \eqref{eq:pushforward-identity} and \eqref{equation:parallel2}, the pushforward $u_h := (\pi_F)_* v_h \in C^\infty(M,\mathbf{L}^{\otimes \mathbf{k}}_M)$ satisfies (note that as $v_h$ is fibrewise holomorphic, it is fibrewise constant)
\begin{equation}
\begin{split}
\label{equation:parallel3}
(-h\nabla^{\mathrm{dyn}}_{\mathbf{k}, M} - z_h \alpha \wedge)u_h = o_{L^2}(h^{\ell/2}) = o_{C^0}(h^{\ell/2 - \dim F/2}), \quad \|u_h\|_{L^2} &\geq c_1, 
\end{split}
\end{equation}
for some $c_1 > 0$, where $\nabla^{\mathrm{dyn}}_{\mathbf{k}, M}$ denotes the associated dynamical connection on $\Lk_M$. For $i = 1, \dotsc, a$, as in \S \ref{sssection:abelian-rep} denote by $\nabla_{i, M}^{\mathrm{dyn}}$ the associated dynamical connection on $\mathbf{L}_{i, M} := \mathbf{L}_M^{\e_i}$, where $\e_i$ is the standard vector consisting of zeroes and a single $1$ at the $i$th position. Then, we have

\begin{lemma}\label{lemma:torus-bundle-curvature}
	Assume \eqref{equation:parallel3} holds and $h \mathbf{k}(h) \to \mathbf{l} = (\ell_1, \dotsc, \ell_a, 0, \dotsc, 0)$ as $h \to 0$. Then
	\[
		-i\sum_{j = 1}^a \ell_j F_{\nabla^{\mathrm{dyn}}_{j, M}} + \eta \dd\alpha = 0.
	\]
\end{lemma}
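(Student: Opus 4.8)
The statement is the exact analogue of Lemma~\ref{lemma:contradiction} from the $\mathrm{U}(1)$ case, but now for the multi-curvature on the torus quotient $P/[G,G]$, so the strategy is to reduce to the flat-bundle criterion (Lemma~\ref{lemma:flat}) applied to the connection $\nabla := \sum_{j=1}^a \ell_j \nabla^{\mathrm{dyn}}_{j,M} + \eta\, \alpha\wedge$ on a suitable line bundle over $M$. Concretely, note that since $(k_{a+1},\dots,k_{a+b})=0$ the weight $\gamma$ extends to $\widetilde\gamma:G\to\mathbb{S}^1$, and $\widetilde\gamma$ factors through $G/[G,G]$; writing $h\mathbf{k}(h) = h^{-1}(\mathbf{l} + \delta_h)$ with $\delta_h\to 0$, equation \eqref{equation:parallel3} becomes a near-horizontal-parallelism statement for $u_h$ with respect to the connection whose local $1$-form is $\sum_j (\ell_j + (\delta_h)_j)\beta_{\mathrm{dyn},j,M} + (\eta+\varepsilon_h')\alpha$, where $\beta_{\mathrm{dyn},j,M}$ is the (Hölder) dynamical connection $1$-form of $\mathbf{L}_{j,M}$ in a local trivialisation. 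So in a contractible chart $U\subset M$, integrating over a piecewise-smooth loop $\tau\subset U$ of length $\le 1$ based at $x$, \eqref{equation:parallel3} together with the lower bound $\|u_h\|_{L^2}\ge c_1$ (which upgrades to a pointwise lower bound $|u_h|>c>0$ by the same diffusion-of-the-norm argument as in Lemma~\ref{lemma:lower-bound}, using the $C^0$-bound $\|u_h\|_{C^0}\le C h^{-\dim F/2}$) yields
\[
\Bigl| 1 - \exp\Bigl(-\tfrac{i}{h}\int_\tau \bigl(\textstyle\sum_j(\ell_j+(\delta_h)_j)\beta_{\mathrm{dyn},j,M} + (\eta+\varepsilon_h')\alpha\bigr)\Bigr)\Bigr| = o\bigl(h^{\ell/2-\dim F/2-1}\bigr).
\]

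The core of the argument is then the Diophantine/Roth-number trick already used in the proof of Lemma~\ref{lemma:contradiction}: suppose the curvature $F_\nabla := \sum_j \ell_j F_{\nabla^{\mathrm{dyn}}_{j,M}} + \eta\, d\alpha$ were not identically zero. By Lemma~\ref{lemma:flat} applied to $\nabla$ (which is Hölder continuous, being a fixed linear combination of Hölder connections and a smooth form — well, $\alpha$ is only Hölder, but Lemma~\ref{lemma:flat} is stated for Hölder connections), there is a homotopically trivial loop $\gamma\subset U$ with $c := \int_\gamma(\sum_j\ell_j\beta_{\mathrm{dyn},j,M} + \eta\alpha)\neq 0$. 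Choosing a null-homotopy $(\gamma_s)_{s\in[0,1]}$ inside $U$, and letting $F(s) = \int_{\gamma_s}(\sum_j\ell_j\beta_{\mathrm{dyn},j,M}+\eta\alpha)$, $F_h(s) = \int_{\gamma_s}(\sum_j(\ell_j+(\delta_h)_j)\beta_{\mathrm{dyn},j,M}+(\eta+\varepsilon_h')\alpha)$, pick $s_0<s_1$ with $F(s_0)/F(s_1)$ an irrational algebraic number; Roth's theorem gives $|F(s_0)/F(s_1) - p/q| > C q^{-5/2}$. Choosing $s_0(h),s_1(h)$ with $F_h(s_i(h)) = F(s_i)$ and arguing exactly as before, one derives the lower bound $|\exp(-\tfrac{i}{h}F_h(s_0(h))) + \exp(-\tfrac{i}{h}F_h(s_1(h))) - 2| > C h^4$, which for $\ell/2 - \dim F/2 - 1 > 4$ (i.e.\ $\ell > \dim F + 10$, guaranteed by our choice of $\ell$) contradicts the displayed $o(h^{\ell/2-\dim F/2-1})$ estimate. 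Hence $F_\nabla = 0$, i.e.\ $-i\sum_j\ell_j F_{\nabla^{\mathrm{dyn}}_{j,M}} + \eta\,d\alpha = 0$ (the factor $-i$ appearing because the $F_{\nabla^{\mathrm{dyn}}_{j,M}}$ are purely imaginary, matching the sign conventions of \S\ref{sssection:curvature} and Lemma~\ref{lemma:abelian-curvature}).

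The main obstacle I anticipate is bookkeeping rather than conceptual: one must be careful that the scaling factor in front of $\beta_{\mathrm{dyn},j,M}$ is genuinely $h^{-1}(\ell_j + (\delta_h)_j)$ (coming from $h\mathbf{k}(h)\to\mathbf{l}$ together with $\nabla_{\mathbf{k},M}^{\mathrm{dyn}} = \sum_j k_j\nabla^{\mathrm{dyn}}_{j,M}$ and the fact that $\mathbf{k}(h)$ has vanishing $[\mathfrak{g},\mathfrak{g}]$-components so only the central $j\le a$ terms survive), and that the error terms $\delta_h,\varepsilon_h'$ genuinely tend to $0$ and do not interfere with the Roth-number comparison — exactly the same technical nuisance as the $\varepsilon_h$ in the proof of Lemma~\ref{lemma:contradiction}. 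A secondary point is to make sure Lemma~\ref{lemma:flat} is legitimately applicable: the relevant connection on $\mathbf{L}_{j,M}\to M$ is the associated connection of the dynamical connection on $P$, which is Hölder by \S\ref{ssection:dynamical-connection}, and $\alpha$ is Hölder, so the hypothesis of Lemma~\ref{lemma:flat} is met. Once these are checked, the argument is a verbatim transcription of \S\ref{ssection:dur}, with $n$ replaced by $\dim F$ and the single $\beta_{\mathrm{dyn}}$ replaced by the linear combination $\sum_j\ell_j\beta_{\mathrm{dyn},j,M}$.
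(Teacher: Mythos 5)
Your proof is correct and mirrors the paper's own argument: both derive from \eqref{equation:parallel3} an approximate parallelism identity along small loops, run the Roth/Diophantine comparison from the proof of Lemma~\ref{lemma:contradiction} with $n$ replaced by $\dim F$ and $\beta_{\mathrm{dyn}}$ replaced by $\mathbf{l}\cdot\boldsymbol{\beta}_{\mathrm{dyn}}$, and conclude via Lemma~\ref{lemma:flat}. (Minor typo: you write $h\mathbf{k}(h) = h^{-1}(\mathbf{l}+\delta_h)$ where you mean $\mathbf{k}(h) = h^{-1}(\mathbf{l}+\delta_h)$; your subsequent display has the correct $1/h$ scaling, so the substance is unaffected.)
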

\begin{proof}
	The proof is completely analogous to Lemma \ref{lemma:contradiction} and we just outline the differences. First of all, similarly to \ref{equation:d-small}, from \eqref{equation:parallel3} it follows that $\dd |u_h|^2 = \mc{O}_{C^0}(h^{\ell/2 - \dim F - 1})$ (here we use that $\ell > 2\dim F + 2$). As a consequence, similarly to Lemma \ref{lemma:lower-bound}, there is a $C > 0$ such that $|u_h| > C$ for all $h$.
	
	Consider a contractible open set $U \ni x$ and a closed loop $\gamma \subset U$ based at $x$. For $j = 1, \dotsc, a$, we trivialize the line bundles $\mathbf{L}_{i, M}$ over $U$ and write $\nabla_{j, M} = d + i\beta_{j, M}$. Denote by 
	\[
		\boldsymbol{\beta}_{\mathrm{dyn}} := (\beta_{1, M}, \dotsc, \beta_{a, M})
	\] 
	the vector of connection $1$-forms. Then by \eqref{equation:parallel3}
	\[
		u_h(x)  = \exp\left(-i \int_{\gamma} \mathbf{k} \cdot \boldsymbol{\beta}_{\mathrm{dyn}} - \dfrac{z_h}{h} \int_\gamma \alpha \right) u_h(x) + o(h^{\ell/2 - \dim F/2 - 1}).
	\]
	Rewriting, using $|u_h| > C$, as well as $h = \langle{\mathbf{k}}\rangle^{-1}$, and $z_h = i(\eta + o(1)) + o(h^{\ell/2 + 1})$, we get
	\begin{equation*}
		\left|1-\exp\left(-\frac{i}{h}\int_{\gamma}  \left((\mathbf{l} + o(1)) \cdot \boldsymbol{\beta}_{\mathrm{dyn}} + (\eta + o(1)) \alpha\right)\right)\right| = o(h^{\ell/2 - \dim F/2 - 1}).
	\end{equation*}
	The remainder of the argument in Lemma \ref{lemma:contradiction} applies, and we obtain that 
	\[
		0 = \int_\gamma (\mathbf{l} \cdot \boldsymbol{\beta}_{\mathrm{dyn}} + \eta \alpha).
	\]
	(Here we use that $\ell > \dim F + 10$.) The result then follows from Lemma \ref{lemma:flat}.
\end{proof}

Since $h|\mathbf{k}(h)|$ is bounded, we may assume without loss of generality that $h \mathbf{k}(h) \to \mathbf{l}$ for some $\mathbf{l} = (\ell_1, \dotsc, \ell_a, 0, \dotsc, 0)$ where $\ell_i \in [0, 1]$ for $i = 1, \dotsc, a$. Then, the result of Lemma \ref{lemma:torus-bundle-curvature} contradicts the assumptions of the proposition (see Lemma \ref{lemma:abelian-curvature}) and completes the proof once we have $\ell > \max\big(3\dim F + 4, \dim F + 10\big)$.


\end{proof}


\subsection{Proof of \eqref{equation:bound22}} The proof is similar to \eqref{equation:bound12}, just like the proof of \eqref{equation:bound2} is similar to that of \eqref{equation:bound1}.

\subsection{Equivalence of rapid mixing}

Denote by $\mathbb{A} := P/[G, G]$ the Abelianisation of $P$ (which is  a principal torus bundle over $M$), by $\mathrm{pr}_{\mathbb{A}}: P \to P/[G, G]$ the quotient projection, by $d\mu_{\mathbb{A}}$ the quotient volume form on $\mathbb{A}$ preserved by the quotient flow $\psi^{\mathbb{A}}$. Write $X_{\mathbb{A}}$ for the vector field generating $\psi^{\mathbb{A}}$. Note that for any $t \in \mathbb{R}$, by definition, $\mathrm{pr}_{\mathbb{A}} \circ \psi_t = \psi_t^{\mathbb{A}} \circ \mathrm{pr}_{\mathbb{A}}$. Here, we prove Theorem \ref{theorem:main2}, Item (ii), that is, the following statement:

\begin{theorem}
	The flow $\psi$ on $P$ is rapid mixing if and only if the quotient flow $\psi^{\mathbb{A}}$ on the Abelianisation $P/[G, G]$ is rapid mixing.
\end{theorem}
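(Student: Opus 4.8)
The plan is to reduce the statement to a Fourier-analytic comparison of correlation functions on $P$ and on $\mathbb{A} := P/[G,G]$, exploiting that the Peter--Weyl decomposition on $P$ refines the one on $\mathbb{A}$ and that rapid mixing of a flow is a statement about the absence of resonances on the imaginary axis together with polynomial resolvent bounds in the anisotropic spaces of Theorem \ref{theorem:real} and Corollary \ref{corollary:hf}.

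First I would establish the easy direction: if $\psi$ on $P$ is rapid mixing, then $\psi^{\mathbb{A}}$ is rapid mixing. Given $g_1, g_2 \in C^\infty(\mathbb{A})$ with zero average, set $f_i := \mathrm{pr}_{\mathbb{A}}^* g_i \in C^\infty(P)$; these have zero average with respect to $\dd\mu$ since $\mathrm{pr}_{\mathbb{A}}$ pushes $\mu$ forward to $\mu_{\mathbb{A}}$. Using $\mathrm{pr}_{\mathbb{A}} \circ \psi_t = \psi_t^{\mathbb{A}} \circ \mathrm{pr}_{\mathbb{A}}$ and the change of variables formula, one has
\[
\int_{\mathbb{A}} (g_1 \circ \psi_t^{\mathbb{A}}) \cdot g_2 \, \dd\mu_{\mathbb{A}} = \int_P (f_1 \circ \psi_t) \cdot f_2 \, \dd\mu,
\]
so the rapid mixing bound \eqref{eq:rapid-mixing} for $\psi$ applied to $f_1, f_2$ gives the corresponding bound for $\psi^{\mathbb{A}}$ applied to $g_1, g_2$, once we note $\|\mathrm{pr}_{\mathbb{A}}^* g_i\|_{H^s(P)} \leq C \|g_i\|_{H^s(\mathbb{A})}$ since $\mathrm{pr}_{\mathbb{A}}$ is a smooth submersion.

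The substantive direction is the converse. The key observation is that a function on $P$ invariant under $[G,G]$ (equivalently, a pullback from $\mathbb{A}$) corresponds precisely to the Fourier modes $\mathbf{k} \in \widehat{G}$ of the form $(k_1,\dots,k_a,0,\dots,0)$, i.e.\ those for which the global weight $\gamma$ extends to a homomorphism $G \to \mathbb{S}^1$ (Lemma \ref{lemma:extension}), and by Lemma \ref{lemma:pullback-equivalence} the induced operator $\X_{\mathbf{k}}$ is conjugate to the operator $\X_{\mathbf{k},M}$ downstairs, which is exactly (a Fourier mode of) the generator $X_{\mathbb{A}}$ of $\psi^{\mathbb{A}}$. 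Hence rapid mixing of $\psi^{\mathbb{A}}$ is equivalent, via the argument of Lemma \ref{lemma:no-resonances} and Corollary \ref{corollary:hf}, to: (a) $\X_{\mathbf{k}}$ has no resonance on $\{\Re z = 0\}$ (other than $z=0$ for $\mathbf{k}=0$) for every $\mathbf{k}$ with $k_{a+1}=\dots=k_d=0$, and (b) uniform polynomial-in-$(\langle\mathbf{k}\rangle, \langle\Im z\rangle)$ resolvent bounds on the imaginary axis for those modes. I would then argue that these two facts, \emph{together with the unconditional parts of the theory that do not use any curvature assumption}, already give rapid mixing of $\psi$ on all of $P$. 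Concretely: for the modes $\mathbf{k}$ with $(k_{a+1},\dots,k_d)\neq 0$, the line bundle $\mathbf{L}^{\otimes\mathbf{k}}\to F$ is topologically nontrivial on each fibre (Proposition \ref{prop:line-bundle-topology}), so the horocyclic-invariance + Diophantine argument of \S\ref{section:hf2}, Case 1 (which uses only ergodicity of $\psi^F$, never the linear-independence hypothesis), shows there are no imaginary-axis resonances and yields the polynomial resolvent bound $\langle\mathbf{k}\rangle^\ell$ uniformly; for the modes with $(k_{a+1},\dots,k_d)=0$, the resolvent bound is exactly hypothesis (b). Reassembling via the spectral-theorem / Stone's-formula argument of \S\ref{ssection:implication} (which is purely formal once one has Corollary \ref{corollary:hf}-type bounds mode-by-mode) then produces \eqref{eq:rapid-mixing} for $\psi$.

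The main obstacle I anticipate is bookkeeping the ergodicity hypotheses carefully: Theorem \ref{theorem:main2} is stated under ``$\psi^F$ ergodic,'' and I must check that rapid mixing of $\psi^{\mathbb{A}}$ does \emph{not} by itself imply $\psi^F$ ergodic, so strictly speaking the clean ``if and only if'' should be read within the standing hypothesis of the theorem (i.e.\ $\psi^F$ ergodic is assumed throughout; cf.\ the remark after Lemma \ref{lemma:no-resonances} that ergodicity of $\psi^F$ plus the weak curvature condition implies mixing of $\psi$). Under that standing hypothesis the argument above closes. A secondary technical point is that the anisotropic Sobolev losses must be tracked so that the final Sobolev exponent on $P$ is finite and uniform; this is handled exactly as in Lemma \ref{lemma:anisotropic-isotropic} and Corollary \ref{corollary:hf}, and requires no new idea. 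I expect the proof in the paper to be short precisely because all the analytic heavy lifting (the high-frequency estimates of \S\ref{section:hf}--\S\ref{section:hf2}) has already been done, and Item (ii) is essentially a repackaging of which Fourier modes ``see'' the commutator subgroup.
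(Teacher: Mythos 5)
Your easy direction $P\Rightarrow\mathbb{A}$ is correct and is exactly the paper's Step 1. The converse is where you have a genuine gap. You assert that rapid mixing of $\psi^{\mathbb{A}}$ is \emph{equivalent}, via Lemma \ref{lemma:no-resonances} and Corollary \ref{corollary:hf}, to (a) absence of imaginary-axis resonances for the Abelian modes and (b) uniform polynomial-in-$(\langle\mathbf{k}\rangle,\langle\Im z\rangle)$ resolvent bounds on the \emph{anisotropic} spaces $\mc{H}^s_{\langle\mathbf{k}\rangle^{-1},\mathrm{hol}}$ for those modes, and then you use (b) as a hypothesis for the Abelian modes. But only the implication (a)$+$(b)$\Rightarrow$rapid mixing is ever established (Corollary \ref{corollary:hf} and \S\ref{ssection:implication}); the converse direction that rapid mixing produces (b) is not proved anywhere, and it is not at all obvious. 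What you \emph{can} extract from rapid mixing of $\psi^{\mathbb{A}}$ (via the Laplace transform of the correlation function, using a polynomial bound with $k=2$) is only an \emph{isotropic} bound $(X_{\mathbb{A}}+z)^{-1}:H^s(\mathbb{A})\to H^{-s}(\mathbb{A})$ uniformly for $\Re z\geq 0$, $z\neq 0$. This is strictly weaker: by Lemma \ref{lemma:anisotropic-isotropic} the anisotropic spaces embed into isotropic ones, but not conversely, and the semiclassical $\langle\mathbf{k}\rangle^{-1}$-rescaling in $\mc{H}^s_h$ is invisible to the isotropic bound. So your plan ``hypothesis (b) is supplied by rapid mixing on $\mathbb{A}$'' does not close.

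The paper's route is different and bypasses this issue. It takes only the isotropic resolvent bound $(X_{\mathbb{A}}+z)^{-1}:H^s\to H^{-s}$, re-runs the contradiction argument for \eqref{equation:bound12} (Case 1 unchanged, as you observed), and in Case 2 converts the quasimode $u_h$ on $\Lk_M$ into an honest smooth function $q_h$ on $\mathbb{A}$ via the inverse Fourier transform, for which $(-hX_{\mathbb{A}}-z_h)q_h$ is $o_{H^N}(h^{\ell/2-N})$ and $\|q_h\|_{L^2}\geq c>0$. Applying the isotropic resolvent bound gives $\|q_h\|_{H^{-s}}=o(h^{\ell/2-s-1})$, while $\|q_h\|_{H^s}=\mc{O}(h^{-s})$ by semiclassical elliptic estimates. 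The interpolation inequality
\[
\|q_h\|_{L^2}^2\leq C\,\|q_h\|_{H^s}\,\|q_h\|_{H^{-s}}=o(h^{\ell/2-2s-1})
\]
then contradicts $\|q_h\|_{L^2}\geq c$ once $\ell>4s+2$. This interpolation step is the missing idea in your write-up: it is what allows a weak isotropic resolvent bound (all that rapid mixing gives) to replace the anisotropic bound you were assuming. Your observation about Case 1 being independent of the curvature hypothesis, and your worry about tracking Sobolev exponents, are both on point; but without the interpolation trick your reduction does not go through.
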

\begin{proof}
We will prove the equivalence by first showing that rapid mixing on $P$ implies rapid mixing on $\mathbb{A}$, and then showing the converse.
 \medskip
 
	\emph{Step 1: $P \implies \mathbb{A}$.} If $f, g \in C^\infty(\mathbb{A})$ with zero mean, and $k \in \mathbb{N}$, there exists a positive $s$ such that
	\begin{align*}
		&\vol([G, G]) \int_{\mathbb{A}} f \cdot (\psi_t^{\mathbb{A}})^*g\, \dd \mu_{\mathbb{A}} = \int_{P} \pr_{\mathbb{A}}^*f \cdot \pr_{\mathbb{A}}^* (\psi_t^{\mathbb{A}})^*g\, \dd\mu \\
		& = \int_{P} \pr_{\mathbb{A}}^*f \cdot \psi_t^* \pr_{\mathbb{A}}^*g\, \dd \mu = \mc{O}(\|f\|_{H^s} \|g\|_{H^s} t^{-k}),
	\end{align*}
	as $t \to \infty$, where in the final line we used that $\pr_{\mathbb{A}}^*: H^\alpha(\mathbb{A}) \to H^\alpha(P)$ is continuous for any $\alpha > 0$, as well as the hypothesis. This completes the proof.
	\medskip
	
	\emph{Step 2: $\mathbb{A} \implies P$.} Let $f, g \in C^\infty(\mathbb{A})$ with zero mean. Then
	\[
		\int_0^\infty e^{-zt} \int_{\mathbb{A}} (\psi_{-t}^{\mathbb{A}})^*f \cdot g\, d\mu_{\mathbb{A}}\, dt = \langle{(X_{\mathbb{A}} + z)^{-1}f, g}\rangle_{L^2}, 
	\]
	for $\Re z > 0$, as the time integral converges (e.g. in $L^2$). Moreover, by the rapid mixing hypothesis, the left hand side of this formula still makes sense for $\Re z = 0$, and this defines $(X_{\mathbb{A}} + z)^{-1}$ on smooth functions on $\mathbb{A}$ with zero mean with values in distributions. In fact, using that the norm of correlations is bounded by $\mc{O}_k(\|f\|_{H^s} \|g\|_{H^s} t^{-k})$ as $t \to \infty$, this defines by duality for $\Re z \geq 0$ and $z \neq 0$ (or when $z = 0$ after restricting to zero mean functions)
	\[
		(X_{\mathbb{A}} + z)^{-1}: H^s(\mathbb{A}) \to H^{-s}(\mathbb{A}),
	\]
	where we took for instance $k = 2$ (and used that $\int_1^\infty t^{-2}\, dt < \infty$). 
	
	
	 
	We follow the proof of Theorem \ref{theorem:main2}, Item (i). Let us discuss the proof of \eqref{equation:bound12} and outline the differences to the present situation. Everything remains the same up until Case 2 (the case when $k_{a + 1} = \dotsb = k_{a + b} = 0$). Recall that we have $u_h$, a smooth section of $\Lk_M = P \times_{\widetilde{\gamma}} \mathbb{C}$, where $\widetilde{\gamma}: G \to \mathbb{S}^1$ is the extension of the global weight $\gamma$ to $G$. Moreover, $u_h$ is a quasimode, i.e. it satisfies \eqref{equation:parallel3}. Applying the inverse Fourier transform \eqref{equation:inversion} to $u_h$ yields a smooth function $v_h \in C^\infty(\mathbb{A})$
	\[
		q_h(x, w) := w^{-1} u_h(\widetilde{\gamma}, x),\quad (x, w) \in \mathbb{A},
	\]
	where $w$ is seen as a (unitary) isomorphism $w: \mathbb{C} \to \Lk_M(x)$. Moreover, thanks to the fact that the Fourier transform commutes with the flow (see \eqref{eq:ft-intertwines-infinitesimal}), $q_h$ satisfies (we only use the flow direction in \eqref{equation:parallel3})
	\begin{equation}\label{eq:derived-XXX}
\begin{split}
	(-hX_{\mathbb{A}} - z_h)q_h & = o_{L^2}(h^{\ell/2}) = o_{H^N_h}(h^{\ell/2}) = o_{H^N}(h^{\ell/2 - N}),\\
	\|q_h\|_{L^2} &\geq c, 
\end{split}
\end{equation}
	for some $c > 0$, where $N > 0$ is arbitrary. Here, we are free to upgrade the norm to $H^N_h$ since the analogous bound holds for $u_h$ by the proof of Lemma \ref{lemma:2}; we also used that $\|f\|_{H^N_h} \geq Ch^{N} \|f\|_{H^N}$ for some $C>0$ and for any smooth $f$. 
%
	Apply the resolvent to \eqref{eq:derived-XXX}, with $N = s$, to get 
	\[
		q_h = o_{H^{-s}}(h^{\ell/2 - s - 1}).
	\] 
	On the other hand, we have $\|q_h\|_{H^s} = \mc{O}(h^{-s})$, and by interpolation we get
	\[
		\|q_h\|_{L^2}^2 \leq C \|q_h\|_{H^s} \|q_h\|_{H^{-s}} = o(h^{\ell/2 - 2s - 1}).
	\]
	 When $\ell > 4s + 2$ (and $\ell > \max(3\dim F + 4, \dim F + 10) + 1$ as previously required), this contradicts the fact that $\|v_h\|_{L^2} \geq c > 0$ and establishes \eqref{equation:bound12}; the bound \eqref{equation:bound22} is similarly obtained. That this implies rapid mixing is shown in \S \ref{ssection:implication} (with explicit dependence of the exponent $\vartheta$ on $s$, e.g. any $\vartheta > 4\ell + 4$ would work). This completes the proof. 
\end{proof}

 \newpage 
\hspace{1cm}

 \chapter{Spectral theory of horizontal Laplacians}
 
 \label{chapter:hypoelliptic}
 
In this chapter, we study sub-elliptic Laplacians obtained as horizontal Laplacians on a principal bundle constructed from a $G$-equivariant connection. We prove several spectral properties for these operators. First, we establish that they are globally hypoelliptic if the group $G$ is semisimple and the connection has dense holonomy group. We also provide a lower bound for the first eigenvalue of the induced Laplacian on fiberwise holomorphic sections of the corresponding line bundles (over the flag bundle) when the curvature is non-degenerate. Finally, we prove a quantum ergodicity result for flat connections, as well as in the setting of perturbations of flat connections.
 
  \minitoc
 
 \newpage
 
 \section{Introduction}\label{sec:horizontal-lapl-intro}
 
 Let $(M,g)$ be a smooth closed connected Riemannian manifold and $G$ be a connected compact Lie group. Throughout this chapter, we shall consider a $G$-principal bundle $\pi : P \to M$ equipped with $\nabla$, a $G$-connection on $P$, which gives rise to a splitting $TP = \HH \oplus \V$ into horizontal $\HH$ and vertical $\V$ subspaces. As in \S\ref{section:connection-principal-bundles}, we define
\[
	d_{\HH} : C^\infty(P) \to C^\infty(P,\HH^*), \qquad d_{\HH}f := df|_{\HH},
\]
and the horizontal Laplace operator $\Delta_{\HH} := (d_{\HH})^*d_{\HH}$. (Here, we recall that $\HH^*$ is the annihilator of $\V$.) The purpose of this chapter is to describe the spectral properties of the operator $\Delta_{\HH}$. 

Recall that $F := P/T \to M$ is the flag bundle, where $T \leqslant G$ is a maximal torus, and that $\Lk \to F$ is a fibrewise holomorphic line bundle defined by a highest weight $\mathbf{k} \in \widehat{G}$ of an irreducible representation (see \eqref{equation:ecriture}). By Lemma \ref{lemma:ft-horizontal-laplacian}, the operator $\Delta_{\HH}$ induces for all $\mathbf{k} \in \widehat{G}$ an operator $\Delta_{\mathbf{k}}$ on $C^\infty_{\mathrm{hol}}(F,\mathbf{L}^{\otimes \mathbf{k}})$ and this operator has discrete spectrum contained in $[0,\infty)$. More precisely, recalling that $\overline{\partial}_{\mathbf{k}}$ is the fibrewise holomorphic derivative, the operator $\Delta_{\mathbf{k}} + (\overline{\partial}_{\mathbf{k}})^*\overline{\partial}_{\mathbf{k}} + 1$ is elliptic and invertible for fixed $\mathbf{k}$, commutes with $\Pi_{\mathbf{k}}$ by \eqref{equation:commutation-nablak-pik}, and its inverse defines a compact self-adjoint operator $(\Delta_{\mathbf{k}} + 1)^{-1}: L^2_{\mathrm{hol}}(F, \Lk) \to L^2_{\mathrm{hol}}(F, \Lk)$, which allows to define the spectrum of $\Delta_{\mathbf{k}}$ (counted with multiplicity and sorted in non-decreasing order):
 \[
 	\mathrm{spec}_{L^2}(\Delta_{\mathbf{k}}) := (\lambda_j(\mathbf{k}))_{j \geq 1} \subset [0,\infty).
 \]
 
 \subsection{Lower bound on the first eigenvalue}
 
We first aim to establish a lower bound on the first eigenvalue of the operator $\Delta_{\mathbf{k}}$. Recall from \eqref{equation:non-degenerate-curvature} that the curvature $F_{\mathrm{Ad}(P)}$ of the connection $\nabla$ on $P$ is said to be non-degenerate at $x \in M$ if
\[
\mathrm{Span}\left(F_{\mathrm{Ad}(P)}(x)(X,Y) \mid X,Y \in T_xM\right) = \mathrm{Ad}(P_x),
\]
and globally non-degenerate if it is non-degenerate at every $x \in M$. (Here, $\Ad(P) \to M$ is the vector bundle associated to the adjoint representation of $G$.) Also recall the following definitions from \S\ref{sssection:non-degenerate-curvature}: for $\mathbf{l} \in \partial_\infty \mathfrak{a}_+ \cong \mathbb{S}^{d-1} \cap (\R^a \times \R^b_+)$, 
\[
F_{\mathrm{min}}(\mathbf{l}) = \min_{(x,w) \in F} \sup_{\substack{X,Y \in T_xM \\ |X|=|Y|=1}} -i\times \mathbf{l} \cdot\mathbf{F}_{\overline{\nabla}}(X^{\HH_F},Y^{\HH_F}),
\]
where $X^{\HH_F},Y^{\HH_F}$ are the horizontal lifts of $X,Y$ respectively to $F$ and
\[
F_{\mathrm{min}} := \min_{\mathbf{l} \in \partial_\infty \mathfrak{a}_+} F_{\mathrm{min}}(\mathbf{l}).
\]
Since $\partial_\infty \mathfrak{a}_+$ is compact, this minimum is indeed achieved. It was shown in Lemma \ref{lemma:enfin} that the curvature is globally non-degenerate if and only if $F_{\mathrm{min}} > 0$, and in the discussion preceding the lemma it was mentioned that by the Ambrose-Singer theorem, in turn this is if and only if the local holonomy group of $G$ equals $G$ at every point $x \in M$.

Next, fix $x_0 \in M$, and define $\mathrm{Hol}(P,\nabla) \leqslant G$ to be the holonomy group of the connection on $P$ at $x_0$. Notice that one obtains conjugate holonomy groups by changing $x_0$ by another point, so the choice of point $x_0 \in M$ is irrelevant. We also emphasize that in general $H$ may \emph{not} be closed (see Example \ref{example:su2} below for instance). 

Finally, recall (see also \eqref{eq:splitting-centre-commutator} and the subsequent paragraphs) that the adjoint action of $G$ on the Lie algebra $\mathfrak{g}$ is diagonal according to the splitting $\mathfrak{g} = \mathfrak{z} \oplus [\mathfrak{g}, \mathfrak{g}]$. Thus the adjoint bundle $\Ad(P) \to M$ splits as $\Ad(P) = \mathbb{R}^a \oplus E$, where $a = \dim \mathfrak{z}$ and $\mathbb{R}^a$ denote the trivial vector bundle $\mathbb{R}^a \times M$. Write $(F_1, \dotsc, F_a)$ for the components of $F_{\Ad(P)}$ in the direction of $\mathbb{R}^a$ (these are real-valued differential two-forms).

Under certain assumptions on the local or global holonomy of the connection on $P$, we shall prove the following lower bounds on $\lambda_1(\Delta_{\mathbf{k}})$:

\begin{theorem}
\label{theorem:lambda1}
The following holds:
\begin{enumerate}[label=\emph{(\roman*)}]
\item Assume that the curvature is globally non-degenerate. Then for all $\eps > 0$, there exists $R > 0$ such that:
\[
\lambda_1(\Delta_\mathbf{k}) \geq (F_{\mathrm{min}}/2-\eps) |\mathbf{k}|, \qquad \forall \mathbf{k} \in \widehat{G}, |\mathbf{k}| > R.
\]
\item Assume $(F_1, \dotsc, F_a)$ are linearly independent over $\mathbb{R}$, and that the holonomy group $H$ is dense in $G$. Then, for all $\nu > \max(4\dim F + 2, \dim F + 8)$ there exists $C > 0$ such that:
\begin{equation}
\label{equation:lower-bound-ss}
\lambda_1(\Delta_{\mathbf{k}}) \geq C |\mathbf{k}|^{-\nu}, \qquad \forall \mathbf{k} \in \widehat{G}.
\end{equation}
\end{enumerate}
\end{theorem}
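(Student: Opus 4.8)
The two items have rather different flavors: Item (i) is a microlocal/semiclassical statement about a single family of operators $\Delta_{\mathbf{k}}$ and can be attacked directly with the Borel-Weil calculus, whereas Item (ii) is a quantitative rigidity statement that should be reduced, by an argument by contradiction, to exactly the kind of Diophantine/horocyclic-invariance contradiction that was used to prove Theorem \ref{theorem:real} in Chapter \ref{chapter:flow}. I will treat them separately.

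For Item (i), the plan is to set $h := |\mathbf{k}|^{-1}$ and consider the semiclassical operator $\mathbf{Q}_{\mathbf{k}} := h^2 \Delta_{\mathbf{k}} = h^2 \nabla_{\mathbf{k}}^*\nabla_{\mathbf{k}} \in \Psi^2_{h,\mathrm{BW}}(P)$. By the examples in \S\ref{sssection:examples2}, its Borel-Weil principal symbol (relative to any connection) restricted to $\HH^*$ is $\sigma^{\mathrm{BW}}_{\mathbf{Q}}(x,\xi) = |\xi_{\HH^*}|^2_g$, which vanishes on the zero section. So the naive elliptic estimate gives nothing; the gain comes from a lower-order term. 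The key is a Bochner-type/positivity argument: writing $\mathbf{l} := \mathbf{k}/|\mathbf{k}| \to \mathbf{l}_0 \in \partial_\infty\mathfrak{a}_+$ along a subsequence, one shows (using Lemma \ref{lemma:vanishing-curvature}, the curvature formula \eqref{equation:courbure-horizontale}, and the fact that $\nabla_{\mathbf{k}}$ has curvature $\mathbf{k}\cdot\mathbf{F}_{\overline{\nabla}}$ of size $|\mathbf{k}|$) that $h\,\nabla_{\mathbf{k}}^*\nabla_{\mathbf{k}}$ is, modulo $\mathcal{O}(h)\Psi^1$ errors, bounded below by $|\xi_{\HH^*}|^2/h$ on the elliptic region plus a ``magnetic'' term of order $|\mathbf{k}|$ near the zero section. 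Concretely I would localize: let $\chi \in C^\infty_{\mathrm{comp}}(T^*M)$ be a cutoff equal to $1$ near the zero section and quantize $\mathbf{X} := \Op^{\mathrm{BW}}_h(\pi^*\chi)$; away from the zero section (on $\mathrm{Ell}(\mathbbm{1}-\mathbf{X})$) we have $\langle h^2\Delta_{\mathbf{k}}u,u\rangle \geq c\,h^2\|(\mathbbm{1}-\mathbf{X})u\|^2_{H^1_h}$ by the sharp Gårding inequality (Lemma \ref{lemma:garding}) applied to the elliptic operator $h^2\Delta_{\mathbf{k}} - c h^2(\mathbbm{1}-\mathbf{X})^*\Delta_h(\mathbbm{1}-\mathbf{X})$; near the zero section one is reduced to a fiberwise problem and the Bergman-kernel expansion of Theorem \ref{theorem:bergman} (formula \eqref{equation:phase-important}) identifies the leading term of $h^2\Delta_{\mathbf{k}}$ restricted to holomorphic sections with the harmonic-oscillator-type operator whose lowest eigenvalue is $\tfrac12$ times the sum of the absolute values of the eigenvalues of the (non-degenerate) curvature form $-i\mathbf{l}_0\cdot\mathbf{F}_{\overline{\nabla}}$, which is $\geq \tfrac12 F_{\mathrm{min}}(\mathbf{l}_0) \geq \tfrac12 F_{\mathrm{min}}$; multiplying back by $|\mathbf{k}| = h^{-1}$ gives the claimed bound $\lambda_1(\Delta_{\mathbf{k}}) \geq (F_{\mathrm{min}}/2 - \varepsilon)|\mathbf{k}|$ for $|\mathbf{k}|$ large. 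The main obstacle here is making the fiberwise ``lowest Landau level'' computation uniform in $\mathbf{k}$ and gluing it to the base estimate — i.e. justifying that the only way $\langle h^2\Delta_{\mathbf{k}}u_{\mathbf{k}},u_{\mathbf{k}}\rangle/\|u_{\mathbf{k}}\|^2$ can tend below $\tfrac12 F_{\mathrm{min}}/h$ is if a defect measure concentrates on the zero section where the Bergman/Toeplitz normal form applies; this is a semiclassical-defect-measure argument analogous to the ones in Chapter \ref{chapter:flow}.

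For Item (ii), the plan is again to argue by contradiction: suppose no bound \eqref{equation:lower-bound-ss} holds with the given $\nu$; then there is a sequence $\mathbf{k}_n \in \widehat{G}$ with $|\mathbf{k}_n| \to \infty$ and $L^2$-normalized $u_n \in C^\infty_{\mathrm{hol}}(F,\mathbf{L}^{\otimes\mathbf{k}_n})$ with $\Delta_{\mathbf{k}_n} u_n = \lambda_1(\mathbf{k}_n) u_n$ and $\lambda_1(\mathbf{k}_n) = o(|\mathbf{k}_n|^{-\nu})$. Setting $h := |\mathbf{k}_n|^{-1}$, this gives $\|h\nabla_{\mathbf{k}_n} u_n\|^2_{L^2} = h^2\langle\Delta_{\mathbf{k}_n}u_n,u_n\rangle = o(h^{\nu+2})$, i.e. $u_n$ is an approximate \emph{covariantly constant} section in all horizontal directions, with a very good rate. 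This is precisely the ``horocyclic invariance in all directions'' situation: proceeding as in \S\ref{ssection:dur} (the proofs of Lemma \ref{lemma:parallel}, Lemma \ref{lemma:lower-bound}, and especially Lemma \ref{lemma:contradiction}), one first upgrades to a $C^0$ estimate via Sobolev embedding (Lemma \ref{lemma:sobolev-embedding}), deduces $d_{\HH_F}(|u_n|^2) = o(h^{\nu/2 - \dim F - 1})$ so $|u_n|^2$ is essentially constant along all horizontal concatenations, then uses the quantitative density of holonomy words (Corollary \ref{corollary:diophantine-flag-manifold} — this is where the hypothesis that $H$ is \emph{dense}, hence by semisimplicity/Abelian-split quantitatively dense, enters, and where the exponent $\nu > \max(4\dim F + 2, \dim F+8)$ is forced) to conclude $|u_n|^2$ is uniformly close to a positive constant on all of $F$. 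Finally, exactly as in Lemma \ref{lemma:contradiction}, parallel transport of $u_n$ around homotopically trivial loops must be nearly trivial at rate faster than the Diophantine density allows, forcing (via Lemma \ref{lemma:flat} and a Roth-type estimate) the vanishing of a combination $\sum_{j=1}^a \ell_j F_j = 0$ for a limit point $\mathbf{l} = (\ell_1,\dots,\ell_a,0,\dots,0)$ of $h\mathbf{k}_n$ with $(\ell_j)$ not all zero on the Abelian part; this contradicts the linear independence of $(F_1,\dots,F_a)$. The case where $h\mathbf{k}_n$ has a nonzero component in the semisimple directions is ruled out more cheaply, since then $\mathbf{L}^{\otimes\mathbf{k}_n}|_{F_{x_0}}$ is topologically nontrivial by Proposition \ref{prop:line-bundle-topology}, forcing $u_n$ to vanish somewhere on a fiber, which combined with the near-constancy of $|u_n|^2$ contradicts $\|u_n\|_{L^2}=1$ — this is verbatim ``Case 1'' of the proof of \eqref{equation:bound12}. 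The main obstacle is bookkeeping: one must carefully track the exponents through the Sobolev-embedding loss ($h^{-\dim F/2}$), the $C^1$-bound loss, the Diophantine density exponent, and the Roth exponent, to confirm that $\nu > \max(4\dim F+2, \dim F+8)$ is exactly what makes all the competing powers of $h$ win — this is the same arithmetic that appears in Proposition \ref{proposition:technical-reduction2}, so I would essentially import that computation with the flow direction replaced by ``all horizontal directions,'' which only makes the estimates easier (no propagation/radial-estimate step is needed since there is no dynamics, just the static near-constancy).
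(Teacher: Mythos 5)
Your plan for Item (ii) is essentially the paper's: the contradiction argument from the smallness of $\lambda_1(\Delta_{\mathbf{k}})$, Sobolev embedding to get $C^0$ near-constancy of $|u_{\mathbf{k}}|^2$ along horizontal directions, the Diophantine density of holonomy words (Corollary \ref{corollary:diophantine-flag-manifold}), the Case 1 / Case 2 split according to whether the semisimple part of $\mathbf{k}$ vanishes (using Proposition \ref{prop:line-bundle-topology} in Case 1 and a pushforward to $M$ plus the loop/Roth argument from Lemma \ref{lemma:torus-bundle-curvature} in Case 2), and the exponent bookkeeping imported from Proposition \ref{proposition:technical-reduction2}. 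That all matches, and you correctly identify that dropping the dynamics only makes life easier since there is no propagation step.

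For Item (i), however, you take a genuinely different and substantially heavier route than the paper. The paper's proof is a short Bochner-type argument: fix any horizontal basic unit vector fields $X^{\HH},Y^{\HH}$, compute the commutator $[\X_{\mathbf{k}},\mathbf{Y}_{\mathbf{k}}] = \mathbf{k}\cdot\mathbf{F}_{\overline\nabla}(X^{\HH},Y^{\HH}) + \text{first-order}$, pair against $\chi u_{\mathbf{k}}$, integrate by parts, and apply Cauchy--Schwarz to obtain
\[
2|\mathbf{k}|^{-1}\|\nabla_{\mathbf{k}}u_{\mathbf{k}}\|^2_{L^2} \geq -i\langle \mathbf{k}\cdot\mathbf{F}_{\overline\nabla}(X^{\HH},Y^{\HH})u_{\mathbf{k}},\chi u_{\mathbf{k}}\rangle_{L^2}|\mathbf{k}|^{-1} + \mc{O}(|\mathbf{k}|^{-1/2}).
\]
Summing over a partition of unity with $X^{\HH},Y^{\HH}$ locally chosen to realize $F_{\mathrm{min}}(\mathbf{l})$, and using a compactness argument over $\partial_\infty\mathfrak{a}_+$, immediately gives $\lambda_1(\Delta_{\mathbf{k}}) \geq (F_{\mathrm{min}}/2 - \eps)|\mathbf{k}|$. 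The $1/2$ is exactly the constant from Cauchy--Schwarz. By contrast, your plan localizes with semiclassical cutoffs, invokes sharp G\r{a}rding away from the zero section, and near the zero section appeals to a Bergman-kernel/Toeplitz normal form and a ``lowest Landau level'' computation. This is a legitimate-looking strategy, but it has real obstacles you acknowledge but do not resolve: (a) the gluing between the elliptic and characteristic regions for a sub-elliptic operator is notoriously delicate and is not a routine defect-measure argument; (b) the ``harmonic oscillator'' normal form is clean when the magnetic two-form $-i\mathbf{l}\cdot\mathbf{F}_{\overline\nabla}|_{\HH}$ is nondegenerate on $\HH$, but the hypothesis of Item (i) only guarantees nonvanishing, not nondegeneracy --- in particular when $\dim M$ is odd the form always has a kernel on each $\HH_x$, so you would need the degenerate version of the Landau model and then have to relate its bottom to $F_{\mathrm{min}}$ (defined via an $\ell^\infty$-type norm of the two-form, not a trace); (c) making the Landau computation quantitative and uniform across $\mathbf{k}$ is exactly what the paper's one-line commutator pairing does for free. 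In short, your route for (i) is plausible but would need substantial work at the steps you flag as obstacles, while the paper's commutator/Bochner identity sidesteps all of them.
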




The constant $F_{\mathrm{min}}/2$ is sharp, as can be seen from examples such as the magnetic Laplacian on a hyperbolic surface (see \cite{Charles-Lefeuvre-24} for instance). The previous theorem is illustrated in Figure \ref{figure:lambda1}. The curvature condition appearing in Item (ii) is needed as one can otherwise construct counterexamples for arbitrary Abelian extensions, see \S\ref{ssection:not-hypo} below. It is the same kind of condition which appears in the statement of Theorem \ref{theorem:main2}, Item (i). As for frame flows (see Chapter \ref{chapter:flow}), the discrepancy between Abelian and semisimple Lie groups lies in the fact that semisimple Lie groups satisfy a Diophantine property (see \S\ref{ssection:diophantine}) and this turns out to be crucial in the proof. The main consequence of the lower bound \eqref{equation:lower-bound-ss} is that it guarantees global hypoellipticity of the operator $\Delta_{\mathbf{k}}$, see Theorem \ref{theorem:hypoellipticity} below. When $G$ is semisimple, the first condition in Item (ii) of the preceding theorem is automatically satisfied, and we immediately obtain the following corollary:
\begin{corollary}
	Assume that $G$ is semisimple and that the holonomy group $\mathrm{Hol}(P,\nabla)$ is dense in $G$. Then \eqref{equation:lower-bound-ss} holds.
\end{corollary}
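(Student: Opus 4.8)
\textbf{Proof plan for Theorem \ref{theorem:lambda1}, Item (ii).}

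The plan is to argue by contradiction, mimicking the high-frequency quasimode analysis of Chapter \ref{chapter:flow} but in the simpler elliptic-over-the-fibers (rather than hyperbolic-over-the-base) setting. Suppose \eqref{equation:lower-bound-ss} fails. Then for every $\nu > \max(4\dim F + 2, \dim F + 8)$ there is a sequence $\mathbf{k}_n \in \widehat{G}$ with $|\mathbf{k}_n| \to \infty$ and $L^2$-normalized fiberwise holomorphic eigensections $u_n \in C^\infty_{\mathrm{hol}}(F, \mathbf{L}^{\otimes \mathbf{k}_n})$ satisfying $\Delta_{\mathbf{k}_n} u_n = \lambda_1(\Delta_{\mathbf{k}_n}) u_n$ with $\lambda_1(\Delta_{\mathbf{k}_n}) = o(|\mathbf{k}_n|^{-\nu})$. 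Setting $h := \langle \mathbf{k}_n \rangle^{-1}$, this says $\|h\nabla_{\mathbf{k}(h)} u_h\|^2_{L^2} = o(h^{\nu})$, i.e. $h\nabla_{\mathbf{k}(h)} u_h = o_{L^2}(h^{\nu/2})$ in the Borel-Weil calculus $\Psi^1_{h,\mathrm{BW}}(P)$, with $u_h$ compactly microlocalized near the zero section $\{\xi_{\HH^*}=0\}$ (since $h\nabla_{\mathbf{k}}$ has principal symbol $i\xi_{\HH^*}$, which is elliptic away from the zero section, so $\WF^{\mathrm{BW}}(u_h)$ is contained in $\{0\}\subset \overline{\HH^*_F}$, using the elliptic estimate of Proposition \ref{proposition:ellipticity} and Lemma \ref{lemma:sobolev-embedding} for the $C^0$ bounds). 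In particular $\|u_h\|_{C^0} = \mathcal{O}(h^{-\dim F/2})$.

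Next I would establish \emph{approximate covariant constancy along all horizontal directions}: from $h\nabla_{\mathbf{k}(h)} u_h = o_{L^2}(h^{\nu/2})$, and using Sobolev embedding together with the fact that $u_h$ is microlocalized near the zero section (so one gains no powers of $h$ from differentiation), one gets $\nabla_{\mathbf{k}(h)} u_h = o_{C^0}(h^{\nu/2 - \dim F/2 - 1})$ after applying the $\Op^{\mathrm{BW}}$ cutoff near the zero section as in Lemma \ref{lemma:microlocalisation}. Consequently $d|_{\HH_F}(|u_h|^2) = o_{C^0}(h^{\nu/2 - \dim F - 1})$, exactly as in \eqref{eq:quasimode-normed-invariance}, and also (crucially) $|u_h|^2$ is approximately invariant under parallel transport of $\nabla^{\mathrm{dyn}}$ — here the ordinary connection $\nabla$ — along horizontal loops: for any piecewise-smooth path $\tau$ in $M$ based at a fixed $x_0$ of length $\ell(\tau) \leq h^{-\beta}$, writing $g_\tau := \mathrm{Hol}^{\nabla}_\tau \in G$, one has $g_\tau^* |u_h|^2(x_0,\bullet) - |u_h|^2(x_0,\bullet) = o_{C^0}(h^{\nu/2 - (\dim F+1) - \beta})$ on the fiber $F_{x_0}$. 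Then I invoke the Diophantine density of $W \subset G$ (Corollary \ref{corollary:diophantine-flag-manifold}) applied to a finite $\varepsilon_0$-dense generating set $W$ of $H$ (which exists since $H$ is dense in $G$): taking $N := \lceil h^{-\beta}\rceil$, the set $\mathcal{W}_N$ is $Ch^{\beta(1-\delta)}$-dense in $G$, and combining with the $C^1$-bound $\||u_h|^2\|_{C^1} = \mathcal{O}(h^{-\dim F/2 - 1})$ gives, exactly as in \eqref{eq:c_h}, a constant $c_h \geq 0$ with $|u_h|^2(x_0,\bullet) = c_h + \mathcal{O}_{C^0}(h^\theta)$ where $\theta = \min(\beta(1-\delta) - \dim F/2 - 1, \nu/2 - \dim F - 1 - \beta) > 0$ for a suitable $\beta$, $\delta$ once $\nu > 3\dim F + 4$.

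Finally I split into two cases according to the weight $\mathbf{k}(h) = (k_1,\dots,k_{a+b})$, parallel to the proof of \eqref{equation:bound12}. If $(k_{a+1},\dots,k_{a+b}) \neq 0$ along a subsequence, then $\mathbf{L}^{\otimes \mathbf{k}}|_{F_{x_0}}$ is topologically nontrivial by Proposition \ref{prop:line-bundle-topology}, so $u_h$ vanishes somewhere on $F_{x_0}$, forcing $c_h = \mathcal{O}(h^\theta)$ and (propagating constancy in the base via horizontal loops) $\|u_h\|_{C^0}^2 = o(1)$, contradicting $\|u_h\|_{L^2} = 1$. If $(k_{a+1},\dots,k_{a+b}) = 0$ for all but finitely many $h$, then by Lemma \ref{lemma:extension} the weight extends to $\widetilde\gamma : G \to \mathbb{S}^1$, $\mathbf{L}^{\otimes \mathbf{k}} \cong \pi_F^* \mathbf{L}^{\otimes \mathbf{k}}_M$ by Lemma \ref{lemma:pullback-equivalence}, and the fiberwise-constant section pushes forward to $u_{h,M} \in C^\infty(M, \mathbf{L}^{\otimes \mathbf{k}}_M)$ with $h\nabla_{\mathbf{k},M} u_{h,M} = o_{L^2}(h^{\nu/2})$ and $\|u_{h,M}\|_{L^2}$ bounded below; running the holonomy/Roth-type argument of Lemma \ref{lemma:contradiction} (this time with $\eta = 0$, i.e. no Anosov $1$-form, so the obstruction is purely the curvature) forces, with $h\mathbf{k}(h) \to \mathbf{l} = (\ell_1,\dots,\ell_a,0,\dots,0)$, the relation $-i\sum_{j=1}^a \ell_j F_{\nabla_{j,M}} = 0$, i.e. $\sum_j \ell_j F_j = 0$ with $\mathbf{l}\neq 0$ by Lemma \ref{lemma:abelian-curvature}, contradicting the linear independence of $(F_1,\dots,F_a)$. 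This closes both cases and proves \eqref{equation:lower-bound-ss}; the corollary is then immediate since for semisimple $G$ one has $a = 0$, so the linear-independence hypothesis is vacuous and density of $\mathrm{Hol}(P,\nabla)$ is the only remaining assumption.

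\textbf{Main obstacle.} The delicate point is the same as in Chapter \ref{chapter:flow}: making the ``approximate horizontal covariant constancy'' of $|u_h|^2$ quantitatively survive holonomy transport over paths of polynomially large length $h^{-\beta}$, so that the quantitative (Diophantine) density of $\mathcal{W}_N$ in $G$ can be played off against the $C^1$-modulus of continuity of $|u_h|^2$ in the fiber. One must track the exponents carefully — each horizontal differentiation costs a factor $h^{-1}$ via Sobolev embedding on $F$, and each holonomy step of bounded length costs nothing but accumulates over $N \sim h^{-\beta}$ steps — and verify the window $\nu > \max(4\dim F + 2, \dim F + 8)$ genuinely suffices for all the simultaneous inequalities $\nu/2 - \dim F - 1 - \beta > 0$, $\beta(1-\delta) - \dim F/2 - 1 > 0$, $\nu/2 - \dim F - 1 > 0$, and the Roth-exponent bound $\nu/2 - \dim F/2 - 1 > 4$ from Lemma \ref{lemma:contradiction}. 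The rest is a routine transcription of the $\mathrm{U}(1)$ and general-$G$ high-frequency arguments, with the hyperbolic propagation estimates replaced by the trivial elliptic fact that $h\nabla_{\mathbf{k}}$ is elliptic off the zero section.
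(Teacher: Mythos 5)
Your plan is correct and tracks the paper's proof of Theorem \ref{theorem:lambda1}, Item (ii) almost line by line — microlocalizing the eigenmode near the zero section, converting $\|\nabla_{\mathbf{k}} u_{\mathbf{k}}\|_{L^2} = o(|\mathbf{k}|^{-\nu/2})$ into approximate horizontal constancy of $|u_{\mathbf{k}}|^2$ via Sobolev embedding, invoking the Diophantine density of $\mathcal{W}_N$, and splitting into cases according to whether $(k_{a+1},\dotsc,k_{a+b})$ vanishes — and the corollary itself is, as you say, the immediate observation that $a=0$ for semisimple $G$ renders the linear-independence hypothesis vacuous. The only bookkeeping slip is the fiberwise modulus of continuity: the paper bounds $d_{\V}|u_{\mathbf{k}}|^2 = \mathcal{O}(|\mathbf{k}|^{\dim F+1})$, coming from $\|D^{\mathrm{Chern}}_{\mathbf{k}}u\|_{C^0}\|u\|_{C^0} \lesssim |\mathbf{k}|^{\dim F/2+1}\cdot |\mathbf{k}|^{\dim F/2}$, not $\mathcal{O}(|\mathbf{k}|^{\dim F/2+1})$ as your ``$\||u_h|^2\|_{C^1}$'' estimate would give; this is exactly why the admissible threshold is $\nu > 4\dim F + 2$ rather than the $3\dim F + 4$ you obtain at that intermediate step.
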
 

\begin{figure}[htbp!]
\centering 
\includegraphics{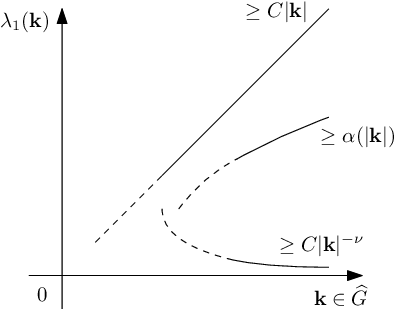}
\caption{The upper curve corresponds to the case of locally non-degenerate curvature, Theorem \ref{theorem:lambda1}, Item (i). In the middle curve, $\alpha \in C^\infty(\R_{\geq 0}, [C_0,\infty))$ is a smooth non-decreasing function such that $\alpha(x) \to_{x \to \infty} \infty$ and $C_0 > 0$; it corresponds to Theorem \ref{theorem:hypoellipticity}, Item (i) (below) and implies that $\Delta_{\HH}$ has compact resolvent on $L^2(P)$ with discrete spectrum. The lower curve corresponds to Theorem \ref{theorem:lambda1}, Item (ii), and Theorem \ref{theorem:hypoellipticity}, Item (ii); it implies that $\Delta_{\HH}$ is globally hypoelliptic and is verified when $G$ is semisimple with dense holonomy group.}
\label{figure:lambda1}
\end{figure}

 \subsection{Discrete spectrum and global hypoellipticity}

 \subsubsection{Motivation: Hörmander's hypoelliptic theorem}
 
 Let $M$ be a smooth closed (oriented) $n$-dimensional manifold equipped with a volume form $\mu$. Consider a family $X_0, X_1, \dotsc, X_d \in C^\infty(M,TM)$ of smooth vector fields and $c \in C^\infty(M)$. Define the sum of squares operator
 \begin{equation}
 \label{equation:l}
 L = \sum_{i=1}^d X_i^* X_i + X_0 + c.
 \end{equation}
Recall that the operator $L$ is (globally) \emph{hypoelliptic} if all solutions $u \in \mc{D}'(M)$ to the equation $Lu = f$, with $f \in C^\infty(M)$, are smooth, that is $u \in C^\infty(M)$. Following standard terminology, we say that the $X_i$'s satisfy the \emph{bracket condition} if for every $x \in M$, the iterated brackets 
\[
\left\{[X_{i_1},[X_{i_2}, \dotsc]] \mid i_1, \dotsc, i_k \in \{0, 1, \dotsc, d\}, k \in \Z_{\geq 0}\right\}
\]
span the tangent space $T_xM$. We first recall the standard hypoelliptic theorem due to Hörmander \cite{Hormander-67}:

\begin{theorem}[Hörmander]
Assume that the vector fields $X_0, X_1, \dotsc, X_d$ satisfy the bracket condition. Then the operator $L$ is hypoelliptic.
\end{theorem}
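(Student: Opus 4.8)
The plan is to deduce global hypoellipticity from a local a priori \emph{subelliptic estimate}, following the classical strategy of Kohn (see also \cite{Hormander-67} for Hörmander's original pseudodifferential argument). Since hypoellipticity is a local matter, I would fix a coordinate chart $U \subset \mathbb{R}^n$ and aim to prove: there exist $\varepsilon > 0$ and, for every $s \in \mathbb{R}$, a constant $C_s > 0$ such that
\[
\|u\|_{H^{s+\varepsilon}}^2 \leq C_s\big(\|Lu\|_{H^{s}}^2 + \|u\|_{H^{s}}^2\big), \qquad u \in C^\infty_{\mathrm{comp}}(U).
\]
Granting this, the theorem follows by a standard regularization argument: if $u \in \mathcal{D}'(M)$ solves $Lu = f \in C^\infty(M)$, then $u \in H^{s_0}(M)$ for some $s_0$; mollifying in charts, applying the estimate to the mollified $u_\delta$, and controlling the commutator of $L$ with the mollifier uniformly in $\delta$ yields $u \in H^{s_0+\varepsilon}$ locally, hence on $M$ by a partition of unity. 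Iterating gives $u \in \bigcap_s H^s(M) = C^\infty(M)$. (The sharper microlocal statement $\operatorname{WF}(u) = \operatorname{WF}(Lu)$ can be obtained by conjugating the estimate with elliptic pseudodifferential cutoffs supported in small cones, but is not needed for the global statement.)

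The heart of the argument is the subelliptic estimate itself, which I would prove by induction on the length of iterated brackets. The base case is the energy identity: since $L = \sum_{i=1}^d X_i^* X_i + X_0 + c$ and $X_0 - X_0^* \in C^\infty(M)$, integration by parts gives
\[
\sum_{i=1}^d \|X_i u\|_{H^0}^2 = \operatorname{Re}\langle Lu, u\rangle_{L^2} - \operatorname{Re}\langle (X_0 + c) u, u\rangle_{L^2} \leq C\big(\|Lu\|_{H^0}\|u\|_{H^0} + \|u\|_{H^0}^2\big),
\]
so the "depth one" fields $X_1,\dots,X_d$ are controlled. The inductive step rests on the commutator lemma: writing $\Lambda = (1+|D|^2)^{1/2}$, if two vector fields $Y,Z$ satisfy $\|\Lambda^{\delta} Y u\|_{H^0}, \|\Lambda^{\delta} Z u\|_{H^0} \lesssim A$, then $\|\Lambda^{2\delta - 1} [Y,Z] u\|_{H^0} \lesssim A$; this is shown by expanding $[Y,Z] = YZ - ZY$, inserting $\Lambda^{2\delta-1}$, and absorbing the error terms coming from $[\Lambda^{\bullet}, \bullet]$ via the pseudodifferential calculus together with the inductive hypothesis. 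The bracket condition ensures that after at most $r$ iterations (where $r$ is the maximal bracket length needed to span $T_xM$) all coordinate directions are controlled in $H^{\varepsilon}$ with $\varepsilon = 2^{-r}$, which is the desired estimate.

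The main obstacle is precisely the bookkeeping in this iterated commutator estimate: each bracket costs a factor of two in the Sobolev gain (so $\varepsilon$ is exponentially small in the bracket step), and at every level one must control error terms of the form $[\Lambda^{s}, X_j]$, which are of the same order as $\Lambda^s X_j$ and therefore require invoking the inductive hypothesis at a slightly shifted regularity and interpolating between levels. Handling the drift term $X_0$, which enters the brackets on equal footing with the $X_i^* X_i$ but is only half-controlled by the energy estimate, needs a small additional care, as does the localization step: since $u$ is a priori only a distribution, one must truncate and mollify first and verify all constants are uniform. Alternative routes that sidestep part of this bookkeeping --- the Rothschild--Stein nilpotent Lie group approximation, or, when $X_0$ is the generator of a diffusion, the probabilistic Malliavin calculus approach --- could also be used, but the subelliptic-estimate proof above is the most self-contained.
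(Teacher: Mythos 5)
The paper does not prove this statement: it is quoted as a classical result with a citation to \cite{Hormander-67}, so there is no in-text proof to compare against. Your task was therefore to give a self-contained proof, and your sketch is a recognisable and essentially correct outline of the standard Kohn-style pseudodifferential argument.

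A few remarks on precision, none of which break the argument. First, the energy identity is correct, but the resulting control on the drift term is weaker than on the fields $X_1,\dots,X_d$: from $\operatorname{Re}\langle X_0 u, u\rangle \lesssim \|Lu\|\|u\| + \|u\|^2$ one extracts a bound on $\|X_0 u\|_{H^{-1/2}}$ (by writing $X_0 = X_0|_{L^2} + \text{l.o.t.}$ and pairing against $\Lambda^{-1}X_0 u$), not on $\|X_0 u\|_{L^2}$; this is why brackets involving $X_0$ cost a different, larger loss in the induction, and your "only half-controlled" phrase is the right intuition but you should keep track of the two separate induction chains (one for iterated brackets of $X_1,\dots,X_d$, one for those also involving $X_0$). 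Second, the commutator lemma as you state it ($\|\Lambda^\delta Y u\|, \|\Lambda^\delta Zu\| \lesssim A \Rightarrow \|\Lambda^{2\delta - 1}[Y,Z]u\| \lesssim A$) is morally right but needs the additional hypothesis that $\|\Lambda^{2\delta}u\|$ is also controlled, since the terms $\Lambda^{2\delta-1} Y Z u$ and $\Lambda^{2\delta - 1} Z Y u$ only cancel up to commutators of $\Lambda^{\bullet}$ with the vector fields, which are first-order and hence eat one derivative; this is exactly where the interpolation between levels you mention enters. Third, the gain is $\varepsilon = 2^{1-r}$ in Kohn's version rather than $2^{-r}$ (a factor of two), while Hörmander's original integration-by-parts proof yields a polynomial gain $\sim 1/r$; the exponential-vs-polynomial distinction is immaterial for hypoellipticity but is worth knowing since the paper's actual quantitative results (Theorem \ref{theorem:lambda1}, Item (ii)) depend on tracking such exponents carefully in a related but global setting. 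With those caveats noted, the sketch is sound.
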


Following Hörmander's seminal work, we are interested in understanding when a general operator $L$ of the form \eqref{equation:l} is hypoelliptic. The bracket condition should be interpreted as saying that the \emph{local holonomy group} generated by the vector fields $X_1, \dotsc, X_d$ is full. In other words, starting from an arbitrary point $x \in M$, one can reach any other point in its neighborhood (hence in $M$ by compactness) by a concatenation of flowlines of the vector fields $X_i$. This statement turns out not to be always verifiable in practice: the bracket condition may either fail at some points in $M$, or the vector fields themselves may not be sufficiently explicit (or regular) to be able to compute the Lie brackets. Motivated by questions in dynamical systems such as ``Are essentially accessible systems ergodic?" (see \cite{Pugh-Shub-96, Wilkinson-11}), we are looking for global conditions that could entail hypoellipticity of $L$. For that, set
\[
H := \{ e^{t_{i_1}X_{i_1}} \circ \dotsb \circ e^{t_{i_m}X_{i_m}} ~|~ m \in \Z_{\geq 0}, 0 \leq i_j \leq d, t_{i_j} \in \R \} \subset \mathrm{Diff}^0(M),
\]
where $\mathrm{Diff}^0(M)$ denotes the space of smooth diffeomorphisms of $M$ isotopic to the identity (here, $e^{tY}$ denotes the flow of a vector field $Y$ at time $t$). We are interested in the following question: \\

\noindent \emph{Question:} Suppose that $H$ admits a dense orbit on $M$. What extra assumptions need to be made to ensure that $L$ is hypoelliptic? \\

That $H$ has a dense orbit means that there exists $x \in M$ such that its orbit $Hx \subset M$ is dense. That $H$ admits a dense orbit on $M$ cannot imply that $L$ is hypoelliptic in full generality as we shall see in a simple example in \S\ref{ssection:not-hypo}. We also emphasize that full transitivity (i.e. the property that $Hx = M$ for some $x \in M$) is conjectured to yield hypoellipticity \cite{Omori-91,Omori-Kobayashi-99}. Nonetheless, we believe that this is too strong of an assumption in all practical purposes related to dynamical systems.

\begin{remark} The horizontal Laplacian $\Delta_{\HH}$ introduced previously can be written in the form \eqref{equation:l} with $X_0, X_1, \dotsc, X_d$ horizontal lifts of vector fields on $M$ and $c = 0$. Indeed, locally we may write $\Delta_{\HH} = \sum_{i = 1}^n (\e_i^{\HH})^* \e_i^{\HH}$, where $(\e_i)_{i = 1}^n$ is a local orthonormal frame of $TM$, whose horizontal lift to $TP$ is $(\e_i^{\HH})_{i = 1}^n$. The claim (with possibly $d > n$) then follows by using a partition of unity. In particular, we see that the bracket condition is related to non-integrability of $\HH$, i.e. to brackets of horizontal vector fields (and in turn, to the curvature of the connection on $P$). Nevertheless, we will establish hypoellipticity also in the integrable case (i.e. when the connection is flat), see Theorem \ref{theorem:hypoellipticity} and Example \ref{example:su2} below.
\end{remark}

 \subsubsection{Statement} We will establish the following result. We assume the notation from \S \ref{sec:horizontal-lapl-intro}.


 \begin{theorem}
 \label{theorem:hypoellipticity}
  The following holds: 
 \begin{enumerate}[label=\emph{(\roman*)}]
 \item Assume that $\lambda_1(\Delta_{\mathbf{k}}) \to_{|\mathbf{k}|\to\infty} +\infty$. Then $\Delta_{\HH}$ has compact resolvent and discrete spectrum.
 \item Assume that $\lambda_1(\Delta_{\mathbf{k}}) \geq C|\mathbf{k}|^{-\nu}$ for some $C, \nu > 0$. Then $\Delta_{\HH}$ is hypoelliptic.
\end{enumerate}
 In particular, if Item \emph{(ii)} of Theorem \ref{theorem:lambda1} is satisfied (e.g. if $G$ is semisimple and the holonomy group is dense in $G$), then $\Delta_{\HH}$ is hypoelliptic.
 \end{theorem}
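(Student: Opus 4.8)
The statement of Theorem~\ref{theorem:hypoellipticity} reduces the global spectral theory of $\Delta_{\HH}$ on $P$ to the family of fibrewise operators $\Delta_{\mathbf{k}}$ on $C^\infty_{\mathrm{hol}}(F,\mathbf{L}^{\otimes\mathbf{k}})$, via the Fourier transform $\mc{F}$ of Definition~\ref{definition:fourier-transform}. The starting point is Lemma~\ref{lemma:ft-horizontal-laplacian}, which tells us that $\mc{F}$ intertwines $\Delta_{\HH}$ with the direct sum $\bigoplus_{\mathbf{k}}\Delta_{\mathbf{k}}$ (acting with multiplicity $d_{\mathbf{k}}$), and that the vertical Laplacian $\Delta_{\V}$ acts by the scalar $c(\mathbf{k})$ on the $\mathbf{k}$-th block, with $c(\mathbf{k})\asymp 1+|\mathbf{k}|^2$ by \eqref{equation:ck-asymptotic}. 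Thus a distribution $u\in\mc{D}'(P)$ decomposes as $(u_{\mathbf{k},i})$ with $u_{\mathbf{k},i}\in\mc{D}'_{\mathrm{hol}}(F,\mathbf{L}^{\otimes\mathbf{k}})$, and $u$ is smooth if and only if the $C^\infty$-seminorms of $u_{\mathbf{k},i}$ decay faster than any polynomial in $|\mathbf{k}|$, with control uniform in $i=1,\dots,d_{\mathbf{k}}$.

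\emph{Item (i).} First I would prove that $(\Delta_{\HH}+1)^{-1}$ is compact on $L^2(P)$ under the hypothesis $\lambda_1(\Delta_{\mathbf{k}})\to\infty$. The operator $\Delta_{\HH}+\Delta_{\V}+1$ is elliptic on $P$ (its principal symbol is $|\xi|^2$ on the full $T^*P$), hence $(\Delta_{\HH}+\Delta_{\V}+1)^{-1}$ is compact and $(\Delta_{\HH}+\Delta_{\V}+1)^{-1/2}$ maps $L^2(P)$ continuously into $H^1(P)$. It then suffices to show that $(\Delta_{\HH}+1)^{-1}(\Delta_{\HH}+\Delta_{\V}+1)$, or rather the comparison of the two resolvents, is bounded: on the $\mathbf{k}$-th Fourier block this is the scalar-bundle operator $(\Delta_{\mathbf{k}}+1)^{-1}(\Delta_{\mathbf{k}}+c(\mathbf{k})+1)$, whose $L^2_{\mathrm{hol}}$-norm is bounded by $1+c(\mathbf{k})/(1+\lambda_1(\Delta_{\mathbf{k}}))$. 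This is \emph{not} uniformly bounded in general, so the correct route is instead: decompose $L^2(P)=\bigoplus_{|\mathbf{k}|\le N}\oplus\bigoplus_{|\mathbf{k}|>N}$; on the first (finite) part $(\Delta_{\HH}+1)^{-1}$ is compact because each $\Delta_{\mathbf{k}}$ has compact resolvent (as $\Delta_{\mathbf{k}}+(\overline\partial_{\mathbf{k}})^*\overline\partial_{\mathbf{k}}+1$ is elliptic and commutes with $\Pi_{\mathbf{k}}$, as recalled in Chapter~\ref{chapter:hypoelliptic}); on the second part $\|(\Delta_{\HH}+1)^{-1}\|_{L^2\to L^2}\le (1+\lambda_1(\Delta_{\mathbf{k}}))^{-1}\le \sup_{|\mathbf{k}|>N}(1+\lambda_1(\Delta_{\mathbf{k}}))^{-1}\to 0$ as $N\to\infty$ by hypothesis. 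Hence $(\Delta_{\HH}+1)^{-1}$ is a norm limit of compact operators, so compact; self-adjointness of $\Delta_{\HH}$ (as $(d_{\HH})^*d_{\HH}$ on its natural domain) then yields discrete spectrum.

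\emph{Item (ii).} This is the heart of the matter. Suppose $\lambda_1(\Delta_{\mathbf{k}})\ge C|\mathbf{k}|^{-\nu}$ and let $u\in\mc{D}'(P)$ with $\Delta_{\HH}u=f\in C^\infty(P)$. Passing to Fourier modes, $\Delta_{\mathbf{k}}u_{\mathbf{k},i}=f_{\mathbf{k},i}$ with $f_{\mathbf{k},i}\in C^\infty_{\mathrm{hol}}(F,\mathbf{L}^{\otimes\mathbf{k}})$ rapidly decaying. For $\mathbf{k}=0$ the operator $\Delta_0=\Delta_g$ is elliptic and $u_0$ is smooth by the usual elliptic regularity; for $\mathbf{k}\neq 0$, $\Delta_{\mathbf{k}}$ is injective on $L^2_{\mathrm{hol}}$ by the hypothesis, so $u_{\mathbf{k},i}=\Delta_{\mathbf{k}}^{-1}f_{\mathbf{k},i}\in C^\infty_{\mathrm{hol}}(F,\mathbf{L}^{\otimes\mathbf{k}})$ (it is a priori a distributional solution, and elliptic regularity for the fixed-$\mathbf{k}$ elliptic operator $\Delta_{\mathbf{k}}+(\overline\partial_{\mathbf{k}})^*\overline\partial_{\mathbf{k}}+1$ upgrades it to smooth). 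The crucial quantitative step is a polynomial bound on the resolvent: I would show there exist $C',N_0>0$ such that
\[
\|u_{\mathbf{k},i}\|_{H^s(F,\mathbf{L}^{\otimes\mathbf{k}})}\le C'\,|\mathbf{k}|^{\,\nu+N(s)}\,\|f_{\mathbf{k},i}\|_{H^{s+2}(F,\mathbf{L}^{\otimes\mathbf{k}})}
\]
for each $s$, with $N(s)$ growing at most linearly in $s$. This follows by interpolating: the $L^2\to L^2$ bound $\|\Delta_{\mathbf{k}}^{-1}\|\le C^{-1}|\mathbf{k}|^{\nu}$ combined with the a priori elliptic estimate $\|v\|_{H^{2}}\le C''(\|\Delta_{\mathbf{k}}v\|_{L^2}+\|(\overline\partial_{\mathbf{k}})^*\overline\partial_{\mathbf{k}}v\|_{L^2}+\|v\|_{L^2})$ and, since $v$ is fibrewise holomorphic, $\overline\partial_{\mathbf{k}}v=0$, hence $\|v\|_{H^2}\le C''(\|\Delta_{\mathbf{k}}v\|_{L^2}+\|v\|_{L^2})$; bootstrapping gives $\|v\|_{H^{s+2}}\le C_s(\|\Delta_{\mathbf{k}}v\|_{H^s}+\|v\|_{L^2})$ with constants $C_s$ polynomial in $|\mathbf{k}|$ (this polynomial growth of the elliptic constants in $|\mathbf{k}|$ can be read off from the Borel-Weil calculus $\Psi^\bullet_{h,\mathrm{BW}}(P)$ with $h=1/|\mathbf{k}|$, or more elementarily from the explicit dependence of the symbol of $\Delta_{\mathbf{k}}$ on $\mathbf{k}$). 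Combining, $\|u_{\mathbf{k},i}\|_{H^{s+2}}\le C^{-1}C_s|\mathbf{k}|^{\nu}\|f_{\mathbf{k},i}\|_{H^s}+C_s\|u_{\mathbf{k},i}\|_{L^2}$, and iterating the last term away using the same bound with $s=0$ gives the claimed estimate. Since $f\in C^\infty(P)$, the sum $\sum_{\mathbf{k}}d_{\mathbf{k}}\sum_i\langle\mathbf{k}\rangle^{2m}\|f_{\mathbf{k},i}\|^2_{H^{s}}<\infty$ for every $m,s$; the polynomial loss $|\mathbf{k}|^{2(\nu+N(s))}$ and the polynomial growth $d_{\mathbf{k}}=\mc{O}(|\mathbf{k}|^{\dim(G/T)})$ are harmless, so $\sum_{\mathbf{k}}d_{\mathbf{k}}\sum_i\langle\mathbf{k}\rangle^{2m}\|u_{\mathbf{k},i}\|^2_{H^s}<\infty$ for every $m,s$, which by (a Sobolev-space version of) Lemma~\ref{lemma:anisotropic-isotropic} and the Fourier characterisation of $C^\infty(P)$ means $u\in C^\infty(P)$. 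The final sentence of the theorem is then immediate: under the hypotheses of Theorem~\ref{theorem:lambda1}(ii) — in particular when $G$ is semisimple and $\mathrm{Hol}(P,\nabla)$ is dense — the bound \eqref{equation:lower-bound-ss} holds, so Item (ii) applies and $\Delta_{\HH}$ is hypoelliptic.

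\emph{Main obstacle.} The delicate point is tracking the \emph{uniformity in $\mathbf{k}$} of the elliptic estimates for $\Delta_{\mathbf{k}}$, i.e. verifying that the Sobolev elliptic constants grow only polynomially in $|\mathbf{k}|$ and that no exponential loss hides in the high Fourier regime. This is exactly what the semiclassical framework of Chapter~\ref{chapter:analytic} is designed to supply — writing $h=1/|\mathbf{k}|$ and using that $h^2\Delta_{\mathbf{k}}$ is elliptic in $\Psi^2_{h,\mathrm{BW}}(P)$ with symbol $|\xi_{\HH^*}|_g^2$ (by \S\ref{sssection:examples2}, item (iii)), together with the parametrix construction of Lemma~\ref{lemma:parametrix-bw} — so the obstacle is really bookkeeping rather than a genuinely new difficulty. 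A secondary subtlety is ensuring the bounds are uniform in the multiplicity index $i=1,\dots,d_{\mathbf{k}}$, which holds because $\Delta_{\mathbf{k},i}$ is the same operator for every $i$ (the splitting of $\mathrm{Hom}(V^\lambda,E^\lambda)$ into copies of $E^\lambda$ is $\nabla$-parallel, cf. Lemma~\ref{lemma:ft-intertwines-flow}).
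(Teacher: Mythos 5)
Your overall strategy is correct, and it shares the paper's skeleton: pass to Fourier modes via $\mc{F}$, invoke Lemma \ref{lemma:ft-horizontal-laplacian} to diagonalise $\Delta_{\HH}$ and $\Delta_{\V}$, exploit the lower bound on $\lambda_1(\Delta_{\mathbf{k}})$ and the polynomial growth of $c(\mathbf{k})$ and $d_{\mathbf{k}}$, and conclude with a summation over $\mathbf{k}$. Where you diverge from the paper is in the choice of scale of norms for item (ii). You work in the standard semiclassical Sobolev scale $H^s_h(F,\Lk)$ and therefore need to argue that the elliptic constants of $\Delta_{\mathbf{k}}+(\overline\partial_{\mathbf{k}})^*\overline\partial_{\mathbf{k}}+1$ grow at most polynomially in $|\mathbf{k}|$ — which you correctly flag as the ``main obstacle'' and propose to handle via the Borel--Weil calculus with $h=1/|\mathbf{k}|$. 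The paper sidesteps this bookkeeping entirely by introducing the tailored anisotropic scale $H^{2p,2q}(P)$ built from $\|u\|_{L^2}$, $\|\Delta_{\V}^p u\|_{L^2}$ and $\|\Delta_{\HH}^q u\|_{L^2}$: these norms factor exactly under $\mc{F}$ (each summand is a power of $c(\mathbf{k})$ or of $\Delta_{\mathbf{k}}$ applied to $u_{\mathbf{k},i}$), the key inequality boils down to the one-line estimate \eqref{equation:ehouai}, and the identification $H^{2p,2p}\simeq H^{2p}$ follows from commutativity of $\Delta_{\V}$ and $\Delta_{\HH}$. In short: your proof trades a clever choice of norm for a harder-to-justify polynomial estimate on elliptic constants; both work, but the paper's route is self-contained and avoids any appeal to the semiclassical machinery. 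Two small slips in your write-up worth noting: the intermediate estimate $\|u_{\mathbf{k},i}\|_{H^s}\le C'|\mathbf{k}|^{\nu+N(s)}\|f_{\mathbf{k},i}\|_{H^{s+2}}$ has the Sobolev indices reversed (the resolvent should gain, not lose, two derivatives; this is harmless here because $f$ is smooth), and your first attempt at item (i) — bounding $(\Delta_{\HH}+1)^{-1}(\Delta_{\HH}+\Delta_{\V}+1)$ — does indeed fail, but you already abandon it and the finite-tail splitting you substitute is valid and amounts to the same compactness argument the paper encodes in the $H^{0,2}_\alpha\hookrightarrow L^2$ embedding.
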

 
 The condition in Item (i) is satisfied if the curvature is globally non-degenerate by Theorem \ref{theorem:lambda1}, Item (i). Nevertheless, there exist cases where $\lambda_1(\Delta_{\mathbf{k}})$ is bounded from below by a function growing to infinity slower than linearly in $|\mathbf{k}|$. For instance, if $G = \mathrm{U}(1)$ (magnetic case) and the curvature vanishes to order $r \geq 0$ on a hypersurface in $M$, it was established in \cite[Theorem 1.2]{Helffer-Kordyukov-09}, that $\lambda_1(\Delta_{\mathbf{k}}) \gtrsim |\mathbf{k}|^{2/(r+2)}$ (for $r=0$, one retrieves the linear bound).
Regarding Item (ii), we point out that Omori \cite{Omori-91} had established hypoellipticity of $\Delta_{\HH}$ under the assumption that the holonomy group is \emph{equal} to $G$, which is a much stronger assumption (in particular, this is never satisfied for flat bundles).

Examples of flat $G$-principal bundles with dense holonomy group (but not equal to $G$) can be easily constructed as follows (see also Example \ref{example:flat-connection}), using representations of the fundamental group $\rho : \pi_1(M) \to G$ with dense image $\rho(\pi_1(M)) \leqslant G$.
 
 \begin{example}\label{example:su2}
 Let $\Sigma$ be a genus $2$ closed surface. Recall that (here, $[\bullet, \bullet]$ denotes the commutator)
 \[
 \pi_1(\Sigma) = \langle a_1, b_1, a_2, b_2 ~|~ [a_1, b_1][a_2, b_2] = 1\rangle.
 \]
 Then send $b_1, b_2$ to the neutral element in $\mathrm{SU}(2)$ and $a_1, a_2$ to the generators of a dense subgroup in $\mathrm{SU}(2)$. This defines a representation $\rho : \pi_1(\Sigma) \to \mathrm{SU}(2)$, hence an $\mathrm{SU}(2)$-principal bundle over $\Sigma$ with flat structure and dense holonomy group given by the image $\rho(\pi_1(\Sigma))$ of the representation. Theorem \ref{theorem:hypoellipticity} applies in this case.
 
More generally, if $G$ is a compact Lie group, then for any $g \geq \mathrm{dim}(G)-\mathrm{rk}(G)$, one can construct on $\Sigma_g$, the oriented surface of genus $g$, a flat $G$-bundle with dense holonomy group. Indeed, $\pi_1(\Sigma_g)$ surjects onto the free group $\mathbb{F}_{g}$ of rank $g$. It then suffices to map the $g$ generators $a_1, \dotsc, a_g$ of $\mathbb{F}_{g}$ to elements $\rho(a_1), \dotsc,\rho(a_g) \in G$ which generate a dense subgroup. This is possible as soon as $g \geq \mathrm{dim}(G) - \mathrm{rk}(G) + 1$. (To see that there is a set of cardinality $\mathrm{dim}(G) - \mathrm{rk}(G) + 1$ generating a dense subgroup of $G$, roughly speaking it suffices to take an element generating a dense subset of $T$, and then suitably inductively add elements generating the semisimple part. We did not try to optimise this constant.)
 \end{example}
 
 We next give a complementary example where $\Delta_{\HH}$ can be explicitly computed and $G$ is not semisimple, so we are in the setting of Theorem \ref{theorem:hypoellipticity}, Item (ii), and we can understand the density of the holonomy group.

\begin{example}\label{example:surface1}
	Assume $(M, g)$ is a closed Riemannian surface, and let $\pi: P = SM \to M$ be its unit sphere bundle. On $P$, it is well known that there is a canonical orthonormal frame \cite{Merry-Paternain-11, Paternain-Salo-Uhlmann-23}: let $X$ be the geodesic vector field, let $H(x, v)$ be the horizontal lift the rotation $iv$ by $\frac{\pi}{2}$ in the positive direction of $v$, and let $V$ be the generator of the rotations $(R_\theta)_{\theta \in [0, 2\pi)}$ in the fibres. We let $\alpha$, $\beta$, and $\psi$ denote the dual $1$-forms: $\alpha(X) = 1$, $\alpha(H) = \alpha(V) = 0$, and similarly for $\beta$ and $\psi$. Also, we have $d\psi = -K_g \alpha \wedge \beta$, where $K_g$ denotes the Gaussian curvature of $(M, g)$, and $\alpha \wedge \beta = \pi^*(d\vol_g)$, where $d\vol_g$ is the volume form on $(M, g)$.  
	We then see immediately that
	\[
		d_{\HH}u = du|_{\HH} = Xu\, \alpha + Hu\, \beta, \quad d_{\HH}^*(a\alpha + b \beta) = - (Xa + Hb), \quad u, a, b \in C^\infty(P).
	\]
	Then $\Delta_{\HH} = d_{\HH}^* d_{\HH} = -(X^2 + H^2)$, and we see that H\"ormander's bracket condition is satisfied at the points where $K_g$ does not vanish. 
	
	Assume that $K_g$ is not identically zero. Then we claim that the conditions of Theorem \ref{theorem:lambda1}, Item (ii), are met, and so $\Delta_{\HH}$ is hypoelliptic. Indeed, it is easily checked that $F_1 = K_g d\vol_g$, and so $F_1$ is not identically zero. Also, by the Ambrose-Singer theorem, at a point of non-vanishing curvature the local holonomy group is full, proving the claim. (And so in this particular case the result of \cite{Omori-91} also applies.) In the case where $K_g \equiv 0$, we are on a flat torus and the holonomy group is trivial; however in \S \ref{ssection:not-hypo} we will see an example of a flat circle bundle with dense holonomy whose $\Delta_{\HH}$ is not hypoelliptic.
	
	We remark that when $K_g$ is nowhere zero, conditions of Theorem \ref{theorem:lambda1}, Item (i), are met, so by Theorem \ref{theorem:hypoellipticity}, Item (i), we get that $\Delta_{\HH} = -(X^2 + H^2)$ has compact resolvent and discrete spectrum, and its spectrum by Remark \ref{remark:horizontal-laplacian} is determined by the spectra of $(\nabla_k)^* \nabla_k$ acting on the powers $\mathcal{K}^{\otimes k}$  of the canonical bundle (see also Example \ref{example:surface} below).
\end{example}
 
 \subsection{Quantum ergodicity}

We now discuss a quantum ergodicity statement. 
We may then introduce the $\Omega \subset \widehat{G} \times [0,\infty)$ as
 \begin{equation}
 \label{equation:omega-qe}
\Omega = \{(\mathbf{k},\lambda) \in \widehat{G} \times [0,\infty) ~|~\exists u_{\mathbf{k},\lambda} \in C^\infty_{\mathrm{hol}}(F,\mathbf{L}^{\otimes \mathbf{k}}), \|u_{\mathbf{k},\, \lambda}\|_{L^2}=1,\,  \Delta_{\mathbf{k}} u_{\mathbf{k},\lambda} = \lambda^2 u_{\mathbf{k},\lambda}\}.
 \end{equation}
Eigenvalues are counted with multiplicity in $\Omega$. The $L^2$-normalised eigenstate corresponding to $(\mathbf{k},\lambda) \in \Omega$ will be denoted by $u_{\mathbf{k},\lambda}$.

 In the following, $B(0,R)$ is the unit ball for the Euclidean norm in
 \[
 \widehat{G} \times [0,\infty) \subset \mathfrak{a} \times \R \simeq \R^{d+1},
 \]
 where $d$ is the rank of $G$. We shall prove the following:
 
 \begin{theorem}[Flat case]
 \label{theorem:quantum-ergodicity}
 Let $(M,g)$ be a Riemannian manifold with Anosov geodesic flow, and let $P \to M$ be a flat $G$-principal bundle with dense holonomy group. Then there exists a density $1$ subset $\Lambda \subset \Omega$, that is we have
 \[
 \dfrac{\sharp (\Lambda \cap B(0,R))}{\sharp(\Omega \cap B(0,R))} \to_{R \to \infty} 1,
 \]
 such that for all sequences $(\mathbf{k}_j,\lambda_j)_{j \geq 0} \in \Lambda^{\Z_{\geq 0}}$, one has: for all $a \in C^\infty(F)$,
 \begin{equation}
 \label{equation:bafi}
\langle a u_{\mathbf{k}_j,\lambda_j},u_{\mathbf{k}_j,\lambda_j}\rangle_{L^2(F,\mathbf{L}^{\otimes \mathbf{k}_j})} \to_{j \to \infty} \dfrac{1}{\vol(F)} \int_F a(w)\, \dd w,
 \end{equation}
 where $\dd w$ stands for the Riemannian measure on $F$.
 \end{theorem}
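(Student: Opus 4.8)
The plan is to deduce this quantum ergodicity statement from the general machinery of semiclassical analysis combined with the Borel-Weil calculus developed in Chapters~\ref{chapter:analysis} and \ref{chapter:analytic}. The key observation is that for a flat connection, the induced horizontal Laplacian $\Delta_{\mathbf{k}}$ on $C^\infty_{\mathrm{hol}}(F,\mathbf{L}^{\otimes\mathbf{k}})$ has principal symbol (in the Borel-Weil calculus) equal to the pullback $\pi^*|\xi|^2_g$ restricted to $\HH^*$, by \S\ref{sssection:examples2}, Item (iii). Since the curvature of the flat connection vanishes, Proposition~\ref{proposition:egorov}, Item (iii), and the discussion around Lemma~\ref{lemma:invariance-h} show that the relevant Hamiltonian flow on $\HH^* \subset T^*F$ is simply the horizontal lift of the geodesic flow on $T^*M$, which is ergodic by assumption (and, by density of the holonomy group, the lifted dynamics on $F$ together with the geodesic flow is ergodic on the appropriate cosphere bundle of $F$ — this is exactly the kind of statement proved via the transitivity group / Diophantine action arguments, cf.\ Lemma~\ref{lemma:no-resonances} and \S\ref{sssection:transitivity-group}).

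First I would set up the semiclassical framework: for $(\mathbf{k},\lambda) \in \Omega$, introduce the semiclassical parameter $h := (|\mathbf{k}|^2 + \lambda^2)^{-1/2}$ (or equivalently parametrize by $\Omega \cap B(0,R)$ and let $R \to \infty$), so that the eigenvalue equation $\Delta_{\mathbf{k}} u_{\mathbf{k},\lambda} = \lambda^2 u_{\mathbf{k},\lambda}$ becomes $(h^2\Delta_{\mathbf{k}} - E_h)u_{\mathbf{k},\lambda} = 0$ with $E_h = h^2\lambda^2 \in [0,1]$. The operator $h^2\Delta_{\mathbf{k}}$ belongs to $\Psi^2_{h,\mathrm{BW}}(P)$. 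One then needs a Weyl law counting $\sharp(\Omega \cap B(0,R))$, which should follow from the Borel-Weil quantization (Theorem~\ref{theorem:penible}) realizing $\Delta_{\mathbf{k}}$ as a pseudodifferential operator on $M$ with values in Toeplitz operators on the fibers, combined with the eigenvalue asymptotics $c(\mathbf{k}) \asymp |\mathbf{k}|^2$ of Lemma~\ref{lemma:laplace-eigenvalue}. Next I would run the standard Shnirelman--Zelditch--Colin de Verdière argument: using the Egorov theorem (Proposition~\ref{proposition:egorov}) within $\Psi^\bullet_{h,\mathrm{BW}}(P)$, for $\mathbf{A} = \Op^{\mathrm{BW}}_h(a)$ with $a$ a pullback symbol on $\HH^*$, the quantity $\langle \mathbf{A} u_{\mathbf{k},\lambda}, u_{\mathbf{k},\lambda}\rangle$ is, up to $O(h)$, invariant under the geodesic flow; averaging over the Weyl ball, the microlocal Weyl law identifies the average of these matrix elements with $\frac{1}{\vol(F)}\int_F a\,\dd w$ (after integrating out the fiber Toeplitz directions), and ergodicity of the flow forces the variance $\frac{1}{\sharp(\Omega\cap B(0,R))}\sum_{(\mathbf{k},\lambda)}|\langle \mathbf{A} u_{\mathbf{k},\lambda}, u_{\mathbf{k},\lambda}\rangle - \frac{1}{\vol(F)}\int_F a\,\dd w|^2 \to 0$. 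A density-$1$ subsequence extraction and a density argument upgrading from $a \in \Op^{\mathrm{BW}}_h(\cdot)$ to general $a \in C^\infty(F)$ (noting that $h^2\overline{\partial}^*_{\mathbf{k}}\overline{\partial}_{\mathbf{k}}$ is elliptic off $\HH^*$, so the microlocal mass of $u_{\mathbf{k},\lambda}$ concentrates on $\overline{\HH^*}$, and a diagonal argument over a countable dense family of observables) then yields the theorem.

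The main obstacle, I expect, is twofold. First, there is the issue of the ``two competing semiclassical parameters'' $\lambda$ and $|\mathbf{k}|$ flagged in \S\ref{ssection:weyl}: one must carefully handle the regime where $|\mathbf{k}|/\lambda \to 0$ or $\to \infty$ separately, since the effective phase space and the nature of the limiting measure (a measure on $\HH^*$ versus a measure purely on the base coming from the vertical Laplacian eigenvalue $c(\mathbf{k})$) differ. In the flat case this is manageable because the curvature contribution vanishes, so the twisted symplectic form $\omega_{h,\mathbf{k}}$ coincides with $\omega_0$ and the Hamiltonian dynamics is independent of the ratio; this is precisely why flatness is assumed here (rather than the ``nearly flat'' generality of Theorem~\ref{theorem:quantum-ergodicity2}). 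Second, one needs the correct ergodicity input: it is not literally the geodesic flow on $S^*M$ but the induced flow on a cosphere bundle of $F$ that must be ergodic, and establishing this requires combining the Anosov property of the geodesic flow with the density of $\mathrm{Hol}(P,\nabla) = \rho(\pi_1(M))$ in $G$ — this is an ergodicity statement of the same flavor as the ones underlying Theorem~\ref{theorem:main2} and should be reducible to results in \cite{Brin-75-1, Lefeuvre-23} via the transitivity group, but it needs to be stated and invoked cleanly. Once these two points are settled, the remainder is a faithful transcription of the classical quantum ergodicity proof into the $\Psi^\bullet_{h,\mathrm{BW}}(P)$ calculus, using the properties established in \S\ref{sssection:properties-bw} (algebra, Gårding inequality, parametrices).
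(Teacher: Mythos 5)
Your proposal is correct and follows essentially the same route as the paper: a local Weyl law and Egorov theorem in the $\Psi^\bullet_{h,\mathrm{BW}}(P)$ calculus, a variance estimate $\frac{1}{\sharp(\Omega\cap B(0,R))}\sum|\langle au_{\mathbf{k},\lambda},u_{\mathbf{k},\lambda}\rangle - \fint_F a|^2 \to 0$, and ergodicity of the lifted geodesic flow on the cosphere bundle of $F$ coming from density of the holonomy group via the transitivity group argument (Lemma~\ref{lemma:ergodic-layer}). The one point you slightly overcomplicate is the ``two competing parameters'' issue: the paper works with a single semiclassical parameter $h=R^{-1}$ over the whole box $C(R)$, and since flatness makes the Hamiltonian flow on every positive energy layer $S_E\HH^*$ the same parallel-transport-extended geodesic flow (ergodic for all $E>0$), the variance integral over $E\in[0,1]$ tends to zero by dominated convergence with no regime splitting needed, the $E=0$ layer being of measure zero.
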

 
 As in the standard case (Laplacian on functions), the ergodicity of a certain Hamiltonian flow is responsible for \eqref{equation:bafi}. This flow can be described as follows. The (semiclassical) principal symbol of $h^2\Delta_{\mathbf{k}}$ is defined on $\HH^*_F \to F$ and given by $p(x,\xi) := |\xi|^2$. The (twisted) symplectic form (on $T^*F$) was defined in \eqref{equation:omega-twisted} and is given by $\omega_{h,\mathbf{k}} = \omega_0 + i h\mathbf{k} \cdot \pi^*\mathbf{F}_{\overline{\nabla}}$, where $\pi : T^*F \to F$ is the projection and $\mathbf{F}_{\overline{\nabla}}$ is the curvature on $F$ of the corresponding (multi) line bundle $\mathbf{L} \to F$. Here, the connection is flat so $\omega_{h,\mathbf{k}} = \omega_0$ (see \ref{equation:courbure-horizontale}). The induced flow $(\Phi_t)_{t \in \mathbb{R}}$ on $\HH^*_F$ is then Hamiltonian flow of $p$ computed with respect to $\omega_0$. We shall see that under the assumptions of the theorem, this flow is ergodic on the energy layers $S_E\HH^* = \{\xi \in \HH^* \mid |\xi|=E\}$ for $E \neq 0$ (see Lemma \ref{lemma:ergodic-layer}). 
 
 We emphasise that Theorem \ref{theorem:quantum-ergodicity} also holds if we only assume that the flow $(\Phi_t)_{t \in \mathbb{R}}$ on $S_1\HH^*$ is ergodic; we decided to state the theorem this way for simplicity. This more general result is also contained in Theorem \ref{theorem:quantum-ergodicity2}, as flat connections are $0$-admissible (see below for details). That this result is strictly more general is shown in Example \ref{example:surface3} below.
 
We now discuss the non-flat case. In what follows $P$ is a fixed $G$-bundle over $M$ and $G$ is a semisimple Lie group; we denote by $\mathbf{A}_P$ the space of all $G$-connections on $P$ modulo gauge equivalence. The subspace $\mathbf{A}_P^{\mathrm{flat}} \subset \mathbf{A}_P$ of all flat equivariant connections on $P$ (modulo gauge equivalence) is a finite dimensional algebraic variety (see for instance \cite[Chapter 5]{Labourie-13} in the surface case and \cite{Maret-22} for a more extensive discussion). It can be verified that the set of connections with dense holonomy group is open in $\mathbf{A}_P^{\mathrm{flat}}$. For such a connection, the Hamiltonian vector field $H^{\omega_{h,\mathbf{k}}}_p$ on the unit ball $S_1\HH^*_F := \{ (w,\xi) \in \HH^*_F ~|~ |\xi|=1\}$ generates an ergodic flow, as mentioned in the previous paragraph. It turns out that this property remains true for \emph{all} nearby volume preserving vector fields (in the $C^1$ topology) by \emph{stable ergodicity}, see \cite[Theorem B]{Burns-Wilkinson-99}. We note that the theorem is established there for Anosov maps but the proof goes through in the flow case, as indicated to us by the authors in private communication. (The semisimplicity assumption on $G$ is also needed for stable ergodicity to hold.) As a consequence, starting with a flat connection (with dense holonomy group) and perturbing the (twisted) symplectic form $\omega_{h,\mathbf{k}} = \omega_0 + ih\mathbf{k} \cdot \pi^*\mathbf{F}_{\overline{\nabla}}$ with $\mathbf{F}_{\overline{\nabla}} \neq 0$ but small enough, we can still obtain an ergodic Hamiltonian flow $(\Phi_t^{\omega_{h,\mathbf{k}}})_{t \in \R}$ on a range of energy shells in $\HH^*$ (here $h|\mathbf{k}|\leq 1$). In the following, we set $\omega(y) := \omega_0 + iy \cdot \mathbf{F}_{\overline{\nabla}}$ for $y \in \mathfrak{a}_+$ (positive Weyl chamber) with norm $|y| \leq 1$. We introduce the following terminology:

\begin{definition}\label{def:admissible-connection}
Let $\nabla \in \mathbf{A}_P$ be a $G$-equivariant connection, and let $\eps \geq 0$. We say that the connection is $\eps$-admissible if for all $y \in \mathfrak{a}_+$ with norm $|y| \leq 1$, for all $\eps < E \leq 1$, the Hamiltonian vector field $H^{\omega(y)}_p$ generates an ergodic flow on the energy layer $S_E\HH^*_F := \{ (x,\xi) \in \HH^*_F ~|~ |\xi|=E\}$.
\end{definition}

As mentioned above, for a fixed $\eps > 0$, $\eps$-admissible connections can be obtained on Anosov manifolds by simply perturbing a flat connection (in $C^\infty$, on a principal bundle with semisimple structure group) with dense holonomy group and using the stable ergodicity statement of \cite[Theorem B]{Burns-Wilkinson-99}. Note that for any $\eps > 0$, there is an infinite dimensional space of such connections. In the flat case we have some homogeneity with respect to the energy layers, and Lemma \ref{lemma:ergodic-layer} asserts that one can take $\eps = 0$. We do not know if there exist non-flat connections for which one can take $\eps =0$, but some evidence in Example \ref{example:surface} below is given that there might be none; in the same example the vector fields $H^{\omega(y)}_p$ are expressed explicitly in terms of geometric data. The quantum ergodicity statement we obtain in this case is the following:

 \begin{theorem}
 \label{theorem:quantum-ergodicity2}
 Let $(M,g)$ be a Riemannian manifold, and let $P \to M$ be $G$-principal bundle equipped with an $\eps$-admissible connection for some $\eps > 0$. Then there exists a subset $\Lambda \subset \Omega$ such that
 \[
 \dfrac{\sharp (\Lambda \cap B(0,R) \cap \{\lambda \geq \eps R\})}{\sharp(\Omega \cap B(0,R)\cap \{\lambda \geq \eps R\})} \to_{R \to \infty} 1,
 \]
 such that for all sequences $(\mathbf{k}_j,\lambda_j)_{j \geq 0} \in \Lambda^{\Z_{\geq 0}}$, one has: for all $a \in C^\infty(F)$,
 \begin{equation}
 \label{equation:bafi-non-flat}
\langle a u_{\mathbf{k}_j,\lambda_j},u_{\mathbf{k}_j,\lambda_j}\rangle_{L^2(F,\mathbf{L}^{\otimes \mathbf{k}_j})} \to_{j \to \infty} \dfrac{1}{\vol(F)} \int_F a(w)\, \dd w,
 \end{equation}
 where $\dd w$ stands for the Riemannian measure on $F$.
 \end{theorem}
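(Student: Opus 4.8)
The strategy is the standard quantum ergodicity (Shnirelman--Zelditch--Colin de Verdière) argument, but carried out inside the Borel--Weil calculus $\Psi^\bullet_{h,\mathrm{BW}}(P)$ and with \emph{two} simultaneous semiclassical parameters $h = 1/\lambda$ and $1/|\mathbf{k}|$ tied together along the region $\{\lambda \geq \eps R\}$ of $\Omega$. First I would set up the bookkeeping: for $(\mathbf{k},\lambda) \in \Omega \cap B(0,R) \cap \{\lambda \geq \eps R\}$ write $h := \lambda^{-1}$, so that $h|\mathbf{k}| \leq |\mathbf{k}|/(\eps R) \leq 1/\eps \cdot |\mathbf{k}|/R$ is bounded, and after a dyadic/rescaling argument one may assume $h|\mathbf{k}| \leq 1$ (the constant $1/\eps$ is harmless since the calculus only needs $h|\mathbf{k}|$ bounded, or one rescales the metric on $G$); along a subsequence $h\mathbf{k}(h) \to y$ for some $y \in \mathfrak{a}_+$ with $|y| \leq 1$, and $\lambda \to E \in [\eps,1]$ after rescaling so $R = 1$. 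The operator $\mathbf{P}_h := h^2\Delta_{\mathbf{k}} - E^2 \in \Psi^2_{h,\mathrm{BW}}(P)$ has principal symbol $p(w,\xi) - E^2 = |\xi|^2_g - E^2$ on $\HH^*_F$, and by \S\ref{sssection:propagation1} (Egorov in the twisted calculus, Proposition \ref{proposition:egorov}) its Hamiltonian flow on $\HH^*$ relative to the family $\mathbf{k}(h)$ is the Hamiltonian flow $\Phi^{\omega(y)}_t$ of $p$ with respect to $\omega(y) = \omega_0 + iy\cdot\mathbf{F}_{\overline\nabla}$; the $\eps$-admissibility hypothesis gives ergodicity of this flow on $S_E\HH^*_F$ for every such $y$ and every $E \in (\eps,1]$.

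\textbf{Key steps.} (1) \emph{Weyl law.} I would first establish $\sharp(\Omega \cap B(0,R) \cap \{\lambda \geq \eps R\}) \asymp R^{d+1}$ (or at least two-sided polynomial bounds, which is all that is needed), using Lemma \ref{lemma:ft-horizontal-laplacian} to identify $\mathrm{spec}(\Delta_{\mathbf{k}})$ with horizontal Laplacian eigenvalues, Theorem \ref{theorem:penible} to see $h^2\Delta_{\mathbf{k}}$ as a pseudodifferential operator on $M$ with values in Toeplitz operators on the fibres $G/T$ (so its eigenvalue count scales like $h^{-n} d_{\mathbf{k}}$), and summing over $\mathbf{k}$ with $|\mathbf{k}| \leq R$; here $d_{\mathbf{k}} \asymp |\mathbf{k}|^{\dim(G/T)/2}$ and $n + \dim(G/T)/2 = \dim F/?$—this is just a careful counting. (2) \emph{Egorov and local Weyl law.} For $\mathbf{A} = \mathrm{Op}^{\mathrm{BW}}_h(\pi^* a)$ with $a \in C^\infty(T^*M)$ (or directly $a\in C^\infty(F)$ quantized as multiplication), prove the local Weyl law
\[
\frac{1}{\sharp(\Omega_R)} \sum_{(\mathbf{k},\lambda) \in \Omega_R} \langle \mathbf{A} u_{\mathbf{k},\lambda}, u_{\mathbf{k},\lambda}\rangle \longrightarrow \frac{1}{\vol(F)}\int_F a\,\dd w,
\]
where $\Omega_R := \Omega \cap B(0,R) \cap \{\lambda \geq \eps R\}$, again via Theorem \ref{theorem:penible} (the Toeplitz structure forces the fibre average, giving $\int_F$ rather than $\int_{\HH^*}$) plus the standard cutoff-and-trace argument. (3) \emph{Variance estimate.} With $\langle A \rangle := \vol(F)^{-1}\int_F a$, set $\mathbf{B} := \mathbf{A} - \langle A\rangle\,\mathrm{Id}$ and bound
\[
\frac{1}{\sharp(\Omega_R)}\sum_{(\mathbf{k},\lambda)\in\Omega_R} |\langle \mathbf{B} u_{\mathbf{k},\lambda},u_{\mathbf{k},\lambda}\rangle|^2 \longrightarrow 0.
\]
This is the heart: replace $\mathbf{B}$ by its time average $\mathbf{B}_T := \frac1T\int_0^T e^{it\X}\mathbf{B}e^{-it\X}\,dt$ modulo $\mathcal{O}_{L^2}(h)$ errors (using $\X u_{\mathbf{k},\lambda} \approx 0$ in the relevant sense—more precisely $h\sqrt{\Delta_{\mathbf{k}}}$ has symbol $|\xi|$ and one conjugates by its propagator, exactly as in the classical proof), invoke Egorov (Proposition \ref{proposition:egorov}/\ref{proposition:propagation2}) so $\sigma(\mathbf{B}_T) = \frac1T\int_0^T \sigma(\mathbf{B})\circ\Phi^{\omega(y)}_t\,dt$, and then the Birkhoff ergodic theorem for the flow $\Phi^{\omega(y)}$ on $S_E\HH^*_F$ (ergodic by $\eps$-admissibility) together with the local Weyl law of step (2) applied to $|\mathbf{B}_T|^2$ yields a bound $\limsup \leq \frac{1}{\vol(F)}\int_F |\langle \text{time average of }a\rangle|^2$, which tends to $(\langle A\rangle)^2 - \text{same} = 0$ as $T\to\infty$ by the mean ergodic theorem. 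A mild uniformity-in-$y$ point has to be addressed since $y$ varies with the subsequence, but $\eps$-admissibility was formulated precisely to cover all $|y|\leq 1$ uniformly, and one extracts a subsequence. (4) \emph{Extraction of the density-one set.} From the variance estimate, a standard diagonal/Chebyshev argument over a countable dense family of observables $a$ produces $\Lambda \subset \Omega$ with $\sharp(\Lambda \cap \Omega_R)/\sharp(\Omega_R) \to 1$ along which \eqref{equation:bafi-non-flat} holds for all $a \in C^\infty(F)$.

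\textbf{Main obstacle.} The delicate point, and the one I would spend most care on, is controlling the interaction of the two semiclassical scales: the propagator $e^{it\X_{\mathbf{k}}}$ in step (3) must be handled in the Borel--Weil calculus $\Psi^\bullet_{h,\mathrm{BW}}(P)$ uniformly in $\mathbf{k}$ and in $h$ with $h|\mathbf{k}|\leq 1$, and the twisted symplectic form $\omega_{h,\mathbf{k}}$ genuinely depends on $\mathbf{k}$, so the "classical flow" is only well-defined \emph{relative to a chosen family} $h\mapsto\mathbf{k}(h)$ (this is exactly the subtlety flagged after Proposition \ref{proposition:propagation}). Thus the variance estimate cannot be proved for all of $\Omega_R$ at once but must be organised by first passing to arbitrary sequences in $\Omega_R$, extracting $h\mathbf{k}(h)\to y$, and running the Egorov/Birkhoff argument for that fixed limiting flow $\Phi^{\omega(y)}$; one then needs the bound to be uniform enough over $y \in \{|y|\le 1\}$ that a contradiction/compactness argument closes it. A secondary technical nuisance is that the natural phase space is $\HH^*_F$, not $T^*F$, so all quantizations must be the $\mathrm{Op}^{\mathrm{BW}}$ of \S\ref{ssection:quantization-symbols} and Theorem \ref{theorem:quantization-bw}, and the Toeplitz projection $\Pi_{\mathbf{k}}$ must be commuted through at each step via \eqref{equation:commutation-nablak-pik}--\eqref{eq:commutation-vertical-derivative} and Lemma \ref{lemma:eat-pik}; none of this is conceptually hard but it is where the "routine calculation" bulk of the proof lives. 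Everything else is a faithful transcription of the Zelditch--Colin de Verdière scheme, with Theorem \ref{theorem:penible} doing the work of producing the fibrewise-averaged limit measure $\vol(F)^{-1}\dd w$.
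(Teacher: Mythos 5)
Your proposal follows the same general Shnirelman--Zelditch--Colin de Verdi\`ere scheme that the paper uses, and you correctly identify the central difficulty: the twisted symplectic form $\omega_{h,\mathbf{k}}$, and hence the classical flow appearing in Egorov, depends on the auxiliary parameter $y = h\mathbf{k}$, so there is no single ``classical limit dynamics'' across all of $\Omega_R$. However, your proposed remedy --- extracting sequences with $h\mathbf{k}(h)\to y$, arguing for that fixed $y$, then closing via a compactness/contradiction argument uniform in $y$ --- is not what the paper does, and the paper's mechanism is both cleaner and worth contrasting.

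The paper's resolution is to build the $y$-dependence directly into a strengthened local Weyl law (the analogue of Lemma \ref{lemma:local-weyl}): one allows symbols $a \in C^\infty_{\mathrm{comp}}(\HH^*\times\mathfrak{a})$ depending on the extra variable $y$, quantizes $a(\bullet,\mathbf{k}/R)$ for each $\mathbf{k}$, and proves
\[
R^{-r}\sum_{(\mathbf{k},\lambda)\in C(R)}\left|\langle \Op^{\mathrm{BW}}_{R^{-1}}(a(\bullet,\mathbf{k}/R))\,u_{\mathbf{k},\lambda},u_{\mathbf{k},\lambda}\rangle\right|^2 \le C\int_{B_1\mathfrak{a}_+}\|a(\bullet,y)\|^2_{L^2(B_{1+\delta}\HH^*)}\,\dd y + \mathcal{O}_{a,\delta}(R^{-1}),
\]
where the $\mathbf{k}$-sum becomes a Riemann sum for the $y$-integral thanks to the Weyl dimension formula and the exponent $r = n + (\dim G + d)/2$. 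Egorov is then applied with the \emph{exact} value $y = h\mathbf{k}$ (no limit or subsequence is taken), producing the time-averaged symbol $\langle a'\rangle_T^{\omega(y)}$; feeding this into the $y$-dependent local Weyl law bounds the variance by $\int_{B_1\mathfrak{a}_+}\int_{\eps}^{1}\|\langle \pi^*a\rangle_T^{\omega(y)}\|^2_{L^2(S_E\HH^*)}\,\dd E\,\dd y$, and $\eps$-admissibility plus dominated convergence sends this to zero as $T\to\infty$. This avoids any subsequence extraction and the attendant question of uniformity: the integral over $y\in B_1\mathfrak{a}_+$ is doing the work that your contradiction argument would have to do by hand, and it does so in one pass, without any pointwise uniformity estimate on the rate of ergodicity in $y$. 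If you pursued your route, you would need some effective control on the variance as a function of $y$ before extracting, which $\eps$-admissibility alone does not obviously give you; the paper sidesteps this by never fixing $y$.

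Two minor points. First, the paper sets $h := R^{-1}$ (not $h=\lambda^{-1}$), which makes $h|\mathbf{k}|\le 1$ automatic on $C(R)$ with no rescaling of the metric or dyadic gymnastics; your choice $h=\lambda^{-1}$ works but generates an avoidable constant. Second, the paper conjugates by $e^{ith\Delta_{\mathbf{k}}}$ rather than the square-root propagator; both generate the same orbits on each energy shell, and your choice would work, but it matches neither the paper's Egorov statement \eqref{equation:egorov-utile} nor the form of the twisted flow recorded there, so you would need to re-derive the Egorov identity for the half-wave group in $\Psi^\bullet_{h,\mathrm{BW}}(P)$.
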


We conclude this section with a discussion of previous results. See \S\ref{ssection:weyl} for further perspectives on this result. In the flat case (Theorem \ref{theorem:quantum-ergodicity}), a similar result was established recently and independently in \cite[Theorems 1.4 and 4.9]{Ma-Ma-23}. We note that their result seems to be complementary to ours as it features a notion of uniformity in the parameter $\mathbf{k} \in \widehat{G}$, but only deals with flat connections; in other words, they are allowed to take $\Lambda \subset \Omega$ which has uniform density one for a set of weights of the form $\{p\mathbf{k} \mid p \in \mathbb{Z}_{\geq 0}\}$, where $\mathbf{k} \in \widehat{G}$ is fixed (so \emph{not} considering all $\mathbf{k} \in \widehat{G}$ simultaneously). Their approach is based on a fibrewise Toeplitz quantisation which bears similarities with our approach. We also refer to \cite{Bismut-Ma-Zhang-17, Ma-24, Puchol-23} for some very limited similarity in approach in the context of Toeplitz operators and analytic torsion.

In the non-flat case (Theorem \ref{theorem:quantum-ergodicity2}), very recently, \cite{Ben-Ovadia-Ma-Rogdriguez-Hertz-24} prove a similar result for perturbations, featuring a notion of uniformity as in the previous paragraph, and also using a form of stable ergodicity in the proof.

We next give a remark about the spectrum of the horizontal Laplacian.
 
 \begin{remark}\label{remark:horizontal-laplacian}
The point spectrum of $\Delta_{\HH}$ can be defined as 
\[
	\spec(\Delta_{\HH}) := \{\lambda \in \mathbb{R}_{\geq 0} \mid \exists u \in L^2(P),\,u \neq 0,\, \Delta_{\HH}u = \lambda^2 u\},
\]
counted with multiplicity. Using the Fourier transform on $P$ (see \eqref{eq:ft-bw-isomorphism}), as well as Lemma \ref{lemma:ft-horizontal-laplacian}, we see that
\[
	\spec(\Delta_{\HH}) = \cup_{\mathbf{k} \in \widehat{G}} \cup_{i = 1}^{d_{\mathbf{k}}} \spec(\Delta_{\mathbf{k}, i}),
\]
where $\spec(\Delta_{\mathbf{k}, i})$ denotes the (discrete) spectrum of $\Delta_{\mathbf{k}, i} = \Delta_{\mathbf{k}}$ acting on $L^2_{\mathrm{hol}}(F, \Lk)$
\[
	\spec(\Delta_{\mathbf{k}, i}) := \{\lambda \in \mathbb{R}_{\geq 0} \mid \exists u \in C^\infty_{\mathrm{hol}}(F, \Lk),\,u \neq 0,\, \Delta_{\mathbf{k}} u = \lambda^2 u\}.
\]
In particular, we see that the pair $(\mathbf{k}, \lambda)$ in $\Omega$ (with multiplicity) is counted $d_{\mathbf{k}}$ times in $\spec(\Delta_{\HH})$. Therefore the spectrum $\spec(\Delta_{\HH})$ and eigenfunctions of $\Delta_{\HH}$ are uniquely determined from $\Omega$ and eigenfunctions of $\Delta_{\mathbf{k}}$, and vice versa.
\end{remark}

We finish this section by two examples. We first give a geometrically appealing example (unit tangent bundle of a surface) where the condition of $\varepsilon$-admissibility appearing in Theorem \ref{theorem:quantum-ergodicity2} is further clarified and some evidence towards its optimality is given. In particular, we explicitly compute the Hamiltonian flows appearing in the problem in terms of geometric data.

\begin{example}\label{example:surface}	
	
	Assume we are in the setting of Example \ref{example:surface1}. As mentioned earlier in \S \ref{ssection:u1}, the Borel-Weil theory for $P$ is simple. The irreducible representations of $\mathbb{S}^1$ are classified by integer $k \in \mathbb{Z}$, and are given by $\rho_k(z) = z^k$, $\rho_k: \mathbb{S}^1 \to \mathrm{U}(1)$. The flag manifold $F = P/T = M$ is trivial in this case, and the line bundle associated to $P$ via $\rho_k$ is $\mathcal{K}^{\otimes k}$, the $k$th power of the canonical line bundle, $\mathcal{K} = T^*M^{1, 0}$ (i.e. the bundle locally spanned by $dz$, for a complex coordinate $z$). (This is easily verified by computing transition functions for $\mc{K}$ and $SM$ in local isothermal coordinates.) 
	
	 The Levi-Civita connection $\nabla^{\mathrm{LC}}$ on $P$ is given by the $1$-form $\psi$; indeed its kernel is the horizontal space, and it satisfies $R_\theta^* \psi = \psi$ so it is equivariant. (Note that here $\mathbb{S}^1$ is identified with $\mathbb{R}/(2\pi \mathbb{Z})$.) The connection $\nabla_k$ associated to $\nabla^{\mathrm{LC}}$ on $\mathcal{K}^{\otimes k}$ is then simply the Levi-Civita connection on $\mathcal{K}^{\otimes k}$. Note that $\nabla_k$ is an actual connection, so there is no need to add the vertical Chern connection as in \S \ref{section:connection-principal-bundles} and \eqref{equation:dynamical-lambda}. Its curvature is computed using \eqref{equation:courbure-horizontale} to be 
	 \[
	 	F_{\nabla_k}(v, iv) = ik d\psi(X(v), H(v)) = -ik K_g d\vol_g(v, iv), \quad v \in SM,
	 \] 
	 where $d\vol_g$ is the volume form of $(M, g)$ (here $\alpha_k$ in \eqref{equation:courbure-horizontale} is simply the multiplication by $ik$). Thus $F_{\nabla_k} = -ik K_g d\vol_g$. Hence the twisted symplectic form $\omega_{h, k}$ on $T^*M$ introduced in \eqref{equation:omega-twisted} is given by
	 \[
	 	\omega_{h, k} = \omega_0 + ihk \pi^*F_{\nabla_1} = \omega_0 + h k \pi^*(K_g d\vol_g),
	 \]
	 where $\omega_0$ is the canonical symplectic form on $\pi: T^*M \to M$. Note that $\alpha$ extends to a $1$-form on $TM$ by setting 
	 \[
	 	\alpha(x, v)(\xi) := g_x(v, d\pi(x, v) \xi), \quad (x, v) \in TM,\quad \xi \in T_{(x, v)}TM,
	 \] 
	 where by slight abuse of notation we write $\pi$ also for the projection $TM \to M$. Also, the geodesic vector field $X$ extends naturally to $TM$, and so does the vertical vector field $V$. Using the musical isomorphism $\hat{g}: TM \to T^*M$, it is easy to see that $\hat{g}^* \omega_0 = -d\alpha$. Using \cite[Section 1.3 and Lemma 7.7]{Merry-Paternain-11}, the Hamiltonian vector field of $\frac{1}{2}|\xi|^2$ with respect to $\hat{g}^* \omega_{h, k}$ on $TM$ is
	 \[
	 		H = X + hk K_g V.
	 \]
	 For $r > 0$, denote by $SM_r := \{(x, v) \in TM \mid |v|_g = r\}$ the bundle of tangent vectors of length $r$, and by $\ell_r: SM \to SM_r, v \mapsto rv$ the re-scaling map. Then it can be checked that $\ell_r^*X = rX$ and $\ell_r^*V = V$. Write 
	 \[
	 	H_{r, y} := \frac{1}{r} \ell_r^*(X + y K_g V) = X + \frac{y K_g}{r} V, \quad y \in \mathbb{R}, r > 0,
	 \]
	 for the family of vector fields on $SM$. The flows generated by such vector fields (of the form $X + \lambda V$ where $\lambda$ is a pullback of a function on $M$) are called \emph{magnetic} because they model trajectories of charged particles influenced by a magnetic field.
	 
	  In Definition \ref{def:admissible-connection}, recall that we introduced the notion of $\varepsilon$-admissibility of a connection; by definition, the Levi-Civita connection on $P$ is $\varepsilon$-admissible if $H_{r, y}$ is ergodic for all $|y| \leq 1$ and $r \in (\varepsilon, 1]$. In particular, when $(M, g)$ is hyperbolic, i.e. $K_g \equiv -1$, $H_{r, y} = X - \frac{y}{r}V$, we see that $H_{r, y}$ is not $\varepsilon$-admissible for any $\varepsilon < 1$. (This is because it is well-known that for hyperbolic surfaces $X + cV$ for $c$ a constant generates an ergodic flow if and only if $|c| \leq 1$.) 
	 
	 However, in general, addressing the question of ergodicity for a range of $(y, r)$ is known to be difficult. For instance, there are examples for general $\lambda$ in $X + \lambda V$ where one can enter and leave the space of Anosov flows an arbitrary number of times by re-scaling $\lambda$ with a constant  \cite{Burns-Paternain-02}; it is also known that $H_{r, y}$ is \emph{not} Anosov for $\frac{y}{r}$ large enough \cite{Paternain-Paternain-96}. It is expected that ergodicity of $H_{r, y}$ also fails for $\frac{y}{r}$ large enough, and so Theorem \ref{theorem:quantum-ergodicity2} is in this sense optimal.
\end{example}

Finally, we give an example of a flat bundle where the conclusion of Theorem \ref{theorem:quantum-ergodicity} applies, under the mentioned weaker dynamical assumption.

\begin{example}\label{example:surface3}
	Let $(M, g)$ be a Riemannian surface and $P = M \times \mathbb{S}^1$. Write $\psi$ for the canonical $1$-form on the $\mathbb{S}^1$ factor, and let $\eta$ be a $1$-form on $M$. Then $\psi + \eta$ is a connection $1$-form on $P$, whose curvature is $d\eta$. Note that the line bundles in Borel-Weil theory are powers of $L = M \times \mathbb{C}$. Similarly to Example \ref{example:surface}, $\omega_{h, k} = \omega_0 - hk \pi^*d\eta$ is the twisted symplectic form.
	
	 Assume now the connection is flat, i.e. $d\eta = 0$. Then the flow Hamiltonian flow of $\frac{1}{2}|\xi|^2$ with respect to $\omega_0$ is just the geodesic flow. The conclusions of Theorem \ref{theorem:quantum-ergodicity} thus hold for the magnetic Laplacian (twisted with a closed $1$-form $\eta$), on surfaces with ergodic geodesic flow (which exist e.g. on the torus $\mathbb{T}^2$, see \cite[Theorem 1]{Donnay-88}).
\end{example}

 \section{Bottom of the spectrum}

 \subsection{Lower bound on $\lambda_1$ for globally non-degenerate curvature}
 


In this paragraph, we prove Theorem \ref{theorem:lambda1}, Item (i). We first claim that it reduces to the following statement:

\begin{proposition}
\label{proposition:lower-bound}
Let $\mathbf{l} \in \partial_\infty \mathfrak{a}_+$ such that $F_{\mathrm{min}}(\mathbf{l}) > 0$. Then for all $\eps > 0$, there exists $R > 0$ such that
\begin{equation}
\label{equation:open-set}
\lambda_1(\Delta_{\mathbf{k}}) > (F_{\mathrm{min}}(\mathbf{l})/2-\eps)|\mathbf{k}|, \qquad \forall \mathbf{k} \in \widehat{G}, |\mathbf{k}| > R, \left|\mathbf{k}/|\mathbf{k}|-\mathbf{l}\right| < 1/R.
\end{equation}
\end{proposition}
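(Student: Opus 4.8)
\textbf{Proof plan for Proposition~\ref{proposition:lower-bound}.}

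The strategy is a semiclassical argument by contradiction, in the spirit of the proof of Theorem~\ref{theorem:main2} in Chapter~\ref{chapter:flow}. Suppose the conclusion fails: then there is a sequence $\mathbf{k}_n \in \widehat{G}$ with $|\mathbf{k}_n| \to \infty$, $\mathbf{k}_n/|\mathbf{k}_n| \to \mathbf{l}$, and normalised eigensections $u_n \in C^\infty_{\mathrm{hol}}(F,\mathbf{L}^{\otimes \mathbf{k}_n})$ with $\Delta_{\mathbf{k}_n} u_n = \lambda_1(\Delta_{\mathbf{k}_n}) u_n$ and $\lambda_1(\Delta_{\mathbf{k}_n}) \le (F_{\mathrm{min}}(\mathbf{l})/2 - \eps)|\mathbf{k}_n|$. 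Set $h := 1/|\mathbf{k}_n|$ (suppressing $n$), so $h|\mathbf{k}_n| = 1$ and $h\mathbf{k}_n \to \mathbf{l}$. Rescaling, $u_n$ satisfies $(h^2 \Delta_{\mathbf{k}_n}) u_n = h^2\lambda_1 u_n$ with $h^2 \lambda_1 \le (F_{\mathrm{min}}(\mathbf{l})/2 - \eps)h$. By \S\ref{sssection:examples2}, Item (iii), $h^2\Delta_{\mathbf{k}} = h^2 \nabla_{\mathbf{k}}^*\nabla_{\mathbf{k}} \in \Psi^2_{h,\mathrm{BW}}(P)$ with principal symbol $|\xi_{\HH^*}|^2_g$. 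The eigenvalue bound forces $\|h\nabla_{\mathbf{k}_n} u_n\|_{L^2}^2 = h^2\lambda_1 \to 0$, so $u_n$ is (semiclassically) concentrated at $\xi_{\HH^*} = 0$, i.e. microlocally supported near the zero section of $\HH^*_F$; the fibrewise holomorphicity (ellipticity of $h\overline{\partial}_{\mathbf{k}}$ off $\HH^*_F$, see \S\ref{sssection:examples}, Item (iv)) already confines mass to $\HH^*_F$. Thus $u_n$ has a semiclassical defect measure supported on the zero section of $\HH^*_F \cong F$.

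The heart of the argument is a commutator/positivity estimate exploiting the \emph{twisted} symplectic structure. Following the computation in the proof of the Egorov theorem (Proposition~\ref{proposition:egorov}) and the commutator lemma, for a horizontal vector field $X$ on $M$ with lift $X^{\HH_F}$ one has, with $\X_{\mathbf{k}} := \iota_{X^{\HH_F}}\nabla_{\mathbf{k}}$,
\[
\langle [\,(h\X_{\mathbf{k}})^*, h\X_{\mathbf{k}}\,] u_n, u_n\rangle = \langle (\mathrm{something involving } h\mathbf{k}_n\cdot\mathbf{F}_{\overline{\nabla}}) u_n, u_n\rangle + \mathcal{O}(h\|u_n\|^2),
\]
using \eqref{equation:salut}; the curvature term contributes the quantity $-i\, h\mathbf{k}_n\cdot \mathbf{F}_{\overline{\nabla}}(X^{\HH_F}, JX^{\HH_F})$-type expressions via the sharp Gårding inequality (Lemma~\ref{lemma:garding}) and Theorem~\ref{theorem:bergman}, relating the Bergman-kernel curvature $\mathbf{F}_{D^{\mathrm{Chern}}}$ and $\mathbf{F}_{\overline{\nabla}}$. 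Concretely, I would write $h^2\Delta_{\mathbf{k}_n}$ locally as a sum of squares $\sum_i (h\nabla_{\mathbf{k}_n, \e_i^{\HH}})^*(h\nabla_{\mathbf{k}_n, \e_i^{\HH}})$ over a local orthonormal frame, pass to the fibrewise-holomorphic picture via $\Pi_{\mathbf{k}}$ and Theorem~\ref{theorem:penible}, and invoke the model computation on $G/T$ (the flag fibre) where $\Delta_{\mathbf{k}}$ restricted to holomorphic sections of $\mathbf{J}^{\otimes\mathbf{k}}|_{F_x}$ has first eigenvalue governed by $\sum$ of the positive parts of the eigenvalues of the curvature form $i\,\mathbf{k}_n\cdot\mathbf{F}_{\nabla^{\V_F}}$ acting fibrewise — this is the standard ``lowest Landau level'' lower bound, $\lambda_1 \gtrsim \tfrac12 \sum_j |\mu_j|$ where $\mu_j$ are the curvature eigenvalues, and the relevant quantity on the diagonal is exactly $-i\,\mathbf{l}\cdot\mathbf{F}_{\overline{\nabla}}(\bullet, J\bullet)$ restricted to $\HH_F$ after the $|\mathbf{k}_n| \to \infty$ rescaling. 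Taking the infimum over $(x,w) \in F$ and over choices of $X,Y$ gives the constant $F_{\mathrm{min}}(\mathbf{l})$.

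Putting these together: by the lower bound $\langle h^2\Delta_{\mathbf{k}_n} u_n, u_n\rangle \ge (F_{\mathrm{min}}(\mathbf{l})/2 - \eps/2) h \,\|u_n\|^2$ for $n$ large (uniformly, since $h\mathbf{k}_n \to \mathbf{l}$ and $F_{\mathrm{min}}(\cdot)$ is continuous, and using compactness of $F$ to make the ``$+\mathcal{O}(h)$'' errors uniform), which contradicts $\langle h^2\Delta_{\mathbf{k}_n}u_n,u_n\rangle = h^2\lambda_1 \le (F_{\mathrm{min}}(\mathbf{l})/2 - \eps)h\|u_n\|^2$. The main obstacle I anticipate is making the fibrewise curvature lower bound genuinely uniform in $\mathbf{k}_n$ and in the basepoint: one must show that the leading-order behaviour of $\Delta_{\mathbf{k}_n}$ on fibrewise holomorphic sections near the zero section of $\HH^*_F$ is governed precisely by the restriction of the curvature to $\HH_F$ (not $\V_F$), using Lemma~\ref{lemma:vanishing-curvature} (no mixed curvature components) and the relation \eqref{equation:courbure-horizontale} between $\mathbf{F}_{\overline{\nabla}}$ and $d\Theta$, and then that the sharp Gårding inequality yields a matching lower bound with no loss in the constant beyond $\eps$. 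That is where the Bergman kernel expansion (Theorem~\ref{theorem:bergman}) and the Borel-Weil-to-Toeplitz translation (Theorem~\ref{theorem:penible}) do the essential work. Once Proposition~\ref{proposition:lower-bound} is proved, Theorem~\ref{theorem:lambda1}, Item (i), follows by a straightforward covering argument: $\partial_\infty\mathfrak{a}_+$ is compact, cover it by finitely many neighbourhoods $U_{R,1/R}(\mathbf{l})$ with $F_{\mathrm{min}}(\mathbf{l}) \ge F_{\mathrm{min}}$, and take the worst $R$.
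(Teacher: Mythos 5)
Your intuition about the mechanism is right: the curvature of the connection, accessed through commutators of horizontal covariant derivatives, is what produces the lower bound on $\lambda_1$. But the machinery you propose is much heavier than what the paper actually uses, and the key step of your plan is not spelled out in a way that would close the argument.

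The paper's proof is a direct integration-by-parts computation, not a contradiction argument and not a semiclassical-defect-measure argument. The core is a single identity (equation~\eqref{equation:ima}): for an eigenfunction $u_{\mathbf{k}}$ with $\langle \Delta_{\mathbf{k}}u_{\mathbf{k}},u_{\mathbf{k}}\rangle\le C|\mathbf{k}|$ and $\|u_{\mathbf{k}}\|_{L^2}=1$, and horizontal basic vector fields $X^{\HH_F},Y^{\HH_F}$, one has
\begin{equation*}
 2\,\Im\langle \X_{\mathbf{k}}u_{\mathbf{k}},\chi\,\mathbf{Y}_{\mathbf{k}}u_{\mathbf{k}}\rangle_{L^2}
 =-i\,\langle \mathbf{k}\cdot\mathbf{F}_{\overline{\nabla}}(X^{\HH_F},Y^{\HH_F})\,u_{\mathbf{k}},\chi u_{\mathbf{k}}\rangle_{L^2}
 +\mc{O}(|\mathbf{k}|^{1/2}),
\end{equation*}
proved just from $\X_{\mathbf{k}}\mathbf{Y}_{\mathbf{k}}-\mathbf{Y}_{\mathbf{k}}\X_{\mathbf{k}}=\mathbf{k}\cdot\mathbf{F}_{\overline{\nabla}}(X^{\HH_F},Y^{\HH_F})+(\overline{\nabla}_{\mathbf{k}})_{[X^{\HH_F},Y^{\HH_F}]}$, integration by parts (the a priori energy bound controls the first-order error terms), and the fact that the vertical Chern part of $(\overline{\nabla}_{\mathbf{k}})_{[X^{\HH_F},Y^{\HH_F}]}$ is controlled using $\overline{\partial}_{\mathbf{k}}u_{\mathbf{k}}=0$. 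Then: choose, at each point of $F$, a pair $X,Y$ nearly realising the $\max$ in the definition of $F_{\mathrm{min}}(\mathbf{l})$, cover $F$ by finitely many such neighbourhoods, take a subordinate partition of unity $\sum_j\chi_j=1$, sum the identities, bound the left side by $2|\mathbf{k}|^{-1}\|\nabla_{\mathbf{k}}u_{\mathbf{k}}\|_{L^2}^2$ using Cauchy--Schwarz, and read off the lower bound. No $\Op^{\mathrm{BW}}_h$ quantisation, no Bergman kernel, no Toeplitz picture, no Egorov, no defect measure.

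The step where your plan genuinely has a gap is the one you yourself flagged as ``the main obstacle'': ``\dots the sharp G{\aa}rding inequality yields a matching lower bound with no loss in the constant beyond $\eps$.'' As stated in Lemma~\ref{lemma:garding}, sharp G{\aa}rding only gives $\Re\langle\mathbf{A}u,u\rangle\ge -Ch\|u\|^2_{H^{(m-1)/2}}$; to extract the sharp leading constant $F_{\mathrm{min}}(\mathbf{l})/2$ you would need to isolate the correct subprincipal/curvature contribution and show the error really is $\eps h|\mathbf{k}|\|u\|^2$ rather than $\mc{O}(h)\|u\|^2$, which is exactly the nontrivial part. Similarly, passing to the fibrewise Toeplitz picture via Theorem~\ref{theorem:penible} and invoking a ``lowest Landau level'' model is a plausible route but would require proving the curvature quadratic form you obtain fibrewise agrees, in the $|\mathbf{k}|\to\infty$ limit, with $-i\,\mathbf{l}\cdot\mathbf{F}_{\overline{\nabla}}$ restricted to $\HH_F$; that is precisely what the elementary commutator identity delivers directly, and you never derive it. In short: your plan is pointed in the right direction, but the heavy pseudodifferential steps that would carry the constant are asserted rather than proved, whereas the paper's argument replaces all of them by one explicit local identity and a partition of unity.
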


Let us establish Theorem \ref{theorem:lambda1}, Item (i), from the previous proposition.

\begin{proof}[Proof of Theorem \ref{theorem:lambda1}, Item (i)]
The topology on the compactified Weyl chamber $\overline{\mathfrak{a}_+} = \mathfrak{a}_+ \sqcup \partial_\infty \mathfrak{a}_+$ defined in \S\ref{sssection:compactification} makes it a compact topological space. The condition on $\mathbf{k}$ in \eqref{equation:open-set} defines an open neighborhood of the point $\mathbf{l} \in \partial_\infty \mathfrak{a}_+$ for the topology defined in $\overline{\mathfrak{a}_+}$. Since $\partial_\infty \mathfrak{a}_+ \subset \mathfrak{a}_+$ is compact, it can be covered by a finite number of open subsets such as the ones defined in \eqref{equation:open-set}; the conclusion is therefore immediate.
\end{proof}
 
 The rest of this paragraph is devoted to the proof of Proposition \ref{proposition:lower-bound}. Since $\pi : F \to M$ is a Riemannian submersion, any vector field $X \in C^\infty(M,TM)$ can be lifted in a unique way to a horizontal vector field $X^{\HH_F} \in C^\infty(F,\HH_F)$ such that $d\pi(X^{\HH_F}) = X$. Following standard terminology, we call these vector fields horizontal basic vector fields. The following holds.

\begin{lemma}
Fix $C > 0$, $\chi \in C^\infty(F,[0,1])$. Let $X^{\HH_F},Y^{\HH_F} \in C^\infty(F,\HH_F)$ be horizontal basic vector fields such that $|X|,|Y| \leq 1$ on $M$, and set $\X_{\mathbf{k}} := \iota_{X^{\HH_F}} \nabla_{\mathbf{k}}, \mathbf{Y}_{\mathbf{k}}=\iota_{Y^{\HH_F}} \nabla_{\mathbf{k}}$. Let $(u_{\mathbf{k}})_{\mathbf{k} \in \widehat{G}}$ be a sequence of sections $u_{\mathbf{k}} \in C^\infty_{\mathrm{hol}}(F,\mathbf{L}^{\otimes \mathbf{k}})$ such that
\begin{equation}
\label{equation:assumption-delta-k}
\langle \Delta_{\mathbf{k}} u_{\mathbf{k}}, u_{\mathbf{k}}\rangle_{L^2} \leq C|\mathbf{k}|, \qquad \|u_{\mathbf{k}}\|_{L^2}=1.
\end{equation}
Then the following equality holds:
\begin{equation}
\label{equation:ima}
\begin{split}
2 \Im \langle \X_{\mathbf{k}} u_{\mathbf{k}}, \chi \mathbf{Y}_{\mathbf{k}} u_{\mathbf{k}}\rangle_{L^2} = -i \langle \mathbf{k} \cdot \mathbf{F}_{\overline{\nabla}}(X^{\HH_F},Y^{\HH_F}) u_{\mathbf{k}}, \chi u_{\mathbf{k}}\rangle_{L^2} + \mc{O}_{C,X,Y,\chi}(|\mathbf{k}|^{1/2}).
\end{split}
\end{equation}
\end{lemma}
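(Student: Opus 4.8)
The identity \eqref{equation:ima} is a commutator computation. The key observation is that the curvature $\mathbf{k}\cdot\mathbf{F}_{\overline{\nabla}}$ appears as the horizontal part of the commutator $[\X_{\mathbf{k}},\mathbf{Y}_{\mathbf{k}}]$. More precisely, recalling the definition $\nabla_{\mathbf{k}}$ and its curvature $F_{\overline{\nabla}_{\mathbf{k}}} = \mathbf{k}\cdot\mathbf{F}_{\overline{\nabla}}$ from \S\ref{sssection:curvature}, the standard formula for curvature gives
\[
[\X_{\mathbf{k}},\mathbf{Y}_{\mathbf{k}}] = \iota_{[X^{\HH_F},Y^{\HH_F}]}\overline{\nabla}_{\mathbf{k}} + F_{\overline{\nabla}_{\mathbf{k}}}(X^{\HH_F},Y^{\HH_F}),
\]
where the bracket $[X^{\HH_F},Y^{\HH_F}]$ is a (not necessarily horizontal) vector field on $F$, so $\iota_{[X^{\HH_F},Y^{\HH_F}]}\overline{\nabla}_{\mathbf{k}}$ decomposes into a horizontal part $\iota_{(\pi_{\HH_F}[X^{\HH_F},Y^{\HH_F}])}\nabla_{\mathbf{k}}$ (which is again of the form $\mathbf{Z}_{\mathbf{k}}$ for a vector field $Z$ of bounded norm, up to terms involving only the base), and a vertical part $\iota_{(\pi_{\V_F}[X^{\HH_F},Y^{\HH_F}])}D^{\mathrm{Chern}}_{\mathbf{k}}$. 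The first step is thus to expand $2\Im\langle\X_{\mathbf{k}}u_{\mathbf{k}},\chi\mathbf{Y}_{\mathbf{k}}u_{\mathbf{k}}\rangle$ using that $\X_{\mathbf{k}}^*=-\X_{\mathbf{k}}-\Div(X^{\HH_F})$ and $\mathbf{Y}_{\mathbf{k}}^*=-\mathbf{Y}_{\mathbf{k}}-\Div(Y^{\HH_F})$ (both connections being unitary), which produces $\langle[\X_{\mathbf{k}},\chi\mathbf{Y}_{\mathbf{k}}]u_{\mathbf{k}},u_{\mathbf{k}}\rangle$ plus terms where one of $\X_{\mathbf{k}},\mathbf{Y}_{\mathbf{k}}$ hits $\chi$ or the divergence factor, hence involving at most one derivative of $u_{\mathbf{k}}$.

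The second step is to estimate all the error terms by $\mc{O}(|\mathbf{k}|^{1/2})$. The basic tool is the a priori bound: from \eqref{equation:assumption-delta-k} and $\Delta_{\mathbf{k}}=\nabla_{\mathbf{k}}^*\nabla_{\mathbf{k}}$ we get $\|\nabla_{\mathbf{k}}u_{\mathbf{k}}\|_{L^2}\leq (C|\mathbf{k}|)^{1/2}$, and since $X^{\HH_F},Y^{\HH_F}$ have bounded norm, $\|\X_{\mathbf{k}}u_{\mathbf{k}}\|_{L^2},\|\mathbf{Y}_{\mathbf{k}}u_{\mathbf{k}}\|_{L^2}=\mc{O}(|\mathbf{k}|^{1/2})$; combined with $\|u_{\mathbf{k}}\|_{L^2}=1$ this bounds by $\mc{O}(|\mathbf{k}|^{1/2})$ any pairing where the total number of $\nabla_{\mathbf{k}}$-type derivatives landing on the two copies of $u_{\mathbf{k}}$ is at most one. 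The only potentially problematic terms are those with \emph{two} derivatives: $\langle\iota_{(\pi_{\HH_F}[X^{\HH_F},Y^{\HH_F}])}\nabla_{\mathbf{k}}u_{\mathbf{k}},\chi u_{\mathbf{k}}\rangle$ and the vertical term $\langle\iota_{(\pi_{\V_F}[X^{\HH_F},Y^{\HH_F}])}D^{\mathrm{Chern}}_{\mathbf{k}}u_{\mathbf{k}},\chi u_{\mathbf{k}}\rangle$ — but these each carry only one net derivative on $u_{\mathbf{k}}$ (the other $u_{\mathbf{k}}$ is undifferentiated), so they too are $\mc{O}(|\mathbf{k}|^{1/2})$. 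The genuinely dangerous object is the horizontal-bracket contribution \emph{before} integrating by parts, i.e. a term of the form $\langle\X_{\mathbf{k}}\mathbf{Y}_{\mathbf{k}}u_{\mathbf{k}},\chi u_{\mathbf{k}}\rangle$, which naively has two derivatives; but this is exactly where the integration by parts in step one is used to move one derivative onto $\chi u_{\mathbf{k}}$, turning it into $-\langle\mathbf{Y}_{\mathbf{k}}u_{\mathbf{k}},\mathbf{X}_{\mathbf{k}}^*(\chi u_{\mathbf{k}})\rangle=\langle\mathbf{Y}_{\mathbf{k}}u_{\mathbf{k}},\chi\X_{\mathbf{k}}u_{\mathbf{k}}\rangle + \mc{O}(|\mathbf{k}|^{1/2})$ — which is the real part and drops out of the imaginary part. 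So the only surviving contribution to $2\Im\langle\X_{\mathbf{k}}u_{\mathbf{k}},\chi\mathbf{Y}_{\mathbf{k}}u_{\mathbf{k}}\rangle$ modulo $\mc{O}(|\mathbf{k}|^{1/2})$ is $\langle F_{\overline{\nabla}_{\mathbf{k}}}(X^{\HH_F},Y^{\HH_F})u_{\mathbf{k}},\chi u_{\mathbf{k}}\rangle$, and since this curvature is a purely imaginary scalar $2$-form (as the connections are unitary), writing it as $i\cdot(\text{real})$ and taking the imaginary part yields $-i\langle\mathbf{k}\cdot\mathbf{F}_{\overline{\nabla}}(X^{\HH_F},Y^{\HH_F})u_{\mathbf{k}},\chi u_{\mathbf{k}}\rangle$, which is \eqref{equation:ima}.

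The main obstacle I anticipate is bookkeeping rather than conceptual: one has to be careful that the vertical Chern part $D^{\mathrm{Chern}}_{\mathbf{k}}$ arising from $\pi_{\V_F}[X^{\HH_F},Y^{\HH_F}]$ does not produce a term with two vertical derivatives. Here one uses that $[X^{\HH_F},Y^{\HH_F}]$ has bounded $C^0$ norm (as $X,Y$ are smooth basic vector fields), so the pairing $\langle\iota_{(\pi_{\V_F}[X^{\HH_F},Y^{\HH_F}])}D^{\mathrm{Chern}}_{\mathbf{k}}u_{\mathbf{k}},\chi u_{\mathbf{k}}\rangle$ is bounded by $\|D^{\mathrm{Chern}}_{\mathbf{k}}u_{\mathbf{k}}\|_{L^2}\cdot\mc{O}(1)$, and one needs the bound $\|D^{\mathrm{Chern}}_{\mathbf{k}}u_{\mathbf{k}}\|_{L^2}=\mc{O}(|\mathbf{k}|^{1/2})$. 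This does not follow from \eqref{equation:assumption-delta-k} alone (which only controls the horizontal Dirichlet energy); however, it does follow from the additional input that $u_{\mathbf{k}}$ is fiberwise holomorphic, since on a holomorphic section the vertical energy $\|D^{\mathrm{Chern}}_{\mathbf{k}}u_{\mathbf{k}}\|_{L^2}^2$ equals (by Bochner–Kodaira on the fibres) the fiberwise curvature pairing $\langle \mathbf{k}\cdot\mathbf{F}_{\nabla^{\V_F}}u_{\mathbf{k}},u_{\mathbf{k}}\rangle + \text{(topological term)}$, which is $\mc{O}(|\mathbf{k}|)$ by \eqref{equation:curvature-vertical} and compactness of $F$. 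Alternatively — and more in the spirit of the paper — one invokes the Borel–Weil calculus: $u_{\mathbf{k}}$ with $\|h^2\Delta_{\mathbf{k}}u_{\mathbf{k}}\|\lesssim h$ (with $h=|\mathbf{k}|^{-1}$) is compactly microlocalised, and $h D^{\mathrm{Chern}}_{\mathbf{k}}$ is a bounded operator on the relevant anisotropic spaces by \S\ref{sssection:examples}, Items (iv)--(v), giving the bound directly. Once this single estimate is in place the rest is the routine integration-by-parts argument described above.
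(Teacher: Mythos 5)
Your proof is correct and follows the same overall strategy as the paper: expand the commutator $[\X_{\mathbf{k}},\mathbf{Y}_{\mathbf{k}}]$ into a curvature term plus a bracket term, integrate by parts to isolate $2i\Im\langle\X_{\mathbf{k}}u_{\mathbf{k}},\chi\mathbf{Y}_{\mathbf{k}}u_{\mathbf{k}}\rangle$, and bound all error terms using the a priori Dirichlet-energy estimate $\|\nabla_{\mathbf{k}}u_{\mathbf{k}}\|_{L^2}=\mc{O}(|\mathbf{k}|^{1/2})$.

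The one place where you diverge from the paper is the vertical Chern term
$\langle (D_{\mathbf{k}}^{\mathrm{Chern}})_{Z}u_{\mathbf{k}},\chi u_{\mathbf{k}}\rangle$ with $Z := [X^{\HH_F},Y^{\HH_F}]^{\V_F}$. The paper's route is lighter: write $D^{\mathrm{Chern}}_{\mathbf{k}}=\partial_{\mathbf{k}}+\overline{\partial}_{\mathbf{k}}$, kill the $\overline{\partial}_{\mathbf{k}}$ piece by holomorphicity, then integrate by parts once more using $(\partial_{\mathbf{k}})_Z^*=-\overline{\partial}_Z-\Div Z^{0,1}$ so the $\overline{\partial}$ lands again on the holomorphic $u_{\mathbf{k}}$ and dies — this yields $\mc{O}(1)$ with no quantitative input on the vertical energy. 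Your route is to estimate $\|D^{\mathrm{Chern}}_{\mathbf{k}}u_{\mathbf{k}}\|_{L^2}=\mc{O}(|\mathbf{k}|^{1/2})$ directly via a fiberwise Bochner--Kodaira identity: on a holomorphic section the vertical energy equals a fiberwise curvature pairing, which scales like $|\mathbf{k}|\|u_{\mathbf{k}}\|^2_{L^2}$. That is also valid (even if the descended bi-invariant metric on $G/T$ is not K\"ahler, the torsion contribution is $\mathbf{k}$-independent and harmless), and is sufficient because the target error in the lemma is $\mc{O}(|\mathbf{k}|^{1/2})$. It is a slightly heavier route for a slightly weaker bound, but it is correct.

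One caveat: the ``alternative'' you offer at the end — claiming $u_{\mathbf{k}}$ is compactly microlocalised because $\|h^2\Delta_{\mathbf{k}}u_{\mathbf{k}}\|\lesssim h$ — does not follow from the hypothesis \eqref{equation:assumption-delta-k}. That hypothesis only bounds the quadratic form $\langle\Delta_{\mathbf{k}}u_{\mathbf{k}},u_{\mathbf{k}}\rangle$, not $\|\Delta_{\mathbf{k}}u_{\mathbf{k}}\|_{L^2}$, and the lemma must hold for arbitrary test sections with bounded Rayleigh quotient (it is applied this way in Proposition~\ref{proposition:lower-bound}), not only for eigenfunctions. Such a $u_{\mathbf{k}}$ need not be compactly microlocalised. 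Since you only present this as an alternative and your main (Bochner--Kodaira) argument is sound, the proof as a whole stands, but this particular shortcut should be dropped.
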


\begin{proof}
We start with the identity:
\[
\X_{\mathbf{k}} \mathbf{Y}_{\mathbf{k}} - \mathbf{Y}_{\mathbf{k}} \X_{\mathbf{k}} = \mathbf{k} \cdot \mathbf{F}_{\overline{\nabla}}(X^{\HH_F},Y^{\HH_F}) + (\overline{\nabla}_{\mathbf{k}})_{[X^{\HH_F}, Y^{\HH_F}]}.
\]
Applying this identity to $u_{\mathbf{k}}$, pairing against $\chi u_{\mathbf{k}}$, and using $\X_{\mathbf{k}}^*=-\X_{\mathbf{k}} -\Div(X^{\HH_F})$, we obtain:
\begin{equation}
\label{equation:long}
\begin{split}
& \langle \X_{\mathbf{k}} u_{\mathbf{k}}, \chi  \mathbf{Y}_{\mathbf{k}} u_{\mathbf{k}}\rangle_{L^2} - \langle \mathbf{Y}_{\mathbf{k}} u_{\mathbf{k}}, \chi \X_{\mathbf{k}} u_{\mathbf{k}}\rangle_{L^2} \\
& + \langle \X_{\mathbf{k}} u_{\mathbf{k}}, (Y^{\HH_F}\chi) u_{\mathbf{k}}\rangle_{L^2} - \langle \mathbf{Y}_{\mathbf{k}} u_{\mathbf{k}}, (X^{\HH_F}\chi) u_{\mathbf{k}}\rangle_{L^2} \\
& +  \langle \X_{\mathbf{k}} u_{\mathbf{k}}, \Div(Y^{\HH_F}) \chi u_{\mathbf{k}}\rangle_{L^2}- \langle \mathbf{Y}_{\mathbf{k}} u_{\mathbf{k}}, \Div(X^{\HH_F}) \chi u_{\mathbf{k}}\rangle_{L^2} \\
& \qquad = \langle\mathbf{k} \cdot \mathbf{F}_{\overline{\nabla}}(X^{\HH_F},Y^{\HH_F}) u_{\mathbf{k}}, \chi u_{\mathbf{k}}\rangle_{L^2}+ \langle (\nabla_{\mathbf{k}})_{[X, Y]^{\HH_F}}u_{\mathbf{k}},\chi u_{\mathbf{k}}\rangle_{L^2}\\
&\qquad \qquad + \langle (D_{\mathbf{k}}^{\mathrm{Chern}})_{[X^{\HH_F}, Y^{\HH_F}]^{\V_F}}u_{\mathbf{k}},\chi u_{\mathbf{k}}\rangle_{L^2}.
\end{split}
\end{equation}

Writing $\Delta_{\mathbf{k}} = (\nabla_{\mathbf{k}})^*\nabla_{\mathbf{k}}$, we obtain by \eqref{equation:assumption-delta-k} that $\|\nabla_{\mathbf{k}}u_{\mathbf{k}}\|^2_{L^2} \leq C|\mathbf{k}|$. For $Z \in C^\infty(F,\HH_F)$ and $f \in C^\infty(F)$, we therefore get by the Cauchy-Schwarz inequality:
\begin{equation}
\label{equation:bonui}
|\langle \iota_Z \nabla_{\mathbf{k}} u_{\mathbf{k}}, f u_{\mathbf{k}}\rangle_{L^2}| \leq C^{1/2} \|Z\|_{L^\infty} \|f\|_{L^\infty} |\mathbf{k}|^{1/2}.
\end{equation}
In \eqref{equation:long}, the last four terms on the left-hand side and the middle term on the right-hand side have the form \eqref{equation:bonui}. Then, writing $D_{\mathbf{k}}^{\mathrm{Chern}}= \overline{\partial}_{\mathbf{k}} + \partial_{\mathbf{k}}$ (given by the splitting into $(0, 1)$- and $(1, 0)$-parts, respectively), and $Z := [X^{\HH_F}, Y^{\HH_F}]^{\V_F}$, the last term on the right hand side can be re-written as:
\[
	\langle (D_{\mathbf{k}}^{\mathrm{Chern}})_{Z}u_{\mathbf{k}},\chi u_{\mathbf{k}}\rangle_{L^2} = \langle (\partial_{\mathbf{k}})_Z u_{\mathbf{k}}, \chi u_{\mathbf{k}}\rangle_{L^2} = -\langle u_{\mathbf{k}}, (Z^{0, 1} \chi + \chi \Div Z^{0, 1}) u_{\mathbf{k}}\rangle_{L^2} = \mc{O}(1),
\]
where in the first equality we used $\overline{\partial}_{\mathbf{k}} u_{\mathbf{k}} = 0$, and in the second one that $(\partial_{\mathbf{k}})_Z^* = -\overline{\partial}_Z - \Div Z^{0, 1}$; by $Z^{0, 1}$ we denote the $(0, 1)$-part of $Z$. Finally, the sum of the first two terms on the left-hand side is equal to $2 i \Im \langle \X_{\mathbf{k}} u_{\mathbf{k}}, \chi \mathbf{Y}_{\mathbf{k}} u_{\mathbf{k}}\rangle_{L^2}$. This proves the claim.
\end{proof}

\begin{proof}[Proof of Proposition \ref{proposition:lower-bound}]
By definition of $F_{\mathrm{min}}(\mathbf{l})$, the following holds: for all $\eps > 0$, for all $w_0 \in F$, there exists an open neighborhood $U$ of $w_0$, $X^{\HH_F}, Y^{\HH_F} \in C^\infty(U,\HH_F)$ of basic horizontal vector fields such that $|X^{\HH_F}|=|Y^{\HH_F}|=1$ pointwise, and $R > 0$ such that:
 \begin{equation}
 \label{equation:bound-uniform-curvature}
\dfrac{-i}{|\mathbf{k}|} \mathbf{k} \cdot {\mathbf{F}_{\overline{\nabla}}}(w)(X^{\HH_F},Y^{\HH_F}) > F_{\mathrm{min}}(\mathbf{l})-\eps, \quad \forall \mathbf{k} \in \widehat{G}, |\mathbf{k}| > R, |\mathbf{k}/|\mathbf{k}|-\mathbf{l}| < 1/R, \forall w \in U.
 \end{equation}
Notice that $U, X^{\HH_F}, Y^{\HH_F}, R$ all depend on $\eps$.
 
We now fix $\eps > 0$ and consider a finite cover $F = \cup_{j=1}^N U_{w_j}$, where $U_{w_j}$ are open subsets provided by \eqref{equation:bound-uniform-curvature}. We shall denote by $X^{\HH_F}_j, Y^{\HH_F}_j$ the corresponding horizontal basic vector fields. Let $\sum_j \chi_j = \mathbf{1}$ be a partition of unity subordinated to the sets $U_{w_j}$. Combining \eqref{equation:ima} with \eqref{equation:bound-uniform-curvature}, and writing $\X_{j,\mathbf{k}} = (\nabla_{\mathbf{k}})_{X^{\HH_F}_j}$, and $\Y_{j,\mathbf{k}} = (\nabla_{\mathbf{k}})_{Y^{\HH_F}_j}$, we find for all $\mathbf{k} \in \widehat{G}$ as in \eqref{equation:bound-uniform-curvature} (note that we rescale by $\|u_{\mathbf{k}}\|_{L^2}^2$):
\[
\begin{split}
\sum_{j=1}^N \dfrac{2}{|\mathbf{k}|} \Im \langle \X_{j,\mathbf{k}} u_{\mathbf{k}}, \chi_j \mathbf{Y}_{j, \mathbf{k}} u_{\mathbf{k}}\rangle_{L^2}& = \sum_{j=1}^N  \dfrac{-i}{|\mathbf{k}|} \langle \mathbf{k} \cdot \mathbf{F}_{\overline{\nabla}}(X^{\HH_F}_j,Y^{\HH_F}_j) u_{\mathbf{k}}, \chi_j u_{\mathbf{k}}\rangle_{L^2} + \|u_{\mathbf{k}}\|_{L^2}^2  \mc{O}(|\mathbf{k}|^{-1/2}) \\
& \geq (F_{\mathrm{min}}(\mathbf{l})-\eps) \underbrace{\sum_{j=1}^N \langle u_{\mathbf{k}}, \chi_j u_{\mathbf{k}}\rangle_{L^2}}_{= \|u_{\mathbf{k}}\|^2_{L^2}} + \|u_{\mathbf{k}}\|_{L^2}^2 \mc{O}(|\mathbf{k}|^{-1/2}) \\
& \geq (F_{\mathrm{min}}(\mathbf{l})-2\eps) \|u_{\mathbf{k}}\|^2_{L^2},
\end{split}
\]
by taking $|\mathbf{k}|$ large enough.
 Now, the left-hand side in the previous equality is bounded, using $|X^{\HH_F}_j|, |Y^{\HH_F}_j| \leq 1$ by:
 \[
 \left| \sum_{j=1}^N \dfrac{2}{|\mathbf{k}|} \Im \langle \X_{j,\mathbf{k}} u_{\mathbf{k}}, \chi_j \mathbf{Y}_{j, \mathbf{k}} u_{\mathbf{k}}\rangle_{L^2}\right| \leq 2 |\mathbf{k}|^{-1} \sum_{j=1}^N \int_F \chi_j |\nabla_{\mathbf{k}}u_{\mathbf{k}}|^2 \,\dd w = 2  |\mathbf{k}|^{-1} \|\nabla_{\mathbf{k}} u_{\mathbf{k}}\|^2_{L^2}.
 \]
Hence, we have established that for all $\eps > 0$, there exists $R > 0$ such that for all $\mathbf{k} \in \widehat{G}$, $|\mathbf{k}| > R$ and $|\mathbf{k}/|\mathbf{k}|-\mathbf{l}| < 1/R$, for all $u_{\mathbf{k}} \in C^\infty_{\mathrm{hol}}(F,\mathbf{L}^{\otimes \mathbf{k}})$,
 \[
 \|\nabla_{\mathbf{k}} u_{\mathbf{k}}\|^2_{L^2} \geq (F_{\mathrm{min}}(\mathbf{l})/2-\eps)  |\mathbf{k}| \|u_{\mathbf{k}}\|^2_{L^2}.
 \]
This proves the claim.
\end{proof}
 
 \subsection{Lower bound on $\lambda_1$ under global holonomy assumptions}
 
We start with the following:
 
 \begin{lemma}\label{lemma:first-eigenvalue-holonomy-group-dense}
 Assume that the holonomy group is dense in $G$. Then for all $\mathbf{k} \neq 0$, $\lambda_1(\mathbf{k}) > 0$. Moreover, $\lambda_2(\mathbf{0}) > 0$, and eigenfunctions associated to $\lambda_1(\mathbf{0}) = 0$ are the constant functions (see \S \ref{sssection:abelian-rep}).  
 \end{lemma}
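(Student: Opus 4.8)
The statement is that density of the holonomy group forces $\lambda_1(\mathbf{k}) > 0$ for all $\mathbf{k} \neq 0$, together with the identification of the zero-eigenspace for $\mathbf{k} = \mathbf{0}$. The natural approach is a unique continuation / constancy argument for horizontally parallel sections. Suppose $\lambda_1(\mathbf{k}) = 0$, so there exists a nonzero $u_{\mathbf{k}} \in C^\infty_{\mathrm{hol}}(F,\mathbf{L}^{\otimes \mathbf{k}})$ with $\Delta_{\mathbf{k}} u_{\mathbf{k}} = 0$. Since $\Delta_{\mathbf{k}} = (\nabla_{\mathbf{k}})^*\nabla_{\mathbf{k}}$, pairing against $u_{\mathbf{k}}$ gives $\|\nabla_{\mathbf{k}} u_{\mathbf{k}}\|^2_{L^2(F,\mathbf{L}^{\otimes \mathbf{k}})} = 0$, hence $\nabla_{\mathbf{k}} u_{\mathbf{k}} = 0$; i.e. $u_{\mathbf{k}}$ is parallel along all horizontal directions. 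First I would lift $u_{\mathbf{k}}$ to its $T$-equivariant representative $\overline{u}_{\mathbf{k}} \in C^\infty(P)$ satisfying \eqref{equation:t-equivariance}, which by \eqref{equation:useful2} satisfies $d\overline{u}_{\mathbf{k}}|_{\HH_P} = 0$, i.e. $\overline{u}_{\mathbf{k}}$ is constant along horizontal curves in $P$, together with $Y\overline{u}_{\mathbf{k}} = 0$ for $Y \in \mathfrak{n}^+$ (fibrewise holomorphicity).

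Next I would exploit density of $\mathrm{Hol}(P,\nabla)$. Fix $x_0 \in M$ and identify $P_{x_0} \cong G$. Horizontal parallel transport along a loop $\gamma$ based at $x_0$ is right multiplication by some $\tau_\gamma \in \mathrm{Hol}(P,\nabla) \leqslant G$. Since $\overline{u}_{\mathbf{k}}$ is horizontally constant, for every $w \in P_{x_0}$ and every loop $\gamma$ we get $\overline{u}_{\mathbf{k}}(w \cdot \tau_\gamma) = \overline{u}_{\mathbf{k}}(w)$, so $\overline{u}_{\mathbf{k}}|_{P_{x_0}}$ is invariant under right translation by the dense subgroup $\mathrm{Hol}(P,\nabla)$, hence (by continuity of $\overline{u}_{\mathbf{k}}$ and density) invariant under \emph{all} of $G$. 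But the $T$-equivariance \eqref{equation:t-equivariance} says $\overline{u}_{\mathbf{k}}(wt) = \gamma_{\mathbf{k}}(t^{-1})\overline{u}_{\mathbf{k}}(w)$ for $t \in T$, where $\gamma_{\mathbf{k}}$ is the global weight associated to $\mathbf{k}$; combining $G$-invariance with this forces $\gamma_{\mathbf{k}} \equiv 1$, i.e. $d\gamma_{\mathbf{k}}(e) = \phi(\mathbf{k}) = 0$, hence $\mathbf{k} = \mathbf{0}$ by \eqref{equation:ecriture}. This contradicts $\mathbf{k} \neq 0$ and proves $\lambda_1(\mathbf{k}) > 0$. (One should also note that $M$ connected ensures horizontal constancy on one fibre propagates to constancy of $|u_{\mathbf{k}}|$ over all of $F$, so the argument at a single base point suffices; actually the cleaner route is: $\overline{u}_{\mathbf{k}}$ is constant on all of $P$ along horizontal directions, and since any two points of $P$ are joined by a horizontal path projecting to a path in the connected base $M$, $\overline{u}_{\mathbf{k}}$ is determined on each fibre by its value at a single point, and the fibrewise behaviour is governed by the weight as above.)

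For $\mathbf{k} = \mathbf{0}$: here $\mathbf{L}^{\otimes \mathbf{0}} = F \times \mathbb{C}$ is trivial, $\nabla_{\mathbf{0}}$ is the trivial horizontal connection, and fibrewise holomorphic sections that are also fibrewise constant (holomorphic functions on the compact $G/T$ are constant) descend to functions on $M$; by \S\ref{sssection:abelian-rep} we identify $C^\infty_{\mathrm{hol}}(F,\mathbb{C})$ with $C^\infty(M)$, and $\Delta_{\mathbf{0}}$ with the genuine Laplacian $\Delta_g$ on $M$ (or rather, $\nabla_{\mathbf{0}}^*\nabla_{\mathbf{0}} = \Delta_g$ since the dynamical/geometric horizontal connection restricted to $\mathbf{k}=\mathbf{0}$ is just $d$ on the base). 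Then $\lambda_1(\mathbf{0}) = 0$ with eigenspace the constants, and $\lambda_2(\mathbf{0}) > 0$ is the first nonzero eigenvalue of $\Delta_g$ on the connected closed manifold $M$, which is strictly positive by standard elliptic theory.

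\textbf{Main obstacle.} The routine calculations are harmless; the one point requiring care is the passage from ``$\overline{u}_{\mathbf{k}}$ horizontally constant'' to ``$\overline{u}_{\mathbf{k}}$ $G$-invariant on fibres'', which needs density of the holonomy group \emph{together with} continuity of $\overline{u}_{\mathbf{k}}$ — so I must be sure $u_{\mathbf{k}}$ (a priori only $L^2$, if one starts from an abstract eigenfunction) is genuinely smooth. This follows because $\Delta_{\mathbf{k}} + (\overline{\partial}_{\mathbf{k}})^*\overline{\partial}_{\mathbf{k}} + 1$ is elliptic for fixed $\mathbf{k}$ and $u_{\mathbf{k}}$ lies in its kernel shifted by $1$, so $u_{\mathbf{k}} \in C^\infty_{\mathrm{hol}}(F,\mathbf{L}^{\otimes\mathbf{k}})$ by elliptic regularity. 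Beyond that, one must make sure the reduction $\phi(\mathbf{k}) = 0 \Rightarrow \mathbf{k} = \mathbf{0}$ is stated correctly: $\phi$ from \eqref{equation:ecriture} may have a kernel when $G$ is not simply connected, but elements of $\widehat{G}$ are precisely $\mathbb{Z}^a\times\mathbb{Z}^b_{\geq 0}/{\sim}$ with $\mathbf{k}\sim\mathbf{k}'$ iff $\phi(\mathbf{k})=\phi(\mathbf{k}')$, so $\phi(\mathbf{k})=0$ means $\mathbf{k}$ equals the zero class in $\widehat{G}$, which is exactly what ``$\mathbf{k}=0$'' denotes — no genuine obstacle, just a notational check.
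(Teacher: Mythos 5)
Your proof is correct and follows essentially the same strategy as the paper's: $\Delta_{\mathbf{k}}u_{\mathbf{k}}=0$ forces $\nabla_{\mathbf{k}}u_{\mathbf{k}}=0$ (so $u_{\mathbf{k}}$ is horizontally parallel), then density of the holonomy group together with continuity forces a fibrewise constancy that is incompatible with $\mathbf{k}\neq 0$. The small difference is in how the contradiction is extracted. The paper fixes $x_0$ and views $\xi := u_{\mathbf{k}}(x_0)\in H^0(G/T,\mathbf{J}^{\otimes\mathbf{k}})$ as a vector in the irreducible representation $\rho_{\mathbf{k}}$; parallelism gives $\rho_{\mathbf{k}}(h)\xi=\xi$ for $h$ in the dense holonomy subgroup, hence for all of $G$, contradicting irreducibility. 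You instead work with the $T$-equivariant scalar lift $\overline{u}_{\mathbf{k}}$ on $P$ and derive that it is constant on the fibre $P_{x_0}\cong G$, which is incompatible with the equivariance $\overline{u}_{\mathbf{k}}(wt)=\gamma_{\mathbf{k}}(t^{-1})\overline{u}_{\mathbf{k}}(w)$ unless $\gamma_{\mathbf{k}}\equiv 1$, i.e. $\mathbf{k}=\mathbf{0}$. These are the same argument seen from two angles (the equivariance computation is what makes $H^0(G/T,\mathbf{J}^{\otimes\mathbf{k}})$ irreducible with nonzero weight for $\mathbf{k}\neq 0$). One notational slip: once you fix $w_0\in P_{x_0}$ and identify $P_{x_0}\cong G$, parallel transport $\tau_\gamma$ acts by \emph{left} multiplication (since it commutes with the right $G$-action), not right multiplication as you wrote; this does not affect the conclusion, as density and continuity give constancy either way. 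For $\mathbf{k}=\mathbf{0}$ your observation that fibrewise holomorphic functions on the compact fibres $G/T$ are automatically fibrewise constant is sound (and in fact shows the density hypothesis is not even needed for that part), and the identification of $\Delta_{\mathbf{0}}$ on pullbacks with the base Laplacian matches the paper's use of Lemma \ref{lemma:pullback-equivalence}.
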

A similar result on the level of the horizontal Laplacian was already observed in \cite[Theorem 8.2]{BerardBergery-Bourguignon-81}.
 
 \begin{proof}
Let $\mathbf{k} \neq 0$. Assume that there exists $u_{\mathbf{k}} \in C^\infty_{\mathrm{hol}}(F,\mathbf{L}^{\otimes k})$ such that $\Delta_{\mathbf{k}} u_{\mathbf{k}} = 0$. Then $\nabla_{\mathbf{k}} u_{\mathbf{k}} = 0$; thus $u_{\mathbf{k}}$ is a parallel section. Fix an arbitrary point $x_0 \in M$ such that
\[
\xi := u_{\mathbf{k}}(x_0) \in H^0(F_{x_0},\mathbf{L}^{\otimes \mathbf{k}}|_{F_{x_0}}) \simeq H^0(G/T,\mathbf{J}^{\otimes \mathbf{k}})
\]
is not zero. Let $H := \mathrm{Hol}(P,\nabla)$ be the holonomy group of the connection at $x_0$. That $u_{\mathbf{k}}$ is parallel implies that for every $g \in H$, $g \cdot \xi = \xi$. (More precisely, here $g$ acts via $\rho_{\mathbf{k}}(g)$, where $\rho_{\mathbf{k}}$ is the representation of $G$ on $H^0(G/T,\mathbf{J}^{\otimes \mathbf{k}})$.) Since $H \leqslant G$ is assumed to be dense in $G$, by continuity we obtain that for every $g \in G$, $g \cdot \xi = \xi$. But then this contradicts the fact that $H^0(G/T,\mathbf{J}^{\otimes \mathbf{k}})$ is an irreducible $G$-representation and that $\mathbf{k} \neq 0$.

For the second part, assuming $\Delta_{\mathbf{0}} u = 0$, as above we get $\nabla_{\mathbf{0}} u = 0$. We may think of $\mathbf{L}^{\bf \otimes 0}$ as the trivial bundle $F \times \mathbb{C}$. The fact that $H$ is dense in $G$ ensures that $u$ is fibrewise constant so it equals $u = \pi^*v$ for some $v \in C^\infty(M, \mathbb{C})$. Then using Lemma \ref{lemma:pullback-equivalence}, Item (ii), we get $d(\pi_* u) = \vol(G/T) dv = 0$, where $\pi: F \to M$ is the projection, and $\pi_*$ denotes integration in fibres. This implies that $v$, and so $u$, is constant and completes the proof.
 \end{proof}
 

We now establish Theorem \ref{theorem:lambda1}, Item (ii). The proof follows closely that of Proposition \ref{proposition:technical-reduction2} in Chapter \ref{chapter:flow}.

 \begin{proof}[Proof of Theorem \ref{theorem:lambda1}, Item \emph{(ii)}]
 Let $\nu > 2(\dim F + 1)$. Assume for the sake of contradiction that along a subsequence $\lambda_1(\mathbf{k}) = o(|\mathbf{k}|^{-\nu})$. Consider an eigenstate $u_{\mathbf{k}} \in C^\infty_{\mathrm{hol}}(F,\mathbf{L}^{\otimes \mathbf{k}})$ such that $\|u_{\mathbf{k}}\|_{L^2} = 1$ and $\Delta_{\mathbf{k}}u_{\mathbf{k}} = \lambda_1(\mathbf{k}) u_{\mathbf{k}}$. We rewrite this as:
 \[
 \mathbf{P} u_{\mathbf{k}} := 0,\quad  \mathbf{P} := h^2\Delta_{\mathbf{k}} -h^2\lambda_1(\mathbf{k}).
 \]
 The operator $\mathbf{P}$ belongs to $\Psi^2_{h, \mathrm{BW}}(P)$. We now further restrict to $h := |\mathbf{k}|^{-1}$ (we assume $\mathbf{k} \neq 0$ as we only care about asymptotic properties as $|\mathbf{k}| \to \infty$). The principal symbol $\sigma_{\mathbf{P}}^{\nabla}$ in the calculus $\Psi^\bullet_{h, \mathrm{BW}}(P)$ (computed with respect to the connection $\nabla$ itself on $P$) is then equal, along any family, to
 \[
 \sigma_{\mathbf{P}}^{\nabla}(x,\xi) = |\xi|^2_g,
 \]
 see \S\ref{sssection:examples}, Item (v). Hence, by ellipticity of $\mathbf{P}$ outside of $\{\xi_{\HH^*}=0\} \subset \HH^*$, we obtain that $u_{\mathbf{k}}$ is microlocally $\mc{O}(|\mathbf{k}|^{-\infty})$ outside of the null section $\{\xi_{\HH^*}=0\} \simeq F$ (the null section lies in $\HH^* \subset T^*F$).
 
Now, pairing against $u_{\mathbf{k}}$, we find $\nabla_{\mathbf{k}}u_{\mathbf{k}} = \mc{O}_{L^2}(\lambda_1(\mathbf{k})^{1/2}) = o_{L^2}(|\mathbf{k}|^{-\nu/2})$. Given $f \in C^\infty(F)$, we recall the notation $d_{\HH/\V}f := df|_{\HH^*/\V^*}$. Observe that, using Sobolev embeddings \eqref{equation:sobolev-embeddin-c0}, for any $\varepsilon > 0$ we have
\begin{equation}
\label{equation:dh0}
\begin{split}
\big|d_{\HH}|u_{\mathbf{k}}|^2\big| & = |\langle\nabla_{\mathbf{k}} u_{\mathbf{k}}, u_{\mathbf{k}}\rangle  + \langle u_{\mathbf{k}},\nabla_{\mathbf{k}}u_{\mathbf{k}}\rangle| \\
& \leq C \|u_{\mathbf{k}}\|_{C^0(F,\mathbf{L}^{\otimes \mathbf{k}})}\|\nabla_{\mathbf{k}}u_{\mathbf{k}}\|_{C^0(F, \HH^* \otimes\mathbf{L}^{\otimes \mathbf{k}})} \\
& \leq C |\mathbf{k}|^{\dim F} \|u_{\mathbf{k}}\|_{H^{n/2+\varepsilon}_{|\mathbf{k}|^{-1}}(F,\mathbf{L}^{\otimes \mathbf{k}})} \|\nabla_{\mathbf{k}}u_{\mathbf{k}}\|_{H^{n/2+ \varepsilon}_{|\mathbf{k}|^{-1}}(F,\mathbf{L}^{\otimes \mathbf{k}})} \\
& \leq C |\mathbf{k}|^{\dim F} \|u_{\mathbf{k}}\|_{L^2(F,\mathbf{L}^{\otimes \mathbf{k}})} \|\nabla_{\mathbf{k}}u_{\mathbf{k}}\|_{L^2(F, \HH^* \otimes \mathbf{L}^{\otimes \mathbf{k}})} \leq C |\mathbf{k}|^{\dim F - \nu/2},
\end{split}
\end{equation}
where $C > 0$ changes from line to line, and in the last line we used that $u_{\mathbf{k}}$ is compactly microsupported (so we can bound any semiclassical norm of $u_{\mathbf{k}}$ by a constant times the $L^2$ norm). (The previous inequality should be compared to \eqref{eq:quasimode-normed-invariance}.)

Fix $\beta > 0$, $\alpha < 1$, and pick an arbitrary $x_0 \in M$. Let $W$ be a finite set in the holonomy group over $x_0$, and recall that the set $\mc{W}_N$ of words in $W$ of length $N$ was defined in \eqref{eq:diophantine-w_n-def}. Choose $W$ sufficiently dense so that Corollary \ref{corollary:diophantine-flag-manifold} applies (here we use that $H \leqslant G$ is dense). Following the same argument as in the proof of \eqref{equation:bound12}, and using the Diophantine property of the semisimple Lie group $G$, as well as \eqref{equation:dh0}, we then find as in \eqref{equation:inv1} that the section $u_{\mathbf{k}}(x_0) \in H^0(F_{x_0},\mathbf{L}^{\otimes \mathbf{k}}|_{F_{x_0}}) \simeq H^0(G/T,\mathbf{J}^{\otimes \mathbf{k}})$ verifies:
\begin{equation}
\label{equation:tralala}
g^* |u_{\mathbf{k}}(x_0, \bullet)|^2 - |u_{\mathbf{k}}(x_0, \bullet)|^2 = O(|\mathbf{k}|^{\dim F - \nu/2+\beta}), \qquad \forall g \in \mc{W}_{N},
\end{equation}
where $N := \lceil{|\mathbf{k}|^{\beta}}\rceil$, and $\mc{W}_{N}$ is a $\mc{O}(|\mathbf{k}|^{-\alpha \beta})$-dense set in $G$. 

On the other hand, as in \eqref{equation:dh0}, and using Sobolev embeddings, for any $\varepsilon > 0$ we find that:
\[
\begin{split}
\big|d_{\V}|u_{\mathbf{k}}|^2\big| & = |\langle D_{\mathbf{k}}^{\mathrm{Chern}}u_{\mathbf{k}}, u_{\mathbf{k}}\rangle + \langle u_{\mathbf{k}}, D_{\mathbf{k}}^{\mathrm{Chern}}u_{\mathbf{k}}\rangle| \\
& \leq C \|D_{\mathbf{k}}^{\mathrm{Chern}}u_{\mathbf{k}}\|_{C^0} \|u_{\mathbf{k}}\|_{C^0} \\
& \leq C |\mathbf{k}|^{\dim F/2+1} \|u_{\mathbf{k}}\|_{H^{n/2+1+\varepsilon}_{|\mathbf{k}|^{-1}}}  |\mathbf{k}|^{\dim F/2} \|u_{\mathbf{k}}\|_{H^{n/2+\varepsilon}_{|\mathbf{k}|^{-1}}} \\
& \leq C |\mathbf{k}|^{\dim F + 1} \|u_{\mathbf{k}}\|^2_{L^2} = \mc{O}(|\mathbf{k}|^{\dim F + 1}).
\end{split}
\]

This yields, similarly to \eqref{equation:inv2} that
\[
|u_{\mathbf{k}}(x_0,z)|^2-|u_{\mathbf{k}}(x_0,z')|^2 = \mc{O}(|\mathbf{k}|^{\dim F + 1}d_{F_{x_0}}(z,z')) = \mc{O}(|\mathbf{k}|^{\dim F + 1 - \beta\alpha}),
\]
for all $z,z' \in F_{x_0}$ such that $d(z,z') \leq C |\mathbf{k}|^{-\beta \alpha}$. Combining the previous bound with \eqref{equation:tralala}, we therefore obtain the existence of a point $z_0 \in F_{x_0}$ such that, writing $a_{\mathbf{k}} := |u_{\mathbf{k}}(x_0, z_0)|^2$, we have 
\[
	|u_{\mathbf{k}}(x_0,\bullet)|^2 = a_{\mathbf{k}} + \mc{O}(|\mathbf{k}|^{-\theta}), 
\]
where $\theta = \min\big(-(\dim F + 1)+\beta\alpha, -\dim F + \nu/2 - \beta\big)$. In order to have $\theta > 0$, we can take $\beta > \dim F + 1$ arbitrarily close to $\dim F + 1$, $\alpha < 1$ arbitrarily close to $1$ and $\nu > 4\dim F + 2$ arbitrarily close to $4\dim F + 2$. We then split to cases according to the value of $\mathbf{k} = (k_1, \dotsc, k_a, k_{a + 1}, \dotsc, k_{a + b})$, similarly to the proof of \eqref{equation:bound12}.
\medskip

\emph{Case 1: $(k_{a + 1}, \dotsc, k_{a + b}) \neq 0$.} We assume to have this property along a subsequence of $h \to 0$. By Proposition \ref{prop:line-bundle-topology} we know that $\mathbf{L}^{\otimes \mathbf{k}}|_{F_{x_0}} \to F_{x_0}$ is non-trivial, so $u_{\mathbf{k}}(x_0,\bullet)$ vanishes somewhere and we find that $|u_{\mathbf{k}}(x_0,\bullet)|^2 = \mc{O}(|\mathbf{k}|^{-\theta})$. Using that $M$ is connected, as well as \eqref{equation:dh0}, we obtain that $\|u_{\mathbf{k}}\|_{C^0} =  \mc{O}(|\mathbf{k}|^{-\theta/2})$, thus contradicting $\|u_{\mathbf{k}}\|_{L^2}=1$, and proving the claim.
\medskip

\emph{Case 2: $(k_{a + 1}, \dotsc, k_{a + b}) = 0$.} Assume this property holds for all but finitely many $h$. By Lemma \ref{lemma:extension}, for those $h$ we have that the global weight $\gamma$ associated to $\mathbf{k}$ extends to $\widetilde{\gamma}: G \to \mathbb{S}^1$, and we may define the line bundle $\Lk_M := P \times_{\widetilde{\gamma}} \mathbb{C} \to M$, equipped with the connection associated to the connection on $P$ via $\widetilde{\gamma}$. By Lemma \ref{lemma:pullback-equivalence}, $\pi_F^* \Lk_M$ is identified with $\Lk$ (together with the natural connections on those bundles). Now by assumption we have $\nabla_{\mathbf{k}} u_{\mathbf{k}} = o_{L^2}(|\mathbf{k}|^{-\frac{\nu}{2}})$, and writing $v_{\mathbf{k}} := (\pi_F)_* u_{\mathbf{k}} \in C^\infty(M, \Lk_M)$, as well as by using Lemma \ref{lemma:pullback-equivalence} and \eqref{eq:pushforward-identity} (note that $u_{\mathbf{k}}$ is fibrewise constant since it is fibrewise holomorphic), we get
\[
	\nabla_{\mathbf{k}, M} v_{\mathbf{k}} = o_{L^2}(|\mathbf{k}|^{-\frac{\nu}{2}}) = o_{C^0}(|\mathbf{k}|^{\dim F/2 - \nu/2}), \quad 1/c_1 \geq \|v_{\mathbf{k}}\|_{L^2} \geq c_1 > 0,
\]
for some $c_1 > 0$. In particular, this implies that $d(|v_{\mathbf{k}}|^2) = o_{C^0}(|\mathbf{k}|^{\dim F - \nu/2})$. Then, it is straightforward to check that the proof of Lemma \ref{lemma:torus-bundle-curvature} applies (in fact, we need only a simplified version of it for $\alpha \equiv 0$ and $z_h \equiv 0$), under the assumption that $\nu > \dim F + 8$ (which is automatically satisfied if $\nu > 4\dim F + 2$ and $\dim F \geq 2$). Assuming that $h \mathbf{k}(h) \to \mathbf{l} = (\ell_1, \dotsc, \ell_a, 0, \dotsc, 0)$, this gives (see also Lemma \ref{lemma:abelian-curvature})
\[
	\sum_{j = 1}^a \ell_j F_j = 0,
\] 
which contradicts the starting assumption (note that $(\ell_1, \dotsc, \ell_a)$ has unit norm since $h = |\mathbf{k}|^{-1}$). This completes the proof.
\end{proof}

 \subsection{Hypoellipticity}
 
 The purpose of this subsection is to establish Theorem \ref{theorem:hypoellipticity}, Item (ii), and to give an example of a horizontal Laplacian which is not hypoelliptic.
 
 \subsubsection{Counterexample for Abelian extensions}

\label{ssection:not-hypo}

We first provide an example of an operator of the form \eqref{equation:l} that is \emph{not} hypoelliptic despite the holonomy group being dense. As pointed out to us by F. Nier, a similar construction had already been considered in \cite[Section 3.4]{BerardBergery-Bourguignon-81}.

Consider the relation $\sim$ on $\R \times \R/\Z$ given by
\[
	(x+1, \theta) \sim (x, \theta+\alpha \mod 1), \qquad \alpha \in [0,1].
\]
The quotient $P := (\R \times \R/\Z)/\sim$ is a circle bundle over $\R/\Z$, and it is naturally equipped with a flat connection (coming from the representation $\rho: \pi_1(\mathbb{S}^1) \cong \mathbb{Z} \to \mathbb{S}^1$, where the generator is mapped by $\rho: 1 \mapsto e^{2\pi i \alpha}$; see also \S \ref{sssection:associated-bundle}). Notice that in this case $P \simeq \T^2$, the $2$-torus. It can be equipped with a horizontal Laplacian $\Delta_{\HH}$, which if we write $\pi: \R \times \R/\Z \to P$ for the quotient map, can be shown to satisfy $\pi^*\Delta_{\HH} = -\partial_x^2 \pi^*$ where $\pi^*$ denotes pullback. Notice that a function $f \in C^\infty(P)$ is the same as a function $\bar{f} \in C^\infty(\R \times \R/\Z)$ satisfying $\bar{f}(x+1,\theta) = \bar{f}(x,\theta+\alpha)$. In the following, we will use this identification and drop the $\bar{f}$ notation. Given $f \in C^\infty(P)$, it can be decomposed as a sum of Fourier modes
\[
	f(x,\theta) = \sum_{k \in \Z} f_k = \sum_{k \in \Z} f_k(x) e^{2\pi ik\theta}, \qquad  f_k(x+1) = e^{2\pi i k \alpha} f_k(x).
\]
The Fourier modes $f_k$ can be identified with sections of the line bundle $L^{\otimes k} \to \R/\mathbb{Z}$ which is associated to $P$ via $L^{\otimes k} = P \times_{\rho_k} \mathbb{C}$, with $\rho_k: \mathbb{S}^1 \to \mathbb{S}^1$ given by $z \mapsto z^k$ (see also \S \ref{sssection:U(1)}). Denote by $\Delta_k$ the (connection) Laplacian on $L^{\otimes k}$. By Lemma \ref{lemma:ft-horizontal-laplacian}, the horizontal Laplacian then acts diagonally as 
\[
	\Delta_{\HH}f = \sum_{k \in \Z} \Delta_k f_k = - \sum_{k \in \Z} \partial_x^2 f_k(x) e^{2\pi i k \theta}.
\]
 
 \begin{lemma}
 \label{lemma:eigenvalues-deltak}
 The eigenvalues of $\Delta_k$ are equal to $\{4\pi^2 (k\alpha + q)^2 ~|~ q \in \Z\}$. In particular, if $\alpha$ is irrational, and $k \neq 0$, $\Delta_k$ is invertible.
 \end{lemma}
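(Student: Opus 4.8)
The plan is to compute the spectrum of $\Delta_k$ directly by solving the eigenvalue equation on the covering space $\R \times \R/\Z$. First I would observe that a section of $L^{\otimes k} \to \R/\Z$ is the same as a function $f_k \in C^\infty(\R)$ satisfying the twisted periodicity condition $f_k(x+1) = e^{2\pi i k \alpha} f_k(x)$, and that under this identification $\Delta_k$ acts as $-\partial_x^2$ (this was explained in the paragraph preceding the lemma). So the eigenvalue problem $\Delta_k f_k = \mu f_k$ becomes $-f_k'' = \mu f_k$ on $\R$ subject to the quasi-periodicity constraint.

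Next I would solve this ODE: for $\mu \geq 0$ write $\mu = \nu^2$ with $\nu \geq 0$; the general solution of $-f'' = \nu^2 f$ is a linear combination of $e^{i\nu x}$ and $e^{-i\nu x}$ (or $x$ and a constant if $\nu = 0$). Imposing $f_k(x+1) = e^{2\pi i k\alpha} f_k(x)$ forces each exponential mode $e^{\pm i\nu x}$ to satisfy $e^{\pm i \nu} = e^{2\pi i k\alpha}$, i.e. $\pm\nu \in 2\pi(k\alpha + \Z)$. Hence the admissible values are $\nu = 2\pi|k\alpha + q|$ for $q \in \Z$, giving eigenvalues $\mu = 4\pi^2(k\alpha+q)^2$; one checks there are no negative eigenvalues since $-\partial_x^2$ is non-negative (equivalently, the $\mu < 0$ case produces real exponentials which cannot satisfy the unimodular twisting condition unless they vanish). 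This yields $\spec(\Delta_k) = \{4\pi^2(k\alpha+q)^2 \mid q \in \Z\}$. For the last assertion: if $\alpha$ is irrational and $k \neq 0$, then $k\alpha + q \neq 0$ for every $q \in \Z$ (otherwise $\alpha = -q/k \in \Q$), so $0 \notin \spec(\Delta_k)$, and since $\Delta_k$ has discrete spectrum and compact resolvent on $L^2(\R/\Z, L^{\otimes k})$ this means it is invertible.

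I do not expect any serious obstacle here; the main point requiring a little care is justifying that these quasi-periodic exponentials do form a complete orthonormal system (so that the listed values really are \emph{all} the eigenvalues), which follows by unitarily trivializing $L^{\otimes k}$ — e.g. via $f_k(x) \mapsto e^{-2\pi i k\alpha x} f_k(x)$, which conjugates $\Delta_k$ to $(-i\partial_x + 2\pi k\alpha)^2$ acting on genuinely $1$-periodic functions, whose spectrum is classical Fourier analysis. This is really the only step where one must be slightly careful, but it is entirely routine.
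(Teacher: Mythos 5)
Your proof is correct and takes essentially the same approach as the paper's: solve $-f_k'' = \lambda f_k$ directly and impose the quasi-periodicity $f_k(x+1) = e^{2\pi i k\alpha} f_k(x)$ on each exponential mode. Your extra remark about unitary trivialization via $f_k \mapsto e^{-2\pi i k\alpha x} f_k$, conjugating $\Delta_k$ to $(-i\partial_x + 2\pi k\alpha)^2$ on periodic functions, is a clean way to see completeness, which the paper leaves implicit.
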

 
 \begin{proof}
 The solutions to the eigenvalue equation $-\partial^2_x f_k = \lambda f_k$ are given by $f_k(x) = a e^{i\sqrt{\lambda}x} + be^{-i\sqrt{\lambda}x}$ ($a,b \in \R$) and the equation $f_k(x+1) = e^{2\pi i k\alpha} f_k(x)$ forces $\sqrt{\lambda} = 2\pi (k\alpha + q)$, with $q \in \Z$. This proves the claim.
 \end{proof}
 
 We note that $\alpha$ is irrational if and only if the holonomy group is dense, in which case the invertibility of $\Delta_k$ for $k \neq 0$ was already established in Lemma \ref{lemma:first-eigenvalue-holonomy-group-dense}. For the next lemma, we recall that an irrational number $\alpha \in \mathbb{R}$ is called \emph{Liouville} if for any $N > 0$, there exists $(p, q) \in \mathbb{Z} \times \mathbb{Z}_{\geq 1}$, such that
 \[
 	0 < \left|\alpha - \frac{p}{q}\right| < \frac{1}{q^N}.
 \] 
It is known that Liouville numbers are dense in $\mathbb{R}$, but form a set of Hausdorff dimension zero.
\begin{lemma}
	Let $\alpha \in [0,1] \setminus \Q$ be a Liouville number. Then $\Delta_{\HH}$ is not hypoelliptic.
\end{lemma}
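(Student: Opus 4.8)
The plan is to disprove hypoellipticity directly, by exhibiting a distribution $u \in \mc{D}'(P)$ with $\Delta_{\HH} u \in C^\infty(P)$ but $u \notin C^\infty(P)$. The mechanism is the one already visible in Lemma \ref{lemma:eigenvalues-deltak}: the bottom of the spectrum of $\Delta_k$ is $\lambda_1(\Delta_k) = 4\pi^2\,\operatorname{dist}(k\alpha,\Z)^2$, and the Liouville hypothesis forces this quantity to be super-polynomially small in $|k|$ along a suitable subsequence of Fourier modes. The corresponding highly oscillatory eigenfunctions $\phi_k$ then have $\|\Delta_{\HH}\phi_k\|_{C^m}\to 0$ much faster than $\|\phi_k\|_{C^m}$ blows up, so a carefully chosen infinite sum of them is a singular solution.

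Concretely, I would first fix the sequence of modes. Since $\alpha$ is Liouville and irrational, for every $N$ the inequality $0<|\alpha-p/q|<q^{-N}$ has solutions with $q$ arbitrarily large (irrationality rules out solutions with bounded denominator once $N$ is large, since $\min_{1\le q\le Q}\operatorname{dist}(q\alpha,\Z)>0$). Iterating this, one obtains a strictly increasing sequence $(k_j)_{j\ge1}$ of positive integers with $\delta_j:=\operatorname{dist}(k_j\alpha,\Z)<k_j^{-j}$. Let $q_j'\in\Z$ be the nearest integer to $-k_j\alpha$, set $\gamma_j:=k_j\alpha+q_j'$ so $|\gamma_j|=\delta_j\le\tfrac12$, and let
\[
\phi_j(x,\theta):=e^{2\pi i\gamma_j x}\,e^{2\pi i k_j\theta}\in C^\infty(P),
\]
which is well defined on $P$ because $\phi_j(x+1,\theta)=e^{2\pi i k_j\alpha}\phi_j(x,\theta)=\phi_j(x,\theta+\alpha)$, and which satisfies $\Delta_{\HH}\phi_j=4\pi^2\delta_j^2\,\phi_j$ by the identity $\pi^*\Delta_{\HH}=-\partial_x^2\pi^*$. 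These are (after the usual constant normalisation) the $L^2(P)$-normalised lowest eigenfunctions of $\Delta_{k_j}$ from Lemma \ref{lemma:eigenvalues-deltak}; since the $k_j$ are distinct they are pairwise $L^2(P)$-orthogonal, being part of the Fourier basis $\{e^{2\pi i(k\alpha+q)x}e^{2\pi i k\theta}\}_{k,q\in\Z}$ of $L^2(P)$. I would then put $u:=\sum_{j\ge1}\phi_j$.

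The argument is completed by three verifications. First, $u\in\mc{D}'(P)$: for $\psi\in C^\infty(P)$ the pairings $\langle\phi_j,\psi\rangle$ are Fourier coefficients of the smooth function $\psi$, hence decay faster than any power of the frequency, which is $\asymp k_j$; since $k_j\to\infty$ the series $\sum_j\langle\phi_j,\psi\rangle$ converges absolutely and the partial sums of $u$ are Cauchy in $\mc{D}'(P)$. Second, $u\notin C^\infty(P)$: by orthogonality $\langle u,\phi_j\rangle=\|\phi_j\|_{L^2(P)}^2$ is a fixed nonzero constant for all $j$, whereas a smooth function has rapidly decaying Fourier coefficients with respect to $(\phi_j)$. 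Third, $\Delta_{\HH}u=\sum_j4\pi^2\delta_j^2\phi_j\in C^\infty(P)$: each $\partial_\theta$ contributes a factor $2\pi k_j$ and each $\partial_x$ a factor $2\pi|\gamma_j|\le 2\pi k_j$, so $\|\delta_j^2\phi_j\|_{C^m(P)}\le C_m\,\delta_j^2(2\pi k_j)^m\le C_m(2\pi)^m k_j^{m-2j}$, which is summable in $j$ for every fixed $m$ (for $j\ge m+1$ the exponent is $\le -(j+1)$ while $k_j\ge j$); thus the series converges in $C^\infty(P)$, and by continuity of $\Delta_{\HH}$ on $\mc{D}'(P)$ its sum is indeed $\Delta_{\HH}u$. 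Combining the second and third points shows $\Delta_{\HH}$ is not hypoelliptic.

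The only step carrying real content is the extraction of the sequence $(k_j)$ with $\delta_j<k_j^{-j}$ from the Liouville condition, and within that the mild point requiring care is that the denominators produced actually tend to infinity rather than staying bounded — this is exactly where irrationality of $\alpha$ is used, and it is the part of the write-up I would present with some care. The summability bookkeeping in the three verifications, and the identification of $C^\infty(P)$ with rapidly decaying Fourier data on the flat $2$-torus $P\cong\T^2$, are routine.
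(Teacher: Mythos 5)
Your proof is correct and follows essentially the same strategy as the paper's: extract a subsequence $k_j$ for which $\lambda_1(\Delta_{k_j}) = 4\pi^2\,\operatorname{dist}(k_j\alpha,\Z)^2 \lesssim k_j^{-2j}$ using the Liouville property, then sum the corresponding $L^2$-normalised ground-state eigenfunctions to produce a distribution $u$ with $\Delta_{\HH}u$ smooth but $u$ not smooth. The only cosmetic difference is that the paper concludes $u\notin C^\infty(P)$ by noting $u\notin L^2(P)$ (since $\|u_{k_j}\|_{L^2}=1$ for infinitely many $j$), whereas you argue via the non-decay of the Fourier coefficients $\langle u,\phi_j\rangle=1$; both are valid and the underlying mechanism is identical.
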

 
 \begin{proof}
 By the assumption on $\alpha$, we can find a sequence of integers $(p_j,k_j)_{j \geq 1} \in \mathbb{Z} \times \mathbb{Z}_{\geq 1}$ such that $k_j \geq j$, $(k_j)_{j \geq 1}$ is increasing, and $|k_j\alpha + p_j| \leq k_j^{-j}$. In particular, by Lemma \ref{lemma:eigenvalues-deltak}, along this subsequence, one obtains that the first eigenvalue $\lambda_1(k_j)$ of the Laplacian $\Delta_{k_j}$ satisfies 
 \begin{equation}\label{eq:first-eigenvalue}
 	0 < \lambda_1(k_j) \leq C k_j^{-2j} 
 \end{equation}
 for some $C > 0$ independent of $j$.
 
 We define a function $f$ on $P$ by the following
 \[
 	f := \sum_{j=1}^{\infty} f_{k_j}, \qquad \Delta_{k_j} f_{k_j} = \lambda_1(k_j) f_{k_j}, \qquad \|f_{k_j}\|_{L^2} = \lambda_1(k_j).
 \]
 Observe that
 \[
 \|f\|^2_{L^2(P)} = \sum_{j=1}^{\infty} \|f_{k_j}\|^2_{L^2} \leq C^2 \sum_{j=1}^{\infty} k_j^{-2j} \leq C^2 \sum_{j = 1}^{\infty} j^{-2} < \infty,
 \]
 so $f \in L^2(P)$. Computing the $L^2$ norm of $\Delta_{\HH}^k f$ and $\Delta_{\V}^k f$ for any $k \in \mathbb{Z}_{\geq 0}$ (where $\Delta_{\V}$ is the vertical Laplacian), and showing these expressions are finite for any $k \geq 0$ in a similar fashion (also using the estimate \eqref{eq:first-eigenvalue}), allows one to show that $f$ is smooth, i.e. $f \in C^\infty(P)$.
 
Set $u_{k_j} := \lambda_1(k_j)^{-1} f_{k_j}$ for all $j \in \mathbb{Z}_{\geq 1}$ and define $u := \sum_{j = 1}^\infty u_{k_j}$. By construction and Lemma \ref{lemma:ft-horizontal-laplacian} we have $\Delta_{\HH} u = f$. Moreover, $\|u_{k_j}\|_{L^2} = 1$, and so we get that $u \in \mc{D}'(P)$. (Indeed, to define $\langle{u, \varphi}\rangle_{L^2} = \sum_k \langle u_k, \varphi_k\rangle_{L^2}$ for a test function $\varphi \in C^\infty(P)$, it suffices to use that $\|\varphi_k\|_{L^2} = \mc{O}(k^{-\infty})$ as $k \to \infty$ by the smoothness of $\varphi$.) However, $u \not \in L^2(P)$, and so $\Delta_{\HH}$ is not hypoelliptic.
 \end{proof}

\subsubsection{Proof of Theorem \ref{theorem:hypoellipticity}, Item (ii)} Given $f,u \in \mc{D}'(P)$, write $f = \mc{F}^{-1}(f_{\mathbf{k},i})_{\mathbf{k},i}$ and $u= \mc{F}^{-1}(u_{\mathbf{k},i})_{\mathbf{k},i}$. By projecting onto each Fourier mode, the equation $\Delta_{\HH} u = f$ is equivalent to (using Lemma \ref{lemma:ft-horizontal-laplacian})
\begin{equation}
\label{equation:modes}
\Delta_{\mathbf{k}}  u_{\mathbf{k}, i} = f_{\mathbf{k}, i}, \qquad u_{\mathbf{k}, i},f_{\mathbf{k}, i} \in \mc{D}'_{\mathrm{hol}}(F,\mathbf{L}^{\otimes \mathbf{k}}), \qquad  \mathbf{k} \in \widehat{G}, i = 1, \dotsc, d_{\mathbf{k}}.
\end{equation}
If $f \in C^\infty(P)$, then $f_{\mathbf{k},i} \in C^\infty_{\mathrm{hol}}(F,\mathbf{L}^{\otimes \mathbf{k}})$ and hence $u_{\mathbf{k},i} \in C^\infty_{\mathrm{hol}}(F,\mathbf{L}^{\otimes \mathbf{k}})$ by ellipticity of $\Delta_{\mathbf{k}}$ in the direction of $\HH^*$ (we already know that $u_{\mathbf{k}, i}$ is fibrewise holomorphic and in particular smooth in the vertical direction). However, proving that $u$ is smooth amounts to proving high frequency estimates on the norms of $u_{\mathbf{k},i}$ in Sobolev spaces as $|\mathbf{k}| \to \infty$.

For this purpose, introduce for $p,q \in \mathbb{Z}_{\geq 0}$ the anisotropic norms defined for $u \in C^\infty(P)$ by:
\begin{equation}
\label{equation:hpq}
\|u\|_{H^{2p,2q}(P)}^2 := \|u\|^2_{L^2(P)} + \|\Delta_{\V}^pu\|^2_{L^2(P)} + \|\Delta_{\HH}^q u\|^2_{L^2(P)}.
\end{equation}
We then define the space $H^{2p,2q}(P)$ as the completion of $C^\infty(P)$ with respect to the previous norm. It can be easily verified that for $p = q$, this space is equivalent to the usual $H^{2p}(P)$ norm on $P$. Recall from Lemma \ref{lemma:ft-horizontal-laplacian} that $(\Delta_{\V} u)_{\mathbf{k}, i} = c(\mathbf{k}) u_{\mathbf{k}, i}$ and the Fourier transform intertwines the horizontal Laplacians. Hence from the Plancherel formula (Lemma \ref{lemma:ft-isometry}):
\[
	\|u\|^2_{H^{2p,2q}(P)} = \sum_{\mathbf{k} \in \widehat{G}} d_{\mathbf{k}} \sum_{i=1}^{d_{\mathbf{k}}}\left( (1 + c(\mathbf{k})^{2p}) \|u_{\mathbf{k},i}\|^2_{L^2} +  \|\Delta_{\mathbf{k}}^q u_{\mathbf{k},i}\|^2_{L^2}\right).
\]

Theorem \ref{theorem:hypoellipticity}, Item (ii) follows immediately from the following lemma.

\begin{lemma}
\label{lemma:key-lol}
Assume there exists $C, \nu > 0$, such that $\lambda_1(\Delta_{\mathbf{k}}) \geq C |\mathbf{k}|^{-\nu}$ for all non-zero $\mathbf{k} \in \widehat{G}$. Then, for all $p \in \mathbb{Z}_{\geq 0}$ and $q \in \mathbb{Z}_{\geq 1}$, there exists $C > 0$ such that for all $u \in C^\infty(P)$, orthogonal to the constant functions,
\begin{equation}
\label{equation:laval}
\|u\|_{H^{2p,2q}} \leq C \|\Delta_{\HH} u\|_{H^{2\lceil p+\nu/2 \rceil,2(q-1)}}.
\end{equation}
Moreover, if $u \in \mc{D}'(P)$ is orthogonal to constant functions, and we have $\Delta_{\HH} u \in H^{2\lceil p+\nu/2 \rceil,2(q-1)}(P)$, then $u \in H^{2p,2q}(P)$ and inequality \eqref{equation:laval} holds. In particular, if $\Delta_{\HH} u \in C^\infty(P)$ is smooth, then $u \in C^\infty(P)$.
\end{lemma}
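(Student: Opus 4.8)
The strategy is to work mode-by-mode on the Fourier side using the decomposition \eqref{equation:modes}, and to combine the elliptic estimate for $\Delta_{\mathbf{k}}$ in the horizontal direction with the assumed lower bound $\lambda_1(\Delta_{\mathbf{k}}) \geq C|\mathbf{k}|^{-\nu}$ in order to control $u_{\mathbf{k},i}$ in terms of $f_{\mathbf{k},i} = (\Delta_{\HH}u)_{\mathbf{k},i}$ with only a polynomial (in $|\mathbf{k}|$) loss. I will first prove the a priori estimate \eqref{equation:laval} for $u \in C^\infty(P)$ orthogonal to constants, then upgrade it to distributional solutions by a routine regularisation/density argument, and finally deduce hypoellipticity by letting $p,q \to \infty$.

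\textbf{Step 1: the key mode-by-mode estimate.} Fix $\mathbf{k} \neq 0$, and write $v := u_{\mathbf{k},i}$, $g := f_{\mathbf{k},i} = \Delta_{\mathbf{k}} v$, both fibrewise holomorphic. Since $\Delta_{\mathbf{k}} \geq \lambda_1(\Delta_{\mathbf{k}}) \geq C|\mathbf{k}|^{-\nu}$ on $L^2_{\mathrm{hol}}(F,\mathbf{L}^{\otimes \mathbf{k}})$ (orthogonal to its kernel, which is trivial for $\mathbf{k} \neq 0$ by Lemma \ref{lemma:first-eigenvalue-holonomy-group-dense}, though here we only need the quantitative lower bound hypothesis), we get by the spectral theorem
\[
\|v\|_{L^2} \leq C^{-1}|\mathbf{k}|^{\nu} \|g\|_{L^2}, \qquad \|\Delta_{\mathbf{k}}^q v\|_{L^2} = \|\Delta_{\mathbf{k}}^{q-1} g\|_{L^2}.
\]
The first inequality already handles the $\|v\|_{L^2}$ and $\|\Delta_{\V}^p v\|_{L^2} = c(\mathbf{k})^p\|v\|_{L^2}$ contributions to the left-hand side of \eqref{equation:laval}: using $c(\mathbf{k}) \leq C(1+|\mathbf{k}|^2)$ from \eqref{equation:ck-asymptotic}, we bound $c(\mathbf{k})^p\|v\|_{L^2} \leq C c(\mathbf{k})^p |\mathbf{k}|^\nu \|g\|_{L^2} \leq C c(\mathbf{k})^{\lceil p + \nu/2\rceil}\|g\|_{L^2}$, which is exactly (a piece of) the $H^{2\lceil p+\nu/2\rceil,2(q-1)}$-norm of $g$ for the $q=1$ part. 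For the $\|\Delta_{\mathbf{k}}^q v\|_{L^2} = \|\Delta_{\mathbf{k}}^{q-1}g\|_{L^2}$ term there is no loss at all. Summing over $\mathbf{k} \in \widehat{G}$ and $i = 1,\dots,d_{\mathbf{k}}$ with the weights $d_{\mathbf{k}}$, using the Plancherel identity of Lemma \ref{lemma:ft-isometry} and the intertwining relations of Lemma \ref{lemma:ft-horizontal-laplacian}, and noting the $\mathbf{k}=0$ mode drops out because $u$ is orthogonal to constants (and $\Delta_0 = X$-type... actually $\Delta_0$ is the Laplacian on $M$, whose kernel is the constants, so the $\mathbf{k}=0$ contribution is controlled the same way once constants are removed), yields \eqref{equation:laval} for smooth $u$. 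One should be slightly careful that the weight $1 + c(\mathbf{k})^{2p}$ in the $H^{2p,2q}$-norm and the weight $1 + c(\mathbf{k})^{2\lceil p+\nu/2\rceil}$ in the right-hand norm absorb the polynomial factors $|\mathbf{k}|^{2\nu}$ uniformly in $\mathbf{k}$; this is where the exponent $\lceil p + \nu/2\rceil$ comes from.

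\textbf{Step 2: distributional solutions and hypoellipticity.} Given $u \in \mc{D}'(P)$ orthogonal to constants with $\Delta_{\HH}u =: f \in H^{2\lceil p+\nu/2\rceil, 2(q-1)}(P)$, decompose into Fourier modes: each $u_{\mathbf{k},i}$ is then $\Delta_{\mathbf{k}}$-harmonic against $f_{\mathbf{k},i}$, hence smooth and fibrewise holomorphic by ellipticity of $\Delta_{\mathbf{k}}$ in the $\overline{\HH^*}$-direction (combined with fibrewise holomorphicity in the vertical direction), and the mode-by-mode bound of Step 1 applies verbatim. Since the right-hand side of \eqref{equation:laval} is finite by hypothesis, the weighted sum defining $\|u\|_{H^{2p,2q}(P)}$ converges, so $u \in H^{2p,2q}(P)$ and \eqref{equation:laval} holds. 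Finally, if $f = \Delta_{\HH}u \in C^\infty(P)$, then $f \in H^{2\lceil p+\nu/2\rceil,2(q-1)}(P)$ for every $p,q$, hence $u \in H^{2p,2q}(P)$ for every $p,q \geq 0$; since $\bigcap_{p,q} H^{2p,2q}(P) = C^\infty(P)$ (because $H^{2p,2p}(P) = H^{2p}(P)$ and Sobolev embedding), we conclude $u \in C^\infty(P)$. For the original statement about $\Delta_{\HH}u = f \in C^\infty(P)$ with $u \in \mc{D}'(P)$ (not necessarily orthogonal to constants), write $u = u_0 + \tilde u$ where $u_0$ is the $L^2$-projection onto constants (which makes sense since constants are in $L^2(P)$ and $u$ pairs with them); then $\tilde u$ is orthogonal to constants with $\Delta_{\HH}\tilde u = f - \Delta_{\HH}u_0 = f \in C^\infty(P)$, so $\tilde u \in C^\infty(P)$ and hence $u \in C^\infty(P)$.

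\textbf{Main obstacle.} The only real subtlety is bookkeeping the uniformity in $\mathbf{k}$ of the polynomial losses: one must check that the exponent $\lceil p + \nu/2\rceil$ is genuinely enough to absorb every factor of $|\mathbf{k}|^\nu$ that appears, across all three terms of the $H^{2p,2q}$-norm, using only \eqref{equation:ck-asymptotic} to compare $c(\mathbf{k})$ and $\langle\mathbf{k}\rangle^2$. The rest is a standard Fourier-decomposition-plus-Plancherel argument and the routine passage from a priori estimates to distributional regularity; I expect no difficulty there, since $\Delta_{\mathbf{k}}$ acts diagonally in the Fourier decomposition and commutes with $\Pi_{\mathbf{k}}$ by \eqref{equation:commutation-nablak-pik}.
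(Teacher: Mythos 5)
Your proposal is correct and follows essentially the same route as the paper: Fourier decomposition via Plancherel, the spectral gap $\lambda_1(\Delta_{\mathbf{k}}) \geq C|\mathbf{k}|^{-\nu}$ to control the $L^2$-piece mode-by-mode, the asymptotic $c(\mathbf{k}) \sim \langle\mathbf{k}\rangle^2$ from \eqref{equation:ck-asymptotic} to absorb the loss $|\mathbf{k}|^\nu$ into the $\lceil p+\nu/2\rceil$ vertical weight, and a truncation/density bootstrap for distributional $u$. The paper packages the mode-by-mode estimate as the single inequality $\lambda_1(\mathbf{k})^2(1+c(\mathbf{k})^{2p'}) \geq C(1+c(\mathbf{k})^{2p})$ and uses an explicit truncation operator $T_h$ plus a Cauchy-sequence argument (or the Fatou property) for the bootstrap, but these are presentation differences rather than a different approach.
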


 

\begin{proof}
Observe that $c(\mathbf{k})$ satisfies the asymptotic estimate \eqref{equation:ck-asymptotic}, and set $p' := \lceil p+\nu/2 \rceil$. For the rest of the proof, for simplicity, we write $\lambda_1(\mathbf{0})$ for the second eigenvalue of $\Delta_{\mathbf{0}}$ (it is positive by Lemma \ref{lemma:first-eigenvalue-holonomy-group-dense}). It follows that there exists a constant $C:=C(p) > 0$ such that for all $\mathbf{k} \in \widehat{G}$, the following inequality holds:
\begin{equation}
\label{equation:ehouai}
\lambda_1^2(\mathbf{k})(1+c(\mathbf{k})^{2p'}) \geq \lambda_1^2(\mathbf{k})(1+c(\mathbf{k})^{2(p+\nu/2)}) \geq C(1+c(\mathbf{k})^{2p}).
\end{equation}
Hence, if $u \in C^\infty(P) \cap (\C \mathbf{1})^{\perp}$ (i.e. it is orthogonal to constants) and solves $\Delta_{\HH} u = f$, we find that 
\begin{equation}
\label{equation:laval2}
\begin{split}
\|\Delta_{\HH} u\|_{H^{2p',2(q-1)}(P)}^2 & = \sum_{\mathbf{k} \in \widehat{G}} d_{\mathbf{k}} \sum_{i=1}^{d_{\mathbf{k}}} \left((1+c(\mathbf{k})^{2p'})\|\Delta_{\mathbf{k}} u_{\mathbf{k},i}\|^2_{L^2} + \|\Delta_{\mathbf{k}}^{q} u_{\mathbf{k},i}\|^2_{L^2}\right) \\
& \geq \sum_{\mathbf{k} \in \widehat{G}} d_{\mathbf{k}} \sum_{i=1}^{d_{\mathbf{k}}} \left(\lambda_1(\mathbf{k})^2 (1+c(\mathbf{k})^{2p'})\|u_{\mathbf{k},i}\|^2_{L^2} + \|\Delta_{\mathbf{k}}^{q} u_{\mathbf{k},i}\|^2_{L^2}\right)  \\
& \geq C \sum_{\mathbf{k} \in \widehat{G}} d_{\mathbf{k}} \sum_{i=1}^{d_{\mathbf{k}}} \left((1+c(\mathbf{k})^{2p})\|u_{\mathbf{k},i}\|^2_{L^2} + \|\Delta_{\mathbf{k}}^{q} u_{\mathbf{k},i}\|^2_{L^2}\right) \\
& = C \|u\|^2_{H^{2p,2q}(P)},
\end{split}
\end{equation}
where we used \eqref{equation:ehouai} in the second line. This proves the first claim.

Finally, it remains to prove the bootstrap argument, namely if $u \in \mc{D}'(P) \cap (\C \mathbf{1})^{\perp}$ and $\Delta_{\HH}u \in H^{2p', 2(q-1)}$, then $u \in H^{2p,2q}$ and the estimate \eqref{equation:laval} is satisfied. For $h > 0$, define
\begin{equation}
\label{equation:th}
T_h u := \mc{F}^{-1}(u_{\mathbf{k},i})_{\mathbf{k} \in \widehat{G}, |\mathbf{k}|\leq h^{-1},i=1, \dotsc,d_{\mathbf{k}}}.
\end{equation}
By standard elliptic regularity, $u_{\mathbf{k},i} \in H^{2q}_{\mathrm{hol}}(F,\mathbf{L}^{\otimes \mathbf{k}})$ since $\Delta_{\mathbf{k}}^q u_{\mathbf{k},i} \in L^2_{\mathrm{hol}}(F,\mathbf{L}^{\otimes \mathbf{k}})$, and thus $T_h u \in H^{2p,2q}(P)$. (In fact, we even have $T_hu \in H^{2p'', 2q}(P)$ for any $p'' \in \mathbb{Z}_{\geq 0}$ since the sum in \eqref{equation:th} is finite.) As a consequence, the same sequence of inequalities as in \eqref{equation:laval2} lead to:
\begin{equation}\label{eq:thh}
\begin{split}
\|T_h u\|^2_{H^{2p,2q}(P)} & = \sum_{\substack{\mathbf{k} \in \widehat{G}, \\ |\mathbf{k}|\leq h^{-1}}} d_{\mathbf{k}} \sum_{i=1}^{d_{\mathbf{k}}} \left((1+c(\mathbf{k})^{2p})\|u_{\mathbf{k},i}\|^2_{L^2} + \|\Delta_{\mathbf{k}}^{q} u_{\mathbf{k},i}\|^2_{L^2}\right) \\
& \leq C\|\Delta_{\HH} u\|_{H^{2p',2(q-1)}(P)}^2,
\end{split}
\end{equation}
for some $C> 0$ which is independent of $h > 0$. It follows that $(T_hu)_{h > 0}$ is a Cauchy sequence in $H^{2p, 2q}(P)$, and so as $h \to 0$, we have $T_h u \to v$ in $H^{2p, 2q}(P)$. Since the $L^2$-adjoint of $T_h$ is equal to $T_h$, we conclude that for any test function $\varphi \in C^\infty(P)$ we have 
\[
	\langle{v, \varphi}\rangle_{L^2} = \lim_{h \to 0} \langle{T_h u, \varphi}\rangle_{L^2} = \lim_{h \to 0} \langle{u, T_h \varphi}\rangle_{L^2} = \langle{u, \varphi}\rangle_{L^2},
\]	
as $T_h\varphi \to \varphi$ as $h \to 0$ in $C^\infty(P)$. This shows $v = u \in H^{2p,2q}(P)$, as well as that $\|u\|_{H^{2p,2q}(P)} \leq C \|\Delta_{\HH}u\|_{H^{2p',2(q-1)}}$, which concludes the proof of the lemma. (Alternatively, the Sobolev spaces $H^{2p,2q}(P)$ satisfy the Fatou property (see \cite[page 15]{Runst-Sickel-96}), and so \eqref{eq:thh} directly implies the result.) 
\end{proof}

\subsection{Discrete spectrum}

We now prove Theorem \ref{theorem:hypoellipticity}, Item (i). Let $\alpha : [C_0,+\infty) \to (0,\infty)$ be any function such that $\lambda_1(\Delta_{\mathbf{k}}) \geq \alpha(|\mathbf{k}|)$ (for $|\mathbf{k}| \gg 1$) and $\alpha(x) \to_{x \to +\infty} +\infty$. Define $H^{2p,2q}_\alpha(P)$ as the completion of $C^\infty(P)$ with respect to the norm
\begin{equation}
\label{equation:loulou}
\|u\|^2_{H^{2p,2q}_\alpha(P)} := \sum_{\mathbf{k} \in \widehat{G}} d_{\mathbf{k}} \sum_{i=1}^{d_{\mathbf{k}}}\left((1+c(\mathbf{k})^{2p})\alpha(|\mathbf{k}|)^2 \|u_{\mathbf{k},i}\|^2_{L^2} + \|\Delta_{\mathbf{k}}^{q} u_{\mathbf{k},i}\|^2_{L^2}\right).
\end{equation}

\begin{lemma}
For every $p \in \mathbb{Z}_{\geq 0}$ and $q \in \mathbb{Z}_{\geq 1}$, there exists a constant $C > 0$ such that for all $u \in C^\infty(P)$ orthogonal to the constants:
\begin{equation}
\label{equation:inverse-delta}
\|u\|_{H^{2p,2q}_\alpha} \leq C\|\Delta_{\HH}u\|_{H^{2p,2(q-1)}}.
\end{equation}
In particular, $\Delta_{\HH}: H^{2p, 2q}_{\alpha} \cap (\mathbb{C} \mathbf{1})^{\perp} \to H^{2p, 2(q - 1)} \cap (\mathbb{C} \mathbf{1})^{\perp}$ is an isomorphism. Moreover, the embedding $H^{2p,2q}_\alpha(P) \hookrightarrow H^{2p,2(q-1)}(P)$ is compact.
\end{lemma}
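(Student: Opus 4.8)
The estimate \eqref{equation:inverse-delta} is essentially the same computation as \eqref{equation:laval2}, with the only difference being that the factor $(1+c(\mathbf{k})^{2p})$ is now weighted by $\alpha(|\mathbf{k}|)^2$ instead of being amplified by $\lambda_1(\Delta_{\mathbf{k}})^2 (1+c(\mathbf{k})^{2(p+\nu/2)})$. First I would record the key inequality: by the hypothesis $\lambda_1(\Delta_{\mathbf{k}}) \geq \alpha(|\mathbf{k}|)$ for $|\mathbf{k}| \gg 1$ (and positivity of $\lambda_1(\Delta_{\mathbf{k}})$ for all $\mathbf{k} \neq 0$, together with positivity of the second eigenvalue of $\Delta_{\mathbf{0}}$ from Lemma \ref{lemma:first-eigenvalue-holonomy-group-dense}, handling the finitely many exceptional $\mathbf{k}$ by absorbing them into the constant), there is $C > 0$ such that for all $\mathbf{k} \in \widehat{G}$ and all $u_{\mathbf{k}, i} \in C^\infty_{\mathrm{hol}}(F, \mathbf{L}^{\otimes \mathbf{k}})$ orthogonal to constants when $\mathbf{k} = \mathbf{0}$,
\[
\|\Delta_{\mathbf{k}} u_{\mathbf{k}, i}\|^2_{L^2} \geq \lambda_1(\Delta_{\mathbf{k}})^2 \|u_{\mathbf{k}, i}\|^2_{L^2} \geq C \alpha(|\mathbf{k}|)^2 \|u_{\mathbf{k}, i}\|^2_{L^2}.
\]
Then, expanding $\|\Delta_{\HH} u\|^2_{H^{2p, 2(q-1)}}$ via Plancherel (Lemma \ref{lemma:ft-isometry}) and Lemma \ref{lemma:ft-horizontal-laplacian} exactly as in \eqref{equation:laval2}, using $\|\Delta_{\mathbf{k}} u_{\mathbf{k},i}\|^2_{L^2} \geq C\alpha(|\mathbf{k}|)^2\|u_{\mathbf{k},i}\|^2_{L^2}$ in the term weighted by $(1+c(\mathbf{k})^{2p})$ and $\|\Delta_{\mathbf{k}}^q u_{\mathbf{k},i}\|^2_{L^2} \leq \|\Delta_{\mathbf{k}}^{q}u_{\mathbf{k},i}\|^2_{L^2}$ trivially in the other, yields $\|\Delta_{\HH} u\|^2_{H^{2p,2(q-1)}} \geq C \|u\|^2_{H^{2p,2q}_\alpha}$, which is \eqref{equation:inverse-delta}.

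For the isomorphism claim, injectivity of $\Delta_{\HH}$ on $(\mathbb{C}\mathbf{1})^\perp$ follows from \eqref{equation:inverse-delta} (and Lemma \ref{lemma:first-eigenvalue-holonomy-group-dense}); for surjectivity I would run the same Fourier-truncation bootstrap argument as in Lemma \ref{lemma:key-lol}: given $f \in H^{2p, 2(q-1)}(P) \cap (\mathbb{C}\mathbf{1})^\perp$, set $u_{\mathbf{k},i} := \Delta_{\mathbf{k}}^{-1} f_{\mathbf{k},i}$ (well-defined by invertibility of $\Delta_{\mathbf{k}}$ on the relevant space for each fixed $\mathbf{k}$, using \eqref{equation:commutation-nablak-pik}), apply the truncation operator $T_h$ of \eqref{equation:th}, observe that $T_h u \in H^{2p, 2q}_\alpha(P)$ with $\|T_h u\|^2_{H^{2p,2q}_\alpha} \leq C\|f\|^2_{H^{2p, 2(q-1)}}$ uniformly in $h$, so $(T_h u)_{h>0}$ is Cauchy in $H^{2p,2q}_\alpha(P)$, converges to some $v$, and $v = u$ by the self-adjointness of $T_h$ and density, so $u \in H^{2p,2q}_\alpha(P)$ with $\Delta_{\HH} u = f$.

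For the compactness of the embedding $H^{2p,2q}_\alpha(P) \hookrightarrow H^{2p,2(q-1)}(P)$, the key point is that $\alpha(|\mathbf{k}|) \to \infty$ as $|\mathbf{k}| \to \infty$. Given a bounded sequence $(v_n)$ in $H^{2p,2q}_\alpha(P)$, I would argue as follows: the finitely-many-Fourier-modes truncation $T_h v_n$ lives, for each fixed $h$, in a space compactly embedded in $H^{2p,2(q-1)}(P)$ — this is because the truncated sum involves only finitely many $\mathbf{k}$, and for each such $\mathbf{k}$ the operator $(\Delta_{\mathbf{k}}+1)^{-1}$ is compact on $L^2_{\mathrm{hol}}(F,\mathbf{L}^{\otimes \mathbf{k}})$ (discrete spectrum), giving compactness of the bounded ball of $H^{2p,2q}$-type norm inside the $H^{2p,2(q-1)}$-type norm over that finite set of modes. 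On the other hand, the tail $(\mathbbm{1}-T_h)v_n$ is small uniformly in $n$ in the $H^{2p,2(q-1)}$ norm: indeed
\[
\|(\mathbbm{1}-T_h)v_n\|^2_{H^{2p,2(q-1)}} = \sum_{|\mathbf{k}| > h^{-1}} d_{\mathbf{k}} \sum_{i=1}^{d_{\mathbf{k}}} \Big( (1+c(\mathbf{k})^{2p})\|(v_n)_{\mathbf{k},i}\|^2_{L^2} + \|\Delta_{\mathbf{k}}^{q-1}(v_n)_{\mathbf{k},i}\|^2_{L^2}\Big),
\]
and the first sum is bounded by $\big(\sup_{|\mathbf{k}|>h^{-1}}\alpha(|\mathbf{k}|)^{-2}\big)\|v_n\|^2_{H^{2p,2q}_\alpha}$ while the second, after inserting $\|\Delta_{\mathbf{k}}^{q-1} w\|_{L^2} \le \lambda_1(\Delta_{\mathbf{k}})^{-1}\|\Delta_{\mathbf{k}}^{q} w\|_{L^2} \le C\alpha(|\mathbf{k}|)^{-1}\|\Delta_{\mathbf{k}}^q w\|_{L^2}$, is likewise bounded by $\big(\sup_{|\mathbf{k}|>h^{-1}}C^2\alpha(|\mathbf{k}|)^{-2}\big)\|v_n\|^2_{H^{2p,2q}_\alpha}$; both go to $0$ as $h \to 0$ since $\alpha(|\mathbf{k}|) \to \infty$. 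A standard diagonal-extraction argument then produces a convergent subsequence in $H^{2p,2(q-1)}(P)$. The main (minor) obstacle here is bookkeeping the weights correctly in the tail estimate — in particular making sure the $\Delta_{\mathbf{k}}^{q-1}$ term is also controlled by $\alpha(|\mathbf{k}|)^{-1}$, which uses $q \geq 1$ and the lower bound $\lambda_1(\Delta_{\mathbf{k}}) \geq \alpha(|\mathbf{k}|)$ once more. Finally, combining the isomorphism $\Delta_{\HH}\colon H^{2p,2q}_\alpha \cap (\mathbb{C}\mathbf{1})^\perp \to H^{2p,2(q-1)} \cap (\mathbb{C}\mathbf{1})^\perp$ with this compact embedding shows (taking $p = 0$, $q = 1$) that $(\Delta_{\HH}+1)^{-1}\colon L^2(P) \to L^2(P)$ is compact, whence $\Delta_{\HH}$ has compact resolvent and discrete spectrum, giving Theorem \ref{theorem:hypoellipticity}, Item (i).
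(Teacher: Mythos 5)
Your proof is correct and follows the same structure as the paper's: for the estimate \eqref{equation:inverse-delta}, you rerun \eqref{equation:laval2} with $p' = p$ and the lower bound $\lambda_1(\Delta_{\mathbf{k}}) \gtrsim \alpha(|\mathbf{k}|)$; for the isomorphism, the same Fourier-truncation bootstrap as Lemma \ref{lemma:key-lol}; for compactness, approximation by the truncation operators $T_h$.

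One point worth flagging: your tail estimate for the compactness claim is actually more careful than what the paper writes. The paper asserts
$\|T_h u - u\|_{H^{2p, 2(q - 1)}} \leq \alpha(h^{-1})^{-1} \|u\|_{H_{\alpha}^{2p, 2(q - 1)}}$
and then invokes $H^{2p, 2q}_\alpha \subset H_\alpha^{2p, 2(q - 1)}$; but the $\|\Delta_{\mathbf{k}}^{q-1} u_{\mathbf{k},i}\|^2$ summand appears with the same (unweighted) coefficient on both sides of that inequality, so it cannot literally be absorbed into a factor $\alpha(h^{-1})^{-2}$. The correct way to handle that term is exactly what you do: insert the spectral-gap bound $\|\Delta_{\mathbf{k}}^{q-1} u_{\mathbf{k},i}\| \leq \lambda_1(\Delta_{\mathbf{k}})^{-1}\|\Delta_{\mathbf{k}}^{q} u_{\mathbf{k},i}\| \lesssim \alpha(|\mathbf{k}|)^{-1}\|\Delta_{\mathbf{k}}^{q} u_{\mathbf{k},i}\|$, which trades $q-1$ for $q$ and thereby lands on the right member of the $H^{2p,2q}_\alpha$ norm. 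This yields the clean bound $\|T_h u - u\|_{H^{2p,2(q-1)}} \leq C\,\alpha(h^{-1})^{-1}\|u\|_{H^{2p,2q}_\alpha}$, from which compactness follows (either by your diagonal-extraction phrasing or, equivalently, by noting that $T_h$ converges to the inclusion in operator norm and compact operators are closed in that topology). Your version thus patches a minor imprecision in the published argument. The only cosmetic remark is that the phrase ``$T_h v_n$ lives in a space compactly embedded in $H^{2p,2(q-1)}$'' would be clearer stated as ``$T_h$ is a compact operator $H^{2p,2q}_\alpha \to H^{2p,2(q-1)}$'', since $T_h v_n$ lives in the same function space; but the underlying reason you give (finitely many Fourier modes, plus the usual compact Sobolev embedding on each fibre) is precisely the paper's.
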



\begin{proof}
The proof of \eqref{equation:inverse-delta} is straightforward and follows that of \eqref{equation:laval}. Indeed, in \eqref{equation:laval2}, it suffices to replace $p'$ by $p$ and bound $\lambda_1(\mathbf{k})$ from below by $\alpha(\mathbf{k})$ (by assumption). That the embedding is compact follows from the fact that $\iota : H^{2p,2q}_\alpha(P)\hookrightarrow H^{2p,2(q-1)}(P) $ is approximated by the compact operators $T_h$ (defined in \eqref{equation:th}) in the topology $\mc{L}(H^{2p,2q}_\alpha(P), H^{2p,2(q-1)}(P))$ as $h \to 0$. Indeed, to see this, observe firstly that $T_h$ is compact since by definition it selects only finitely many Fourier modes, and since we have the compactness of the classical Sobolev spaces $H^{2q}_{\mathrm{hol}}(F, \Lk) \subset H^{2(q - 1)}_{\mathrm{hol}}(F, \Lk)$. To see that $T_h$ approximates $\iota$ in the operator norm, as in \eqref{equation:laval2}, we similarly obtain for any $u \in C^\infty(P)$ orthogonal to constants that 
\[
	\|T_h u - u\|_{H^{2p, 2(q - 1)}} \leq \frac{1}{|\alpha(h^{-1})|} \|u\|_{H_{\alpha}^{2p, 2(q - 1)}},
\]
using that $\alpha$ is non-decreasing. Using that $H^{2p, 2q}_\alpha(P) \subset H_\alpha^{2p, 2(q - 1)}(P)$, as well as that $\alpha(h^{-1})$ converges to $\infty$ as $h \to 0$ proves the claim, and completes the proof.
\end{proof}

\begin{proof}[Proof of Theorem \ref{theorem:hypoellipticity}, Item \emph{(i)}]
Applying the previous lemma with $p=0$, $q=1$, and using that $H^{0,2}_\alpha(P) \hookrightarrow L^2(P)$ is compact, with obtain that $\Delta_{\HH}^{-1} : L^2(P) \cap (\C \mathbf{1})^{\perp} \to L^2(P) \cap (\C \mathbf{1})^{\perp}$ is compact; thus $\Delta_{\HH}$ has discrete spectrum.
\end{proof}

\section{Quantum Ergodicity}

We now prove Theorem \ref{theorem:quantum-ergodicity}.

 \subsection{Dynamical preliminaries}
 
 Let $P \to M$ be a $G$-principal bundle. Let $\pi : T^*M \to M$ be the footpoint projection; let $\pi^* F \to T^*M$ be the pullback of $F \to M$ to $T^*M$. In what follows, $\HH^* := \HH^*_F$.

\begin{lemma}
\label{lemma:theta}
There exists a natural isomorphism $\theta :  \pi^*F \to \HH^*$.
\end{lemma}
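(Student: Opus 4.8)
The plan is to write down $\theta$ explicitly as the fibrewise horizontal lift (equivalently, pullback) of covectors, and then check it is a diffeomorphism compatible with both projections. Recall that $\HH^* = \HH^*_F$ was defined as the annihilator of $\V_F = \ker d\pi_F$ inside $T^*F$, so a point of $\HH^*$ is a pair $(w,\eta)$ with $w \in F$ and $\eta \in T_w^*F$ satisfying $\eta|_{(\V_F)_w} = 0$, while a point of $\pi^*F$ is a pair $(\xi, w)$ with $\xi \in T^*_xM$, $w \in F_x$ and $x = \pi(\xi) = \pi_F(w)$. First I would define, in the spirit of \eqref{eq:horizontal-lift-1-form},
\[
\theta(\xi,w) := \big(w,\ \xi \circ d\pi_F(w)\big) = \big(w,\ \xi^{\HH_F}(w)\big) \in \HH^*.
\]
Since $\xi \circ d\pi_F(w)$ annihilates $\ker d\pi_F(w) = (\V_F)_w$ by construction, $\theta$ indeed lands in $\HH^*$, it is manifestly smooth, and it is fibrewise linear in $\xi$.

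Next I would construct the inverse. Given $(w,\eta) \in \HH^*$, the differential $d\pi_F(w) : T_wF \to T_{\pi_F(w)}M$ is surjective with kernel $(\V_F)_w$, and $\eta$ annihilates $(\V_F)_w$; hence there is a unique $\xi_\eta \in T^*_{\pi_F(w)}M$ with $\xi_\eta \circ d\pi_F(w) = \eta$, and I set $\theta^{-1}(w,\eta) := (\xi_\eta, w)$. The map $\theta^{-1}$ is well-defined and smooth: smoothness follows from the fact that $d\pi_F$ has locally constant rank and splits smoothly (e.g. using any local trivialisation of $F \to M$, or the connection-induced splitting $TF = \HH_F \oplus \V_F$), so that $(w,\eta) \mapsto \xi_\eta$ depends smoothly on $(w,\eta)$. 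Thus $\theta$ is a diffeomorphism. I would then record that $\theta$ intertwines the relevant projections: it covers $\mathrm{id}_F$ with respect to the bundle projections $\pi^*F \to F$ (second factor) and $\HH^* \to F$, and it covers $\mathrm{id}_{T^*M}$ with respect to $\pi^*F \to T^*M$ (first factor) and the canonical submersion $\HH^* \to T^*M$, $(w,\eta)\mapsto \xi_\eta$; in particular $\theta$ identifies $\pi^*F \to T^*M$ and $\HH^* \to T^*M$ as $G/T$-fibre bundles, and on each fibre it is the identity on the flag-manifold factor. Naturality simply means that $\theta$ uses only the submersion $\pi_F : F \to M$ and no auxiliary choices, so it is canonical and compatible with pullbacks.

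There is no serious obstacle here; the construction is essentially formal. The only point deserving a little care is the smoothness of $\theta^{-1}$, which is where one invokes a local trivialisation (or the horizontal splitting) to see that the inverse of $d\pi_F$ restricted to a horizontal complement varies smoothly. It may also be worth remarking, for later use, that under $\theta$ the fibrewise holomorphic structures, the projections $\Pi_{\mathbf k}$, and horizontal symbols transport between the $\pi^*F$ and $\HH^*$ pictures, since $\theta$ is the identity on the $G/T$-factor of each fibre.
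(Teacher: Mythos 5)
Your construction $\theta(\xi,w) = (w,\ \xi \circ d\pi_F(w))$ is exactly the paper's map $\theta(x,\xi,w) := (x,w,d\pi^\top\xi)$, written out with the inverse made explicit; the paper simply declares the isomorphism property "immediate" where you supply the routine verification. Same approach, correct.
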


\begin{proof}
Given $(x,\xi,w) \in \pi^*F$, where $x \in M, \xi \in T^*_xM, w \in F_x$, define $\theta(x,\xi,w) := (x,w,d\pi^\top\xi) \in \HH^*$. It is immediate that $\theta : \pi^* F \to \HH^*$ is an isomorphism.
\end{proof}

The geodesic flow $(\varphi_t)_{t \in \R}$ on $T^*M$ can be extended to a flow $(\psi_t)_{t \in \R}$ on $\pi^*F$ as follows:
\[
\psi_t(x,\xi,w) := (\varphi_t(x,\xi), \tau_{x \to \pi(\varphi_t(x,\xi))}w),
\]
where $\tau_{x \to \pi(\varphi_t(x,\xi))}w$ denotes the parallel transport of $w$ with respect to the induced connection on $F$, along the geodesic $(\pi \circ \varphi_s(x,\xi))_{s \in [0,t]}$. Recall from \eqref{equation:omega-twisted} that the curvature $\nabla$ on $P$ induces a twisted symplectic form on $T^*F$ given by
\[
\omega_{h,\mathbf{k}} = \omega_0 + ih\mathbf{k}\cdot\pi^*\mathbf{F}_{\overline{\nabla}},
\]
where $\pi : T^*F \to F$ is the projection and $\mathbf{F}_{\overline{\nabla}}$ is the (multi) curvature induced on the (multi) line bundle $\mathbf{L} \to F$. Define the symbol $p(w,\xi) := |\xi_{\HH^*}|^2$ on $T^*F$. Let $(\Phi^{\omega_{h,\mathbf{k}}}_t)_{t \in \R}$ be the symplectic flow on $T^*F$ generated by $p$ with respect to the $2$-form $\omega_{h,\mathbf{k}}$, that is the flow generated by the vector field $H_p^{\omega_{h,\mathbf{k}}}$ defined in \eqref{equation:hp-vfield}. Notice by Lemma \ref{lemma:invariance-h} that $(\Phi^{\omega_{h,\mathbf{k}}}_t)_{t \in \R}$ preserves $\HH^*$. Moreover, for any $E > 0$ it preserves the energy layers $S_E\HH^* := \{\xi \in \HH^* \mid |\xi| = E\}$ since it preserves the Hamiltonian function. In what follows, we will only consider the restriction of this flow to $\HH^*$.

\begin{lemma}
Let $a \in S^m(\HH^*)$. Then:
\begin{equation}
\label{equation:egorov-utile}
e^{ith\Delta_{\mathbf{k}}}\Op^{\mathrm{BW}}_h(a)e^{-ith\Delta_{\mathbf{k}}} = \Op^{\mathrm{BW}}_h(a \circ \Phi_t^{\omega_{h,\mathbf{k}}}) + \mc{O}_{S^{m-1}}(h).
\end{equation}
\end{lemma}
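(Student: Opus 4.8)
The statement \eqref{equation:egorov-utile} is a version of Egorov's theorem for the Borel-Weil calculus, adapted to the case where the Hamiltonian is $h^2\Delta_{\mathbf{k}}$ (of order $2$, rescaled by $h$), and conjugated by the unitary propagator $e^{-ith\Delta_{\mathbf{k}}}$. The plan is to reduce it to the commutator formula (the general version of Egorov's theorem) already available in $\Psi^\bullet_{h,\mathbf{k}}(F,\mathbf{L})$ --- see the Lemma on commutators in \S\ref{sssection:propagation1} and Remark \ref{remark:egorov-general}, where it is pointed out that $e^{it\mathbf{B}/h}\mathbf{A}e^{-it\mathbf{B}/h} \in \Psi^m_{h,\mathbf{k}}(M,\mathbf{L})$ --- together with the admissibility structure of the Borel-Weil calculus. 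First I would observe that $h\Delta_{\mathbf{k}} = h^{-1}(h^2\Delta_{\mathbf{k}})$, and that $\mathbf{B} := h^2\Delta_{\mathbf{k}} \in \Psi^2_{h,\mathrm{BW}}(P)$ by \S\ref{sssection:examples2}, Item (iii), with principal symbol $\sigma^{\mathrm{BW}}_{\mathbf{B}}(x,\xi) = |\xi_{\HH^*}|^2_g = p$. Also $\mathbf{B}$ commutes with $\Pi_{\mathbf{k}}$ by \eqref{equation:commutation-nablak-pik}, hence is admissible, so that $e^{it\mathbf{B}/h}$ preserves (modulo $\mc{O}(h^\infty)$) the space of fibrewise holomorphic sections; this is what makes the statement meaningful in $\Psi^\bullet_{h,\mathrm{BW}}(P)$. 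Since $\Delta_{\mathbf{k}}$ is self-adjoint on $L^2_{\mathrm{hol}}(F,\mathbf{L}^{\otimes\mathbf{k}})$, the propagator $e^{-ith\Delta_{\mathbf{k}}} = e^{-it\mathbf{B}/h}$ is unitary, and the left-hand side of \eqref{equation:egorov-utile} is exactly $e^{it\mathbf{B}/h}\Op^{\mathrm{BW}}_h(a)e^{-it\mathbf{B}/h}$.

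\textbf{Key steps.} The argument I would carry out is the standard Heisenberg-picture derivation. Set $\mathbf{A}(t) := e^{it\mathbf{B}/h}\Op^{\mathrm{BW}}_h(a)e^{-it\mathbf{B}/h}$. By the general Egorov theorem in $\Psi^\bullet_{h,\mathrm{BW}}(P)$ (which follows from the commutator lemma of \S\ref{sssection:propagation1} exactly as in \cite[Theorem 15.2]{Zworski-12}, restricted to the horizontal directions since $\mathbf{B}$ and $\Op^{\mathrm{BW}}_h(a)$ are both in the Borel-Weil calculus and hence microlocalised near $\overline{\HH^*}$), one has $\mathbf{A}(t) \in \Psi^m_{h,\mathrm{BW}}(P)$ with smooth dependence on $t$, and $\partial_t\mathbf{A}(t) = \tfrac{i}{h}[\mathbf{B},\mathbf{A}(t)]$. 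The commutator lemma gives $\tfrac{i}{h}[\mathbf{B},\mathbf{A}(t)] \in \Psi^m_{h,\mathrm{BW}}(P)$ with principal symbol (restricted to $\HH^*$) equal to $H^{\omega_{h,\mathbf{k}}}_{\sigma_{\mathbf{B}}}\sigma_{\mathbf{A}(t)} = H^{\omega_{h,\mathbf{k}}}_p\sigma_{\mathbf{A}(t)}$, where the Hamiltonian vector field is computed with respect to the twisted symplectic form $\omega_{h,\mathbf{k}}$; this is where the twisted symplectic structure enters, consistently with the discussion after \eqref{equation:omega-twisted}. Therefore $\sigma_{\mathbf{A}(t)}$ and $a\circ\Phi_t^{\omega_{h,\mathbf{k}}}$ solve the same transport equation $\partial_t b = H^{\omega_{h,\mathbf{k}}}_p b$ with the same initial condition $b|_{t=0} = a$ (using $\sigma_{\mathbf{A}(0)} = a$, which holds by Theorem \ref{theorem:quantization-bw} and the lemma on quantization of pullback functions, noting that here $a\in S^m(\HH^*)$). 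Then $\mathbf{A}(t) - \Op^{\mathrm{BW}}_h(a\circ\Phi_t^{\omega_{h,\mathbf{k}}}) \in \Psi^{m-1}_{h,\mathrm{BW}}(P)$; iterating (Duhamel) one bootstraps the remainder down, yielding \eqref{equation:egorov-utile} with error $\mc{O}_{S^{m-1}}(h)$ in symbol, equivalently $\mc{O}_{\Psi^{m-1}_{h,\mathrm{BW}}(P)}(h)$. One has to check that the relevant curvature term in $\omega_{h,\mathbf{k}}$ does not destroy uniformity: since the connection is flat in the setting of Theorem \ref{theorem:quantum-ergodicity} we have $\omega_{h,\mathbf{k}} = \omega_0$ (by Lemma \ref{lemma:vanishing-curvature} and \eqref{equation:courbure-horizontale}), but the formula \eqref{equation:egorov-utile} is stated and should be proven for general $\mathbf{k}$ with $h|\mathbf{k}|\leq 1$, where $h\mathbf{k}\cdot\pi^*\mathbf{F}_{\overline{\nabla}}$ is a bounded perturbation, hence the Hamiltonian flow $\Phi_t^{\omega_{h,\mathbf{k}}}$ and all symbolic estimates are uniform in $(h,\mathbf{k})$ on compact $t$-intervals.

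\textbf{Main obstacle.} The routine analytic part (the iteration/Duhamel argument, the pseudodifferential bookkeeping) is entirely parallel to the classical proof, so the only genuine subtlety is the interplay with the fibrewise holomorphic projection $\Pi_{\mathbf{k}}$: one must ensure that conjugating by $e^{-it\mathbf{B}/h}$ stays within the admissible class so that $\Op^{\mathrm{BW}}_h(a\circ\Phi_t^{\omega_{h,\mathbf{k}}})$, built with $\Pi_{\mathbf{k}}$ on both sides, genuinely agrees with the conjugated operator modulo $\mc{O}_{S^{m-1}}(h)$ --- rather than only modulo $\mc{O}(h^\infty)$-negligible correction terms from the commutators $[\cdot,\Pi_{\mathbf{k}}]$. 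Since $\mathbf{B}$ commutes exactly with $\Pi_{\mathbf{k}}$ (Lemma before \eqref{eq:commutation-vertical-derivative} together with \eqref{equation:commutation-nablak-pik}), the propagator $e^{-it\mathbf{B}/h}$ commutes exactly with $\Pi_{\mathbf{k}}$ as well, so admissibility is preserved without any error; this is the key structural point that makes the obstacle disappear. I expect writing out this commutation and confirming that the parametrix/bootstrap error truly stays at order $\mc{O}_{S^{m-1}}(h)$ (and not merely $\mc{O}(h^\infty)$ modulo admissibility corrections, nor worse) to be the only place requiring care; the rest is a direct transcription of \cite[Theorem 15.2]{Zworski-12} into the Borel-Weil calculus, exactly as indicated in Remark \ref{remark:egorov-general}.
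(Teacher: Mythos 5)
Your proposal is correct and follows essentially the same route as the paper: the paper's proof is a one-line citation of the commutator lemma in \S\ref{sssection:propagation1} and of Remark~\ref{remark:egorov-general} (which says the integration of that commutator relation gives Egorov's theorem verbatim as in \cite[Theorem 15.2]{Zworski-12}), and what you wrote out is precisely that derivation. The observation you single out as the only subtle point --- that $h^2\Delta_{\mathbf{k}}$ commutes \emph{exactly} with $\Pi_{\mathbf{k}}$ by \eqref{equation:commutation-nablak-pik}, so that $e^{-it\mathbf{B}/h}$ preserves fibrewise holomorphicity without $\mc{O}(h^\infty)$-corrections --- is the correct resolution of the admissibility question and is implicit in the paper's appeal to the twisted calculus.
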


\begin{proof}
The principal symbol of $h^2\Delta_{\mathbf{k}}$ is $p(w,\xi)=|\xi_{\HH^*}|^2$ which generates the flow $(\Phi_t^{\omega_{h,\mathbf{k}}})_{t \in \R}$. The claim thus follows from Egorov's theorem in the twisted calculus, see \S\ref{sssection:propagation1} and Remark \ref{remark:egorov-general}. 
\end{proof}

We now investigate the flat case:

\begin{lemma}
Suppose that $\nabla$ is flat. Then for all $t \in \R$, $\theta \circ \psi_t = \Phi_t^{\omega_{h,\mathbf{k}}} \circ \theta$.
\end{lemma}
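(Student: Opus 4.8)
The claim is that when $\nabla$ is flat, the extended geodesic flow $\psi_t$ on $\pi^*F$ is conjugated, via the isomorphism $\theta: \pi^*F \to \HH^*$ of Lemma \ref{lemma:theta}, to the Hamiltonian flow $\Phi_t^{\omega_{h,\mathbf{k}}}$ of $p(w,\xi) = |\xi_{\HH^*}|^2$ on $\HH^*$. The first observation is that since $\nabla$ is flat, the curvature $\mathbf{F}_{\overline{\nabla}}$ on $F$ vanishes --- indeed by \eqref{equation:courbure-horizontale} the horizontal part of $F_{\overline{\nabla}_{\mathbf{k}}}$ is expressed through $d\Theta$ which is zero in the flat case, and by Lemma \ref{lemma:vanishing-curvature} there are no mixed components, while the vertical part $\mathbf{F}_{\nabla^{\V_F}}$ of $\mathbf{F}_{\overline{\nabla}}$ restricted to $\HH^*$ plays no role. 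Hence on $\HH^*$ the twisted symplectic form reduces to $\omega_{h,\mathbf{k}} = \omega_0$ (more precisely, the Hamiltonian vector field $H_p^{\omega_{h,\mathbf{k}}}$ is tangent to $\HH^*$ by Lemma \ref{lemma:invariance-h}, and on $\HH^*$ it agrees with $H_p^{\omega_0}$ because the correction term $ih\mathbf{k}\cdot\pi^*\mathbf{F}_{\overline{\nabla}}$ contributes nothing when contracted with vectors that project to horizontal vectors, using \eqref{equation:salut}-type computations and vanishing of the curvature on $\HH_F$).

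Second, I would identify the flow $\Phi_t^{\omega_0}$ on $\HH^*$ geometrically. By the discussion in \S\ref{sssection:homogeneous-line} and Lemma \ref{lemma:invariance-h}, $\Phi_t^{\omega_0}$ preserves $\HH^*$ and, since $\pi_F: F \to M$ is a Riemannian submersion whose horizontal lifts of geodesics are geodesics (as used in the proof of Lemma \ref{lemma:critical-Omega}, citing \cite{Vilms-70}; here the connection on $F$ being flat guarantees the relevant compatibility), the projection to $M$ of the $\HH^*$-component of $\Phi_t^{\omega_0}$ is exactly the symplectic lift $\Phi_t^{\omega_0}$ of the geodesic flow on $T^*M$. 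It thus remains to track how $\Phi_t^{\omega_0}$ moves a point of $F$ along a horizontal curve: the horizontal lift of the base geodesic in $F$ is precisely parallel transport $\tau_{x \to \pi(\varphi_t(x,\xi))}$ with respect to the induced connection on $F$. This is exactly the second component of $\psi_t(x,\xi,w)$ as defined just before the statement. Combining these two facts with the definition $\theta(x,\xi,w) = (x,w,d\pi^\top\xi)$ and the intertwining $d\pi^\top \circ d\varphi_t^\top = d\psi_t^\top \circ d\pi^\top$ (noted in the proof of Lemma \ref{lemma:invariance-h}), one gets $\theta(\psi_t(x,\xi,w)) = \Phi_t^{\omega_0}(\theta(x,\xi,w)) = \Phi_t^{\omega_{h,\mathbf{k}}}(\theta(x,\xi,w))$.

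The main obstacle I anticipate is the careful bookkeeping of the identification between "parallel transport of frames on $F$" and "horizontal lift of geodesics on $F$ in the sense of the flat connection", together with verifying that the flat-connection hypothesis is genuinely what makes $\Phi_t^{\omega_0}$ preserve the horizontal structure \emph{and} project to geodesics simultaneously. Concretely, one needs to check that the horizontal distribution $\HH_F$ coming from the connection on $P$ --- which, when $\nabla$ is flat, coincides with parallel transport along arbitrary paths in a leaf, cf. Example \ref{example:flat-connection} --- is the one whose horizontal lifts the symplectic flow $\Phi_t^{\omega_0}$ actually follows. This is essentially a restatement of the fact that the Hamiltonian flow of $|\xi_{\HH^*}|^2$ with respect to $\omega_0$, restricted to $\HH^*$, is generated by the horizontal lift $X^{\HH_F}$ of the geodesic vector field, which in turn is a direct consequence of Lemma \ref{lemma:invariance-h} and the Riemannian submersion structure. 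Once this geometric dictionary is set up, the proof is a short unwinding of definitions; I would write it as: (i) reduce $\omega_{h,\mathbf{k}}$ to $\omega_0$ on $\HH^*$ using flatness; (ii) identify $\Phi_t^{\omega_0}|_{\HH^*}$ with the horizontal lift of the geodesic flow; (iii) recognize the horizontal lift as parallel transport, matching the definition of $\psi_t$; (iv) conclude via $\theta$.
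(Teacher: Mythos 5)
Your proof is correct, but the route is genuinely different from the paper's. The paper proceeds by a direct local computation: trivialize $F|_U \cong U \times G/T$ so that (by flatness) parallel transport is the identity in the $G/T$ factor, observe that in such a trivialization the connection multi-$1$-form satisfies $\boldsymbol{\beta}^{\HH}=0$ and $\partial_x \boldsymbol{\beta}^{\V}=0$, then plug into the explicit expression \eqref{equation:hvfield} for $H_p^{\omega_{h,\mathbf{k}}}$ to see all correction terms drop out, leaving the geodesic flow tensored with the identity on $G/T$. Your argument instead cleanly separates two logically independent facts: (a) flatness forces the horizontal part of $\mathbf{F}_{\overline{\nabla}}$ to vanish (via \eqref{equation:courbure-horizontale} and $\Omega = d\Theta + [\Theta,\Theta] = 0$ on $\HH_P$), and together with Lemma~\ref{lemma:vanishing-curvature} (no mixed components) and $\partial_{\eta}p = 0$, this gives $H_p^{\omega_{h,\mathbf{k}}} = H_p^{\omega_0}$ along $\HH^*$; (b) the identity $\theta \circ \psi_t = \Phi_t^{\omega_0} \circ \theta$ is a general fact about Riemannian submersions that does not use flatness at all --- $\Phi_t^{\omega_0}|_{\HH^*}$ coincides with the geodesic flow on $T^*F$ (their Hamiltonians $|\xi_{\HH^*}|^2$ and $|\xi|^2_F$ have equal differentials along $\{\xi_{\V^*}=0\}$, and both flows preserve $\HH^*$), and horizontal lifts of base geodesics are geodesics in $F$, which is precisely parallel transport. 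What your approach buys is a conceptually sharper picture of where flatness enters: only in (a). Your parenthetical ``here the connection on $F$ being flat guarantees the relevant compatibility'' in step (ii) is therefore misplaced --- the Riemannian submersion argument needs no flatness, and you should delete that qualifier. One other imprecision: you write that ``the curvature $\mathbf{F}_{\overline{\nabla}}$ on $F$ vanishes,'' but the vertical (Chern) part does not; what you actually use, and correctly argue, is that $\mathbf{F}_{\overline{\nabla}}$ vanishes on $\HH_F \times TF$, and that the vertical part contributes nothing because $\partial_\eta p = 0$. With these two phrasings tightened, your proof stands as a valid and arguably more transparent alternative to the paper's coordinate computation.
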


The curvature $\mathbf{F}_{\overline{\nabla}}$ does not appear in the flow $(\Phi_t^{\omega_{h,\mathbf{k}}})_{t \in \R}$; this is a consequence of the bundle being flat.

\begin{proof}
Let $U \subset M$ be a local patch of coordinates. Since the connection is flat, the bundle $F$ can be locally trivialized over $U$ as $F|_{U} \simeq U \times G/T$ in such a way that parallel transport along curves of $M$ (contained in $U$) of elements of $F$ corresponds to the identity map in the $G/T$ factor of the trivialization. Let $(x,w)$ denote a point in $U \times G/T$; let $(\xi,\eta) \in T^*U \times T^*(G/T)$ denote the dual variables. Then $\HH^* \simeq \{\eta=0\}$ and $\V^*\simeq\{\xi=0\}$. The symbol of the horizontal Laplacian is then given by $p(x,w,\xi,\eta) = |\xi|^2$. Since $P \to M$ is flat, the locally defined (multi) $1$-form $\boldsymbol{\beta} = (\boldsymbol{\beta}^{\HH}, \boldsymbol{\beta}^{\V})$ only admits components in the vertical directions (in the $G/T$ direction), that is $\boldsymbol{\beta} =(0,\boldsymbol{\beta}^{\V})$ and $\boldsymbol{\beta}^{\V}$ is independent of the $x$ variable. Using \eqref{equation:hvfield} in $T^*F$ for the expression of $H_p^{\omega_{h,\mathbf{k}}}$ in these coordinates, one finds:
\small
\[
\begin{split}
& H_p^{\omega_{h,\mathbf{k}}}  = \sum_{k=1}^n -\partial_{\xi_k}p~ \partial_{x_k} 
+  \sum_{k=1}^{\dim(G/T)}- \partial_{\eta_k}p~ \partial_{w_k} \\
& + \sum_{k=1}^n \left(\partial_{x_k}p + h\mathbf{k} \cdot \left( \sum_{\ell=1}^n \partial_{\xi_\ell}p(\partial_{x_\ell}\boldsymbol{\beta}^{\HH}_k-\partial_{x_k}\boldsymbol{\beta}^{\HH}_\ell) +  \sum_{\ell=1}^{\dim(G/T)} \partial_{\eta_\ell}p(\partial_{w_\ell}\boldsymbol{\beta}^{\HH}_k-\partial_{x_k}\boldsymbol{\beta}^{\V}_\ell)\right) \right)\partial_{\xi_k} \\
&  + \sum_{k=1}^{\dim(G/T)} \left(\partial_{w_k}p + h\mathbf{k} \cdot \left( \sum_{\ell=1}^n \partial_{\xi_\ell}p(\partial_{x_\ell}\boldsymbol{\beta}^{\V}_k-\partial_{w_k}\boldsymbol{\beta}^{\HH}_\ell) +  \sum_{\ell=1}^{\dim(G/T)} \partial_{\eta_\ell}p(\partial_{w_\ell}\boldsymbol{\beta}^{\V}_k-\partial_{w_k}\boldsymbol{\beta}^{\V}_\ell)\right) \right)\partial_{\eta_k}
\end{split} 
\]
\normalsize
Using $\boldsymbol{\beta}^{\HH}=0$, $\partial_{x_k} \boldsymbol{\beta}^{\V}=0$, $\partial_{w_k}p = \partial_{\eta_\ell} p = 0$, one obtains that this simplifies to
\[
H_p^{\omega_{h,\mathbf{k}}}  = \sum_{k=1}^n -\partial_{\xi_k}p \cdot \partial_{x_k}+ \sum_{k=1}^n \partial_{x_k}p \cdot \partial_{\xi_k}.
\]
One therefore retrieves the generator of the geodesic flow on $T^*M$; nevertheless, it should be interpreted here as the generator of the flow on $\HH^* \simeq T^*M \times G/T$ (locally) given by the geodesic flow in the $T^*M$ variable, and the identity in the $G/T$ variable. Notice that in our local trivialization of $F$, the latter actually corresponds to parallel transport in the fibers of $F \to M$. This proves the claim made in the lemma.
\end{proof}

In what follows, in the flat case, we will simply write $(\Phi_t)_{t \in \R}$ for this flow on $\HH^*$. Finally, we conclude with the following observation. 

\begin{lemma}
\label{lemma:ergodic-layer}
Assume that $(M,g)$ is a Riemannian manifold such that its geodesic flow is Anosov, and $P \to M$ is a flat $G$-principal bundle with dense holonomy group in $G$. Then for all $E > 0$, $(\Phi_t)_{t \in \R}$ is ergodic on $S_E\HH^*$ with respect to the smooth invariant measure.
\end{lemma}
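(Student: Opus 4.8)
\textbf{Proof plan for Lemma \ref{lemma:ergodic-layer}.}

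The plan is to reduce the ergodicity statement on each energy layer $S_E\HH^*$ to the ergodicity of the \emph{frame-flow-type} extension on $\pi^*F$, which in turn is governed by the transitivity group machinery recalled in \S\ref{sssection:transitivity-group}. First I would observe that, by the preceding lemma, in the flat case the flow $(\Phi_t)_{t\in\R}$ on $\HH^*$ is conjugated via the isomorphism $\theta:\pi^*F\to\HH^*$ of Lemma \ref{lemma:theta} to the flow $(\psi_t)_{t\in\R}$ on $\pi^*F$ defined by $\psi_t(x,\xi,w)=(\varphi_t(x,\xi),\tau_{x\to\pi(\varphi_t(x,\xi))}w)$; moreover $\theta$ is fibrewise linear over $T^*M$ and intertwines the scaling $\xi\mapsto E\xi$, so $\theta$ maps $\pi^*F|_{S^*_EM}$ (the part of $\pi^*F$ lying over the cosphere bundle of radius $E$) onto $S_E\HH^*$, and the smooth invariant measures correspond. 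Hence it suffices to prove that $(\psi_t)_{t\in\R}$ is ergodic on $\pi^*F|_{S^*_EM}$ for each $E>0$. By homogeneity of the geodesic flow (rescaling $\xi$), it is enough to treat $E=1$, i.e. to show $(\psi_t)_{t\in\R}$ is ergodic on $\pi^*F|_{S^*M}$, where $S^*M$ carries the Anosov geodesic flow.

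Next I would recognise $\pi^*F|_{S^*M}\to S^*M$ as exactly the flag bundle $Q/T$ associated to the $G$-principal bundle $Q:=\pi^*P|_{S^*M}\to S^*M$ (pullback of $P$ along $\pi:S^*M\to M$), equipped with the pullback connection, and the flow $(\psi_t)$ as the isometric extension of the geodesic flow to $Q/T$ induced by parallel transport — precisely the situation of Chapter \ref{chapter:flow} with base Anosov flow the geodesic flow on $S^*M$ and principal bundle $Q$. Because $P$ is flat with holonomy group $\mathrm{Hol}(P,\nabla)=\rho(\pi_1(M))$ dense in $G$ (cf. Example \ref{example:flat-connection}), the pulled-back bundle $Q\to S^*M$ is again flat, and its holonomy group is the image of $\rho$ composed with $\pi_*:\pi_1(S^*M)\to\pi_1(M)$; since $\pi:S^*M\to M$ is a fibre bundle with connected fibre, $\pi_*$ is surjective (for $\dim M\ge 2$ the fibre $S^{n-1}$ is connected), so the holonomy group of $Q$ equals $\rho(\pi_1(M))$, which is dense in $G$. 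By the transitivity group characterisation of ergodicity for isometric extensions (stated in \S\ref{sssection:transitivity-group}, and used e.g. in Example \ref{example:flat-connection}), an isometric extension by a flat bundle is ergodic if and only if the holonomy group is dense; applying this to $Q$ gives that $(\psi_t)$ is ergodic on $Q/T=\pi^*F|_{S^*M}$. Passing back through $\theta$ and the rescaling then yields ergodicity of $(\Phi_t)$ on $S_E\HH^*$ for every $E>0$.

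I expect the main obstacle to be the bookkeeping at the reduction step: one must carefully check that the dynamical data pull back correctly — that $\theta$ genuinely conjugates $(\Phi_t)$ to $(\psi_t)$ including the identification of invariant smooth measures (the Liouville-type measure on $S_E\HH^*$ versus the product of the geodesic-flow-invariant measure on $S^*M$ with the fibre measure on $G/T$), and that the holonomy/transitivity group of the pullback bundle $Q\to S^*M$ is indeed $\rho(\pi_1(M))$ and not something smaller (this uses surjectivity of $\pi_*$, i.e. connectedness of the sphere fibre, hence the hypothesis $\dim M\ge 2$, which is harmless here since Anosov geodesic flows require $\dim M\ge 2$). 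A secondary point is that the transitivity-group ergodicity criterion as stated in \cite{Brin-75-1,Lefeuvre-23} is phrased for the principal bundle extension; one should note that ergodicity of the extension on $Q$ passes to ergodicity on any associated homogeneous bundle, in particular on $Q/T$, which is a standard fact recalled after Corollary \ref{corollary:ss}. Once these identifications are in place, the statement follows immediately and no further estimates are needed.
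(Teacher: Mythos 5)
Your proposal is correct and follows essentially the same route as the paper's proof: conjugate $(\Phi_t)$ via $\theta$ to the parallel-transport extension over $S_E^*M$, reduce to the principal-bundle extension $\pi^*P\to S_E^*M$, and invoke the transitivity-group criterion together with the fact that for a flat bundle the transitivity group contains the holonomy group, which is dense by hypothesis. The only variations are cosmetic: you additionally rescale to $E=1$ (harmless but unnecessary, since the paper works directly with $S_E^*M$), and you spell out the surjectivity of $\pi_*:\pi_1(S^*M)\to\pi_1(M)$ to identify $\mathrm{Hol}(\pi^*P)$ with $\mathrm{Hol}(P)$, a point the paper leaves implicit.
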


\begin{proof}
By the isomorphism $\theta$ of Lemma \ref{lemma:theta}, the flow $(\Phi_t)_{t \in \R}$ is an isometric extension of the geodesic flow on $S_E^*M := \{(x,\xi) \in T^*M ~|~|\xi|=E\}$ to the bundle $\pi^*F \to S_E^*M$. This bundle is an associated bundle to $\pi^*P \to S_E^*M$ so it suffices to prove that the flow defined by parallel transport along geodesics on $\pi^*P \to S_E^*M$ is ergodic. Since this is an extension of the (Anosov) geodesic flow to a principal bundle, it suffices to check that the transitivity group $H$ equals $G$. Now, by Example \ref{example:flat-connection}, $H$ contains as a subgroup the holonomy group $\mathrm{Hol}(P,\nabla)$, which is dense in $G$ by assumption. Since $H$ is closed by definition, the claim is immediate.
\end{proof}

 \subsection{Spectral preliminaries}

\subsubsection{Hellfer-Sjöstrand's formula} The following holds:

\begin{lemma}
\label{lemma:hs}
Let $\chi \in C^\infty_{\mathrm{comp}}(\R)$, $m \geq 0$. Let $\mathbf{A} \in \Psi^m_{h,\mathrm{BW}}(P)$ be formally self-adjoint elliptic. Then $\chi(\mathbf{A}) \in \Psi^{-\infty}_{h,\mathrm{BW}}(P)$ with principal symbol $\sigma^{\mathrm{BW}}_{\chi(\mathbf{A})} = \chi(a(\bullet)) \in S^m(\HH^*)$.
\end{lemma}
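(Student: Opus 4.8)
The plan is to establish Lemma~\ref{lemma:hs} via the Helffer--Sj\"ostrand functional calculus, transported into the Borel--Weil calculus $\Psi^\bullet_{h,\mathrm{BW}}(P)$. First I would recall the Helffer--Sj\"ostrand formula: for $\chi \in C^\infty_{\mathrm{comp}}(\R)$, choosing an almost analytic extension $\widetilde{\chi} \in C^\infty_{\mathrm{comp}}(\C)$ of $\chi$ (so that $\overline{\partial} \widetilde{\chi}$ vanishes to infinite order on $\R$ and $\widetilde{\chi}|_{\R} = \chi$), one has
\[
\chi(\mathbf{A}) = -\dfrac{1}{\pi} \int_{\C} \overline{\partial}\widetilde{\chi}(z)\, (\mathbf{A}-z)^{-1}\, \dd z \wedge \dd \bar z,
\]
valid for any self-adjoint operator $\mathbf{A}$. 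The point is then to show that the right-hand side lands in $\Psi^{-\infty}_{h,\mathrm{BW}}(P)$ with the claimed principal symbol. Since $\mathbf{A}$ is elliptic of order $m \geq 0$ with real principal symbol $a$, for $z \notin \R$ the operator $\mathbf{A} - z$ is elliptic with principal symbol $a - z$ which is everywhere non-vanishing (as $\Im z \neq 0$), so by Lemma~\ref{lemma:parametrix-bw} (the parametrix construction in the Borel--Weil calculus) the resolvent $(\mathbf{A}-z)^{-1}$ exists for $h$ small enough and lies in $\Psi^{-m}_{h,\mathrm{BW}}(P)$, with principal symbol $(a-z)^{-1}$ and with Fr\'echet seminorms controlled polynomially in $\langle \Im z\rangle^{-1}$ (this is the standard resolvent estimate propagated through the symbol construction; here the decay of $\overline{\partial}\widetilde{\chi}$ to infinite order on $\R$ absorbs the polynomial blow-up near the real axis).

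Next I would carry out the symbolic integration. Writing out the parametrix expansion $(\mathbf{A}-z)^{-1} = \Op^{\mathrm{BW}}_h\big(\sum_{j\geq 0} h^j q_j(\bullet;z)\big) + \mathcal{O}_{\Psi^{-\infty}_{h,\mathrm{BW}}}(h^\infty)$ with $q_0 = (a-z)^{-1}$ and each $q_j$ a rational function of $z$ with poles only at $z = a(w,\xi)$, one integrates term by term against $-\frac{1}{\pi}\overline{\partial}\widetilde{\chi}(z)\,\dd z\wedge\dd\bar z$. For the leading term, the Cauchy--Pompeiu formula gives
\[
-\dfrac{1}{\pi}\int_{\C} \overline{\partial}\widetilde{\chi}(z) \dfrac{1}{a(w,\xi)-z}\, \dd z \wedge \dd\bar z = \chi(a(w,\xi)),
\]
which yields the claimed principal symbol $\sigma^{\mathrm{BW}}_{\chi(\mathbf{A})} = \chi(a(\bullet))$; moreover since $\chi$ has compact support and $a$ is a symbol of order $m$ which is elliptic (hence $|a| \to \infty$ at fiber infinity), $\chi(a)$ has compact support in $\HH^*$, so $\chi(\mathbf{A}) \in \Psi^{-\infty}_{h,\mathrm{BW}}(P)$. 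For the subleading terms, each $q_j$ ($j\geq 1$) has a pole of higher order at $z=a$, but is still a finite sum of terms of the form $(\text{symbol})\cdot (a-z)^{-\ell}$, and the corresponding integrals produce derivatives $\chi^{(\ell-1)}(a)$ times symbols of negative order, all compactly supported; I would note that the order in $h$ is preserved so the full expansion defines a bona fide element of $\Psi^{-\infty}_{h,\mathrm{BW}}(P)$. One must also check admissibility, i.e. that $\chi(\mathbf{A})$ commutes with $\Pi_{\mathbf{k}}$ modulo $h^\infty\Psi^{-\infty}$: this follows because $(\mathbf{A}-z)^{-1}$ inherits this property from $\mathbf{A}$ (Lemma~\ref{lemma:parametrix-bw} produces parametrices satisfying \eqref{equation:commutation-holomorphic}), and the property is stable under the $z$-integral.

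The main obstacle I anticipate is making the resolvent bounds uniform enough in $z$ near the real axis to justify the absolute convergence of the Helffer--Sj\"ostrand integral and the term-by-term integration, \emph{simultaneously} uniformly in the auxiliary parameter $\mathbf{k}$ (or along families $h \mapsto \mathbf{k}(h)$). Concretely, one needs estimates of the shape $\|(\mathbf{A}-z)^{-1}\|_{H^s_h \to H^{s+m}_h} \leq C_N \langle \Im z\rangle^{-N}$ for the remainder terms and polynomial-in-$\langle\Im z\rangle$ control on the symbol seminorms of the $q_j$, and these must hold with constants independent of $h$ and $\mathbf{k}$ with $h|\mathbf{k}|\leq 1$; this is exactly the kind of uniform statement the calculus of Chapter~\ref{chapter:analytic} is designed to support (cf.\ Proposition~\ref{proposition:ellipticity} and Remark~\ref{remark:uniform}), but the bookkeeping is the delicate part. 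Everything else --- the Cauchy--Pompeiu computation of the leading symbol, the almost-analytic extension, the stability of admissibility under the integral --- is routine and follows \emph{verbatim} the classical argument (see \cite[Theorem 14.9]{Zworski-12}) once transplanted to $\Psi^\bullet_{h,\mathrm{BW}}(P)$.
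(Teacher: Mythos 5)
Your proposal is correct and follows essentially the same route as the paper: the paper's (very terse) proof also invokes the Helffer--Sj\"ostrand formula together with Lemma~\ref{lemma:parametrix-bw} for the resolvent, references \cite[Theorems 14.8 and 14.9]{Zworski-12} for the uniform symbolic bookkeeping, and notes that admissibility of $\chi(\mathbf{A})$ is inherited from admissibility of $(\mathbf{A}-z)^{-1}$. Your only slight imprecision is the phrase ``for $h$ small enough'' when discussing the resolvent: by the spectral theorem $(\mathbf{A}-z)^{-1}$ is defined for all $h$ whenever $\Im z \neq 0$; it is only membership in $\Psi^{-m}_{h,\mathrm{BW}}(P)$ with uniform symbol seminorms that the parametrix construction gives, and this is what is needed.
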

Note that $\chi(\mathbf{A})$ is always well-defined on $C^\infty_{\mathrm{hol}}(F,\mathbf{L}^{\otimes \mathbf{k}})$ by the spectral theorem.

\begin{proof}
The proof is verbatim the same as \cite[Theorems 14.8 and 14.9]{Zworski-12} based on the formula
\[
\chi(\mathbf{A})  = \dfrac{1}{\pi i} \int_{\C} \overline{\partial}_z \widetilde{\chi}(z)(\mathbf{A}-z)^{-1}\, \dd z,
\]
where $\widetilde{\chi}$ denotes an almost-analytic extension of $\chi$ to $\C$ (more precisely, this means that $\bar{\partial}_{z}\widetilde{\chi}$ vanishes to infinite order at the real axis and is equal to $\chi$ on it, see \cite[Theorem 3.6]{Zworski-12}). Note that the resolvent $(\mathbf{A}-z)^{-1} : C^\infty_{\mathrm{hol}}(F,\mathbf{L}^{\otimes \mathbf{k}}) \to C^\infty_{\mathrm{hol}}(F,\mathbf{L}^{\otimes \mathbf{k}})$ is well-defined and in the Borel-Weil calculus for $z$ not in the spectrum of $\mathbf{A}$ by Lemma \ref{lemma:parametrix-bw} and the subsequent remark. Admissibility of $\chi(\mathbf{A})$ follows from admissibility of $(\mathbf{A} - z)^{-1}$.
\end{proof}

\subsubsection{Trace} We now establish an asymptotic formula for the trace of an operator in the Borel-Weil calculus.

\begin{lemma}
\label{lemma:trace}
Fix a constant $C > 0$. Let $\mathbf{A} \in \Psi^{-\infty}_{h,\mathrm{BW}}(P)$. Then $\mathbf{A} : C^\infty_{\mathrm{hol}}(F,\mathbf{L}^{\otimes \mathbf{k}}) \to C^\infty_{\mathrm{hol}}(F,\mathbf{L}^{\otimes \mathbf{k}})$ is trace class and for all $h, \mathbf{k}$ such that $h|\mathbf{k}| \leq C$, one has:
\begin{equation}
\label{equation:l1}
\Tr(\mathbf{A}) = \dfrac{d_{\mathbf{k}}}{\vol(G/T)(2\pi h)^n} \left( \int_{T^*M} \int_{F_x} \sigma_{\mathbf{A}}^{\mathrm{BW}}(x,w,d\pi^\top \xi)\, \dd w \dd x \dd \xi + \mc{O}(h)\right).
\end{equation}
In particular, if $\sigma_{\mathbf{A}}^{\mathrm{BW}}$ is a pullback symbol (see \S \ref{ssection:quantization-symbols}), then
\begin{equation}
\label{equation:l2}
\Tr(\mathbf{A}) = \dfrac{d_{\mathbf{k}}}{(2\pi h)^n} \left(\int_{T^*M} \sigma_{\mathbf{A}}^{\mathrm{BW}}(x,\xi)\, \dd x \dd \xi + \mc{O}(h)\right),
\end{equation}
where $\sigma^{\mathrm{BW}}_{\mathbf{A}}$ is here identified with a function on $T^*M$. The remainder terms in \eqref{equation:l1} and \eqref{equation:l2} both depend on $C$.
\end{lemma}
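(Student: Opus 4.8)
The trace of $\mathbf{A} = \Pi_{\mathbf{k}}\mathbf{A}'\Pi_{\mathbf{k}}$ acting on $C^\infty_{\mathrm{hol}}(F,\mathbf{L}^{\otimes\mathbf{k}})$ should be computed by integrating the Schwartz kernel over the diagonal. The starting point is the oscillatory-integral formula for $\mathbf{A}$ established in Theorem \ref{theorem:penible}, which describes the Schwartz kernel near the diagonal of $F\times F$ as
\[
\mathbf{A}(w_\ell,w_r) = \dfrac{1}{(2\pi h)^{\dim M}}\int_{\xi\in T^*_{x_\ell}M} e^{-\frac{i}{h}\xi\cdot\exp^{-1}_{x_\ell}(x_r)}\, p_{h,\mathbf{k}}(w_\ell,w_r,d\pi^\top\xi)\, \dd\xi ~\tau^\ell_{x_r\to x_\ell}\big(\Pi_{\mathbf{k}}(\bullet,w_r)|_{x_r}\big)(w_\ell) + \mc{O}_{C^\infty}(h^\infty),
\]
with $p_{h,\mathbf{k}}(w,w,d\pi^\top\xi) = \sigma_{\mathbf{A}'}|_{\HH^*}(w,d\pi^\top\xi) + \mc{O}_{S^{-\infty}}(h)$. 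First I would take the pointwise trace of this kernel at $w_\ell = w_r = w$: since $\tau^\ell_{x\to x}$ is the identity and $\Pi_{\mathbf{k}}(\bullet,w)|_{F_x}$ is the reproducing kernel for the fibrewise holomorphic projection, restricting to $x_\ell=x_r$ already contracts the parallel transport factor, and the fibrewise trace of the projection kernel contributes exactly the $\frac{d_{\mathbf{k}}}{\vol(G/T)}\mathbf{E}^{\otimes\mathbf{k}}(w,w)=\frac{d_{\mathbf{k}}}{\vol(G/T)}\mathbbm{1}$ factor from Theorem \ref{theorem:bergman}, equation \eqref{equation:partie-simple}.

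Next I would carry out the fibre integration carefully. The point is that $\Tr(\mathbf{A}) = \int_F \tr_{\mathbf{L}^{\otimes\mathbf{k}}_w}\mathbf{A}(w,w)\,\dd w$, and by the tower of fibrations $F\to M$ this is $\int_M\int_{F_x}\tr\,\mathbf{A}(w,w)\,\dd w\,\dd x$. Using the Schwartz kernel formula at the diagonal, $\tr\,\mathbf{A}(w,w) = \frac{1}{(2\pi h)^{\dim M}}\int_{T^*_xM} p_{h,\mathbf{k}}(w,w,d\pi^\top\xi)\cdot\frac{d_{\mathbf{k}}}{\vol(G/T)}\,\dd\xi + \mc{O}(h^\infty)$, where I have used that $\Pi_{\mathbf{k}}(w,w) = \frac{d_{\mathbf{k}}}{\vol(G/T)}\mathbbm{1}_{\mathbf{L}^{\otimes\mathbf{k}}_w}$ on the diagonal. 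Substituting $p_{h,\mathbf{k}}(w,w,d\pi^\top\xi) = \sigma^{\mathrm{BW}}_{\mathbf{A}}(x,w,d\pi^\top\xi) + \mc{O}_{S^{-\infty}}(h)$ and integrating over $F_x$ then $M$ gives \eqref{equation:l1}. For \eqref{equation:l2}, when $\sigma^{\mathrm{BW}}_{\mathbf{A}}$ is a pullback symbol it is constant on each fibre $F_x$, so $\int_{F_x}\sigma^{\mathrm{BW}}_{\mathbf{A}}(x,w,d\pi^\top\xi)\,\dd w = \vol(G/T)\cdot\sigma^{\mathrm{BW}}_{\mathbf{A}}(x,\xi)$, and the $\vol(G/T)$ cancels.

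The subtle points, and where the main obstacle lies, are twofold. First, the formula of Theorem \ref{theorem:penible} is only valid microlocally near the diagonal of $F\times F$; away from the diagonal one must argue that the kernel of $\mathbf{A}\in\Psi^{-\infty}_{h,\mathrm{BW}}(P)$ contributes only $\mc{O}(h^\infty)$ to the diagonal integral — this follows from the fact that for such operators $\WF^{\mathrm{BW}}(\mathbf{A})$ is empty and the rapid decay of the kernel off-diagonal, uniformly in $\mathbf{k}$ with $h|\mathbf{k}|\le C$, which needs the uniform estimates of Lemma \ref{lemma:help} together with a non-stationary phase argument in $\exp^{-1}_{x_\ell}(x_r)$. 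Second, and this is the genuinely technical point, one must track the $h$- and $\mathbf{k}$-dependence of all remainders so that, after multiplying by the normalisation $\frac{d_{\mathbf{k}}}{(2\pi h)^n}$ and dividing, the relative error is $\mc{O}(h)$ uniformly over the admissible region $h|\mathbf{k}|\le C$; the factor $d_{\mathbf{k}}$ can grow polynomially in $|\mathbf{k}|\le C/h$, so one needs that the $\mc{O}_{S^{-\infty}}(h)$ remainder in $p_{h,\mathbf{k}}$ from the complex stationary phase expansion of Theorem \ref{theorem:penible} is genuinely uniform in $\mathbf{k}$ — this is where the standing assumption $h|\mathbf{k}|\le C$ enters and where one should invoke Remark \ref{remark:uniform}-style quantifier arguments. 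Once these uniformity issues are settled, the statement follows by direct substitution; I would not expect any conceptual surprises beyond the bookkeeping.
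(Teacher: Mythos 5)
Your proof is correct and follows essentially the same route as the paper's: restrict the kernel formula of Theorem~\ref{theorem:penible} to the diagonal, use $\Pi_{\mathbf{k}}(w,w) = \tfrac{d_{\mathbf{k}}}{\vol(G/T)}\mathbbm{1}$ from Theorem~\ref{theorem:bergman}, substitute the symbol expansion~\eqref{equation:symbol-expansion}, and integrate. Your additional commentary on the uniformity of the $\mc{O}(h)$ remainder in $\mathbf{k}$ over the region $h|\mathbf{k}|\le C$ is a legitimate point the paper glosses over, though the concern about $d_{\mathbf{k}}$ growth is not actually an obstacle, since that factor appears explicitly on the right-hand side of \eqref{equation:l1} rather than being absorbed into the remainder.
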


The principal symbol $\sigma_{\mathbf{A}}^{\mathrm{BW}}$ depends on a choice of family $h \mapsto \mathbf{k}(h)$ with $h|\mathbf{k}(h)| \leq 1$. In \eqref{equation:l1}, it is thus implicitly understood that such a choice has been made. The remainder $\mc{O}(h)$ is uniform among all choices of families $h \mapsto \mathbf{k}(h)$. Observe that, taking a constant family $\mathbf{k}(h) := \mathbf{k}_0$, and further assuming that $\sigma_{\mathbf{A}}^{\mathrm{BW}}$ is a pullback function, one retrieves the usual formula for the trace of a non-twisted semiclassical operator acting on the vector bundle $E := H^0(F,\mathbf{L}^{\otimes \mathbf{k}_0}) \to M$ with diagonal principal symbol $\sigma(x,\xi) := \sigma_{\mathbf{A}}^{\mathrm{BW}}(x,\xi) \mathbf{1}_{E_x}$.

\begin{proof}
We start by observing that \eqref{equation:l2} follows from \eqref{equation:l1} as $\sigma_{\mathbf{A}}^{\mathrm{BW}}$ being a pullback symbol simply means that it is independent of the variable $w$. To prove \eqref{equation:l1} apply \eqref{equation:penible}, \eqref{equation:symbol-expansion}, and Theorem \ref{theorem:bergman} (see also \eqref{equation:temporaire}, second equality, for the expression of the Schwartz kernel) which yields:
\[
\Tr(\mathbf{A}) = \dfrac{d_{\mathbf{k}}}{\vol(G/T)(2\pi h)^n}  \int_{T^*M} \int_{F_x} (\sigma^{\mathrm{BW}}_{\mathbf{A}}(x,w,d\pi^\top\xi) + \mc{O}_{S^{-\infty}}(h))\, \dd w \dd x \dd \xi.
\]
This proves the claim.
\end{proof}

\begin{remark}After some calculation, \eqref{equation:l1} could also be rewritten
\begin{equation}
\label{equation:l1bis}
\Tr(\mathbf{A}) = \dfrac{1}{(2\pi h)^n} \int_{T^*M} \Tr\left(\Pi_{\mathbf{k}}(x)\sigma_{\mathbf{A}}^{\mathrm{BW}}(x,d\pi^\top_{\bullet}\xi,\bullet)\Pi_{\mathbf{k}}(x)\right) \dd x \dd \xi + \mc{O}\left(\tfrac{d_{\mathbf{k}} h}{(2\pi h)^n}\right).
\end{equation}
\end{remark}

An immediate corollary of Lemma \ref{lemma:trace} is to provide a Weyl law for elliptic operators. For $0 \leq a < b$, define $N(h,\mathbf{k},a,b)$ to be the number of eigenvalues of $\Delta_{\mathbf{k}}$ in the interval $[h^{-2}a,h^{-2}b]$. The following holds:
 
 \begin{proposition}
 \label{proposition:weyl}
 Let $C > 0$. Then for all $h, \mathbf{k}$ such that $h|\mathbf{k}| \leq C$,
 \[
  N(h,\mathbf{k},a,b) = \dfrac{d_\mathbf{k}}{(2\pi h)^n}\dfrac{\vol(M) \vol(\Ss^{n-1})}{n}(b^{n/2}-a^{n/2})(1 + o_{h \to 0, C}(1)),
 \]
 where the $o_{h \to 0, C}(1)$ depends on the constant $C > 0$. 
 \end{proposition}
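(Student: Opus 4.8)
The plan is to deduce Proposition~\ref{proposition:weyl} from Lemma~\ref{lemma:trace} (or rather its consequence, the trace asymptotics for elliptic operators in the Borel--Weil calculus) by a standard functional-calculus and squeezing argument. First I would fix $C>0$ and only consider pairs $h,\mathbf{k}$ with $h|\mathbf{k}|\leq C$; after rescaling by $h^2$, the operator $\mathbf{\Delta}_{\mathbf{k}}^{\mathrm{sc}} := h^2\Delta_{\mathbf{k}}$ belongs to $\Psi^2_{h,\mathrm{BW}}(P)$ with principal symbol the pullback function $p(x,\xi)=|\xi_{\HH^*}|^2_g$ (see \S\ref{sssection:examples2}, Item (iii)), and $N(h,\mathbf{k},a,b)$ equals the number of eigenvalues of $\mathbf{\Delta}_{\mathbf{k}}^{\mathrm{sc}}$ in $[a,b]$. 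The idea is to sandwich the sharp cutoff $\mathbbm{1}_{[a,b]}$ between smooth functions $\chi_\pm \in C^\infty_{\mathrm{comp}}(\R)$ with $\chi_-\leq \mathbbm{1}_{[a,b]}\leq \chi_+$, $\chi_\pm$ supported in $(a-\delta,b+\delta)$ and equal to $1$ on $[a+\delta,b-\delta]$ (resp. $[a,b]$), so that
\[
\Tr\big(\chi_-(\mathbf{\Delta}_{\mathbf{k}}^{\mathrm{sc}})\big) \leq N(h,\mathbf{k},a,b) \leq \Tr\big(\chi_+(\mathbf{\Delta}_{\mathbf{k}}^{\mathrm{sc}})\big).
\]
By Lemma~\ref{lemma:hs}, $\chi_\pm(\mathbf{\Delta}_{\mathbf{k}}^{\mathrm{sc}}) \in \Psi^{-\infty}_{h,\mathrm{BW}}(P)$ with principal symbol the pullback function $\chi_\pm(p(x,\xi))$, so Lemma~\ref{lemma:trace}, formula \eqref{equation:l2}, applies and gives
\[
\Tr\big(\chi_\pm(\mathbf{\Delta}_{\mathbf{k}}^{\mathrm{sc}})\big) = \dfrac{d_{\mathbf{k}}}{(2\pi h)^n}\left(\int_{T^*M}\chi_\pm(|\xi|_g^2)\,\dd x\,\dd\xi + \mc{O}_C(h)\right).
\]

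Next I would compute the phase-space integral: using polar coordinates in each fibre $T^*_xM\cong\R^n$ and the relation $\int_{T^*M} f(|\xi|_g^2)\dd x\dd\xi = \vol(M)\vol(\Ss^{n-1})\int_0^\infty f(r^2)r^{n-1}\dd r$, one has, by letting $\delta\to0$,
\[
\int_{T^*M}\mathbbm{1}_{[a,b]}(|\xi|_g^2)\,\dd x\,\dd\xi = \vol(M)\vol(\Ss^{n-1})\int_{\sqrt a}^{\sqrt b} r^{n-1}\,\dd r = \dfrac{\vol(M)\vol(\Ss^{n-1})}{n}\big(b^{n/2}-a^{n/2}\big).
\]
Since $\chi_\pm(|\xi|_g^2)$ differ from $\mathbbm{1}_{[a,b]}(|\xi|_g^2)$ only on a set of measure $\mc{O}(\delta)$ (the boundary shells $|\xi|_g^2\in[a-\delta,a+\delta]\cup[b-\delta,b+\delta]$), both upper and lower trace bounds are, after dividing by $d_{\mathbf{k}}(2\pi h)^{-n}$, within $\mc{O}(\delta)+\mc{O}_C(h)$ of the target expression $\tfrac{\vol(M)\vol(\Ss^{n-1})}{n}(b^{n/2}-a^{n/2})$. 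Choosing $\delta=\delta(h)\to0$ slowly enough (e.g. $\delta=\sqrt h$) so that the $\mc{O}_C(h)$-remainder in Lemma~\ref{lemma:trace}, which is allowed to depend on $\chi_\pm$ and hence on $\delta$, still goes to zero, I obtain
\[
N(h,\mathbf{k},a,b) = \dfrac{d_{\mathbf{k}}}{(2\pi h)^n}\left(\dfrac{\vol(M)\vol(\Ss^{n-1})}{n}\big(b^{n/2}-a^{n/2}\big) + o_{h\to0,C}(1)\right),
\]
which is the claimed Weyl law.

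The one point that requires a little care — and which I expect to be the main technical obstacle — is the dependence of the remainder in Lemma~\ref{lemma:trace} on the cutoff function $\chi_\pm$: the $\mc{O}_C(h)$ there is uniform in $h,\mathbf{k}$ (with $h|\mathbf{k}|\leq C$) but its implicit constant depends on finitely many seminorms of the symbol, hence on $\delta$ through $\chi_\pm$. To handle this I would keep $\delta$ fixed first, let $h\to0$ to get $\limsup$ and $\liminf$ bounds on $N(h,\mathbf{k},a,b)/\big(d_{\mathbf{k}}(2\pi h)^{-n}\big)$ that differ from the target by $\mc{O}(\delta)$ only, and then let $\delta\to0$; this standard two-parameter limiting argument yields the $o_{h\to0,C}(1)$ without ever needing to track the $\delta$-dependence of the constant quantitatively. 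One should also note that $\Delta_{\mathbf{k}}$ has discrete spectrum for each fixed $\mathbf{k}$ (as recalled in \S\ref{sec:horizontal-lapl-intro}, since $\Delta_{\mathbf{k}}+(\overline{\partial}_{\mathbf{k}})^*\overline{\partial}_{\mathbf{k}}+1$ is elliptic and invertible and commutes with $\Pi_{\mathbf{k}}$), so $N(h,\mathbf{k},a,b)$ is finite and the functional calculus $\chi_\pm(\mathbf{\Delta}_{\mathbf{k}}^{\mathrm{sc}})$ acting on $C^\infty_{\mathrm{hol}}(F,\mathbf{L}^{\otimes\mathbf{k}})$ is trace class, so all the manipulations above are legitimate.
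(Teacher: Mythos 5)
Your proposal is correct and follows precisely the paper's approach: the paper's proof is a one-line deferral to Zworski Theorem 14.11, after pointing to Lemmas~\ref{lemma:hs} and~\ref{lemma:trace} applied with $\mathbf{A} = h^2\Delta_{\mathbf{k}}$ and smooth approximations to $\mathbbm{1}_{[a,b]}$, which is exactly the sandwiching-plus-trace argument you carry out. Your discussion of the two-parameter limit in $h$ and $\delta$ is the right way to handle the $\delta$-dependence of the remainder and matches how the cited proof works.
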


\begin{proof}[Proof of Proposition \ref{proposition:weyl}]
The proof of Proposition \ref{proposition:weyl} follows from Lemmas \ref{lemma:hs} and \ref{lemma:trace} by applying it with $\mathbf{A} := h^2\Delta_{\mathbf{k}}$ and considering cutoff functions $\chi_\eps \in C^\infty_{\comp}(\R)$ which approximate the characteristic function of the interval $[a,b]$, see \cite[Theorem 14.11]{Zworski-12} for a proof.
\end{proof}

\subsubsection{Local Weyl law estimate} Let $C(R)$ denote the `square' defined by
\[
	C(R) := \{(\mathbf{k},\lambda) \in \Omega ~|~ |\mathbf{k}| \leq R, 0 \leq \lambda \leq R\} \subset \Omega,
\]
where $\Omega$ was introduced in \eqref{equation:omega-qe} and set
\[
r := \dim(P)-\dim(G/T)/2 = n + (\dim(G)+d)/2,
\]
where $d := \mathrm{rank}(G)$.

\begin{lemma}
There exists a constant $C > 1$ such that for all $R > 1$:
\begin{equation}
\label{equation:size-cr}
R^r/C \leq \sharp C(R) \leq C R^{r}.
\end{equation}
\end{lemma}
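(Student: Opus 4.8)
The plan is to count the lattice points in $C(R)$ by fibering the problem over the weight $\mathbf{k}$, applying the Weyl law of Proposition \ref{proposition:weyl} fibrewise (which counts eigenvalues $\lambda$ for a fixed $\mathbf{k}$), and then summing the resulting contributions over all $\mathbf{k} \in \widehat{G}$ with $|\mathbf{k}| \leq R$. Recall that $\Omega$ is defined so that a pair $(\mathbf{k},\lambda)$ is counted with multiplicity (the multiplicity of $\lambda$ as an eigenvalue of $\Delta_{\mathbf{k}}$ on $C^\infty_{\mathrm{hol}}(F,\mathbf{L}^{\otimes\mathbf{k}})$), and that $\widehat{G}$ is identified with a quotient of $\mathbb{Z}^a \times \mathbb{Z}^b_{\geq 0}$, so that the number of $\mathbf{k} \in \widehat{G}$ with $|\mathbf{k}| \leq R$ is comparable to $R^d$ (up to constants depending only on $G$), with $d = \mathrm{rk}(G)$.

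First I would fix $R > 1$ and write
\[
\sharp C(R) = \sum_{\substack{\mathbf{k} \in \widehat{G} \\ |\mathbf{k}| \leq R}} N(\langle\mathbf{k}\rangle^{-1}, \mathbf{k}, 0, R^2 \langle\mathbf{k}\rangle^{-2}) \cdot (\text{irrelevant normalisation}),
\]
more precisely I would apply Proposition \ref{proposition:weyl} with the choice $h := R^{-1}$ (which is legitimate since then $h|\mathbf{k}| = |\mathbf{k}|/R \leq 1$, so the constant $C$ in that proposition can be taken to be $1$), and with $[a,b] = [0,1]$, so that $N(h,\mathbf{k},0,1)$ counts exactly the eigenvalues $\lambda^2$ of $\Delta_{\mathbf{k}}$ with $0 \leq \lambda^2 \leq R^2$, i.e. $0 \leq \lambda \leq R$. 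Proposition \ref{proposition:weyl} then gives
\[
N(R^{-1},\mathbf{k},0,1) = d_{\mathbf{k}} R^n \cdot \frac{\vol(M)\vol(\mathbb{S}^{n-1})}{n(2\pi)^n}(1 + o_{R\to\infty}(1)),
\]
with the $o(1)$ \emph{uniform} over all $\mathbf{k}$ with $|\mathbf{k}| \leq R$ (this uniformity is exactly what the ``depends on $C$'' clause in Proposition \ref{proposition:weyl} provides, taking $C = 1$ throughout). Summing over $\mathbf{k}$,
\[
\sharp C(R) = R^n \cdot \frac{\vol(M)\vol(\mathbb{S}^{n-1})}{n(2\pi)^n}(1 + o(1)) \sum_{\substack{\mathbf{k} \in \widehat{G} \\ |\mathbf{k}| \leq R}} d_{\mathbf{k}}.
\]
It then remains to estimate $\sum_{|\mathbf{k}| \leq R} d_{\mathbf{k}}$ from above and below by constant multiples of $R^{d + 2\dim_{\mathbb{C}}(G/T)} = R^{d + \dim(G) - d} = R^{\dim G}$: indeed by the Weyl dimension formula (or simply by the fact that $d_{\mathbf{k}} = \dim H^0(G/T,\mathbf{J}^{\otimes\mathbf{k}})$ is a polynomial in $\mathbf{k}$ of degree $\dim_{\mathbb{C}}(G/T) = (\dim G - d)/2$, with positive leading part on the Weyl chamber), we have $d_{\mathbf{k}} \asymp \langle\mathbf{k}\rangle^{(\dim G - d)/2}$, and summing over the $\asymp R^d$ lattice points $\mathbf{k}$ with $|\mathbf{k}| \leq R$ gives $\sum_{|\mathbf{k}| \leq R} d_{\mathbf{k}} \asymp R^{d + (\dim G - d)/2} = R^{(\dim G + d)/2}$. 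Combining, $\sharp C(R) \asymp R^{n + (\dim G + d)/2} = R^r$, which is the claim; the two-sided bound \eqref{equation:size-cr} follows by absorbing all the constants and the $(1+o(1))$ into a single constant $C > 1$ for $R$ large, and then enlarging $C$ to cover the finitely many small $R$.

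The main obstacle I anticipate is making the uniformity in the Weyl law genuinely usable: Proposition \ref{proposition:weyl} is an asymptotic statement as $h \to 0$, and in the sum above each term has $h = R^{-1}$ fixed but $\mathbf{k}$ ranging up to $|\mathbf{k}| \sim R$, so one is really near the ``corner'' $h|\mathbf{k}| \sim 1$ of the blown-up parameter space where the error term could in principle degenerate. The point is that the proof of Proposition \ref{proposition:weyl} (via Lemmas \ref{lemma:hs} and \ref{lemma:trace} and the uniform trace expansion with remainder depending only on the bound $C$ on $h|\mathbf{k}|$) does give a remainder that is $o(1)$ uniformly on the region $\{h|\mathbf{k}| \leq 1\}$, so this is fine, but it is worth stating carefully. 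A secondary, purely bookkeeping, point is to pin down that $d_{\mathbf{k}}$ really grows like $\langle\mathbf{k}\rangle^{(\dim G - d)/2}$ both above and below — the upper bound is immediate from polynomiality, and the lower bound follows from the fact that the leading-order term of the Weyl dimension polynomial is strictly positive in the interior of the Weyl chamber, which contains a positive-proportion cone of lattice points. Everything else is routine summation and constant-chasing.
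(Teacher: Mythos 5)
Your argument is correct and agrees with the paper for the upper bound (Weyl law in $\lambda$ for fixed $\mathbf{k}$, then $d_{\mathbf{k}} \lesssim |\mathbf{k}|^{\dim(G/T)/2}$ from the Weyl dimension formula and a lattice sum). For the lower bound, however, you take a genuinely different route from the paper. You estimate $\sum_{|\mathbf{k}| \leq R} d_{\mathbf{k}}$ directly from below by restricting to a positive-proportion cone in the interior of the Weyl chamber, where the Weyl dimension polynomial has a strictly positive leading part of degree $\dim_{\mathbb{C}}(G/T) = (\dim G - d)/2$; the $\asymp R^d$ lattice points in that cone then give $\sum d_{\mathbf{k}} \gtrsim R^{(\dim G + d)/2}$. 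The paper instead sidesteps any cone argument: it writes $\sum d_{\mathbf{k}} = \sum d_{\mathbf{k}}^2 \cdot d_{\mathbf{k}}^{-1} \gtrsim R^{-\dim(G/T)/2} \sum d_{\mathbf{k}}^2$, and then observes (via the identification of $d_{\mathbf{k}}^2$ as the multiplicity of the Casimir eigenvalue $c(\mathbf{k}) \asymp |\mathbf{k}|^2$ in $L^2(G)$, i.e. Peter--Weyl) that $\sum_{|\mathbf{k}| \leq R} d_{\mathbf{k}}^2$ is bounded below by the Weyl eigenvalue-counting function for $\Delta_G$, giving $\gtrsim R^{\dim G}$. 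Both arguments yield the same exponent $r = n + (\dim G + d)/2$. Your route is more explicit about the polynomial structure of $d_{\mathbf{k}}$ and requires a (correct, but not spelled-out) ``positive-proportion cone'' step, and also implicitly uses that the quotient $\mathbb{Z}^a \times \mathbb{Z}^b_{\geq 0} \to \widehat{G}$ only affects counting up to a constant; the paper's Peter--Weyl reduction converts the whole sum into a single Weyl law on the compact group $G$, which handles those combinatorial subtleties for free. Either is a legitimate proof.
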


\begin{proof}
By Proposition \ref{proposition:weyl}, there exists $C > 1$ such that for all $|\mathbf{k}| \leq R$, the set $C_{\mathbf{k}}(R) := \{\lambda \leq R ~|~  (\mathbf{k},\lambda) \in C(R)\}$ satisfies
\begin{equation}
\label{equation:ineq-dk}
R^{n}d_{\mathbf{k}}/C \leq \sharp C_{\mathbf{k}}(R) \leq C R^n d_{\mathbf{k}},\quad R \gg 1,
\end{equation}
The upper bound in \eqref{equation:size-cr} then follows from the bound
\begin{equation}
\label{equation:weyl-dim}
d_{\mathbf{k}} \leq C|\mathbf{k}|^{\dim(G/T)/2},
\end{equation}
for some uniform $C > 0$ (independent of $\mathbf{k}$), which itself follows from the Weyl dimension formula, and the formula $\sharp C(R) = \sum_{|\mathbf{k}| \leq R} \sharp C_{\mathbf{k}}(R)$. To obtain the lower bound, one writes using \eqref{equation:ineq-dk}:
\begin{equation}
\label{equation:jeudi-matin}
\begin{split}
\sharp C(R) & = \sum_{|\mathbf{k}| \leq R} \sharp C_{\mathbf{k}}(R)  \gtrsim R^n \sum_{|\mathbf{k}| \leq R} d_{\mathbf{k}}^2 d_{\mathbf{k}}^{-1} \gtrsim R^{n-\dim(G/T)/2} \sum_{|\mathbf{k}| \leq R} d_{\mathbf{k}}^2.
\end{split}
\end{equation}
Now, recall by \S\ref{sssection:laplace-eigenvalue} that an element $u \in H^0(G/T,\mathbf{J}^{\otimes \mathbf{k}})$ can be identified with an eigenfunction of the (non-negative) Laplacian $\Delta_G$ on $G$ (with respect to a bi-invariant Riemannian metric), associated to the eigenvalue $c(\mathbf{k})$ defined in \eqref{equation:definition-ck}. Furthermore, one has the asymptotic \eqref{equation:ck-asymptotic} for $c(\mathbf{k})$. This implies that there exists $\eps > 0$ small enough such that for $R \gg 1$ large enough, $\sum_{|\mathbf{k}| \leq R} d_{\mathbf{k}}^2 \geq \sum_{c(\mathbf{k}) \leq \eps R^2} d_{\mathbf{k}}^2$. By the Peter-Weyl theorem, the last sum corresponds to the number of eigenvalues of $\Delta_G$ less or equal than $\eps R^2$. By the Weyl law applied to $\Delta_G$, this number is bounded from below by $\gtrsim R^{\dim(G)}$. Going back to \eqref{equation:jeudi-matin}, we thus find:
\[
\sharp C(R) \gtrsim R^{n-\dim(G/T)/2} R^{\dim(G)} = R^{r},
\]
by definition of $r$. This proves the claim.
\end{proof}

In what follows, we write $B_E \HH^*$ for $E \geq 0$ to denote the ball bundle $\{ \xi \in \HH^* \mid |\xi| \leq E\}$ over $F$. The proof of Theorem \ref{theorem:quantum-ergodicity} relies on the following local Weyl law type of lemma (recall that $\Delta_{\mathbf{k}}u_{\mathbf{k},\lambda} = \lambda^2 u_{\mathbf{k},\lambda}$ and $\|u_{\mathbf{k},\lambda}\|_{L^2}=1$):

\begin{lemma}
\label{lemma:local-weyl}
There exists a constant $C > 0$ such that for all $a \in C^\infty_{\mathrm{comp}}(\HH^*)$ and all $R > 1,\eps > 0$,
\begin{equation}
\label{equation:local-weyl}
\begin{split}
R^{-r} \sum_{(\mathbf{k},\lambda) \in C(R)} \left|\langle \Op^{\mathrm{BW}}_{R^{-1}}(a) u_{\mathbf{k},\lambda},u_{\mathbf{k},\lambda}\rangle_{L^2(F,\mathbf{L}^{\otimes \mathbf{k}})}\right|^2 \leq C \|a\|^2_{L^2(B_{1+\eps}\HH^*)} + \mc{O}_{a,\eps}(R^{-1}).
\end{split}
\end{equation}
\end{lemma}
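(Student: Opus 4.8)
The plan is to reduce the estimate to a trace computation. The key algebraic identity is that, for a fixed cutoff $\chi \in C^\infty_{\mathrm{comp}}(\R)$ equal to $1$ on $[0,(1+\eps/2)^2]$ and supported in $[-1,(1+\eps)^2]$, one has $\chi(R^2 h^2 \Delta_{\mathbf{k}}) u_{\mathbf{k},\lambda} = u_{\mathbf{k},\lambda}$ whenever $(\mathbf{k},\lambda) \in C(R)$ and $h = R^{-1}$ (since then $\lambda^2/R^2 = (\lambda/R)^2 \leq 1$). Therefore for such eigenstates, writing $\mathbf{B} := \Op^{\mathrm{BW}}_{R^{-1}}(a)$,
\[
\langle \mathbf{B} u_{\mathbf{k},\lambda}, u_{\mathbf{k},\lambda}\rangle = \langle \chi(R^{-2}\Delta_{\mathbf{k}})\, \mathbf{B}^*\mathbf{B}\, \chi(R^{-2}\Delta_{\mathbf{k}}) u_{\mathbf{k},\lambda}, u_{\mathbf{k},\lambda}\rangle
\]
up to passing to $|\langle \mathbf{B} u,u\rangle|^2 \leq \langle \mathbf{B}^*\mathbf{B} u, u\rangle$ (Cauchy--Schwarz, using $\|u_{\mathbf{k},\lambda}\| = 1$). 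Summing the left side of \eqref{equation:local-weyl} over the eigenbasis and bounding it by the trace,
\[
\sum_{(\mathbf{k},\lambda)\in C(R)} |\langle \mathbf{B} u_{\mathbf{k},\lambda}, u_{\mathbf{k},\lambda}\rangle|^2 \leq \sum_{\mathbf{k}} d_{\mathbf{k}} \Tr_{L^2_{\mathrm{hol}}(F,\mathbf{L}^{\otimes\mathbf{k}})}\!\big( \chi(R^{-2}\Delta_{\mathbf{k}}) \mathbf{B}^*\mathbf{B}\, \chi(R^{-2}\Delta_{\mathbf{k}}) \big),
\]
where the sum over $\mathbf{k}$ is restricted to $|\mathbf{k}| \leq R$, and the factor $d_{\mathbf{k}}$ appears because each pair $(\mathbf{k},\lambda) \in \Omega$ occurs with multiplicity $d_{\mathbf{k}}$ in $\spec(\Delta_{\HH})$ (see Remark \ref{remark:horizontal-laplacian}).

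The next step is to evaluate this trace using the Borel--Weil calculus. By Lemma \ref{lemma:hs}, $\chi(R^{-2}\Delta_{\mathbf{k}}) \in \Psi^{-\infty}_{h,\mathrm{BW}}(P)$ with principal symbol $\chi(|\xi_{\HH^*}|^2)$; by the algebra property of $\Psi^\bullet_{h,\mathrm{BW}}(P)$ (\S\ref{sssection:properties-bw}), the operator $\mathbf{C} := \chi(R^{-2}\Delta_{\mathbf{k}}) \mathbf{B}^*\mathbf{B}\, \chi(R^{-2}\Delta_{\mathbf{k}})$ lies in $\Psi^{-\infty}_{h,\mathrm{BW}}(P)$ with principal symbol $\sigma^{\mathrm{BW}}_{\mathbf{C}} = \chi(|\xi_{\HH^*}|^2)^2 |a(w,\xi)|^2$. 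Applying Lemma \ref{lemma:trace}, with $h|\mathbf{k}| \leq 1$ (which holds since $|\mathbf{k}| \leq R = h^{-1}$), gives
\[
\Tr(\mathbf{C}) = \frac{d_{\mathbf{k}}}{\vol(G/T)(2\pi h)^n}\left( \int_{T^*M}\int_{F_x} \chi(|d\pi^\top\xi|^2)^2\, |a(w,d\pi^\top\xi)|^2\, dw\, dx\, d\xi + \mc{O}(h)\right),
\]
the remainder being uniform over the relevant range of $(h,\mathbf{k})$. Since $\chi$ is bounded by $1$ and supported where $|\xi_{\HH^*}| \leq 1+\eps$, the integral is $\leq \|a\|^2_{L^2(B_{1+\eps}\HH^*)}$ after passing to the fibrewise horizontal cosphere/coball variables and absorbing $\vol(G/T)$ (identifying $B_{1+\eps}\HH^*$ with $\{(x,\xi,w) \in \pi^*F : |\xi| \leq 1+\eps\}$ via Lemma \ref{lemma:theta}). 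Summing over $|\mathbf{k}| \leq R$, using the bound $d_{\mathbf{k}} \leq C|\mathbf{k}|^{\dim(G/T)/2}$ from the Weyl dimension formula \eqref{equation:weyl-dim}, one finds $\sum_{|\mathbf{k}|\leq R} d_{\mathbf{k}}^2 h^{-n} \lesssim R^{n + \dim(G/T)/2 + \dim(G/T)/2 \cdot 1} \cdots$; more precisely $\sum_{|\mathbf{k}|\leq R} d_{\mathbf{k}}^2 R^n$, and matching this against $R^r$ with $r = n + (\dim G + d)/2$ requires the Weyl-law-type lower bound for $\sum_{|\mathbf{k}|\le R} d_{\mathbf{k}}^2$ already established in the proof of \eqref{equation:size-cr} (via Peter--Weyl and the Weyl law for $\Delta_G$); combining with the matching upper bound $\sum_{|\mathbf{k}|\leq R} d_{\mathbf{k}}^2 \lesssim R^{\dim G}$ gives exactly the normalization $R^{-r}$ in \eqref{equation:local-weyl}. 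The $\mc{O}_{a,\eps}(R^{-1})$ remainder comes from the $\mc{O}(h)$ in the trace formula, multiplied by $\sum_{|\mathbf{k}|\leq R} d_{\mathbf{k}}^2$ and $h^{-n} = R^n$, and divided by $R^r$; this yields $\mc{O}(R^{-1})$ since $\dim G + n = r + (\dim G - d)/2 \geq r$ — one must check the exponents line up, which they do because the leading-order term and the remainder term carry the same power of $R$ up to one factor of $h$.

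\textbf{Main obstacle.} The genuinely delicate point is not the symbol calculus but the bookkeeping of the various multiplicities and normalizations: matching the power $R^r$ with $r = \dim P - \dim(G/T)/2$ requires using the \emph{two-sided} estimate on $\sum_{|\mathbf{k}|\leq R} d_{\mathbf{k}}^2$ (equivalently, both $\sharp C(R) \gtrsim R^r$ from \eqref{equation:jeudi-matin} and $\sharp C(R) \lesssim R^r$), and being careful that the factor $d_{\mathbf{k}}$ appears \emph{quadratically} — once from the multiplicity of $(\mathbf{k},\lambda)$ in $\Omega$ versus $\spec(\Delta_{\HH})$ and once from the dimension factor in the trace formula \eqref{equation:l1}. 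A secondary technical point is that $a$ is compactly supported in $\HH^*$ but the symbol $\chi(R^{-2}\Delta_{\mathbf{k}})$ is only a Borel--Weil pseudodifferential operator (not compactly microsupported unless one is careful); this is handled by noting that $\mathbf{B} = \Op^{\mathrm{BW}}_{R^{-1}}(a)$ is already compactly microsupported in $\HH^*$, so $\mathbf{C}$ has a well-defined symbol supported in a compact subset of $\HH^*$ and the integral in Lemma \ref{lemma:trace} converges, as required for \eqref{equation:l1} to apply with an $L^2$-uniform remainder. Once these normalization constants are tracked correctly, the estimate \eqref{equation:local-weyl} follows directly.
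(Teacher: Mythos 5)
Your overall strategy (Cauchy--Schwarz, spectral/microlocal cutoff, trace formula, Weyl dimension formula) is close to the paper's, and the idea of using the spectral cutoff $\chi(R^{-2}\Delta_{\mathbf{k}})$ is actually a nice simplification compared to what the paper does: writing $\mathbf{C} = \mathbf{D}^*\mathbf{D}$ with $\mathbf{D} = \Op^{\mathrm{BW}}_h(a)\chi(R^{-2}\Delta_{\mathbf{k}})$ gives you non-negativity for free, whereas the paper cuts off by \emph{multiplying the symbol} by a fiberwise cutoff $\chi_\varepsilon$ supported near $B_{1+\varepsilon}\HH^*$, and then needs the sharp Gårding inequality (Lemma~\ref{lemma:garding}) to make the operator almost non-negative before it can bound the truncated sum $\sum_{(\mathbf{k},\lambda)\in C(R)}$ by the full trace $\sum_{\lambda\in\spec(\Delta_{\mathbf{k}})}$. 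Your version avoids that step.

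However, there is a genuine error in the bookkeeping of $d_{\mathbf{k}}$ that breaks the power counting. The sum in \eqref{equation:local-weyl} is over $\Omega$ (one eigenfunction $u_{\mathbf{k},\lambda}$ for each $(\mathbf{k},\lambda)$, counted with the multiplicity of $\lambda^2$ in $\spec(\Delta_{\mathbf{k}})$), \emph{not} over $\spec(\Delta_{\HH})$. The factor $d_{\mathbf{k}}$ you insert — justified by Remark~\ref{remark:horizontal-laplacian}, which says that each $(\mathbf{k},\lambda) \in \Omega$ corresponds to $d_{\mathbf{k}}$ eigenvalues of $\Delta_{\HH}$ — is therefore spurious: you should have
\[
\sum_{(\mathbf{k},\lambda)\in C(R)} |\langle \mathbf{B} u_{\mathbf{k},\lambda}, u_{\mathbf{k},\lambda}\rangle|^2 \leq \sum_{|\mathbf{k}| \leq R} \Tr_{L^2_{\mathrm{hol}}(F,\mathbf{L}^{\otimes\mathbf{k}})}\bigl( \chi(R^{-2}\Delta_{\mathbf{k}})\, \mathbf{B}^*\mathbf{B}\, \chi(R^{-2}\Delta_{\mathbf{k}}) \bigr),
\]
with no factor of $d_{\mathbf{k}}$ in front of the trace. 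The trace formula \eqref{equation:l1} already produces one $d_{\mathbf{k}}$, and the paper's computation uses $\sum_{|\mathbf{k}|\leq R} d_{\mathbf{k}}\, R^n \lesssim R^{n+\dim(G/T)/2 + d} = R^r$. With your extra $d_{\mathbf{k}}$ you would instead get $\sum_{|\mathbf{k}|\leq R} d_{\mathbf{k}}^2\, R^n \asymp R^{\dim G + n}$, which is $R^{r + \dim(G/T)/2}$; this is too big by a factor $R^{\dim(G/T)/2}$, and no amount of matching upper and lower bounds on $\sum d_{\mathbf{k}}^2$ (as in \eqref{equation:jeudi-matin}) will fix that — those estimates are used in \eqref{equation:size-cr} to control $\sharp C(R)$ from \emph{below}, which is a different statement. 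Your ``Main obstacle'' paragraph senses that the power counting is delicate but misidentifies the source: the issue is not that you need two-sided control of $\sum d_{\mathbf{k}}^2$, it is that the factor $d_{\mathbf{k}}$ appears exactly once (from the trace formula), not quadratically. Once the spurious $d_{\mathbf{k}}$ is removed, your argument does close correctly.
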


It should be possible to take $\eps = 0$ in \eqref{equation:local-weyl} but this will not be needed for the proof of Theorem \ref{theorem:quantum-ergodicity}.

\begin{proof}
We use the notation $h := 1/R$. Observe that, by the algebraic properties of the Borel-Weil calculus (see \S\ref{sssection:properties-bw}), one has:
\[
\begin{split}
|\langle \Op^{\mathrm{BW}}_h(a) u_{\mathbf{k},\lambda},u_{\mathbf{k},\lambda}\rangle_{L^2(F,\mathbf{L}^{\otimes \mathbf{k}})}|^2 &\leq \|\Op^{\mathrm{BW}}_h(a) u_{\mathbf{k},\lambda}\|^2_{L^2(F,\mathbf{L}^{\otimes \mathbf{k}})} \\
& = \langle \Op^{\mathrm{BW}}_h(a)^*\Op^{\mathrm{BW}}_h(a) u_{\mathbf{k},\lambda},u_{\mathbf{k},\lambda}\rangle_{L^2(F,\mathbf{L}^{\otimes \mathbf{k}})} \\
& = \langle (\Op^{\mathrm{BW}}_h(|a|^2) + \mc{O}_{\Psi^{-\infty}}(h))u_{\mathbf{k},\lambda},u_{\mathbf{k},\lambda}\rangle_{L^2(F,\mathbf{L}^{\otimes \mathbf{k}})} \\
& = \langle \Op_h^{\mathrm{BW}}(|a|^2)u_{\mathbf{k},\lambda},u_{\mathbf{k},\lambda}\rangle_{L^2(F,\mathbf{L}^{\otimes \mathbf{k}})} + \mc{O}_a(R^{-1}),
\end{split}
\]		
where we used in the last equality that $h=1/R$ and $\|u_{\mathbf{k},\lambda}\|_{L^2}=1$. Using \eqref{equation:size-cr}, we thus obtain:
\begin{equation}
\label{equation:coolos}
\begin{split}
R^{-r} \sum_{(\mathbf{k},\lambda) \in C(R)}& |\langle \Op^{\mathrm{BW}}_h(a) u_{\mathbf{k},\lambda},u_{\mathbf{k},\lambda}\rangle_{L^2(F,\mathbf{L}^{\otimes \mathbf{k}})}|^2 \\
&\leq R^{-r} \sum_{(\mathbf{k},\lambda) \in C(R)} \langle \Op_h^{\mathrm{BW}}(|a|^2)u_{\mathbf{k},\lambda},u_{\mathbf{k},\lambda}\rangle_{L^2(F,\mathbf{L}^{\otimes \mathbf{k}})} + \mc{O}_a(R^{-1}).
\end{split}
\end{equation}
By definition, for $(\mathbf{k},\lambda) \in C(R)$, one has $\Delta_{\mathbf{k}} u_{\mathbf{k},\lambda} = \lambda^2 u_{\mathbf{k},\lambda}$ and thus $(h^2 \Delta_{\mathbf{k}}- (h\lambda)^2)u_{\mathbf{k},\lambda} = 0$ with $h\lambda \leq 1$ (recall here that the principal symbol of $h^2\Delta_{\mathbf{k}}$ with respect to the connection $\nabla$ itself is independent of the family $h \mapsto \mathbf{k}(h)$). Hence $u_{\mathbf{k},\lambda}$ has microsupport in the unit ball $B_1 \HH^*$. Let $\eps > 0$, and let $\chi_\eps \in C^\infty_{\mathrm{comp}}( \HH^*, [0,1])$ be a smooth cutoff function equal to $1$ on $B_{1+\eps/2}\HH^*$ and $0$ on $\HH^* \setminus B_{1+\eps}$. One has:
\[
\langle \Op_h^{\mathrm{BW}}(|a|^2(1-\chi_\eps))u_{\mathbf{k},\lambda},u_{\mathbf{k},\lambda}\rangle_{L^2(F,\mathbf{L}^{\otimes \mathbf{k}})} = \mc{O}_{a,\eps}(h^\infty) = \mc{O}_{a,\eps}(R^{-\infty}), 
\]
which implies:
\[
\langle \Op_h^{\mathrm{BW}}(|a|^2)u_{\mathbf{k},\lambda},u_{\mathbf{k},\lambda}\rangle_{L^2(F,\mathbf{L}^{\otimes \mathbf{k}})} = \langle \Op_h^{\mathrm{BW}}(|a|^2\chi_\eps)u_{\mathbf{k},\lambda},u_{\mathbf{k},\lambda}\rangle_{L^2(F,\mathbf{L}^{\otimes \mathbf{k}})} +  \mc{O}_{a,\eps}(R^{-\infty}).
\]
As a consequence, we obtain the same inequality as \eqref{equation:coolos} where, in the right-hand side, the function $|a|^2$ is replaced by $|a|^2\chi_\eps$ and the $\mc{O}_a(R^{-1})$ is now equal to $\mc{O}_{a,\eps}(R^{-1})$. Since $|a|^2\chi_\eps \geq 0$, the sharp Gårding inequality (see Lemma \ref{lemma:garding} and its proof) implies the existence of an operator $\mathbf{R} \in \Psi^{-\infty}_{h, \mathrm{BW}}(P)$ such that $\Op_h^{\mathrm{BW}}(|a|^2\chi_\eps) + h\mathbf{R} \geq 0$ is a nonnegative operator, that is 
\[
	\langle (\Op_h^{\mathrm{BW}}(|a|^2\chi_\eps) + h\mathbf{R})u,u\rangle_{L^2(F,\mathbf{L}^{\otimes \mathbf{k}})} \geq 0, \quad u \in L^2(F,\mathbf{L}^{\otimes \mathbf{k}}).
\] 
We thus find:
\[
\begin{split}
R^{-r} & \sum_{(\mathbf{k},\lambda) \in C(R)} |\langle \Op^{\mathrm{BW}}_h(a) u_{\mathbf{k},\lambda},u_{\mathbf{k},\lambda}\rangle_{L^2(F,\mathbf{L}^{\otimes \mathbf{k}})}|^2 \\
& \leq R^{-r} \sum_{(\mathbf{k},\lambda) \in C(R)} \langle (\Op_h^{\mathrm{BW}}(|a|^2\chi_\eps)+ h\mathbf{R})u_{\mathbf{k},\lambda},u_{\mathbf{k},\lambda}\rangle_{L^2(F,\mathbf{L}^{\otimes \mathbf{k}})} + \mc{O}_{a,\eps}(R^{-1}) \\
& \leq R^{-r} \sum_{|\mathbf{k}|\leq R} \sum_{\lambda \in \mathrm{spec}(\Delta_{\mathbf{k}})} \langle (\Op_h^{\mathrm{BW}}(|a|^2\chi_\eps)+ h\mathbf{R})u_{\mathbf{k},\lambda},u_{\mathbf{k},\lambda}\rangle_{L^2(F,\mathbf{L}^{\otimes \mathbf{k}})} + \mc{O}_{a,\eps}(R^{-1}) \\
& = R^{-r} \sum_{|\mathbf{k}| \leq R} \Tr(\Op_h^{\mathrm{BW}}(|a|^2\chi_\eps)+ h\mathbf{R}) + \mc{O}_{a,\eps}(R^{-1}).
\end{split}
\]
(The factor $h\mathbf{R}$ is used in the second inequality to justify the appearance of the sum over the spectrum of $\Delta_{\mathbf{k}}$.) By Lemma \ref{lemma:trace} and \eqref{equation:l1}, the previous inequality yields:
\[
\begin{split}
&R^{-r} \sum_{(\mathbf{k},\lambda) \in C(R)} |\langle \Op^{\mathrm{BW}}_h(a) u_{\mathbf{k},\lambda},u_{\mathbf{k},\lambda}\rangle_{L^2(F,\mathbf{L}^{\otimes \mathbf{k}})}|^2 \\
& \leq R^{-r} \sum_{|\mathbf{k}| \leq R}  \dfrac{d_{\mathbf{k}} R^n}{\vol(G/T)(2\pi)^n} \left(\int_{T^*M} \int_{F_x} |a|^2\chi_\eps(x,w,d\pi^\top\xi)\, \dd w\dd x \dd \xi  + \mc{O}_{a,\eps}(R^{-1})\right)\\
& \qquad + \mc{O}_{a,\eps}(R^{-1})
\end{split}
\]
Using the Weyl dimension formula \eqref{equation:weyl-dim}, we find that:
\[
\sum_{|\mathbf{k}| \leq R}  \dfrac{d_{\mathbf{k}} R^n}{\vol(G/T)(2\pi)^n}  \leq C R^{n+\dim(G/T)/2+\dim(T)}  = \mc{O}_{a,\eps}(R^{r}),
\]
since by definition of $r = n + (\dim(G)+\dim(T))/2$. We thus obtain:
\[
R^{-r} \sum_{(\mathbf{k},\lambda) \in C(R)} |\langle \Op^{\mathrm{BW}}_h(a) u_{\mathbf{k},\lambda},u_{\mathbf{k},\lambda}\rangle_{L^2(F,\mathbf{L}^{\otimes \mathbf{k}})}|^2 \leq C \|a\|^2_{L^2(B_{1+\eps}\HH^*)} + \mc{O}_{a,\eps}(R^{-1}),
\]
which concludes the proof of the lemma.
\end{proof}

\subsection{End of the proof}

We can now prove Theorems \ref{theorem:quantum-ergodicity} and \eqref{theorem:quantum-ergodicity2}.

\begin{proof}[Proof of Theorem \ref{theorem:quantum-ergodicity}]
Set $N(R) := \sharp(\Omega \cap B(0,R))$. By standard measure-theoretic arguments (see \cite[Proof of Theorem 15.5]{Zworski-12}), the proof of Theorem \ref{theorem:quantum-ergodicity} boils down to proving that for all $a \in C^\infty(F)$, one has:
\begin{equation}
\label{equation:bien}
\dfrac{1}{N(R)} \sum_{(\mathbf{k},\lambda) \in \Omega \cap B(0,R)} \left|\langle au_{\mathbf{k},\lambda}, u_{\mathbf{k},\lambda}\rangle - \dfrac{1}{\vol(F)}\int_F a(w)\, \dd w \right|^2 \to_{R\to\infty} 0.
\end{equation}
Up to changing $a$ by $a- \tfrac{1}{\vol(F)}\int_F a(w)\, \dd w$, we can further assume that $\int_F a(w) \dd w=0$ and prove \eqref{equation:bien} in this case.

Notice that
\[
\begin{split}
\dfrac{1}{N(R)} \sum_{(\mathbf{k},\lambda) \in \Omega \cap B(0,R)} \left|\langle au_{\mathbf{k},\lambda}, u_{\mathbf{k},\lambda}\rangle \right|^2 \leq \dfrac{R^r}{N(R)} R^{-r}\sum_{(\mathbf{k},\lambda) \in C(R)} \left|\langle au_{\mathbf{k},\lambda}, u_{\mathbf{k},\lambda}\rangle \right|^2.
\end{split}
\]
Since by \eqref{equation:size-cr} we have $N(R) \geq CR^r$ for some uniform $C > 0$, we obtain that $R^r/N(R)$ is uniformly bounded as $R \to \infty$. Hence, it suffices to show that
\begin{equation}
\label{equation:kkk}
R^{-r}\sum_{(\mathbf{k},\lambda) \in C(R)} \left|\langle au_{\mathbf{k},\lambda}, u_{\mathbf{k},\lambda}\rangle \right|^2 \to_{R\to\infty} 0.
\end{equation}
Let $\pi : \HH^* \to F$ denote the projection. Let $\psi \in C^\infty_{\mathrm{comp}}(\HH^*)$ be an arbitrary cutoff function equal to $1$ on $B_2\HH^*$. As in the proof of Lemma \ref{lemma:local-weyl}, one has $\Op_h^{\mathrm{BW}}((1-\psi) \pi^* a) u_{\mathbf{k},\lambda} = \mc{O}(h^\infty)$ by the support property of $\psi$ and that the microsupport of $u_{\mathbf{k}, \lambda}$ is contained in $B_1\HH^*$, for all $(\mathbf{k},\lambda) \in C(R)$. Furthermore, by symbolic calculus, one has $\Op_h^{\mathrm{BW}}(\pi^*a) = \pi^*a + \mc{O}_{\Psi^{-1}}(h)$, where we recall $h := R^{-1}$. This implies that 
\begin{equation}
\label{equation:vavite2}
\langle a u_{\mathbf{k},\lambda}, u_{\mathbf{k},\lambda}\rangle_{L^2} = \langle \Op_h^{\mathrm{BW}}(\psi~\pi^* a) u_{\mathbf{k},\lambda}, u_{\mathbf{k},\lambda}\rangle_{L^2} + \mc{O}(h),
\end{equation}
and thus
\begin{equation}
\label{equation:to}
R^{-r}\sum_{(\mathbf{k},\lambda) \in C(R)} \left|\langle au_{\mathbf{k},\lambda}, u_{\mathbf{k},\lambda}\rangle \right|^2 = R^{-r}\sum_{(\mathbf{k},\lambda) \in C(R)} \left|\langle \Op^{\mathrm{BW}}_h(\psi~\pi^*a)u_{\mathbf{k},\lambda}, u_{\mathbf{k},\lambda}\rangle \right|^2 + \mc{O}(R^{-1}).
\end{equation}
So it now suffices to prove \eqref{equation:kkk} with $a$ replaced by $\Op_h^{\mathrm{BW}}(\psi ~\pi^* a)$.

Using Egorov's theorem \eqref{equation:egorov-utile}, we have
\[
\begin{split}
 \langle \Op_h^{\mathrm{BW}}(\psi~\pi^* a) u_{\mathbf{k},\lambda}, u_{\mathbf{k},\lambda}\rangle_{L^2} & =  \langle \Op_h^{\mathrm{BW}}(\psi~\pi^* a) e^{-it h\Delta_{\mathbf{k}}}u_{\mathbf{k},\lambda}, e^{-it h\Delta_{\mathbf{k}}} u_{\mathbf{k},\lambda}\rangle_{L^2} \\
 & = \langle e^{\tfrac{it}{h} h^2\Delta_{\mathbf{k}}}\Op_h^{\mathrm{BW}}(\psi~\pi^* a) e^{-\tfrac{it}{h} h^2\Delta_{\mathbf{k}}}u_{\mathbf{k},\lambda},  u_{\mathbf{k},\lambda}\rangle_{L^2}  \\
 & = \langle \Op_{h}^{\mathrm{BW}}((\psi~\pi^*a) \circ \Phi_t)u_{\mathbf{k},\lambda},  u_{\mathbf{k},\lambda}\rangle_{L^2} + \mc{O}(h) \\
 & = \langle \Op_{h}^{\mathrm{BW}}(\langle\psi~\pi^*a\rangle_T) u_{\mathbf{k},\lambda},  u_{\mathbf{k},\lambda}\rangle_{L^2} + \mc{O}_T(h),
 \end{split}
\]
where $T >0$ is arbitrary, we recall that $(\Phi_t)_{t \in \mathbb{R}}$ is the Hamiltonian flow of $|\xi_{\HH}|^2$ with respect to $\omega_0$, and we use the notation
\[
\langle b\rangle_T := \dfrac{1}{T} \int_0^T b \circ \Phi_t ~ \dd t,\quad b \in C^\infty(\HH^*).
\]
By the previous equality and \eqref{equation:local-weyl} (we also use \eqref{equation:size-cr}), we then obtain:
\[
\begin{split}
R^{-r}\sum_{(\mathbf{k},\lambda) \in C(R)}& \left|\langle \Op^{\mathrm{BW}}_h(\psi~\pi^*a)u_{\mathbf{k},\lambda}, u_{\mathbf{k},\lambda}\rangle \right|^2\\
&  = R^{-r}\sum_{(\mathbf{k},\lambda) \in C(R)} \left|\langle \Op^{\mathrm{BW}}_h(\langle\psi~\pi^*a\rangle_T) u_{\mathbf{k},\lambda}, u_{\mathbf{k},\lambda}\rangle \right|^2 + \mc{O}_T(R^{-1}) \\
& \leq C\|\langle\pi^*a\rangle_T\|^2_{L^2(B_{1+\eps}\HH^*)} + \mc{O}_{T, \varepsilon}(R^{-1}),
\end{split}
\]
where in the last line we used that $\psi = 1$ on $B_{1 + \varepsilon}\HH^*$. Using \eqref{equation:to}, taking the $\limsup$ in $R \to \infty$, and then $\varepsilon \to 0$ (we may do so since the constant $C$ does not depend on $\varepsilon > 0$), we therefore obtain:
\begin{equation}
\label{equation:finito}
\limsup_{R \to \infty} R^{-r}\sum_{(\mathbf{k},\lambda) \in C(R)} \left|\langle au_{\mathbf{k},\lambda}, u_{\mathbf{k},\lambda}\rangle \right|^2 \leq C\|\langle \pi^*a\rangle_T\|^2_{L^2(B_{1}\HH^*)}.
\end{equation}
Now, observe that for each energy layer $S_E\HH^* := \{|\xi_{\HH^*}|=E\}$ with $E > 0$, the Hamiltonian flow $(\Phi_t|_{S_E\HH^*})_{t \in \R}$ is ergodic by Lemma \ref{lemma:ergodic-layer}. (For $E=0$, this flow is the identity map on $S_0\HH^* \simeq F$.) This implies that for all $b \in C^\infty(\HH^*)$,
\[
\langle b|_{S_E\HH^*}\rangle_T \to_{T \to \infty} \dfrac{1}{\vol(S_E\HH^*)}\int_{S_E\HH^*} b(w,\xi)\, \dd\mu_E(w,\xi),
\]
where the convergence holds in $L^2(S_E\HH^*)$, and we write $\dd\mu_E$ for the Riemannian measure on $S_E \HH^*$. As a consequence, we find that
\[
\|\langle\pi^*a\rangle_T\|^2_{L^2(B_{1}\HH^*)} = \int_{E=0}^{1} \int_{S_E\HH^*} \langle\pi^*a\rangle_T^2\, \dd\mu_E(w,\xi)\, \dd E \to_{T \to \infty} 0,
\]
by dominated convergence, since
\[
\int_{S_E\HH^*} \pi^*a(w,\xi)\, \dd\mu_E(w,\xi) = \vol(\mathbb{S}^{n - 1}_E) \int_F a(w)\, \dd w = 0,
\]
by assumption, where $\mathbb{S}^{n - 1}_E \subset \mathbb{R}^n$ denotes the sphere of radius $E$ in $\mathbb{R}^n$. Taking the limit as $T \to \infty$ in \eqref{equation:finito}, we finally obtain \eqref{equation:kkk}, which concludes the proof.
\end{proof}

We now prove Theorem \ref{theorem:quantum-ergodicity2}. Since the proof is very similar to the proof of Theorem \ref{theorem:quantum-ergodicity}, instead of going through the same steps again, we emphasize the main changes in the argument.

\begin{proof}[Proof of Theorem \ref{theorem:quantum-ergodicity2}] We divide the proof into two steps.
\medskip

\emph{Step 1.} First, the estimate \eqref{equation:local-weyl} in Lemma \ref{lemma:local-weyl} should be replaced by the following one: there exists a constant $C > 0$ such that for all $a \in C^\infty_{\mathrm{comp}}(\HH^* \times \mathfrak{a})$ and all $R > 1,\delta > 0$,
\begin{equation}
\label{equation:local-weyl2}
\begin{split}
R^{-r} \sum_{(\mathbf{k},\lambda) \in C(R)} & \left|\langle \Op^{\mathrm{BW}}_{h}(a(\bullet,\mathbf{k}/R)) u_{\mathbf{k},\lambda},u_{\mathbf{k},\lambda}\rangle_{L^2(F,\mathbf{L}^{\otimes \mathbf{k}})}\right|^2 \\
& \qquad \leq C \int_{y \in B_1\mathfrak{a}_+} \|a(\bullet,y)\|^2_{L^2(B_{1+\delta}\HH^*)}\, \dd y + \mc{O}_{a,\delta}(R^{-1}),
\end{split}
\end{equation}
where $\dd y$ is the Riemannian volume on the Lie algebra $\mathfrak{a}$ (induced by the choice of metric on $G$), we recall that $h = R^{-1}$, and $B_1\mathfrak{a}_+$ denotes the unit ball in $\mathfrak{a}_+$. The proof is \emph{verbatim} the same as Lemma \ref{lemma:local-weyl} (keeping track of the dependence of $a$ on the new variable), except at the very last step where the left-hand side of \eqref{equation:local-weyl2} is now bounded by
\[
\begin{split}
R^{-r} \sum_{|\mathbf{k}| \leq R}&  \dfrac{d_{\mathbf{k}} R^n}{\vol(G/T)(2\pi)^n} \int_{\HH^*} |a|^2\chi_\delta(w,\xi, \mathbf{k}/R)\, \dd w \dd \xi  + \mc{O}_{a,\delta}(R^{-1}) \\
& \qquad \leq C R^{-\mathrm{rank}(G)} \sum_{|\mathbf{k}| \leq R}  \|a\chi_\delta^{1/2}(\bullet,\mathbf{k}/R)\|^2_{L^2(\HH^*)} + \mc{O}_{a,\delta}(R^{-1}),
\end{split}
\]
where we recall that $\chi_\delta \in C^\infty_{\mathrm{comp}}(\HH^*, [0, 1])$ is a cutoff function equal to $1$ on $B_{1 + \frac{\delta}{2}}\HH^*$ and to $0$ outside $B_{1 + \delta}\HH^*$. Also, we used in the last inequality the Weyl dimension formula, see \eqref{equation:weyl-dim}, and that $r=n+(\dim(G)+\mathrm{rank}(G))/2$. This is now a Riemann sum and can thus be bounded by the term on the right-hand side of \eqref{equation:local-weyl2}.
\medskip

\emph{Step 2.} The proof is then very similar to that of Theorem \ref{theorem:quantum-ergodicity} and boils down, by a mere counting argument, to showing that for $a \in C^\infty(F)$ with $0$ average,
\[
R^{-r}\sum_{(\mathbf{k},\lambda) \in C(R), \lambda \geq \eps R} \left|\langle au_{\mathbf{k},\lambda}, u_{\mathbf{k},\lambda}\rangle \right|^2 \to_{R\to\infty} 0.
\]
Modulo $\mc{O}(R^{-1})$ remainders, the condition $\lambda \geq \eps R$ allows to replace $\pi^*a$ by $a'$ such that $a' \equiv \pi^* a$ on the energy shells $S_E \HH^*$ for $\eps - \delta' \leq E \leq 1 + \delta'$ and $a'$ has compact support in the energy shells $[\varepsilon - 2\delta', 1+2\delta']$ (where $\delta' > 0$ can be taken arbitrarily small) since the $u_{\mathbf{k},\lambda}$'s have semiclassical microsupport in the energy shells $[\eps,1]$. One then finds using \eqref{equation:local-weyl2} that:
\[
\begin{split}
R^{-r}\sum_{(\mathbf{k},\lambda) \in C(R), \lambda \geq \eps R}& \left|\langle au_{\mathbf{k},\lambda}, u_{\mathbf{k},\lambda}\rangle \right|^2 \\
& \leq R^{-r}\sum_{(\mathbf{k},\lambda) \in C(R)} \left|\langle \Op^{\mathrm{BW}}_h(a')u_{\mathbf{k},\lambda}, u_{\mathbf{k},\lambda}\rangle \right|^2 + \mc{O}(R^{-1})\\
&  = R^{-r}\sum_{(\mathbf{k},\lambda) \in C(R)} \left|\langle \Op^{\mathrm{BW}}_h(\langle a'\rangle_T^{\omega_{h,\mathbf{k}}}) u_{\mathbf{k},\lambda}, u_{\mathbf{k},\lambda}\rangle \right|^2 + \mc{O}_T(R^{-1}) \\
& \leq C \int_{y \in B_1\mathfrak{a}_+} \|\langle a' \rangle_T^{\omega(y)}\|^2_{L^2(\HH^*)} \,\dd y + \mc{O}_T(R^{-1}),
\end{split}
\]
where $\omega(y) := \omega_0 + iy\cdot\mathbf{F}_{\overline{\nabla}}$ and $y \in \mathfrak{a}_+$, the positive Weyl chamber. Taking the limsup as $R \to +\infty$ and then the limit as $\delta' \to 0$, one finds:
\[
\begin{split}
\limsup_{R \to \infty} R^{-r}\sum_{(\mathbf{k},\lambda) \in C(R), \lambda \geq \eps R}& \left|\langle au_{\mathbf{k},\lambda}, u_{\mathbf{k},\lambda}\rangle \right|^2 \\
& \leq C \int_{y \in B_1\mathfrak{a}_+} \int_{E=\eps}^1 \|\langle \pi^*a\rangle_T^{\omega(y)}\|^2_{L^2(S_E\HH^*)}\, \dd E \dd y.
\end{split}
\]
By assumption, $a$ has $0$ average in $F$ and so has $\pi^*a$ on $S_E\HH^*$ for any $E > 0$. Since the connection is by assumption $\eps$-admissible, all the flows $(\Phi_t^{\omega(y)})_{t \in \R}$ are ergodic on the energy shells $S_E\HH^*$ for $E \in [\eps,1]$. Hence by dominated convergence, the previous integral converges to $0$ as $T \to \infty$. This completes the proof.
\end{proof}

\newpage

\hspace{1cm}

\bibliographystyle{alpha}
\bibliography{Biblio}

\begin{thebibliography}{GBGHW24}

\bibitem[AN07]{Anantharaman-Nonnenmacher-07}
Nalini Anantharaman and St{\'e}phane Nonnenmacher.
\newblock Half-delocalization of eigenfunctions for the {Laplacian} on an
  {Anosov} manifold.
\newblock {\em Ann. Inst. Fourier}, 57(7):2465--2523, 2007.

\bibitem[Ana08]{Anantharaman-08}
Nalini Anantharaman.
\newblock Entropy and the localization of eigenfunctions.
\newblock {\em Ann. Math. (2)}, 168(2):435--475, 2008.

\bibitem[BBB81]{BerardBergery-Bourguignon-81}
Lionel B{\'e}rard~Bergery and Jean-Pierre Bourguignon.
\newblock Laplacian and {Riemannian} submersions with totally geodesic fibres.
\newblock Global differential geometry and global analysis, {Proc}. {Colloq}.,
  {Berlin} 1979, {Lect}. {Notes} {Math}. 838, 30-35 (1981)., 1981.

\bibitem[BD18]{Bourgain-Dyatlov-18}
Jean Bourgain and Semyon Dyatlov.
\newblock Spectral gaps without the pressure condition.
\newblock {\em Ann. Math. (2)}, 187(3):825--867, 2018.

\bibitem[BdMG81]{BoutetDeMonvel-Guillemin-81}
L.~Boutet~de Monvel and V.~Guillemin.
\newblock {\em The spectral theory of {Toeplitz} operators}, volume~99 of {\em
  Ann. Math. Stud.}
\newblock Princeton University Press, Princeton, NJ, 1981.

\bibitem[BG80]{Brin-Gromov-80}
Michael Brin and Mikhael Gromov.
\newblock On the ergodicity of frame flows.
\newblock {\em Invent. Math.}, 60(1):1--7, 1980.

\bibitem[BK84]{Brin-Karcher-83}
Michael Brin and Hermann Karcher.
\newblock Frame flows on manifolds with pinched negative curvature.
\newblock {\em Compositio Math.}, 52(3):275--297, 1984.

\bibitem[BL23]{Bonthonneau-Lefeuvre-23}
Yannick~Guedes Bonthonneau and Thibault Lefeuvre.
\newblock Radial source estimates in {H{\"o}lder}-{Zygmund} spaces for
  hyperbolic dynamics.
\newblock {\em Ann. Henri Lebesgue}, 6:643--686, 2023.

\bibitem[BMR24]{Ben-Ovadia-Ma-Rogdriguez-Hertz-24}
Snir {Ben Ovadia}, Qiaochu {Ma}, and Federico {Rodriguez-Hertz}.
\newblock {Mixed quantization and partial hyperbolicity}.
\newblock {\em arXiv e-prints}, September 2024.

\bibitem[BMZ17]{Bismut-Ma-Zhang-17}
Jean-Michel Bismut, Xiaonan Ma, and Weiping Zhang.
\newblock Asymptotic torsion and {T}oeplitz operators.
\newblock {\em J. Inst. Math. Jussieu}, 16(2):223--349, 2017.

\bibitem[Bor53]{Borel-53}
Armand Borel.
\newblock Sur la cohomologie des espaces fibr\'{e}s principaux et des espaces
  homog\`enes de groupes de {L}ie compacts.
\newblock {\em Ann. of Math. (2)}, 57:115--207, 1953.

\bibitem[BP02]{Burns-Paternain-02}
Keith Burns and Gabriel~P. Paternain.
\newblock Anosov magnetic flows, critical values and topological entropy.
\newblock {\em Nonlinearity}, 15(2):281--314, 2002.

\bibitem[BP03]{Burns-Pollicott-03}
Keith Burns and Mark Pollicott.
\newblock Stable ergodicity and frame flows.
\newblock {\em Geom. Dedicata}, 98:189--210, 2003.

\bibitem[Bri75a]{Brin-75-2}
Michael Brin.
\newblock Topological transitivity of a certain class of dynamical systems, and
  flows of frames on manifolds of negative curvature.
\newblock {\em Funkcional. Anal. i Prilo\v{z}en.}, 9(1):9--19, 1975.

\bibitem[Bri75b]{Brin-75-1}
Michael Brin.
\newblock The topology of group extensions of {$C$}-systems.
\newblock {\em Mat. Zametki}, 18(3):453--465, 1975.

\bibitem[Bri82]{Brin-82}
Michael Brin.
\newblock Ergodic theory of frame flows.
\newblock In {\em Ergodic theory and dynamical systems, {II} ({C}ollege {P}ark,
  {M}d., 1979/1980)}, volume~21 of {\em Progr. Math.}, pages 163--183.
  Birkh\"{a}user, Boston, Mass., 1982.

\bibitem[BT82]{Bott-Tu-82}
Raoul Bott and Loring~W. Tu.
\newblock {\em Differential forms in algebraic topology}, volume~82 of {\em
  Graduate Texts in Mathematics}.
\newblock Springer-Verlag, New York-Berlin, 1982.

\bibitem[BtD85]{Brocker-Dieck-85}
Theodor Br\"{o}cker and Tammo tom Dieck.
\newblock {\em Representations of compact {L}ie groups}, volume~98 of {\em
  Graduate Texts in Mathematics}.
\newblock Springer-Verlag, New York, 1985.

\bibitem[BW99]{Burns-Wilkinson-99}
Keith Burns and Amie Wilkinson.
\newblock Stable ergodicity of skew products.
\newblock {\em Ann. Sci. {\'E}c. Norm. Sup{\'e}r. (4)}, 32(6):859--889, 1999.

\bibitem[CdV85]{Colindeverdiere-85}
Yves Colin~de Verdi{\`e}re.
\newblock Ergodicit{\'e} et fonctions propres du {Laplacien}.
\newblock S{\'e}min., {\'E}quations {D{\'e}riv}. {Partielles} 1984-1985, {Exp}.
  {No}. 13, 7 p. (1985)., 1985.

\bibitem[CEG23]{Chen-Erchenko-Gogolev-23}
Dong Chen, Alena Erchenko, and Andrey Gogolev.
\newblock Riemannian {Anosov} extension and applications.
\newblock {\em J. {\'E}c. Polytech., Math.}, 10:945--987, 2023.

\bibitem[CG21]{Cekic-Guillarmou-21}
Mihajlo Ceki\'{c} and Colin Guillarmou.
\newblock First band of {R}uelle resonances for contact {A}nosov flows in
  dimension 3.
\newblock {\em Comm. Math. Phys.}, 386(2):1289--1318, 2021.

\bibitem[Cha]{Charles-00}
Laurent Charles.
\newblock {\em Semi-classical aspects of geometric quantization}.

\bibitem[Cha23]{Charles-23}
Laurent Charles.
\newblock {\em Semiclassical analysis on Kähler manifolds}.
\newblock available online, 2023.

\bibitem[CL21]{Cekic-Lefeuvre-21-2}
Mihajlo {Ceki{\'c}} and Thibault {Lefeuvre}.
\newblock {Generic injectivity of the X-ray transform}.
\newblock {\em accepted in Journal of Differential Geometry}, July 2021.

\bibitem[CL22]{Cekic-Lefeuvre-22}
Mihajlo {Ceki{\'c}} and Thibault {Lefeuvre}.
\newblock {Isospectral connections, ergodicity of frame flows, and polynomial
  maps between spheres}.
\newblock {\em arXiv e-prints}, page arXiv:2209.11109, September 2022.

\bibitem[CL24]{Charles-Lefeuvre-24}
L.~Charles and T.~Lefeuvre.
\newblock Semiclassical defect measures of magnetic laplacians on surface.
\newblock {\em In preparation}, 2024.

\bibitem[CLMS21]{Cekic-Lefeuvre-Moroianu-Semmelmann-21}
Mihajlo {Ceki{\'c}}, Thibault {Lefeuvre}, Andrei {Moroianu}, and Uwe
  {Semmelmann}.
\newblock {On the ergodicity of the frame flow on even-dimensional manifolds}.
\newblock {\em arXiv e-prints}, page arXiv:2111.14811, November 2021.

\bibitem[CLMS22]{Cekic-Lefeuvre-Moroianu-Semmelmann-22}
Mihajlo Ceki{\'c}, Thibault Lefeuvre, Andrei Moroianu, and Uwe Semmelmann.
\newblock Towards {Brin}'s conjecture on frame flow ergodicity: new progress
  and perspectives.
\newblock {\em Math. Res. Rep. (Amst.)}, 3:21--34, 2022.

\bibitem[CLMS23]{Cekic-Lefeuvre-Moroianu-Semmelmann-23}
Mihajlo {Ceki{\'c}}, Thibault {Lefeuvre}, Andrei {Moroianu}, and Uwe
  {Semmelmann}.
\newblock {On the ergodicity of unitary frame flows on K{\"a}hler manifolds}.
\newblock {\em arXiv e-prints}, page arXiv:2301.05933, January 2023.

\bibitem[CP]{Crovisier-Potrie-notes}
Sylvain Crovisier and Rafael Potrie.
\newblock Introduction to partially hyperbolic dynamics.

\bibitem[CS98]{Croke-Sharafutdinov-98}
Christopher~B. Croke and Vladimir~A. Sharafutdinov.
\newblock Spectral rigidity of a compact negatively curved manifold.
\newblock {\em Topology}, 37(6):1265--1273, 1998.

\bibitem[DJ18]{Dyatlov-Jin-18}
Semyon Dyatlov and Long Jin.
\newblock Semiclassical measures on hyperbolic surfaces have full support.
\newblock {\em Acta Math.}, 220(2):297--339, 2018.

\bibitem[DJN22]{Dyatlov-Jin-Nonnenmacher-22}
Semyon Dyatlov, Long Jin, and St{\'e}phane Nonnenmacher.
\newblock Control of eigenfunctions on surfaces of variable curvature.
\newblock {\em J. Am. Math. Soc.}, 35(2):361--465, 2022.

\bibitem[DK00]{Duistermaat-Kolk-00}
J.~J. Duistermaat and J.~A.~C. Kolk.
\newblock {\em Lie groups}.
\newblock Universitext. Berlin: Springer, 2000.

\bibitem[Dol98]{Dolgopyat-98}
Dmitry Dolgopyat.
\newblock On decay of correlations in {Anosov} flows.
\newblock {\em Ann. Math. (2)}, 147(2):357--390, 1998.

\bibitem[Dol02]{Dolgopyat-02}
Dmitry Dolgopyat.
\newblock On mixing properties of compact group extensions of hyperbolic
  systems.
\newblock {\em Isr. J. Math.}, 130:157--205, 2002.

\bibitem[Don88]{Donnay-88}
Victor~J. Donnay.
\newblock Geodesic flow on the two-sphere. {II}. {E}rgodicity.
\newblock In {\em Dynamical systems ({C}ollege {P}ark, {MD}, 1986--87)}, volume
  1342 of {\em Lecture Notes in Math.}, pages 112--153. Springer, Berlin, 1988.

\bibitem[Dro17]{Drouot-17}
Alexis Drouot.
\newblock Stochastic stability of {P}ollicott-{R}uelle resonances.
\newblock {\em Comm. Math. Phys.}, 356(2):357--396, 2017.

\bibitem[DS99]{Dimassi-Sjostrand-99}
Mouez Dimassi and Johannes Sj\"{o}strand.
\newblock {\em Spectral asymptotics in the semi-classical limit}, volume 268 of
  {\em London Mathematical Society Lecture Note Series}.
\newblock Cambridge University Press, Cambridge, 1999.

\bibitem[DS03]{Dairbekov-Sharafutdinov-03}
Nurlan~S. Dairbekov and Vladimir~A. Sharafutdinov.
\newblock Some problems of integral geometry on {A}nosov manifolds.
\newblock {\em Ergodic Theory Dynam. Systems}, 23(1):59--74, 2003.

\bibitem[Dya22]{Dyatlov-22}
Semyon Dyatlov.
\newblock Around quantum ergodicity.
\newblock {\em Ann. Math. Qu{\'e}.}, 46(1):11--26, 2022.

\bibitem[DZ15]{Dyatlov-Zworski-15}
Semyon Dyatlov and Maciej Zworski.
\newblock Stochastic stability of {P}ollicott-{R}uelle resonances.
\newblock {\em Nonlinearity}, 28(10):3511--3533, 2015.

\bibitem[DZ16]{Dyatlov-Zworski-16}
Semyon Dyatlov and Maciej Zworski.
\newblock Dynamical zeta functions for {A}nosov flows via microlocal analysis.
\newblock {\em Ann. Sci. \'{E}c. Norm. Sup\'{e}r. (4)}, 49(3):543--577, 2016.

\bibitem[DZ17]{Dyatlov-Zworski-17}
Semyon Dyatlov and Maciej Zworski.
\newblock Ruelle zeta function at zero for surfaces.
\newblock {\em Invent. Math.}, 210(1):211--229, 2017.

\bibitem[DZ19]{Dyatlov-Zworski-19}
Semyon Dyatlov and Maciej Zworski.
\newblock {\em Mathematical theory of scattering resonances}, volume 200 of
  {\em Grad. Stud. Math.}
\newblock Providence, RI: American Mathematical Society (AMS), 2019.

\bibitem[EL24]{Erchenko-Lefeuvre-24}
Alena Erchenko and Thibault Lefeuvre.
\newblock Marked boundary rigidity for surfaces of {Anosov} type.
\newblock {\em Math. Z.}, 306(3):22, 2024.
\newblock Id/No 36.

\bibitem[FMT07]{Field-Melbourne-Torok-07}
Michael Field, Ian Melbourne, and Andrei T\"{o}r\"{o}k.
\newblock Stability of mixing and rapid mixing for hyperbolic flows.
\newblock {\em Ann. of Math. (2)}, 166(1):269--291, 2007.

\bibitem[FRS08]{Faure-Roy-Sjostrand-08}
Fr\'{e}d\'{e}ric Faure, Nicolas Roy, and Johannes Sj\"{o}strand.
\newblock Semi-classical approach for {A}nosov diffeomorphisms and {R}uelle
  resonances.
\newblock {\em Open Math. J.}, 1:35--81, 2008.

\bibitem[FS11]{Faure-Sjostrand-11}
Fr\'{e}d\'{e}ric Faure and Johannes Sj\"{o}strand.
\newblock Upper bound on the density of {R}uelle resonances for {A}nosov flows.
\newblock {\em Comm. Math. Phys.}, 308(2):325--364, 2011.

\bibitem[GBGHW24]{Bonthonneau-Guillarmou-Hilgert-Weich-23}
Yannick Guedes~Bonthonneau, Colin Guillarmou, Joachim Hilgert, and Tobias
  Weich.
\newblock Ruelle-taylor resonances of anosov actions.
\newblock 2024.
\newblock to appear in Journal of the European Mathematical Society.

\bibitem[GBGW24]{Bonthonneau-Guillarmou-Weich-24}
Yannick Guedes~Bonthonneau, Colin Guillarmou, and Tobias Weich.
\newblock Srb measures for anosov actions.
\newblock 2024.
\newblock to appear in Journal of Differential Geometry.

\bibitem[GK80]{Guillemin-Kazhdan-80}
Victor Guillemin and David Kazhdan.
\newblock Some inverse spectral results for negatively curved {$2$}-manifolds.
\newblock {\em Topology}, 19(3):301--312, 1980.

\bibitem[GK21]{Guillarmou-Kuster-21}
Colin Guillarmou and Benjamin K{\"u}ster.
\newblock Spectral theory of the frame flow on hyperbolic 3-manifolds.
\newblock {\em Ann. Henri Poincar{\'e}}, 22(11):3565--3617, 2021.

\bibitem[GKL22]{Guillarmou-Knieper-Lefeuvre-22}
Colin Guillarmou, Gerhard Knieper, and Thibault Lefeuvre.
\newblock Geodesic stretch, pressure metric and marked length spectrum
  rigidity.
\newblock {\em Ergodic Theory Dyn. Syst.}, 42(3):974--1022, 2022.

\bibitem[GL19]{Guillarmou-Lefeuvre-18}
Colin Guillarmou and Thibault Lefeuvre.
\newblock The marked length spectrum of {A}nosov manifolds.
\newblock {\em Ann. of Math. (2)}, 190(1):321--344, 2019.

\bibitem[Gui17]{Guillarmou-17-1}
Colin Guillarmou.
\newblock Invariant distributions and {X}-ray transform for {A}nosov flows.
\newblock {\em J. Differential Geom.}, 105(2):177--208, 2017.

\bibitem[Hat]{Hatcher-notes}
Allen Hatcher.
\newblock Spectral sequences.

\bibitem[Hat02]{Hatcher-02}
Allen Hatcher.
\newblock {\em Algebraic topology}.
\newblock Cambridge University Press, Cambridge, 2002.

\bibitem[HK09]{Helffer-Kordyukov-09}
B.~Helffer and Y.~A. Kordyukov.
\newblock Spectral gaps for periodic {Schr{\"o}dinger} operators with
  hypersurface magnetic wells: analysis near the bottom.
\newblock {\em J. Funct. Anal.}, 257(10):3043--3081, 2009.

\bibitem[H{\"o}r67]{Hormander-67}
Lars H{\"o}rmander.
\newblock Hypoelliptic second order differential equations.
\newblock {\em Acta Math.}, 119:147--171, 1967.

\bibitem[Kan93]{Kanai-93}
Masahiko Kanai.
\newblock Differential-geometric studies on dynamics of geodesic and frame
  flows.
\newblock {\em Jpn. J. Math., New Ser.}, 19(1):1--30, 1993.

\bibitem[KM12]{Kahn-Markovic-12}
Jeremy Kahn and Vladimir Markovic.
\newblock Immersing almost geodesic surfaces in a closed hyperbolic three
  manifold.
\newblock {\em Ann. of Math. (2)}, 175(3):1127--1190, 2012.

\bibitem[KN63]{Kobayashi-Nomizu-63}
Shoshichi Kobayashi and Katsumi Nomizu.
\newblock {\em Foundations of differential geometry. {I}}, volume~15 of {\em
  Intersci. Tracts Pure Appl. Math.}
\newblock Interscience Publishers, New York, NY, 1963.

\bibitem[KN69]{Kobayashi-Nomizu-69}
Shoshichi Kobayashi and Katsumi Nomizu.
\newblock {\em Foundations of differential geometry. {Vol}. {II}}, volume~15 of
  {\em Intersci. Tracts Pure Appl. Math.}
\newblock Interscience Publishers, New York, NY, 1969.

\bibitem[Kna02]{Knapp-02}
Anthony~W. Knapp.
\newblock {\em Lie groups beyond an introduction}, volume 140 of {\em Prog.
  Math.}
\newblock Boston, MA: Birkh{\"a}user, 2nd ed. edition, 2002.

\bibitem[KWL20]{Kolb-Weich-Wolf-20}
Martin {Kolb}, Tobias {Weich}, and Lasse {Lennart Wolf}.
\newblock {Spectral Asymptotics for Kinetic Brownian Motion on Surfaces of
  Constant Curvature}.
\newblock {\em arXiv e-prints}, page arXiv:2011.06434, November 2020.

\bibitem[Lab13]{Labourie-13}
Fran{\c{c}}ois Labourie.
\newblock {\em Lectures on representations of surface groups}.
\newblock Zur. Lect. Adv. Math. Z{\"u}rich: European Mathematical Society
  (EMS), 2013.

\bibitem[{Lef}]{Lefeuvre-book}
Thibault {Lefeuvre}.
\newblock {Microlocal analysis in hyperbolic dynamics and geometry}.
\newblock {\em In preparation, available at \url{thibaultlefeuvre.blog}}.

\bibitem[Lef23]{Lefeuvre-23}
Thibault Lefeuvre.
\newblock Isometric extensions of {Anosov} flows via microlocal analysis.
\newblock {\em Commun. Math. Phys.}, 399(1):453--479, 2023.

\bibitem[Lin06]{Lindenstrauss-06}
Elon Lindenstrauss.
\newblock Invariant measures and arithmetic unique ergodicity. {Appendix} by
  {E}. {Lindenstrauss} and {D}. {Rudolph}.
\newblock {\em Ann. Math. (2)}, 163(1):165--219, 2006.

\bibitem[Ma24]{Ma-24}
Qiaochu Ma.
\newblock Toeplitz operators and the full asymptotic torsion forms.
\newblock {\em J. Funct. Anal.}, 286(3):Paper No. 110210, 74, 2024.

\bibitem[Mar22]{Maret-22}
Arnaud Maret.
\newblock A note on character varieties, 2022.

\bibitem[Mel94]{Melrose-94}
Richard~B. Melrose.
\newblock Spectral and scattering theory for the {L}aplacian on asymptotically
  {E}uclidian spaces.
\newblock In {\em Spectral and scattering theory ({S}anda, 1992)}, volume 161
  of {\em Lecture Notes in Pure and Appl. Math.}, pages 85--130. Dekker, New
  York, 1994.

\bibitem[MM23]{Ma-Ma-23}
Minghui {Ma} and Qiaochu {Ma}.
\newblock {Semiclassical analysis, geometric representation and quantum
  ergodicity}.
\newblock {\em arXiv e-prints}, page arXiv:2302.09497, February 2023.

\bibitem[Moo87]{Moore-87}
Calvin~C. Moore.
\newblock Exponential decay of correlation coefficients for geodesic flows.
\newblock In {\em Group representations, ergodic theory, operator algebras, and
  mathematical physics ({B}erkeley, {C}alif., 1984)}, volume~6 of {\em Math.
  Sci. Res. Inst. Publ.}, pages 163--181. Springer, New York, 1987.

\bibitem[MP11]{Merry-Paternain-11}
Will Merry and Gabriel~P. Paternain.
\newblock {\em Inverse Problems in Geometry and Dynamics}.
\newblock Lecture notes, 2011.
\newblock Available online:
  \url{https://www.dpmms.cam.ac.uk/~gpp24/ipgd(3).pdf}.

\bibitem[MS75]{Melin-Sjostrand-75}
Anders Melin and Johannes Sj{\"o}strand.
\newblock Fourier integral operators with complex-valued phase functions.
\newblock Fourier {Integr}. {Oper}. part. differ. {Equat}., {Colloq}. int.
  {Nice} 1974, {Lect}. {Notes} {Math}. 459, 120-223 (1975)., 1975.

\bibitem[OK99]{Omori-Kobayashi-99}
Hideki Omori and Takao Kobayashi.
\newblock Global hypoellipticity of subelliptic operators on closed manifolds.
\newblock {\em Hokkaido Math. J.}, 28(3):613--633, 1999.

\bibitem[Omo91]{Omori-91}
Hideki Omori.
\newblock On global hypoellipticity of horizontal {Laplacians} on compact
  principal bundles.
\newblock {\em Hokkaido Math. J.}, 20(2):185--194, 1991.

\bibitem[Pla72]{Plante-72}
Joseph~F. Plante.
\newblock Anosov flows.
\newblock {\em Am. J. Math.}, 94:729--754, 1972.

\bibitem[Pol92]{Pollicott-92}
Mark Pollicott.
\newblock Exponential mixing for the geodesic flow on hyperbolic
  three-manifolds.
\newblock {\em J. Statist. Phys.}, 67(3-4):667--673, 1992.

\bibitem[PP96]{Paternain-Paternain-96}
Gabriel~P. Paternain and Miguel Paternain.
\newblock Anosov geodesic flows and twisted symplectic structures.
\newblock In {\em International {C}onference on {D}ynamical {S}ystems
  ({M}ontevideo, 1995)}, volume 362 of {\em Pitman Res. Notes Math. Ser.},
  pages 132--145. Longman, Harlow, 1996.

\bibitem[PS96]{Pugh-Shub-96}
Charles Pugh and Michael Shub.
\newblock Stable ergodicity and partial hyperbolicity.
\newblock In {\em 1st international conference on dynamical systems,
  Montevideo, Uruguay, 1995 - a tribute to Ricardo Ma\~n\'e. Proceedings},
  pages 182--187. Harlow: Longman, 1996.

\bibitem[PSU14]{Paternain-Salo-Uhlmann-14-1}
Gabriel~P. Paternain, Mikko Salo, and Gunther Uhlmann.
\newblock Tensor tomography: progress and challenges.
\newblock {\em Chin. Ann. Math. Ser. B}, 35(3):399--428, 2014.

\bibitem[PSU23]{Paternain-Salo-Uhlmann-23}
Gabriel~P. Paternain, Mikko Salo, and Gunther Uhlmann.
\newblock {\em Geometric inverse problems---with emphasis on two dimensions},
  volume 204 of {\em Cambridge Studies in Advanced Mathematics}.
\newblock Cambridge University Press, Cambridge, 2023.
\newblock With a foreword by Andr\'as Vasy.

\bibitem[Puc23]{Puchol-23}
Martin Puchol.
\newblock The asymptotics of the holomorphic analytic torsion forms.
\newblock {\em J. Lond. Math. Soc. (2)}, 108(1):80--140, 2023.

\bibitem[PZ24]{Pollicott-Zhang-24}
M.~Pollicott and D.~Zhang.
\newblock Rapid mixing for compact group extensions of hyperbolic flows.
\newblock {\em in preparation}, 2024.

\bibitem[Riv10]{Riviere-10}
Gabriel Rivi{\`e}re.
\newblock Entropy of semiclassical measures in dimension 2.
\newblock {\em Duke Math. J.}, 155(2):271--335, 2010.

\bibitem[RS94]{Rudnick-Sarnak-94}
Ze{\'e}v Rudnick and Peter Sarnak.
\newblock The behaviour of eigenstates of arithmetic hyperbolic manifolds.
\newblock {\em Commun. Math. Phys.}, 161(1):195--213, 1994.

\bibitem[RS96]{Runst-Sickel-96}
Thomas Runst and Winfried Sickel.
\newblock {\em Sobolev spaces of fractional order, {Nemytskij} operators and
  nonlinear partial differential equations}, volume~3 of {\em De Gruyter Ser.
  Nonlinear Anal. Appl.}
\newblock Berlin: de Gruyter, 1996.

\bibitem[RT22a]{Ren-Tao-22b}
Qiuyu {Ren} and Zhongkai {Tao}.
\newblock {Spectral asymptotics for kinetic Brownian motion on locally
  symmetric spaces}.
\newblock {\em arXiv e-prints}, page arXiv:2208.13111, August 2022.

\bibitem[RT22b]{Ren-Tao-22a}
Qiuyu {Ren} and Zhongkai {Tao}.
\newblock {Spectral asymptotics for kinetic Brownian motion on Riemannian
  manifolds}.
\newblock {\em arXiv e-prints}, page arXiv:2212.05394, December 2022.

\bibitem[Sep07]{Sepanski-07}
Mark~R. Sepanski.
\newblock {\em Compact {Lie} groups}, volume 235 of {\em Grad. Texts Math.}
\newblock New York, NY: Springer, 2007.

\bibitem[Shn74]{Shnirelman-74-1}
A.~I. Shnirel'man.
\newblock Ergodic properties of eigenfunctions.
\newblock {\em Usp. Mat. Nauk}, 29(6(180)):181--182, 1974.

\bibitem[Shn22]{Shnirelman-74-2}
Alexander Shnirelman.
\newblock Statistical properties of eigenfunctions.
\newblock {\em Ann. Math. Qu{\'e}.}, 46(1):3--9, 2022.

\bibitem[Tay68]{Taylor-68}
Michael~E. Taylor.
\newblock Fourier series on compact {L}ie groups.
\newblock {\em Proc. Amer. Math. Soc.}, 19:1103--1105, 1968.

\bibitem[Vil70]{Vilms-70}
Jaak Vilms.
\newblock Totally geodesic maps.
\newblock {\em J. Differential Geometry}, 4:73--79, 1970.

\bibitem[Wil11]{Wilkinson-11}
Amie Wilkinson.
\newblock Conservative partially hyperbolic dynamics.
\newblock In {\em Proceedings of the international congress of mathematicians
  (ICM 2010), Hyderabad, India, August 19--27, 2010. Vol. III: Invited
  lectures}, pages 1816--1836. Hackensack, NJ: World Scientific; New Delhi:
  Hindustan Book Agency, 2011.

\bibitem[Zel87]{Zelditch-87}
Steven Zelditch.
\newblock Uniform distribution of eigenfunctions on compact hyperbolic
  surfaces.
\newblock {\em Duke Math. J.}, 55:919--941, 1987.

\bibitem[Zwo12]{Zworski-12}
Maciej Zworski.
\newblock {\em Semiclassical analysis}, volume 138 of {\em Graduate Studies in
  Mathematics}.
\newblock American Mathematical Society, Providence, RI, 2012.

\end{thebibliography}

\end{document}